\numberwithin{equation}{section}
\def\jet{(r,p,A)}
\def\ss{\subset}
\def\half{\hbox{${1\over 2}$}}
\def\tr{{\rm tr}}
\def\max{{\rm max}}
\def\min{{\rm min}}
\def\det{{\rm det}}
\def\Symn{\mathcal{S}(n)}
\def\Sym1{\mathcal{S}(1)}
\def\cE{{\mathcal E}}
\def\cG{{\mathcal G}}
\def\cH{{\mathcal H}}
\def\cD{{\mathcal D}}
\def\cDt{{\widetilde \cD}}
\def\cM{{\mathcal M}}
\def\cMt{{\widetilde \cM}}
\def\cP{{\mathcal P}}
\def\cPt{{\widetilde \cP}}
\def\cN{{\mathcal N}}
\def\cQ{{\mathcal Q}}
\def\cQt{{\widetilde \cQ}}
\def\cR{{\mathcal R}}
\def\cRt{{\widetilde \cR}}
\def\cS{{\mathcal S}}
\newtheorem{thm}{\textbf{Theorem}}[section]
\newtheorem{lem}[thm]{\textbf{Lemma}}
\newtheorem{prop}[thm]{\textbf{Proposition}}
\theoremstyle{remark}
\newtheorem{rem}[thm]{\textbf{Remark}}
\newtheorem{sumrem}[thm]{\textbf{Summary Remark}}
\newtheorem{cor}[thm]{\textbf{Corollary}}
\newtheorem{exe}[thm]{\textbf{Example}}
\newtheorem{exes}[thm]{\textbf{Examples}}
\theoremstyle{definition}
\newtheorem{defn}[thm]{{Definition}}
\newtheorem{conv}[thm]{{Convention}}
\newtheoremstyle{Claim}{}{}{\itshape}{}{\itshape\bfseries}{:}{ }{#1}
\theoremstyle{Claim}
\newcommand{\N}{{\mathbb N}}
\newcommand{\R}{{\mathbb R}}
\newcommand{\CF}{{\mathbb C}}
\newcommand{\HF}{{\mathbb H}}
\newcommand{\veps}{\varepsilon}
\newcommand{\J}{\mathcal{J}}
\newcommand{\F}{\mathcal{F}}
\newcommand{\G}{\mathcal{G}}
\newcommand{\cU}{\mathcal{U}}
\newcommand{\wt}{\widetilde}
\newcommand{\FD}{\wt{\F}}
\newcommand{\USC}{\mathrm{USC}}
\newcommand{\LSC}{\mathrm{LSC}}
\newcommand{\UCP}{\mathrm{UCP}}
\newcommand{\Int}{\mathrm{Int}}
\newcommand{\SA}{\mathrm{SA}}
\newcommand{\EC}[1]{\overrightarrow{#1}}
\newcommand{\A}{\mathrm{Aff}}
\newcommand{\Ap}{\mathrm{Aff}^+}
\newcommand{\SAp}{\mathrm{SA}^+}
\newcommand{\pol}{\mathfrak{g}}
\newcommand{\polh}{\mathfrak{h}}
\newcommand{\transv}{\mathrel{\text{\tpitchfork}}}
\newcommand{\tpitchfork}{%
	\vbox{
		\baselineskip\z@skip
		\lineskip-.52ex
		\lineskiplimit\maxdimen
		\m@th
		\ialign{##\crcr\hidewidth\smash{$-$}\hidewidth\crcr$\pitchfork$\crcr}
	}%
}
\newcommand{\changelocaltocdepth}[1]{%
	\addtocontents{toc}{\protect\setcounter{tocdepth}{#1}}%
	\setcounter{tocdepth}{#1}%
}
\title[Comparison principles by monotonicity and duality]{Comparison principles by monotonicity and duality for constant coefficient nonlinear potential theory and PDEs}
\author[M. Cirant]{Marco Cirant}
\address{Dipartimento di Matematica\\  Universit\`a di Padova\\ Via Trieste, 63 \\ 35121 -- Padova, Italy}
\email{marco.cirant@unipd.it (Marco Cirant)}
\author[F.R. Harvey]{F. Reese Harvey}
\address{Department of Mathematics\\ Rice University\\ P.O. Box 1892\\ Houston, TX 77005-1892, USA}
\email{harvey@rice.edu (F. Reese Harvey)}
\author[H.B. Lawson, Jr.]{H. Blaine Lawson, Jr.}
\address{Department of Mathematics\\ Stony Brook University\\ Stony Brook, NY 11794-3651, USA}
\email{blaine@math.stonybrook.edu (H.\ Blaine Lawson, Jr.)}
\author[K.R. Payne]{Kevin R. Payne}
\address{Dipartimento di Matematica ``F. Enriques''\\ Universit\`a di Milano\\ Via C. Saldini 50\\ 20133--Milano, Italy}
\email{kevin.payne@unimi.it (Kevin R.\ Payne)}
\date{\today} \linespread{1.2}
\keywords{fully nonlinear, degenerate elliptic, comparison principles, viscosity solutions, admissibility constraints, monotonicty, duality}
\subjclass[2010]{35B51, 35J60, 35J70, 35D40, 31C45, 35E20 }
\begin{document}

\maketitle

\begin{abstract}

    One main purpose of this paper is to prove {\em comparison principles} for nonlinear potential theories in $\R^n$
in a very straightforward manner from {\em duality} and {\em monotonicity}.  We shall also show how to deduce comparison
principles for nonlinear differential operators -- a program seemingly different from the first. However,
we shall marry these two points of view, for a wide variety of equations, under something called the
{\em correspondence principle}.  

In potential theory one is given a constraint set $\F$ on the 2-jets of a function, and the boundary of $\F$ gives a differential equation. There are many differential operators, suitably organized around $\F$, which give the same equation. So potential theory gives a great strengthening and simplification to the operator theory. Conversely, the set of operators associated to $\F$ can have much to say about the potential theory.

An object of central interest here is that of monotonicity, which explains and unifies much of the theory. We shall always assume that the maximal monotonicity cone for a potential theory has interior. This is automatic for gradient-free equations where monotonicity is simply the standard  degenerate ellipticity (positivity) and properness (negativity) assumptions. 

We show that for each such potential theory $\F$ there is an associated {\em canonical operator} $F$, defined on the entire 2-jet space and having all the desired properties. Furthermore, comparison holds for this $F$ on any domain $\Omega \subset\subset \R^n$ which admits a $C^2$ strictly $\cM$-subharmonic function, where $\cM$ is a monotonicity subequation for $\F$. For example, for the potential theory corresponding to convex functions, the canonical operator is the minimal eigenvalue of $D^2 u$  in the $C^2$-case.

On the operator side there is an important dichotomy into the {\em unconstrained cases} and {\em constrained cases},
where the operator must be restricted to a proper subset of 2-jet space
The unconstrained  case is best illustrated by the canonical operators, whereas the constrained case
is best illustrated by Dirichlet-G\aa rding  operators. 

The article gives many, many examples from pure and applied mathematics, and also from theoretical physics.

\end{abstract}

\makeatletter
\def\l@subsection{\@tocline{2}{0pt}{2.5pc}{5pc}{}}
\makeatother

 \setcounter{tocdepth}{1}
\tableofcontents

\section{Introduction}\label{sec:intro} 

One of the main purposes of this paper is to prove {\em comparison principles} with respect 
to a constant coefficient nonlinear potential theory in a very straightforward manner from {\em duality} and 
{\em monotonicity}.  We shall also show how to deduce comparison principles for nonlinear differential 
operators, a program  which seems somewhat different from the first.  However, we shall marry these
two points of view, for a wide variety of equations, under something we call the {\em correspondence principle}.
This turns out to be interesting for several reasons.  In potential theory one is given a constraint set 
$\F$ on the 2-jets of a function, and the boundary of $\F$ gives a differential
equation.  There are many differential operators, suitably organized around $\F$, which give the same equation.
So  potential theory gives a great strengthening and simplification to the operator  theory.
Conversely, the set of operators associated to $\F$ can have much to say about the potential theory.

\def\wt{\widetilde}

One motivation for this study comes from the following consideration (other motivations will be given below). 
For the Dirichlet Problem (DP) on a bounded domain $\Omega$ in Euclidian space, it is proved in \cite{HL11}  
that {\em existence always holds}  in the constant coefficient case \footnote{The conclusion
	``existence always holds''  is  precisely defined  in Theorem \ref{thm:existence} in Appendix A.} 
(assuming that $\Omega$ has a smooth $C^2$  boundary satisfying the appropriate strict boundary convexity conditions).  This leaves uniqueness, which the comparison principle obviously implies. Interestingly, in our constant coefficient case, one can show that uniqueness and comparison are actually equivalent using the fact that existence always holds (see Theorem \ref{thm:comparion_uniqueness}).

An object of central interest here is that of  {\em monotonicity}.
It is monotonicity that explains and unifies much of the theory.
In simpler cases, such as pure second order equations, or gradient-free equations,
monotonicity comes down to the standard degenerate ellipticity and negativity assumptions. To explain this in more detail we need some notation.

\subsection{The Potential Theory Setting}

Set $\J^2 := \R \times\R^n\times \Symn$ the space
of $2$-jets with standard jet coordinates  $(r,p, A)$,
where $\Symn$ is the space of symmetric  $n\times n$-matrices with real entries,
and consider a  set $\F\ss \J^2$.  Then $\F$ is called a 
{\em constant coefficient subequation constraint set} (or simply {\em subequation}, or {\em constraint set})
if  $\F$ is not $\emptyset$ or $\Symn$, and 
\begin{equation}\label{SE_defn}
\F +  \cP_0 \ \ss\ \F, \qquad \F  +   \cN_0 \ \ss\  \F  \qquad {\rm and} \qquad \F \ =\ \overline{\Int \, \F},
\end{equation}
where $\cP_0 := \{0\} \times \{0\}  \times \cP$ and $\cN_0 := \cN \times \{0\} \times \{0\}$
in $\J^2 = \R \times\R^n\times \Symn$,
with
\begin{equation}\label{PN_defn}
\cP \ := \ \{ A \in  \Symn : A\geq 0\}
\qquad{\rm and}\qquad
\cN \ := \ \{r \in \R : r\leq 0\}.
\end{equation}
Associated  to a constraint set $\F$ is its {\em dual} constraint  set \footnote{Throughout the paper, $\Int \, \F$ is the interior of $\F$ and $\sim \F = \J^2 \setminus \F$ the complement of $\F$ with respect to $\J^2$.}
\begin{equation}\label{DD_defn}
\wt \F \ := \ \sim\{-\Int \, \F\} \ = \ -\{\sim \Int \, \F\}.
\end{equation}

Now, each constraint   set $\F$ determines a potential theory of {\em $\F$-subharmonic functions}.  For a $C^2$-function $u$ on an open subset $X\ss \R^n$ is {\em $\F$-subharmonic} on $X$
if 
\begin{equation}\label{SH_defn_1}
J^2_{x_0}  u := (u(x_0), Du(x_0), D^2 u(x_0) ) \in \F \ \ \text{for all} \ \ x_0 \in X.
\end{equation}
Using viscosity theory, this condition can be transferred pointwise from the $2$-jet $J^2_{x_0}u$ to the set of upper test jets (see Defintion \ref{defn:ULTF} by requiring 
\begin{equation}\label{SH_defn-2}
J^{2}_{x_0}  \varphi  \in \F \ \ \text{for all upper test functions} \ \varphi \ \text{for $u$ at $x_0 \in X$},
\end{equation}
thereby extending the notion of $\F$-subharmonic from $C^2$-functions to the space $\USC(X)$  of all upper  semi-continuous, $[-\infty, \infty)$-valued functions on $X$.

In addition   to the notion of duality \eqref{DD_defn}, the other fundamental concept for this paper is monotonicity.

\begin{defn}\label{defn_MC}   A {\bf{\em monotonicity cone}} for  a subequation $\F$  is a  cone $\cM\ss \J^2$
	(with vertex at the origin) such that 
	\begin{equation}\label{MC_defn}
	\F + \cM\ \ss\ \F,
	\end{equation}
	and in this case we say that $\F$ {\em is} {\bf {\em $\cM$-monotone}}.
\end{defn}
\noindent Note that since $\cM$ contains the origin the inclusion \eqref{MC_defn} is an equality $\F + \cM = \F$.

Since $\F$ is a subequation, one can always enlarge a monotonicity cone to one where
\begin{equation}\label{MC_enlarge_1}
\cM \ \supset \ \cN\times \{0\}\times \cP
\qquad {\rm and}\qquad
\cM\ \ \text{is a closed convex cone}.
\end{equation}
In fact,  the closed convex cone hull of a monotonicity cone is also a monotonicity cone.
For each $\F$ there is a {\em maximal} monotonicity cone.
Moreover, in this paper we are interested in subequations $\F$ which have monotonicity cones
$\cM$ which satisfy \eqref{MC_enlarge_1} and 
\begin{equation}\label{MCS_defn}
\Int \, \cM \ \neq \ \emptyset,
\end{equation}
so that $\cM$ is itself a subequation. (To see this note that for a closed convex  cone $\cM$,
we have $\Int \, \cM \neq \emptyset \iff \cM = \overline {\Int \,  \cM}$.)
From this assumption \eqref{MCS_defn}, which holds for 
many constraint sets (including all second-order, in fact, all gradient-free subequations), many important things 
follow: 

\qquad $\bullet$ \ \ The Correspondence Principle,

\qquad $\bullet$ \ \ Comparison Theorems,  

\qquad $\bullet$ \ \ The Existence of Canonical Operators,

\qquad $\bullet$ \ \ The Existence of Unique Solutions to the Dirichlet Problem.

\qquad $\bullet$ \ \ and Much More, See Below.

\subsection{The Differential Operator Setting.}

We now address the companion setting of differential operators.  There are two cases: the unconstrained case
and the constrained case.

\begin{defn}\label{defn_COSP}  A {\bf {\em compatible operator-subequation pair $(F, \F)$}} consists of either
	
	\noindent
	{\bf {\em The Unconstrained Case:}} \quad  $\F \ =\ \J^2 \quad \text{ and} \quad F\in C(\F)$
	
	\noindent    or

	\noindent
	{\bf {\em The Constrained Case:}}  \ \ $\F \ \ss  \J^2 \ \text{is a subequation} \quad \text{and} \quad F\in C(\F)$, \\
	with the properties
	\begin{equation}\label{compatibility_defn}
	c_0 \ \equiv \ \inf_{\F} F \ >\ -\infty  
	\qquad{\rm and}\qquad
	\partial \F \ =\ \{J\in \F : F(J)= c_0\}.
	\end{equation}
	In both cases $F(\F)$ is called the set of  {\em admissible levels} of the pair.
\end{defn}

Let $\cM\ss\J^2$ be a convex cone with vertex at the origin.  We say that a 
compatible operator-subequation pair $(F, \F)$ is $\cM$-{\bf {\em monotone}} if $\F$ is $\cM$-monotone
and 
\begin{equation}\label{MM_pair}
F(J+J') \  \geq\ F(J) \qquad \forall\, J\in \F\quad{\rm and}\quad \forall J'\in \cM.
\end{equation}
If $\cM \supset \cN\times \{0\}\times \cP$, then $(F, \F)$ is called {\bf {\em proper elliptic}}\footnote{Often in the viscosity literature (such as \cite{CIL92}), one uses the simpler term {\em proper} for the $\cN \times \{0\} \times \cP$-monotonicity, but we prefer the phrase {\em proper elliptic} to recall both the $\cP$-monotonicity (degenerate ellipticity or positivity) and $\cN$-monotonicity (properness or negativity) for us.}
These are the only operators we consider, because of our focus on comparison.

Next these proper elliptic operators are divided into two classes: those which are {\bf {\em  topologically pathological}} meaning, as a function on the 2-jet space, the operator has a level set with interior; with the remaining case being refered to as {\bf {\em  topologically tame}}. The topologically pathological case is discarded here because uniqueness of solutions (and hence comparison) is trivially impossible. Various equivalent formulations of topological tameness appear in Theorem \ref{thm:tameness}. For this topologically tame case, we shall establish a rigorous {\em correspondence principle} between potential theory  and PDE's.

\subsection{The Correspondence Principle.}
This result builds a bridge between nonlinear potential theory (subharmonics for a subequation and its dual) and nonlinear PDE's (admissible viscosity sub/supersolutions of PDEs); in particular, it represents the part of the theory using monotonicty and duality for which the two approaches are equivalent.

We begin with a definition from viscosity theory.

\begin{defn}\label{defn:ULTF}  Let $x_0\in X \subset \R^n$ (an open subset) and $u\in\USC(X)$.
	An {\em upper test function} for $u$ at $x_0$ is a $C^2$ function $\varphi$ defined 
	near $x_0$ with 
	$$ 
	u(x)\leq \varphi(x) \quad \text{and} \quad u(x_0) =  \varphi(x_0).
	$$
	A {\em  lower test function} for $u$ at $x_0$ is a $C^2$ function $\varphi$
	such that $-\varphi$ is an upper test function for $-u$ at $x_0$
	We will denote by $J^{2, \pm}_{x_0}u \subset \J^2$ the spaces of {\em (upper/lower) test jets for $u$ at $x_0$}; that is, the set of all $J = J^2_{x_0} \varphi$ where $\varphi$ is a $C^2$ (upper/lower) test function for $u$ at $x_0$.
\end{defn}

For compatible pairs $(F, \F)$ which admit a   monotonicity cone subequation, there is a potential theory at each admissible level $c \in F(\F)$.

\begin{defn}\label{defn_F_c-sub}  Let  $(F, \F)$ be a compatible operator-subequation pair,
	which admits a monotonicity subequation $\cM$,
	and let $c \in F(\F)$ be an admissible level. Consider the subequation
	$\F_c \equiv \{J\in \F : F(J)\geq c\}$.
	Let $u\in \USC(X)$ where $X\subset \R^n$ is an open subset.
	Then $u$ is said to be {\em $\F_c$-subharmonic} on $X$ if 
	\begin{equation}\label{F_c-sub}
	J^{2, +}_{x_0}u \subset \F_c \ \ \text{for all}  \ x_0 \in X.   
	\end{equation}
	A function $v\in \LSC(X)$ is said to be {\em $\F_c$-superharmonic} on $X$ if
	$-v$ is $\widetilde \F_c$-subharmonic on $X$. By duality,  this is equivalent to asking that $J^{2,-}_{x_0}v \subset \sim \Int \, \F_c$ for each $x_0 \in X$.
\end{defn}

We now present  an essential notion for the constrained case; that is, of admissible viscosity subsolutions and supersolutions where the subequation $\F$ places a constraint on the upper/lower test jets that compete in the defintion. In particular, part (b) in the definition below makes systematic what is often done in an ad hoc way in the literature.

\begin{defn}\label{defn_AVSS}  
	Let  $(F, \F)$ be a compatible operator-subequation pair as above. Let $\Omega$ be a domain in $\R^n$ and let $c \in F(\F)$ be an admissible level.
	
	(a)\ \  A function $u \in \USC(\Omega)$ is said to be an  {\em $\F$-admissible viscosity subsolution of $F(u,Du,D^2u) = c$ in $\Omega$} if for every $x_0 \in \Omega$ one has
	\begin{equation}\label{AVsub_defn}	 
	\mbox{$J \in J^{2, +}_{x_0}u \ \ \Rightarrow \ \   J \in \F$ \ \ \text{and} \ \ $F(J) \geq c$.}
	\end{equation}
	
	(b)\ \  A function $w \in \LSC(\Omega)$ is said to be an {\em $\F$-admissible viscosity supersolution  of $F(u,Du,D^2u) = c$ in $\Omega$} if
	\begin{equation}\label{AVsuper_defn}		
	\mbox{$J \in J^{2, -}_{x_0} w  \ \ \Rightarrow$ \ \ either [ $J \in \F$ and \ $F(J) \leq c$\, ] \ or \ $J \not\in \F$.}
	\end{equation}	
\end{defn}		

A main result of this paper is the following theorem. 

\noindent
{\bf Thoerem \ref{cor:AVSolns}}.\ ({\bf The Correspondence Principle for Compatible Pairs}): {\em Let $(F,\F)$ be a compatible proper elliptic operator-subequation pair,
	which is $\cM$-monotone for a convex cone subequation $\cM$.
	Suppose also that   $F$ is topologically tame.
	Let $c\in F(\F)$ be an admissible value, and set $\F_c \equiv \{J\in \F : F(J)\geq c\}$ as above.  Fix a domain} $\Omega\ss\R^n$. Then:
\begin{itemize}
	\item[(a)] {\em $u \in \USC(\Omega)$ is an $\F$-admissible viscosity subsolution of $F(u,Du,D^2u) = c$ in $\Omega$ if and only if $u$ is $\F_c$-subharmonic on $\Omega$;}
	\item[(b)] {\em $u \in \LSC(\Omega)$ is an $\F$-admissible viscosity supersolution of $F(u,Du,D^2u) = c$ in $\Omega$ if and only if $u$ is $\F_c$-superharmonic on $\Omega$; }
	\item[(c)] {\em comparison for the subequation $\F_c$ on a domain $\Omega$ is valid if and only if comparison for the equation $F(u, Du, D^2 u) = c$ on $\Omega$ is valid.}
\end{itemize}

The correspondence principle is a very general and powerful tool, which needs to be ``unpacked'' in order to fully appreciate it. First, there an important {\bf dichotomy} between the {\em unconstrained case} $(F, \J^2)$ in which the operator $F$ is proper elliptic on all of $\J^2$ and the {\em constrained case} $(F, \F)$ where $F$ is proper elliptic only when restricted to some compatible subequation $\F$. 
Note that  in the constrained case, the constraint set  $\F$  on the domain of the operator $F$ is used in the Definition\ref{defn_AVSS} of  {\em $\F$-admissible sub/supersolutions}, while in the unconstrained case sub/supersolutions are in the standard viscosity sense.

Second, using this principle, one can reduce PDE comparison to potential theoretic comparison, in order to free the operator from its particular form, retaining only the need to analyze its maximal monotonicity cone $\cM$. This is done in section \ref{sec:CP_operators} for many classes of operators.

\subsection{\bf Canonical  Operators.}
This collection of operators gives some of the best illustrations of the unconstrained case.
The construction starts with a subequation $\F$ which admits a 
monotonicity subequation $\cM$.  One then chooses an element $J_0\in \Int \, \cM$. 
Associated to this is a {\bf {\em canonical operator}}  $F\in C(\J^2)$, defined on all of $\J^2$,
with very nice properties.
It is canonically  defined via  the Structure Theorem \ref{thm:structure} which 
says that for each $J\in \J^2$, the set
\begin{equation}\label{Interval_J}
I_J := \{ t \in \R: \ J + tJ_0 \in \F \} 
\end{equation}
is a closed interval of the form $[t_J, +\infty)$ with $t_J \in \R$ (finite). Moreover
\begin{itemize}
	\item[(a)] $J + t J_0 \not\in \F \ \iff \ t < t_J$;
	\item[(b)] $J + t_J J_0 \in \partial \F$;
	\item[(c)] $ J + t J_0 \in \Int \, \F \ \iff \ t > t_J$;
\end{itemize}

The canonical operator $\F : \J^2 \to \R$ is then defined by
\begin{equation}\label{CO_defn} 
F(J)  \ =\ - t_J
\end{equation}	
and it has the following properties. $F$ decomposes $\J^2$ into three disjoint pieces
\begin{equation}\label{CO_properties}
\partial \F = \{F(J) = 0\}, \ \  \Int \, \F = \{  F(J) > 0\} \ \	 \text{and} \ \	\J^2 \setminus \F = \{ F(J) < 0\},
\end{equation}
and  $F$ is strictly increasing in the direction $J_0$.  In fact,
$F(J + t J_0) = F(J) + t$ for each $t \in \R$.
Furthermore,	 $F$ is proper elliptic on $\J^2$,  and in fact, it is $\cM$-monotone.	It is also Lipschitz.
(See Propositions \ref{prop:canonical_structure}, \ref{prop:graphing_function} and \ref{prop:Lipschitz_g}.)

Interestingly, there exist important cases where this is the only construction of a  good operator,
that is, there are no polynomial operators that we know.  Examples come from the geometrical  potential theories
for Slag, G(2) and Spin(7).

We have the following two results.

\noindent
{\bf Theorem \ref{thm:CanOp_ComPair} .}\ ({\bf Canonical operators and compatible pairs}).    {\em
	Suppose that a subequation $\F$ admits a monotonicity cone subequation $\cM$. Let $F \in C(\J^2)$ be the canonical operator for $\F$ determined by any fixed $J_0 \in \Int \, \cM$. Then: 
	\begin{itemize}
		\item[(a)] $(F, \J^2)$ is an unconstrained proper elliptic operator-subequation pair;
		\item[(b)] $F(\J^2) = \R$ and the operator $F$ is topologically tame; 
		\item[(c)] for each $c \in \R$, the set $\F_c:= \{ J \in \J^2: \ F(J) \geq c \}$ is a subequation constraint set with $\F_0 = \F$ and the pair $(F, \F_c)$ satisfies the compatibility conditions 
		$$
		\inf_{\F_c} F = c \quad \text{and} \quad \partial \F_c = \{J \in \F_c: \ F(J) = c \}.
		$$
	\end{itemize} 
	In addition, the canonical operator (determined by $J_0 \in \Int \, \cM$) for the dual subequation $\wt{\F}$ is given by 
	$$
	\wt{F}(J) := - F(-J) \ \ \text{for all} \ J \in \J^2. \qquad \text{Also note that } \ \wt{\F}_c = \F_{-c}.
	$$
	The analogous statements of (a), (b) and (c) hold for $(\wt{F}, \J^2)$ and $(\wt{F}, \wt{\F}_c)$.
}

\noindent
{\bf Theorem \ref{thm:CP_canonical_ops}.}\  ({\bf Comparison for canonical operators}).	{\em
	Let $\F \subset \J^2$ be a subequation constraint set which admits a monotonicity cone subequation $\cM$. 
	Further, suppose that $\cM$ admits a strict approximator $\psi$ on a bounded domain $\Omega$; that is, $\psi \in C(\overline{\Omega}) \cap C^2(\Omega)$ such that $J^2_x \psi \in \Int \, \cM$ for each $x \in \Omega$. Then, for each $J_0 \in \Int \, \cM$ fixed, the canonical operator $F$ for $\F$ determined by $J_0$ satisfies the comparison principle at every level $c \in \R$; that is,
	$$
	\mbox{$u \leq w$ on $\partial \Omega \ \ \Rightarrow \ \  u \leq w$ on $\Omega$}
	$$
	for $u \in \USC(\overline{\Omega})$ and $w \in \LSC(\overline{\Omega})$ which are respectively 
	viscosity subsolutions and supersolutions to $F(u,Du,D^2u) = c$ on $\Omega$.
}

This gives rise to many beautiful  operator subequation pairs, starting simply with just the subequation $\F$ itself.

\begin{exe}[{\bf {\em Minimal eigenvalue operator}}]\label{exe:MEO}  For the simplest example of a canonical operator,
	let $\F =  \R\times \R^n\times \cP$ (the real convexity  subequation),
	and take $J_0 =  (0,0,{1\over n} I)$.   Then
	$$
	F(J) \ =\ F(r,p,A) \ =\ \lambda_1(A) \qquad\text{(the smallest eigenvalue of $A$)}.
	$$
	Of course, there are many other operators which are compatible with $\F$ and are zero on $\partial \F$, such as 
	$\det(A)$ or $\det(A)^{1\over  n}$.  However, for any such operator we know
	from Theorem \ref{thm:CP_canonical_ops} above  that {\bf  comparison always holds.}
\end{exe}

It is interesting to note that all linear operators are canonical (see Lemma \ref{lem:LCO}). In addition, the concave operator $F$ which is the infimum over a suitably renormalized {\em pointed family}  $\mathfrak{F} = \{F_{\sigma}\}_{\sigma \in \Sigma}$ of linear operators is also the canonical operator for the convex cone subequation $\F$ which is the intersection the associated half-space constraint sets $\{\F_{\sigma}\}_{\sigma \in \Sigma}$. (See Theorem \ref{thm:HJB_inf}). Similar considerations hold for the canonical supremum operator associated to the closure of the union of the $\F_{\sigma}$ (see Remark \ref{rem:HJB_sup}).The precise notion of being pointed is given in Definition \ref{defn:pointed_set} and is a geometrical hypothesis (see Remark \ref{rem:pointed}) on the set of coeffient vectors 
$$S = \{ J_{\sigma} = (a_{\sigma}, b_{\sigma}, E_{\sigma})\}_{\sigma \in \Sigma} \subset \J^2
$$ 
defining the operators in the family by
\begin{equation}\label{linear_intro}
F_{\sigma}(J) = F_{\sigma}(r, p, A):= {\rm tr}(E_{\sigma} A) + \langle b_{\sigma}, p \rangle + a_{\sigma} r = \langle J_{\sigma} , J \rangle, \ \ J \in \J^2. 
\end{equation}
In the proper elliptic case, where each $(a_{\sigma}, E_{\sigma}) \in \cN \times \cP$, one also has the validity comparison principle (see Theorem \ref{thm:CP_HJB}) which depends on the interesting fact that a necessary and sufficient condition for the canonical operator for $\F$ to be $\cM$-monotone is that $S$ is contained in the convex polar cone $\cM^{\circ}$ of $\cM$. To facilitate the application of Theorem \ref{thm:CP_HJB}, the polars of many monotonicity cones are listed in Propsoition \ref{prop:polar_cones}. The following example application comes from optimal control and is discussed in Example \ref{exe:HJB2}.

\begin{exe}[{\bf {\em  Hamilton-Jacobi-Bellman operators with directed drift}}]\label{exe:HJBO}   
	In optimal control, one problem concerns an agent who seeks to minimize  an infinite-horizon discounted cost functional by acting on its drift and volatility parameters. The relavant operator to consider is the infimum over a familly of linear operators like \eqref{linear_intro}, where we will specialize to
	\begin{equation}\label{OCO}
	F_\sigma(J) = F_\sigma (r,p,A) = {\rm tr}(E_\sigma A) + \langle
	b_\sigma, p \rangle + cr = \langle J_{\sigma}, J \rangle, \sigma \in \Sigma
	\end{equation}
	where $\delta:= -c > 0$ is the {\em discount factor}, $b_\sigma$ is the {\em drift term}
	and $E_\sigma$ is the {\em (squared) volatility}. Under the assumptions that $E_\sigma$ is allowed to vary in bounded sets and the set of drifts $S_d:= \{b_\sigma\}_{\sigma \in \Sigma}$ share a ``preferred'' direction $b_0$ (the family is pointed with axis $b_0 \in \R^n \setminus \{0\}$), Theorem \ref{thm:CP_HJB} shows that the comparison principle holds on arbitrary bounded domains for the equation $F(u, Du, D^2u) = c$ for each $c \in \R$. 
\end{exe} 

We now consider an important example of an unconstrained operator that is {\em not} a canonical operator.
This particular equation has received much attention in recent years from quite varied points of view.
There is  some history in \cite{HL20}.

\begin{exe}[{\bf {\em Special Lagrangian potential operator}}]\label{exe:SLPO}  This pure second order operator was introduced along with special Lagrangian geometry in \cite{HL82}. It takes the form
	$$
	F(A):= \sum_{k=1}^n \arctan(\lambda_k(A))
	$$
	and is $\cP$-monotone on all of $\J^2$. Comparison on arbitrary bounded domains 
	holds for the equation $F(D^2u) = c$ at all admissible levels $c \in (-n\pi/2, n \pi/2)$.
\end{exe}

\subsection{\bf Gradient-Free Operators.}
Given a subequation $\F$  our results apply if  the maximal monotonicity cone  of $\F$  has interior. 
However, notice that {\sl this is true
	for every pure second-order subequation} $\F = \R\times \R^n\times \F_0$ since $\R\times \R^n\times \cP$ 
is always a monotonicity subequation for $\F$.   In fact,  {\sl this is true
	for every pure gradient-free subequation} $\F$, since $\cN\times \R^n\times \cP$ 
is always a monotonicity subequation for $\F$ by Definition \ref{defn:GFSE1} of {\em gradient-free}. We have the following result.

\noindent
{\bf Theorem \ref{thm:E0}.}  ({\bf Comparison in the  Gradient-Free Case}).
{\em Suppose that $(F, \F)$ is a compatible, gradient-free pair.  Then for every bounded domain
	$\Omega$ and every $c\in F(\F)$, one has the comparison principle:
	$$
	u\ \leq\ w\ on\ \partial \Omega \quad \Rightarrow\qquad 
	u\ \leq\ w\ on\  \Omega
	$$
	for  $u\in \USC(\overline\Omega)$ and $w\in \LSC(\overline\Omega)$ where
	$u$ is $\F_c$-subharmonic and $w$ is $\F_c$-super-harmonic (i.e., $-w$ is $\widetilde\F_c$-subharmonic). }

We have seen that  the unconstrained case is best illustrated by canonical operators.
The constrained case  is best illustrated by operators involving G\aa rding hyperbolic polynomials, 
which we examine next.

\subsection{\bf Operators Involving Dirichlet-G\aa rding Polynomials.}
G\aa rding's theory \cite{Ga59} provides  a unified approach to studying many of the most important subequations.  The reader should look at section \ref{subsec:garding} for more details and to \cite{HL13a} and \cite{HL10}
for a modern self-contained treatment. A  {\bf {\em Dirichlet-G\aa rding polynomial}} is a homogeneous polynomial $\pol$ of 
degree $m$ on $\Symn$ with the following properties.  

(1)\ \ ($I$-Hyperbolicity). For each $A\in \Symn$, the  
polynomial $p_A(t) \equiv\pol(tI +A)$ has all real roots. The negatives of these real roots are called the {\bf {\em G\aa rding $I$-eigenvalues}} of $A$ 
and up to permutation can be written in increasng order as  $\lambda^\pol_1(A) \leq 
\lambda^\pol_2(A) \leq  \cdots \leq \lambda^\pol_m(A)$.

(2)  \ \ (Positivity). We assume $\pol(I)>0$ and define the {\bf {\em G\aa rding cone}} $\Gamma$  to be the connected component of 
$\Symn \setminus \{\pol=0\}$ which contains the identity $I$.  This is a convex cone (see Theorem 11.30), given by those $A$ with
$\lambda^\pol_1(A) > 0$.
We assume the positivity property 
\begin{equation}\label{DG_Cone_1}
\overline\Gamma+\cP \subset \overline\Gamma, \ \ \text{which is equivalent to either} \ \overline\Gamma+\cP = \overline\Gamma \ \text{or} \  \cP \subset \overline\Gamma,
\end{equation}
since $\cP$ contains the origin and $\overline\Gamma$ is a convex cone.

We normalize so that $\pol(I)=1$. Then we have 
\begin{equation}\label{DG_Cone_2}
\pol(tI + A) \ =\ \prod_{k=1}^m (t+\lambda^\pol_k(A)),
\end{equation}
which  when evaluated in $t = 0$ shows that each Dirichlet-G\aa rding operator is a {\em generalized Monge-Amp\`{e}re operator}, where the G\aa rding $I$-eigenvalues of $A$ take the place of the standard eigenvalues of $A$ in the special case $\pol = {\rm det}$.

If $\pol(A)$ be a  Dirichlet-G\aa rding polynomial on $\Symn$
with closed G\aa rding cone $\overline\Gamma$, then this    gives rise to a pure second-order polynomial
operator $\pol(D^2 u)$ constrained  to the 
pure second-order subequation $\R\times \R^n \times \overline\Gamma$.
These are discussed 
at length in Section 11.6.  Simple examples are given by the elementary symmetric functions
of the eigenvalues of $A$ (the so-called {\em Hessian equations}). 
There are many  more interesting examples, including the 
Lagrangian Monge-Amp\`ere operator (see Examples \ref{exes:DG_polys} (4)), the   {\em geometric  $k$-convexity operator} (see Example \ref{exe:GKCO}  below)). Moreover, each of these {\em universal examples}  (defined in terms of the standard eigenvalues $\lambda_k(A)$), gives rise to a huge family of examples by simply replacing the standard eigenvalues $\lambda_k(A)$ by the  G\aa rding $I$-eigenvalues $\lambda_k^{\pol}(A)$ of $A$ for any G\aa rding $I$-hyperbolic polynomial $\pol$ on $\cS(n)$. 

There are many interesting equations which involve Dirichlet-G\aa rding polynomials $\pol(A)$.
We now look at some of the examples.

\begin{exe}[{\bf {\em $k$-Plurisubharmonicity, the truncated Laplacian and the geometric $k$-convexity operator}}]\label{exe:GKCO} 
	These  examples were introduced in \cite{HL09} (see page 39). Here they  illustrate the
	general fact that given a G\aa rding polynomial, there are two natural operators, the G\aa rding operator defined directly by $\pol$ and the canonical operator for the G\aa rding cone $\overline{\Gamma}$ determined by $\pol$. 
We discuss an interpolation of operators between them.
	First we define the  potential theory, which is quite interesting.  Fix an integer $k$,  $1\leq k\leq n$.
	A $k$-{\em plurisubharmonic function} is defined by requiring that its restriction
	to every affine $k$-plane is classically Laplacian subharmonic (or $\equiv -\infty$).
	The subequation $\cP(k)$ is defined by requiring that $A\in \Symn$
	restricts, as a quadratic form, to have a positive  trace on all affine $k$-planes.
	The $k$-plurisubharmonic functions are exactly the $\cP(k)$-subharmonics.
	
	The canonical operator is the {\bf {\em truncated Laplacian}}
	\begin{equation}\label{TLO}
	\Delta_{\cP(k)}(A) \ \equiv\ \lambda_1(A) + \cdots + \lambda_k(A)
	\qquad (\lambda_1 \leq \cdots\leq \lambda_n).
	\end{equation}
	There is also a polynomial Dirichlet-G\aa rding operator 
	\begin{equation}\label{GkCO}
	T_k(A) \ =\ \prod_ {i_1< \cdots<i_k} (\lambda_{i_1}(A) + \cdots + \lambda_{i_k}(A)),
	\end{equation}
	which we call the {\bf {\em geometric $k$-convexity operator}}. This yields two compatible 
	operator-subequation pairs using the canonical operator and a Dirichlet-G\aa rding operator; namely 
	$$
	(\Delta_{\cP(k)}, \cP(k))
	\qquad {\rm and} \qquad
	(T_k, \cP(k)),
	$$
	and yields a new interpolated sequence between the pairs  $(\lambda_1,\cP)$ and $(\det, \cP)$ at the $k=1$ end,
	and the identical  pairs $(\Delta, \{\tr\geq 0\})$ and $(\Delta, \{\tr\geq 0\})$ at the $k=n$ end.  
	The canonical operator has been studied in \cite{BGI18} where the 
	terminology {\em truncated Laplacian} 
	was introduced.
\end{exe}

We point out that $\cP(k)$-subharmonic functions restrict to be subharmonic on all 
$k$-dimensional  minimal submanifolds \cite{HL14}.

\begin{exe}\label{exe:LPPTO} The reader might enjoy the article \cite{HL17}  where one has a full blown Lagrangain plurisubharmonic potential theory complete with an operator of  ``Monge-Amp\`ere type'' in  Lagrangian geometry. 
\end{exe}

\begin{exe}[{\bf {\em Branches of a G\aa rding-Dirichlet operator}}]\label{exe:BGDO}
	In  subsection \ref{subsec:branches} we discuss the general notion of branches.   A  {\bf {\em  branch}} is a closed subset
	of $\J^2$ which is the boundary of a subequation. Given a Dirichlet-G\aa rding polynomial $\pol$
	of degree $m$, there are $m$ distinct branches
	$$
	\Lambda_1^\pol \ \subset\ \Lambda_2^\pol \ \subset\  \cdots \ \subset\ \Lambda_m^\pol 
	\ \ \ {\rm where} \ \ 
	\Lambda_k^\pol \ =\ \{\lambda^\pol_k \geq 0\}.
	$$
	Our theory applies to all of these branches, because they are pure second-order.
	(The only natural operator for these branches is the canonical operator $\lambda^\pol_k$ unless $k=1$.)
\end{exe}

\begin{exe}[{\bf {\em Gradient-Free operators with a Dirichlet-G\aa rding factor}}]\label{exe:GFDGO} 
	Let $\pol(A)$ be a  Dirichlet-G\aa rding polynomial as above, and 
	let $h\in C((-\infty, 0])$ be non-negative, non-increasing and with $h(r)=0 \iff r=0$.
	Consider the operator 
	\begin{equation}\label{GFDGO}
	F(r,p,A) \ =\ h(r) \pol(A).
	\end{equation}
	Restricting $F$ to the subequation $\F = \cN \times \R^n \times  \overline\Gamma$
	gives a compatible gradient-free pair, and hence comparison holds at every admissible level
	on every bounded domain.
\end{exe}

An interesting special case comes from affine hyperbolic geometry, as presented in Example \ref{exe:cheng_yau}.

\begin{exe}[{\bf {\em The hyperbolic affine sphere equation}}]\label{exe:HASE} The partial differential equation 
	\begin{equation}\label{HASE}
	{\rm det}(D^2 u) = \left( \frac{L}{u} \right)^{n+2}, \ \   L \leq 0,   \qquad  {\rm i.e., } \ \ 
	(-r)^{n+2} \det(A) = (-L)^{n+2}
	\end{equation}
	arises in the study of {\em hyperbolic affine spheres} with mean curvature $L$ where $u < 0$ is convex and vanishes on the boundary of $\Omega \subset \R^n$ convex (see Cheng-Yau \cite{CY86}). This equation  is covered by the Example above if one takes $\pol(A) = {\rm det}(A)$ and $h(r) = (-r)^{n+2}$ and $c = (-L)^{n+2} \geq 0$ are the admissible levels. 
\end{exe}

The next example   illustrates a new construction in subsection \ref{subsec:garding} (see Lemma \ref{lem:build_DG} which produces a gradient-free Dirichlet-G\aa rding operator from a pure second order Dirichlet-G\aa rding operator.

\begin{exe}\label{exe:POS_GF} For each Dirichlet-G\aa rding polynomial $\pol$ of degree $m$ on $\cS(n)$ with G\aa rding $I$-eigenvalues of $A$ given by $\lambda_k^{\pol}(A)$, $k = 1, \ldots, m$, the operator 
	\begin{equation}\label{POS_GF}
	\polh(r,A) \ =\ \prod_{k=1}^m (\lambda_k^{\pol}(A) - r) \ =\ \pol(A-r I)
	\end{equation}
	is a $\left(-\frac{1}{2}, \frac{1}{2} I\right)$-hyperbolic Dirichlet-G\aa rding polynomial of degree $m$ on $\R \times \cS(n)$ (normalized to have $\polh \left(-\frac{1}{2}, \frac{1}{2} I\right) = 1$) with G\aa rding eigenvalues $\lambda^\polh_k(A) = \lambda_k^{\pol}(A) - r$.
\end{exe}

Now we consider  an example with gradient dependence which requires an additional {\em directionality property} (D) with respect to a {\em directional cone} $\cD$ (see Defintion 2.2).

\begin{exe}[{\bf {\em Example \ref{exe:GFDGO} with a directional cone}}]\label{exe:DGD} Let $\pol$ and $h$ be as in Example \ref{exe:GFDGO} above,
	and consider a continuous $d: \cD \to \R$,  where $\cD \subsetneq \R^n$ is a directional cone, with 
	\begin{equation}\label{dD1}
	d\ \geq\ 0 \ \ {\rm and } \ \ d(p)=0   \iff p\in\partial \cD
	\end{equation}
	\begin{equation}\label{dD2}
	d(p+q) \ \geq\ d(p) \ \text{for each $p,q \in\cD$}.
	\end{equation}
	Then the operator 
	\begin{equation}\label{DGD_operator}
	F(r,p,A) \ =\ h(r) d(p) \pol(A)
	\end{equation}
	with restricted domain
	\begin{equation}\label{DGD_domain}
	\F \ =\  \cN \times \cD\times \overline{\Gamma}
	\end{equation}
	defines a compatible $\cN \times \cD \times \cP$-monotone operator-subequation pair $(F, \F)$. Hence, comparison holds on arbitrary bounded domains at every admissible level of $F$.	
	
	\noindent{\bf Note:} Some examples of such pairs $(d, \cD)$ are:
	\begin{equation}\label{dD_pair_1}
	d(p)= p_n \ \ \text{and} \ \ \cD = \{ (p', p_n) \in \R^n: \ p_n \geq 0 \} \ \ \text{(a half-space)},
	\end{equation}
	and for  $k \in \{1, \ldots, n \}$
	\begin{equation}\label{dD_pair_2}
	d(p)= \prod_{j = 1}^k p_j \ \ \text{and} \ \ \cD = \{ (p_1, \ldots , p_n) \in \R^n: \ p_j \geq 0 \ \text{for each} \ j= 1, \ldots  k \}.
	\end{equation}
\end{exe}

An interesting special case of Example \ref{exe:DGD} concerns parabolic operators, which are discussed in subsection \ref{subsec:parabolic} in both constrained (Theorem \ref{thm:PCP_CC}) and unconstrained cases (Theorem \ref{thm:CP_P1})

\begin{exe}[{\bf {\em Parabolic operators}}]\label{exe:PO} In the case where the gradient pair $(d, \cD)$ is defined by \eqref{dD_pair_1}, 
	$h\equiv 1$, and $\pol(A)$ is replaced by $G(A')$  which depends only on $A' \in \cS(n-1)$ (second order derivatives only in the {\em spatial variables} $x' \in \R^{n-1}$), one has a fully nonlinear {\em parabolic} operator 
	\begin{equation}\label{PO}
	F(r,p,A) := p_n G(r, A').
	\end{equation}
	of the kind considered by Krylov in his extension of Alexandroff's methods to parabolic equations in \cite{Kv76}. The compatible subequation is described in formula \eqref{FFpair} of the paper.
\end{exe}

Another interesting special case of Example \ref{exe:DGD} comes from a very particular form of optimal transport with quadratic cost, as presented in Example \ref{exe:OTE}. 

\begin{exe}[{\bf {\em Potential equation for optimal transport with uniform source density and directed target density}}]\label{exe:OTE_intro} 
	Equations of the form
	\begin{equation}\label{OTO_defn}
	d(Du) \, {\rm det}(D^2u) = c, \ \ c \geq 0
	\end{equation}
	arise in the theory of optimal transport, under some restrictive assumptions. In general, there would be a function $f = f(x)$ in place of the constant $c$, where $f$ represents the mass density in the source configuration and $d$ represents the mass density of the target configuration (with he mass balance $||f||_{L^1} = ||d||_{L^1}$). 
	One seeks to transport the mass with density $f$ onto the mass with density $d$ at minimal transportation cost (which is quadratic respect to transport distance). The solution of this minimization problem is given by the gradient of a convex function $u$, which turns out to be a generalized solution of the equation \eqref{OTO_defn}. In the special case of uniform source density $f \equiv c$ and with target density $d$ having some directionality, comparison principles can be obtained as a special case of Example \ref{exe:DGD} with $h(r) := 1$ and $\pol(A) := {\rm det} \, A$.

	

	Thus we see  that  seemingly diverse equations can be established from a surprisingly {\bf unified point of view}.
	It frees the theory from any particular form of the operator. Given a potential theory; that is, given a subequation constraint set $\F$, there are many natural choices for an associated operator. If $\F$ has {\bf sufficient monotonicity}; that is, if $\F$  admits a monotonicity cone subequation $\cM$ (i.e., the maximal monotonicity cone has interior), there is always one choice that is ``canonical'', but for proving useful estimates, other choices may be better choices. For instance, a polynomial operator, if there is one, may be preferable. Restricting attention, as we do here, to the continuous version of the Dirichlet Problem (DP), the correspondence principle enables a single potential theory/subequation result to be applied to all of the many compatible operators $F$ associated to the subequation $\F$. 
\end{exe}


\subsection{General Potential Theoretic Comparison Theorems.}
One   of the important parts of this paper is   understanding convex  cone subequations $\cM\subset \J^2$ and the comparison results for subequations $\F$ which are $\cM$-monotone. 

By comparison results, we mean the validity of the comparison principle on bounded domains $\Omega \subset \R^n$; that is,
\begin{equation}\label{CP1_intro}
u \leq w \ \ \text{on} \ \ \partial \Omega \ \ \Rightarrow \ \ u \leq w \ \ \text{on} \ \ \Omega 
\end{equation}
for all $u \in \USC(\overline{\Omega})$ and $w \in \LSC(\overline{\Omega})$ which are respectively $\F$-subharmonic and $\F$-superharmonic on $\Omega$. By duality, this is equivalent to showing
\begin{equation}\label{CP2_intro}
u + v \leq 0 \ \ \text{on} \ \ \partial \Omega \ \ \Rightarrow \ \ u + v \leq 0 \ \ \text{on} \ \ \Omega 
\end{equation}
for all $u,v \in \USC(\overline{\Omega})$ which are respectively $\F, \wt{\F}$-subharmonic on $\Omega$. Our method of proof for $\cM$-monotone subequations $\F$ makes use of this second formulation.

Here is a guide to the method. There are four steps.

\noindent
{\bf Step 1.\ (Jet addition)} We have the following elementary but important fact concerning constraint sets,  monotonicity and duality:
\begin{equation}\label{SAT_intro}
\F + \cM \ \subset\ \F
\qquad\iff\qquad
\F + \widetilde \F \ \subset\ \widetilde \cM.
\end{equation}
So the monotonicity condition on the left is equivalent to  the condition on the right which is perfect for comparison, as one sees from \eqref{CP2_intro}.

Showing that this infinitesimal statement passes to a potential theoretic statement is the hard analysis step in the method.

\noindent
{\bf Step 2.\ (Subharmonic addition)} We prove the following potential theoretic result.

\noindent 
{\bf Theorem \ref{thm:SAT_MD}.\ (Subharmonic addition, monotonicity and duality)}
{\em Suppose that $\cM\subset\J^2$ is a monotonicity cone subequation and that $\F\subset \J^2$
	is an $\cM$-monotone subequation constraint set.  Then for every open set $X\subset \R^n$, one has}
\begin{equation}\label{SAT_intro}
\F(X) + \widetilde \F(X) \ \subset \ \widetilde \cM(X).
\end{equation}
(where $\F(X)$ is the set of $u\in \USC(X)$ which are $\F$-subharmonic on $X$).

\noindent
{\bf Step 3.\  (Reduce comparison to the  Zero Maximum Principle  for $\widetilde\cM$)}  Armed with Theorem \ref{thm:SAT_MD}, it is clear from \eqref{CP2_intro} that comparison for $\F$ on $\Omega$ will hold if we can prove the
\noindent
{\bf {\em Zero Maximum Principle (ZMP) for $\widetilde \cM$}} on a bounded domain $\Omega \subset \R^n$; that is,
\begin{equation}\label{ZMP_intro}
z\ \leq \ 0  \ \ {\rm on}\ \ \partial \Omega 
\qquad\Rightarrow \qquad 
z\ \leq \ 0  \ \ {\rm on}\  \ \Omega 
\end{equation}
for all $z\in \USC(\overline \Omega)$ which are $\widetilde \cM$-subharmonic on $\Omega$.

\noindent
{\bf Step 4.\ (Prove the Zero Maximum Principle for $\widetilde\cM$)} A key concept in the proof is the following.

\begin{defn}\label{defn:SA_intro} Suppose that $\cM$ is a convex cone subequation. 
	Given a domain $\Omega \subset \subset \R^n$, we say $\cM$ {\em admits a strict approximator on $\Omega$} if there exists $\psi$ with
	\begin{equation}\label{SA_intro}
	\psi \in C(\overline{\Omega}) \cap C^2(\Omega)  \ \text{and} \ \ J^2_x \psi \in  \Int \, \cM \ \ \text{for each} \ x \in \Omega.
	\end{equation}
\end{defn}
This important notion gives a sufficient condition for proving the (ZMP) for $\wt{\cM}$ and hence comparison for $\F$. 

\noindent
{\bf Theorem \ref{thm:ZMP}.\ (The Zero Maximum Principle)} 
{\em  Suppose   that $\cM$ is a convex cone subequation that admits a 
	strict approximator on $\Omega$. Then the zero maximum principle (ZMP) 
	holds for $\wt{\cM}$ on $\overline{\Omega}$ }

Putting these four steps  together gives the following. 

\noindent
{\bf Theorem \ref{thm:CP_general}\ (The General Comparison Theorem)}  {\em
	Suppose that $\cM\subset\J^2$ is a monotonicity cone subequation and that $\F\subset \J^2$
	is an $\cM$-monotone subequation constraint set.  Suppose   that $\cM$ is a convex cone subequation that admits a 
	strict approximator on $\Omega$.  Then comparison holds for $\F$ on $\overline \Omega$.  That is, 
	given $u,v\in\USC(\overline{\Omega})$ where $u$ is $\F$-subharmonic on $\Omega$ and 
	$v$ is $\widetilde \F$-subharmonic on $\Omega$, then
	$$
	u+v \ \leq\ 0 \ \ {\rm on}\ \ \partial \Omega
	\qquad\Rightarrow\qquad
	u+v \ \leq\ 0 \ \ {\rm on}\ \ \Omega
	$$
	
	The conclusion here can be restated as follows.  
	Given $u\in\USC(\overline{\Omega})$ and  $w\in\LSC(\overline{\Omega})$  
	where $u$ is $\F$-subharmonic on $\Omega$ and 
	$w$ is an  $\F$-superharmonic on $\Omega$, then}
$$
u \ \leq\ w \ \ {\rm on}\ \ \partial \Omega
\qquad\Rightarrow\qquad
u \ \leq\ w \ \ {\rm on}\ \ \Omega
$$

To see this last statement we only need to know that $w$ is  $\F$-superharmonic on $\Omega$
if and only if $v\equiv -w$ is $\widetilde \F$-subharmonic on $\Omega$.

Now in Section \ref{sec:monotonicity} we present and study a list  of fundamental monotonicity cone subequations
$\cM(\gamma, \cD, R)$ where $\cD\subset \R^n$ is a directional cone, $\gamma\in [0, \infty)$ 
and $R\in (0,\infty]$.  In  the Fundamental Family Theorem \ref{thm:fundamental}, it is shown that:
\begin{equation}\label{FF}
\text{Every monotonicity cone subequation contains one of these.}
\end{equation}
We note that
$$
\cM(\gamma):= \left\{ (r,p,A) \in \J^2: \ r \leq - \gamma |p| \right\}
\qquad
\cM(R):= \left\{ (r,p,A) \in \J^2: \ A \geq \frac{|p|}{R}I \right\}
$$	
and 
$$
\cM(\gamma, \cD, R) \ :=\  \cM(\gamma) \cap \cM(\cD) \cap     \cM(R).
$$
where  $\cM(\cD) := \R\times \cD\times \Symn$. 

The fundamental nature of this family of monotonicity cones, together with the general comparison principle of Theorem \ref{thm:CP_general} leads to a main comparison result, Theorem \ref{thm:comparison} (The Fundamental Family Comparison Theorem) which depends on
the cone $\cM(\gamma, \cD, R)$.   For some of these cones, comparison
holds on all bounded domains.  For the others comparison holds only on domains
$\Omega \subset \R^n$ which are subsets of a translation of the truncated
cone $\cD\cap B_R(0)$. 
This is  semi-local comparison with explicit parameters.
Note that by the Fundamental Families result \eqref{FF}, local comparison always holds  (see Theorem \ref{thm:LC}).

Concerning the  applicability of the fundamental comparison result of Theorem \ref{thm:comparison}, it is worth mentioning that larger monotonicity cones $\cM$ for a given subequation $\F$ give a better chance of proving comparison (one more likely to be able to construct a strict approximator) but smaller monotonicity cones $\cM$ will apply to larger families of subequations. In particular, if one would like to know if comparison holds on arbitrary bounded domains, one should search for the largest possible $\cM$, which is perhaps not in the list of the fundamental family. For example, in Theorems \ref{thm:CP_improvents} and \ref{thm:CP_improvents2} we present enlargements of the cones with $R$ finite for which comparison holds on all bounded domains.

On the other hand, the  search for sufficient monotonicity to have comparison on arbitrary bounded domains may be futile. In particular, for $\F:= \cM(R)$ which is its own maximal monotonicity cone, it is shown that the (ZMP) fails for $\wt{\cM}(R)$ on large balls in Proposition \ref{prop:ZMP_failure} and hence comparison also fails for $\F = \cM(R)$ on large balls. This failure of comparison on large balls is extended to interesting subequations $\F$ with maximal monotonicity cone equal to $\cM(R)$ in Proposition \ref{prop:CE1_CP}. The situation can be even worse. 

\begin{rem}[{\bf {\em Failure of local comparison with insufficient monotonicity}}]\label{rem:failure_small}    In Theorem \ref{thm:failure}, we show that comparison can fail on arbitrarily small balls (even if both (P) and (N) hold), if there is insufficient monotonicity  In the examples the maximal monotonicity cone $\cM_{\F}$ has empty interior, hence no strict approximators on any ball,
	no matter how small.
\end{rem}

Concerning Step 3  of our method (in which comparison resuces to the validity of the (ZMP) for the dual $\wt{\cM}$ of the monotonicity cone), the following observation is of interest.

\begin{rem}[{\bf {\em Strong Comparison from the Strong (ZMP)}}]\label{rem:SMP} 
	The monotonicity and duality method can be used to prove a strong comparison principle which, by the Subharmonic Addition Theorem, reduces to proving a a strong (ZMP) for $\wt{\cM}$ on $\overline{\Omega}$; that is,
	\begin{equation}\label{SMP_intro}
	z\ \leq \ 0  \ \ {\rm on}\ \ \partial \Omega 
	\ \ \Rightarrow \ \ 
	z \equiv 0  \ \ \text{or} \ \ z < 0 \ {\rm on}\  \ \Omega 
	\end{equation}
	for all $z\in \USC(\overline \Omega)$ which are $\widetilde \cM$-subharmonic on $\Omega$. This method was used in \cite{HL16c} to prove strong comparison for pure second order subequations. We will not attempt to extend this to the general constant coefficient case in this paper. There is, of course, a rich literature on the strong maximum principle for nonlinear operators including the important work of Bardi and Da Lio initiated in \cite{BD99}, along with recent papers of Birindelli-Galise-Ishhi \cite{BGI18} and Goffi-Pediconi \cite{GP20}. 
	
\end{rem} 

A few additional potential theoretic ingredients are worth mentioning. First, an elaboration on the potential theory underlying Example \ref{exe:HJBO}.

\begin{rem}[{\bf {\em Canonical operators, duality, intersections and unions}}] \label{rem:IUSI} For families $\{\F_{\sigma}\}_{\sigma \in \Sigma}$ of subequations with a common monotonicty cone subequation $\cM$, by using unions, intersections and duality, four interesting $\cM$-monotone subequations are constructed together with their canonical operators (see Theorem \ref{thm:canonical_inf}).
\end{rem}

Next, an elaboration on the gradient-free case.

\begin{rem}[{\bf {\em Subaffine plus functions}}] \label{rem:SAPF}   Subaffine plus theory concerns the potential theory of the gradient free subequation
	\begin{equation}\label{dQ_intro}
	\cQt:= \{(r,A) \in \R \times \cS(n):  \ r \leq 0 \ \ \text{or} \ \ A \in \cP \},
	\end{equation} where $\cPt$ is the pure second order {\em subaffine} subequation.	This $\cQt$ is the dual of the fundamental gradient-free monotoncity cone $\cM = \cQ := \cN \times \cP$ and this potential theory is develped in detail (see Theorems \ref{thm:SAPChar} and \ref{thm:SAPT}). In particular, we extend the elegant method of using subaffine functions (the $\wt{\cP}$-subharmonics) to prove that ``comparison always holds''  for pure second order subequations. Subaffine plus functions (the $(\cQt$-subharmonics) are used to prove that  ``comparison always holds'' for the larger family of gradient-free subequations.  
\end{rem}

\subsection{Limitations of the method and comparison with the literature.}

The monotonicity and duality method presented here applies to a vast array of constant coefficient potential theories and operators, but not all of them. There are many interesting and important examples with insufficient monotonicity to be treated by our method. For example quasilinear operators such as the minimal surface operator
$$
F(p,A):= {\rm tr}(A) - \frac{\langle Ap, p \rangle}{1 + |p|^2}
$$
the $q$-Laplacian (with $1 < q < 2$ or $2 < q < \infty$)
$$
F(p,A):= |p|^{q -2} {\rm tr}(A) + (q-2)|p|^{q -4} \langle Ap, p \rangle
$$
and the infinite Laplacian
$$
F(p,A):= \langle Ap, p \rangle
$$
do not have monotonicity cones $\cM$ with interior, which we require. Such examples (and others) have been treated by Barles and Busca \cite{BB01}. On the other hand, for these reduced (no explicit dependence on the jet variable $r \in \R$) operators $F$, they do require structural assumptions such as their condition (F2) of being {\em strictly elliptic in the gradient direction}. This condition is {\bf not} satisfied by an operator such as 
$$
F(p,A):= d(p) \, {\rm det} \, A,
$$
which is Example \ref{exe:GFDGO} with $h(r)\equiv 1$ and $\pol(A) = {\rm det} \, A$. Such examples can be treated by our method.

Next, we discuss a prototype operator which surprisingly creates difficulty for any method. The operator looks particularly attractive for comparison since it is strictly decreasing in the solution variable $r \in \R$ and is increasing in the hessian variable $A$ when restricted to $\cP \subset \cS(n)$.  Namely, consider the seemingly innocuous operator
\begin{equation}\label{bad_example}
F(r,A) = {\rm det} \, A - r,
\end{equation}
(which is further discussed in Remark 12.7). The operator $F$ is gradient-free and proper elliptic on $\R \times \cP$; that is, it is $\cQ = \cN \times \cP$-monotone on $\F:= \R \times \cP$. However, the potential theory equation $\partial \F$ is not contained in the zero locus $\{ (r,A) \in \F: \ F(r,A) = 0 \}$; that is, $F$ and $\F$ are not compatible. This cannot be remedied by another choice of $\F$, creating a major obstacle to the study of this operator. This incompatibility means that $\F$-superharmonics will not correcpond to $\F$-admissible supersolutions to the equation $F(u,Du,D^2u) = 0$. In order to formulate a notion of admissible supersolution, one could make use of the {\em generalized equation} approach initiated in \cite{HL19} for pure second order equations in which one looks for a second constraint set $\cG$ (different from $\F$) such that 
$$
	\F \cap (-\wt{\cG}) = \{ (r,A) \in \F: \ F(r,A) = 0 \}.
$$
The admissible supersolutions are those $w \in \LSC(\Omega)$ which are $-\wt{\cG}$-subharmonic. We will not pursue this program here. 

In addition to the paper \cite{BB01} discussed above, earleir pioneering work in the constant coefficient case was done by Jensen \cite{Je88}. The equations treated by him are all unconstrained in our language, where the monotoncity properties (P) and (N) do {\bf not} require restricting the domain $F$ to a constraint set $\F$. In subsections \ref{subsec:strict_r} and \ref{subsec:strict_ell}, we recover Jensen's results in this unconstrained setting (see Remark \ref{rem:Jensen1}). Of course, we also treat many constrained cases in the present paper, which is an important motivation for us.

Concerning the constrained case and our notion of compatible pairs $(F, \F)$, we should mention that the special case of Monge-Amp\`{e}re-type equations with the convexity constriant $\cP$ is given in Ishii-Lions \cite{IL90} together with a notion of admissible supersolutions in our language. A similar admissibility notion was also given by Trudinger \cite{Tr90} for prescribed curvature equations and later by Trudinger and Wang for the so-called Hessian equations in a series of papers beginning with \cite{TW97}. As noted previously, another motivation of ours is to treat constrained cases in a robust and general way. The potential theoretic approach initiated in \cite{HL09} was influenced by the important paper of Krylov \cite{Kv95} on the general notion of ellipticity, who championed the idea of freeing a given differential operator $F$ from its particular form by looking instead at the constraint that is imposes on the $2$-jets of subsolutions to the equation.

Finally, we wish to comment on our choice to focus on the constant coefficient case. The most basic reason is that in this situation, monotonicity and duality alone suffice to produce comparison for compatible pairs $(F, \F)$. On Euclidian spaces, where $x$-dependence is added into the pair; that is,
$$
F: \Omega \times \J^2 \to \R \quad \text{and} \quad \F: \Omega \to \{ {\rm subequations \ in} \ \J^2 \}
$$
one also needs at least the continuity of the subequation-valued map $\F(\cdot)$. This, together with monotonicity and duality has been shown to be sufficient for comparison in the pure second order and gradient-free cases in \cite{CP17} and \cite{CP21}. See also the recent paper of Brustad \cite{Br20}. Moreover, constant coefficient subequations on Euclidian space generate a rich and interesting class of subequations on manifolds $X$, as developped in \cite{HL11}. These subequations on $X$ are those which are {\em locally jet-equivalent to a constant coefficient subequation}.
Any riemannian G-subequation on a riemmanian manifold $X$ with topological $G$-structure is such a subequation. For simple examples, let $\mathfrak{p}: \Symn \to \R $ be a continuous function which is invariant under the action of O$(n)$ (such as the determinant or the trace).
Applying $\mathfrak{p}$ to the riemannian hessian gives an operator (real Monge-Amp\`ere or Laplace-Beltrami) on $X$, which has the jet-equivalence property above. 
These notions are discussed in the introduction of \cite{HL11} (see pages 398-402) along with much, much more.

\section{Constant Coefficient Constraint Sets and their Subharmonics}\label{sec:constraints}

In this section, we will discuss nonlinear potential theory. Two definitions are of fundamental importance; that of {\em subequation constraint sets} and their {\em subharmonics} (see Definitions \ref{defn:constraint} and \ref{defn:FSH}). In all that follows, $X$ will denote an open subset of $\R^n$, $\Symn$ the space of symmetric $n \times n$ matrices with real entries (with its partial ordering given by the associated quadratic forms) and
\begin{equation}\label{2jets}
\J^2 := \R \times \R^n \times \Symn
\end{equation}
will denote the space of {\em 2-jets} with coordinates $J = (r,p,A)$. The spaces of upper, lower semi-continuous functions on $X$ taking values in $[-\infty, +\infty), (-\infty, +\infty]$ will be denoted by $\USC(X), \LSC(X)$ respectively.

\begin{defn}\label{defn:constraint}
	A {\em subequation (constraint set)} is a non empty proper subset \footnote{Somewhat surprisingly, the non-empty and proper subset hypothesis is rarely needed in the proofs; for example, one always has (trivially) comparison if $\F = \emptyset$ or $\F = \J^2$.} $\F \subset \J^2$ which satisfies the {\em Positivity Condition}
	$$ \mbox{(P) \ \ \ \ $(r,p,A) \in \F$\ \ and $P\geq 0 \ \ \Rightarrow\ \ (r,p,A+P)\in \F$,}$$
	the {\em Negativity Condition}
	$$ \mbox{(N) \ \ \ \ $(r,p,A) \in \F$ \ \ and $s \leq 0 \ \ \Rightarrow\ \ (r + s,p,P)\in \F$}$$
	and the {\em Topological Condition}
	$$ \mbox{(T) \ \ \ \ $\F = \overline{\Int \, \F}$,}$$
	which implies that $\F$ is closed and has non empty interior.
\end{defn}

Denoting by $\cP := \{ P \in \Symn: P \geq 0\}$ and $\cN := \{ s \in \R: s \leq 0\}$, the monotonicity conditions are 
$$ \mbox{(P) \ \ \ \ $\F+ \left( \{0\} \!\times\!  \{0\} \!\times\!  \cP \right) \ \ss \ \F$ \quad and \quad (N) \ \ \ \ $\F+ \left( \cN \!\times\!  \{0\} \!\times\!  \{0\} \right)  \ \ss \ \F$.}$$
Also, it is useful to note that for closed convex sets $\F$
\begin{equation}\label{convex_T}
\mbox{the topological conditon (T) holds  \ \ $\Longleftrightarrow \ \ \Int \, \F \neq \emptyset$.} 
\end{equation}
Hence a closed convex set $\F \subset \J^2$ is a subequation if and only if $\F$ satisfies (P) and (N) and has non-empty interior.

In addition to the monotonicity properties (P) and (N), a third monotonicity condition plays an important role in comparison and is introduced here. It depends on the choice of a suitable cone $\cD \subseteq \R^n$.

	\begin{defn}\label{defn:property_D} A closed convex cone $\cD \subseteq \R^n$ (possibly all of $\R^n$) with vertex at the origin which satisfies the topological condition (T) (equivalently $\Int \, \cD \neq \emptyset$) will be called a {\em directional cone}. The {\em Directionality Condition} on a subequation $\F \subset \J$ is
	$$ \mbox{(D) \ \ \ \ $(r,p,A) \in \F$\ \ and $q \in \cD \ \ \Rightarrow\ \ (r,p + q,A)\in \F$,}$$
	or equivalently 
	$$
	\mbox{(D) \ \ \ \ $\F+ \left( \{0\} \!\times\!  \cD \!\times\!  \{0\} \right) \ \ss \ \F$ .}$$
	\end{defn}

\begin{rem}\label{rem:subequation_terminology} In previous works, the sets $\mathcal{F}$ described in Definition \ref{defn:constraint}  have been called both a {\em Dirichlet set} and a {\em subequation}. Here we will often use the shortened form {\em subequation} in place of the longer phrase {\em subequation constraint set} introduced in Definition \ref{defn:constraint}.
	\end{rem}

A function $u \in C^2(X)$ {\em satisfies the subequation constraint $\F$} if
\begin{equation}\label{C2FSH}
J^2_x u := (u(x), Du(x), D^2u(x)) \in \F \ \ \ \text{for each} \ x \in X,
\end{equation}
and will be called {\em $\F$-subharmonic on $X$}. If
\begin{equation}\label{C2SFSH}
J^2_x u \in \Int \, \F \ \ \text{for each} \ x \in X,
\end{equation} 
we will say that $u$ is {\em strictly $\F$-subharmonic on $X$}. 

For upper semi-continuous functions $u \in \USC(X)$, one can define the differential inclusion \eqref{C2FSH} in the {\em viscosity sense}. 

\begin{defn}\label{defn:FSH} Given a function $u\in\USC(X)$:
	\begin{itemize}
		\item[(a)] a function $\varphi$ which is $C^2$ near $x_0 \in X$ is said to be a {\em ($C^2$ upper) test function for $u$ at $x_0$} if
		\begin{equation}\label{TF}
		u-\varphi \leq \ 0  \ \ \text{near}\ x_0 \ \ \ \text{and} \ \ \  
		u-\varphi = 0  \ \ \text{at} \ x_0;  
		\end{equation}	
		\item[(b)] the function $u$ is said to be {\em $\F$-subharmonic at $x_0$} if $J^2_{x_0} \varphi \in \F$ for all upper $C^2$ test functions $\varphi$ for $u$ at $x_0$; 
		\item[(c)] the function $u$ is said to be {\em $\F$-subharmonic on $X$} if $u$ is $\F$-subharmonic at each $x_0 \in X$. 
	\end{itemize} 

	The space of all $\F$-subharmonic functions on $X$ will be denoted by $\F(X)$.
\end{defn}

A pair or remarks concerning the $\F$-subharmonicity are in order.

\begin{rem}\label{rem:FSH_equiv}
	There are several equivalent ways of defining $u$ to be $\F$-subharmonic at $x_0 \in X$, which can all be formulated as
	\begin{equation} \label{FSH_equiv}
	J = (r,p,A) \in \F \ \ \text{for all ``upper test jets'' for $u$ at $x_0$}.
	\end{equation}
	To complete the definition \eqref{FSH_equiv}, there are several natural choices for defining the concept ``$J = (r,p,A)$ is an upper test jet for $u$ at $x_0$'' which all yield the same notion of $u$ being $\F$-subharmonic at $x_0$. This is, of course, well known to specialists. For the convenience of the reader, we present four equivalent reformulations in Lemma \ref{lem:A}. The statement includes nomenclature for each formulation that we believe to be useful. (Also, somewhat surprisingly, all of the equivalences of Lemma \ref{lem:A} are valid for any closed set $\F$ without mention of properties (P) or (N)). Being $\F$-subharmonic at $x_0$ is the differential inclusion 
	\begin{equation}\label{FSH-DI}
	\mbox{$J_{\ast}(x_0,u) \subset \F$ for one of the four choices of upper test jets in Lemma \ref{lem:A}.}
	\end{equation}
	It  is important to note that the four sets of upper test jets for $u$ at $x_0$ are nested in the sense that (see \eqref{A2}):
	\begin{equation}\label{UTJ_chain}
	J_1(x_0, u) \subset J_2(x_0, u) \subset J_3(x_0, u) \subset J_4(x_0, u).
	\end{equation}
	Using the smallest set $J_1(x_0,u)$ of {\em strict quadratic test jets} gives the best choice for showing that $u$ is subharmonic by a contradiction argument. This is done in the Bad Test Jet Lemma \ref{lem:nonFSH}, which concludes that there is a bad test jet $J \in J_1(x_0,u)$. The set $J_3(x_0,u)$ of {\em $C^2$-test jets} is the set of upper test functions used in our Definition \ref{defn:FSH} (b) above. This definition is local, but can be reformulated as a form of the comparison principle which always holds globally.
	See Lemma \ref{lem:DCP} on {\em definitional comparison}. Finally, the largest set $J_4(x_0,u)$ of {\em little-o quadratic test jets} yields one of the standard definitions in terms of the second order superdifferential since  $J_4(x_0, u) = J_{x_0}^{2,+} u$. 
\end{rem}

\begin{rem}\label{rem:FSH_silly}
	The pointwise notion of Definition \ref{defn:FSH} (b) that $u$ is $\F$-subharmonic at $x_0$ is not without its ``pathology''. For example, the function $u(x) = |x|$ is $\F$-subharmonic at $x_0 = 0$ for every subequation $\F$ since there are no upper test functions $\varphi$ for $u$ at $x_0 = 0$. However, the concept of $u$ being $\F$-subharmonic on an open set $X$ mitigates this pathology in the following way. For a fixed $u \in \USC(X)$, consider the set of {\em upper contact points}
	\begin{equation}\label{UCP}
		\UCP(X):= \{ x_0 \in X: \ J_{\ast}(x_0,u) \neq \emptyset \},
	\end{equation}
	where $J_{\ast}(x_0,u)$ is any one of set of test jets in  \eqref{UTJ_chain}, as defined by (J1)-(J4) in Lemma \ref{lem:A}. One can show that $u \equiv - \infty$ on the open set $X \setminus \overline{\UCP(X)}$,
	by using the proof of Lemma 6.1 of \cite{HL16b}. In particular, if $u(x_0) > -\infty$ ($u$ is finite at $x_0$), then $x_0$ is the limit of a sequence $\{x_k\}_{k \in \N} \subset \UCP(X)$ \footnote{This fact strengthens Proposition 2.5(2) of \cite{Ko04} where only $\USC$ functions are considered which are everywhere finite.} 
	\end{rem}

The natural notion of $u$ being {\em $\F$-harmonic on $X$} will be recorded in Definition \ref{defn:FH}, in terms of {\em Dirichlet duality} (along with the notion of $u$ being {\em $\F$-superharmonic on $X$}). 

A pair of elementary examples of $\F$-subharmonic functions are worth mentioning to help fix the idea. 

\begin{exes}[Convex functions and classical Laplacian subharmonics]\label{exes:SHs} For the {\em convexity subequation}  $\F = \R \times \R^n \times \cP$, one has (see Proposition 4.5 of \cite{HL09}):
	\begin{equation}\label{Convex_char}
	\mbox{$u \in \F(X) \ \Leftrightarrow \ u$ is convex  or $u \equiv - \infty$ on connected components of $X$.}
	\end{equation}

	For $\F = \R \times \R^n \times \F_{\Delta}$ with $\F_{\Delta}:= \{ A \in \Symn: {\rm{tr}}(A) \geq 0\}$, one can show that for $u \in \USC(X)$ 
	\begin{equation}\label{Subharmonic_char}
	\mbox{$\displaystyle{u \in \F(X) \ \Leftrightarrow \ u(x_0) \leq \frac{1}{|B_r(x_0)|} \int_{B_r(x_0)} u(x) \, dx}$ for each $B_r(x_0) \subset \subset X$,}
	\end{equation}
which is the classical definition of a subharmonic function. 

	In both examples, there is a canonical \footnote{See Proposition 6.11 of \cite{HL18b} for the definition of the {\em canonical operator}, its construction and many additional examples. See also subsection \ref{subsection:canonical}  below.} choice (but not the only choice) of a differential operator $F(u, Du, D^2u)$ with 
	$$
	\F = \{ (r,p,A) \in \J^2: \ F(r,p,A) \geq 0\}
	$$
	and hence $\F(X)$ is the space of viscosity subsolutions of  $F(u, Du, D^2u) = 0$. The first is the {\em minimum eigenvalue operator}
	$$
		F(r,p,A) = \lambda_{\rm min}(A)
	$$
	and the second is the {\em Laplacian}
	 $F(r,p,A) = {\rm tr}(A)$.
\end{exes}

As noted in Remark \ref{rem:FSH_equiv}, if one takes the contrapositive of Definition \ref{defn:FSH}(b) using the {\em strict quadratic test jet} formulation (J1) of Lemma \ref{lem:A},  one immediately obtains the following very useful tool for establishing $\F$-subharmonicity by providing the existence of a ``bad test jet'' at a point where $\F$-subharmonicity fails.

\begin{lem}[The Bad Test Jet Lemma]\label{lem:nonFSH} 
Given $u \in \USC(X)$ and $\F \neq \emptyset$, if $u$ is not $\F$-subharmonic at $x_0$ then there exist $\veps >0$ and $J=(r,p,A) \notin \F$ such that the quadratic function $Q_J$ with 2-jet $J$ at $x_0$ is a test function for $u$ at $x_0$ in the following  $\veps$-strict sense:
	\begin{equation}\label{nonFSH}
\mbox{$u(x)-Q_J(x) \leq \ -\veps|x-x_0|^2$ \ \ near $x_0$, with equality at $x_0$.} 
	\end{equation}
\end{lem}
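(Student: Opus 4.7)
The plan is to prove the contrapositive: assuming $u$ fails to be $\F$-subharmonic at $x_0$, I will construct a bad strict quadratic test jet $J \notin \F$ satisfying the $\veps$-strict touching condition. By Definition~\ref{defn:FSH}(b), the failure of $\F$-subharmonicity at $x_0$ produces a $C^2$ upper test function $\varphi$ for $u$ at $x_0$ whose 2-jet $J_0 := J^2_{x_0}\varphi = (r_0, p_0, A_0)$ lies outside $\F$. Writing $Q_{J_0}$ for the quadratic with 2-jet $J_0$ at $x_0$, the $C^2$ regularity of $\varphi$ gives the Taylor expansion $\varphi(x) = Q_{J_0}(x) + R(x)$ with remainder $R(x) = o(|x - x_0|^2)$ as $x \to x_0$.

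The idea is now to perturb the Hessian component of $J_0$ upward by a small multiple of the identity. For $\eta > 0$, set $J_\eta := (r_0, p_0, A_0 + \eta I)$, so that $Q_{J_\eta}(x) = Q_{J_0}(x) + \tfrac{\eta}{2}|x - x_0|^2$. Combining $u(x) \leq \varphi(x)$ near $x_0$ with the expansion yields
$$
u(x) - Q_{J_\eta}(x) \ \leq \ R(x) - \tfrac{\eta}{2}|x - x_0|^2.
$$
Since $R(x) = o(|x-x_0|^2)$, one may shrink the neighborhood of $x_0$ so that $|R(x)| \leq \tfrac{\eta}{4}|x - x_0|^2$ throughout, producing the strict bound $u(x) - Q_{J_\eta}(x) \leq -\tfrac{\eta}{4}|x - x_0|^2$ near $x_0$. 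Equality at $x_0$ is immediate since $Q_{J_\eta}(x_0) = r_0 = \varphi(x_0) = u(x_0)$. Setting $\veps := \eta/4$ and $J := J_\eta$ gives the required conclusion, modulo one final point.

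That point --- and the only real obstacle --- is ensuring that the perturbed jet $J_\eta$ lies outside $\F$. This is where the topological condition~(T) enters essentially: since $\F$ is closed in $\J^2$ and $J_0 \notin \F$, the complement $\J^2 \setminus \F$ is open and contains an entire ball about $J_0$, so $J_\eta \notin \F$ for all sufficiently small $\eta > 0$. The monotonicity axioms (P) and (N) play no role whatsoever; indeed, a naive invocation of (P) would push in the wrong direction, since adding $\eta I > 0$ to $A_0$ preserves membership \emph{inside} $\F$, not outside. Thus it is precisely the closedness half of~(T), rather than any monotonicity property, that powers the argument: a small enough $\eta > 0$ can be chosen so that both $J_\eta \notin \F$ and $|R(x)| \leq \tfrac{\eta}{4}|x - x_0|^2$ hold on a common neighborhood of $x_0$. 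This is consistent with the parenthetical remark preceding the lemma, which observes that the equivalence of the various notions of upper test jet requires only that $\F$ be closed.
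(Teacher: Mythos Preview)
Your proof is correct and is essentially the same argument the paper uses, just presented inline rather than by citation. The paper derives the lemma as the contrapositive of the strict-quadratic formulation (J1) in Lemma~\ref{lem:A}, whose proof (specifically the inclusion $J_4(x_0,u)\subset\bar J_1(x_0,u)$) is exactly your perturbation $J_0\mapsto J_0+(0,0,\eta I)$ combined with the closedness of $\F$ to keep the perturbed jet outside $\F$; you simply unroll that appendix argument directly. One cosmetic remark: you call this ``the contrapositive,'' but what you actually prove is the direct implication as stated.
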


The converse of Lemma \ref{lem:nonFSH} is one of the many equivalent definitions of $\F$-subharmonicity (using the strict quadratic test jets $J_1(x_0, u)$). 

\begin{rem}[Coherence and the Positivity Condition]\label{rem:coherence}
	A fundamental observation concerning the Definition \ref{defn:FSH}(b) of  $u \in \USC(X)$ being $\F$-subarmonic at $x_0$ with $\F \neq \emptyset$ is the following {\em coherence property}: if $u$ is twice differentiable at $x_0$ then 
	\begin{equation}\label{coherence}
	\mbox{$u$ is $\F$-subharmonic at $x_0 \ \ \Longleftrightarrow \ \ J_{x_0}^2 u \in \F$.}
	\end{equation}
\end{rem}
The forward half of the equivalence $(\Rightarrow)$ is an immediate consequence of the little-o formulation \eqref{A3} of Lemma \ref{lem:A}, where the second order Taylor expansion of $u$ in $x_0$ with Peano remainder is an upper test function for $u$ at $x_0$. The reverse half of the equivalence $(\Leftarrow)$ is the first instance where the positivity condition (P) for $\F$ is required. Indeed, assume $J^2_{x_0}u \in \F$ and that $\varphi$ is an upper test function satisfying \eqref{TF}. By elementary calculus, one has $\varphi(x_0) = u(x_0), D \varphi(x_0) = D u(x_0)$ and $D^2 u(x_0) - D^2 \varphi(x_0) = -P$ with $P \geq 0$ and hence
$$
J^2_{x_0} \varphi = J^2_{x_0} u + P \ \ \text{with} \ \ P \geq 0,
$$
which yields $J^2_{x_0} \varphi \in \F$ if and only if property (P) holds for $\F$.

Another result which makes use of property (P) is the following useful lemma, which has been given in a more general form on manifolds in Theorem 4.1 of \cite{HL16b}. For the convenience of the reader we will give a sketch of the proof. We recall that a function $u$ is {\em locally quasi-convex (locally semi-convex) on an open set $X \subset \R^n$} if for each $x \in X$ and for some $\lambda > 0$ the function $u(\cdot) + \frac{\lambda}{2} | \cdot|$ is convex on a neighborhood of $x$.

\begin{lem}[The Almost Everywhere Theorem]\label{lem:AET}
	Suppose that $\F \subset \J^2$ is a subequation constraint set and that $u$ is a locally quasi-convex function on the open set $X \subset \R^n$. Then $u$ is twice differentiable almost everywhere on $X$ by Alexandroff's theorem and 
\begin{equation}\label{AET}
	\mbox{ $J^2_xu \in \F$ \ for almost every $x \in X \ \ \Rightarrow \ \ u$ \ is $\F$-subharmonic on $X$.}
	\end{equation}
	\end{lem}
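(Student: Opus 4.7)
\textbf{Proof plan for the Almost Everywhere Theorem (Lemma \ref{lem:AET}).} The idea is a Jensen-type perturbation argument. By Lemma \ref{lem:A} (formulation (J1)) and the Bad Test Jet Lemma \ref{lem:nonFSH}, it suffices to show that $u$ admits no strict quadratic upper contact jet $J \notin \F$ at any point of $X$. Arguing by contradiction, suppose that at some $x_0 \in X$, which we may assume to be the origin, there exist $\varepsilon > 0$ and $J=(r,p,A) \notin \F$ with $Q_J(x) := r + \langle p,x\rangle + \tfrac12 \langle Ax,x\rangle$ satisfying
$$
u(x) - Q_J(x) \ \leq\ -\varepsilon |x|^2 \quad \text{for all}\ x \in \overline{B_\rho(0)}, \qquad \text{with equality at } 0,
$$
for some $\rho>0$ small. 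Set $w(x) := u(x) - Q_J(x)$, which is locally semi-convex since $u$ is.

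\textbf{Step 1 (linear perturbation).} For each $\xi \in \R^n$ with $|\xi| < \varepsilon \rho$, consider $w_\xi(x) := w(x) - \langle \xi, x\rangle$. On $\partial B_\rho(0)$ one has $w_\xi \leq -\varepsilon \rho^2 + |\xi|\rho < 0 = w_\xi(0)$, so $w_\xi$ attains its maximum over $\overline{B_\rho(0)}$ at some interior point $x_\xi \in B_\rho(0)$. Consequently $Q_J(x) + \langle \xi,x\rangle + w_\xi(x_\xi)$ is a $C^2$ upper test function for $u$ at $x_\xi$.

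\textbf{Step 2 (Jensen's lemma).} Because $w$ is semi-convex and has a strict maximum at $0$, Jensen's lemma (in the form used in \cite{CIL92}, which applies precisely to semi-convex obstacles) asserts that the set
$$
K_\delta \ := \ \{ x_\xi : |\xi| \leq \delta \}
$$
has positive Lebesgue measure for every $\delta \in (0, \varepsilon\rho)$, and in fact shrinks to $\{0\}$ as $\delta \to 0^+$. Since $u$ is locally quasi-convex, Alexandroff's theorem gives twice differentiability almost everywhere; combining this with the standing hypothesis, for every $\delta$ we may select $\bar x = x_{\bar\xi} \in K_\delta$ at which $u$ is twice differentiable and $J^2_{\bar x} u \in \F$.

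\textbf{Step 3 (use coherence and property (P)).} Since $\bar x$ is an interior maximum of $w_{\bar\xi}$ and $u$ is twice differentiable at $\bar x$, elementary calculus yields
$$
\nabla u(\bar x) \ =\ p + A\bar x + \bar\xi, \qquad D^2 u(\bar x) \ \leq\ A.
$$
Therefore $J^2_{\bar x} u = (u(\bar x),\, p + A\bar x + \bar\xi,\, D^2 u(\bar x)) \in \F$, and by property (P),
$$
\bigl( u(\bar x),\ p + A\bar x + \bar\xi,\ A\bigr) \ =\ J^2_{\bar x}u + (0,0,A - D^2u(\bar x)) \ \in\ \F.
$$

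\textbf{Step 4 (pass to the limit).} Let $\delta_k \to 0^+$ and pick corresponding $\bar x_k \in K_{\delta_k}$, $\bar\xi_k$ with $|\bar\xi_k| \leq \delta_k$ as above. Then $\bar x_k \to 0$ and $\bar\xi_k \to 0$. Local semi-convexity makes $u$ continuous, so $u(\bar x_k) \to u(0) = r$, and the jets above converge to $(r,p,A) = J$. Since $\F$ is closed, $J \in \F$, contradicting $J \notin \F$. This completes the proof.

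\textbf{Main obstacle.} The crux is Step 2: correctly invoking Jensen's lemma for the semi-convex obstacle $w$ to ensure that the contact set $K_\delta$ has positive measure, so that it meets the full-measure set on which $u$ is twice differentiable and satisfies the jet constraint. Once this is in hand, the positivity condition (P) on $\F$ absorbs the inequality $D^2u(\bar x) \leq A$, and closedness of $\F$ delivers the limit.
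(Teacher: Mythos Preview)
Your proof is correct and follows essentially the same approach as the paper. The paper's version is more compressed: it invokes the Jensen--Slodkowski Lemma (Theorem 3.6 of \cite{HL16b}) as a black box to produce a sequence $x_k \to x_0$ in the full-measure set $E$ with $J^2_{x_k} u \to (r,p,A-P)$ for some $P \geq 0$, then applies closedness of $\F$ followed by property (P); your Steps 1--3 unpack exactly this lemma via the classical Jensen perturbation argument, and you apply (P) before the limit rather than after, which is an inessential reordering.
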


\begin{proof} The proof is by contradiction. If $u$ fails to be $\F$-subharmonic at $x_0 \in X$, then the Bad Test Jet Lemmma \ref{lem:nonFSH} yields a strict upper contact jet $J = (r,p,A) \notin \F$ for which \eqref{nonFSH} holds in a neighborhood of $x_0$. From this strict upper contact jet, the Jensen-Slodkowski Lemma (see Theorem 3.6 of \cite{HL16b}) applied to the subset $E \subset X$ of full measure on which $u$ is twice differentiable at $x \in E$ and $J^2_x u \in \F$ yields a sequence $\{x_k\}_{k \in \N} \subset E$ such that for $k \to +\infty$ one has
$$
x_k \to x_0 \ \ \text{and} \ \ J_{x_k}^2u \to (r,p,A - P) \ \text{with} \ P \geq 0.
$$
Since $\F$ is closed, one has $(r,p,A - P) \in \F$ and then, by property (P), $J = (r,p,A) = (r,p,A - P + P) \in \F$, which contradicts the choice of $J$.

\end{proof}

\begin{rem}\label{rem:AET} Of course, by Alexandroff's theorem and the coherence property of Remark \ref{rem:coherence}, the implication \eqref{AET} is an equivalence.

\end{rem}

We conclude this section with a few additional remarks on the conditions (P), (N) and (T) of Definition \ref{defn:constraint} and condition (D) of Definition \ref{defn:property_D}. Condition (P) corresponds to {\em degenerate ellipticity} of the differential inclusion \eqref{FSH-DI} for each $x_0 \in X$ and, as shown above, proves to be crucial in showing one half of the coherence property \eqref{coherence}. Property (N), when combined with (P), corresponds to {\em properness} of \eqref{FSH-DI} and is well known to play a role in the validity of maximum and comparison principles on arbitrary bounded domains. For example, the linear equation $u^{\prime \prime} + u = 0$ in one variable will satisfy the maximum principle only on intervals of length less than $\pi$. Property (D) plays an essential role in parabolic equations. Condition (N) yields the following elementary but useful fact.

\begin{rem}[Translations and the Negativity Condition]\label{rem:N-translates} For any $\F$ satisfying (N), one has that $u - m$ is $\F$-subharmonic on $X$ for each $m \in (0, +\infty)$ if $u \in \USC(X)$ is $\F$-subharmonic on $X$. Indeed, for each $x_0 \in \Omega$ and each $\varphi$ which is $C^2$ near $x_0$, one has
	\begin{equation*}\label{TTF}
	\mbox{$\varphi$ is a test function for $u$ at $x_0 \ \ \Leftrightarrow \ \ \varphi - m$  is a test function for $u - m$ at $x_0$.}
	\end{equation*}
\end{rem}

Conditions (T) and (P) imply the following fact.

\begin{rem}[Local existence of smooth subharmonics]\label{rem:LBS}
	For any $\F$ satisfying (T), about each point $x_0 \in \R^n$ one can construct strictly $\F$-subharmonic functions which are smooth (and hence bounded) near $x_0$. Indeed, pick any 2-jet $J_0:= (r_0, p_0, A_0) \in \Int \, \F$, which is nonempty by condition (T). The quadratic polynomial 
	$$
	Q(x):= r_0 + \langle p_0, x - x_0 \rangle + \frac{1}{2} \langle A_0(x - x_0), x - x_0 \rangle,
	$$ 
	which has 2-jet $J^2_{x_0} Q = J_0$ at $x_0$, has 2-jet
	$$
	J^2_x Q = J_0 + (Q(x) - r_0, A_0(x-x_0), 0) \in \Int \, \F ,
	$$
	for each $x$ sufficiently near to $x_0$. If, in addition, $\F$ satisfies property (P) then the coherence property of Remark \ref{rem:coherence} implies that $Q$ is $\F$-subharmonic.
\end{rem}

\begin{rem}[The subequations determined by $\cP, \cN$ and $\cD$]\label{rem:building_blocks} Each of our three monotonicity conditions (P), (N) and (D) on a subequation $\F$ were phrased as $\F + \cM \subset \F$ using the {\em monotonicity sets} $\cM$ defined by
	$$
	 \{0\} \!\times\!  \{0\} \!\times\!  \cP, \ \ \   \cN \!\times \! \{0\} \!\times\!  \{0\} \ \ \text{and} \ \  \{0\} \!\times\!  \cD \!\times \! \{0\} 
	$$
respectively. None of these monotonicity sets are subequations as they violate at least two of the conditions (P), (N) and (T). However, there are three naturally associated subequations for which the sets $\cP, \cN$ and $\cD$ give the {\em active constraint} while the constraint coming from the other factors is silent. Namely,
\begin{equation}\label{PND}
	\cM(\cP):= \R \times \R^n \times \cP, \ \ \cM(\cN):= \cN \times \R^n \times \Symn \ \  \text{and} \ \ \cM(\cD):= \R \times \cD \times \Symn.
\end{equation}
The second subequation, in isolation, is not very interesting since no derivatives are constrained. However, taken all together these three subequations will be used as the basic building blocks in Section \ref{sec:cones}.  It is convenient to refer to $\cP$ as the {\em convexity subequation}, while actually meaning $\cM(\cP) = \R \times \R^n \times \cP$. Similarly, it will be convenient to use $\cP$ in place of the monotonicity set $ \{0\} \!\times\!  \{0\} \!\times\!  \cP$ used in the definition of property (P). Similarly, we will refer to $\cN$ as the {\em negativity subequation} and $\cD$ as the {\em $\cD$-directional subequation} while actually meaning $\cM(\cN)$ and $\cM(\cD)$ as in \eqref{PND}. This convention that involves reduced constraints will be expanded on and formalized in Convention \ref{conv:reduction} and plays a simplifying role in Section \ref{sec:reductions}.
	
	\end{rem}

The importance of properties (N) and (T) become even more transparent in conjunction with Dirichlet duality for subsets $\F \subset \J^2$, which will be recalled in the next section.

\section{Dirichlet Duality and $\F$-subharmonic Functions}\label{subsec:duality}

We begin by recalling the notion of duality introduced in \cite{HL09} and \cite{HL11}.

\begin{defn}\label{defn:duality}
Suppose $\F \subset \J^2 = \R \times \R^n \times\Symn$ is an arbitrary subset.
The {\em Dirichlet dual of $\F$} is defined to be
\begin{equation}\label{Fdual}
\FD \ =\ \sim(-\Int \, \F)\ =\ -(\sim \Int \, \F),
\end{equation}
where $\sim$ is the complement relative to $\J^2$.
\end{defn}

Several elementary properties will be of constant use, and some illustrate the importance of the topological property (T). 

\begin{prop}[Elementary Properties of the Dirichlet Dual]\label{prop:duality} For $\F, \F_1$ and $\F_2$ arbitrary subsets of $\J^2$, one has:
\begin{itemize}
\item[(1)] $\F_1\ \ss\ \F_2 \ \Rightarrow\ \wt{\F_2}\ \ss\ \wt{\F_1}$;
\item[(2)] $\F + J \subset \F \ \Rightarrow \  \wt{\F} + J \subset \wt{\F}$ for each $J = \jet \in \J^2$;
\item[(3)] $\wt{\F + J} =  \wt{\F} - J$ for each  $J = \jet \in \J^2$.
\end{itemize}
By property (2), one has:
\begin{itemize}
	\item[(4)] $\F$ satisfies property (P) $ \ \Rightarrow \ \wt{\F}$ satisfies property (P);
\item[(5)] $\F$ satisfies property (N) $ \ \Rightarrow \ \wt{\F}$ satisfies property (N);
\end{itemize}
The topological property (T) is equivalent to reflexivity; that is, 
\begin{itemize}
	\item[(6)] $\F = \overline{ \Int \F}  \ \iff \  \wt{ \wt{\F}} = \F$.
\end{itemize}
Moreover
\begin{itemize}
	\item[(7)] $\F$ satisfies property (T) $ \ \Rightarrow \ \wt{\F}$ satisfies property (T)
\end{itemize}
and hence for $\F \subset \J^2$ a proper subset one has
\begin{itemize}
	\item[(8)] $\F$ is a subequation constraint set $ \ \Rightarrow \ \wt{\F}$ is a subequation constraint set.
\end{itemize}
\end{prop}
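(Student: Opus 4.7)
The plan is to derive all eight properties by systematic manipulation of the defining identity $\wt{\F} = -\sim(\Int \F)$, relying on three standard set-theoretic facts: (a) $-\sim A = \sim(-A)$ for any $A \subset \J^2$; (b) translation $A \mapsto A + J$ is a homeomorphism of $\J^2$, so it commutes with $\Int(\cdot)$ and $\overline{\,\cdot\,}$ and satisfies $\sim(A + J) = (\sim A) + J$; (c) the complement-interior-closure duality $\Int(\sim A) = \sim \overline{A}$ and $\overline{\sim A} = \sim(\Int A)$. With these three tools in hand, no deep input is needed for any of the eight items, though (6) is slightly more involved than the rest.

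For (1), I would take interiors of the inclusion $\F_1 \subset \F_2$, negate, and then complement, which flips the inclusion direction. For (2), I would first rewrite the hypothesis $\F + J \subset \F$ as $\Int \F + J \subset \Int \F$ (applying interiors and using (b)); then given $y \in \wt{\F} + J$, writing $y = z + J$ with $-z \notin \Int \F$ immediately gives $-y + J = -z \notin \Int \F$, and the contrapositive of $\Int \F + J \subset \Int \F$ applied to $w = -y$ then produces $-y \notin \Int \F$, i.e., $y \in \wt{\F}$. Item (3) is a short calculation: $\wt{\F+J} = -\sim(\Int \F + J) = -((\sim \Int \F) + J) = \wt{\F} - J$, using (b) throughout. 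Properties (4) and (5) then follow immediately from (2) by applying the hypothesis to each $J \in \{0\}\times\{0\}\times\cP$ and each $J \in \cN \times \{0\} \times \{0\}$ respectively.

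The main obstacle will be (6), and (7) falls out from the same computations. For (6), I would compute $\wt{\wt{\F}}$ directly: applying (c), $\Int(-\sim \Int \F) = -\Int(\sim \Int \F) = -\sim \overline{\Int \F}$, so $\wt{\wt{\F}} = -\sim(-\sim \overline{\Int \F}) = \overline{\Int \F}$ after one more use of (a). This identifies the equality $\wt{\wt{\F}} = \F$ literally with the condition $\F = \overline{\Int \F}$, which is (T). For (7), assuming (T) for $\F$, I would compute $\Int \wt{\F} = -\sim \overline{\Int \F} = -\sim \F$, whence $\overline{\Int \wt{\F}} = -\overline{\sim \F} = -\sim(\Int \F) = \wt{\F}$ by (c), giving (T) for $\wt{\F}$.

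Finally, (8) assembles (4), (5), (7), together with the nonemptiness checks: since $\F$ is a subequation, (T) gives $\Int \F \neq \emptyset$ (because $\F$ is nonempty) and $\Int \F \neq \J^2$ (because $\F$ is proper), and hence $\wt{\F} = -\sim \Int \F$ is itself neither empty nor all of $\J^2$. Combined with the monotonicity/topological conclusions in (4), (5), and (7), this yields that $\wt{\F}$ is a subequation constraint set. The only genuinely nontrivial computation is the double-dual identity in (6); everything else is bookkeeping with negations, complements, and the fact that translation is a homeomorphism.
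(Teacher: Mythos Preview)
Your proposal is correct and follows essentially the same route as the paper's own proof: both hinge on the complement--interior--closure identity $\Int(\sim A) = \sim \overline{A}$ (equivalently $\overline{\sim A} = \sim \Int A$) together with the fact that translation and negation are homeomorphisms of $\J^2$, and both compute $\wt{\wt{\F}} = \overline{\Int \F}$ directly to obtain (6). Your write-up is slightly more systematic in packaging the set-theoretic tools as (a)--(c) up front, and you are a bit more careful than the paper in (8) by explicitly verifying that $\wt{\F}$ is nonempty and proper, but the substance of the argument is the same.
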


\begin{proof}
Property (1) follows from the definition \eqref{Fdual} of the dual since 
$\F_1 \subset \F_2 \ \Leftrightarrow \ \Int \, \F_1 \subset \Int \, \F_2$. 

For property (2), note that $\F + J \subset \F$ implies that $\Int \, \F + J$ is an open subset of $\F$ and hence $\Int \, \F + J \subset \Int \, \F$ which yields
$$
	\sim \left( \Int \, \F \right) \subset \,  \sim \left( \Int \, \F + J \right) = \left( \sim  \Int \F \right) + J \ \text{and hence} \ \wt{\F} \subset \wt{\F} - J.
$$
Thus $\wt{\F} \supset \wt{\F} + J$, as desired. The proof of property (3) is similar, using the definitions of $\wt{ \F + J}$ and $\wt{\F}$. Properties (4) and (5) are immediate consequences of (2).

To prove properties (6) and (7), we will use the fact that
\begin{equation}\label{comp_closure}
	\Int \, \cS = \, \sim \overline{\left( \sim \cS \right)} \ \ \text{for any subset} \ \cS \subset \J^2.
\end{equation}
For the equivalence (6), using the definition of the dual twice and canceling the minus signs, one has
\begin{equation}\label{reflexivity1}
	\wt{\wt{\F}} = \, \sim \Int \, \left( \sim \Int \, \F \right).
\end{equation}
Taking $\cS := \, \sim \Int \, \F$ in \eqref{comp_closure} transforms \eqref{reflexivity1} into
$$
	\wt{\wt{\F}}  = \overline{\Int \, \F},
$$
so that $\wt{\wt{\F}} = \F$ if and only if property (T) holds for $\F$.

For property (7), first take $\cS := \wt{\F}$ in \eqref{comp_closure} to obtain
$$
	\Int \, \wt{\F} = \, \sim \overline{\left( \sim \wt{\F} \right)} = \, \sim \overline{\left( - \Int \, \F \right)},
$$
which, by property (T) for $\F$, proves that $\Int \, \wt{\F} = \, \sim \left( - \F \right)$. Therefore 
\begin{equation}\label{PropT1}
\overline{\Int \, \wt{\F}} = \overline{ - \left( \sim \F \right)} = - \overline{ \left( \sim \F \right)}. 
\end{equation}
Next, take $\cS := \F$ in \eqref{comp_closure} to obtain $\overline{\sim \F} = \, \sim \left( \Int \F \right)$; that is,
\begin{equation}\label{PropT2}	
	\overline{- \left( \sim \F \right)} = - \left( \sim \Int \, \F \right) = \wt{\F},
	\end{equation}
Combining \eqref{PropT1} and \eqref{PropT2} yields $\overline{ \Int \, \wt{\F}} = \wt{\F}$. 

Finally, property (8) is an immediate consequence of properties (4), (5) and (7) and  Definition \ref{defn:constraint}. 
\end{proof}

It is worth noting that the reverse implications in (4) and (7) may be false. The following is a simple one dimensional pure second order example. 

\begin{exe}\label{exe:no_inverse} If $\F = \{-1\} \cup [0, + \infty) \subset \Sym1$ then $\wt{\F} = [0, +\infty)$ satisfies properties (P) and (T), but $\F$ satisfies neither.
	\end{exe} 

Similarly, the reverse implication in (5) may be false. Next, we turn to the behavior of duality under unions and intersections. 

\begin{prop}[Duality, unions and intersections]\label{prop:algebra_duals} For $\F_1$ and $\F_2$ arbitrary subsets of $\J^2$ one has
	\begin{equation}\label{dual_intersection}
	\wt{\F_1 \cap \F_2} = \wt{\F}_1 \cup \wt{\F}_2
	\end{equation}
and if, in addition $\Int \, (\F_1 \cup \F_2) = \Int \, F_1 \cup \Int \, \F_2$, then	
	\begin{equation}\label{dual_union}
\wt{\F_1 \cup \F_2} = \wt{\F}_1 \cap \wt{\F}_2
\end{equation}
	\end{prop}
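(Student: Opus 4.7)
My plan is to reduce both identities to the set-theoretic definition $\wt{\F} = \sim(-\Int \F)$ combined with De Morgan's laws and the standard fact that $-(\cdot)$ commutes with $\Int(\cdot)$ and with $\sim(\cdot)$. The whole argument should be a two- or three-line chain of equalities for each formula; the only nontrivial ingredient is how the interior interacts with unions and intersections.

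For the first identity, I will begin by observing that for any two subsets of $\J^2$ one has the topological identity
\[
\Int(\F_1 \cap \F_2) \;=\; \Int \F_1 \cap \Int \F_2,
\]
which is standard and requires no hypothesis (one inclusion is because the interior is open and contained in the intersection; the other because being interior to $\F_1 \cap \F_2$ forces being interior to each factor). Applying this, De Morgan, and the fact that $-(A \cap B) = (-A) \cap (-B)$, I compute
\[
\wt{\F_1 \cap \F_2} \;=\; \sim\bigl(-\Int(\F_1 \cap \F_2)\bigr) \;=\; \sim\bigl((-\Int \F_1)\cap(-\Int \F_2)\bigr) \;=\; \wt{\F}_1 \cup \wt{\F}_2,
\]
giving \eqref{dual_intersection}.

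For the second identity, the key point is that the inclusion $\Int \F_1 \cup \Int \F_2 \subset \Int(\F_1 \cup \F_2)$ is always valid (the left-hand side is open and contained in $\F_1 \cup \F_2$), but the reverse inclusion may fail in general; this is precisely what the added hypothesis $\Int(\F_1 \cup \F_2) = \Int \F_1 \cup \Int \F_2$ supplies. Granted this, an analogous chain of equalities using $-(A \cup B) = (-A) \cup (-B)$ and De Morgan yields
\[
\wt{\F_1 \cup \F_2} \;=\; \sim\bigl(-\Int(\F_1 \cup \F_2)\bigr) \;=\; \sim\bigl((-\Int \F_1) \cup (-\Int \F_2)\bigr) \;=\; \wt{\F}_1 \cap \wt{\F}_2,
\]
establishing \eqref{dual_union}.

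There is no real obstacle beyond bookkeeping: the whole proof is algebraic manipulation on subsets of $\J^2$. The only point worth emphasizing is the asymmetry between the two statements, namely that the interior distributes over finite intersections unconditionally but not over unions, which is why \eqref{dual_union} carries the extra hypothesis while \eqref{dual_intersection} does not. A one-line example such as $\F_1 = (-\infty,0]$ and $\F_2 = [0,+\infty)$ in $\R$ (where both sides of the union identity fail to be equal without the hypothesis) could be included as a remark to justify the need for the assumption, but is not required for the proof itself.
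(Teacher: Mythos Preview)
Your proof is correct and follows essentially the same approach as the paper: both use the standard identity $\Int(\F_1\cap\F_2)=\Int\F_1\cap\Int\F_2$ together with the definition of duality for \eqref{dual_intersection}, and the assumed hypothesis on $\Int(\F_1\cup\F_2)$ for \eqref{dual_union}. The paper phrases the union case via a pointwise $J\in\wt{\F_1\cup\F_2}\iff -J\notin\Int\F_1\cup\Int\F_2$ argument rather than your explicit De Morgan chain, but this is purely presentational.
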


\begin{proof}
	For the dual of an intersection, note that $\Int \, (\F_1 \cap \F_2) = (\Int \, \F_1) \cap (\Int \, \F_2)$ and use the definition of duality to arrive at \eqref{dual_intersection}. For the dual of a union, by the definition of duality and the hypothesis on $\Int \left( \F_1 \cup \F_2 \right)$ one has
	$$
	J \in \wt{ \F_1 \cup \F_2} \ \iff \ -J \not\in \Int \, \left( \F_1 \cup \F_2 \right) = \left( \Int \F_1 \right) \cup \left( \Int \, \F_2 \right),
	$$
	which yields $J \in \wt{F}_1 \cap \wt{F}_2$ as desired.
\end{proof}

It is worth noting that property \eqref{dual_union} can fail if one does not have the hypothesis  $\Int \, (\F_1 \cup \F_2) = \Int \, F_1 \cup \Int \, \F_2$. The following is a simple pure first order one dimensional counterexample.

\begin{exe}\label{exe:prop11} In dimension $n = 1$, consider
	$$
	\F_1 := [-1,0] \quad \text{and} \quad \F_2 := [0,1].
	$$
	One easily verifies that 
	$$ \wt{\F_1 \cup \F_2} = (-\infty, -1] \cup [1, + \infty) \quad \text{while} \quad \wt{F}_1 \cap \wt{F}_2 = \wt{\F_1 \cup \F_2} \cup \{0\}.
	$$
\end{exe}

Additional properties of the dual, the validity of property (T) and algebra of subequations will be addressed in the next section on monotonicity (see Propositions \ref{prop:MMC2}, \ref{prop:subequation_cones}  and \ref{prop:algebra}). 

Notice that property (T) ensures that there is a true duality in the form of the reflexivity property given in (7), and since $\F$ is closed, one has also
\begin{equation}\label{Fclosed}
\partial \F = \F \cap (\sim \Int \, \F) = \F \cap (-\wt{\F}),
\end{equation}
which leads to the following definition.

\begin{defn}\label{defn:FH} Let $\F$ be a subequation constraint set. A function $u$ is said to be {\em $\F$-harmonic in $X$} if 
	\begin{equation}\label{FH}
	u \in \F(X) \quad \text{and} \quad -u \in \wt{\F}(X).
	\end{equation}
	A function $u$ is said to be {\em $\F$-superharmonic in $X$} if  $-u \in \wt{\F}(X)$.
\end{defn}

\begin{rem}\label{rem:Fsuper}
	An equivalent formulation of $u \in \LSC(\Omega)$ being $\F$-superharmonic in $X$ is that for each $x_0 \in \Omega$, one has
	\begin{equation}\label{Fsuper}
		J^{2,-}_{x_0}u \subset \,  \sim \Int \, \F,
	\end{equation}
	where $J^{2,-}_{x_0}u$ is the set of {\em (lower) test jets for $u$ at $x_0$}; that is, the set of 2-jets $J^2_{x_0} \varphi$ with $\varphi$ a {\em $C^2$ (lower) test function for $u$ at $x_0$} ($\varphi$ is $C^2$ near $x_0$ and $u - \varphi$ has a local minimum value of zero in $x_0$). 
	\end{rem}

Notice that $\F$-harmonic functions are automatically continuous and that $\F$-superharmonic functions are automatically lower semi-continuous. Moreover, $u \in C^2(X)$ is $\F$-harmonic on $X$ if and only if
\begin{equation}\label{FHC2}
    J_xu \in \partial \F, \ \ \text{for each} \ x \in X,
\end{equation}
which follows from the coherence property for $\F, \wt{\F}$ of Remark \ref{rem:coherence} coupled with \eqref{FH} and \eqref{Fclosed}.
\begin{exe}[Classical subharmonic and harmonic functions]\label{exe:Harmonic} If $\F:= \R \times \R^n \times \F_{\Delta}$ with $\F_{\Delta} = \{ A \in \Symn: {\rm tr} A \geq 0 \}$, then $\wt{\F} = \F$ is self dual and $\F$-harmonics are characterized by satisfying the mean value property; that is, by having equality in \eqref{Subharmonic_char}.
\end{exe}

Pairs of $\F$ and $\wt{\F}$-subharmonics are the key players in the comparison principle when making use of Dirichlet duality and hence a few additional pairs are worth mentioning now. 

\begin{exe}[Convex and subaffine functions]\label{exe:SA} If $\F:= \R \times \R^n \times \cP$ is the {\em convexity subequation} , then $\wt{\F} = \R \times \R^n \times \cPt$
where $\cPt = \, \sim (-\Int \, \cP))$ is the compliment of the negative definite quadratic forms. Thus $A \in \cPt \iff \lambda_{\rm max}(A) \geq 0$, where $\lambda_{\rm max}$ is the maximal eigenvalue. The class of $\wt{\F}$-subharmonic functions are characterized by: $u \in \USC(X)$ belongs to $\wt{\F}(X)$ if and only if for every open subset $\Omega$ with $\Omega \subset \subset X$ and each affine function $a$, one  has 
\begin{equation}\label{SAC}
  \mbox{$u \leq a$ on $\partial \Omega \Rightarrow u \leq a$ on $\Omega$.}
\end{equation}
Since they are equal (see Proposition 4.5 of \cite{HL09}),
we will denote this space of {\em subaffine functions on $X$} by $\SA(X)$ as well as $\wt{\F}(X)$.
\end{exe}

\begin{exe}[Negative functions]\label{exe:NN} If $\F:= \cN \times \R^n \times \Symn$, then $\wt{\F} = \F$ is self-dual and  zeroth order.
\end{exe}

\begin{conv}\label{conv:reduction} If the constraint $\F$ places restrictions only on the matrix variable; that is, if $\F := \R \times \R^n \times \F_2$ with $\F_2$ satisfying (P): $\F_2 + \cP \subset \F_2$, then ``$u$ is $\F$-subharmonic on $X$'' and ``$u$ is $\F_2$-subharmonic on $X$'' will mean the same thing. For example, we can denote by $\cP(X)$ the convex functions of \eqref{Convex_char} and  $\cPt(X)$ the subaffine functions of Example \ref{exe:SA}. Similarly, the negative functions of Example \ref{exe:NN} can be denoted by $\cN(X)$. In addition, when it is clear from the context, we will refer to the {\em subequation} $\F_2$ meaning $\F$.
\end{conv}

Additional justification for this convention is provided in the discussion before Remark \ref{rem:PSO}. 

\begin{exe}[Negative convex and subaffine-plus functions]\label{exe:SAP} If $\F:= \cN \times \R^n \times \cP$ is the {\em negativity and convexity subequation}, then 
\begin{equation}
\wt{\F} = (\cN \times \R^n \times \Symn) \cup (\R \times \R^n \times \cPt)
\end{equation}
and the dual subharmonics are characterized by 
\begin{equation}\label{SAPC}
  w \in \wt{\F}(X) \iff w^+ := {\rm max}\{w,0\} \in \cPt(X) = \SA(X)
\end{equation}
and referred to as the {\em subaffine-plus functions on $X$.}
This characterization is discussed in Theorem \ref{thm:SAPChar}. Denoting by $\cQ = \cN \times \cP$ and $\cQt:= \{(r,A) \in \R \times \Symn : r \in \cN \ \text{or} \ A \in \cPt \}$, we will also say that the subaffine-plus functions are $\cQt$-subharmonic on $X$ in the spirit of Convention \ref{conv:reduction}.
\end{exe}

Additional examples include $\F = \cM$ where $\cM$ is a {\em monotonicity cone}, as will be discussed in the next section. For future reference we record the following example of duals to the elementary monotonicity cones introduced in \eqref{PND}.

\begin{exe}\label{exe:PND_duals} The duals of
	$$
	\cM(\cP):= \R \times \R^n \times \cP, \ \ \cM(\cN):= \cN \times \R^n \times \Symn \ \  \text{and} \ \ \cM(\cD):= \R \times \cD \times \Symn.
	$$
	are
	\begin{equation}\label{PND_duals}
	\wt{\cM}(\cP) = \R \times \R^n \times \wt{\cP}, \ \ \wt{\cM}(\cN)= \cN \times \R^n \times \Symn \ \  \text{and} \ \ \wt{\cM}(\cD)= \R \times \wt{\cD} \times \Symn
	\end{equation}
	as follows easily from applying the definition \eqref{Fdual} of duality.
	\end{exe}

We conclude this section with a very useful result which illustrates the importance of the negativity condition (N), saying that the comparison principle \eqref{CP} always holds if the function $v$ is $C^2$ \underline{smooth} and \underline{strictly} $\wt{\F}$-subharmonic.

\begin{lem}[Definitional Comparison]\label{lem:DCP} Suppose that $\F$ is a subequation constraint set and that $u \in \USC(X)$.
	\begin{itemize}
		\item[(a)] If $u$ is $\F$-subharmonic on $X$, then the following form of the comparison principle holds for each bounded domain $\Omega \subset \subset X$:
		\begin{equation}\label{DCP}
		\left\{ \begin{array}{c}
		\mbox{ $u + v \leq 0$ on $\partial \Omega \Longrightarrow u + v \leq 0$ on $\Omega$} \\
		 \\
		 	\mbox{if $v \in \USC(\overline{\Omega}) \cap C^2(\Omega)$ is strictly $\wt{\F}$-subharmonic on $X$.} \end{array} \right.
		\end{equation}
		With $w:= -v$ one has the equivalent statement
		\begin{equation}\label{DCP2}
			\left\{ \begin{array}{c} \mbox{ $u \leq w$ on $\partial \Omega \Longrightarrow u \leq w$ on $\Omega$} \\
			\\
			\mbox{if $w \in \LSC(\overline{\Omega}) \cap C^2(\Omega)$ with $J_x^2 w \not\in \F$ for each $x \in \Omega$.} \end{array} \right.
			\end{equation}
(That is, for $w$ which are regular and strictly $\F$-superharmonic in $X$.)
		\item[(b)] Conversely, suppose that for each $x \in X$ there is a neighborhood $\Omega \subset \subset X$ where the form of comparison of part (a) holds. Then $u$ is $\F$-subharmonic on $X$.
	\end{itemize} 
	
\end{lem}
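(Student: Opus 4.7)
The two formulations in (a) are equivalent: for a $C^2$ function $v$, strict $\wt{\F}$-subharmonicity means $J^2_x v \in \Int \wt{\F}$ on $\Omega$, and from $\F = \overline{\Int \F}$ together with the identity $\Int S = \,\sim\overline{\sim S}$ one gets $\Int \wt{\F} = \,\sim(-\F)$; hence $J^2_x v \in \Int \wt{\F}$ iff $J^2_x w = -J^2_x v \notin \F$, where $w := -v$, while $u + v \leq 0$ becomes $u \leq w$. I will carry out both halves using the $w$-formulation.

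\textbf{Part (a).} I would argue by contradiction. Suppose $M := \max_{\overline{\Omega}}(u - w) > 0$; since $u - w \in \USC(\overline{\Omega})$ is non-positive on $\partial \Omega$ and $\overline{\Omega}$ is compact, this maximum is attained at some interior point $x_0 \in \Omega$. Then $\varphi := w + M$ is $C^2$ near $x_0$, satisfies $u \leq \varphi$ near $x_0$ and $u(x_0) = \varphi(x_0)$, so $\varphi$ is an upper test function for $u$ at $x_0$. The $\F$-subharmonicity of $u$ gives $J^2_{x_0} \varphi \in \F$, and since
$$ J^2_{x_0} \varphi \ =\ J^2_{x_0} w + (M, 0, 0), \qquad M > 0, $$
the negativity property (N), applied with $(-M, 0, 0) \in \cN \times \{0\} \times \{0\}$, forces $J^2_{x_0} w \in \F$, contradicting the standing hypothesis that $J^2_x w \notin \F$ on $\Omega$.

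\textbf{Part (b).} I would argue contrapositively: assuming $u$ fails to be $\F$-subharmonic at some $x_0 \in X$, I would construct a ball around $x_0$ on which the comparison hypothesis is violated. By the Bad Test Jet Lemma \ref{lem:nonFSH}, there exist $\veps > 0$ and $J = (r, p, A) \notin \F$ such that the quadratic $Q_J$ with 2-jet $J$ at $x_0$ satisfies $u(x) - Q_J(x) \leq -\veps |x - x_0|^2$ near $x_0$, with equality at $x_0$. Consider the smooth candidate
$$ w(x) \ :=\ Q_J(x) - \tfrac{\veps}{2}|x - x_0|^2 - \delta, $$
whose 2-jet at $x_0$ is $(r - \delta,\, p,\, A - \veps I)$. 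Property (P) forbids $(r, p, A - \veps I) \in \F$, since otherwise adding $\veps I$ would place $(r, p, A)$ in $\F$. Because $\sim \F$ is open and $x \mapsto J^2_x w$ is continuous, I can first fix $\rho > 0$ small enough that $B_\rho(x_0) \subset\subset X$, the strict contact inequality above holds on $\overline{B_\rho(x_0)}$, the local comparison hypothesis of (b) applies on $B_\rho(x_0)$, and $J^2_x (w|_{\delta=0})$ remains inside a fixed open neighborhood of $(r, p, A - \veps I)$ disjoint from $\F$; then choose $\delta > 0$ small enough that $J^2_x w \notin \F$ for every $x \in B_\rho(x_0)$ and simultaneously $\delta < \veps \rho^2 / 2$. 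On $\partial B_\rho(x_0)$ one has
$$ u(x) - w(x) \ \leq\ -\veps \rho^2 + \tfrac{\veps \rho^2}{2} + \delta \ <\ 0, $$
so the hypothesis yields $u \leq w$ on $B_\rho(x_0)$; but $u(x_0) = r > r - \delta = w(x_0)$, a contradiction.

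\textbf{Main obstacle.} Part (a) is a direct one-jet computation once the test function is identified as $\varphi = w + M$. The real work lies in part (b), where one must simultaneously arrange two competing smallness conditions: $\rho$ small enough for continuity to drive $J^2_x w$ into the open set $\sim \F$, yet $\delta > 0$ so small that the boundary gain $\tfrac{1}{2}\veps \rho^2$ still dominates the downward shift $\delta$. The two structural properties (P) and (N) enter precisely once each and are both indispensable: (P) in (b) to guarantee $(r, p, A - \veps I) \notin \F$ after lowering $A$, and (N) in (a) to undo the $(+M, 0, 0)$ shift in the jet of $\varphi$.
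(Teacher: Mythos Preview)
Your proof is correct and follows essentially the same route as the paper's. The only notable difference is that in part (b) the paper works directly with $w = Q_J - \veps\rho^2$ (equivalently $v = -Q_J + \veps\rho^2$), using only that $-J \in -(\sim\F) = \Int\wt{\F}$ is open to propagate strict $\wt{\F}$-subharmonicity to a small ball; this avoids both the extra quadratic correction $-\tfrac{\veps}{2}|x-x_0|^2$ and the second parameter $\delta$, and in particular does not invoke property (P), so your closing claim that (P) is indispensable in (b) is a slight overstatement.
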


\begin{proof} Suppose that \eqref{DCP2} fails for some domain $\Omega \subset \subset X$ and some regular strictly $\F$-superharmonic function $w$. Then $u - w \in \USC(\overline{\Omega})$ will have a positive maximum value $m > 0$ at an interior point $x_0 \in \Omega$ and hence $\varphi := w + m$ is $C^2$ near $x_0$ and satisfies
$$
   \mbox{ $u - \varphi \leq 0$ \ near $x_0$ \ with equality at $x_0$. }
   $$
Since $u$ is $\F$-subharmonic at $x_0$, by Definition \ref{defn:FSH} one has
$$
    J^2_{x_0}(w + m) = J^2_{x_0}w  + (m,0,0) \in \F,
$$
which contradicts property (N) since $m > 0$ and $J^2_{x_0}w  \not\in \F$. This completes the proof of part (a).

For part (b), suppose that $u$ fails to be $\F$-subharmonic at some $x_0 \in X$. By the Bad Test Jet Lemma \ref{lem:nonFSH} there exist $\rho, \veps > 0$ and $J = (u(x_0), p, A) \not\in \F$ such that
\begin{equation}\label{BTJ}
\mbox{$u(x)-Q_J(x) \leq \ -\veps|x-x_0|^2$ \ \ on $\overline{B}_{\rho}(x_0)$, with equality at $x_0$,} 
\end{equation}
where $Q_J$ is the quadratic with $J^2_{x_0} Q_J = J$. The function $-Q_J$ is smooth and strictly $\wt{\F}$-subharmonic in $x_0$ since $J^2_{x_0}(-Q_J) = -J \in - (\sim {\F}) = \Int \, \wt{\F}$.
Since $\Int \, \wt{\F}$ is open, by choosing $\rho > 0$ sufficiently small, the function $v:= -Q_J + \veps \rho^2$ will be strictly $\wt{\F}$-subharmonic in $B_{\rho}(x_0)$. Moreover, reducing $\rho$ preserves the validity of \eqref{BTJ} and hence the comparison \eqref{DCP} fails for $v$ on $\overline{B}_{\rho}(x_0)$ since 
$$
u + v = 0 \ \ \text{on} \ \ \partial B_{\rho}(x_0) \ \ \text{and} \ \ u(x_0) + v(x_0) = \veps \rho^2 > 0.
$$
\end{proof}

\begin{rem}\label{rem:DCP1}
Note that the above proof is very general. The result holds verbatim on any manifold $X$ and any subequation $\F$ on $X$. More precisely, it holds if $\F$ satifies properties (P), (N) and $\F = \overline{\Int \, \F}$. Replacing $\F$ by its dual $\wt{\F}$, since $\wt{\wt{\F}} = \F$ (the proof of reflexivity makes use of the defintion of duality and \eqref{comp_closure}, whihch holds in any topological space), one also has the comparison principle \eqref{DCP} if the function $u$ is $C^2$ \underline{smooth} and \underline{strictly} $\F$-subharmonic and $v \in \USC(X)$ is $\wt{\F}$-subharmonic.
\end{rem}

We note that Lemma \ref{lem:DCP} will be useful in the proof of the {\em Zero Maximum Principle} for $\cMt(X)$-subharmonic functions of section \ref{sec:ZMP} where $\cM$ is a {\em monotonicity cone}, which is the subject of the next section.

We also note that our basic ``tool kit'' of viscosity solution techniques include, amonsgst other things the bad test jet lemma (Lemma \ref{lem:nonFSH}), the almost everywhere theorem (Lemma \ref{lem:AET}) and definitional comparison (Lemma \ref{lem:DCP}).

We conclude this section by adding  a useful classical computational lemma to our tool kit, for the reader's convenience and future reference. We combine the statement and the proofs as a remark. First, we need some notation. For $x \neq 0$ in $\R^n$, we denote orthogonal projection onto the one dimensional linear subspace $[x]$ through $x$ by 
\begin{equation}\label{projection_x}
P_{x} := \frac{1}{|x|^2} x \otimes x
\end{equation}
and similarly by
\begin{equation}\label{projection_xperp}
P_{x^{\perp}} := I - \frac{1}{|x|^2} x \otimes x = I - P_x,
\end{equation}
the projection onto the orthogonal complement $[x]^{\perp}$.

\begin{rem}[Radial calculations and examples]\label{rem:radial_calculus} First note that for $C^2$ functions $v: \Omega \to \R$ and  $\psi: v(\Omega) \subset \R\to \R$, then the chain rule applied to $u = \psi \circ v \in C^2(\Omega)$ gives (for each $x \in \Omega$):
	\begin{equation}\label{chain rule_1}
Du(x) = D_x(\psi(v(x)) = \psi'(v(x)) Dv(x)
\end{equation}
and
\begin{equation}\label{chain rule_2}
D^2u(x) =  D_x^2(\psi(v(x)) = \psi'(v(x)) D^2v(x) + \psi^{\prime \prime}(v(x)) Dv(x) \otimes Dv(x).
\end{equation}	
In particular, with $v(x) = |x|$ and $\psi$ of class $C^2$ on an interval in $(0,+\infty)$, since
	\begin{equation}\label{radial_calc_2}
Dv(x) = D_x(|x|) = \frac{x}{|x|} \quad \text{and} \quad D^2v(x) = D^2_x(|x|) = \frac{1}{|x|} P_{x^{\perp}},
\end{equation}
the radial function defined by $u(x) := \psi(|x|)$ has reduced $2$-jet $(Du(x), D^2u(x))$ given by
	\begin{equation}\label{radial_calc_3}
D_x \psi(|x|) = \psi'(|x|) \frac{x}{|x|} \quad \text{and} \quad 		D^2_x\psi(|x|) = \frac{\psi'(|x|)}{|x|} P_{x^{\perp}} + \psi^{\prime \prime}(|x|) P_x,
\end{equation}
on the corresponding annular domain. In addition to the example $\psi(t):=t$ in \eqref{radial_calc_2}, here are some other useful examples. 

\noindent{\bf Example 1:} For $u(x):= \frac{1}{2} |x|^2$; that is, $\psi(t):= \frac{1}{2} t^2$, one has
	\begin{equation}\label{radial_calc_E1}
(Du(x), D^2u(x)) = \left( x, I \right).
\end{equation}
The these two examples \eqref{radial_calc_2} and \eqref{radial_calc_E1} generalize from $m = 0$ and $m = 1$ to

\noindent{\bf Example 2:} For $u(x):= \frac{1}{m + 1} |x|^{m + 1}$; that is, $\psi(t):= \frac{1}{m+1} t^{m+1}$, with $m \in \R$, $m \neq 1$, one has
\begin{equation}\label{radial_calc_E2}
	(Du(x), D^2u(x)) = |x|^{m-1} \left( x, P_{x^{\perp}} + m P_x \right) =  |x|^{m-1} \left(x, I + (m-1)P_x \right).
\end{equation}

\noindent{\bf Example 3:} For $u(x):= e^{\alpha |x|^2/2}$; that is, $\psi(t):= e^{\alpha t^2/2}$, with $\alpha \in \R$, one has
\begin{equation}\label{radial_calc_E3}
(Du(x), D^2u(x)) = \alpha e^{\alpha \frac{|x|^2}{2}} \left( x, P_{x^{\perp}} + (\alpha + 1) P_x \right) =  \alpha e^{\alpha \frac{|x|^2}{2}} \left(x, I + \alpha P_x \right).
\end{equation}
These two examples $\frac{1}{m + 1} |x|^{m + 1}$ and $e^{\alpha |x|^2/2}$ can usually be used interchangeably, since up to a positve scalar multiple, the reduced $2$-jet is $\left(x, I + (m-1)P_x \right)$ or $\left(x, I + \alpha P_x \right)$.

\noindent{\bf Example 4:} For $u(x):= e^{\alpha |x|}$; that is, $\psi(t):= e^{\alpha t}$ with $\alpha \in \R$, one has
\begin{eqnarray}\label{radial_calc_E4}
(Du(x), D^2u(x)) &=& \frac{\alpha}{|x|} e^{\alpha |x|}  \left(x, P_{x^{\perp}} + \alpha|x| P_x \right)\\
& = &\frac{\alpha}{|x|} e^{\alpha |x|}  \left(x, I + (\alpha|x| - 1) P_x \right).
\end{eqnarray}

	\end{rem}

\changelocaltocdepth{2}

\section{Monotonicity Cones for Constant Coefficient Subequations}\label{sec:cones}

\begin{defn}\label{defn:M_for_F} Given a subset $\F \subset \J^2$, a set $\cM \subset \J^2$ is called a {\em monotonicity set for $\F$} and we say {\em $\F$ is $\cM$-monotone} if
	\begin{equation}\label{M_monotone}
	\F + \cM \subset \F.
	\end{equation}
	Since
	\begin{equation}\label{PN_monotone}
	\mbox{ $\F$ satisfies (P) and (N) $\iff \F$ is $\cN \times \{0\} \times \cP$-monotone, }
	\end{equation}
	the set
	\begin{equation}\label{M0}
	\cM_0 := \cN \times \{0\} \times \cP.
	\end{equation}
	will be called the {\em minimal monotonicity set for all subequation constraint sets $\F$}.
\end{defn}

This set is a closed convex cone which satisfies properties (P) and (N), but it is \underline{not} a subequation since property (T) fails, or equivalently $\Int \, \cM_0 = \emptyset$.

A larger monotonicity set $\cM$ for $\F$ provides more information about $\F$. Note that the sum $\cM_1 + \cM_2$ of two monotonicity sets for $\F$ is again amonotonicity set for $\F$. Hence, if $\F$ is a subequation, we can always add $\cM_0$ to $\cM$, since $\F + \cM_0 \subset \F$. Also, we can replace $\cM$ by its closure (assuming that $\F$ is closed) and the resulting set will still be a monotonicity set for $\F$. Said differently, we need only consider monotonicity sets $\cM$ for subsequations $\F$ with the properties:
\begin{equation}\label{M_requests}
\mbox{ (i) $\cM_0 \subset \cM$ ; \ \ (ii) $\cM + \cM \subset \cM$ ; \ \ and (iii) $\cM$ is closed.}
\end{equation}
For simplicity, in this paper, we restrict attention to sets $\cM$ which are {\em cones}; that is,
\begin{equation}\label{M_cone}
	t \cM \subset \cM \ \ \text{for each} \ t \geq 0.
\end{equation}
All cones are taken to have vertex at the origin, unless stated otherwise. Note that
\begin{equation}\label{cone_sums}
\mbox{ if $\cM$ is a cone then $\cM + \cM$ is a convex cone.}
\end{equation}
As a consequence of \eqref{M_requests} and \eqref{M_cone}, if $\cM$ is a monotonicity cone for a subequation $\F$ then $\overline{{\rm C}(\cM)}$, the closed convex cone hull of $\cM$
is also a monotonicity cone for $\F$. Said differently, given a subsequation $\F$ a monotonicity cone $\cM$ for $\F$ can always be enlarged to one where
\begin{equation}\label{M_requests2}
\mbox{ (i) $\cM_0 \subset \cM$  \quad and \quad (ii)  $\cM$ is closed convex cone.}
\end{equation}

\subsection{The maximal monotonicity cone subequation}\label{subsec:MMC}

Among all sets $\cM$ which are cones and for which a given subequation $\F$ is $\cM$-monotone, there is a unique largest or maximal monotonicity cone for $\F$, defined as follows. 

\begin{defn}\label{defn:MMC} Suppose that $\F \subset \J^2$ is a subequation. Associated to $\F$ is its {\em maximal monotonicity cone} $\cM_{\F}$ defined by
	\begin{equation}\label{MMC}
	\cM_{\F} := \{ J \in \J^2: \ \F + tJ \subset \F \ \ \text{for each} \ t \in [0,1] \}.
	\end{equation}
	\end{defn}
	This invariant satisfies \eqref{M_requests2} and is characterized in the following result.
	
\begin{prop}\label{prop:MMC} Suppose that $\F \subset \J^2$ is a subequation. Its maximal monotonicity cone $\cM_{\F}$ is a closed convex cone containing the minimal monotonicity set $\cM_0 = \cN \times \{0\} \times \cP$ and hence satisfies properties (N) and (P). Therefore
\begin{equation}\label{MF_T} \mbox{ $\cM_{\F}$ is a subequation \ \ $\Leftrightarrow$ \ \   $\cM_{\F}$ satisfies the topological property (T).}
\end{equation}
 Moreover, since $\cM_{\F}$ is a closed convex cone, (T) is satisfied if and only if $\Int \, \cM_{\F} \neq \emptyset$. Finally, $\cM_{\F}$ is maximal in the sense that if $\cM$ is a cone and is a monotonicity set for $\F$, then 
	\begin{equation}\label{MMC_maximality}
		\cM \subset \cM_{\F}.
	\end{equation}
	\end{prop}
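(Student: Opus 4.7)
\textbf{Proof plan for Proposition \ref{prop:MMC}.} The strategy is to verify each claim directly from the definition \eqref{MMC}, exploiting only closedness of $\F$, the monotonicity assumptions (P) and (N), and the elementary cone/convex-set algebra. I would organize the argument in five short steps, followed by the maximality assertion.

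First I would show that $\cM_\F$ is a cone. If $J\in\cM_\F$ then taking $t=1$ gives $\F+J\subset\F$, and iterating yields $\F+kJ\subset\F$ for every $k\in\N$; combining this with the hypothesis $\F+tJ\subset\F$ on $[0,1]$ and writing an arbitrary $s\ge 0$ as $s=k+t$ with $k\in\N$ and $t\in[0,1]$ gives $\F+sJ\subset \F+tJ\subset\F$, so $sJ\in\cM_\F$. Next I would establish closure under addition: for $J_1,J_2\in\cM_\F$ and $t\in[0,1]$,
\[
\F+t(J_1+J_2)=(\F+tJ_1)+tJ_2\subset\F+tJ_2\subset\F,
\]
which, combined with the cone property, gives convexity. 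Closedness is a direct limit argument: if $J_k\to J$ with $J_k\in\cM_\F$, then for any $J'\in\F$ and $t\in[0,1]$ we have $J'+tJ_k\in\F$, and passing to the limit in the closed set $\F$ gives $J'+tJ\in\F$.

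For the inclusion $\cM_0\subset\cM_\F$, I would observe that $\cM_0=\cN\times\{0\}\times\cP$ is stable under multiplication by $t\in[0,1]$ (since $t\cN\subset\cN$ and $t\cP\subset\cP$), so for any $J\in\cM_0$ and $t\in[0,1]$ one has $tJ\in\cM_0$, and then the hypothesis that $\F$ satisfies (P) and (N), reformulated as $\F+\cM_0\subset\F$ via \eqref{PN_monotone}, yields $\F+tJ\subset\F$, i.e.\ $J\in\cM_\F$. From $\cM_0\subset\cM_\F$ and the fact that $\cM_\F$ is closed under addition, we get $\cM_\F+\cM_0\subset\cM_\F$, which is exactly (P) and (N) for $\cM_\F$.

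The equivalence \eqref{MF_T} then follows from Definition \ref{defn:constraint}: $\cM_\F$ is already closed and satisfies (P), (N), and it is a proper nonempty subset of $\J^2$ (it contains $\cM_0\neq\emptyset$; and if $\cM_\F=\J^2$, picking any $J_0\in\F$ and any $J\in\J^2$ would give $J_0+J\in\F$, forcing $\F=\J^2$ and contradicting the hypothesis that $\F$ is a proper subset). Hence being a subequation reduces to (T), which by \eqref{convex_T} is equivalent to $\Int\,\cM_\F\neq\emptyset$ since $\cM_\F$ is a closed convex cone. Finally, for maximality \eqref{MMC_maximality}, if $\cM$ is any cone with $\F+\cM\subset\F$ and $J\in\cM$, then $tJ\in\cM$ for every $t\ge 0$, so $\F+tJ\subset\F$ for all $t\in[0,1]$ and in particular $J\in\cM_\F$.

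There is no real obstacle here; the only point requiring a moment of care is the passage from the interval $[0,1]$ in the definition to the full ray $[0,\infty)$, which is handled by the iteration $\F+kJ\subset\F$ noted at the outset. Everything else is formal manipulation of closed sets, cones, and the defining monotonicities.
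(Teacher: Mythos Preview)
Your proof is correct and follows essentially the same approach as the paper's: both establish closedness from closedness of $\F$, derive the cone property by iterating $\F+J\subset\F$ to pass from the interval $[0,1]$ to all of $[0,\infty)$, verify $\cM_\F+\cM_\F\subset\cM_\F$ directly, and read off convexity, (P), (N), and maximality. You supply more detail than the paper does---in particular an explicit check that $\cM_\F$ is a proper subset of $\J^2$ and a spelled-out verification of $\cM_0\subset\cM_\F$---but the underlying argument is the same.
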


\begin{proof}
	It is easy to see that $\F$ is closed implies that $\cM_{\F}$ is closed. By \eqref{PN_monotone} $\F$ satisfies (P) and (N) if and only if $\cN \times \{0\} \times \cP \subset \cM_{\F}$. Obviously, $\cM_{\F} + \cM_{\F} \subset \cM_{\F}$ and hence if $J \in \cM_{\F}$ then each integer multiple $kJ \in \cM_{\F}$. By the definition of $\cM_{\F}$ this implies that $tJ \in \cM_{\F}$ for each $t \in [0, k]$, which proves that $\cM_{\F}$ is a cone. Since $\cM_{\F} + \cM_{\F} \subset \cM_{\F}$, this imples that $\cM_{\F}$ is a convex cone. 
	
	The statements regarding property (T) are straightforward. The final maximality claim for $\cM_{\F}$ is immediate from the definition of $\cM_{\F}$.
\end{proof}

\begin{rem}\label{rem:MMC} In light of Proposition \ref{prop:MMC}, given a subsequation $\F \subset \J^2$, the question of whether or not $\F$ has a monotonicity cone which is a subequation reduces to (is equivalent to) the question:
	\begin{equation}\label{MMC_rem}
\mbox{	Does the maximal monotonicity cone $\cM_{\F}$ for $\F$ have interior?}
	\end{equation}
As we will see in what follows, in order to apply the techniques of this paper to a subequation $\F$, it must have monotonicity $\cM$ which is a subequation.
	
	\end{rem}

Some of the additional properties of the maximal monotonicity cone $\cM_{\F}$ of a subequation $\F \subset \J^2$ are as follows. 

\begin{prop}\label{prop:MMC2}
	\begin{itemize}
		\item[(a)] If a subequation $\F$ is a convex cone, then $\cM_{\F} = \F$. In particular, if $\cM_{\F}$ is also a subequation, then $\cM_{\cM_{\F}} = \cM_{\F}$.
		\item[(b)] $\F$ and its dual $\wt{\F}$ have the same maximal monotonicity cones; that is, $\cM_{\wt{\F}} = \cM_{\F}$.
	\end{itemize}
	\end{prop}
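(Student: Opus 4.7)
My plan is to handle parts (a) and (b) separately, both by using just the definition of $\cM_{\F}$ from \eqref{MMC} together with the elementary duality properties (2), (6), (7) of Proposition \ref{prop:duality}. Neither step should be hard; the content is really about unpacking definitions, and the only subtle point is remembering that a convex cone with vertex at the origin automatically contains $0$.

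For part (a), suppose $\F$ is a subequation that also happens to be a convex cone (with vertex at $0$). I will show two containments. First, $\F \subset \cM_{\F}$: given $J_0 \in \F$ fixed and $J \in \F$ arbitrary, I want $J_0 + tJ \in \F$ for all $t \in [0,1]$. Rewrite
\[
J_0 + tJ \;=\; (1+t)\Bigl(\tfrac{1}{1+t} J_0 + \tfrac{t}{1+t} J\Bigr),
\]
so the parenthesized vector lies in $\F$ by convexity of $\F$, and the nonnegative scalar $(1+t)$ keeps us in $\F$ because $\F$ is a cone. Hence $J \in \cM_{\F}$. Second, $\cM_{\F} \subset \F$: since $\F$ is a convex cone with vertex $0$, we have $0 \in \F$, so for any $J \in \cM_{\F}$ taking $t = 1$ and adding to $0 \in \F$ gives $J = 0 + J \in \F$. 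The ``in particular'' clause is then immediate: if $\cM_{\F}$ is itself a subequation, it is automatically a closed convex cone by Proposition \ref{prop:MMC}, so applying the statement just proved to $\cM_{\F}$ in place of $\F$ gives $\cM_{\cM_{\F}} = \cM_{\F}$.

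For part (b), the key input is Proposition \ref{prop:duality}(2), which says that any one-point translation invariance passes to the dual: $\F + J \subset \F \Rightarrow \wt{\F} + J \subset \wt{\F}$. I will feed this the family of jets $tJ$ with $t \in [0,1]$. If $J \in \cM_{\F}$, then $\F + tJ \subset \F$ for every $t \in [0,1]$, so by (2) $\wt{\F} + tJ \subset \wt{\F}$ for every such $t$, which is exactly $J \in \cM_{\wt{\F}}$. Thus $\cM_{\F} \subset \cM_{\wt{\F}}$. For the reverse inclusion I apply the same implication to the subequation $\wt{\F}$ and then use reflexivity $\wt{\wt{\F}} = \F$ from Proposition \ref{prop:duality}(6), which is available because $\F$ is a subequation (and so $\wt{\F}$ is a subequation too by Proposition \ref{prop:duality}(8)). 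This gives $\cM_{\wt{\F}} \subset \cM_{\wt{\wt{\F}}} = \cM_{\F}$.

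I don't anticipate a real obstacle. The only place where one has to be slightly careful is in (a): using the convex cone structure at two different scales (convex combination, then nonnegative rescaling), and invoking that $0 \in \F$ to recover $\cM_{\F} \subset \F$. In (b), the only substantive ingredient beyond the definition of $\cM$ is Proposition \ref{prop:duality}(2), and the symmetry of the argument is enabled entirely by reflexivity \ref{prop:duality}(6), which is exactly the role of the topological condition (T).
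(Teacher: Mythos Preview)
Your proof is correct and follows essentially the same route as the paper. For (b) your argument is identical to the paper's: use Proposition~\ref{prop:duality}(2) for one inclusion and reflexivity (6) for the other. For (a) the paper simply invokes the elementary fact that a convex cone $\mathcal{C}$ satisfies $\mathcal{C}+v\subset\mathcal{C}\iff v\in\mathcal{C}$; your explicit convex-combination-then-rescale computation is exactly a proof of that fact, so the difference is only in how much detail is spelled out.
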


\begin{proof} Part (a) follows from the fact that if $\mathcal{C}$ is any convex cone in $\R^N$, then for each $v \in \R^N$
	$$
	\mathcal{C} + v \subset \mathcal{C} \ \ \Leftrightarrow \ \ v \in \mathcal{C}.
	$$
	Part (b) follows from the fact for any subequation $\F$, one has: 
	\begin{equation}\label{J_invariance}
	\text{for each} \ J \in \J^2: \ \ \F + J \subset \F \ \ \Leftrightarrow \ \ \wt{\F} + J \subset \wt{\F}.
	\end{equation}
	Indeed, the implication $(\Rightarrow)$ is part (2) of Proposition \ref{prop:duality}. By the same property, if $\wt{\F} + J \subset \wt{\F}$, then $\wt{\wt{\F}} + J \subset \wt{\wt{\F}}$, but $\wt{\wt{\F}} = \F$ by the reflexivity in part (6) of Proposition \ref{prop:duality}, since any subequation $\F$ satisfies property (T).
	\end{proof}

We record the following basic definitions. 

\begin{defn}\label{defn:MCS} A closed convex cone  $\cM \subset \J^2$ with vertex at the origin that contains the minimal monotonicity set $\cM_0 := \cN \times \{0\} \times \cP$ will be referred to as a {\em monotonicity cone}. A monotonicity cone $\cM$ which satisfies property (T) will be called a {\em monotonicity cone subequation}. 
\end{defn}

In this case,  for any closed set  $\F \subset \J$ which is $\cM$-monotone, property (T) for $\F$ follows.

\begin{prop}\label{prop:subequation_cones} Let $\cM$ be a monotonicity cone subequation. Then, for any closed subset $\F \subset \J^2$ which is $\cM$-monotone one has the following set identities
\begin{equation}\label{SI1}
	 \F + \Int \, \cM = \Int \, \F
\end{equation}
and
\begin{equation}\label{SI2}
	\F = \overline{\Int \, \F} ,
\end{equation}
and hence $\F$ satisfies properties (P), (N) and (T), so that $\F$ is a subequation constraint set (if $\F \neq \emptyset$ and  $\F \neq \J^2)$. 
\end{prop}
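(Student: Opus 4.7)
My plan is to prove (SI1) first, then deduce (SI2) from (SI1), and finally read off properties (P), (N), (T) from the hypothesis $\cM_0 \subset \cM$ together with (SI2). The key observation throughout is that because $\cM$ is a cone with vertex at $0$, scalar multiplication by any $t > 0$ is a self-homeomorphism of $\J^2$ carrying $\cM$ onto itself, hence carrying $\Int \, \cM$ onto itself; in particular, for any $J_0 \in \Int \, \cM$ (which exists since $\cM$ satisfies (T)) and any $t > 0$, we have $tJ_0 \in \Int \, \cM$.

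For the forward inclusion in (SI1), I would take $J_1 \in \F$ and $J_2 \in \Int \, \cM$, choose an open neighborhood $U \subset \cM$ of $J_2$, and observe that $J_1 + U$ is an open neighborhood of $J_1 + J_2$ contained in $\F + \cM \subset \F$, so $J_1 + J_2 \in \Int \, \F$. For the reverse inclusion, given $J \in \Int \, \F$, I would fix any $J_0 \in \Int \, \cM$. Since $\Int \, \F$ is open, the continuous path $t \mapsto J - tJ_0$ lies in $\Int \, \F$ (hence in $\F$) for all sufficiently small $t > 0$. Fixing such a $t$, the decomposition
\[
J \ =\ (J - tJ_0) \ +\ tJ_0
\]
exhibits $J$ as an element of $\F + \Int \, \cM$, since $tJ_0 \in \Int \, \cM$ by the cone observation above.

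For (SI2), the inclusion $\overline{\Int \, \F} \subset \F$ is free because $\F$ is closed by hypothesis. For the reverse inclusion, let $J \in \F$ and fix $J_0 \in \Int \, \cM$. Then by (SI1) we have $J + tJ_0 \in \F + \Int \, \cM = \Int \, \F$ for every $t > 0$, and letting $t \to 0^+$ gives $J \in \overline{\Int \, \F}$. This proves (T) for $\F$.

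Finally, (P) and (N) follow at once from the nesting $\cN_0,\, \cP_0 \subset \cM_0 \subset \cM$ (using $0 \in \cP$ and $0 \in \cN$), since then $\F + \cP_0 \subset \F + \cM \subset \F$ and likewise $\F + \cN_0 \subset \F$. Together with (T) established above, and the standing hypothesis that $\F$ is neither empty nor all of $\J^2$, this yields that $\F$ is a subequation constraint set in the sense of Definition \ref{defn:constraint}. I do not expect any serious obstacle: the main subtlety is simply the cone invariance of $\Int \, \cM$ under positive scalings, which is what makes the ``push off the boundary by $tJ_0$'' trick work in both directions.
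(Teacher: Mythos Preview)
Your proof is correct and follows essentially the same route as the paper's: the forward inclusion in (SI1) via openness of $\F+\Int\,\cM$ and $\cM$-monotonicity, the reverse inclusion via the decomposition $J=(J-\veps J_0)+\veps J_0$ with $J_0\in\Int\,\cM$, then (SI2) by approximating $J\in\F$ with $J+\veps J_0\in\Int\,\F$, and finally (P), (N) from $\cM_0\subset\cM$. Your explicit remark that positive scalings preserve $\Int\,\cM$ is exactly what the paper invokes when it says ``$\Int\,\cM$ is a cone.''
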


\begin{proof}
	For the identity \eqref{SI1}, first note that  $\F + \Int \, \cM$ is an open set being the union over $J \in \F$ of the open sets $J + \Int \, \cM$. By monotonicity, this open set is contained in $\F$ and hence are contained in $\Int \, \F$. For the reverse inclusion, if $J \in \Int \, \F$ then, picking $J_0 \in \Int \, \cM$, one has
	$$
		J = (J - \veps J_0) + \veps J_0 \in \Int \, \F + \Int \, \cM
	$$
	if $\veps > 0$ is chosen sufficiently small since $\Int \, \F$ is open and $\Int \, \cM$ is a cone.
	 
	For the identity \eqref{SI2}, with $J_0 \in \Int \, \cM$, each $J \in \F$ can be approximated by $J + \veps J_0$, which belongs to $\Int \, \F$ by the identity  \eqref{SI1}.
	
	The subequation claim for $\F$ is immediate, since \eqref{SI2} is the topological property (T) and the properties (P) and (N) follow from the $\cM_0$-monotonicity of $\F$.
\end{proof}

As a corollary of the fact that monotonicity implies the topological property (T), we have the following result. 

\begin{prop}[Intersections and unions]\label{prop:algebra} 	Suppose that $\{ \F_{\sigma} : \sigma \in \Sigma \}$ is an arbitrary family of subequations which are all $\cM$-monotone for a given monotonicity subequation cone $\cM$. Then one has the following statements.
\begin{itemize}
		\item[(a)] The intersection $\F  := \bigcap_{\sigma \in \Sigma} \F_{\sigma}$ (if non empty) is an $\cM$-monotone subequation.
		\item[(b)] The closure of the union $\F  := \overline{\bigcup_{\sigma \in \Sigma} \F_{\sigma}}$ (if not equal to all of $\J^2$) is an $\cM$-monotone subequation .
\end{itemize}
\end{prop}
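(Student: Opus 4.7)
\medskip

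\noindent\textbf{Proof plan.} The strategy is to verify directly the three defining axioms (P), (N), (T) of Definition \ref{defn:constraint} plus the $\cM$-monotonicity condition $\F + \cM \subset \F$, and then invoke Proposition \ref{prop:subequation_cones} to obtain the topological axiom (T) for free. Recall that each $\F_\sigma$ satisfies $\F_\sigma + \cM \subset \F_\sigma$, that each $\F_\sigma$ is closed, and that $\cM$ is a monotonicity cone subequation (so in particular $\Int \, \cM \neq \emptyset$ and $\cM_0 = \cN \times \{0\} \times \cP \subset \cM$).

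For part (a), set $\F := \bigcap_{\sigma \in \Sigma} \F_\sigma$, assumed non-empty. First I would note that $\F$ is closed, being an intersection of closed sets. Next, $\cM$-monotonicity of the intersection is immediate: if $J \in \F$ and $J' \in \cM$, then $J \in \F_\sigma$ for every $\sigma$, whence $J + J' \in \F_\sigma$ for every $\sigma$ by the $\cM$-monotonicity of each $\F_\sigma$, so $J+J' \in \F$. Because $\cM_0 \subset \cM$, this already gives properties (P) and (N). Finally, since $\F$ is closed and $\cM$-monotone with $\cM$ a monotonicity cone subequation, Proposition \ref{prop:subequation_cones} yields $\F = \overline{\Int \, \F}$, which is property (T). Assuming $\F \neq \emptyset$ and (as is automatic from being contained in each proper $\F_\sigma$) $\F \neq \J^2$, this shows $\F$ is a subequation constraint set.

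For part (b), set $\F := \overline{\bigcup_{\sigma \in \Sigma} \F_\sigma}$, assumed not equal to $\J^2$. Clearly $\F$ is closed by construction. For $\cM$-monotonicity, let $J \in \F$ and $J' \in \cM$; pick a sequence $\{J_n\} \subset \bigcup_\sigma \F_\sigma$ with $J_n \to J$, and for each $n$ choose $\sigma_n$ with $J_n \in \F_{\sigma_n}$. Then $J_n + J' \in \F_{\sigma_n} \subset \bigcup_\sigma \F_\sigma \subset \F$, and passing to the limit (using that $\F$ is closed) gives $J + J' \in \F$. Hence $\F + \cM \subset \F$, which in particular yields (P) and (N). As in part (a), Proposition \ref{prop:subequation_cones} then supplies the topological property (T). We also need $\F$ to be non-empty, but this is automatic as soon as some $\F_\sigma$ is non-empty.

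The argument is essentially bookkeeping: the only subtle point is the topological property (T), which is exactly the content of Proposition \ref{prop:subequation_cones} and is the reason we assumed $\cM$ to be a monotonicity cone \emph{subequation} (i.e., $\Int \, \cM \neq \emptyset$) rather than merely a monotonicity cone. In the union case, one small care is needed when passing the monotonicity property through the closure, but this is standard since translation by $J' \in \cM$ is continuous.
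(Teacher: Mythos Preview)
Your proposal is correct and follows essentially the same approach as the paper: verify closedness and $\cM$-monotonicity directly (which yields (P) and (N) since $\cM_0 \subset \cM$), then invoke Proposition \ref{prop:subequation_cones} to obtain (T). Your sequence argument for passing $\cM$-monotonicity through the closure in part (b) is a welcome bit of extra care that the paper leaves implicit.
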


\begin{proof}
	First note that arbitrary  intersections and arbitrary unions of sets that are $\cM$-monotone are again $\cM$-monotone for any set $\cM$. In particular, property (P), as well as property (N) are preserved under arbitrary  intersections and unions. In addition, if $\cM$ is a monotonicity cone subequation then $\cM$-monotonicity for a set $\F$ implies property (T) for $\F$ as long as $\F$ is closed (see Proposition \ref{prop:subequation_cones}). Arbitrary intersections of closed sets are closed, while finite unions are (which is why in general one must take the closure of the union). Finally, subequations $\F \subset \J^2$ are by defintion non empty and proper subsets of $\J^2$. Intersections of proper subsets are proper, but may be empty. Unions of non empty sets are non empty but may have closure equal to all of $\J^2$. This completes the proof.
\end{proof}

It is worth noting that easy examples (including pure first order subequations where (P) and (N) are automatic) illustrate the role monotonicity plays in Propsoition \ref{prop:algebra}. More precisely, for arbitrary closed sets, property (T) is not preserved under finite intersections and the interior of finite unions may be larger than the (finite) union of the interiors. For example, consider two closed cubes of the same size which intersect along a common face. 

Now we turn to examples of monotonicty cone subequations.

\subsection{Product monotonicity cone subequations}

Consider a product set 
$$
\cM := \cM^0 \times \cM^1 \times \cM^2 \subset \J^2 = \R \times \R^n \times \Symn.
$$
Note that $\cM$ is a convex cone if and only if each factor of $\cM$ is a convex cone, and $\cM$ satisfies (T) if and only if each factor does; that is,
\begin{equation}\label{T_for_M}
\cM = \overline{ \Int \, \cM} \iff \cM^k = \overline{ \Int \, \cM^k}, \ \ k = 0,1,2.
\end{equation}
In this case, $\cM^1$ is a convex cone satisfying property (T). Thus
\begin{equation}\label{T_for_M}
\mbox{$\cM^1 := \cD$ is a directional cone as in Definition \ref{defn:property_D};}
\end{equation}
that is, a closed convex cone in $\R^n$ with vertex at the origin and non-empty interior. Note that $\cD = \R^n$ is allowed.
Also note that
\begin{equation}\label{PMS2}
\mbox{ $\cM$ satisfies (P) $\iff \cP \subset \cM^2$}
\end{equation}
and that
\begin{equation}\label{PMS0}
\mbox{ $\cM$ satisfies (N) $\iff \cN \subset \cM^0$}
\end{equation}
This can be summarized as follows.

\begin{prop}\label{prop:MCS} A product set $\cM = \cM^0 \times \cM^1 \times \cM^2 \subset \J^2$ is a monotonicity cone subequation if and only if the factors satisfy
	\begin{itemize}
		\item[(0)] $\cM^0 = \cN$ or $\cM^0 = \R$;
		\item[(1)] $\cM^1 = \cD$ is a directional cone in $\R^n$ as in Definition \ref{defn:property_D};
		\item[(2)] $\cM^2$ is a closed convex cone in $\Symn$ which contains $\cP$.
	\end{itemize}
\end{prop}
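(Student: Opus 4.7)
The plan is to verify both directions of the equivalence by unpacking, on the product structure, the definition of a monotonicity cone subequation (Definition \ref{defn:MCS}): a closed convex cone $\cM \subset \J^2$ with vertex at the origin, containing the minimal set $\cM_0 = \cN \times \{0\} \times \cP$, and satisfying property (T), which for a closed convex set is equivalent to having nonempty interior.

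For the ``if'' direction, assuming (0), (1), (2) I would observe that each factor $\cM^k$ is a closed convex cone with vertex at the origin (immediate for $\cN$ and $\R$; from the definition of directional cone for $\cD$; by hypothesis for $\cM^2$), so the product $\cM$ inherits the same structure. The inclusion $\cM_0 \subset \cM$ is immediate from (0), (2), and the fact that $0 \in \cD$ since every cone contains its vertex. To verify (T), I would use the identity $\Int(A \times B \times C) = \Int A \times \Int B \times \Int C$ for subsets of a finite-dimensional vector space, and check that each factor has nonempty interior: trivially for $\cN$ and $\R$; from the definition of directional cone for $\cD$; and for $\cM^2$ because $I$ lies in the interior of $\cP$ inside $\Symn$ and $\cP \subset \cM^2$.

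For the ``only if'' direction, I would assume $\cM = \cM^0 \times \cM^1 \times \cM^2$ is a monotonicity cone subequation and read off the properties factor by factor. Each $\cM^k$ is the image of $\cM$ under a coordinate projection and is nonempty (since $\cM_0 \subset \cM$); standard facts about closed convex sets in a product of finite-dimensional vector spaces give that each factor is itself a closed convex cone with vertex at the origin, and $\emptyset \neq \Int \cM = \Int \cM^0 \times \Int \cM^1 \times \Int \cM^2$ shows each has nonempty interior. The inclusion $\cM_0 \subset \cM$ then yields $\cN \subset \cM^0$ and $\cP \subset \cM^2$. The only small case analysis is for $\cM^0$: the closed convex cones in $\R$ with vertex at $0$ are $\{0\}$, $\cN = (-\infty, 0]$, $[0,+\infty)$, and $\R$, and among these only $\cN$ and $\R$ contain $\cN$, yielding (0). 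Then (1) is just the definition of directional cone applied to $\cM^1$, and $\cM^2$ is a closed convex cone containing $\cP$, which is (2).

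The proof presents no real obstacle; it is a direct unwinding of the definitions for product sets. The only mildly technical ingredient is the behavior of closure, convexity, and interior under finite Cartesian products in finite-dimensional vector spaces, which is standard and can be dispatched in a single remark.
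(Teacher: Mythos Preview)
Your proposal is correct and follows essentially the same route as the paper. The paper does not give a separate proof of this proposition; instead it records the observations \eqref{T_for_M}--\eqref{PMS0} (factorwise behavior of the convex-cone, (T), (P), (N) properties) and then states the proposition as a summary, which is exactly the content you unpack in your two directions.
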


In particular, important examples of product monotonicity cone subequations include those determined by $\cN, \cD$ and $\cP$ as introduced in \eqref{PND}; that is,
$$
\cM(\cP):= \R \times \R^n \times \cP, \ \ \cM(\cN):= \cN \times \R^n \times \Symn \ \  \text{and} \ \ \cM(\cD):= \R \times \cD \times \Symn.
$$
Moreover, each product monotonicity cone subequation contains the intersection of these basic examples; that is, these intersections form a fundamental neighborhood system for $\cM_0$ among all product monotonicity subequations. The proof is omitted. 

\begin{defn}\label{defn:FPMC} A {\em fundamental product monotonicity subequation} is a subset $\cM \subset \J^2$ of the form
	\begin{equation}\label{FPMC}
	\cM := \cN \times \cD \times \cP = \cM(N) \cap \cM(\cD) \cap \cM(\cP)
	\end{equation}
	determined by the choice of a directional cone $\cD$  in $\R^n$.
\end{defn}

An arbitrary subset $\F \subset \J^2$ is $\cM$-monotone for a fundamental product monotonicity subequation $\cM$ as in \eqref{FPMC} if and only if $\F$ satisfies (P), (N) and the {\em Directionality Property}
\begin{equation*}\label{Directionality}
{\rm (D)}\ \ \ \ (r,p,A) \in \F \ \Longrightarrow \ (r, p + q, A) \in \F \ \  \text{for all} \ q \in \cD.
\end{equation*}

\begin{lem}\label{lem:FPMCDual}
	The Dirichlet dual of a fundamental product monotonicity subequation $\cM$ is
	\begin{equation}\label{FPMCDual}
		\begin{split}
	\cMt  & = \wt{\cM}(N) \cup  \wt{\cM}(\cD) \cup \wt{\cM}(\cP) \\
	& = \left\{ (r,p,A) \in \J^2: \ \text{either} \ r \leq 0, \ \text{or} \ p \in \cDt, \ \text{or} \ A \in \cPt \right\}.
	\end{split}
	\end{equation}
\end{lem}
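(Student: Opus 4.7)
The plan is to obtain \eqref{FPMCDual} as a direct corollary of two facts already established in the excerpt: the behavior of duality under intersections (Proposition \ref{prop:algebra_duals}) and the explicit form of the duals of the three elementary product monotonicity subequations (Example \ref{exe:PND_duals}). No new analytic input is required.

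First, I would apply the intersection formula \eqref{dual_intersection} iteratively to the decomposition \eqref{FPMC}. Because the identity $\Int(\F_1 \cap \F_2) = (\Int \F_1) \cap (\Int \F_2)$ holds for arbitrary subsets of $\J^2$, no additional hypothesis is needed on the factors; in particular one may apply Proposition \ref{prop:algebra_duals}$(a)$ directly to the pair $\cM(\cN), \cM(\cD) \cap \cM(\cP)$ and then again to $\cM(\cD),\cM(\cP)$, yielding
\begin{equation*}
\cMt \ = \ \widetilde{\cM(\cN) \cap \cM(\cD) \cap \cM(\cP)} \ = \ \wt{\cM}(\cN) \cup \wt{\cM}(\cD) \cup \wt{\cM}(\cP).
\end{equation*}
This is the first equality in \eqref{FPMCDual}.

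Next, I would substitute the explicit duals recorded in Example \ref{exe:PND_duals},
\begin{equation*}
\wt{\cM}(\cN) = \cN \times \R^n \times \Symn, \quad \wt{\cM}(\cD) = \R \times \wt{\cD} \times \Symn, \quad \wt{\cM}(\cP) = \R \times \R^n \times \wt{\cP},
\end{equation*}
and observe that a jet $J = (r,p,A) \in \J^2$ lies in the union precisely when it lies in at least one of the three factors, i.e.\ when $r \leq 0$, or $p \in \wt{\cD}$, or $A \in \wt{\cP}$. This gives the disjunctive description in the second equality of \eqref{FPMCDual}.

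There is no real obstacle here; the only point worth double-checking is the computation in Example \ref{exe:PND_duals}, which follows immediately from Definition \ref{defn:duality} together with the observation that each of $\cN$, $\cD$, $\cP$ satisfies the topological property (T) (so that $\sim(-\Int \cN) = \cN$ as sets in $\R$, and analogously for $\cD$ and $\cP$ in their respective ambient spaces). If one prefers a self-contained verification bypassing Proposition \ref{prop:algebra_duals}, the same conclusion follows by unpacking the definition directly: $J_0 = (r_0,p_0,A_0) \in \cMt$ iff $-J_0 \notin \Int \cM = \Int \cN \times \Int \cD \times \Int \cP$, and negating this product condition yields precisely the disjunction $r_0 \in \wt{\cN}$ or $p_0 \in \wt{\cD}$ or $A_0 \in \wt{\cP}$.
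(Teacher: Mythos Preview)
Your proposal is correct and follows essentially the same approach as the paper's proof: invoke the dual-of-intersection formula \eqref{dual_intersection} to obtain the first equality, then substitute the explicit duals from Example~\ref{exe:PND_duals} for the second. The paper's proof is just these two sentences; your additional direct verification via $-J_0 \notin \Int \cM$ is a fine alternative but not needed.
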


\begin{proof} The first line of \eqref{FPMCDual} just uses the formula for the dual of intersections \eqref{dual_intersection} and the second line uses the formulas \eqref{PND_duals} for the duals of the elementary cones. 
\end{proof}

\section{A Fundamental Family of Monotonicity Cone Subequations}\label{sec:monotonicity}

In this section, we will construct a family of monotonicity cone subequations which is fundamental in the the sense that if a given subequation $\F$ is $\cM$-monotone for a monotonicity subequation $\cM$, then there exists an element $\cM_{\ast}$ of the fundamental family with $\cM_{\ast} \subset \cM$. That is, the family provides a fundamental neighborhood system for $\cM_0$ among all convex cone subequations. Note that $\cM_{\ast} \subset \cM$ implies $\wt{\cM} \subset \wt{\cM}_{\ast}$ so that if the (ZMP) holds for $\wt{\cM}_{\ast}$ it will also hold for $\wt{\cM}$.

In order to construct the fundamental family, we will use intersections of five elementary monotonicity cone subequations to build up a family of seventeen distinct monotonicity cone subequations for use in the main comparison theorem of section \ref{sec:CP}. Some of these will also have a product structure and in some cases there will be silent factors which are suitable for the results on reduction of section \ref{sec:reductions}.

\subsection{Construction of the fundamental family} 

We begin by adding two one parameter families of monotonicity cones to the elementary building blocks $\cM(\cP)$, $\cM(\cN)$ and $\cM(\cD)$ given in \eqref{PND} as
$$
	\cM(\cP):= \R \times \R^n \times \cP, \ \ \cM(\cN):= \cN \times \R^n \times \Symn \ \  \text{and} \ \ \cM(\cD):= \R \times \cD \times \Symn,
$$
where 
$$
	\cP = \{ A \in \Symn: \ A \geq 0 \}, \ \ \cN =  (-\infty, 0] \subset \R \ \ \text{and} \ \ \cD \subset \R^N\
$$
and $\cD$ is a closed convex cone with vertex in $0$ and $\Int \, \cD \neq \emptyset$.

\begin{defn}\label{defn:GRCone} Let $\J = \R \times \R^n \times \Symn$. Given a real number $\gamma \in (0, + \infty)$, the set
\begin{equation}\label{GCone}
\cM(\gamma):= \left\{ (r,p,A) \in \J^2: \ r \leq - \gamma |p| \right\}
\end{equation}	
will be called the {\em $\gamma$-monotonicity cone subequation}	and given a real number $R \in (0, + \infty)$, the set
\begin{equation}\label{RCone}
\cM(R):= \left\{ (r,p,A) \in \J^2: \ A \geq \frac{|p|}{R}I \right\}
\end{equation}	
will be called the {\em $R$-monotonicity cone subequation}
	\end{defn}

Besides helping to complete a fundamental family, these new building blocks are interesting in their own right. They are all closed convex cones with vertex at the origin. It is also clear that they satisfy the subequation constraint conditions so that each is a monotonicity cone subequation. We will see that the cone $\cM(\gamma)$ arises naturally in equations with strict monotonicity in $r$ and a Lipschitz bound in $p$ (see Theorem \ref{thm:CP_DECr}). Similarly $\cM(R)$ arises in zero order free order equations with some degree of strict ellipticity and a Lipschitz bound in $p$ (see Theorem \ref{thm:CP_SE}). 

Additional monotonicity cone subequations are generated by taking intersections of the five basic cones $\cM(\cN), \cM(\cP),\cM(\cD), \cM(\gamma)$ and $\cM(R)$. This provides us with our list of seventeen distinct monotonicity cone subequations.  
 
\begin{defn}[Our list of monotonicity cone subequations]\label{defn:cone_zoo}
	
	Part I) First on the list are four of the five basic examples defined above:
\begin{itemize}
	\item [(1)] $\cM(\cN)$:
	\item[(2)] $\cM(\cP)$
	\item[(3)] $\cM(\gamma)$ \ with $\gamma \in (0, +\infty)$;
	\item[(4)] \ $\cM(R)$ \ with $\R \in (0, +\infty)$. 
\end{itemize}

	Part II) Double intersections of those in Part I) give rise to four new monotonicity cone subequations:
	\begin{itemize}
	\item[(5)] $\cM(\gamma, R)  := \cM(\gamma) \cap \cM(R) = \left\{ (r,p,A) \in \J^2: r \leq - \gamma |p| \   \text{and} \ A \geq \frac{|p|}{R}I     \right\}$:
	\item[(6)] $	\cM(\gamma, \cP)  := \cM(\gamma) \cap \cM(\cP) = \left\{ (r,p,A) \in \J^2: \ r \leq - \gamma |p| \   \text{and} \ A \geq 0    \right\}$;
	\item[(7)] $
	\cM(\cN, R)  := \cM(\cN) \cap \cM(R) = \left\{ (r,p,A) \in \J^2: r \leq 0 \   \text{and} \  A \geq \frac{|p|}{R}I    \right\}$;
	\item[(8)] $\cM(\cN, \cP)  := \cM(\cN) \cap \cM(\cP) = \left\{ (r,p,A) \in \J^2: r \leq 0 \   \text{and} \ A \geq 0    \right\}$.
	\end{itemize}

Part III) To complete the list, first add our last basic example: 
\begin{itemize}
	\item[(9)] $\cM(\cD)$ with $\cD \subsetneq \R^n$ a proper directional cone.
\end{itemize}

Part IV) Intersecting such a $\cM(\cD)$ with the first eight examples completes the list:
\begin{itemize}
	\item[(10)]  $\cM(\cN, \cD)  := \cM(\cN) \cap \cM(\cD) = \left\{ (r,p,A) \in \J^2: \ r \leq 0 \   \text{and} \ p \in \cD    \right\}$ ;
	\item[(11)] $\cM(\cD, \cP)  := \cM(\cD) \cap \cM(\cP) = \left\{ (r,p,A) \in \J^2: \ p \in \cD  \    \text{and} \  A \geq 0 \right\}$;
	\item[(12)] $\cM(\gamma, \cD)  := \cM(\gamma) \cap \cM(\cD) = \left\{ (r,p,A) \in \J^2: \ r \leq - \gamma |p| \   \text{and} \ p \in \cD    \right\}$;
	\item[(13)] 	$\cM(\cD, R)  := \cM(\cD) \cap \cM(R) = \left\{ (r,p,A) \in \J^2: \ p \in \cD  \    \text{and} \  A \geq \frac{|p|}{R}I \right\}$;
		\item[(14)] $	\cM(\gamma, \cD, R)  := \cM(\gamma) \cap \cM(\cD) \cap \cM(R)$; that is, 
		$$ \cM(\gamma, \cD, R) = \left\{ (r,p,A) \in \J^2: \ r \leq - \gamma |p|, \  p \in \cD \ \text{and} \ A \geq \frac{|p|}{R}I     \right\}; $$
			\item[(15)] $
			\cM(\gamma, \cD, \cP) := \cM(\gamma, \cP) \cap \cM(\cD) = \cM(\gamma) \cap \cM(\cD) \cap \cM(\cP)$;
				\item[(16)] $
				\cM(\cN, \cD, R) := \cM(\cN, R) \cap \cM(\cD) = \cM(\cN) \cap \cM(\cD) \cap \cM(R) $;
					\item[(17)] $\cM(\cN, \cD, \cP) := \cM(\cN, \cP) \cap \cM(\cD) = \cM(\cN) \cap \cM(\cD) \cap \cM(\cP) $.
\end{itemize}
The cone $\cM(\gamma,R)$ will be called the {\em $(\gamma, R)$-monotonicity cone subequation}. The same nomenclature will be used for the other cones above; for example, $\cM(\gamma, \cD,R)$ will be called the {\em  $(\gamma, \cD, R)$-monotonicty cone subequation}. 
\end{defn}

About this family of cones, a few remarks are in order.

\begin{rem}\label{rem:zoo1} While taking all possible intersections of the basic five cones will produce more than twelve additional sets, many of the intersections are not distinct since for each $\gamma, R \in (0, + \infty)$ one has
\begin{equation}\label{GNRP1}
	\cM(\gamma) \subsetneq \cM(\cN) \quad \text{and} \quad \cM(R) \subsetneq \cM(\cP) \ \text{are proper subsets.}
\end{equation}
Consequently, in Part II), the double intersections  $\cM(\gamma) \cap \cM(\cN) = \cM(\gamma)$ and $\cM(R) \cap \cM(\cP) = \cM(R)$ can be ignored, along with the triple intersections involving them and $\cM(\cN) \cap \cM(\cP) \cap \cM(\gamma) \cap \cM(R) = \cM(\gamma, R)$.
\end{rem}

Some of the cones have been seen before.

\begin{rem}[Fundamental products]\label{rem:cone_zoo_fundamental_products} The elementary cones $\cM(\cN)$, $\cM(\cD)$ and $\cM(\cP)$ are all examples of fundamental product monotonicity subequations in the sense of Definition \ref{defn:FPMC} as are their intersections $\cM(\cN, \cP), \cM(\cN, \cD), \cM(\cD, \cP)$ and $\cM(\cN, \cD, \cP)$.
	\end{rem}

Some  of the cones have silent factors which provide useful simplifications in the reductions of section \ref{sec:reductions}.

\begin{rem}[Reduction by suppressing trivial factors]\label{rem:cone_zoo_reduction} With the exception of $\cM(\cN, \cD, \cP)$ in the case $\cD \neq \R^n$, all of the cones in Remark \ref{rem:cone_zoo_fundamental_products} have a trivial factor $\R, \R^n$ or $\Symn$. A particularly important case is the {\em negativity-convexity subequation} $\cM(\cN, \cP) = \cN \times \R^n \times \cP$, which will de discussed at length in subsection \ref{subsec:GFSE} on comparison for {\em gradient free} subequations $\F$.  In this case, one can {\em reduce} to the monotonicity cone $\cQ:= \cN \times \cP \subset \R \times \Symn$. 
	
The other reducible cones are $\cM(\gamma), \cM(R), \cM(\gamma, \cD)$ and $\cM(\cD, R)$. If one eliminates the trivial factor, one can define {\em reduced cones} such as 
\begin{equation}\label{GDPrime}                 \cM^{\prime}(\gamma, \cD) := \left\{ (r,p) \in \R \times \R^n: \ r \leq -\gamma |p| \ \text{and} \ p \in \cD \right\}
\end{equation}
and
\begin{equation}\label{DRPrime}
\cM^{\prime}(\cD, R) := \left\{ (p, A) \in \R^n \times \Symn: \  p \in \cD \  \text{and} \ A \geq \frac{|p|}{R}I \right\}
\end{equation}
so that
\begin{equation}\label{M_Products}
\cM(\gamma, \cD) = \cM^{\prime}(\gamma, \cD) \times \Symn \ \ \text{and} \ \ 
\cM(\cD, R) = \R \times \cM^{\prime}(\cD, R).
\end{equation}
The cones in \eqref{DRPrime} will be used for zero order free subequations in subsection \ref{subsec:ZOFSE}. The other pair reducible cones $\cM(\gamma)$ and $\cM(R)$ correspond to $\cD = \R^n$ and we will write
\begin{equation}\label{GPrime}           \cM^{\prime}(\gamma) := \left\{ (r,p) \in \R \times \R^n: \ r \leq -\gamma |p| \right\} 
\end{equation}
and
\begin{equation}\label{RPrime}           \cM^{\prime}(R) := \left\{ (p, A) \in \R^n \times \Symn: \ A \geq \frac{|p|}{R}I \right\}.
\end{equation}
\end{rem}
The utility of $\cM^{\prime}(\gamma)$ and $\cM^{\prime}(R)$ was noted following Definition \ref{defn:GRCone}.

Finally, for future use, we record the following formulas for strictness and duality.
		
\begin{rem}\label{rem:dual_zoo} If $\cM = \cM(\gamma, \cD, R)$ then
\begin{equation}\label{Int_GDRCone}
 \Int \, \cM =  \left\{ (r,p,A) \in \J^2: \ r < - \gamma |p|, \  p \in  \Int \, \cD, \ \text{and} \ A > \frac{|p|}{R}I     \right\};
\end{equation}
\begin{equation}\label{GDRCD}
\cMt = \left\{ (r,p,A) \in \J^2: \ \text{either} \ r \leq \gamma |p|, \ \text{or} \ p \in \cDt, \ \text{or} \ A + \frac{|p|}{R}I \in \cPt \right\};
\end{equation}
\begin{equation}\label{notGDRD}
\sim \cMt = \left\{ (r,p,A) \in \J^2: \  r > \gamma |p|, \ -p \in  \cD, \ \text{and} \ A < - \frac{|p|}{R}I \right\}.
\end{equation}
Similar formulas for the other cones defined above are easily deduced from these.
\end{rem}

\subsection{Nesting, limit cases and simplifying the family of cones}

In this subsection, we look at the limiting cases of $\cM(\gamma, \cD, R)$ cones when the parameters $\gamma$ and $R$ in $(0, + \infty)$ tend to their limiting values. This analysis will allow us to express every element of the fundamental family in terms of a triple $(\gamma, \cD, R)$ with $\gamma \in [0, +\infty)$, $\cD \subseteq \R^N$ a proper directional cone or all of $\R^n$ and $R \in (0, + \infty]$. This simplification is carried out in  Remark \ref{rem:cone_zoo_compress} below. With  respect to the partial ordering by set inclusion, $\cM(\gamma)$ and $\cM(R)$ are nested families. It is easy to see from the definitions that
\begin{equation}\label{GNest}
	\mbox{$\cM(\gamma)$ is decreasing in $\gamma \in (0, +\infty)$}
\end{equation}
and
\begin{equation}\label{RNest}
\mbox{$\cM(R)$ is increasing in $R \in (0, +\infty)$}.
\end{equation}
Hence  
\begin{equation}\label{GRNest}
\mbox{$\cM(\gamma, R)$ decreases as $\gamma$ increases and $R$ decreases}
\end{equation}
and $\cM(\gamma, R)$ increases as $\gamma$ decreases and $R$ increases. Moreover, these  monotonicity properties pass to intersections with $\cM(\cD)$.

\begin{prop}[Decreasing limits]\label{prop:decreasing_cones} For the family of $(\gamma, \cD, R)$-cones defined in formula (14) of Part IV of Definition  \ref{defn:cone_zoo}, the decreasing limits \footnote{From the monotoncity of \eqref{GNest}, one means, of course, the intersection over $\gamma \in (0, + \infty)$ of $\cM(\gamma, \cD, R)$ for the first decreasing limit and similar intersections for the last two.}
		\begin{equation}\label{decr_limits_cones}
		\lim_{\gamma \nearrow + \infty} \cM(\gamma, \cD, R), \  \lim_{R \searrow 0} \cM(\gamma, \cD, R) \ \text{and} \ \lim_{\substack{\gamma \nearrow + \infty\\ R \searrow 0}} \cM(\gamma, \cD, R),
		\end{equation} 
		are all equal  to the minimal monotonicity set $\cM_0 = \cN \times \! \{0\} \! \times \cP$.
	\end{prop}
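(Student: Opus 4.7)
The proposal is to show set equality in each decreasing limit by a two-sided inclusion: the minimal set $\cM_0$ lies inside every $\cM(\gamma,\cD,R)$ in the family, and conversely any element of the intersection is forced to have $p=0$, which collapses all three defining inequalities down to the defining inequalities of $\cM_0$.

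First I would record the trivial inclusion $\cM_0 \subseteq \cM(\gamma,\cD,R)$ valid for \emph{every} admissible triple $(\gamma,\cD,R)$. Indeed, given $(r,p,A)$ with $r\le 0$, $p=0$ and $A\ge 0$, the three constraints defining $\cM(\gamma,\cD,R)$ read $r\le -\gamma|p|=0$, $p=0\in\cD$ (the vertex of the cone), and $A\ge (|p|/R)I = 0$, each of which is satisfied. Consequently $\cM_0$ is contained in each of the three decreasing intersections.

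For the reverse inclusions I would work in parallel for the three limits. Fix $(r,p,A)$ in the intersection. In the first limit, by hypothesis $r\le -\gamma|p|$ for all $\gamma>0$; letting $\gamma\to+\infty$ forces $|p|=0$ (otherwise $r=-\infty$, impossible), hence $p=0$. In the second limit, $A \ge (|p|/R)I$ for all $R>0$, and $|p|>0$ would give $(|p|/R)\to+\infty$, contradicting boundedness of the entries of the fixed symmetric matrix $A$; hence again $p=0$. The third limit is implied by either of the first two. Once $p=0$ is established, the remaining constraints active for any one member of the family reduce to $r\le 0$ and $A\ge 0$, so $(r,p,A)\in\cM_0$, completing the equality.

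There is no real obstacle here; the only small point to watch is keeping the quantifiers straight (``belongs to every $\cM(\gamma,\cD,R)$ in the family'' as the decreasing intersection, per the footnote), and noting that the directional cone $\cD$ plays no role in the limit because $p=0$ lies in every directional cone by definition. This also explains why the statement is uniform in $\cD$ and why $\cD$ does not appear on the right-hand side of \eqref{decr_limits_cones}.
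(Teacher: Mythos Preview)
Your proof is correct and follows essentially the same approach as the paper: both arguments establish the inclusion $\cM_0 \subseteq \cM(\gamma,\cD,R)$ trivially and then show that membership in the intersection forces $p=0$, after which the remaining constraints collapse to $r\le 0$ and $A\ge 0$. The only cosmetic difference is that the paper first computes the building-block limits $\bigcap_{\gamma>0}\cM(\gamma)=\cN\times\{0\}\times\Symn$ and $\bigcap_{R>0}\cM(R)=\R\times\{0\}\times\cP$ separately and then intersects, whereas you argue directly on $\cM(\gamma,\cD,R)$; the underlying reasoning is identical.
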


\begin{proof} First note that
\begin{equation*}\label{DL0}
 \lim_{\gamma \nearrow +\infty} \cM(\gamma) = \bigcap_{\gamma > 0} \cM(\gamma) = \cN \times \{0\} \times \Symn 
\end{equation*}
and
\begin{equation*}\label{DL1}
\lim_{R \searrow 0} \cM(R) = \bigcap_{R > 0} \cM(R) = \R \times \{0\} \times \cP 
\end{equation*}
since
\begin{equation*}\label{DL2}
	\cM(\gamma) = \left\{ (r,p,A) \in \J^2:  |p| \leq - \frac{r}{\gamma} \right\} \ \ \text{and} \ \ \cM(R) = \left\{ (r,P,A) \in \J^2: |p| I \leq RA \right\}.
\end{equation*}
It follows easily that each of the three decreasing limits 
\begin{equation}\label{DL3}
\lim_{\gamma \nearrow + \infty} \cM(\gamma, R), \  \lim_{R \searrow 0} \cM(\gamma, R) \ \text{and} \ \lim_{\substack{\gamma \nearrow + \infty\\ R \searrow 0}} \cM(\gamma, R) \ \text{all decrease to} \ \cM_0.
\end{equation}
The role of $\cD$ (a proper cone in $\R^n$) is innocuous and \eqref{decr_limits_cones} follws immediately from \eqref{DL3} by interesting with $\cM(\cD)$.
\end{proof}

For the increasing limits, first note that
\begin{equation}\label{IL1}
\lim_{\gamma \searrow 0} \cM(\gamma) = \bigcup_{\gamma > 0} \cM(\gamma) = \cM(\cN) \backslash \left( \{0\} \times (\R^n \setminus \{0\}) \times \Symn \right),
\end{equation}
which has closure $\cM(\cN)$, and
\begin{equation}\label{IL2}
\lim_{R \nearrow +\infty} \cM(R) = \bigcup_{R > 0} \cM(R) =  \cM(\cP) \backslash \left( \R \times (\R^n \setminus \{0\}) \times \{0\} \right),
\end{equation}
which has closure $\cM(\cP)$.

From these two facts we leave it to the reader to prove the following result.
 
\begin{prop}[Increasing limits]\label{prop:incr_limits_cones} For the family of $(\gamma, \cD, R)$-cones defined in formula (14) of Part IV of Definition  \ref{defn:cone_zoo}, the following hold.
	\begin{itemize}
		\item[(a)] $\cM^{\prime}(\gamma, \cD) \times \cP$ is the closure of the increasing limit $\displaystyle{\lim_{R \nearrow + \infty} \cM(\gamma, \cD, R)}$.
		\item[(b)] $\cN \times \cM^{\prime}(\cD, R)$ is the closure of the increasing limit $\displaystyle{\lim_{\gamma \searrow 0} \cM(\gamma, \cD, R)}$.
		\item[(c)] $\cN \times \cD \times \cP$ is the closure of the increasing limit $\displaystyle{\lim_{\substack{\gamma \searrow 0\\ R \nearrow + \infty}} \cM(\gamma, \cD, R)}$.
	\end{itemize} 
\end{prop}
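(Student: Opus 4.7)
The plan is to compute, for each part, the union of the increasing family explicitly using the defining inequalities, and then take its closure. The key observation is that the parameter $\gamma$ only controls the $(r,p)$-inequality $r \leq -\gamma|p|$ in $\cM(\gamma)$, while $R$ only controls the $(p,A)$-inequality $A \geq \frac{|p|}{R}I$ in $\cM(R)$, so the limit analysis decouples; the directional factor $\cM(\cD)$ is inert under both limits. In each case the delicate boundary has to do with the point $p=0$, where the $r$ and $A$ constraints collapse to $r \leq 0$ and $A \geq 0$ respectively (so no passage to the closure is needed), while for $p \neq 0$ the inequalities become strict in the limit.

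For part (a), write $\cM(\gamma, \cD, R) = \cM(\gamma, \cD) \cap \cM(R)$, where $\cM(\gamma,\cD)$ is independent of $R$. A point $(r,p,A)$ lies in $U := \bigcup_{R>0} \cM(\gamma,\cD,R)$ iff $(r,p) \in \cM^{\prime}(\gamma,\cD)$ and there exists $R>0$ with $A \geq \tfrac{|p|}{R}I$. For $p=0$ this reduces to $A \in \cP$; for $p \neq 0$ it forces $\lambda_{\min}(A) > 0$ (take any $R \geq |p|/\lambda_{\min}(A)$). Hence $U \subset \cM^{\prime}(\gamma,\cD) \times \cP$, and the latter is closed, giving $\overline{U} \subset \cM^{\prime}(\gamma,\cD) \times \cP$. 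For the reverse inclusion, given $(r,p,A) \in \cM^{\prime}(\gamma,\cD) \times \cP$, the perturbation $A_k := A + \tfrac{1}{k} I$ satisfies $A_k > 0$ with $A_k \to A$, so $(r,p,A_k) \in U$ (for $p = 0$ take any $R>0$; for $p \neq 0$ take $R_k := |p|/\lambda_{\min}(A_k)$), which shows $(r,p,A) \in \overline U$.

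Part (b) proceeds by the mirror argument, writing $\cM(\gamma,\cD,R) = \cM(\cD,R) \cap \cM(\gamma)$ with $\cM(\cD,R)$ independent of $\gamma$. A point $(r,p,A)$ lies in $U := \bigcup_{\gamma>0} \cM(\gamma,\cD,R)$ iff $(p,A) \in \cM^{\prime}(\cD,R)$ and there exists $\gamma>0$ with $r \leq -\gamma|p|$; this is equivalent to $r \leq 0$ when $p=0$ and to $r < 0$ when $p \neq 0$. Again $U \subset \cN \times \cM^{\prime}(\cD,R)$, and the reverse containment for the closure follows from the perturbation $r_k := r - \tfrac{1}{k}$, which satisfies $(r_k,p,A) \in U$ and $r_k \to r$. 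Part (c) is the simultaneous version: the union $U := \bigcup_{\gamma>0,R>0} \cM(\gamma,\cD,R)$ equals the set of $(r,p,A)$ with $p \in \cD$ and either $p=0$, $r \leq 0$, $A \geq 0$, or $p \neq 0$, $r < 0$, $A > 0$. This is contained in $\cN \times \cD \times \cP$, and the combined perturbation $(r,A) \mapsto (r - \tfrac{1}{k}, A + \tfrac{1}{k}I)$ yields the reverse inclusion for the closure.

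No step should present serious difficulty; the mild subtlety is that closure does not in general commute with intersection, so we verify the claimed identity by exhibiting explicit approximating sequences rather than by invoking \eqref{IL1}--\eqref{IL2} through formal set-theoretic manipulation. The case analysis $p = 0$ versus $p \neq 0$ is the only nontrivial bookkeeping.
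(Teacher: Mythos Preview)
Your proof is correct. The paper leaves this proposition to the reader after recording the formulas \eqref{IL1}--\eqref{IL2}; your argument is exactly the natural fleshing-out of those hints, and you are right to observe that one cannot simply intersect \eqref{IL1}--\eqref{IL2} with the $R$-independent (resp.\ $\gamma$-independent) factor and then pass to the closure, since closure need not commute with intersection --- your explicit perturbations $(r,p,A)\mapsto(r-\tfrac1k,p,A+\tfrac1k I)$ handle this cleanly.
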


\begin{rem}[Simplifying the list of cones]\label{rem:cone_zoo_compress} In light of \eqref{IL1} and \eqref{IL2}, it is natural to extend the parameters to allow $\gamma = 0$ and $R = +\infty$ and to define
	\begin{equation}\label{GNRP2}
	\cM(\gamma = 0) := \cM(\cN) \quad \text{and} \quad \cM(R = +\infty) := \cM(\cP), 
	\end{equation}
	Also note that in the definition of $\cM(\gamma)$ if one simply sets $\gamma = 0$, one obtains $\cM(\cN)$. Similarly, one obtains $\cM(\cP)$ by setting $R = + \infty$ in the definition of $\cM(R)$.
	 With such a choice, the five basic cones can be simplified to three
	 $$ 
	 \mbox{$\cM(\gamma)$ with $\gamma \in [0, +\infty), \cM(R)$ with $R \in (0, +\infty]$ and
	 	$\cM(\cD)$.}
 	$$
 	The remaining cones all take the form 
	\begin{equation}\label{GNRP3}
	\cM(\gamma, R), \ \cM(\gamma, \cD), \ \cM(\cD, R) \ \text{and} \ \cM(\gamma, \cD, R).
	\end{equation}
	Moreover, when $\cD = \R^n$, one has $\cM(\cD) = \J^2$ and hence $\cM(\gamma, \R^n, R) = \cM(\gamma, R)$ and so on. Adopting these conventions/definitions, each of the seventeen cones defined in Definition \ref{defn:cone_zoo} is, in fact, a $(\gamma, \cD, R)$-cone with $\gamma \in [0, +\infty)$, $R \in (0, +\infty]$ and $\cD \subset \R^n$ either a proper directional cone or all of $\R^n$. 
\end{rem}

\subsection{The fundamental nature of the family of monotonicity cones}\label{sec:fundamental}

Our family of monotonicity cones $\cM(\gamma, \cD, R)$ is ``fundamental'' in the following sense.

\begin{thm}[The Fundamental Family Theorem]\label{thm:fundamental}
	If $\F$ is a subequation which is $\cM$-monotone for some monotonicity cone subequation $\cM$, then $\F$ is $\cM(\gamma, \cD, R)$-monotone for some $\gamma, R \in (0, +\infty)$ and some directional cone $\cD$.
	\end{thm}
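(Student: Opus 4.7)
The plan is to reduce the claim to the purely cone-theoretic statement \emph{every monotonicity cone subequation $\cM$ contains some $\cM(\gamma,\cD,R)$ from the fundamental family.} Granted this inclusion, $\cM$-monotonicity of $\F$ gives $\cM(\gamma,\cD,R)$-monotonicity at once, since
$$\F + \cM(\gamma,\cD,R) \ \subset\ \F + \cM \ \subset\ \F.$$

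To produce such an inclusion I would exploit $\Int\,\cM \neq \emptyset$: pick $J_0 = (r_0,p_0,A_0) \in \Int\,\cM$ and $\veps > 0$ with $B_\veps(J_0) \subset \cM$. Since $\Int\,\cM + \cM_0$ is an open subset of $\cM$ (hence of $\Int\,\cM$), adding a suitable $(s,0,tI) \in \cM_0$ to $J_0$ and then shrinking $\veps$ allows me to assume $r_0 < 0$ and $A_0 \geq cI$ for some $c > 0$; if moreover $p_0 \neq 0$ I also shrink $\veps$ so that $\veps < |p_0|$.

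Next I choose the directional cone: set $\cD := \R^n$ when $p_0 = 0$, and otherwise let $\cD$ be the closed convex cone generated by $\overline{B}_{\veps/2}(p_0) \subset \R^n$. In both cases $\cD$ is a directional cone (a closed convex cone with nonempty interior). The key structural observation is that every $p \in \cD \setminus \{0\}$ admits a decomposition $p = \lambda(p_0 + v)$ with $\lambda > 0$ and $|v| \leq \veps/2$, satisfying the uniform lower bound $|p| \geq \lambda m$, where $m := |p_0| - \veps/2 > 0$ if $p_0 \neq 0$ and $m := \veps/2 > 0$ if $p_0 = 0$. Since $(r_0, p_0 + v, A_0) \in B_\veps(J_0) \subset \cM$ and $\cM$ is a convex cone containing $\cM_0$, scaling by $\lambda$ and adding $\cM_0$ yields
$$(\lambda r_0 + s,\ p,\ \lambda A_0 + P) \in \cM \qquad \text{for all } s \leq 0,\ P \geq 0.$$

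To close the argument I set $\gamma := |r_0|/m$ and $R := m/\|A_0\|$, both in $(0,+\infty)$. For any $(r,p,A) \in \cM(\gamma,\cD,R)$ with $p \neq 0$, choosing $s := r - \lambda r_0$ and $P := A - \lambda A_0$ gives $s \leq 0$ (since $r \leq -\gamma|p| \leq -\gamma\lambda m = \lambda r_0$) and $P \geq 0$ (since $A \geq \tfrac{|p|}{R} I \geq \tfrac{\lambda m}{R} I = \lambda\|A_0\| I \geq \lambda A_0$), so $(r,p,A) \in \cM$. When $p = 0$ the defining inequalities force $r \leq 0$ and $A \geq 0$, so $(r,0,A) \in \cM_0 \subset \cM$ directly. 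The main subtlety I expect is the design of $\cD$: one needs the \emph{uniform} linear estimate $|p| \geq \lambda m$ valid for every $p \in \cD$, which is precisely why $p$ must be restricted to a proper cone around $p_0$ when $p_0 \neq 0$ rather than ranging over all of $\R^n$; this uniform bound is what allows a single scalar $\lambda$, read off from $p$ alone, to simultaneously control both the $r$- and the $A$-coordinates.
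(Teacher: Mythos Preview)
Your proof is correct and follows essentially the same strategy as the paper: reduce to showing $\cM(\gamma,\cD,R)\subset\cM$, pick an interior point of $\cM$, adjust it via $\cM_0$-monotonicity, build $\cD$ as the cone over a small ball around the gradient component, and then use the cone property of $\cM$ together with $\cM_0$-monotonicity to absorb any $(r,p,A)\in\cM(\gamma,\cD,R)$ via a scaling argument. The paper packages this into two lemmas and first normalizes the interior point to the form $(-1,q,I)$ (so that $r_0=-1$, $A_0=I$), whereas you work with a general $(r_0,p_0,A_0)$ and handle the cases $p_0=0$ and $p_0\neq 0$ separately; these are cosmetic differences and the core idea is identical.
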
 

\begin{proof}
	It suffices to find $\cM(\gamma, \cD, R) \subset \cM$. This follows from the next two lemmas.
	\end{proof}

\begin{lem}\label{lem:Fund1}
	Given a monotonicity cone subequation $\cM$. If there exist $\veps > 0$ and a directional cone $\cD \subset \R^n$ such that $\{-1\} \times (\cD \cap B_{\veps}(0)) \times \{I\} \subset \cM$, then the $(\gamma, \cD, R)$-monotonicity cone with $R < \veps$ and $\gamma:= 1/R$  satisfies
	$$
	\cM(\gamma, \cD, R) \subset \cM.
	$$	
	\end{lem}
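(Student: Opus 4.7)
The plan is to show that any jet $(r,p,A) \in \cM(\gamma,\cD,R)$ can be decomposed as a sum of two jets, one in $\cM$ by the given hypothesis (and the cone property) and the other in the minimal monotonicity set $\cM_0 = \cN \times \{0\} \times \cP$, which lies in $\cM$ since $\cM$ is a monotonicity cone subequation. The convex cone structure of $\cM$ then yields $(r,p,A) \in \cM$. Throughout, the specific choice $R < \veps$ and $\gamma = 1/R$ is dictated precisely by the need for the two pieces to match the constraints defining $\cM(\gamma,\cD,R)$.

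First I would handle the degenerate case $p = 0$. Then the defining inequalities of $\cM(\gamma,\cD,R)$ reduce to $r \le 0$ and $A \ge 0$, so $(r,0,A) \in \cM_0 \subset \cM$ and we are done. For the main case $p \neq 0$, set
\[
t := \frac{|p|}{R} > 0 \qquad \text{and} \qquad q := \frac{p}{t} = \frac{R}{|p|}\, p.
\]
Since $\cD$ is a cone and $p \in \cD$, we have $q \in \cD$; moreover $|q| = R < \veps$, so $q \in \cD \cap B_\veps(0)$. By hypothesis, $(-1,q,I) \in \cM$, and since $\cM$ is a cone we get
\[
t\cdot(-1,q,I) \;=\; \Bigl(-\tfrac{|p|}{R},\, p,\, \tfrac{|p|}{R} I\Bigr) \;\in\; \cM.
\]

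The remaining step is to observe that the ``residue'' lies in $\cM_0$. Indeed, write
\[
(r,p,A) \;=\; \Bigl(-\tfrac{|p|}{R},\, p,\, \tfrac{|p|}{R}I\Bigr) + \Bigl(r + \tfrac{|p|}{R},\, 0,\, A - \tfrac{|p|}{R}I\Bigr).
\]
Since $\gamma = 1/R$, the constraint $r \le -\gamma|p|$ says precisely $r + |p|/R \le 0$; and $A \ge \tfrac{|p|}{R}I$ says precisely $A - \tfrac{|p|}{R}I \ge 0$. Hence the second summand lies in $\cM_0 \subset \cM$. As $\cM$ is closed under addition (being a convex cone), $(r,p,A) \in \cM$, completing the inclusion $\cM(\gamma,\cD,R) \subset \cM$.

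I expect no serious obstacle here: the lemma is essentially a bookkeeping computation that records why the triple $(\gamma,\cD,R)$ with $\gamma = 1/R$ and $R < \veps$ is the ``right'' one. The only subtle point is confirming $q \in \cD \cap B_\veps(0)$ (which uses strict inequality $R < \veps$ and the cone property of $\cD$), and verifying that the two algebraic identities $r + |p|/R \le 0$ and $A - (|p|/R)I \ge 0$ encode exactly the $\gamma$- and $R$-constraints defining $\cM(\gamma,\cD,R)$.
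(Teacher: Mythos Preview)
Your proof is correct and follows essentially the same strategy as the paper: handle $p=0$ via $\cM_0\subset\cM$, and for $p\neq 0$ decompose $(r,p,A)$ into a scalar multiple of $(-1,\tfrac{R}{|p|}p,I)\in\cM$ plus an $\cM_0$-piece, then invoke the convex cone structure of $\cM$. The only cosmetic difference is that the paper factors out $\tfrac{|p|}{R}$ before adding the $\cM_0$-term inside the brackets, whereas you add the two pieces directly; these are algebraically equivalent.
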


\begin{proof}
	Suppose that $(r,p,A) \in \cM(\gamma, \cD, R)$. If $p = 0$, then $r \leq - \gamma|p| = 0$ and $A \geq \frac{|p|}{R} I = 0$ so that $(r,p,A) \in \cM_0 \subset \cM$.
	
	Suppose now that $p \neq 0$. It suffices to show that 
\begin{equation}\label{FL1}
	(r,p,A) = \frac{|p|}{R} \left[ \left( -1,  \frac{R}{|p|} p, I \right) + (-s, 0, P) \right] 
\end{equation}
with $s \geq 0, P \geq 0$ and $ \frac{R}{|p|} p \in \cD \cap B_{\veps}(0)$, because then the facts
\begin{itemize}
	\item[(i)] $ \left(-1, \frac{R}{|p|}p, I \right) \in \cM$ by hypothesis;
	\item[ii)] $\cM$ is $\cN \times \{0\} \times \cP$-monotone;
	\item[iii)] $\cM$ is a cone,
\end{itemize}
combined with \eqref{FL1} proves that $(r,p,A) \in \cM$. To see that \eqref{FL1} is true, note that this equality defines
$$
	s = - \frac{rR}{|p|} \quad \text{and} \quad P = A - \frac{|p|}{R}I.
$$
Now, by the definition of $(r,p,A) \in \cM(1/R, \cD, R)$, one has $r \leq - \frac{|p|}{R}$ and $A \geq \frac{|p|}{R}I$. Hence $P \geq 0$ and $\frac{(-r)R}{|p|} \geq 1$ so that $s \geq 0$ and $\frac{R}{|p|}p \in \cD \cap B_{\veps}(0)$ as $\cD$ is a cone and $R < \veps$.
\end{proof}

\begin{lem}\label{lem:Fund2}
	Given a monotonicity cone subequation $\cM$.  There exist $\veps > 0$ and a directional cone $\cD \subset \R^n$ such that 
	$$
		\{-1\} \times (\cD \cap B_{\veps}(0)) \times \{I\} \subset \cM.
	$$
\end{lem}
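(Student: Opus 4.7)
The plan is to exploit three features of $\cM$ simultaneously: it contains the minimal monotonicity set $\cM_0 = \cN \times \{0\} \times \cP$, it is a convex cone, and it has nonempty interior. The first of these immediately gives the ``base jet'' $J_0 := (-1, 0, I) \in \cM_0 \subset \cM$. The third supplies test directions, and convexity together with $\cM_0$-monotonicity will be used to glue them to $J_0$ and ``renormalize'' the first and third components to exactly $-1$ and $I$.

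First I would locate a convenient interior point. Pick any $J^* = (r^*, p^*, A^*) \in \Int \, \cM$. Since $\Int \, \cM$ is open, a small perturbation of the $p$-coordinate keeps us in $\Int \, \cM$, so we may assume $p^* \neq 0$. Next, because $\cM$ is a cone with nonempty interior (hence $\mu \cM = \cM$ and $\mu \cdot \Int \, \cM = \Int \, \cM$ for every $\mu > 0$), replacing $J^*$ by $\mu J^*$ for sufficiently small $\mu > 0$ one can simultaneously arrange
$$1 + r^* > 0 \qquad \text{and} \qquad I - A^* > 0 \text{ (as a positive definite matrix),}$$
while keeping $p^* \neq 0$. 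Fix then $\delta \in (0, |p^*|)$ with $B_\delta(J^*) \subset \cM$; in particular $(r^*, p, A^*) \in \cM$ for every $p$ with $|p - p^*| < \delta$.

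The algebraic heart of the argument is the observation that, for every $p \in B_\delta(p^*)$, one can ``clean up'' the first and third components. Indeed, the jet
$$J_1 := \bigl(-(1+r^*),\, 0,\, I - A^*\bigr)$$
belongs to $\cM_0$ by the inequalities just arranged, and
$$(r^*, p, A^*) + J_1 = (-1,\, p,\, I).$$
Since $\cM + \cM_0 \subset \cM$, this yields $(-1, p, I) \in \cM$ for all $p \in B_\delta(p^*)$. This is exactly where the preparatory rescaling of $J^*$ pays off.

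Finally I would upgrade the ball $B_\delta(p^*)$ to a full truncated directional cone by convex interpolation with $J_0$. Let $\cD$ be the closed convex cone in $\R^n$ generated by $\overline{B_{\delta/2}(p^*)}$. Since this closed ball is compact, convex, and disjoint from the origin (because $\delta/2 < |p^*|$), the cone $\cD$ is closed, convex, and has nonempty interior (it contains a neighborhood of $p^*$), so it is a directional cone in the sense of Definition~\ref{defn:property_D}. Set $\veps := |p^*| - \delta/2 > 0$. Any $p \in \cD \cap B_\veps(0)$ decomposes as $p = \lambda q$ with $q \in \overline{B_{\delta/2}(p^*)} \subset B_\delta(p^*)$, and since $|q| \geq |p^*| - \delta/2 = \veps$ we have $\lambda = |p|/|q| \in [0,1)$. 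Because both $(-1, 0, I)$ and $(-1, q, I)$ belong to $\cM$, convexity gives
$$(-1, p, I) = (1-\lambda)(-1, 0, I) + \lambda\, (-1, q, I) \in \cM,$$
which establishes $\{-1\} \times (\cD \cap B_\veps(0)) \times \{I\} \subset \cM$. The main obstacle I anticipate is purely bookkeeping: one must verify that the preliminary scaling and perturbation can arrange $p^* \neq 0$, $1 + r^* > 0$, and $I - A^* > 0$ jointly, without losing a full open ball of directions around $J^*$. Once this is secured, the rest is routine manipulation inside a closed convex cone that is $\cM_0$-monotone.
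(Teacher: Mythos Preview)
Your proof is correct and follows essentially the same strategy as the paper's: pick an interior point with nonzero gradient component, use $\cM_0$-monotonicity to normalize the first and third slots to $(-1,\cdot,I)$, and then exploit the cone structure to sweep out a truncated directional cone in the gradient slot. The only cosmetic difference is that the paper adds large elements of $\cM_0$ (parametrized by $t$) and then rescales by $1/t$, whereas you rescale the interior point first and finish with a convex interpolation against the base jet $(-1,0,I)$; both devices use the same convex-cone structure of $\cM$.
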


\begin{proof}
	By the topological condition (T), $\Int \, \cM \neq \emptyset$. Pick $(r,p,A) \in \Int \, \cM$. By perturbing $p$ we can assume that $p \neq 0$, Pick $\delta \in (0, |p|)$ small so that $\{r\} \times B_{\delta}(p) \times A \subset \Int \, \cM$. For $t \geq t_0 > 0$ large, one has $P:= tI - A \geq 0$ and $s:= r + t \geq 0$. Hence
$$
(-t, q, tI) = (r, q, A) + (-s,0, P) \in \Int \, \cM, \  \ \ \forall \ q \in B_{\delta}(p), \ \forall \ t \geq t_0 > 0.
$$
Since $\Int \, \cM$ is a cone, 
$$
	\{-1\} \times \frac{1}{t} B_{\delta}(p) \times \{I\} \subset \Int \, \cM \ \ \text{for each} \ t \geq t_0 > 0.
$$
Take $\veps:= \delta/t_0$ and $\cD$ the cone on $B_{\delta}(p)$. Then
$$
	\cD \cap B_{\veps}(0) \subset \bigcup _{t \geq t_0} \frac{1}{t} B_{\delta}(0) \cup \{0\} ,
$$
which proves that $\{-1\} \times (\cD \cap B_{\veps}(0)) \times \{I\} \subset \cM$.
\end{proof}

It is important to note that the fundamental nature of the family will ensure the validity of the comparison principle locally (see Theorem \ref{thm:LC}).

\section{The Zero Maximum Principle for Dual Monotonicity Cones}\label{sec:ZMP} 

In this section, we examine the validity of the zero maximum principle for $\cMt$-subharmonic functions if $\cM$ is a monotonicity cone subequation. Its validity can be  to the existence of a global, regular and strictly $\cM$-subharmonic function. This function generates an approximation from above of the $\cMt$-subharmonic function zero and has the advantage that the definitional comparison of Lemma \ref{lem:DCP} (see formula \eqref{ZMP5} below) applies since it is regular and strict (unlike zero). 

\begin{defn}[Strict approximator]\label{defn:strict_approx} Suppose that $\cM$ is a convex cone subequation; that is, a convex cone with vertex at the origin which satisfies the subequation constraint conditions (P), (N) and (T). Given a domain $\Omega \subset \subset \R^n$, we say $\cM$ {\em admits a strict approximator on $\Omega$} if there exists $\psi$ with
	\begin{equation}
	\psi \in C(\overline{\Omega}) \cap C^2(\Omega)  \ \text{and} \ \ J^2_x \psi \in  \Int \, \cM \ \ \text{for each} \ x \in \Omega.
	\end{equation}
	\end{defn}

\vspace{2ex}

\begin{thm}[The Zero Maximum Principle]\label{thm:ZMP} Suppose that $\cM$ is a convex cone subequation that admits a strict approximator on $\Omega$. Then the zero maximum principle (ZMP) holds for $\wt{\cM}$ on $\overline{\Omega}$; that is,
	\begin{equation}\label{ZMP}
	 z\leq 0\ \ {\rm on} \ \partial \Omega \ \ \Rightarrow\ \ z\leq 0 \ \ {\rm on} \ \Omega
	\end{equation}
	for all $z\in \USC(\overline{\Omega})$ which are $\cMt$-subharmonic on $\Omega$.
\end{thm}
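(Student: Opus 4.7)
The plan is to deduce the Zero Maximum Principle from the Definitional Comparison Lemma \ref{lem:DCP}(a), using the strict approximator $\psi$ as the source of a smooth, strictly $\cM$-subharmonic competitor. I first note that the framework is consistent: since $\cM$ is a convex cone subequation, the topological property (T) holds, so reflexivity (Proposition \ref{prop:duality}(6)) gives $\wt{\wt{\cM}} = \cM$, and Proposition \ref{prop:duality}(8) ensures that $\wt{\cM}$ is itself a subequation, making the notion of $\wt{\cM}$-subharmonicity for $z \in \USC(\overline{\Omega})$ well-defined.

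Next I would normalize $\psi$ to take non-positive values on $\overline{\Omega}$. Let $M_\psi := \max_{\overline{\Omega}} \psi$ (finite by continuity on the compact set) and set
\[
\tilde\psi := \begin{cases} \psi, & M_\psi \leq 0, \\ \psi - M_\psi, & M_\psi > 0. \end{cases}
\]
Then $\tilde\psi \in C(\overline{\Omega}) \cap C^2(\Omega)$ with $\tilde\psi \leq 0$ on $\overline{\Omega}$. The point to check is that $J^2_x \tilde\psi \in \Int\,\cM$ for every $x \in \Omega$. The first case is immediate. In the second case, $J^2_x \tilde\psi = J^2_x \psi + (-M_\psi, 0, 0)$ with $J^2_x \psi \in \Int\,\cM$ and $(-M_\psi, 0, 0) \in \cN \times \{0\} \times \{0\} \subset \cM_0 \subset \cM$; since $\cM$ is closed under addition as a convex cone, we have $\Int\,\cM + \cM \subset \Int\,\cM$, giving $J^2_x \tilde\psi \in \Int\,\cM$ as needed. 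This is the step where $\cM_0$-monotonicity of $\cM$ (properties (P) and (N)) is genuinely used.

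For each $\veps > 0$, set $v_\veps := \veps\,\tilde\psi$. Because $\cM$ is a cone, $J^2_x v_\veps = \veps J^2_x \tilde\psi \in \Int\,\cM$ on $\Omega$, so $v_\veps \in C(\overline{\Omega}) \cap C^2(\Omega)$ is smooth and strictly $\cM = \wt{\wt{\cM}}$-subharmonic on $\Omega$. Moreover, $v_\veps \leq 0$ on $\overline{\Omega}$, and combined with the hypothesis $z \leq 0$ on $\partial\Omega$, this yields $z + v_\veps \leq 0$ on $\partial\Omega$. Applying Lemma \ref{lem:DCP}(a) with $\F = \wt{\cM}$, $u = z$ and $v = v_\veps$ produces $z + v_\veps \leq 0$ on $\Omega$, that is, $z \leq -\veps\,\tilde\psi$ on $\Omega$. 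Letting $\veps \to 0^+$ gives the desired conclusion $z \leq 0$ on $\Omega$.

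The argument is essentially a direct application of Lemma \ref{lem:DCP} once the approximator is appropriately normalized. The only mildly delicate point — and the one I would flag as the place to be careful — is the verification that the downward shift by $M_\psi$ (when $M_\psi > 0$) preserves strict $\cM$-subharmonicity; this hinges on the negativity component $\cN \times \{0\} \times \{0\}$ of $\cM_0$ being inside $\cM$, which is built into our definition of a monotonicity cone subequation.
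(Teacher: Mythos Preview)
Your proof is correct and follows essentially the same route as the paper: both arguments reduce the Zero Maximum Principle to a single application of the Definitional Comparison Lemma \ref{lem:DCP}(a) with $\F = \wt{\cM}$ and the strict approximator supplying the smooth strictly $\cM$-subharmonic competitor. The only difference is cosmetic: the paper uses property (N) for $\wt{\cM}$ to shift $z$ down to $z - m$ (making it strictly negative on $\partial\Omega$) and then takes $\veps$ small so that $z - m + \veps\psi \leq 0$ on $\partial\Omega$, whereas you use property (N) for $\cM$ to shift $\psi$ down to $\tilde\psi \leq 0$, which yields $z + \veps\tilde\psi \leq 0$ on $\partial\Omega$ directly; your version thus needs only the single limit $\veps \to 0^+$ rather than the double limit $m, \veps \to 0^+$, but the underlying mechanism is identical.
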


Notice that the (ZMP) is the comparison principle for $\cMt$ in the case where $u = z$ and $w \equiv 0$, because by assumption  $u :=z$ is  $\cMt$-subharmonic and $w :=0$ is  $\cMt$-superharmonic since  $J_x^2 (-w) \equiv (0,0,0) \in  \cMt$. The proof is an elementary consequence of the definitions (as is the proof of definitional comparison).

\begin{proof} The dual $\cMt$ also satisfies (P), (N) and (T) by property (7) of Proposition \ref{prop:duality}. Since $\cMt$ has property (N), $z -  m \in \cMt(\Omega)$ for each $m \in (0, +\infty)$, as noted in Remark \ref{rem:N-translates}. 
	
	Since $z - m < 0$ on $\partial \Omega$ which is compact, one has 
	\begin{equation}\label{ZMP4}
	z - m + \veps \psi \leq 0 \ \text{on} \ \partial \Omega \ \ \ \text{for each} \ \veps > 0 \ \text{sufficiently small}.
	\end{equation}
	Now, since $z - m \in \cMt(\Omega)$ and since $\veps \psi \in C(\overline{\Omega}) \cap C^2(\Omega)$ is strictly $\cM$-subharmonic on $\Omega$ (by coherence and $\cM$ being a cone), one has
	\begin{equation}\label{ZMP5}
	z - m + \veps \psi \leq 0 \ \text{on} \ \Omega
	\end{equation}
	by the Definitional Comparison of Lemma \ref{lem:DCP} with $\F = \cMt$ and $\wt{\F} = \wt{\cMt} = \cM$. Taking the limit in \eqref{ZMP5} as $m \searrow 0$ and $\veps \searrow 0$ gives $z \leq 0$ on $\Omega$. 
\end{proof}

As a corollary to this general theorem, we obtain the Zero Maximum Principle for each $(\gamma, \cD, R)$-monotonicity cone as in Definition \ref{defn:cone_zoo} (see also Remark \ref{rem:cone_zoo_compress}), with a restriction on the size of the domain if $R$ is finite. The following result was originally given in Theorem B.2 of \cite{HL13b}.

\begin{thm}\label{thm:ZMP_for_M}
	Let $\cM$ be a $(\gamma, \cD, R)$-monotonicity cone subequation. Given  $z \in \USC(\overline{\Omega})$ which is $\cMt$-subharmonic on $\Omega$, one has 
	$$
	z \leq 0 \ \text{on} \ \partial \Omega \ \ \Longrightarrow \ \ z \leq 0 \ \text{on} \ \Omega \leqno{\rm{(ZMP)}}
	$$
	as follows:\\
	\underline{Case $R = +\infty$:} For arbitrary
	 $\Omega \subset \subset \R^n$. This case includes $\cM(\gamma, \cP) = \cM^{\prime}(\gamma) \times  \cP$ with $\gamma \in [0, +\infty)$, where the case $\gamma = 0$ is $\cN \times \R^n \times \cP = \cM(\cN, \cP)$ and the case 
	$$ \cM(\gamma, \cD, \cP) := \cM(\gamma) \cap \cM(\cD) \cap \cM(\cP) := \cM^{\prime}(\gamma, \cD) \times \cP, $$
	and hence any of the larger monotonicity cone subequations, namely 
	$$\cM(\gamma), \cM(\cD), \cM(\cP), \cM(\gamma, \cD) \ \text{and} \ \cM(\cD,\cP).
	$$
	\underline{Case $R$ finite:} For domains
	 $\Omega$ which are contained in a translate of the truncated cone $\cD_R := \cD \cap B_R(0)$; that is,
	\begin{equation}\label{ZMP_Domain1}
	\Omega \subset (y +  \cD) \cap B_R(y) \ \text{for some} \ y \in \R^n.
	\end{equation} 
	This case includes $\cM(\gamma, \cD, R) := \cM(\gamma) \cap \cM(\cD) \cap \cM(R)$ with $R$ finite and hence any of the larger monotonicity cone subequations, namely
	$$\cM(R), \cM(\gamma, R) \ \text{and} \  \cM(\cD, R) \ \text{with} \ R \ \text{finite}.
	$$
\end{thm}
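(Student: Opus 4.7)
The plan is to invoke Theorem \ref{thm:ZMP}, which reduces the (ZMP) for $\cMt$ on $\overline{\Omega}$ to the construction of a \emph{strict approximator} for $\cM$ on $\Omega$, i.e.\ a function $\psi \in C(\overline{\Omega}) \cap C^2(\Omega)$ with $J^2_x \psi \in \Int \, \cM$ for every $x \in \Omega$.  A strict approximator for a smaller cone is automatically a strict approximator for any larger cone (since $\cM_1 \subset \cM_2 \Rightarrow \Int \, \cM_1 \subset \Int \, \cM_2$), so within each of the two cases it suffices to treat the smallest cone in the list: $\cM(\gamma, \cD, R)$ when $R$ is finite, and $\cM(\gamma, \cD, \cP)$ when $R = +\infty$.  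By Remark \ref{rem:dual_zoo}, the required strictness amounts to
\[
\psi(x) < -\gamma |D\psi(x)|,\qquad D\psi(x) \in \Int\,\cD,\qquad D^2\psi(x) > \tfrac{|D\psi(x)|}{R}I,
\]
where the last condition becomes $D^2\psi > 0$ when $R = +\infty$ and the $\cD$ condition is vacuous when $\cD = \R^n$.

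For the case $R$ finite, I propose the very simple quadratic $\psi(x):=\tfrac{1}{2}|x-y|^2 - M$ with $M$ large.  The first step is a topological observation: since $\Omega$ is \emph{open} and $\Omega \subset (y+\cD) \cap B_R(y)$, every $x\in\Omega$ has a ball $B_\veps(x) \subset \Omega \subset y+\cD$, so $B_\veps(x-y) \subset \cD$, which gives $x-y \in \Int\,\cD$; thus in fact $\Omega \subset (y + \Int\,\cD) \cap B_R(y)$.  Then $D\psi(x)=x-y \in \Int\,\cD$ and $|x-y|<R$, so $D^2\psi = I > \tfrac{|x-y|}{R}I$ automatically.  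Choosing $M > \tfrac{R^2}{2} + \gamma R$ forces $\psi(x) < -\gamma|x-y|$ on all of $\Omega$, completing the construction.

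For the case $R = +\infty$ the hypothesis on $\Omega$ is merely boundedness and there is no geometric link between $\Omega$ and $\cD$, so a fixed translate $\tfrac{1}{2}|x-y|^2$ will not in general land in $\Int\,\cD$.  The plan is to use $\psi(x):= \tfrac{1}{2}|x|^2 + \lambda \langle q_0, x\rangle - M$, where $q_0 \in \Int\,\cD$ is a fixed unit vector (which exists because $\Int\,\cD\neq \emptyset$) and $\lambda, M$ are to be chosen.  Picking $\delta > 0$ so that $B_\delta(q_0) \subset \Int\,\cD$, setting $K:=\sup_{x\in\overline{\Omega}} |x|$, and requiring $\lambda > K/\delta$ yields $D\psi(x) = \lambda(x/\lambda + q_0)$ with $x/\lambda + q_0 \in B_\delta(q_0)\subset \Int\,\cD$; hence $D\psi(x) \in \Int\,\cD$ by invariance of $\Int\,\cD$ under positive scaling.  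Also $D^2\psi = I > 0$ as required.  Since $|D\psi| \le K+\lambda$ and $\psi \le \tfrac{K^2}{2} + \lambda K - M$, choosing $M$ larger than $\tfrac{K^2}{2} + \lambda K + \gamma(K+\lambda)$ gives $\psi < -\gamma|D\psi|$ throughout $\Omega$.

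The only conceptually nontrivial step in either construction is the topological observation in the $R$-finite case that an open subset of $(y+\cD)\cap B_R(y)$ automatically lies in $(y+\Int\,\cD)\cap B_R(y)$; without it, the candidate $\tfrac{1}{2}|x-y|^2-M$ would fail the directional condition on the portion of $\Omega$ contacting $y+\partial\cD$.  Once the strict approximators are in hand, Theorem \ref{thm:ZMP} immediately delivers the (ZMP) for $\cMt$ in both cases, and the inclusions among cones yield the result for every larger monotonicity cone subequation listed in the statement.
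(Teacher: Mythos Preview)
Your proof is correct and follows essentially the same route as the paper: both reduce to Theorem~\ref{thm:ZMP} and construct a quadratic strict approximator $\psi(x)=\tfrac{1}{2}|x-y|^2-c$. Your $R=+\infty$ construction with the linear tilt $\lambda\langle q_0,x\rangle$ is just a reparametrization of the paper's translated quadratic (complete the square: $y=-\lambda q_0$), and your topological observation that an open $\Omega\subset y+\cD$ lies in $y+\Int\,\cD$ is exactly what the paper invokes implicitly when asserting that \eqref{ZMP_Domain1} yields \eqref{jn2}.
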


\begin{proof} Since $\cM$ is a monotonicity cone subequation, by Theorem \ref{thm:ZMP} it suffices to show that $\Omega$ admits a strict approximator $\psi$. It can be constructed as a quadratic polynomial
	$$
	\psi(x) := -c  + \frac{1}{2} |x - y|^2
	$$
	with $c > 0$ and $y \in \R^n$ chosen to ensure that 
	\begin{equation}\label{jn0}
	J^2_x \psi = \left( -c  + \frac{1}{2} |x - y|^2, (x - y), I \right) \in  \Int \, \cM \ \ \text{for every} \ x \in \Omega.
	\end{equation}
	Using the definition of the interior of $ \cM$ as given in \eqref{Int_GDRCone}, the condition \eqref{jn0} requires that:
	\begin{equation}\label{jn1}
		R I > |x-y| I  \ \ \text{for every} \ x \in \Omega; \ \ \text{that is,} \ \Omega \subset B_R(y)
	\end{equation}
	\begin{equation}\label{jn2}
	(x-y) \in \Int \, \cD \ \ \text{for every} \ x \in \Omega; \ \ \text{that is,} \ \Omega \subset y + \Int \, \cD;
	\end{equation}
	and finally that
	\begin{equation}\label{jn3}
	- c  + \frac{1}{2} |x - y|^2 < - \gamma |x - y| \ \ \text{for every} \ x \in \Omega.
	\end{equation}
	
	The hypothesis \eqref{ZMP_Domain1} is equivalent to \eqref{jn1} and \eqref{jn2}. By \eqref{jn1}, choosing $c > \frac{1}{2} R^2 + \gamma R$ ensures \eqref{jn3}, which completes the proof in the case $R < +\infty$. 
	
	 In the case $R = + \infty$, the condition \eqref{jn1} is automatic for each $y \in \R^n$ and since $\Omega$ is bounded, one can always pick $y \in \R^n$ such that \eqref{jn2} holds. Finally, choose any $R < +\infty$ with $\Omega \subset B_R(y)$ and choose $c > \frac{1}{2} R^2 + \gamma R$ to ensure \eqref{jn3}.
\end{proof}

We remark that Theorem \ref{thm:ZMP_for_M} applies to all of the cones $\cM$ in Definition \ref{defn:cone_zoo}. The cones in Parts I and II correspond to the special case $\cD = \R^n$. It is important to note that there are a priori restrictions on the domain when $R$ is finite which can be essential, as will be shown in Proposition \ref{prop:ZMP_failure} below, whose proof applies the following important fact concerning reduced  subequations to the dual $\wt{\cM}$ of a reduced monotonicity cone subequation $\cM$. As is standard in differential topology, {\em reduced} means that the jet variable $r \in \R$ is silent. As will be discussed in section \ref{sec:reductions}, this is equivalent to the following monotonicity property that strenghtens property (N):
\begin{equation}\label{M_reduced}
(r,p,A) \in \cM \ \ \Rightarrow \ \ (r + s, p, A) \in \cM \ \text{for every} \ s \in \R.
\end{equation}
Note that $\cM$ is reduced if and only if its dual $\wt{\cM}$ is reduced. 

\begin{lem}\label{lem:ReducedM_MP} Suppose that $\cG$ is a reduced subequation in the sense \eqref{M_reduced}. Then the zero maximum principle (ZMP) holds for $\cG$ on $\overline{\Omega}$ if and only if the maximum principle (MP) holds for $\cG$ on $\overline{\Omega}$; that is,
$$
\sup_{\Omega} u \leq \sup_{\partial \Omega} u \leqno{\rm{(MP)}}
$$
for each $u \in \USC(\overline{\Omega})$ which is $\cG$-subharmonic on $\Omega$.	
	\end{lem}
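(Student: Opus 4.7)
The plan is to observe that the direction (MP)$\Rightarrow$(ZMP) is essentially free, and that the reverse direction (ZMP)$\Rightarrow$(MP) reduces to a simple translation trick that uses the reducedness of $\cG$ in an essential way.

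For (MP)$\Rightarrow$(ZMP): given $z\in\USC(\overline\Omega)$ which is $\cG$-subharmonic on $\Omega$ with $z\leq 0$ on $\partial\Omega$, I would just apply (MP) directly to $z$ to get $\sup_\Omega z\leq\sup_{\partial\Omega}z\leq 0$. This step uses nothing at all about $\cG$ beyond the fact that it is a subequation.

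For (ZMP)$\Rightarrow$(MP), the key observation (call it the \emph{translation lemma}) is that when $\cG$ is reduced in the sense \eqref{M_reduced}, $\cG$-subharmonicity is invariant under the addition of real constants: if $u$ is $\cG$-subharmonic on $\Omega$ and $c\in\R$, then $u-c$ is $\cG$-subharmonic on $\Omega$. This is immediate from Definition \ref{defn:FSH}, since $\varphi$ is an upper test function for $u$ at $x_0$ if and only if $\varphi-c$ is an upper test function for $u-c$ at $x_0$, and the reducedness of $\cG$ gives $J^2_{x_0}(\varphi-c)=(\varphi(x_0)-c,D\varphi(x_0),D^2\varphi(x_0))\in\cG\iff J^2_{x_0}\varphi\in\cG$.

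With the translation lemma in hand, the main argument is straightforward: given $u\in\USC(\overline\Omega)$ which is $\cG$-subharmonic on $\Omega$, set $M:=\sup_{\partial\Omega}u$ and let $z:=u-M$. Then $z$ is $\cG$-subharmonic on $\Omega$ by the translation lemma, and $z\leq 0$ on $\partial\Omega$ by construction, so (ZMP) yields $z\leq 0$ on $\Omega$, i.e., $\sup_\Omega u\leq M=\sup_{\partial\Omega}u$.

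The only subtle point—and the main (minor) obstacle—is the degenerate case $M=\sup_{\partial\Omega}u=-\infty$, which is admissible since $\USC$ functions are allowed to take the value $-\infty$. In that case the translation $u-M$ is not literally defined, but the argument is rescued by applying the translation lemma for every finite $M'\in\R$: each $u-M'$ is $\cG$-subharmonic and satisfies $u-M'\equiv-\infty\leq 0$ on $\partial\Omega$, so (ZMP) gives $u\leq M'$ on $\Omega$ for all $M'\in\R$, forcing $u\equiv-\infty$ on $\Omega$ and hence $\sup_\Omega u=-\infty=\sup_{\partial\Omega}u$. This completes the equivalence.
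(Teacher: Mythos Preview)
Your proof is correct and follows essentially the same approach as the paper: the (MP)$\Rightarrow$(ZMP) direction is immediate, and for (ZMP)$\Rightarrow$(MP) both you and the paper subtract $\sup_{\partial\Omega}u$ and use that $\cG$-subharmonicity is preserved under addition of constants when $\cG$ is reduced. Your treatment is slightly more thorough in that you explicitly handle the degenerate case $\sup_{\partial\Omega}u=-\infty$, which the paper glosses over.
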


\begin{proof} The proof that (MP) implies (ZMP) on $\overline{\Omega}$ is immediate since if $z \in \USC(\overline{\Omega})$ is $\cG$-subharmonic in $\Omega$ with $z \leq 0$ on $\partial \Omega$, the (MP) applied to $u = z$ gives $z \leq 0$ on $\Omega$, as desired. Conversely, suppose that the (ZMP) holds and take any $u \in \USC(\overline{\Omega})$ which is $\cG$-subharmonic in $\Omega$. The function $z \in \USC(\overline{\Omega})$ defined by $z:= u - \sup_{\partial \Omega}u$ satisfies $z \leq 0$ on $\partial \Omega$ and is $\cG$-subharmonic in $\Omega$ (since $\cG$ is a reduced subequation). Hence $z \leq 0$ on $\Omega$ by the (ZMP). Note  that $u$ is $\cG$-subharmonic if and only if $u - c$ is $\cG$-subharmonic for any constant $c \in \R$. Now the result follows easily.	
\end{proof}

\begin{prop}[Failure of (ZMP) for $\wt{\cM}(R)$ on large balls]\label{prop:ZMP_failure} In $\R^n$ with $n \geq 2$, consider the reduced (convex) monotonicity cone subequation
\begin{equation}\label{MR_Cone}
	\cM(R) = \left\{ (r,p,A) \in \J^2: \ A \geq \frac{|p|}{R}I \right\} \ \ \text{with} \ R \in (0, + \infty).
\end{equation}
Then, the (ZMP) for $\wt{M}(R)$ \underline{fails} on $\overline{\Omega}$ with $\Omega = B_{R'}(0)$ for each $R' > R$.
\end{prop}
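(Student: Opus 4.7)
The plan is to apply Lemma \ref{lem:ReducedM_MP}: since $\cM(R)$ depends only on $(p,A)$, both $\cM(R)$ and its dual are reduced subequations, so the (ZMP) for $\wt{\cM}(R)$ on $\overline{\Omega}$ is equivalent to the ordinary maximum principle (MP). Unwinding Definition \ref{defn:duality}, one computes
\begin{equation*}
\wt{\cM}(R) = \bigl\{(r,p,A) \in \J^2 : \lambda_{\max}\bigl(A + \tfrac{|p|}{R} I\bigr) \geq 0\bigr\},
\end{equation*}
so it suffices to construct some $u \in \USC(\overline{\Omega})$ that is $\wt{\cM}(R)$-subharmonic on $\Omega = B_{R'}(0)$ yet satisfies $\max_\Omega u > \max_{\partial\Omega} u$. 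My candidate is the radial function $u(x) := \psi(|x|)$, where $\psi:[0,R'] \to \R$ is continuous, with $\psi \equiv 0$ on $[0,R]$ and smooth and strictly decreasing on $[R,R']$ with $\psi(R') < 0$ (for instance, $\psi(t) = -(t-R)^2$ on $[R,R']$). Then $u(0) = 0 > \psi(R') = \max_{\partial \Omega} u$, so (MP) fails once subharmonicity is established.

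I would verify $u \in \wt{\cM}(R)(\Omega)$ in three regions. On $\{|x_0| < R\}$, $u \equiv 0$ in a neighborhood of $x_0$, so any upper test function $\varphi$ has a local minimum at $x_0$, forcing $D\varphi(x_0) = 0$ and $D^2\varphi(x_0) \geq 0$; thus $\lambda_{\max}(D^2\varphi(x_0)) \geq 0$ and $J^2_{x_0}\varphi \in \wt{\cM}(R)$. On the annulus $\{R < |x_0| < R'\}$, the radial calculations \eqref{radial_calc_3} show that $D^2 u(x_0) + \tfrac{|Du(x_0)|}{R} I$ has the $(n-1)$-fold tangential eigenvalue
\begin{equation*}
\frac{\psi'(|x_0|)}{|x_0|} + \frac{|\psi'(|x_0|)|}{R} = \psi'(|x_0|)\Bigl(\frac{1}{|x_0|} - \frac{1}{R}\Bigr) > 0,
\end{equation*}
where positivity uses that $\psi' < 0$ and $|x_0| > R$ make both factors negative. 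So $\lambda_{\max} > 0$ and $u$ is even strictly $\wt{\cM}(R)$-subharmonic here; this step crucially requires $n \geq 2$ to have a tangential direction at all.

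The delicate region is the gluing sphere $\{|x_0| = R\}$. For an upper test function $\varphi$ at such $x_0$, set $\nu := x_0/R$. The inequality $\varphi \geq u \equiv 0$ on $\overline{B_R(0)} \cap U$ together with $\varphi(x_0)=0$ forces, by one-sided derivatives in inward directions, that $D\varphi(x_0) = -\lambda \nu$ with $\lambda := |D\varphi(x_0)| \geq 0$. Moreover $\varphi|_{\partial B_R(0)}$ attains a local minimum at $x_0$; differentiating $s \mapsto \varphi(\gamma(s))$ twice along a great circle $\gamma$ on $\partial B_R(0)$ with $\gamma'(0) = v$, $|v|=1$, $v \perp \nu$, and $\gamma''(0) = -\nu/R$, yields
\begin{equation*}
\langle D^2\varphi(x_0) v, v\rangle + D\varphi(x_0)\cdot\gamma''(0) = \langle D^2\varphi(x_0) v, v\rangle + \frac{\lambda}{R} \geq 0.
\end{equation*}
Hence $\langle (D^2\varphi(x_0) + \tfrac{|D\varphi(x_0)|}{R} I) v, v\rangle \geq 0$ for all $v \perp \nu$, so by min-max $\lambda_{\max}(D^2\varphi(x_0) + \tfrac{|D\varphi(x_0)|}{R} I) \geq 0$, giving $J^2_{x_0}\varphi \in \wt{\cM}(R)$.

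The main obstacle is precisely this sphere analysis: one must combine the obstructed-gradient identity with the geodesic second-variation inequality on $\partial B_R(0)$, and it is the geodesic curvature $1/R$ of the sphere that supplies exactly the slack $|D\varphi|/R$ needed to land in $\wt{\cM}(R)$. Everything else is routine verification.
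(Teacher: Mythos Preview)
Your argument is correct and complete, but it differs from the paper's approach. The paper uses the single explicit function $u(x)=|x|-\tfrac{|x|^2}{2R}$, which is smooth on $\R^n\setminus\{0\}$; its $\wt{\cM}(R)$-subharmonicity is checked by a direct radial computation of $\lambda_{\max}(D^2u)+|Du|/R$ (splitting into $0<|x|\le R$ and $|x|>R$), while at the origin there are simply no upper test functions because of the corner in $|x|$. The maximum of $u$ occurs exactly on the sphere $|x|=R$, so (MP) fails on any $B_{R'}(0)$ with $R'>R$.

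Your route instead glues the zero function on $\overline{B_R}$ to a decreasing profile on the annulus, trading the paper's ``no test functions at one point'' trick for a genuine sphere analysis. The payoff is a nice geometric explanation: the geodesic curvature $1/R$ of $\partial B_R$ is precisely what produces the $|D\varphi|/R$ term needed to land in $\wt{\cM}(R)$, which makes the role of the radius $R$ transparent. The cost is that the gluing-sphere case requires a second-variation argument rather than a one-line computation, so the paper's proof is shorter and more self-contained.
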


\begin{proof} Since $\wt{\cM}(R)$ is a reduced subequation, the (ZMP) for $\wt{\cM}(R)$ holds if and only if the (MP) holds for $\wt{\cM}(R)$ (see Lemma \ref{lem:ReducedM_MP}).  We exhibit a radial counterexample to the (MP) for $\wt{\cM}(R)$ on $B_{R'}(0)$ with $R' > R$ using the radial calculations as recorded in Remark \ref{rem:radial_calculus}. Consider 
\begin{equation}\label{ZMP_CE}
u(x) := \psi(|x|) \ \ \text{with} \ \ \psi(t):= t - \frac{t^2}{2R}.
\end{equation}		
Note that 
\begin{equation}\label{psi_derivs}
\psi'(t) = \frac{R - t}{R} \quad \text{and} \quad \psi^{\prime \prime}(t) = - \frac{1}{R}
\end{equation}	
and $\psi$ has its only critical point in $t=R$ with global maximum value $R/2$. Hence
\begin{equation}\label{max_u}
u(x) := |x| - \frac{1}{R} \frac{|x|^2}{2} \ \ \text{has its global maximum value on the sphere} \ |x| = R,
\end{equation}
and hence \underline{fails} to satisfy the (MP) on $\overline{\Omega}$ for any ball $\Omega = B_{R'}(0)$ with radius $R' > R$.

It remains only to show that $u$ is $\wt{\cM}(R)$-subharmonic on $\R^n$. It is easy to see that (use \eqref{GDRCD} with $\gamma = 0$ and $\cD = \R^n$):	
\begin{equation}\label{MR_Dual_Cone}
	\wt{\cM}(R) = \left\{ (r,p,A) \in \J^2: \ \lambda_{\rm max}(A) + \frac{|p|}{R} \geq 0 \right\}.
\end{equation}
The function $|x|$ does not have any upper test functions at $x = 0$, so neither does $|x|$ minus the quadratic $\frac{|x|^2}{2R}$. For $x \neq 0$, where $u$ is smooth, we show that its $2$-jet satisfies $J^2_xu \in \wt{\cM}(R)$ by using the radial calculus. For $t = |x| \neq 0$, using the radial formula \eqref{radial_calc_3} together with \eqref{psi_derivs}, we have
\begin{equation}\label{RC1}
p := Du(x) = \psi^{\prime}(|x|) \frac{x}{|x|}  = \frac{R - |x|}{R}\frac{x}{|x|} 
\end{equation}
and
\begin{equation}\label{RC2}
A := D^2 u(x) = \frac{\psi^{\prime}(|x|)}{|x|} P_{x^{\perp}} + \psi^{\prime \prime}(|x|) P_{x} = \left( - \frac{1}{R} + \frac{1}{|x|} \right) P_{x^{\perp}} - \frac{1}{R} P_{x}.
\end{equation}
Hence, for $n \geq 2$ we have $\lambda_{\rm max}(A) = - \frac{1}{R} + \frac{1}{|x|}$. In particular, if $0 < |x| \leq R$, $\lambda_{\rm max}(A) > 0$ and hence $\lambda_{\rm max}(A) + \frac{|p|}{R}> 0$. On the other hand, if $R < |x|$ then by \eqref{RC1} and \eqref{RC2} we have
$$
	\frac{|p|}{R} = \frac{|x| - R}{R} > 0\ \ \text{and} \ \ \lambda_{\rm max}(A) = - \frac{(|x| - R)}{R|x|} 
$$
so that 
	$$
	\lambda_{\rm max}(A) + \frac{|p|}{R} = \frac{|x| - R}{R} \left[ \frac{1}{R} - \frac{1}{|x|}  \right] > 0.
$$
\end{proof}

\section{The Comparison Principle for $\cM$-monotone Subequations}\label{sec:CP}

In this section, we examine the central question of the paper which is the validity of comparison (C) for $\F$ on $\overline{\Omega}$:  
\begin{equation}\label{CPV1}
u \leq w \ \text{on} \ \partial \Omega \ \ \Longrightarrow \ \ u \leq w \ \text{on} \ \Omega,
\end{equation}
or equivalently, the zero maximum principle for the {\em comparison differences}
\begin{equation}\label{CPD}
u - w \leq 0 \ \text{on} \ \partial \Omega \ \ \Longrightarrow \ \ u - w \leq 0 \ \text{on} \ \Omega,
\end{equation}
{\em if $u \in \USC(\overline{\Omega})$ and $w \in \LSC(\overline{\Omega})$ are $\F$-subharmonic and $\F$-superharmonic respectively on $\Omega$}. As noted in the discussion of \eqref{CP1_intro}-\eqref{CP2_intro}, if one uses Dirichlet duality and defines $v:= -w$, the comparison (C) is equivalent to the zero maximum principle for sums (ZMP for Sums)  on $\overline{\Omega}$:
\begin{equation}\label{CPV2}
u + v \leq 0 \ \text{on} \ \partial \Omega \ \ \Longrightarrow \ \ u + v \leq 0 \ \text{on} \ \Omega
\end{equation}
{\em  if $u$ and $v \in \USC(\overline{\Omega})$ are $\F$-subharmonic and $\wt{\F}$-subharmonic respectively on $\Omega$}.  This second form \eqref{CPV2} is the one which will be proved. Moreover, since $\wt{\wt{\F}} = \F$, the version \eqref{CPV2} of comparison immediately implies the following symmetry
\begin{equation}\label{CP_Duality}
\mbox{ comparison for $\F$ on $\overline{\Omega} \ \ \Leftrightarrow $\ \ comparison for $\wt{\F}$ on $\overline{\Omega}$.}
\end{equation} 

Our method is dependent on being able to find a subequation  $\cH$ with two properties:
\begin{equation}\label{CP_ansatz1}
\F(X) + \wt{\F}(X) \subset \cH(X), \ \ \text{for every open set} \ X \subset \R^n
\end{equation}
and $\cH(X)$ satisfying the zero mazimum principle (ZMP); that is,
\begin{equation}\label{CP_ansatz2}
h\leq 0 \ \text{on} \ \partial \Omega \ \ \Longrightarrow \ \ h \leq 0 \ \text{on} \ \Omega, \ \ \forall \, \Omega \subset \subset X, \ h \in \USC(\overline{\Omega}) \cap \cH(\Omega). 
\end{equation}
The first step is to find $\cH$ which satisfies \eqref{CP_ansatz1} and the second step is to show that \eqref{CP_ansatz2} holds. We will discover $\cH$ infinitesimally, which reduces to montotonicity by using duality. At the infinitesimal (2-jet) level, $\cH$ must be the dual $\wt{\cM}$ of a monotonicity set $\cM$ for $\F$. This is done in Lemma \ref{lem:CP_Jets} below,  but first we prove that a {\em subharmonic addition} such as \eqref{CP_ansatz1} is implied by its infinitesimal version, {\em jet addition}. 

\begin{thm}[The Subharmonic Addition Theorem]\label{thm:SAT}
For arbitrary subequation constraint sets $\F, \cG$ and $\cH$ of $\J^2$, 
\begin{equation}\label{jet_addition}
\text{(Jet Addition)} \quad \F + \cG \subset \cH
\end{equation}
implies
\begin{equation}\label{subharmonic_addition}
\text{(Subharmonic Addition)} \quad \F(X) + \cG(X) \subset \cH(X)
\end{equation}
for the subharmonics on each open set $X \subset \R^n$.
	\end{thm}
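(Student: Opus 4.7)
The plan is a standard regularization argument: replace the USC functions $u \in \F(X)$ and $v \in \cG(X)$ by sup-convolution approximations, apply coherence plus the Almost Everywhere Theorem to the (semi-convex) sum of the approximations, and pass to a decreasing limit. The pointwise approach fails directly because upper test jets of sums need not split as sums of upper test jets, so regularization to a class where jets really do add (points of twice differentiability) is forced upon us.

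More precisely, fix $X' \subset\subset X$ and for small $\epsilon > 0$ consider
\[
u^{\epsilon}(x) := \sup_{y \in X} \left[ u(y) - \frac{|x-y|^2}{2\epsilon} \right], \qquad v^{\epsilon}(x) := \sup_{y \in X} \left[ v(y) - \frac{|x-y|^2}{2\epsilon} \right],
\]
which, for $\epsilon$ sufficiently small, are finite and $\tfrac{1}{\epsilon}$-semi-convex on an open neighborhood $X_{\epsilon} \supset X'$, and decrease pointwise to $u, v$ as $\epsilon \searrow 0$. The key preservation lemma, which uses \emph{only} the translation invariance of $\F$ (the constant coefficient hypothesis) together with property (N), is that $u^\epsilon \in \F(X_\epsilon)$. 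Indeed, if $\varphi$ is a $C^2$ upper test function for $u^{\epsilon}$ at $x_0 \in X_\epsilon$ and $y_0$ is a maximizer in the defining supremum, then $\psi(z) := \varphi(z + x_0 - y_0) + \tfrac{|x_0-y_0|^2}{2\epsilon}$ is an upper test function for $u$ at $y_0$, so $J^2_{y_0}\psi \in \F$; since
\[
J^2_{y_0}\psi = J^2_{x_0}\varphi + \left( \tfrac{|x_0 - y_0|^2}{2\epsilon},\, 0,\, 0 \right),
\]
property (N) yields $J^2_{x_0}\varphi \in \F$. The analogous statement holds for $v^\epsilon \in \cG(X_\epsilon)$.

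Now $u^\epsilon + v^\epsilon$ is locally semi-convex on $X_\epsilon$, so by Alexandroff's theorem it is twice differentiable almost everywhere. At any such point $x$, both $u^\epsilon$ and $v^\epsilon$ are twice differentiable, and the coherence property (Remark \ref{rem:coherence}) gives $J^2_x u^\epsilon \in \F$ and $J^2_x v^\epsilon \in \cG$; by linearity of differentiation,
\[
J^2_x(u^\epsilon + v^\epsilon) = J^2_x u^\epsilon + J^2_x v^\epsilon \in \F + \cG \subset \cH.
\]
The Almost Everywhere Theorem (Lemma \ref{lem:AET}) applied to the semi-convex function $u^\epsilon + v^\epsilon$ and the subequation $\cH$ then promotes this to $u^\epsilon + v^\epsilon \in \cH(X_\epsilon)$. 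Finally, $u^\epsilon + v^\epsilon \searrow u + v$ pointwise as $\epsilon \searrow 0$, and a decreasing limit of $\cH$-subharmonic functions is $\cH$-subharmonic — a standard viscosity fact, provable by contradiction via the Bad Test Jet Lemma \ref{lem:nonFSH} together with the closedness of $\cH$. Hence $u + v \in \cH(X')$, and since $X' \subset\subset X$ was arbitrary, $u + v \in \cH(X)$.

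The main obstacle is the sup-convolution preservation step; it is precisely here that the constant coefficient hypothesis is essential, because the test function $\psi$ at $y_0$ is obtained from $\varphi$ at $x_0$ by a pure translation, and only the translation invariance of $\F$ (plus property (N) to absorb the nonnegative $r$-shift $|x_0-y_0|^2/(2\epsilon)$) makes the argument go through. The remaining ingredients — Alexandroff's theorem, Lemma \ref{lem:AET}, and stability of subharmonics under decreasing limits — are essentially mechanical once that regularization is in hand.
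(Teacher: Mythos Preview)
Your argument follows exactly the paper's scheme---sup-convolution, then Alexandroff plus the Almost Everywhere Theorem on the semi-convex sum, then a decreasing limit---and the test-function transplant you give for preservation of $\F$-subharmonicity under sup-convolution is correct (the paper phrases it instead as Translation Property plus Families Locally Bounded Above from Proposition~\ref{prop:B}, but the content is the same). There is, however, one genuine technical step you have skipped that the paper carries out explicitly: a preliminary truncation from below.

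The issue is your assertion that $u^\epsilon$ is finite on a neighbourhood of $X'$ and that ``$y_0$ is a maximizer in the defining supremum''. Since $u\in\USC(X)$ takes values in $[-\infty,\infty)$ and need not be bounded above on all of $X$, the supremum $\sup_{y\in X}[u(y)-\tfrac{1}{2\epsilon}|x-y|^{2}]$ may be $+\infty$; and if instead you localize the supremum to a compact $\overline K\subset X$, the maximizer may land on $\partial K$ at points where $u$ is very negative, in which case your transplanted $\psi$ is \emph{not} an upper test function for $u$ in a full neighbourhood of $y_0$ (the inequality $u(z)\le\psi(z)$ uses that $z$ lies in the sup region). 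The paper handles this by first replacing $u,v$ on a small ball $B\subset\subset X$ by the truncations $u_m:=\max\{u,\varphi-m\}$, $v_m:=\max\{v,\psi-m\}$, where $\varphi,\psi$ are bounded quadratic $\F$- and $\G$-subharmonics on $B$ furnished by Remark~\ref{rem:LBS} (this uses properties (T) and (P)); the truncations lie in $\F(B)$ and $\G(B)$ by the Maximum Property and (N), are two-sided bounded $|u_m|,|v_m|\le N$, and decrease to $u,v$. With the lower bound in hand one gets $|x_0-y_0|=O(\sqrt{\epsilon N})$, so the maximizer stays in the interior of $B$ for $x_0$ in the shrunken ball $B_\delta$, and your preservation argument then goes through verbatim. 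After that, the remainder of your proof is identical to the paper's.
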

We include the complete proof of this constant coefficient result for the reader's convenience.

\begin{proof} 
	Given $u \in \F(X)$ and $v \in \G(X)$, it suffices to show that about each $x_0 \in X$ there is an open ball $B = B_{\rho}(x_0) \subset \subset \Omega$ such that $u + v \in \cH(B)$. Since $u,v \in \USC(X)$, they are bounded from above on any compact subset of $X$. Both $u$ and $v$ can be written as a decreasing limit of quasi-convex {\em sup convolution approximations}, if $u$ and $v$ are also locally bounded from below.

	To this end, by shrinking $B$ if necessary, since $\F$ and $\G$ satisfy conditions (T) and (P), about each $x_0 \in X$ one can find quadratic functions $\varphi, \psi$ which are $\F, \G$-subharmonic and bounded on a common $B= B_{\rho}(x_0)$ (see Remark \ref{rem:LBS}). The sequences of functions
	\begin{equation}\label{AT1}
	u_m:= \max\{u, \varphi - m\} \in \F(B) \ \ \text{and} \ \ v_m:= \max\{u, \psi - m\} \in \G(B),  \ \ m \in \N
	\end{equation}
	are bounded from above and below on $B$. The $\F,\G$-subharmonicity claims use the Maximum Property (B) of Proposition \ref{prop:B} and the Negativity Property (N) of $\F, \G$ applied to $\varphi, \psi$ which are $C^2$. 
	
	Using these truncating approximations, in the proof of $u + v \in \cH(B)$ we may assume that $u,v$ are bounded on $B$; that is, there exists $N > 0$ such that
	\begin{equation}\label{AT2}
	|u(x)|, |v(x)| \leq N,  \ \ \forall \ x \in B.
	\end{equation}
	Indeed, if the sum of the truncations in \eqref{AT1} satisfies $u_m + v_m \in \cH(B)$ for each $m \in \N$, the Decreasing Sequence Property (E) of Proposition \ref{prop:B} shows that the limit satisfies $u + v \in \cH(B)$, as desired.
	
	Now, assuming \eqref{AT2}, one passes to the sup convolutions
	\begin{equation}\label{AT3}
	u^{\veps}(x) := \sup_{y \in B} \left\{ u(y) - \frac{1}{\veps} |x-y|^2 \right\}, \ \ x \in B, \veps > 0 ,
	\end{equation}
	and similarly for $v^{\veps}$. One has $u^{\veps}, v^{\veps}$ are $2/\veps$-quasiconvex and decrease to $u,v$ (where one uses that $u,v$ are bounded below for the limit statement and hence the need for the truncation \eqref{AT1}). Moreover one has that
	\begin{equation}\label{AT4}
	u^{\veps} \in \F(B_{\delta}) \quad \text{and} \quad v^{\veps} \in \G(B_{\delta})
	\end{equation}
	where $B_{\delta}:= \{ x \in B: \ {\rm dist}(x, \partial B) > \delta \}$ and $\delta = \sqrt{2 \veps N}$. One uses the Translation Property (D) and the Families Locally Bounded Above Property (F) of Proposition \ref{prop:B}.
	
	By Alexandroff's Theorem, \eqref{AT4} and the jet addition hypothesis ($\F + \G \subset \cH$), one has that the quasi-convex $u^{\veps}, v^{\veps}$ satisfy
	\begin{equation}\label{AT5}
	J^2_x(u^{\veps} + v^{\veps}) \in \cH \ \ \text{for almost every} \ x \in B_{\delta}.
	\end{equation}
	For quasi-convex functions, the statement \eqref{AT5} yields $u^{\veps} + v^{\veps} \in \cH(B_{\delta})$ by the Almost Everywhere Theorem of Lemma \ref{lem:AET}. The desired conclusion follows from the Decreasing Sequence Property (E) of Proposition \ref{prop:B} by considering the limit along a sequence corresponding to $\veps = \veps_j \to 0^+$.
\end{proof}

\begin{rem}[Subharmonic Addition for locally quasi-convex functions]\label{rem:SAT_LQC} For locally quasi-convex functions, Theorem \ref{thm:SAT} extends from constant coefficient subequations to arbitary subequations, and hence from any open set $X$ in Euclidian space to a manifold $X$. Namely
$$
	\F + \G \subset \cH \ \Rightarrow \ u + v \in \cH(X), \ \forall\, u \in \F(X), v \in \G(X) \ \text{locally quasi-convex}. 
$$
This is immediate from the proof of Theorem \ref{thm:SAT} above, since the sup convolution step is unneccessary if $u$ and $v$ are assumed to be locally quasi-convex, and the other steps do not use translation invariance.	
	\end{rem}

In the special case  $\G:= \wt{\F}$, the Subharmonic Addition Theorem concludes the desired subharmonic addition \eqref{CP_ansatz1} stating that:
\begin{equation}\label{JAH}
\text{if} \ \F + \wt{\F} \subset \cH \ \ \text{then} \ \ \F(X) + \wt{\F}(X) \subset \cH(X) .
\end{equation}
Next, using duality, we reduce the jet addition hypothesis $\F + \wt{\F} \subset \cH$ to a monotonicity hypotheis $\F + \wt{\cH} \subset \F$ on $\F$. This is a key step in the basic method of this paper.

\begin{lem}[Jet addition, duality and monotonicity]\label{lem:CP_Jets} For any two subequation constraint sets $\F, \cH \subset \J^2$, one has
	\begin{equation}\label{AlgAT}
	\F + \wt{\F} \subset \cH \ \iff \ \F + \wt{\cH} \subset \F.
	\end{equation}
\end{lem}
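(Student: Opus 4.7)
The plan is to derive both inclusions from a common ``sliced'' reformulation, indexed by $J \in \F$, and to pass between the two versions by a single application of Dirichlet duality to each slice. The only ingredients I expect to need are properties (1), (3), (6), and (8) of Proposition~3.2; reflexivity is where the topological condition (T) enters in an essential way.

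First I would rewrite the hypothesis $\F + \wt{\F} \subset \cH$ in the sliced form: for every fixed $J \in \F$, $\wt{\F} \subset \cH - J$. Then I would apply Proposition~3.2(1) to obtain the dual inclusion $\wt{\cH - J} \subset \wt{\wt{\F}}$. By Proposition~3.2(3), the left-hand side rewrites as $\wt{\cH} + J$, while by reflexivity the right-hand side equals $\F$. Hence the hypothesis becomes
\[
\wt{\cH} + J \subset \F \qquad \text{for every } J \in \F,
\]
which is exactly the second inclusion $\F + \wt{\cH} \subset \F$. The converse implication is obtained by running the same chain backwards, using the reflexivity of $\cH$ (also a subequation) so that $\wt{\wt{\cH}} = \cH$; alternatively, one may simply apply the forward direction with $(\F, \cH)$ replaced by $(\F, \wt{\cH})$ and invoke $\wt{\wt{\cH}} = \cH$ at the end.

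The main point to be careful about is that Proposition~3.2(1) by itself gives only the forward direction ``inclusions reverse under duality''; promoting it to a two-sided equivalence requires both sides of the inclusion to be reflexive. This is fine here because $\wt{\F}$ is a subequation by Proposition~3.2(8), and $\cH - J$, being a translate of the subequation $\cH$, inherits (T) and is therefore reflexive by Proposition~3.2(6). Aside from this bookkeeping, the argument is a short algebraic manipulation and I do not anticipate any other obstacle.
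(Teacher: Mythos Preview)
Your proof is correct and follows essentially the same sliced-by-$J\in\F$ approach as the paper, using properties (1), (3), and reflexivity (6) of the dual; the paper's argument is in fact terser and cites only (1) and (3), leaving the appeal to reflexivity implicit. One minor caveat: your suggested alternative for the converse (applying the forward direction with $\cH$ replaced by $\wt{\cH}$) does not straightforwardly produce the implication $\F+\wt{\cH}\subset\F\Rightarrow\F+\wt{\F}\subset\cH$, but your primary argument (running the chain backwards using reflexivity of $\cH-J$) is sound, so this does not affect correctness.
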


\begin{proof} One sees that for $J = (r,p,A) \in \J^2$ one has
	\begin{equation}\label{CPJ1}
	J + \wt{\F} \subset \cH \ \iff \ J + \wt{\cH} \subset \F
	\end{equation}
	since $\wt{\F - J} = \wt{\F} + J \subset \cH  \iff \wt{\cH} \subset \F-J \iff  J + \wt{\cH} \subset \F$, by the the  elementary properties (1) and (3) of the Dirichlet dual in  Proposition \ref{prop:duality}. Taking all $J \in \F$ in \eqref{CPJ1} yields the lemma.
\end{proof}

Consequently, finding a subequation $\cH$ with the desired jet addition property  $\F + \wt{\F} \subset \cH$ requires that $\cH$ equals the dual $\wt{\cM}$ of a monotonicity subequation $\cM$ for $\F$; that is, satisfying $\F + \cM \subset \F$. We can summarize as follows.

\begin{thm}[Suharmonic addition, duality and monotonicity]\label{thm:SAT_MD} Suppose that $\cM \subset \J^2$ is a monotonicity cone subequation and that $\F \subset \J^2$ is an $\cM$-monotone subequation constraint set. Then, for every open set $X \subset \R^n$, one has 
\begin{equation}\label{SAT_MD}
\F(X) + \wt{\F}(X) \subset \cMt(X).
\end{equation}
\end{thm}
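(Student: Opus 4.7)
The plan is to assemble the statement as a direct consequence of the two main tools established immediately before it: the Subharmonic Addition Theorem (Theorem \ref{thm:SAT}) and the algebraic identity relating jet addition, duality, and monotonicity (Lemma \ref{lem:CP_Jets}). No new hard analysis should be needed; the analytic work is already encapsulated in the proof of Theorem \ref{thm:SAT} (sup convolutions, Alexandroff, and the Almost Everywhere Theorem). The result should drop out by combining these two ingredients via reflexivity of Dirichlet duality.

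More precisely, I would argue at the infinitesimal (jet) level first. The hypothesis that $\F$ is $\cM$-monotone reads $\F + \cM \subset \F$. Since $\cM$ is a monotonicity cone subequation it satisfies property (T), so by the reflexivity clause (6) of Proposition \ref{prop:duality} we have $\cM = \widetilde{\widetilde{\cM}}$. Rewriting the monotonicity hypothesis as $\F + \widetilde{\widetilde{\cM}} \subset \F$ and applying Lemma \ref{lem:CP_Jets} with $\cH := \widetilde{\cM}$ yields the jet addition
\[
\F + \widetilde{\F} \ \subset \ \widetilde{\cM}.
\]
Here it is essential that $\widetilde{\cM}$ actually be a subequation so that Lemma \ref{lem:CP_Jets} applies; but this follows from clause (8) of Proposition \ref{prop:duality} since $\cM$ itself is a subequation.

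Finally, I would invoke the Subharmonic Addition Theorem (Theorem \ref{thm:SAT}) with the triple $(\F,\G,\cH) = (\F, \widetilde{\F}, \widetilde{\cM})$. All three are subequation constraint sets: $\F$ and $\widetilde{\F}$ by assumption and Proposition \ref{prop:duality}(8), and $\widetilde{\cM}$ as noted. The jet inclusion established in the previous step then promotes to the subharmonic inclusion
\[
\F(X) + \widetilde{\F}(X) \ \subset \ \widetilde{\cM}(X)
\]
on every open $X \subset \R^n$, which is exactly the claim.

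The only step where anything could go wrong is verifying the hypotheses of the two lemmas invoked — namely that $\widetilde{\cM}$ is a genuine subequation so that the Subharmonic Addition Theorem and Lemma \ref{lem:CP_Jets} both apply — and this is handled cleanly by Proposition \ref{prop:duality}. So I do not expect any genuine obstacle; the proof is essentially a two-line synthesis of Lemma \ref{lem:CP_Jets} and Theorem \ref{thm:SAT}, with reflexivity of duality as the glue.
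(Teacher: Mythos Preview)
Your proposal is correct and matches the paper's approach exactly: the paper presents this theorem as a direct synthesis of Lemma~\ref{lem:CP_Jets} (with $\cH = \wt{\cM}$, using reflexivity $\cM = \wt{\wt{\cM}}$) and Theorem~\ref{thm:SAT}, just as you outline.
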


Combining Theorem \ref{thm:SAT_MD} with Theorem \ref{thm:ZMP} yields our general method for proving comparison.

\begin{thm}[The General Comparison Theorem]\label{thm:CP_general}
Suppose that a subequation $\F \subset \J^2$ is $\cM$-monotone for some convex cone subequation $\cM$. If $\cM$ admits a strict approximator $\psi$ on $\Omega$ (in the sense of Definition \ref{defn:strict_approx}), then comparison (C) holds for $\F$ on $\overline{\Omega}$.
	\end{thm}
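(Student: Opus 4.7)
The plan is to concatenate the three ingredients that have already been assembled earlier in the excerpt: duality, the Subharmonic Addition Theorem (Theorem \ref{thm:SAT_MD}), and the Zero Maximum Principle (Theorem \ref{thm:ZMP}). No new analysis should be required, since all the heavy lifting has been packaged into those results.

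First I would reformulate the conclusion in its ``sum'' form. Given $u\in\USC(\overline{\Omega})$ that is $\F$-subharmonic on $\Omega$ and $w\in\LSC(\overline{\Omega})$ that is $\F$-superharmonic on $\Omega$, set $v:=-w$. By Definition \ref{defn:FH} the function $v\in\USC(\overline{\Omega})$ is $\wt{\F}$-subharmonic on $\Omega$, and the boundary hypothesis $u\leq w$ on $\partial\Omega$ is just $u+v\leq 0$ on $\partial\Omega$. Proving $u\leq w$ on $\Omega$ is thus equivalent to establishing $u+v\leq 0$ on $\Omega$; this is the form in which we shall attack it.

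Next I would invoke Theorem \ref{thm:SAT_MD}. Since $\F$ is $\cM$-monotone and $\cM$ is a (convex cone) subequation, that theorem yields the containment
\[
\F(\Omega)+\wt{\F}(\Omega)\ \subset\ \wt{\cM}(\Omega).
\]
In particular $z:=u+v\in\USC(\overline{\Omega})$ is $\wt{\cM}$-subharmonic on $\Omega$ and satisfies $z\leq 0$ on $\partial\Omega$. At this point the comparison problem for $\F$ has been entirely converted into a zero maximum principle problem for the single subequation $\wt{\cM}$, regardless of the shape of $\F$.

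Finally I would apply the Zero Maximum Principle (Theorem \ref{thm:ZMP}) to the convex cone subequation $\cM$, which by hypothesis admits a strict approximator $\psi$ on $\Omega$. That theorem tells us precisely that every $z\in\USC(\overline{\Omega})$ which is $\wt{\cM}$-subharmonic on $\Omega$ and non-positive on $\partial\Omega$ is non-positive throughout $\Omega$. Applied to $z=u+v$ this gives $u+v\leq 0$ on $\Omega$, equivalently $u\leq w$ on $\Omega$, which is the comparison principle as stated.

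There is essentially no obstacle left: the genuinely hard analysis (sup-convolution regularization, Alexandroff's theorem, and the Almost Everywhere Theorem \ref{lem:AET} producing subharmonicity of the sum from subharmonicity almost everywhere) has been absorbed into Theorem \ref{thm:SAT_MD}, while the role of the strict approximator $\psi$ has been absorbed into Theorem \ref{thm:ZMP} via the Definitional Comparison Lemma \ref{lem:DCP} (applied to the perturbed function $z-m+\varepsilon\psi$ with $m,\varepsilon\searrow 0$). The only point worth a sentence of care when writing up is to verify that the hypotheses of Theorem \ref{thm:SAT_MD} are in force here, i.e.\ that the convex cone subequation $\cM$ for which $\F$ is $\cM$-monotone can be taken to be a monotonicity cone subequation in the sense of Definition \ref{defn:MCS}; this is automatic because $\F$ satisfies (P) and (N), so $\cM_0\subset\cM_{\F}$, and one may enlarge $\cM$ to $\cM+\cM_0$ (still a convex cone subequation admitting $\psi$ as a strict approximator) without loss.
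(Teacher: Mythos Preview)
Your proof is correct and follows exactly the same route as the paper: reformulate comparison in the sum form $u+v\le 0$, apply Theorem \ref{thm:SAT_MD} to place $z=u+v$ in $\wt{\cM}(\Omega)$, and then invoke Theorem \ref{thm:ZMP} via the strict approximator. Your closing remark about enlarging $\cM$ to $\cM+\cM_0$ is harmless but unnecessary, since a convex cone subequation already satisfies (P) and (N) and contains the vertex $0$, hence automatically contains $\cM_0$.
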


\begin{proof}
	Suppose that $u$ and $v \in \USC(\overline{\Omega})$ are $\F$-subharmonic and $\wt{\F}$-subharmonic respectively on $\Omega$. Taking $X = \Omega$ in Theorem \ref{thm:SAT_MD}, we have $z:= u + v \in \wt{\cM}(\Omega)$, and hence $z \in \USC(\overline{\Omega}) \cap \wt{\cM}(\Omega)$. By Theorem \ref{thm:ZMP}, since $\cM$ has a strict approximator on $\Omega$, such a $z$ satisfies the (ZMP); that is,
	\begin{equation}
	u + v \leq 0 \ \ \text{on} \ \ \partial \Omega \ \ \Rightarrow \ \ u + v \leq 0 \ \ \text{on} \ \ \Omega.
	\end{equation}
	This is precisely \eqref{CPV2}, which as noted above is one way of formulating the comparison principle (C). 
\end{proof}

We are now ready for the main result. 

\begin{thm}[The Fundamental Family Comparison Theorem]\label{thm:comparison} Suppose that $\F \subset \J^2$ is an $\cM$-monotone subequation constraint set where $\cM \subset \J^2$ is a monotonicity cone subequation. Given $u, v \in \USC(\overline{\Omega})$ which are $\F, \wt{\F}$-subharmonic on $\Omega$ one has
	\begin{equation*}\label{CP3}
	{\rm (C)} \ \ \ \ u + v \leq 0 \ \text{on} \ \partial \Omega \ \ \Longrightarrow \ \ u + v \leq 0 \ \text{on} \ \Omega
	\end{equation*}

1) for each $\Omega$ contained in a translate of the truncated cone $\cD_R := \cD \cap B_R(0)$ if $\cM$ contains one of the cones  
\begin{equation}\label{MGDR_Cone}
\cM(\gamma, \cD, R) := \cM(\gamma) \cap \cM(\cD) \cap \cM(R) \ \ \text{with $R$ finite}
\end{equation}

\noindent and

2) for arbitrary $\Omega \subset \subset \R^n$ if $\cM$ contains one of the cones
\begin{equation}\label{MGDP_Cone}
\cM(\gamma, \cD, \cP) := \cM(\gamma) \cap \cM(\cD) \cap \cM(\cP) := \cM^{\prime}(\gamma, \cD) \times \cP.
\end{equation}

Moreover, by the Fundamental Family Theorem  \ref{thm:fundamental}, every monotonicity cone subequation $\cM$ contains a cone of the type \eqref{MGDR_Cone} so case 1) always holds. Finally, any cone of the type \eqref{MGDP_Cone} satisfes
$\cM(\gamma, \cD, \cP) \supset \cM(\gamma, \cD, R)$ for each $R$ finite, and hence case 1) implies case 2).
\end{thm}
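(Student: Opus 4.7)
The plan is to apply the General Comparison Theorem \ref{thm:CP_general} after producing a strict approximator on $\Omega$ for the given monotonicity cone $\cM$. Since $\F$ is $\cM$-monotone by hypothesis, and since any strict approximator for a smaller monotonicity cone subequation $\cM_0 \subset \cM$ is automatically a strict approximator for $\cM$ (because interiors respect inclusion: $\Int\,\cM_0 \subset \Int\,\cM$), it will suffice to produce one for the fundamental cone that $\cM$ is assumed to contain. Thus the entire argument reduces to exhibiting, in each case, a $\psi \in C(\overline\Omega) \cap C^2(\Omega)$ with $J^2_x \psi \in \Int\,\cM(\gamma,\cD,R)$ or $\Int\,\cM(\gamma,\cD,\cP)$ on $\Omega$.

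For Case 1), where $\cM \supset \cM(\gamma,\cD,R)$ with $R$ finite and $\Omega \subset (y+\cD) \cap B_R(y)$ for some $y \in \R^n$, I would use the explicit quadratic already introduced in the proof of Theorem \ref{thm:ZMP_for_M}, namely
\[
\psi(x) \ :=\ -c + \tfrac12 |x-y|^2, \qquad c > \tfrac12 R^2 + \gamma R,
\]
whose $2$-jet at $x$ is $\bigl(-c + \tfrac12|x-y|^2,\, x-y,\, I\bigr)$. Inspection against the formula \eqref{Int_GDRCone} for $\Int\,\cM(\gamma,\cD,R)$ shows the three strict inequalities: $I > \tfrac{|x-y|}{R} I$ because $|x-y| < R$; $x-y \in \Int\,\cD$ because $\Omega \subset y + \Int\,\cD$; and $-c + \tfrac12|x-y|^2 < -\gamma|x-y|$ by the choice of $c$. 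For Case 2), where $\cM \supset \cM(\gamma,\cD,\cP)$, one first picks any $y \in \R^n$ with $\Omega \subset y + \Int\,\cD$ (possible because $\Omega$ is bounded and $\Int\,\cD \neq \emptyset$), then chooses a finite $R$ with $\Omega \subset B_R(y)$, then $c$ as above; the same $\psi$ has $2$-jet in $\Int\,\cM(\gamma,\cD,R) \subset \Int\,\cM(\gamma,\cD,\cP)$, providing the required strict approximator on $\Omega$.

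For the closing remarks: the inclusion $\cM(\gamma,\cD,R) \subset \cM(\gamma,\cD,\cP)$ for finite $R$ is immediate from the definitions, since $A \geq \tfrac{|p|}{R}I$ forces $A \geq 0$ while the $r$ and $p$ conditions are identical; hence Case 1) formally covers Case 2) by transferring the strict approximator upward along cone containment. The universal applicability of Case 1) to any monotonicity cone subequation $\cM$ is exactly the content of the Fundamental Family Theorem \ref{thm:fundamental}, which furnishes some triple $(\gamma,\cD,R)$ with $\cM(\gamma,\cD,R) \subset \cM$.

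The main work has already been carried out in the preliminary theorems: the analytic content lies in the Subharmonic Addition Theorem \ref{thm:SAT_MD} and in the Zero Maximum Principle \ref{thm:ZMP}, both of which feed into Theorem \ref{thm:CP_general}. The present theorem is essentially a bookkeeping assembly of those three ingredients with the explicit quadratic approximator of Theorem \ref{thm:ZMP_for_M} and the fundamental neighborhood basis of cones; there is no real analytic obstacle. The only care needed is to verify the chain of cone inclusions and the domain constraint in the case $R < \infty$, where the size restriction on $\Omega$ is genuinely necessary, as witnessed by the counterexample of Proposition \ref{prop:ZMP_failure}.
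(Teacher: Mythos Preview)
Your proposal is correct and follows essentially the same approach as the paper. The paper's proof is very brief: it applies Theorem \ref{thm:SAT_MD} to get $u+v \in \wt{\cM}(\Omega)$ and then invokes Theorem \ref{thm:ZMP_for_M} directly for the (ZMP); you instead route through Theorem \ref{thm:CP_general} and re-derive the quadratic strict approximator already built inside the proof of Theorem \ref{thm:ZMP_for_M}, but the underlying ingredients and logical structure are identical.
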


\begin{proof} 	Suppose that $u$ and $v \in \USC(\overline{\Omega})$ are $\F$-subharmonic and $\wt{\F}$-subharmonic respectively on $\Omega$. Again, by taking $X = \Omega$ in Theorem \ref{thm:SAT_MD}, we have $z:=u + v \in \wt{\cM}(\Omega)$ and hence  $z \in \USC(\overline{\Omega}) \cap \cM(\Omega)$. Therefore it suffices to verify the (ZMP) for such $z$. In the cases 1) and 2), this is exactly what Theorem \ref{thm:ZMP_for_M} states in the cases $R$ finite and $R = +\infty$, respectively.
\end{proof}

The size of the domain $\Omega$ in Theorem \ref {thm:comparison} 1) is sharp for the subequation $\F = \cM(R)$ when $R$ is finite. 

\begin{exe}\label{CP_failure} With $n \geq 2$ and $R$ finite, comparison fails for $\F= \cM(R)$ on $\overline{\Omega}$ with $\Omega = B_{R'}(0)$ for each $R' > R$. Indeed, as noted in \eqref{CP_Duality}, one has
\begin{equation*}
\mbox{ comparison for $\F = \cM(R)$ on $\overline{\Omega} \ \ \Leftrightarrow $\ \ comparison for $\wt{\F} = \wt{\cM}(R)$ on $\overline{\Omega}$.}
\end{equation*} 	
By Proposition \ref{prop:ZMP_failure} we know that the (ZMP) fails for $\wt{\cM}(R)$ on $\overline{\Omega}$ with $\Omega = B_{R'}(0)$ for each $R' > R$, which completes the claim.
\end{exe}

A larger family of subequations with maximal monotonicity $\cM(R)$ and failure of comparison on balls of radius $R' > R$ will be presented in Proposition \ref{exe:CE1_CP}.

On the other hand, the fundamental nature of the family of $\cM(\gamma, \cD, R)$ cones gives rise to the local validity of the comparison principle for subequations with this minimal monotonicty.

\begin{thm}[Local Comparison]\label{thm:LC} 	If $\F$ is a subequation which is $\cM$-monotone for some monotonicity cone subequation $\cM$, then the comparison principle holds locally on $\R^n$; in particular, there exists $\rho > 0$ which depends on $\cM$ such that for all domains $\Omega \subset B_{\rho}(x_0)$ with  $x_0 \in \R^n$ arbitrary
	\begin{equation*}\label{CP3}
	{\rm (C)} \ \ \ \ u + v \leq 0 \ \text{on} \ \partial \Omega \ \ \Longrightarrow \ \ u + v \leq 0 \ \text{on} \ \Omega 
	\end{equation*}
for each pair $u \in \USC(\overline{\Omega}) \cap \F(\Omega)$ and $v\in \USC(\overline{\Omega}) \cap \wt{\F}(\Omega)$.
\end{thm}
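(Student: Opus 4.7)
The plan is to combine the Fundamental Family Theorem \ref{thm:fundamental} with part 1) of the Fundamental Family Comparison Theorem \ref{thm:comparison}, and to reduce the remainder of the argument to an elementary piece of convex geometry. First, by Theorem \ref{thm:fundamental}, since $\F$ is $\cM$-monotone for some monotonicity cone subequation $\cM$, there exist $\gamma\in(0,+\infty)$, $R\in(0,+\infty)$ and a directional cone $\cD\subseteq\R^n$ such that $\cM(\gamma,\cD,R)\subset\cM$. Because any monotonicity set for $\F$ is also enlarged to a monotonicity set by restricting to a subcone, $\F$ is automatically $\cM(\gamma,\cD,R)$-monotone. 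Theorem \ref{thm:comparison}(1) then guarantees comparison (C) on every $\Omega$ that is contained in some translate $(y+\cD)\cap B_R(y)$.

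The remaining task is purely geometric: produce $\rho=\rho(\gamma,\cD,R)>0$ such that for every $x_0\in\R^n$, every domain $\Omega\subset B_\rho(x_0)$ lies in some translate of $\cD\cap B_R(0)$. Since $\cD$ is a directional cone, $\Int\cD\neq\emptyset$, so we may pick $p_0\in\Int\cD$ with $|p_0|=1$ and $\delta>0$ with $B_\delta(p_0)\subset\cD$. By the cone property of $\cD$, for every $t>0$ one has $B_{t\delta}(tp_0)\subset\cD$. Given $x_0\in\R^n$, set
\[
y\ :=\ x_0-tp_0,\qquad\text{so that}\qquad x_0-y=tp_0,\quad |x_0-y|=t.
\]
Then $B_\rho(x_0)\subset y+\cD$ provided $\rho\le t\delta$, and $B_\rho(x_0)\subset B_R(y)$ provided $t+\rho\le R$. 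Choosing $t:=\rho/\delta$, both conditions are satisfied whenever
\[
\rho\Bigl(1+\tfrac{1}{\delta}\Bigr)\ \le\ R,\qquad\text{i.e.,}\qquad \rho\ \le\ \frac{R\delta}{1+\delta}.
\]
Any such $\rho$ (e.g. $\rho:=R\delta/(2(1+\delta))$) will work uniformly in $x_0$, because the geometric construction of $y$ is translation-equivariant.

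With $\rho$ fixed as above, any domain $\Omega\subset B_\rho(x_0)$ satisfies $\Omega\subset(y+\cD)\cap B_R(y)$ with $y=x_0-(\rho/\delta)p_0$, and Theorem \ref{thm:comparison}(1) applied to $\F$ with the monotonicity cone $\cM(\gamma,\cD,R)$ delivers the desired implication
\[
u+v\ \le\ 0\ \text{on }\partial\Omega\quad\Longrightarrow\quad u+v\ \le\ 0\ \text{on }\Omega
\]
for all $u\in\USC(\overline\Omega)\cap\F(\Omega)$ and $v\in\USC(\overline\Omega)\cap\wt\F(\Omega)$.

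The only genuine obstacle is the finiteness of $R$: the Fundamental Family Theorem produces monotonicity cones with finite $R$ in general, and as shown in Proposition \ref{prop:ZMP_failure} and the example following Theorem \ref{thm:comparison}, this restriction is sharp—comparison can truly fail on balls larger than $R$ even when the strongest available monotonicity is $\cM(R)$. Thus the localization to small balls is intrinsic, and the only quantitative content of the argument is to verify that the convex-geometric estimate above extracts a radius $\rho$ depending solely on $\cM$ (through $R$ and the interior geometry of $\cD$, encoded by any choice of $p_0\in\Int\cD$ with $B_\delta(p_0)\subset\cD$).
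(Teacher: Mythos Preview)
Your proof is correct and follows essentially the same route as the paper: invoke the Fundamental Family Theorem to obtain $\cM(\gamma,\cD,R)\subset\cM$, then apply the Fundamental Family Comparison Theorem to get comparison on domains contained in a translate of the truncated cone $\cD\cap B_R(0)$, and finally observe that sufficiently small balls fit inside such translates. Where the paper simply asserts ``clearly there exists $\rho>0$ such that $B_\rho(y_0)\subset\cD_R$ for some $y_0\in\cD_R$'' and translates, you supply an explicit quantitative construction yielding $\rho\le R\delta/(1+\delta)$ in terms of the interior geometry of $\cD$; this extra detail is sound and makes the dependence of $\rho$ on $\cM$ transparent.
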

\begin{proof} Since $\F$ is $\cM$-monotone for a monotonicity cone subequation, by Theorem \ref{thm:fundamental} there exists a cone $\cM(\gamma, \cD, R)$ in the fundamental family with $\cM(\gamma, \cD, R) \subset \cM$ and hence $\F$ is $\cM(\gamma, \cD, R)$-monotone. Then by Theorem \ref{thm:CP_general}, comparison holds on all domains $\Omega$ contained in a translate of the truncated cone $\cD_R := \cD \cap B_R(0)$. Clearly there exists $\rho > 0$ such that $B_{\rho}(y_0) \subset \cD_R$ for some $y_0 \in \cD_R$ and hence $\Omega \subset B_{\rho}(x_0)$ is contained in a translation of $\cD_R \supset B_{\rho}(y_0)$.
\end{proof}

We give one final comment in this section. In the proof of Lemma \ref{lem:AT_Cones} (and hence for the proof of Theorem \ref{thm:comparison}), one needed the local existence of bounded $\F, \wt{\F}$ subharmonics on potentially small balls. However, one can also find subharmonics on potentially large balls in various ways. We record this observation for future reference.

\begin{rem}\label{rem:BS} If one knows the existence of particular 2-jets $(r_1, p_1, A_1) \in \F$, the construction of explicit bounded and smooth subharmonics simplifies considerably. For example, if there exists $(r_1,0,0) \in \F$, then any constant function $\varphi \equiv r_0$ with $r_0 \leq r_1$ will do by the negativity property (N). Moreover, if $(r_1, p_1, 0) \in \F$, then any affine function 
	$\varphi(x):= r_0 +] \langle p_0, x-x_0 \rangle$ will be $\F$ subharmonic on $B_{\rho}(x_0)$ if
	$$
	p_0:= p_1 \quad \text{and} \quad r_0 - r_1 + \rho |p_0| \leq 0.
	$$
	If one has neither of these two possibilities, about each $x_0 \in \Omega$ one can use the $\cM$-monotonicity of $\F$ to construct quadratic polynomials
	\begin{equation}\label{BS1}
	\varphi(x) := r_0 + \langle p_0, x - x_0 \rangle + \frac{\lambda_0}{2} |x - x_0|^2
	\end{equation}
	with $r_0 < 0 < \lambda_0, p_0 \in \R^n$ chosen to ensure that $J^2_x \varphi \in \F$ for each $x \in B_{\rho}(x_0) \subset \Omega$, for some $\rho > 0$. Starting from any $(r_1, p_1, A_1) \in \F$, one uses property (P) to show that $(r_1, p_1, \lambda_1 I) \in \F$ for $\lambda_1$ large enough. Using the $\cM$-monotonicity it suffices to exhibit  $(r_0, p_0, \lambda_0) \in \R \times \R^n \times \R$ and $\rho > 0$ such that for $x \in B_{\rho}(x_0)$ and 
	\begin{equation}\label{BS2}
	J^2_x \varphi = \left( \varphi(x), p_0 + \lambda_0 (x - x_0), \lambda_0 I \right), := (r_1, p_1, \lambda_1 I) + (r(x), p(x), (\lambda_0 - \lambda_1)I)
	\end{equation}
	one has $(r(x), p(x), (\lambda_0 - \lambda_1)I) \in \cM$, which requires:
	\begin{equation}\label{BS3}
	r(x) \leq -\gamma |p(x)|; \ \ p(x) \in \cD; \ \ R(\lambda_0 - \lambda_1) I \geq |p(x)| I, 
	\end{equation}
	where the last condition in \eqref{BS3} holds for every $\lambda_0 \geq \lambda_1$ in the case $R= + \infty$. The reader can verify easily that for a suitable radius $\rho$ one can find $(r_0, p_0, \lambda_0)$ for which \eqref{BS3} holds.
\end{rem}

\section{Comparison on Arbitrary Domains by Additional Monotonicity}\label{sec:improvemnts}

By Theorem \ref{thm:fundamental}, any subequation constraint set $\F$ which is $\cM$-monotone for some monotonicity cone subequation $\cM$ must have at least the monotonicity of one of the monotoncity cone subequations $\cM(\gamma, \cD, R) \subset \cM$ belonging to our fundamental family. If $R = +\infty$, then comparison holds for $\F$ on arbitrary bounded domains by Theorem \ref{thm:comparison}. If $R < + \infty$, then (again by Theorem \ref{thm:comparison}) comparison holds for $\F$ on domains $\Omega$ for which a translate of $\Omega$ is contained in the truncated cone $\cD \cap B_R(0)$. This result is sharp if the maximal monotonicity cone subequation $\cM_{\F}$ for $\F$ is $\cM(\gamma, \cD, R)$ with $R$ finite (see Proposition \ref{prop:CE1_CP} for an example).  However, this leaves room for improvement if $\cM_{\F}$ is large enough, and this is the subject of the present section. 

Comparison may still hold for all domains $\Omega \subset \subset \R^n$. We explore this possibility here, continuing with our monotonicity technique, looking for larger, not smaller, monotonicity cone subequations, and highlight two examples.
These two examples contain
\begin{equation}\label{def:M(R)}
	\cM(R) = \left \{ (r,p,A) \in \J^2: \ \lambda_{\rm min} (A) \geq \frac{|p|}{R} \right\} .
	\end{equation}

\begin{defn}\label{defn:newcones}
	Fix $R \in (0, +\infty)$. Define 
\begin{equation}\label{MsubR}
 	\cM_R := \left\{ (r,p,A) \in \J^2: \ A \geq 0 \ \text{and} \ \left( \lambda_1(A) \cdots \lambda_n(A) \right)^{1/n} \geq \frac{|p|}{R} \right\}
 \end{equation}
 and
 \begin{equation}\label{MsuperR}
 \cM^R := \left\{ (r,p,A) \in \J^2: \ A \geq 0 \ \text{and} \ \langle A e, e \rangle \geq \frac{|\langle p, e \rangle|}{R}, \ \forall \,   e \in \R^n \ \text{with} \ |e| = 1 \right\}.
 \end{equation}
	\end{defn}

These variants of $\cM(R)$ are indeed convex cone subequations and are all larger than $\cM(R)$ in a precise sense.

\begin{prop}\label{prop:newcones}
For $R \in (0, +\infty)$ fixed, 
\begin{equation}\label{newcones1}
\mbox{ $\cM_R$ and $\cM^R$ are convex cone subequations}
\end{equation}
and 
\begin{equation}\label{newcones2}
\mbox{ $\cM_R$ and $\cM^R$ contain $\cM(R^{\prime}) \ \ \Longleftrightarrow \ \ R^{\prime} \leq R$.}
\end{equation}
\end{prop}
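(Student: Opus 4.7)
The plan is to verify, for each of $\cM_R$ and $\cM^R$, the four defining properties of a convex cone subequation: being a closed convex cone with vertex at the origin, containing the minimal monotonicity set $\cM_0 = \cN \times \{0\} \times \cP$, and having non-empty interior (which is equivalent to (T) by \eqref{convex_T} for closed convex sets). Closedness and the cone property would follow immediately from continuity and positive homogeneity of the defining expressions. Containment of $\cM_0$ is immediate because $p = 0$ makes the right-hand side of the defining inequality vanish, while $A \geq 0$ is part of $\cM_0$. For nonemptiness of the interior, the jet $(-1, 0, I)$ can be used as a witness in both cases, since $I > 0$ and small perturbations preserve the strict versions of the defining inequalities.

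The only nontrivial point is convexity. For $\cM^R$ this should be straightforward: for each fixed unit vector $e \in \R^n$, the map $(p, A) \mapsto \langle Ae, e\rangle - |\langle p, e\rangle|/R$ is concave as the sum of a linear term and the negative of a seminorm, so the intersection over all unit $e$ together with the closed convex cone $\{A \geq 0\}$ in the $A$-slot is convex. For $\cM_R$ the key ingredient is \emph{Minkowski's determinant inequality}, which states that $A \mapsto (\det A)^{1/n}$ is concave on $\cP$. Combined with convexity of the norm $|p|$, this displays $\cM_R$ as the intersection (in the $A$-slot) of $\cP$ with the super-level set of a concave function of $(p,A)$, hence convex.

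For the equivalence in (2), I would first establish the inclusions $\cM(R) \subset \cM_R$ and $\cM(R) \subset \cM^R$ for every $R$: the hypothesis $\lambda_{\min}(A) \geq |p|/R$ forces $A \geq 0$ and makes every eigenvalue (and hence their geometric mean) at least $|p|/R$, while also giving $\langle Ae, e\rangle \geq \lambda_{\min}(A) \geq |p|/R \geq |\langle p, e\rangle|/R$ for unit $e$ by Cauchy--Schwarz. Combined with the monotonicity \eqref{RNest} of $R \mapsto \cM(R)$, this handles the ``$R' \leq R$'' direction. For the converse, I would fix a unit vector $e_1$ and test with the explicit jet $J_0 := (0, e_1, (1/R') I)$, which lies in $\cM(R')$ since $\lambda_{\min}((1/R')I) = 1/R' = |e_1|/R'$. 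Membership of $J_0$ in $\cM_R$ forces $(\det((1/R')I))^{1/n} = 1/R' \geq 1/R$, and evaluating the defining inequality of $\cM^R$ at $e = e_1$ yields the same inequality $1/R' \geq 1/R$; in either case $R' \leq R$ follows. The main conceptual obstacle is really only the appeal to Minkowski's determinant inequality for the convexity of $\cM_R$; everything else reduces to standard convex-cone bookkeeping and elementary spectral estimates.
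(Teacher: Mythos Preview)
Your proof is correct and, for \eqref{newcones1}, essentially matches the paper's argument: the paper simply notes that each cone is defined by $h(p,A)\geq 0$ with $h$ concave on its domain (your Minkowski determinant inequality is exactly the concavity being invoked for $\cM_R$), that (N) is automatic since $r$ is silent, that (P) follows from monotonicity of $h$ in $A$, and that (T) follows from convexity plus nonempty interior. Your formulation via containment of $\cM_0$ and the explicit witness $(-1,0,I)$ is a slight repackaging of the same points.

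For \eqref{newcones2} the paper gives no argument at all, so your proof actually supplies what the paper omits. Your approach (establishing $\cM(R)\subset\cM_R$ and $\cM(R)\subset\cM^R$ directly from the spectral inequality $\lambda_{\min}(A)\geq |p|/R$, then invoking the nesting \eqref{RNest} for the forward direction, and testing with the explicit jet $(0,e_1,(1/R')I)$ for the converse) is clean and correct.
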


\begin{proof}
$\cM_R$ and $\cM^R$ are convex cones since each is defined by an inequality of the form $h(p,A) \geq 0$ where $h$ is a concave function on its domain. Property (N) is automatic as the variable $r$ is silent in both cases, property (P) follows since each $h(p,A)$ is increasing in $A$ on its domain. Property (T) is satisfied since each $\cM$ is a convex cone with  $\Int \, \cM$ non-empty. 
\end{proof}

Comparison always holds for all bounded domains for subequations $\F$ which are $\cM$-monotone if $\cM$ contains either $\cM_R$ or $\cM^R$. 

\begin{thm}\label{thm:CP_improvents}
	Suppose that $\F \subset \J^2$ is a subsequation which is $\cM$-monotone. If $\cM$ contains either $\cM_R$ or $\cM^R$ for some $R$, then comparison holds for $\F$ on all bounded domains $\Omega \subset \R^n$. 
	\end{thm}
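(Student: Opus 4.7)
The plan is to invoke the General Comparison Theorem~\ref{thm:CP_general}, which reduces the task to exhibiting, for each bounded domain $\Omega \subset \R^n$, a strict approximator for $\cM$ on $\Omega$. Since $\cM \supset \cM_R$ (resp.~$\cM^R$) forces $\Int \, \cM \supset \Int \, \cM_R$ (resp.~$\Int \, \cM^R$), any strict approximator for the smaller cone on $\Omega$ is automatically a strict approximator for $\cM$. Thus the whole problem reduces to constructing strict approximators for $\cM_R$ and $\cM^R$ on an arbitrary bounded domain.

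I would try the single radial ansatz $\psi(x) := e^{\alpha |x|^2/2}$ with a parameter $\alpha > 0$ to be chosen. A direct computation (consistent with the radial-calculus formulas of Remark~\ref{rem:radial_calculus}) gives
\begin{equation*}
D\psi(x) = \alpha e^{\alpha |x|^2/2}\, x, \qquad D^2\psi(x) = \alpha e^{\alpha |x|^2/2}\bigl(I + \alpha |x|^2 P_x\bigr),
\end{equation*}
where $P_x := xx^T/|x|^2$ is the orthogonal projection onto the line $[x]$. The Hessian is everywhere positive definite, with eigenvalues $\alpha e^{\alpha|x|^2/2}$ on $[x]^\perp$ (multiplicity $n-1$) and $\alpha(1 + \alpha|x|^2)\, e^{\alpha|x|^2/2}$ along $[x]$. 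The remaining task is to check the strict approximator condition for each cone.

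For $\cM^R$, using $\langle P_x e, e\rangle = \langle x, e\rangle^2/|x|^2$ and canceling the common factor $\alpha e^{\alpha|x|^2/2}$, the condition $\langle D^2\psi(x)e, e\rangle > |\langle D\psi(x), e\rangle|/R$ for every unit $e$ collapses to the scalar inequality $1 + \alpha s^2 > s/R$ in $s := |\langle x, e\rangle|$. This quadratic in $s$ has discriminant $1/R^2 - 4\alpha$, so any $\alpha > 1/(4R^2)$ produces a strict approximator for $\cM^R$ valid on \emph{all} of $\R^n$---the choice of $\alpha$ is independent of $\Omega$. For $\cM_R$, the determinantal condition $(\det D^2\psi(x))^{1/n} > |D\psi(x)|/R$ reduces analogously to $1 + \alpha |x|^2 > |x|^n/R^n$; on a bounded domain $\Omega \subset B_M(0)$ this is arranged by taking $\alpha$ large depending on $M$, $R$, and $n$ (a short study of the polynomial $t \mapsto 1 + \alpha t^2 - t^n/R^n$ on $[0,M]$, e.g.\ choosing $\alpha$ large enough to push the unique positive critical point of this polynomial past $M$).

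The main obstacle I anticipate is not conceptual but computational: correctly identifying the ``fat'' eigenvalue of $D^2 \psi$ along $[x]$ as $\alpha(1 + \alpha|x|^2)e^{\alpha|x|^2/2}$, with the essential $|x|^2$ factor. This factor is precisely what enables $\det D^2\psi$ to grow quickly enough to dominate $|x|^n/R^n$ for large $|x|$, and is the geometric reason why $\cM_R$ and $\cM^R$ yield comparison on arbitrarily large balls whereas $\cM(R)$ does not (cf.\ Proposition~\ref{prop:ZMP_failure}). Once this computation is secured, feeding $\psi$ into Theorem~\ref{thm:CP_general} delivers the claim.
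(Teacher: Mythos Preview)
Your proposal is correct and follows the same overall strategy as the paper: reduce to the General Comparison Theorem~\ref{thm:CP_general} by producing a radial strict approximator for the smaller cone. The difference is purely in the choice of radial profile. The paper uses the polynomial $\psi(x)=|x|^{m+1}/(m+1)$ with an integer $m$ chosen large in terms of the diameter of $\Omega$ (Lemma~\ref{lem:strict_improvement}); since $D^2\psi(0)=0$ for $m>1$, the paper must first translate $\Omega$ so that $0\notin\Omega$ and then works on $B_\rho(0)\setminus\{0\}$. Your Gaussian $\psi(x)=e^{\alpha|x|^2/2}$ avoids this nuisance because $D^2\psi(0)=\alpha I>0$, and for $\cM^R$ it even yields a domain-independent choice $\alpha>1/(4R^2)$. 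The paper itself flags in Remark~\ref{rem:radial_calculus} that $|x|^{m+1}/(m+1)$ and $e^{\alpha|x|^2/2}$ ``can usually be used interchangeably'', so the two arguments are minor variants of one another; your version is marginally cleaner at the origin, the paper's keeps the approximator polynomial.
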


\begin{proof}
	Since $\Omega$ is bounded and (by translation) may be assumed to satisfy $0 \notin \Omega$, by Theorem \ref{thm:CP_general}, it suffices to establish the following lemma. 
\end{proof}

\begin{lem}[Radial polynomial approximators]\label{lem:strict_improvement} Given $\rho > 0$ there exists $m \in \N$ such that
	\begin{equation}\label{radial_poly}
	\psi(x) := \frac{|x|^{m+1}}{m+1} 
	\end{equation}
	defines a strict approximator on $B_{\rho}(0) \setminus \{0\}$ for $\cM_R$ and for $\cM^R$.
	\end{lem}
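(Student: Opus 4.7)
The plan is to apply the radial calculus of Remark \ref{rem:radial_calculus} (Example 2 with exponent $m$) to obtain explicit formulas for the $2$-jet $J^2_x\psi = (\psi(x), D\psi(x), D^2\psi(x))$ and then check the strict membership in $\Int \, \cM_R$ and $\Int \, \cM^R$ pointwise on $B_\rho(0) \setminus \{0\}$, verifying that both sets of defining inequalities become strict once $m$ is chosen large enough depending on $\rho/R$. Observe first that $\psi$ is continuous on $\overline{B_\rho(0)}$ and of class $C^2$ on $B_\rho(0) \setminus \{0\}$, so the regularity requirement in Definition \ref{defn:strict_approx} is automatic. By the Example 2 formulas,
$$
p := D\psi(x) = |x|^{m-1}x, \qquad A := D^2\psi(x) = |x|^{m-1}\bigl(P_{x^\perp} + m P_x\bigr),
$$
so $|p| = |x|^m$ and the eigenvalues of $A$ are $|x|^{m-1}$ with multiplicity $n-1$ (on $[x]^\perp$) and $m\,|x|^{m-1}$ with multiplicity one (on $[x]$). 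In particular $A > 0$ throughout $B_\rho(0)\setminus\{0\}$, so the positivity condition in the definitions of both $\cM_R$ and $\cM^R$ is strict.

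For $\cM_R$, I would use the explicit eigenvalues to compute
$$
\bigl(\lambda_1(A)\cdots\lambda_n(A)\bigr)^{1/n} = m^{1/n}\,|x|^{m-1},
$$
so the strict defining inequality becomes $m^{1/n}|x|^{m-1} > |x|^m/R$, i.e.\ $|x| < R\,m^{1/n}$. This holds for every $x \in B_\rho(0)\setminus\{0\}$ as soon as $m > (\rho/R)^n$.

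For $\cM^R$, I would decompose an arbitrary unit vector $e$ orthogonally as $e = \alpha\,\hat x + \beta\,e^\perp$ with $\hat x := x/|x|$, $e^\perp \in [x]^\perp$, $|e^\perp| = 1$ and $\alpha^2 + \beta^2 = 1$. A direct computation gives
$$
\langle Ae,e\rangle = |x|^{m-1}\bigl(\beta^2 + m\alpha^2\bigr), \qquad \langle p,e\rangle = \alpha\,|x|^m,
$$
so after dividing by $|x|^{m-1}$ the required strict inequality reduces to
$$
1 + (m-1)\alpha^2 > \frac{|\alpha|\,|x|}{R} \qquad \text{for all } \alpha \in [-1,1].
$$
Viewed as a quadratic in $t := |\alpha|$, the left-hand side minus the right-hand side is $1 + (m-1)t^2 - t|x|/R$, which attains its minimum value $1 - |x|^2/\bigl(4R^2(m-1)\bigr)$ at $t = |x|/(2R(m-1))$. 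This minimum is strictly positive provided $|x| < 2R\sqrt{m-1}$, and hence holds on all of $B_\rho(0)\setminus\{0\}$ as soon as $m > 1 + \rho^2/(4R^2)$.

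Choosing any integer $m$ larger than both $(\rho/R)^n$ and $1 + \rho^2/(4R^2)$ thus makes $\psi$ a strict approximator for both $\cM_R$ and $\cM^R$ on $B_\rho(0)\setminus\{0\}$. The only genuinely non-routine step is the analysis for $\cM^R$, where one must handle the full one-parameter family of directions $e$ simultaneously; this is where the optimization in $\alpha$ above is the key point, but it is an elementary quadratic minimization.
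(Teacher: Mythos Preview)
Your proof is correct and follows essentially the same approach as the paper: both use the radial Example~2 formula to reduce to the scaled jet $(x, P_{x^\perp} + mP_x)$, compute the product of eigenvalues for $\cM_R$ to get the threshold $m > (\rho/R)^n$, and for $\cM^R$ minimize the quadratic $1 + (m-1)t^2 - t|x|/R$ in $t = |\langle \hat x, e\rangle|$ to obtain the bound $m > 1 + \rho^2/(4R^2)$. Your explicit orthogonal decomposition $e = \alpha\hat x + \beta e^\perp$ is only a cosmetic variant of the paper's choice to work directly with $t = |\langle x/|x|, e\rangle|$.
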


\begin{proof} Making use of the radial calculation of Example 2 of Remark \ref{rem:radial_calculus},  $\psi$ has reduced 2-jet 
\begin{eqnarray}\label{radial1}
(p,A):= (D\psi(x), D_x^2 \psi)  &=& |x|^{m-1} \left( x, I + (m-1)P_x \right) \\ &=& |x|^{m-1} \left( x, P_{x^{\perp}} + mP_x \right) ,
\end{eqnarray}
where we recall that $P_x$ is the othogonal projection onto the line $[x]$ through $x \in \R^n \setminus \{0\}$ and $P_{x^{\perp}} = I - P_x$ is the orthogonal projection on $[x]^{\perp}$. Hence the claim that $\psi$ is a strict $\cM$-approximator on $B_{\rho}(0) \setminus \{0\}$ is equivalent to the claim
\begin{equation}\label{radial2}
\left(x,  I + (m-1)P_{x} \right) \in \Int \, \cM \ \ \text{for every} \ x \ \text{with} \ 0 < |x| < \rho.
\end{equation}
Now we verify \eqref{radial2} for $\cM_R$ and $\cM^R$ Note that $I + (m-1)P_{x}= P_{x^{\perp}} + mP_x$ has $n-1$ eigenvalues equal to $1$ and one eigenvalue equal to $m$. 

For $\cM = \cM_R$, the claim \eqref{radial2} becomes (since $ I + (m-1) P_{x} > 0$)
$$
 m^{1/n} - \frac{|x|}{R} > 0,
$$
which holds if and only if $m > (\rho/R)^n$.

For $\cM = \cM^R$, with arbitrary $e \in \R^n$ satisfying $|e| = 1$, the claim \eqref{radial2} becomes
\begin{equation}\label{radial3}
\langle \left( I + (m-1) P_{x} \right) e, e \rangle - \frac{|\langle x, e \rangle|}{R} > 0.
\end{equation}
A simple calculation gives
$$
\langle \left( I + (m-1) P_{x} \right) e, e \rangle = 1 + (m-1) \langle x/|x|, e \rangle^2.
$$
and hence the needed \eqref{radial3} can be written as
\begin{equation}\label{radial4}
g(t):= 1 - 
|x| t + (m-1)t^2 > 0 \ \ \text{for} \ t:= | \langle x/|x|, e \rangle | \geq 0.
\end{equation}
The quadratic polynomial $g$ takes on its mimumum at $t_0:= \frac{|x|}{2R(m-1)}$ with minimum value
$$
	g(t_0) = 1 - \frac{|x|^2}{4R^2 (m-1)} > 1 - \frac{\rho^2}{4R^2 (k-1)}, \ \ \forall \, x \ \text{with} \ |x| < \rho.  
$$
Taking $m$ sufficiently large gives $g(t_0) > 0$ and hence \eqref{radial4} 
\end{proof}

We remark that in the case of $\cM = \cM(R)$ one has
$$
\lambda_{\rm min} \left(I + (m-1)P_{x}\right) = 1 > \frac{|x|}{R}
$$
only if $\rho \leq R$ which leads to the restriction on domain size for this case.

Theorem \ref{thm:CP_improvents} easily extends to non-reduced subequations (where the variable $r$ is not silent) as follows. Recall that for $0 \leq \gamma < \infty$

\begin{equation}\label{Mgamma_recall}
	\cM(\gamma) := \{ (r,p,A) \in \J^2: \ r \leq -\gamma|p| \}.
\end{equation}

\begin{thm}\label{thm:CP_improvents2}
	Suppose that $\F \subset \J^2$ is a subsequation which is $\cM$-monotone. If $\cM$ contains either $\cM(\gamma) \cap \cM_R$ or $\cM(\gamma) \cap \cM^R$ for some $\gamma, R$, then comparison holds for $\F$ on all bounded domains $\Omega \subset \subset \R^n$. 
\end{thm}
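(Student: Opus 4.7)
The plan is to follow the template of Theorem \ref{thm:CP_improvents} and reduce everything to the construction of a single strict approximator for the smaller monotonicity cone. Set $\cM' := \cM(\gamma) \cap \cM_R$ (the argument for $\cM(\gamma) \cap \cM^R$ is identical). Since $\F$ being $\cM$-monotone with $\cM \supset \cM'$ forces $\F$ to be $\cM'$-monotone, and since $\cM'$ is a closed convex cone containing $\cM_0$ with non-empty interior (the jet $(-1,0,I)$ lies in $\Int \, \cM(\gamma) \cap \Int \, \cM_R$), the intersection $\cM'$ is itself a convex cone subequation. By Theorem \ref{thm:CP_general} it therefore suffices, for an arbitrary bounded $\Omega \subset\subset \R^n$, to produce $\psi \in C(\overline{\Omega}) \cap C^2(\Omega)$ with $J^2_x \psi \in \Int \, \cM'$ for every $x \in \Omega$.

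Since $\F$-subharmonicity and comparison are translation invariant in the constant coefficient setting, I may translate $\Omega$ so that $0 \notin \overline{\Omega}$ and $\Omega \subset B_\rho(0)$ for some $\rho > 0$. For the second-order constraint, fix $m \in \N$ as in Lemma \ref{lem:strict_improvement}, so that $x \mapsto |x|^{m+1}/(m+1)$ is a strict approximator for $\cM_R$ (resp.\ $\cM^R$) on $B_\rho(0) \setminus \{0\}$. The key observation is that subtracting a constant does not alter $D\psi$ or $D^2\psi$, so the $\cM_R$-condition is unaffected, while we gain the freedom to force the $\cM(\gamma)$-condition $\psi(x) < -\gamma|D\psi(x)|$. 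Specifically, set
\[
\psi(x) := -C + \frac{|x|^{m+1}}{m+1}, \qquad C > \frac{\rho^{m+1}}{m+1} + \gamma \rho^m.
\]
Since $|D\psi(x)| = |x|^m$, at every $x \in \overline{\Omega}$,
\[
\psi(x) + \gamma |D\psi(x)| \;=\; -C + \frac{|x|^{m+1}}{m+1} + \gamma |x|^m \;<\; 0,
\]
so $J^2_x \psi \in \Int \, \cM(\gamma)$, while the choice of $m$ ensures $J^2_x \psi \in \Int \, \cM_R$ (resp.\ $\Int \, \cM^R$). Hence $J^2_x \psi$ lies in $\Int \, \cM(\gamma) \cap \Int \, \cM_R \subset \Int \, \cM'$, as required.

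The argument is essentially a reprise of Theorem \ref{thm:CP_improvents} together with a twist: a purely radial power cannot by itself witness the $\cM(\gamma)$-inequality because $|x|^{m+1}/(m+1) \geq 0$, so we must shift it downward. The only genuinely new observation is that the two monotonicity requirements decouple cleanly -- the zeroth-order inequality is controlled by the additive constant $-C$, while the gradient and Hessian estimates are untouched by this shift. No substantive obstacle beyond this remains; in particular, the use of translation invariance to push $\overline{\Omega}$ off the singularity of $|x|^{m+1}$ is immediate from the constant coefficient nature of $\F$.
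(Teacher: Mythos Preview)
Your proof is correct and follows essentially the same approach as the paper: take the radial polynomial $|x|^{m+1}/(m+1)$ from Lemma~\ref{lem:strict_improvement} (which handles the $\cM_R$ or $\cM^R$ condition) and subtract a large constant $C$ to force the strict $\cM(\gamma)$ inequality $\psi(x) < -\gamma|D\psi(x)|$, exploiting that the gradient and Hessian are unaffected by the shift. Your write-up is, if anything, slightly more careful than the paper's in explicitly verifying that $\cM'$ is a convex cone subequation and in using the strict inequality needed for $\Int\,\cM(\gamma)$.
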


\begin{proof} First choose $m \in \N$ as in Lemma \ref{lem:strict_improvement}. Replace $\psi$ in \eqref{radial_poly} by the radial polynomial
	\begin{equation}\label{radial_poly2}
	\psi(x) := \frac{|x|^{m+1}}{m+1} - C
	\end{equation}
with $C > 0$ large to be determined. The reduced 2-jet remains unchanged but $r:=  \frac{|x|^{m+1}}{m+1} - c$. Since $|p| = |x|^m$, we have that 
$$ 
r \leq -\gamma |p| \ \ \iff \ \  \frac{|x|^{m+1}}{m+1} + \gamma |x!^m \leq C.
$$
This holds on the the set where $|x| \leq \rho$ if $C \geq \frac{|\rho|^{m+1}}{m+1} + \gamma |\rho!^m$.
\end{proof}

For $\cM =  \cM^-_{\lambda, \Lambda, R}$, the claim \eqref{radial2} becomes
\begin{equation}\label{radial5}
\lambda (n -1 + m) - \frac{\lambda n |x|}{R} > 0.
\end{equation}
Since $|x| < \rho$, one has \eqref{radial5} if and only if $m >1 +  n \left( \frac{\rho}{R} - 1 \right)$.

For $\cM = \cM(R)_{\delta}$, the claim \eqref{radial2} becomes
\begin{equation}\label{radial6} 
1 + \delta (n - 1 + m) - \frac{|x|}{R} > 0.
\end{equation}
Since $|x| < \rho$, one has \eqref{radial6} if and only if $m > 1 - n + \frac{\rho - R}{\delta R}$.

For $\lambda, \Lambda \in \R$ with $0 \leq \lambda \leq \Lambda < +\infty$, define 
\begin{equation}\label{MpucciR}
\cM^-_{\lambda, \Lambda, R} := \left\{ (r,p,A) \in \J^2: \ \lambda \, {\rm tr} \, A^+ + \Lambda \, {\rm tr} \, A^- \geq \frac{\lambda n |p|}{R} \right\}.
\end{equation}
For $\delta \in \R$ with $\delta > 0$, define
\begin{equation}\label{MdeltaR}
\cM(R)_{\delta} := \left\{ (r,p,A) \in \J^2: \lambda_{\rm min} (A) + \delta \, {\rm tr} \, A \geq \frac{|p|}{R}  \right\}.
\end{equation}  

\section{Failure of Comparison with Insufficient Maximal Monotonicity}\label{sec:limitations}

In this section, we give some examples of subequation constraint sets $\F$ for which comparison fails to hold on a family of bounded domains $\Omega$. Necessarily, this failure requires that the maximal monotonicity cone $\F_{\cM}$ for $\F$ does contain one of the elements of our fundamental family with $R = +\infty$ nor any of the cones discussed in the previous section on additional monotonicity. We focus on two such examples. The first shows (as claimed in the introduction of section \ref{sec:improvemnts}) that Theorem \ref{thm:comparison} is sharp in the case when $R$ is finite; that is, $R$ gives an upper bound on the diameter of $\Omega$ for which comparison holds. The second, shows just how bad the situation can be. Comparison can fail on arbitrarily small balls. 

\subsection{Finite $R$ and failure of comparison on large domains}

We begin with a simple family of examples which illustrates the sharpness of Theorem \ref{thm:comparison} on the comparison principle in the case when $R$ is finite. 
 
\begin{exe}\label{exe:CE1_CP}
	In dimension $n \geq 2$, with $k \in \{ 1, \ldots, n \}$ and $R \in (0, +\infty)$ consider the subequation constraint sets
	\begin{equation}\label{E4A}
	\F^{\pm}_{k, R} := \{ (r,p,A) \in \J^2: \lambda_k(A) \pm \frac{|p|}{R} \geq 0 \},
	\end{equation}
	where $\lambda_1(A) \leq \cdots \leq \lambda_k(A)$ are the ordered eigenvalues of $A \in \cS(n)$. defining the subharmonics for the operators $F^{\pm}_{k, R}(Du,D^2u) = \lambda_k(D^2u) \pm \frac{1}{R}|Du|$. 
One easily checks that each $\F^{\pm}_{k, R}$ is a subequation constraint set and that the following duality relations hold:
	\begin{equation}\label{FkR_duality}
	\wt{\F}^{+}_{k, R} = \F^{-}_{n + k -1, R} \quad \text{and} \quad  	\wt{\F}^{-}_{k, R} = \F^{+}_{n + k -1, R}. 
	\end{equation}
Notice that two members of the family are cone subequation cones that we have seen before, namely
	\begin{equation}\label{FkR_M}
\F^{-}_{1, R} = \cM(R) \quad \text{and} \quad  	\F^{+}_{n, R} = \wt{\cM}(R), 
\end{equation}
where we note that only the first cone $\cM(R)$ is convex.
		\end{exe}
	
\begin{prop}\label{prop:CE1_CP} For the family of subequations $\F^{\pm}_{k, R}$ in Example \ref{exe:CE1_CP}, one has the following statements. 
	\begin{itemize}
		\item[(a)]  For each $k$ and $R$, the maximal monotonicity cone $\cM_{\F^{\pm}_{k, R}}$ of $\F^{\pm}_{k, R}$ equals
			$$
		\cM(R) := \left\{ (s,q,B) \in \J^2: \ B \geq \frac{|q|}{R}I \right\} = \left\{ (s,q,B) \in \J^2: \lambda_1(B) - \frac{|q|}{R} \geq 0 \right\}.
		$$
		Consequently, comparison \underline{holds} for each $\F^{\pm}_{k, R}$ on $\overline{\Omega}$ for every domain $\Omega$ contained in a ball $B_{R}$ of radius $R$.
		\item[(b)] For each $k = 2, \ldots n$, comparison \underline{fails} for $\F^{\pm}_{k,R}$ on any ball $B_{R'}$ with radius $R' >R$. 
	\end{itemize}
	\end{prop}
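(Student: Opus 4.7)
My plan is to treat the two parts in sequence, exploiting the duality $\wt{\F}^{\pm}_{k,R} = \F^{\mp}_{n-k+1,R}$ of Example \ref{exe:CE1_CP} (the index in \eqref{FkR_duality} should read $n-k+1$ rather than $n+k-1$, as a direct computation with the definition of the Dirichlet dual confirms), Proposition \ref{prop:MMC2}(b) which gives $\cM_{\wt{\F}} = \cM_{\F}$, and the equivalence \eqref{CP_Duality} between comparison for $\F$ and for $\wt{\F}$. These reductions let me concentrate the hard work on the $\F^+$ family.

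For part (a), the inclusion $\cM(R) \subset \cM_{\F^{\pm}_{k,R}}$ is immediate from the min-max characterization of eigenvalues: if $(s,q,B) \in \cM(R)$ then $\lambda_{\min}(B) \geq |q|/R$ and $\lambda_k(A+B) \geq \lambda_k(A) + \lambda_{\min}(B)$, so verifying $(r+s,p+q,A+B) \in \F^{\pm}_{k,R}$ reduces in both signs to the triangle inequality $\bigl||p+q|-|p|\bigr| \leq |q|$. The reverse inclusion $\cM_{\F^+_{k,R}} \subset \cM(R)$ is the main technical step: I would probe any candidate $(s,q,B)$ with the family $A_M := -\tfrac{|p|}{R} P_V - M P_{V^\perp}$, where $V \subset \R^n$ is an arbitrary $(n-k+1)$-dimensional subspace and $M \to +\infty$. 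Because $A_M$ carries the eigenvalue $-|p|/R$ on $V$ with multiplicity $n-k+1$ and the eigenvalue $-M$ on $V^\perp$ with multiplicity $k-1$, one reads off $\lambda_k(A_M) = -|p|/R$, so $(0,p,A_M) \in \partial\F^+_{k,R}$. A standard perturbation argument shows that as $M \to +\infty$ the $k-1$ smallest eigenvalues of $A_M + B$ escape to $-\infty$ from the $V^\perp$ block, while the $k$-th eigenvalue converges to $-|p|/R + \lambda_{\min}(B|_V)$. Passing to the
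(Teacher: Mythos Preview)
Your reduction to the $\F^+$ family via duality and Proposition~\ref{prop:MMC2}(b), and your proof of the inclusion $\cM(R)\subset\cM_{\F^+_{k,R}}$ via the Weyl inequality $\lambda_k(A+B)\geq\lambda_k(A)+\lambda_1(B)$ together with the triangle inequality, are exactly the paper's argument. (Your correction of the index in \eqref{FkR_duality} to $n-k+1$ is right.)

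For the reverse inclusion your route is genuinely different. The paper does not build test matrices directly; instead it quotes the fact that the maximal monotonicity cone of the pure second-order subequation $\{A:\lambda_k(A)\geq 0\}$ is $\cP$, makes the substitution $D=A+\tfrac{|p|}{R}I$ to reduce the hypothesis to that setting, deduces $B+\tfrac{|p+q|-|p|}{R}I\geq 0$ for every $p$, and then evaluates at the single point $p=-2q$ to obtain $B\geq\tfrac{|q|}{R}I$. Your probe with $A_M=-\tfrac{|p|}{R}P_V-MP_{V^\perp}$ and $M\to+\infty$ also works: the Schur-complement limit gives $\lambda_k(A_M+B)\to -\tfrac{|p|}{R}+\lambda_{\min}(B|_V)$, so the membership $(s,p+q,A_M+B)\in\F^+_{k,R}$ yields $\lambda_{\min}(B|_V)\geq\tfrac{|p|-|p+q|}{R}$; taking $p=-tq$ with $t\geq 1$ and letting $V$ range over all $(n-k+1)$-planes gives $\lambda_{\min}(B)\geq\tfrac{|q|}{R}$. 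Your argument is self-contained, while the paper's is shorter but leans on an auxiliary monotonicity-cone computation.

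Your proposal breaks off before part~(b), and that is the real gap. The paper's mechanism is to reuse the radial function $u(x)=|x|-\tfrac{|x|^2}{2R}$ from Proposition~\ref{prop:ZMP_failure}. For $x\neq 0$ the Hessian has $n-1$ eigenvalues equal to $-\tfrac{1}{R}+\tfrac{1}{|x|}$, all strictly larger than the remaining eigenvalue $-\tfrac{1}{R}$; hence for $k\geq 2$ one has $\lambda_k(D^2u(x))=\lambda_n(D^2u(x))$, so $u$ is $\F^+_{k,R}$-subharmonic wherever it is $\F^+_{n,R}=\wt{\cM}(R)$-subharmonic. At $x=0$ there are no upper test jets. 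Since $u$ attains an interior maximum on any ball $B_{R'}$ with $R'>R$, and since the constant $w\equiv\sup_{\overline{B_{R'}}}u$ is $\F^+_{k,R}$-superharmonic (the subequation is reduced, so constants are harmonic), comparison fails. Duality then transfers the failure to the $\F^-$ family.
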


\begin{proof} It suffices to consider the family of subequations $\F^{+}_{k,R}$ since each $\F^{-}_{k,R}$ is dual to $\F^{+}_{n -k + 1,R}$. This is because dual suubequations have the same maximal monotonicity cone (see Proposition \ref{prop:MMC2} (b)) and the comparison principle holds for a subequation $\F$ if and only if it holds for its dual subequation $\wt{\F}$. 
	
We begin by showing that each $\F^{+}_{k,R}$ is $\cM(R)$-monotone.
Given any $(r,p,A) \in \F^{+}_{k, R}$ and any $(s,q,B) \in \cM(R)$, making use of the {\em dual Weyl inequality}
$$
	\lambda_k(A + B) \geq \lambda_k(A) + \lambda_1(B), \ \ \text{for each} \ A, B \in \Symn, k = 1, \ldots, n,
$$
the triangle inequality on $\R^n$, and using $\lambda_1(B) \geq |q|/R$ one has
$$
	\lambda_k(A + B) + \frac{|p + q|}{R}  \geq \lambda_k(A) + \lambda_1(B) + \frac{|p|}{R} - \frac{|q|}{R} \geq 0.
$$
Hence we have $\cM(R) \subset \cM_{\F^{+}_{k,R}}$, the maximal monotonicity cone (as defined in Definition \ref{defn:MMC}). It remains to check the reverse inclusion; that is,
\begin{equation}\label{MMC1}
(r+s, p+q,A+B) \in \F^{+}_{k,R}, \ \ \forall \, (r,p,A) \in \F^{+}_{k,R} \ \ \Rightarrow \ \ (s,q,B) \in \cM(R).
\end{equation}
Since both $\F^{+}_{k,R}$ and $\cM(R)$ are reduced subequations, the condition \eqref{MMC1} can be written as
\begin{equation}\label{MMC2}
\lambda_k \left(A + B +  \frac{|p + q|}{R} I \right) \geq 0, \  \ \forall \, (p,A): \ \lambda_k \left( A + \frac{|p|}{R} I \right) \geq 0 \ \ \Rightarrow \ \ B \geq \frac{|q|}{R}I.
\end{equation}
We will use the fact that
\begin{equation}\label{MMC3}
\lambda_k \left(A + P \right) \geq 0, \ \ \forall \, A \in \cS(n): \ \lambda_k \left( A \right) \geq 0  \ \ \Rightarrow \ \ P \geq 0;
\end{equation}
that is, the maximal monotonicity cone for $\{ A \in \cS(n): \ \lambda_k(A) \geq 0 \}$ is $\cP$. Let $(q,B) \in \R^n \times \cS(n)$ satisfy the hypothesis in \eqref{MMC2} which is equivalent to 
\begin{equation}\label{MMC4}
\lambda_k \left(D + B + \frac{|p + q| - |p|}{R} I \right) \geq 0, \ \ \forall \, (p,D): \ \lambda_k \left( D \right) \geq 0  \ \ \Rightarrow \ \ B \geq \frac{|q|}{R}I.
\end{equation}
By applying \eqref{MMC3} to the pair $(q, B)$ satisfying \eqref{MMC4}, one finds
$$
B + \frac{|p + q| - |p|}{R} I \geq 0 \ \ \text{for every} \ p \in \R^n,
$$
which yields $B - \frac{|q|}{R} I \geq 0$ with $p = -2q$. This completes the proof that $\cM(R)$ is the maximal monotonicity cone for each $\F^{+}_{k,R}$.

It then follows from Theorem \ref{thm:comparison} that comparison holds for all domains $\Omega$ contained in a ball  $B_{R}$ of radius $R$.
	
Next we note that the same radial counterexample $u(x):= |x| - \frac{|x|^2}{2R}$ to the (MP) for $\wt{\cM}(R) = \F_{n,R}^+$ on balls $B_{R'}$ with radius $R'  > R$ (see Proposition \ref{prop:ZMP_failure}) is a counterexample to the (MP) for the reduced subequation $\F^{+}_{k,R}$ on $B_{R'}$ if $k \geq 2$. This is because $D^2u(x)$ has $n-1$ eigenvalues equal to $-\frac{1}{R} + \frac{1}{|x|}$ which are all greater than the remaining eigenvalue $-\frac{1}{R}$ (see formula \eqref{RC2}). Hence with $(p,A):= (Du(x), D^2u(x))$, for each $x \neq 0$:
\begin{equation}\label{kn_subeq}
	\lambda_k(A) + \frac{|p|}{R} = \lambda_n(A) + \frac{|p|}{R} \ \ \text{if} \ k \geq 2,
\end{equation}
which shows that the $\wt{\cM}(R) = \F_{n,R}^+$-subharmonic function $u$ is also $\F^{+}_{k,R}$-subharmonic on $\R^n \setminus \{0\}$ for $k \geq 2$. Recall that $u$ is trivially $\wt{\cM}(R)$-subhamonic at the origin because there are no upper test jets at the origin, and hence the same claim for $\F^{+}_{k,R}$.

Finally, since $\F^{+}_{k,R}$ is a reduced subequation cone, the constant function defined by $w \equiv \sup_{\Omega}u$ is $\F^{+}_{k,R}$-harmonic (superharmonic)
and hence the failure of the (MP) implies the failure of comparison.

\end{proof}

We remark that Example \ref{exe:CE1_CP} is a special case of a larger family of counterexamples to the validity of comparison principles and Alexandroff estimates. See section 4 of \cite{GV17} for operators involving truncated Laplacians and truncated Pucci maximal and minimal operators. Moreover, Propsoition \ref{prop:CE1_CP} easily generalizes with the standard eigenvalues $\lambda_k(A)$ replaced by the G{\aa}rding eigenvalues $\lambda_k^{\pol}(A)$ of a  {\em G{\aa}rding-Dirichlet polynomial} $\pol$ (see the discussion of subsection \ref{subsec:garding} for the relevant notions).  

\begin{exe}\label{exe:CE2_CP} Let  $\pol$ be a G{\aa}rding-Dirichlet polynomial of degree $m$ on $\cS(n)$ in the sense of Definition \ref{defn:garding_op} whose {\em (ordered) G\aa rding $I$-eigenvalues of $A$} are denoted by
	\begin{equation}
	\lambda_1^{\pol}(A) \leq \lambda_2^{\pol}(A) \leq \ldots \leq \lambda_m^{\pol}(A).
	\end{equation}
and $\pol$ is normalized to have $\pol(I) = 1$. Again assuming that $n \geq 2$, the same conclusions of Proposition \ref{prop:CE1_CP} hold for the subsequation constraint set
	\begin{equation}\label{CE2_SE}
\F_{k, \beta}^{\pol} := \{ (r,p,A) \in \J^2: \lambda_k^{\pol}(A) + \beta |p| \geq 0 \} \ \ \text{where} \ k \in \{ 2, \ldots, n \}.
\end{equation}

\end{exe}

\subsection{Failure of comparison on arbitrarily small domains}

We now give a family of examples for which comparison fails on arbitrarily small balls. In fact, we will exhibit subequations for which existence of the Dirichlet problem will hold on arbitrary balls, but the comparison principle, the maximum principle and uniqueness for the Dirichlet problem will all fail (see Theorem \ref{thm:failure} below). The argument will make use of the considerations of Appendix \ref{sec:esiuni} on maximal and minimal solutions, and hence the proof Theorem \ref{thm:failure} will be given in Appendix \ref{sec:failure}. The examples we present will involve subequations $\F$ whose maximal monotonicity cone $\cM_{\F}$ has empty interior, and, as such, cannot admit strict approximators on any domain, no matter how small. The examples are {\em reduced} in the sense that no constraint is placed on the jet variable $r \in \R$ and hence the subsequations $\F$ will be considered as subsets of $\R^n \times \cS$ (see section \ref{sec:reductions} for more on reductions).

We begin by defining the subequations and making some preliminary observations. For $p \neq 0$ in $\R^n$, we recall that the orthogonal projection onto the subapces $[p]$ and $[p]^{\perp}$ are (respectively)
\begin{equation}\label{proj_p_p_perp}
P_{p} = \frac{1}{|p|^2} p \otimes p \quad \text{and} \quad P_{p^{\perp}} = I - P_{p}.
\end{equation}

\begin{exe}\label{exe:upper_lower}
	For $\alpha \in (1, + \infty)$, define 
	\begin{equation}\label{Bmap}
	B(p,A) := A +  |p|^{\frac{\alpha - 1}{\alpha}} \left( P_{p^{\perp}} + \alpha P_{p} \right)) \ \ \text{if} \ \ p \neq 0 \ \ \text{and} \ \ B(0,A) := A.
	\end{equation}
	Notice that the map $B: \R^n \times \Symn \to \Symn$ is continuous.  Consider the operators $F, G \in C(\R^n \times \cS(n))$ defined by
	\begin{equation}\label{define_FG}
	F(p,A) := \lambda_{\rm min}(B(p,A)) \quad \text{and} \quad  G(p,A) := \lambda_{\rm max}(B(p,A))
	\end{equation}
	along with the (reduced) subequations defined by 
	\begin{equation}\label{exeF1}
	\F := \{ (p,A) \in \R^n \times \Symn:  \lambda_{\rm min} (B(p, A)) \geq 0 \} 
	\end{equation}
	and
	\begin{equation}\label{exeG1}
	\cG := \{ (p,A) \in \R^n \times \Symn:  \lambda_{\rm max} (B(p, A)) \geq 0 \}.
	\end{equation}
	When $\alpha = 2$, the subequation $\F$ was introduced in \cite{HL11} as an example where existence holds, but uniqueness fails. Hence, we are considering generalizations of that example.
	
	The fact that the closed sets $\F$ and $\G$ are (reduced) subequations can be seen as follows. Property (N) is automatic since $\F$ and $\G$ are independent of the jet variable $r \in \R$. Property (P) holds for $\F$ and $\cG$ since the operators $F$ and $G$ are increasing in $A \in \cS(n)$. 
	
	To prove the topological property (T) and to show compatibility between $F$ and $\F$ and between $G$ and $\G$, we use a general lemma which we state in the reduced case. By {\em compatibility} we mean the relation \eqref{CF2} below (see Definition \ref{defn:compatible_pair}). This notion will play an important role in our treatment of comparison for operators $F$.
	
	\begin{lem}\label{lem:compatibility} Suppose that $F \in C(\R^n \times \cS(n))$ is a degenerate elliptic operator; that is, $F = F(p,A)$ is increasing in $A$ on all of $\cS(n)$. If $F$ is linear on lines through $I \in \cS(n)$ in the sense that
	$$
	F(p, A + tI) = F(p,A) + t \ \ \text{for each} \ t \in \R,
	$$
	then the set $\F:= \{ (p, A) \in \R^n \times \cS(n): \ F(p,A) \geq 0 \}$ satisfies:
	\begin{equation}\label{CF1}
	\Int \, \F = \{ (p, A) \in \R^n \times \cS(n): \ F(p,A) > 0 \};
	\end{equation}
		\begin{equation}\label{CF2}
	\partial \F = \{ (p, A) \in \R^n \times \cS(n): \ F(p,A) = 0 \};
	\end{equation}
		\begin{equation}\label{CF3}
	\sim \F = \{ (p, A) \in \R^n \times \cS(n): \ F(p,A) < 0 \}.
	\end{equation}
		
		\end{lem}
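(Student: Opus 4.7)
The plan is to reduce everything to the single structural fact that the map $t \mapsto F(p, A + tI)$ is a strictly increasing bijection from $\R$ onto $\R$, and in particular equals $F(p,A) + t$. The degenerate ellipticity hypothesis is not strictly needed for the three claimed identities (it records compatibility with property (P) of the subequation $\F$), so the argument will rest on continuity of $F$ together with the linearity hypothesis along the $I$-direction.

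The easier half is the identification of the complement and the boundary, which will follow once $\Int \, \F$ is characterized. Indeed, $\F = \{F \geq 0\}$ immediately gives $\sim \F = \{F < 0\}$, and continuity of $F$ then makes $\sim \F$ open and $\{F = 0\}$ closed. Granted the identity $\Int \, \F = \{F > 0\}$, the boundary formula \eqref{CF2} follows from
\begin{equation*}
\partial \F \ =\ \overline{\F} \setminus \Int \, \F \ =\ \{F \geq 0\} \setminus \{F > 0\} \ =\ \{F = 0\}.
\end{equation*}

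So the crux is \eqref{CF1}. The inclusion $\{F > 0\} \subseteq \Int \, \F$ is a free consequence of continuity, since $\{F > 0\}$ is open and contained in $\{F \geq 0\} = \F$. For the reverse inclusion I would argue by the $I$-direction trick: given $(p_0, A_0) \in \Int \, \F$, for all $\varepsilon > 0$ sufficiently small one has $(p_0, A_0 - \varepsilon I) \in \F$, hence
\begin{equation*}
0 \ \leq\ F(p_0, A_0 - \varepsilon I) \ =\ F(p_0, A_0) - \varepsilon,
\end{equation*}
where the equality uses the hypothesis $F(p, A + tI) = F(p,A) + t$ with $t = -\varepsilon$. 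Therefore $F(p_0, A_0) \geq \varepsilon > 0$, proving $(p_0, A_0) \in \{F > 0\}$.

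There is no significant obstacle here; the lemma is essentially a one-variable remark that $F$ has no horizontal level set segments in the $I$-direction, which is exactly what rules out the topological pathology that the level set $\{F = 0\}$ could have interior. The only thing to be careful about is not to use degenerate ellipticity when it is not needed — the linear-on-lines hypothesis alone supplies both the decrease after subtracting $\varepsilon I$ and the fact that $\{F = 0\}$ has empty interior, which together yield all three identities.
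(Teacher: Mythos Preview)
Your proof is correct and follows essentially the same approach as the paper: both arguments hinge on the linearity $F(p, A+tI) = F(p,A)+t$ and use the $\varepsilon I$ perturbation to show that interior points must satisfy $F>0$. The only cosmetic difference is organization---the paper shows directly that points with $F=0$ are approximated from both inside $\Int\,\F$ and outside $\F$, whereas you deduce \eqref{CF2} and \eqref{CF3} from \eqref{CF1}; the substance is identical.
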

	
	\begin{proof} Since $\{ (p, A): \ F(p,A) > 0 \}$ is an open subset of $\F$, it is contained in $\Int \, \F$. If $F(p,A) = 0$, then $(p,A)$ is approximated by $(p, A + \veps I) \in \Int \, \F$ for each $\veps > 0$ since $F(p, A + \veps I) = F(p,A) + \veps = \veps > 0$. Such a $(p,A)$ is also approximated by $(p, A - \veps I) \not\in \F$ for each $\veps > 0$ since $F(p, A - \veps I) = F(p,A) - \veps = -\veps < 0$. This proves \eqref{CF1}, \eqref{CF2} and \eqref{CF3} as well as property (T): $\F = \overline{\Int \, \F}$. 
		\end{proof}  
	
	This result applies to both $F$ and $G$ defined by \eqref{define_FG} above since $B(p, A + tI) = B(p,A) + tI$ and $\lambda_j (B(p,A) + tI) = \lambda_j (B(p,A)) + t$ for each $j = 1, \ldots n$.

	The Dirichlet duals of $\F$ and $\G$ defined in \eqref{exeF1} and \eqref{exeG1} are given by
	\begin{equation}\label{exeF1D}
	\wt{\F} := \left\{ (p,A) \in \R^n \times \Symn: \lambda_{\rm max} \left( A - |p|^{\frac{\alpha - 1}{\alpha}} \left( P_{p^{\perp}} + \alpha P_{p} \right) \right) \geq 0 \right\}
	\end{equation}
	and
	\begin{equation}\label{exeG1D}
	\wt{\cG} := \left\{ (p,A) \in \R^n \times \Symn: \lambda_{\rm min} \left( A - |p|^{\frac{\alpha - 1}{\alpha}} \left( P_{p^{\perp}} + \alpha P_{p} \right) \right) \geq 0 \right\}.
	\end{equation}
	Note that one has the inclusions
	\begin{equation}\label{FandG}
	\F \subset \cG \quad  \text{and} \quad \wt{\cG} \subset \wt{\F}.
	\end{equation}
	and that 
	\begin{equation}\label{FG_0}
	(p,A) := (0,0) \in \F \cap \wt{\F} \cap \cG \cap   \wt{\cG}.
	\end{equation}
\end{exe}

\begin{rem}\label{rem:exeFG} If $\alpha = 1$ were to be considered, then 
	$$
	\F = \R^n \times (\cP - I), \wt{\F} = \R^n \times (\cPt + I), \cG = \R^n \times (\cPt - I) \ \text{and} \ \wt{\cG} = \R^n \times (P + I)
	$$ are all pure second order, so comparison holds for all of them and $\Int \, \cP$ governs boundary convexity, as it is the asymptotic interior of each of them (see the discussion in Appendix \ref{sec:esiuni}).
\end{rem}

Next we describe the maximal monotonicty cones for the two subequations in this Example \ref{exe:upper_lower}. 

\begin{lem}\label{lem:MMC} For each $\alpha \in (1, +\infty)$ the (reduced) subequations 
	\begin{equation*}\label{exeF1}
	\F := \{ (p,A) \in \R^n \times \Symn:  \lambda_{\rm min} (B(p, A)) \geq 0 \} 
	\end{equation*}
	and
	\begin{equation*}\label{exeG1}
	\cG := \{ (p,A) \in \R^n \times \Symn:  \lambda_{\rm max} (B(p, A)) \geq 0 \}.
	\end{equation*}
where 
	$$B(p,A):=  A +  |p|^{\frac{\alpha - 1}{\alpha}} \left( P_{p^{\perp}} + \alpha P_{p} \right)) \ \ \text{if} \ \ p \neq 0 \ \ \text{and} \ \ B(0,A) := A, $$
	have (reduced) maximal monotonicity cone $\cM = \{ 0 \} \times \cP \subset \R^n \times \Symn$.
\end{lem}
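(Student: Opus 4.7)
My plan is to reduce both claims to a single algebraic condition on $(q,Q) \in \R^n \times \cS(n)$ by rewriting the two subequations in a product form using the ``dressing map'' $C : \R^n \to \cS(n)$ defined by $C(p) := |p|^{\frac{\alpha-1}{\alpha}}\!\left(I + (\alpha-1) P_p\right)$ for $p \neq 0$ and $C(0) := 0$, so that $B(p,A) = A + C(p)$. In these coordinates one has
$$
\F = \{(p,A) : A + C(p) \in \cP\}, \qquad \cG = \{(p,A) : A + C(p) \in \wt{\cP}\}.
$$

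First I would show the easy inclusion $\{0\} \times \cP \subset \cM_{\F} \cap \cM_{\cG}$: for $(0,Q)$ with $Q \geq 0$ and $(p,A) \in \F$ (respectively $\cG$), one simply notes $B(p, A+Q) = (A + C(p)) + Q$, and adding a PSD matrix preserves both $\cP$ and $\wt{\cP}$.

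For the reverse inclusion I would use the change of variables $P := A + C(p)$ or $M := A + C(p)$, together with the fact (Proposition \ref{prop:MMC2}(b) and a one-line direct check) that $\cM_{\cP} = \cM_{\wt{\cP}} = \cP$. This shows that $(q,Q)$ belongs to either $\cM_{\F}$ or $\cM_{\cG}$ if and only if the single condition
$$
N(p,t) \,:=\, tQ + C(p+tq) - C(p) \,\in\, \cP \qquad \text{for all } p \in \R^n,\ t \geq 0
$$
holds. This is the crucial reduction: the two maximal monotonicity cones reduce to the same algebraic condition on $(q,Q)$, and the remainder of the argument applies uniformly to both.

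The main step is then to exclude the possibility $q \neq 0$. Here I would probe the condition with the special choice $p = -tq$, $t > 0$, which makes $p + tq = 0$ and hence $C(p+tq) = 0$, while $C(-tq) = t^{(\alpha-1)/\alpha} |q|^{(\alpha-1)/\alpha}\bigl(I + (\alpha-1) P_q\bigr)$. The condition $N(-tq,t) \in \cP$ becomes, after dividing by $t$,
$$
Q \,\geq\, t^{-1/\alpha}\, |q|^{(\alpha-1)/\alpha}\bigl(I + (\alpha-1) P_q\bigr),
$$
where the right-hand side is strictly positive definite and blows up as $t \to 0^+$, because $\alpha > 1$ gives a negative exponent. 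This is the decisive obstruction: it is exactly the low-homogeneity ($\beta = (\alpha-1)/\alpha < 1$) of the dressing term that kills any nontrivial gradient direction in the monotonicity cone, and it is the step I expect to be conceptually the key (though the calculation itself is short). Once $q = 0$ is established, the condition collapses to $tQ \in \cP$ for all $t \geq 0$, i.e.\ $Q \in \cP$, completing the identification $\cM_{\F} = \cM_{\cG} = \{0\} \times \cP$.
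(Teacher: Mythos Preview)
Your argument is correct and rests on the same scaling mechanism as the paper's proof: the dressing term $C(p)$ has homogeneity $(\alpha-1)/\alpha < 1$, so along the ray $t \mapsto t(q,Q)$ the contribution $C(tq) \sim t^{(\alpha-1)/\alpha}$ dominates $tQ$ as $t \to 0^{+}$, forcing $q = 0$.

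The packaging differs in a useful way. The paper passes to the dual, uses $\cM_{\F} = \cM_{\wt{\F}}$, picks the specific element $(0,0) \in \wt{\F}$, and then tests $(tq, t\lambda I) \in \wt{\F}$; the case of $\cG$ is dismissed with ``an analogous argument''. Your change of variables $P = A + C(p)$ converts both $\F$ and $\cG$ into the product form $\{(p,P) : P \in \cP\}$, respectively $\{(p,M) : M \in \wt{\cP}\}$, and then the fact $\cM_{\cP} = \cM_{\wt{\cP}} = \cP$ collapses \emph{both} monotonicity-cone questions to the single condition $tQ + C(p+tq) - C(p) \in \cP$. That unification is a genuine gain: it explains in one stroke why $\F$ and $\cG$ have the same maximal monotonicity cone, rather than appealing to a parallel computation. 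Your test point $p = -tq$ corresponds, after undoing the change of variables, to probing the boundary element $\bigl(-tq,\, -C(-tq)\bigr) \in \partial \F$; the paper instead probes from $(0,0) \in \wt{\F}$. These are different vantage points on the same blow-up.
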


\begin{proof}	By Proposition \ref{prop:MMC2} (b), $\F$ and $\wt{\F}$ have the same maximal monotonicity cones. We will show that $\cM_{\wt{\F}} = \{0\} \times \cP$. It suffices to show that if $(q,B) \in \cM_{\wt{\F}}$ then $q$ must be equal $0$ because the fiber of $\wt{\F}$ over $\{0\}$ is $\wt{\cP}$, which has maximal monotonicity $\cP$. Suppose that $(q,B) \in \cM_{\wt{\F}}$ then since $\cM_{\wt{\F}}$ satisfies property (P), one has
	$$ (q,\lambda I) \in \cM_{\wt{\F}} \ \ \text{if} \ \  \lambda I \geq B. $$
	Since $\cM_{\wt{\F}}$ is a monotonicity cone for $\wt{\F}$ with $(0,0) \in \wt{\F}$, one has for any $\lambda \in \R$ such that $\lambda I \geq B$
	$$
	(tq, t \lambda I) = (0,0) + (tq, t \lambda I) \in \wt{\F} + \cM_{\wt{\F}} \subset \wt{\F} \ \ \text{for all} \ t > 0;
	$$
	that is
	\begin{equation}\label{MMCFD}
	\lambda_{\rm max} \left( t \lambda I - t^{\frac{\alpha - 1}{\alpha}} |q|^{\frac{\alpha - 1}{\alpha}} \left( P_{q^{\perp}} + \alpha P_{q} \right) \right) \geq 0  \ \ \text{for all} \ t > 0.
	\end{equation}
	Pick $\lambda > 0$ large enough to ensure that $\lambda I \geq B$. Since the eigenvalues of $P_{q^{\perp}} + \alpha P_{q}$ are $1$ and $\alpha > 1$, and since $(\alpha -1)/\alpha = 1 - 1/\alpha$, the inequality \eqref{MMCFD} is equivalent to 
	$$
	\lambda \geq \frac{|q|^{\frac{\alpha - 1}{\alpha}}}{t^{1/\alpha}} \ \ \text{for all} \ t > 0,
	$$
	which implies $q = 0$, as desired.
	
	An analogous argument shows that $\cM_{\cG} = \cM_{\wt{\cG}} = \{0\} \times \cP$.
\end{proof}
	
	Notice that the interiors of the maximal monotonicity cones are empty, and hence strict approximators cannot be found, which suggests that comparison may fail. Indeed, comparison does fail. 

\begin{thm}\label{thm:failure}
	Let $R \in (0, +\infty)$ and let $B_R \subset \R^n$ be the open $R$-ball about $0$. The functions $z \in C^{\infty}(\R^n)$ and $h \in C^{2, \alpha - 1}(\R^n)$ defined by 
	$$ z(x) := 0 \ \ \text{and} \ \  h(x) := -\frac{|x|^{1 + \alpha}}{1 + \alpha} + \frac{R^{1 + \alpha}}{1 + \alpha} , \ \ x \in \R^n
	$$
	are both $\F$ and $\cG$ harmonic on all of $\R^n$. They both have boundary values $\varphi = 0$ on $\partial B_R$. Thus comparison, uniqueness and the maximum principle all fail for $\F$ and $\cG$ on $B_R$, which can be an arbitrarily small ball.
\end{thm}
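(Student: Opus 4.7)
The plan is to verify directly, by coherence (Remark \ref{rem:coherence}) together with the identification $\partial\F=\{F=0\}$, $\partial\cG=\{G=0\}$ furnished by the compatibility Lemma \ref{lem:compatibility}, that both $z\equiv 0$ and $h$ are $\F$- and $\cG$-harmonic on all of $\R^n$. Once this is in hand, the fact that $z$ and $h$ agree on $\partial B_R$ but differ at the origin (where $h(0)=R^{\alpha+1}/(\alpha+1)>0=z(0)$) immediately yields failure of uniqueness, comparison, and the maximum principle on $B_R$, for every $R>0$.

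For $z\equiv 0$ the verification is trivial: $J^2_x z\equiv(0,0,0)$, and since $B(0,0)=0$ we have $\lambda_{\min}(B(0,0))=\lambda_{\max}(B(0,0))=0$, so $J^2_x z\in\partial\F\cap\partial\cG$ for every $x$. The substantive step is the computation for $h$. Applying Example 2 of Remark \ref{rem:radial_calculus} with $m=\alpha$ to the radial function $|x|^{\alpha+1}/(\alpha+1)$, one obtains for $x\neq 0$
$$
p:=Dh(x)=-|x|^{\alpha-1}x,\qquad A:=D^2h(x)=-|x|^{\alpha-1}(P_{x^\perp}+\alpha P_x).
$$
The key observation is that $p$ is a negative scalar multiple of $x$, so $P_p=P_x$ and $P_{p^\perp}=P_{x^\perp}$; moreover $|p|=|x|^\alpha$, hence $|p|^{(\alpha-1)/\alpha}=|x|^{\alpha-1}$. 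Substituting into the definition of $B$ gives
$$
B(p,A)=-|x|^{\alpha-1}(P_{x^\perp}+\alpha P_x)+|x|^{\alpha-1}(P_{x^\perp}+\alpha P_x)=0,
$$
so $\lambda_{\min}(B(p,A))=\lambda_{\max}(B(p,A))=0$ and $J^2_x h\in\partial\F\cap\partial\cG$. At $x=0$, the hypothesis $\alpha>1$ guarantees that $h\in C^2$ near $0$ with $Dh(0)=0$ and $D^2h(0)=0$, so again $J^2_0 h=(0,0,0)\in\partial\F\cap\partial\cG$. (The Hölder exponent $\alpha-1$ on $D^2h$ is read off directly from the formula for $D^2h(x)$.) By coherence, $h$ is $\F$- and $\cG$-harmonic throughout $\R^n$.

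To close the argument, observe that $z=h=0$ on $\partial B_R$ while $h(0)>z(0)$. Taking $u:=h$ and $w:=z$ (both $\F$-harmonic, hence respectively sub- and super-harmonic) we have $u\leq w$ on $\partial B_R$ but $u(0)>w(0)$, so comparison fails for $\F$ on $\overline{B_R}$; the identical choice works for $\cG$. Similarly $z$ and $h$ are two distinct $\F$- (and $\cG$-) harmonics with the same boundary datum $\varphi\equiv 0$, so uniqueness for the Dirichlet problem fails; and $h\in\F(B_R)\cap\cG(B_R)$ with $h\equiv 0$ on $\partial B_R$ but $h>0$ in $B_R$, violating the maximum principle. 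Since $R\in(0,+\infty)$ was arbitrary, all three failures persist on arbitrarily small balls. The only subtle ingredient is the explicit radial computation showing that $p$ and $x$ are anti-parallel on the level of $Dh$, which is precisely what forces the cancellation $B(Dh,D^2h)\equiv 0$; everything else is formal.
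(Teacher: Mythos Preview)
Your proof is correct and follows essentially the same approach as the paper: a direct radial computation showing $B(Dh(x),D^2h(x))=0$ for $x\neq 0$ via the formulas of Remark \ref{rem:radial_calculus}, together with the observation that $h$ is $C^2$ at the origin with vanishing $2$-jet there. The paper packages the radial computation into a separate lemma (Lemma \ref{lem:radial_subeq}) that characterizes $\F$- and $\wt{\F}$-subharmonicity for radial functions in terms of one-variable conditions on the profile, and then verifies those conditions for $\psi(t)=-t^{1+\alpha}/(1+\alpha)$; your version simply carries out the same substitution inline, arriving at the identical cancellation $B(p,A)=0$.
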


The direct proof is provided, for the convenience of the reader, in  Appendix \ref{sec:failure} where we compute the one variable radial subequations associated to $\F$ and to $\G$.

\section{Special Cases: Reduced Constraint Sets}\label{sec:reductions}

We have given in Theorem \ref{thm:CP_general} a general comparison principle on domains $\Omega \subset \subset \R^n$ for subequation constraint sets  $\F \subset \J^2 = \R \times \R^n \times \Symn$ which satisfy the assumptions: 
\begin{equation}\label{CP1}
	\mbox{ $\F$ is $\cM$-monotone with $\cM$ a monotonicity cone subequation;}
	\end{equation}
\begin{equation}\label{CP2}
	\mbox{ $\cM$ admits a strict approximator on $\Omega$.}
\end{equation}
Moreover, in Definition \ref{defn:cone_zoo}, we have introduced the family of $(\gamma, \cD, R)$ monotonicity cone subequations and we have described for which domains $\Omega$ a given cone $\cM(\gamma, \cD, R)$ admits a strict approximator $\psi$ on $\Omega$.  

In this section, we discuss the special cases when at least one of the constraint factors of $\F \subset \R \times \R^n \times  \Symn$ is ``silent''  in the sense that no restriction is placed on the jet variable corresponding to that factor\footnote{In terms of nonlinear differential operators this means that $F(u, Du, D^2u)$ is independent of at least one of the variables $u, Du$ or $D^2u$.}. In these cases, the subset of the remaining factors will be called the {\em reduced constraint set for $\F$} and will be denoted by $\F^{\prime}$. The silent factors can be included in any associated monotonicity cone subequation $\cM$ for $\F$, making $\cM$  ``large'' and hence increasing the likelihood of \eqref{CP2} being true.

We start with the {\em pure second order case}. Although it has been treated in some detail in \cite{HL09}, a discussion is in order here as a prelude to the main results of this section which concern the {\em gradient free} case, where we prove the analogue of the {\em subaffine theorem}. We apply the Convention \ref{conv:reduction} throughout this section.

At this point, some readers may wish to turn to the Summary Remark \ref{rem:special_cases} for an overview.

\subsection{Pure second order}

By definition, a {\em pure second order constraint set} is a subset $\F \subset \J^2 = \R \times \R^n \times \Symn$ of the form 
$$
	\F = \R \times \R^n \times \F^{\prime}.
$$
That is, the factor $\R \times \R^n$ is silent and the reduced constraint set $\F^{\prime}$ is a subset of $\Symn$. An equivalent definition in terms of monotonicity is that $\F$ is $\R \times \R^n \times \{0\}$-monotone; that is,
\begin{equation}\label{PSO1}
(r,p,A) \in \F \ \Rightarrow (r + s, p + q, A) \in \F \ \ \text{for each} \ s \in \R, q \in \R^n.
\end{equation}
In particular, $\F$ is $(-\infty,0] \times \R^n \times \{0\}$-monotone so that $\F$ automatically satisfies both the negativity property (N) and the directionality property (D) with respect to the cone $\cD$ which is all of $\R^n$. The positivity property (P) holds for $\F$ if and only if the reduced constraint set $\F^{\prime}$ is $\cP$-monotone; that is
\begin{equation}\label{PSO2}
	\F^{\prime} + \cP \subset \F^{\prime}.
\end{equation}
The remaining subequation property, namely the topological property (T) which asks that $\F$ is the closure of its interior, is equivalent to this being true for the reduced constraint set $\F^{\prime}$, that is,
\begin{equation}\label{PSO3}
	\F^{\prime} = \overline{ \Int \, \F^{\prime}}.
\end{equation}

This topological property \eqref{PSO3} follows from the positivity property \eqref{PSO2} as long as $\F$ (or equivalently $\F^{\prime}$) is closed. To see this, first note that the positivity property \eqref{PSO2} implies that $\F^{\prime} + \Int \, \cP \subset \Int \, \F^{\prime}$. In particular, if $A \in \F^{\prime}$, then $A + \veps I \in \Int \, \F^{\prime}$ for every $\veps > 0$. Consequently,  Summarizing then, $\F \subset \J^2$ is a {\em pure second order subequation} if 
\begin{equation}\label{PSO4}
\mbox{ $\F$ is closed and $\cM(\cP) = \R \times \R^n \times \cP$-monotone.}
\end{equation}
In terms of the reduced constraint set $\F^{\prime}$ with  $\F = \R \times \R^n \times \F^{\prime}$ the definition takes the simpler equivalent form
\begin{equation}\label{PSO5}
\mbox{ $\F^{\prime} \subset \Symn$  is closed and $\cP$-monotone.}
\end{equation}

Employing Convention \ref{conv:reduction}, we will refer to such an $\F^{\prime}$ as a {\em pure second order subequation} \footnote{In \cite{Kv95}, open sets $\Theta \subsetneq \Symn$ which correspond to $\Int \, \F^{\prime}$ were called {\em elliptic sets}. Pure second order subequations  $\F^{\prime}$ were introduced as closed subsets of $\Symn$ with the positivity property in \cite{HL09}, where they were denoted by $F$ and called {\em Dirichlet sets}. Such $\F^{\prime}$ were called {\em elliptic sets} and denoted by $\Theta$ in \cite{CP17}.} Taking $\F^{\prime} \subsetneq \Symn$ as a proper subset ensures that $\F(X)$ is a proper subset of $\USC(X)$. 

Comparison for arbitrary bounded domains $\Omega$ follows easily from Theorem \ref{thm:comparison} since the monotonicity cone $\cM(\cP) = \R \times \R^n \times \cP$ for pure second order subequations  contains the monotonicity cone $\cM(\cN) \cap \cM(\cP) = \cN \times \R^n \times \cP$. Note that this is a special case of a $(\gamma, \cD, R)$-monotonicity cone with $\gamma = 0, \cD = \R^n$ and $R = +\infty$. 

The differential inclusion \eqref{FSH-DI} defining $\F$-subharmonicity at $x_0$ is:
\begin{equation}\label{SHE1}
J^2_{x_0} \varphi = (\varphi(x_0), D\varphi(x_0), D^2 \varphi(x_0)) \in \F 
\end{equation}
for each  $C^2$ upper test function $\varphi$ for $u$ at $x_0$. This reduces to the simpler statement that $u \in \F(X)$ if and only if $u \in \USC(X)$ and, for each $x_0 \in X$, one has 
\begin{equation}\label{SHE3}
 \mbox{$D^2 \varphi(x_0) \in \F^{\prime}$ \ \ for each $C^2$ upper test function $\varphi$ for $u$ at $x_0$.}
\end{equation}

\begin{exe}\label{rem:PSO} What might be considered the most basic example in all of viscosity theory is the example $\F^{\prime} = \cP$. Its dual is the {\em subaffine subequation}
	\begin{equation}\label{P_dual}
	\cPt := \{ A \in \Symn: \ \lambda_{\rm max}(A) \geq 0  \}.
	\end{equation}
\end{exe}

One has the elementary facts:
\begin{equation}\label{SAS1}
	\mbox{ $\F$ is a pure second order subequation if and only if $\FD$ is} 
\end{equation}
and
\begin{equation}\label{SAS2}
	\F^{\prime} + \wt{\F}^{\prime} \subset \cPt \ \ \text{for each pure second order subequation} \ \F = \R \times \R^n \times \F^{\prime}.
\end{equation}
This pure second order jet addition theorem \eqref{SAS2} implies the following subharmonic addition theorem 
	\begin{equation}\label{SAS3}
	\F^{\prime} (X) + \wt{\F}^{\prime} (X) \subset \cPt(X),
	\end{equation}
as discussed in Theorem \ref{thm:SAT} for general sums $\F + \G \subset \cH$ of subequations in $\J^2$. 
Let $\A$ denote the space of affine functions on $\R^n$. As defined in Example \ref{exe:SA}, for each open set $X \subset \R^n$, let $\SA(X)$ denote the set of all $u \in \USC(X)$ with the {\em subaffine property}
\begin{equation}\label{SAProp}
\mbox{$u \leq a$ on $\partial \Omega \ \Rightarrow \	u \leq a$ on $\Omega$ \quad for every $\Omega \subset \subset X$ and $a \in \A$.}  
\end{equation}
One of the key results of \cite{HL09} is that
	\begin{equation}\label{SAChar}
\cPt(X) = \SA(X)
	\end{equation}
The proof of \eqref{SAChar} is also obtained here as a special easier case of the proof of Theorem \ref{thm:SAPChar} as indicated in its proof. Combining \eqref{SAChar} with \eqref{SAS3}, one has the {\em subaffine theorem} of \cite{HL09}:
	\begin{equation}\label{SAS4}
\F(X) + \wt{\F}(X) \subset \SA(X).
\end{equation}
This gives rise to an alternate proof of comparison principle for pure second order subequations because subaffine functions clearly satisfy the Zero Maximum Principle, as the function zero is affine.

\subsection{Gradient free}\label{subsec:GFSE}

By definition, a {\em gradient free constraint set} is a subset $\F \subset \J^2 = \R \times \R^n \times \Symn$ of the form 
\begin{equation}\label{GF1}
\F = \{ (r,p,A) \in \J^2: \ p \in \R^n \ \text{and} \ (r,A) \in \F^{\prime} \} \ \text{where} \ \F^{\prime} \subset \R \times \Symn. 
\end{equation}
That is, the factor $\R^n$ is silent and the reduced constraint set $\F^{\prime}$ is a subset of $\R \times \Symn$. When it is convenient, we will reorder the jet variables as $(p,r,A)$ so that the definition of $\F$ being gradient free can be restated as
\begin{equation}\label{GF_reordered}
	\F = \R^n \times \F^{\prime} \ \ \text{with} \  \F^{\prime} \subset \R \times \Symn.
\end{equation}
An equivalent definition in terms of monotonicity is that $\F$ is $\{0\} \times \R^n \times \{0\}$-monotone; that is,
\begin{equation}\label{GF2}
(r,p,A) \in \F \ \Rightarrow (r, p + q, A) \in \F \ \ \text{for each} \ q \in \R^n.
\end{equation}
This is the directionality property (D) with the convex cone $\cD \subset \R^n$ taken to be all of $\R^n$. Incorporating the subequation properties, we arrive at the definition of the concept we seek.  

\begin{defn}\label{defn:GFSE1}  
	A {\em gradient free subequation} is any closed, non-empty subset $\F \subsetneq \J^2 = \R \times \R^n \times \Symn$  which is $\cM$-monotone with respect to the monotonicity cone subequation $\cM(\cN) \cap \cM(\cP) = \cN \times \R^n \times \cP$; that is,
	\begin{equation}\label{GFSE1}
	\F +  (\cN \times \R^n \times \cP) \subset \F
	\end{equation}
	\end{defn}

It is obvious that this monotonicity property is equivalent to the combined monotonicity properties (P), (N) and (D) with $\cD = \R^n$. Therefore, in the gradient free case, one always has comparison on arbitrary bounded domains by applying Theorem \ref{thm:comparison}.

\begin{thm}\label{thm:CP_GF} Let $\F \subset \J^2$ be a gradient free subequation and let $\Omega \subset \subset \R^n$ be arbitrary.  Given $u, v \in \USC(\overline{\Omega})$ which are $\F, \wt{\F}$-subharmonic on $\Omega$ one has
	\begin{equation*}\label{CP_GF}
	u + v \leq 0 \ \text{on} \ \partial \Omega \ \ \Longrightarrow \ \ u + v \leq 0 \ \text{on} \ \Omega.
	\end{equation*}
	\end{thm}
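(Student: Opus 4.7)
The plan is to deduce Theorem \ref{thm:CP_GF} as a direct corollary of the general comparison machinery already in place, by observing that a gradient-free subequation automatically has the ``best possible'' monotonicity for the purposes of the fundamental family. First I would record the tautology that, by Definition \ref{defn:GFSE1}, any gradient-free subequation $\F$ is $\cM$-monotone with respect to
\[
\cM \ :=\ \cN \times \R^n \times \cP \ =\ \cM(\cN) \cap \cM(\cP),
\]
and that $\cM$ is itself a (convex) monotonicity cone subequation in the sense of Definition \ref{defn:MCS}. Moreover, in the notation of Remark \ref{rem:cone_zoo_compress}, this cone is $\cM(\gamma, \cD, R)$ with $\gamma = 0$, $\cD = \R^n$, and $R = +\infty$, so it falls into the ``$R = +\infty$'' branch of the Fundamental Family Comparison Theorem.

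Next, I would invoke the two-step monotonicity-plus-duality strategy. By the Subharmonic Addition Theorem (Theorem \ref{thm:SAT_MD}) applied to the $\cM$-monotone subequation $\F$ on the open set $X = \Omega$, one obtains
\[
\F(\Omega) + \wt{\F}(\Omega) \ \subset\ \wt{\cM}(\Omega).
\]
Hence, given $u \in \USC(\overline{\Omega}) \cap \F(\Omega)$ and $v \in \USC(\overline{\Omega}) \cap \wt{\F}(\Omega)$, the sum $z := u + v$ lies in $\USC(\overline{\Omega}) \cap \wt{\cM}(\Omega)$. The proof is then reduced to the Zero Maximum Principle for $\wt{\cM}$ on $\overline{\Omega}$, namely $z \leq 0$ on $\partial\Omega \Rightarrow z \leq 0$ on $\Omega$.

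For the (ZMP), I would directly apply Theorem \ref{thm:ZMP_for_M} in its $R = +\infty$ case: since $\cM \supset \cM(\gamma = 0, \cD = \R^n, \cP) = \cN \times \R^n \times \cP$, the (ZMP) for $\wt{\cM}$ holds on \emph{arbitrary} bounded domains $\Omega \subset\subset \R^n$. Equivalently, one can appeal to the General Comparison Theorem \ref{thm:CP_general} by exhibiting a strict approximator for $\cM$ on any bounded $\Omega$: choosing $y \in \R^n$ and $c > 0$ large, the quadratic $\psi(x) = -c + \tfrac{1}{2}|x-y|^2$ satisfies
\[
J^2_x \psi \ =\ \bigl(-c + \tfrac{1}{2}|x-y|^2,\, x-y,\, I\bigr) \ \in\ \Int\,\cM \quad \text{for every } x \in \Omega,
\]
since the middle slot is unconstrained (as $\cD = \R^n$), $I > 0$, and $-c + \tfrac12|x-y|^2 < 0$ for $c$ large enough.

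There is no real obstacle here: the only substantive input is the topological property for $\cM$, which is automatic because $\Int\,(\cN \times \R^n \times \cP) \neq \emptyset$, and the hard analytic work (passing from jet addition to subharmonic addition via sup-convolutions and the Almost Everywhere Theorem) has already been absorbed into Theorem \ref{thm:SAT_MD}. Writing out the conclusion in ``$u \leq w$'' form is then just the duality rewriting $w = -v$, which is a $\wt{\F}$-subharmonic translation of the $\F$-superharmonic $w$.
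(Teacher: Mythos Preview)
Your proposal is correct and follows essentially the same route as the paper: both observe that a gradient-free subequation is by definition $\cM$-monotone for $\cM = \cN \times \R^n \times \cP = \cM(\gamma=0,\cD=\R^n,R=+\infty)$, and then invoke the Fundamental Family Comparison Theorem \ref{thm:comparison} in its $R=+\infty$ branch. Your write-up simply unpacks that invocation into its two constituent pieces (Theorem \ref{thm:SAT_MD} plus Theorem \ref{thm:ZMP_for_M}), which the paper leaves implicit.
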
 

In analogy with the pure second order case, all of the above discussion can be reformulated in simpler terms by starting with the reduced constraint set $\F^{\prime} \subset \R \times \Symn$ and proceeding as follows.

\begin{defn}\label{defn:GFSE2}
	A closed subset $\F^{\prime} \subset \R \times \Symn$ is called a {\em gradient free subequation} if 
	\begin{equation}\label{GFSE2}
	\F^{\prime} +  \cQ \subset \F^{\prime} \quad \text{where} \ \ \cQ:= \cN \times \cP.
	\end{equation}
\end{defn}
We note that the topological property (T) is again automatic since each $(r,A) \in \F^{\prime}$ can be approximated by $(r,A) + \veps (-1, I)$ which lies in
$\F^{\prime} + \Int \, \cQ \subset \Int \, \F^{\prime}$.

\begin{exes}\label{exe:GFSE} The most basic example is $\F^{\prime} = \cQ = \cN \times \cP$. Another example is its dual
	\begin{equation}\label{Q_dual}
	\cQt := \{ (r,A) \in \R \times \Symn: \ r \leq 0 \ \ \text{or} \ \ A \in \cPt \}
	\end{equation}
	with $\cPt$ as in \eqref{P_dual}, the subaffine subequation.
\end{exes}

 One has the elementary facts:
 \begin{equation}\label{GFS1}
 \mbox{ $\F$ is a gradient free subequation if and only if $\FD$ is,} 
 \end{equation}
 and
 \begin{equation}\label{GFS2}
 \mbox{ $\F^{\prime} + \wt{\F}^{\prime} \subset \cQt$ for each gradient free subequation $\F$ determined by $\F^{\prime}$.}
 \end{equation}
 This jet addition result extends to the {\em subharmonic addition theorem}
	\begin{equation}\label{GFS3}
	\mbox{$\F(X) + \wt{\F}(X) \subset \cQt(X)$ \quad for any open subset $X \subset \R^n$.}
	\end{equation}
	Here, in the spirit of Convention \ref{conv:reduction}, one has $u \in \F(X)$   if and only if $u \in \USC(X)$ and, for each $x_0 \in X$ one has
	$$
	\mbox{$(\varphi(x_0), D^2 \varphi(x_0)) \in \F^{\prime}$ 	for each $C^2$ upper test function $\varphi$ for $u$ at $x_0$.}
		$$

The claims up to \eqref{GFS2} are purely algebraic statements and easily verified. The claim \eqref{GFS3} follows \eqref{GFS2} and from the general results of section \ref{sec:CP}. In particular, see Theorem \ref{thm:SAT}, Lemma \ref{lem:CP_Jets} and Theorem \ref{thm:SAT_MD}. 

We will finish this discussion by providing alternate characterizations of $\cQt(X)$ including the one claimed in Example \ref{exe:SAP}, culminating in the {\em subaffine plus theorem} (see Theorem \ref{thm:SAPT} below). The key notion  is that of {\em subaffine plus functions}, which we proceed to describe. 

For any domain $\Omega \subset \subset \R^n$ and with $\A$ the space of affine functions on $\R^n$, denote by 
\begin{equation}\label{Ap}
\Ap(\overline{\Omega}) := \{ \alpha:= a_{|\overline{\Omega}} : a \in \A \ \text{and} \ a \geq 0 \ \text{on} \ \overline{\Omega} \},
\end{equation}
the space of {\em affine plus functions on $\overline{\Omega}$}. 

\begin{defn}\label{defn:SAp}
	For $X \subseteq \R^n$ open, a function $u \in \USC(X)$ is said to be {\em subaffine plus on $X$} if for every open subset $\Omega \subset \subset X$, it has the {\em subaffine plus property}: 
	\begin{equation}\label{SAp1}
	u \leq \alpha \ \text{on} \ \partial \Omega \ \Rightarrow \ u \leq \alpha \ \text{on} \ \Omega \quad \text{for every} \ \alpha \in \Ap(\overline{\Omega}).
	\end{equation}
	Denote by $\SAp(X)$ the space of all such functions. 
\end{defn}

The $\cQt$-subharmonic functions $u$ on $X$ can be characterized by being subaffine plus on $X$, and/or by the positive part $u^+ := \max \{u,0\}$ being subaffine on $X$. 

\begin{thm}[Subaffine plus characterizations]\label{thm:SAPChar} If $X \subseteq \R^n$ is open, then 
	\begin{equation}\label{SApC1}
	\cQt(X) = \SAp(X) = \{ u \in \USC(X): \ u^+  \in \SA(X) = \cPt(X) \}.
	\end{equation}
\end{thm}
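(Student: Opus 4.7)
The plan is to establish both equalities at once via the cyclic chain of inclusions
\[
\cQt(X) \ \subseteq \ \SAp(X) \ \subseteq \ \{u \in \USC(X) : u^+ \in \cPt(X)\} \ \subseteq \ \cQt(X).
\]
The first inclusion is essentially a comparison statement. Given $u \in \cQt(X)$, $\Omega \subset\subset X$, and $\alpha \in \Ap(\overline\Omega)$, the function $-\alpha$ satisfies $-\alpha \leq 0$ on $\overline\Omega$ and $D^2(-\alpha) \equiv 0 \in \cP$, so by coherence it belongs to $\cQ(\Omega)$ (viewing $\cQ = \cN \times \cP$ as a reduced subequation). The gradient-free comparison principle of Theorem~\ref{thm:CP_GF}, applied to the pair $(-\alpha, u) \in \cQ(\Omega) \times \cQt(\Omega)$, then delivers $u \leq \alpha$ on $\Omega$ from $u \leq \alpha$ on $\partial\Omega$. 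Equivalently, one could invoke the gradient-free subharmonic addition \eqref{GFS3} together with the zero maximum principle for $\cQt$ on arbitrary bounded domains (Theorem~\ref{thm:ZMP_for_M} applied to the $R = +\infty$ cone $\cQ = \cM(\cN,\cP)$, which admits strict approximators such as $x \mapsto |x-y|^2/2 - C$).

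For the second inclusion, let $u \in \SAp(X)$, fix $\Omega \subset\subset X$, and suppose $u^+ \leq a$ on $\partial \Omega$ for some affine $a$. Since $u^+ \geq 0$, we have $a \geq 0$ on $\partial\Omega$; because $a$ is affine (hence harmonic) and $\Omega$ is bounded, its minimum on $\overline\Omega$ is attained on $\partial\Omega$, so $a \geq 0$ on all of $\overline\Omega$, i.e.\ $a \in \Ap(\overline\Omega)$. The subaffine-plus property of $u$ then gives $u \leq a$ on $\Omega$, and combining this with $0 \leq a$ on $\Omega$ yields $u^+ \leq a$ on $\Omega$. Since $u^+ = \max(u,0) \in \USC(X)$, we conclude $u^+ \in \SA(X) = \cPt(X)$.

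For the closing inclusion, given $u \in \USC(X)$ with $u^+ \in \cPt(X)$, I would verify the differential inclusion defining $\cQt(X)$ pointwise: for $x_0 \in X$ and an upper test function $\varphi$ for $u$ at $x_0$, we must show $J^2_{x_0}\varphi \in \cQt$, i.e.\ either $\varphi(x_0) \leq 0$ or $D^2 \varphi(x_0) \in \cPt$. If $u(x_0) \leq 0$ then $\varphi(x_0) = u(x_0) \leq 0$ and the first alternative holds. If $u(x_0) > 0$, continuity of $\varphi$ at $x_0$ yields a neighborhood $U$ of $x_0$ on which $\varphi > 0$; there $u^+ = \max(u,0) \leq \max(\varphi,0) = \varphi$, with equality at $x_0$, so $\varphi$ is also an upper test function for $u^+$ at $x_0$. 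The hypothesis $u^+ \in \cPt(X)$ then forces $D^2\varphi(x_0) \in \cPt$, closing the argument.

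The only substantive obstacle is the first inclusion, and even there the heavy lifting has already been done in earlier sections: once one observes that $-\alpha \in \cQ(\Omega)$, the gradient-free comparison principle closes the argument immediately. The second inclusion hinges on the elementary but crucial min-principle observation upgrading $a \geq 0$ on $\partial\Omega$ to $a \in \Ap(\overline\Omega)$; the third is a direct test-function manipulation powered by the continuity of $\varphi$ at $x_0$.
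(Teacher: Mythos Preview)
Your proof is correct and follows the same three-step cyclic structure as the paper. Two differences are worth flagging.

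For the first inclusion $\cQt(X) \subseteq \SAp(X)$, you invoke the full gradient-free comparison principle (Theorem~\ref{thm:CP_GF}), noting that $-\alpha \in \cQ(\Omega)$. The paper instead argues more elementarily via definitional comparison (Lemma~\ref{lem:DCP}): it perturbs $\alpha$ to $w_\veps(x) = \alpha(x) + \veps(R^2 - |x|^2)/2$, which is $C^2$ with $w_\veps > 0$ on $\overline\Omega$ and $D^2 w_\veps = -\veps I < 0$, hence $(w_\veps(x), D^2 w_\veps(x)) \notin \cQt$; definitional comparison then gives $u \leq w_\veps$, and $\veps \to 0$ yields $u \leq \alpha$. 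Your route is logically sound since Theorem~\ref{thm:CP_GF} is proved independently via Theorem~\ref{thm:comparison}, but the paper's argument keeps Theorem~\ref{thm:SAPChar} self-contained---which matters because Remark~\ref{rem:SAPT2} subsequently presents the Subaffine Plus Theorem as an \emph{alternative} route to gradient-free comparison.

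For the closing inclusion, the paper argues by contrapositive via the Bad Test Jet Lemma: if $u \notin \cQt(X)$, it extracts a strict quadratic test jet $(r,p,A)$ with $r > 0$ and $A < 0$, then observes that positivity of the quadratic near $x_0$ makes it a test jet for $u^+$ as well, forcing $u^+ \notin \cPt(X)$. Your direct argument with $C^2$ test functions and the continuity of $\varphi$ is the same idea without the contrapositive packaging.
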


\begin{proof} The proof requires verifying three inclusions, where the first inclusion provides motivation for the concept of {\em subaffine plus} introduced in Definition \ref{defn:SAp}. For completeness, we include a proof of the characterization $ \cPt(X) = \SA(X)$ which was stated in \eqref{SAChar} and used in the statement of Theorem \ref{thm:SAPChar}.
	
\noindent{\bf Step 1:} $\cQt(X) \subset \SAp(X)$.

Suppose that $u \in \cQt(X)$. Since $\cQt$ is a closed set satisfying (P) and (N) one has {\em definitional comparison} (see Lemma \ref{lem:DCP}); that is, 
	\begin{equation}\label{DefComp1}
	u \leq w \ \text{on} \ \partial \Omega \ \Rightarrow \ u \leq w \ \text{on} \ \Omega.
	\end{equation}
	for functions $w$ that satisfy
	\begin{equation}\label{DefComp2}
	w \in C(\overline{\Omega}) \cap C^2(\Omega) \ \text{with} \ (w(x), D^2w(x)) \not\in \cQt, \ \forall \ x \in \Omega.
	\end{equation}
	If $w$ is a degree two polynomial then $D^2_x w (x):= A$ is independent of $x$, so the requirement $(w(x), D^2w(x)) \not\in \cQt, \ \forall \ x \in \Omega$ in \eqref{DefComp2} becomes
	\begin{equation}\label{DefComp2bis}
	\mbox{ $w(x) > 0$ \quad and \quad  $D^2w(x) :=A < 0$ \quad for each $x \in \Omega$}.
	\end{equation}
Such functions $w$ can be constructed by starting from any $\alpha \in \Ap(\overline{\Omega})$ and then defining	
	\begin{equation}\label{DefComp3}
	w_{\veps}(x) := \alpha(x) + \veps \left( \frac{R^2 - |x|^2}{2} \right), \ \ x \in \overline{\Omega},
	\end{equation}
	with $\veps > 0$ and $R > 0$ chosen large enough so that $\overline{\Omega} \subset B_R(0)$; that is, $R^2 - |x|^2 > 0$ on $\overline{\Omega}$. One computes that
	$$
	w_{\veps}(x) > 0 \ \ \text{for every} \ x \in \overline{\Omega} \quad \text{and that} \quad D^2w_{\veps}(x) = - \veps I < 0 \ \ \text{for every} \ x,
	$$
	so that \eqref{DefComp2bis} is satisfied. If, in addition, $u \leq \alpha$ on $\partial \Omega$, then  $u \leq w_{\veps}$ on $\partial \Omega$. Applying \eqref{DefComp1} yields
	$$
	u \leq w_{\veps} \ \text{on} \ \Omega
	$$
	with $\veps > 0$ arbitrary. Taking the limit as $\veps \to 0^+$ yields $u \leq \alpha$ on $\Omega$ as desired and hence $u \in \SAp(X)$.
		
	\noindent {\bf Step 2:} $\SAp(X) \subset \{ u \in \USC(X): \ u^+  \in \SA(X) \}$.
	
	Suppose that $u \in \SAp(X)$. Given $\Omega \subset \subset X$ and $a \in \A$ with $u^+ \leq a$ on $\partial \Omega$, since $0 \leq u^+$ it follows that $0 \leq a$ on $\partial \Omega$. Now $-a$ is also affine, so it satisfies the maximum principle, and hence $0 \leq a$ on $\overline{\Omega}$. This proves that $\alpha:= a_{|\overline{\Omega}} \in \Ap(\overline{\Omega})$. Since $u \in \SAp(X)$ and $u \leq \alpha$ on $\partial \Omega$, it follows that $u \leq \alpha$ on $\Omega$. Also $0 \leq \alpha = a$ on $\overline{\Omega}$. Therefore $u^+ \leq \alpha = a$ on $\overline{\Omega}$. This proves that $u^+ \in \SA(X)$.

	\noindent {\bf Step 3:} $\{ u \in \USC(X): \ u^+ \in \cPt(X) \} \subset \cQt(X)$.
	
	We argue by contradiction to show that $ u \not\in \cQt(X) \ \Rightarrow \ u^+ \not\in \cPt(X)$. If $u \in \USC(X) \setminus \cQt(X)$  then by the Bad Test Jet Lemma \ref{lem:nonFSH} there exist $x_0 \in X$, $\veps, \rho > 0$, $p \in \R^n$ and $(r,A) \not\in \cQt$ such that	
	\begin{equation}\label{NQt1}
	u(x) \leq   r + \langle p, x - x_0 \rangle + \frac{1}{2} \langle A(x - x_0), x - x_0 \rangle - \veps |x - x_0|^2, \ \forall \ x \in B_{\rho}(x_0)
	\end{equation}
	and
	\begin{equation}\label{NQt2}
	u(x_0) = r.
	\end{equation}
	Recall that $(r,A) \not\in \cQt$ means that
	\begin{equation}\label{NQt3}
	r > 0 \quad \text{and} \quad A < 0.
	\end{equation}
	Now $r > 0$ means that the right hand side of \eqref{NQt1} is positive for $\rho$ sufficiently small. Therefore \eqref{NQt1} holds on $B_{\rho}(x_0)$ with $u(x)$ replaced by $u^+(x)$ (and we have equality at $x = x_0$). Thus $u^+$ has an upper test jet $(r,p,A)$ at $x = x_0$ with $A < 0$, which proves that $u^+ \not\in \cPt(X)$.
	
	Finally, for the sake of completeness, we include the following argument.
	
	\noindent {\bf Step 4:} $\cPt(X) = \SA(X)$.
	
	To see that $\cPt(X) \subset \SA(X)$, consider the simpler version $w_{\veps}(x):= a(x) - \frac{\veps}{2}|x|^2$ of \eqref{DefComp3}, but with $a \in \A$ arbitrary. Since $D^2w_{\veps}(x) = -\veps I \not\in \cPt$ for every $x$, the subaffine property follows as in the argument after \eqref{DefComp3}, but for any $u \in \cPt(X)$, which proves that $u \in \SA(X)$.
	
	For the inclusion $\SA(X) \subset \cPt(X)$, see \eqref{NQt1} with $A \not\in \cPt$; that is, $A < 0$ (negative definite). It follows that $u \not\in \SA(X)$.  
	\end{proof}

Since $\cQt(X) = \SAp(X)$, we can restate the {\em subharmonic addition theorem} \eqref{GFS3} in the following way by making use of Convention \ref{conv:reduction}.

\begin{thm}[The Subaffine Plus Theorem]\label{thm:SAPT} If $\F \subset \J^2$ is a gradient free subequation, then for any open set $X \subset \R^n$
	\begin{equation}\label{SAPT}
	\F(X) + \wt{\F}(X) \subset \SAp(X).
	\end{equation}
	\end{thm}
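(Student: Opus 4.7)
The plan is to deduce Theorem \ref{thm:SAPT} as an essentially formal consequence of three prior results: an algebraic jet-addition identity (obtained from monotonicity and duality), the Subharmonic Addition Theorem that lifts such jet identities to the function level, and the characterization of $\cQt$-subharmonics as subaffine plus functions. No new analytic content is needed; the entire proof is a short chain of implications.

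First I would unpack the hypothesis. Since $\F$ is gradient free, Definition \ref{defn:GFSE1} says $\F$ is $\cM$-monotone for
\[
\cM \; := \; \cN \times \R^n \times \cP \; = \; \cM(\cN) \cap \cM(\cP),
\]
and this $\cM$ is itself a monotonicity cone subequation (it is a closed convex cone with vertex at the origin, contains $\cM_0$, and plainly satisfies (T) since its interior is nonempty). I would then identify $\wt{\cM}$ using the formula for duals of intersections in Lemma \ref{lem:FPMCDual}:
\[
\wt{\cM} \; = \; \wt{\cM}(\cN) \cup \wt{\cM}(\cP) \; = \; (\cN \times \R^n \times \Symn) \cup (\R \times \R^n \times \wt{\cP}),
\]
which is exactly the gradient free subequation determined (via \eqref{GF1}) by the reduced constraint set
\[
\cQt \; = \; \{(r,A) \in \R \times \Symn : r \leq 0 \ \text{or} \ A \in \wt{\cP}\}
\]
from \eqref{Q_dual}. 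Thus under Convention \ref{conv:reduction}, $\wt{\cM}(X) = \cQt(X)$ for every open $X \subset \R^n$.

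Next, I would invoke Theorem \ref{thm:SAT_MD} (Subharmonic Addition, Monotonicity and Duality) with this monotonicity cone $\cM$, yielding
\[
\F(X) + \wt{\F}(X) \; \subset \; \wt{\cM}(X) \; = \; \cQt(X) \qquad \text{for every open } X \subset \R^n.
\]
Finally, the Subaffine Plus Characterization (Theorem \ref{thm:SAPChar}) gives $\cQt(X) = \SAp(X)$, so chaining these inclusions produces the desired
\[
\F(X) + \wt{\F}(X) \; \subset \; \SAp(X).
\]

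There is really no ``hard part'' left at this level: the serious analytic steps—sup-convolution regularization in the proof of Theorem \ref{thm:SAT}, the definitional-comparison argument for $\cQt$-subharmonics, and the positive-part characterization of $\SAp$—were all carried out earlier. The only thing worth double-checking is the bookkeeping between the full-jet and reduced formulations: one must verify that a function is $\wt{\cM}$-subharmonic in the sense of the full $\J^2$-subequation $\wt{\cM}$ if and only if it is $\cQt$-subharmonic in the reduced sense, which is immediate from the fact that the $\R^n$-factor is silent in both $\cM$ and $\wt{\cM}$ and that upper test jets only enter through their $(r,A)$-components for such constraints.
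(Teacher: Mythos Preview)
Your proposal is correct and follows essentially the same approach as the paper: the theorem is presented there as an immediate restatement of the subharmonic addition inclusion \eqref{GFS3} (which is itself derived from Theorem \ref{thm:SAT_MD} applied to the gradient-free monotonicity cone $\cM(\cN,\cP)$) combined with the characterization $\cQt(X)=\SAp(X)$ of Theorem \ref{thm:SAPChar}. Your explicit identification of $\wt{\cM}$ with the reduced constraint $\cQt$ via Lemma \ref{lem:FPMCDual} and Convention \ref{conv:reduction} is exactly the bookkeeping the paper alludes to but leaves implicit.
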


We conlude this subsection with a few remarks concerning Theorems \ref{thm:SAPChar} and \ref{thm:SAPT}.

\begin{rem}\label{rem:SAPT1}
	It is easy to see that the set inclusion \eqref{GFS3} is actually an equality. Thus \eqref{SAPT} is also an equality.
\end{rem}

\begin{rem}\label{rem:SAPT2}
	The subaffine plus theorem (Theorem \ref{thm:SAPT}) yields another proof of  comparison (Theorem \ref{thm:CP_GF}) for gradient free subequations because subaffine plus functions clearly satisfy the Zero Maximum Principle, as the function zero is subaffine plus.
\end{rem}

\begin{rem}\label{rem:SAPT3}
	While $\cQt(X) = \SAp(X) = \{ u \in \USC(X): \ u^+  \in \SA(X) \}$, one might wonder if 
	\begin{multline}\label{SAPCF}
	\cQt(X) = \{ u \in \USC(X): \ \forall \, \Omega \subset \subset X \ \text{and} \ a \in \A, \ \text{one has} \\  u \leq a^+ \ \text{on} \ \partial \Omega \ \Rightarrow \ u \leq a^+ \ \text{on} \ \Omega \}?
	\end{multline}
	
	We leave it to the reader to show that the right hand side of \eqref{SAPCF} is contained in $\cQt(X)=\SAp(X)$. Making use of an affine $u$, we now give an example that shows that \eqref{SAPCF} is \underline{not} an equality. First note that $\cPt(X) \subset \cQt(X)$. Using the test \eqref{C2FSH} for $C^2$ functions, any $u \in \A(\R^n)$ belongs to $\cPt(\R^n)$ since $D^2u(x) = 0 \in \cPt$ for all $x$. In dimension $n=1$, consider
	$$
	\mbox{ $u(x) = x, \ a(x) = 2(x-1)$ \ and \ $\Omega = (0,2)$.} 
	$$
	One has $u = a^+$ on $\partial \Omega$ but $u(1) = 1 > 0 = a^+(1)$.
\end{rem}

\subsection{First order and pure first order} 

By definition, a {\em first order constraint set} is a subset $\F \subset \J^2 = \R \times \R^n \times \Symn$ of the form
$$
\F = \F^{\prime} \times \Symn.
$$
That is, the second order factor is silent and the reduced constraint set $\F^{\prime}$ is a subset of $\R \times \R^n$. Such a set $\F$ automatically satisfies the positivity condition (P). Hence $\F = \F^{\prime} \times \Symn$ is a {\em first order subequation} if $\F$ also satisfies the the properties (N) and (T), which in terms of $\F^{\prime}$ means 
\begin{equation}\label{NFO}
	(r,p) \in \F^{\prime} \ \Rightarrow \ (r + s, p) \in \F^{\prime} \ \text{for each} \ s \leq 0
\end{equation}
and
\begin{equation}\label{TFO}
\F^{\prime} = \overline{ \Int \, \F^{\prime}}. 
\end{equation}

Any monotonicity cone $\cM$ for $\F$ can always be enlarged to include the silent factor $\Symn$. Hence we can replace our family of $(\gamma, \cD, R)$-cones by the family of {\em $(\gamma, \cD)$-convex cones} whose elements are defined by
\begin{equation}\label{GDCC}
\cM^{\prime}(\gamma, \cD) := \{(r,p) \in \R \times \R^n: \ r \leq - \gamma|p| \ \text{and} \ p \in \cD \},
\end{equation}
where $ \gamma \in [0, +\infty)$ and $\cD \subset \R^n$ is a directional cone as in Definition \ref{defn:property_D}. In particular, 
$$
\cM(\gamma, \cD, +\infty) = \cM^{\prime}(\gamma, \cD) \times \cP \subset \cM^{\prime}(\gamma, \cD) \times \Symn,	
$$
so the strict approximators constructed in Theorem \ref{thm:ZMP_for_M} for $\cM(\gamma, \cD, +\infty)$ are valid for $\cM = \cM^{\prime}(\gamma, \cD) \times \Symn$ and for any domain $\Omega \subset \subset \R^n$. This proves that the (ZMP) holds for $\wt{\cM}(\overline{\Omega})$ functions and hence we always have comparison in this case. The following notational device will be used for convenience: given a subequation constraint set $\F$ and a bounded domain $\Omega$, denote by
\begin{equation}\label{FSH_on_closure}
\F(\overline{\Omega}) := \{ u \in \USC(\overline{\Omega}): \ u \ \text{is $\F$-subharmonic on} \ \Omega\} = \USC(\overline{\Omega}) \cap \F(\Omega).
\end{equation}

\begin{thm}\label{thm:CP_FO}  Let $\F = \F^{\prime} \times \Symn \subset \J^2$ be a first order subequation such that $\F^{\prime}$ is $\cM^{\prime}(\gamma, \cD)$-monotone with $\gamma \in [0, +\infty)$ and $\cD \subset \R^n$ a directional cone which can be all of $R^n$. Then comparison always holds; that is, for any domain $\Omega \subset \subset \R^n$, given $u \in \F(\overline{\Omega})$ and $v \in \wt{\F}(\overline{\Omega})$ one has
	\begin{equation*}\label{CP_GF}
	u + v \leq 0 \ \text{on} \ \partial \Omega \ \ \Longrightarrow \ \ u + v \leq 0 \ \text{on} \ \Omega.
	\end{equation*}
\end{thm}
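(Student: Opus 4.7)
The plan is to reduce the theorem to a direct application of the general comparison result (Theorem \ref{thm:CP_general} / Theorem \ref{thm:comparison}) by exploiting the fact that the second-order factor of $\F$ is silent, which allows enlargement of the monotonicity cone in the $\Symn$-variable at no cost.

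First, I would observe that if $\F = \F' \times \Symn$ and $\F'$ is $\cM'(\gamma, \cD)$-monotone, then by the definition of $\F$ the set $\F$ is automatically invariant under arbitrary translations in the matrix variable $A \in \Symn$. Combined with the monotonicity $\F' + \cM'(\gamma, \cD) \subset \F'$, this means
\[
\F + \bigl(\cM'(\gamma, \cD) \times \Symn\bigr) \subset \F.
\]
In other words, $\F$ is $\cM$-monotone for the enlarged cone $\cM := \cM'(\gamma, \cD) \times \Symn$. Moreover $\cM$ contains the smaller cone $\cM(\gamma, \cD, +\infty) = \cM'(\gamma, \cD) \times \cP$, which by Proposition \ref{prop:MCS} (or directly by the construction in Part IV of Definition \ref{defn:cone_zoo} with $R = +\infty$) is a bona fide monotonicity cone subequation. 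Consequently $\F$ is $\cM(\gamma, \cD, +\infty)$-monotone.

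Second, I would invoke part 2) of the Fundamental Family Comparison Theorem (Theorem \ref{thm:comparison}), which asserts that for any subequation $\cM$-monotone with respect to a cone containing $\cM(\gamma, \cD, \cP) = \cM(\gamma, \cD, +\infty)$, comparison holds on every bounded domain $\Omega \subset\subset \R^n$. This yields exactly the conclusion: for $u \in \F(\overline{\Omega})$ and $v \in \wt{\F}(\overline{\Omega})$ with $u + v \leq 0$ on $\partial \Omega$, one has $u + v \leq 0$ on $\Omega$.

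Alternatively, and perhaps more transparently, one could re-run the proof of the (ZMP) directly for $\wt{\cM}$ on an arbitrary bounded $\Omega$: since the $\Symn$-factor of $\cM$ is all of $\Symn$, the conditions \eqref{jn1}, \eqref{jn2}, \eqref{jn3} in the proof of Theorem \ref{thm:ZMP_for_M} collapse to requiring only $\Omega \subset y + \Int\,\cD$ and $-c + \tfrac12|x-y|^2 < -\gamma|x-y|$ on $\Omega$, both of which can always be arranged by picking $y$ far enough in $\Int\,\cD$ and $c$ large enough (note $\Omega$ is bounded, so $|x-y|$ is bounded on $\Omega$). The quadratic $\psi(x) = -c + \tfrac12|x-y|^2$ is then a strict approximator for $\cM$ on $\Omega$, and Theorem \ref{thm:CP_general} closes the argument. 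There is no real obstacle here; the only point to verify carefully is the harmless enlargement step that turns the $(\gamma,\cD)$-monotonicity of the reduced $\F'$ into a genuine monotonicity cone subequation acting on $\F$, which is immediate from the product structure.
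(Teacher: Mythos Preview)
Your proposal is correct and follows essentially the same approach as the paper: you enlarge the monotonicity cone via the silent $\Symn$-factor to get $\cM'(\gamma,\cD)\times\Symn \supset \cM(\gamma,\cD,+\infty)$, and then invoke the $R=+\infty$ case of the Fundamental Family Comparison Theorem (equivalently, the strict approximator from Theorem~\ref{thm:ZMP_for_M}). The paper's argument is the same, phrased as ``the strict approximators for $\cM(\gamma,\cD,+\infty)$ are valid for the larger cone $\cM'(\gamma,\cD)\times\Symn$,'' which is exactly your alternative paragraph.
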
 

Finally, a {\em pure first order subequation} $\F \subset \R \times \R^n \times \Symn$ has both the first and third factors silent. That is, 
$$
	\F = \R \times \F^{\prime} \times \Symn
$$ 
with reduced constraint set $\F^{\prime} \subset \R^n$ satisfying the single condition (T):
\begin{equation}\label{TPFO}
\F^{\prime} = \overline{ \Int \, \F^{\prime}}. 
\end{equation}
The subequation conditions (N) and (P) are automatic in this case. However, for comparison $\F^{\prime}$ must also satisfy the directionality property (D)
\begin{equation}\label{DPFO}
\mbox{$\F^{\prime} + \cD \subset \F^{\prime}$; that is, \ $p \in \F^{\prime} \ \Rightarrow (p+q) \in \F^{\prime}, \forall \, q \in \R^n$}
\end{equation}
for some non-empty closed convex cone $\cD \subset \R^n$ with vertex at the origin. In this case, the comparison principle of Theorem \ref{thm:CP_FO} applies to $\F^{\prime} \subset \R^n$ and arbitrary domains $\Omega \subset \subset \R^n$ by taking $\gamma = 0$.

\subsection{Zero order free} \label{subsec:ZOFSE}

By definition, a {\em zero order free constraint set} is a subset $\F \subset \J^2 = \R \times \R^n \times \Symn$ of the form 
$$
\F = \R \times \F^{\prime}.
$$
That is, the zeroth order factor is silent and the reduced constraint set $\F^{\prime}$ is a subset of $\R^n \times \Symn$. Such a set $\F$ automatically satisfies the negativity condition (N), but not the positivity condition (P) nor the topological condition (T). Here $\F = \R \times\F^{\prime}$ is a {\em zero  order free subequation} if the reduced constraint set $\F^{\prime} \subset \R^n \times \Symn$ satisfies
\begin{equation}\label{PRSO}
(p,A) \in \F^{\prime} \ \Rightarrow \ (p, A + P) \in \F^{\prime} \ \text{for each} \ P \geq 0
\end{equation}
and
\begin{equation}\label{TFO}
\F^{\prime} = \overline{ \Int \, \F^{\prime}}. 
\end{equation}

One might  as well replace our family of $(\gamma, \cD, R)$-cones by the family of cones whose elements are $\R \times \cM^{\prime}(\cD, R)$ where 
\begin{equation}\label{MC_RSO}
\cM^{\prime}(\cD, R) := \left\{ (p,A) \in \R^n \times \Symn: \ p \in \cD \ \text{and} \ A \geq \frac{|p|}{R} I \right\},
\end{equation}
where $\cD \subset \R^n$ is a non-empty closed convex cone with vertex at the origin. Notice that 
$$
	\cM(0, \cD, R) \subset \R \times \cM^{\prime}(\cD, R).
$$
Since this inclusion is reversed by duality, our previous results apply, yielding the following comparison principle.

\begin{thm}\label{thm:CP_RSO}  Let $\F = \R \times \F^{\prime} \subset \J^2$ be a zero order free subequation such that $\F^{\prime}$ is $\cM^{\prime}(\cD, R)$-monotone. Then comparison holds; that is, given given $u \in \F(\overline{\Omega})$ and $v \in \wt{\F}(\overline{\Omega})$ one has
	\begin{equation*}\label{CP_GF}
	u + v \leq 0 \ \text{on} \ \partial \Omega \ \ \Longrightarrow \ \ u + v \leq 0 \ \text{on} \ \Omega,
	\end{equation*}
	where $\Omega$ is any domain $\Omega \subset \subset \R^n$ contained in a translate of the truncated cone $\cD \cap B_R(0)$ if $R < + \infty$ and $\Omega$ is an arbitrary bounded domain in the case $R = +\infty$.
\end{thm}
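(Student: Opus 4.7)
The plan is to reduce Theorem \ref{thm:CP_RSO} directly to the Fundamental Family Comparison Theorem (Theorem \ref{thm:comparison}) by promoting the $\cM'(\cD,R)$-monotonicity of the reduced set $\F'\subset\R^n\times\Symn$ to a monotonicity of $\F=\R\times\F'\subset\J^2$ with respect to a standard fundamental cone.

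First I would verify that $\F=\R\times\F'$ is genuinely a subequation constraint set in $\J^2$. Property (N) is automatic because the $r$-factor is unconstrained. Property (P) transfers from $\F'$ via \eqref{PRSO}, since $(r,p,A)\in\F$ and $P\geq 0$ give $(p,A+P)\in\F'+(\{0\}\times\cP)\subset\F'$. The topological condition (T) for $\F$ follows from the identity $\Int\F=\R\times\Int\F'$ together with $\F'=\overline{\Int\F'}$.

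Next, I would observe the key inclusion of cones: the fundamental family member
\[
\cM(0,\cD,R)\ =\ \cM(\cN)\cap\cM(\cD)\cap\cM(R)\ =\ \cN\times\cM'(\cD,R)
\]
is contained in $\R\times\cM'(\cD,R)$. Now, given $(r,p,A)\in\F$ (equivalently $(p,A)\in\F'$) and $(s,q,B)\in\cM(0,\cD,R)$, we have $s\leq 0$ and $(q,B)\in\cM'(\cD,R)$, so that $(p+q,A+B)\in\F'+\cM'(\cD,R)\subset\F'$ by hypothesis, and therefore $(r+s,p+q,A+B)\in\R\times\F'=\F$. Thus $\F$ is $\cM(0,\cD,R)$-monotone, and $\cM(0,\cD,R)$ is a monotonicity cone subequation belonging to the fundamental family.

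Finally, I would invoke Theorem \ref{thm:comparison} in the two regimes. When $R<+\infty$, case 1) of that theorem gives comparison for $\F$ on every $\overline{\Omega}$ with $\Omega$ contained in a translate of the truncated cone $\cD\cap B_R(0)$, which is precisely the asserted conclusion. When $R=+\infty$, one has $\cM'(\cD,+\infty)=\cD\times\cP$ and hence $\cM(0,\cD,+\infty)=\cN\times\cD\times\cP=\cM(\gamma=0,\cD,\cP)$ in the notation of \eqref{MGDP_Cone}; case 2) of Theorem \ref{thm:comparison} then delivers comparison on arbitrary bounded domains $\Omega\subset\subset\R^n$. Since all the hard analytic work (subharmonic addition, construction of strict approximators, the zero maximum principle for the dual cone) has already been carried out in the general theory, the only task in the present proof is the algebraic upgrade of monotonicity performed above; there is no substantial obstacle.
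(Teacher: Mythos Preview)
Your proposal is correct and follows essentially the same route as the paper: the paper observes the inclusion $\cM(0,\cD,R)\subset\R\times\cM'(\cD,R)$ (which you rewrite as $\cM(0,\cD,R)=\cN\times\cM'(\cD,R)$), deduces that $\F$ inherits $\cM(0,\cD,R)$-monotonicity, and then invokes the general comparison machinery for the fundamental family. You have simply spelled out in more detail the verification that $\F$ is a subequation and the monotonicity step, which the paper leaves implicit.
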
 

\subsection{Summary}\label{summary_special_cases}

In this subsection, we give a brief summary of the cases discussed above.

\begin{sumrem}\label{rem:special_cases}
	For a subequation $\F \subset \J^2 = \R \times \R^n \times \Symn$, there are six cases where a reduced constraint set $\F^{\prime}$ can replace $\F$. In three of these cases, two factors of $\F$ are silent.
	
	\noindent 1. \underline{Pure second order.} $\R \times \R^n$ is silent and $\F = \R \times \R^n \times \F^{\prime}$ with $\F^{\prime} \subsetneq \Symn$ a closed set satisfying (P): $\F^{\prime} + \cP \subset \F^{\prime}$. 
	
	\noindent 2. \underline{Pure first order.} $\R \times \Symn$ is silent and $\F = \R  \times \F^{\prime} \times \Symn$ with $\F^{\prime} \subsetneq \R^n$ satisfying (T): $\F^{\prime} = \overline{ \Int \, \F^{\prime}}$. 
	
	\noindent 3. \underline{Zeroth order.} $\R^n \times \Symn$ is silent and $\F = (-\infty, r_0]  \times \R^n \times \Symn$ with $r_0 \in \R$.
	
	The remaining three cases have just one silent factor. 
	
	\noindent 4. \underline{Zero order free.} $\R$ is silent and $\F = \R  \times \F^{\prime}$ with $\F^{\prime} \subsetneq \R^n \times \Symn$ satisfying (P): $\F^{\prime} + (\{0\} \times \cP) \subset \F^{\prime}$ and satisfying (T): $\F^{\prime} = \overline{ \Int \, \F^{\prime}}$. 
	
	\noindent 5. \underline{Gradient free.} $\R^n$ is silent and $\F = \{ (r,p,A) \in \J^2: \ p \in \R^n \ \text{and} \ (r,A) \in \F^{\prime} \}$ with $\F^{\prime} \subsetneq \R \times \Symn$ a closed set satisfying (P) and (N): $\F^{\prime} + (\cN \times \cP) \subset \F^{\prime}$. 
	
	\noindent 6. \underline{First order.} $\Symn$ is silent and $\F = \F^{\prime} \times \Symn$ with $\F^{\prime} \subsetneq \R \times \R^n$ satisfying (N): $\F^{\prime} + (\cN \times \{0\}) \subset \F^{\prime}$ and satisfying (T):  $\F^{\prime} = \overline{ \Int \, \F^{\prime}}$. 
	
	Our main result Theorem \ref{thm:comparison} applies to case 5 (gradient free) and hence to case 1 (pure second order) and to the trivial case 3 (zeroth order) with no further restrictions on the subequation and yields comparison for arbitrary domains $\Omega \subset \subset \R^n$. A further condition on the reduced constraint set $\F^{\prime}$ is required in each remaining case, in which the gradient constraint is \underline{not} silent. 
	
	In case 2 (pure first order), in order to apply Theorem \ref{thm:comparison} we must assume directionality (D):  $\F^{\prime} + \cD \subset  \F^{\prime}$ for a proper cone $\cD \subset \R^n$. Then Theorem \ref{thm:comparison} applies and comparison holds on  arbitrary domains $\Omega \subset \subset \R^n$. 
	
	In case 4 (zero order free), in order to apply Theorem \ref{thm:comparison} we must assume that $\F^{\prime} \subset \R^n \times \Symn$ is $\cM^{\prime}$-monotone for 
	$$
	\cM^{\prime}(\cD,R) := \left\{ (p,A) \in \R^n \times \Symn: \ p \in \cD \ \text{and} \ A \geq \frac{|p|}{R} I \right\}
	$$
	for some convex cone $\cD \subsetneq \R^n$ and some $R$ with $0 < R \leq +\infty$. Theorem \ref{thm:comparison} applies and comparison holds on  arbitrary domains $\Omega \subset \subset \R^n$ if $R = +\infty$. Otherwise, comparison holds for domains $\Omega$ contained in a translate of the truncated cone $\cD_R:= \cD \cap B_R(0)$.
	
	In case 6 (first order), in order  to apply Theorem \ref{thm:comparison} we must assume that $\F^{\prime} \subset \R \times \R^n$ is $\cM$-monotone for the monotonicity cone 
	$$
	\cM^{\prime}(\gamma,\cD) := \left\{ (r,p) \in \R \times \R^n : p \in \cD \ \text{and} \ r \leq - \gamma |p| \right\} 
	$$
	for some $\gamma  \in [0, +\infty)$ and directional conse $\cD$. In which case, Theorem \ref{thm:comparison} applies and comparison holds on  arbitrary domains $\Omega \subset \subset \R^n$.
	
	Although, in case 3 (zeroth order) our results apply and comparison holds for all such subequations on arbitrary domains $\Omega \subset \subset \R^n$, no explicit discussion was presented since no constraint is placed on derivatives. This is the case when $\F$ is $\cM(\gamma, \cD)$-monotone with $\gamma = 0$ and $\cD = \R^n$ (so that $\cM(\gamma, \cD) = \cM(\cN)$) which is included in Theorem \ref{thm:CP_general}. However, the proof of comparison is trivial, as $\wt{\F} = (-\infty, -r_0] \times \R^n \times \Symn$.
\end{sumrem}

\section{Subequation Constraint Sets and Nonlinear Operators}\label{sec:sets_operators}

In this section, we will discuss some key issues concerning applications of the potential theoretic comparison principles to comparison principles for constant coefficient nonlinear operators. 

Attention is restricted to the constant coefficient case. This discussion goes beyond what is already given in \cite{HL18b}. It is helpful to be guided by two types of examples from the pure second order case. One, where the operator $F$ is defined and ``elliptic'' on the full jet space $\J^2$ and one where $F$ must be restricted (constrained) to a proper subset of $\J^2$ in order to be ``elliptic''. This dichotomy is illustrated by the minimal eigenvalue operator
\begin{equation}\label{MEO}
\mbox{$F(r,p,A) := \lambda_{\rm min}(A)$ \ \ which is increasing in $A$ on all of $\Symn$,}
\end{equation}
and the Monge-Amp\`{e}re operator
\begin{equation}\label{MAO}
\mbox{$F(r,p,A) := {\rm det} \, A$ \ \ which is increasing in $A$ only on $\cP \subsetneq \Symn$.}
\end{equation}

 Ideally, one would like to start from an equation 
\begin{equation}\label{PDE}
F(u, Du, D^2u) = 0
\end{equation}
defined by a function $F: {\rm dom}(F) \subseteq \J^2 \to \R$ and determine when there exists a subequation constraint set $\F \subset \J^2$ so that $\F$-subharmonics and $\F$-superharmonics ($-\wt{\F}$-subharmonics) correspond to viscosity subsolutions and supersolutions (with admissibility constraints) to the PDE \eqref{PDE}. A natural attempt would be to realize $\F$ in the form  
\begin{equation}\label{superlevel}
\F := \{ J = (r,p,A) \in {\rm dom}(F): \ \ F(J) \geq 0\},
\end{equation}
where one would also need
\begin{equation}\label{level}
\partial \F := \{ J = (r,p,A) \in {\rm dom}(F): \ \ F(J) = 0\}.
\end{equation}
The minimal monotonicity properties (P) and (N) for $\F$ can be deduced from the familiar monotonicity in $(r,A)$ for $F = F(r,p,A)$, which, in general, will not hold on all of ${\rm dom}(F) \subseteq \J^2$. This imposes an a priori {\em constraint} on admissible values of the 2-jets and one will first need to restrict $F$  and take ${\rm dom}(F) \subsetneq \J^2$ to be the {\em effective domain} on which $F$ is suitably monotone in $(r,A)$. In this {\em constrained case}, one might as well redefine ${\rm dom}(F) = \F$ so that the effective domain is a subequation constraint set. Ensuring that $\F$ has the needed topological property (T) is a more delicate matter and will be discussed below. Finally, to complete the applications, one would want to try to establish the needed structural conditions on $F$ which ensure that $\F$ is suitably $\cM$-monotone to yield comparison. 

Classes of examples and illustrations will be given in the following subsections. For example, unconstrained case examples include {\em canonical operators} as discussed in subsection \ref{subsection:canonical} and constrained case examples include {\em Dirichlet-G\aa rding polynomials} as discussed in subsection \ref{subsec:garding}. These classes are representative but, of course, not exhaustive for the dichotomy noted above. In particular, the minimal eigenvalue operator \eqref{MEO} is a canoncial operator for the pure second order subequation $\cP \subset \cS(n)$ and the Monge-Amp\`{e}re operator \eqref{MAO} is one of the most basic and important Dirichlet-G\aa rding polynomials.

\subsection{Compatible operator-subequation pairs and topological tameness}

We proceed with a precise discussion of the relationship between operators $F$ and subequation constraint sets $\F$, beginning with the following definition of {\em compatibility} related to the desire of realizing \eqref{level} in the constrained case. 

\begin{defn}\label{defn:compatible_pair} A {\em  compatible operator-subequation pair} $(F, \F)$ consists of either 
	\begin{equation}\label{case1}
	\mbox{$\F = \J^2$ and $F \in C(\J^2)$} \quad \text{(the {\em unconstrained case})}
	\end{equation}
	or
	\begin{equation}\label{case2}
	\mbox{a subequation $\F \subsetneq  \J^2$ and $F \in C(\F)$  \quad \text{(the {\em constrained case})}.}
	\end{equation}
	In this case \eqref{case2}, one requires that $F$ and $\F$ are {\em compatible} in the following sense: 
	\begin{equation}\label{compatible1}
	\mbox{ $\displaystyle{ c_0:= \inf_{\F} F}$ \ \  is finite}
	\end{equation}
	and
	\begin{equation}\label{compatible2}
	\partial \F = \{ J \in \F: \ F(J) = c_0 \}.
	\end{equation}
\end{defn}
Note that one can replace $F$ by $F - c_0$ and reduce to the situation in which $c_0 = 0$. 

Perhaps the simplest examples of compatibile pairs come from the protype operators noted above; namely, 
$\left( \lambda_{\rm min}(A), \J^2 \right)$ and $\left( {\rm det} \, A,  \R \times \R^n \times \cP \right)$
are compatible operator-subequation pairs in the unconstrained and constrained cases respectively. Notice that in this pure second order case, we will also refer to
\begin{equation}\label{CP_PSO}
	\left( \lambda_{\rm min}(A), \cS(n) \right) \quad \text{and} \quad \left( {\rm det} \, A,  \cP \right)
\end{equation}
as {\em compatible (pure second order) pairs}, where one makes the obvious modification of Definition \ref{defn:compatible_pair} in this and other reduced cases.

Compatibilty of a pair $(F, \F)$ will be used to define {\em $\F$-admissible viscosity subsolutions, supersolutions and solutions} of the equation $F(u,Du,D^2u) = c$ for each {\em admissible level} $c \in F(\F)$ (see Definition \ref{defn:AVSolns}). Our treatment of comparison will be based on the {\em corrspondence principle} of Theorem \ref{cor:AVSolns}. For this principle, in addition to compatibility of the pair $(F, \F)$, we also require that the pair has the minimal monotonicity of being {\em proper elliptic} ($\cM_0 = \cN \times \{0\} \times \cP$-monotonicity) and a non-degeneracy property of {\em topological tameness} for $F$ on $\F$. We proceed to discuss these two additional ingredients.

For a pair $\F$ and $F \in C(\F, \R)$, the monotonicity of the operator $F$ is related to the monotonicity of the set $\F$ as follows. 

\begin{defn}\label{defn:PEP} Suppose that $(F, \F)$ is an operator-subequation pair. For any subset $\cM \subset \J^2$, we will say that $(F, \F)$ is {\em $\cM$-monotone} if
\begin{equation}\label{Mmo1}
\mbox{ $\F$ is $\cM$-monotone; that is, $\F + \cM \subset \F$}
\end{equation}
and, in addition, $F$ is $\cM$-monotone on $\F$; that is,
\begin{equation}\label{Mmo2}
\mbox{ $F(J + J') \geq F(J)$ \  \ $\forall \, J \in \F, \ \forall \, J' \in \cM$.}
\end{equation}
	\end{defn}
Of course, if the pair $(F, \F)$ is $\cM$-monotone, then each particular upper level set such as $\F_0 := \{ J \in \F: F(J) \geq 0\}$ will be $\cM$-monotone, but the converse can fail to be true. However, by considering upper level sets for every $c \in \R$, the converse is trivially true.

\begin{lem}\label{lem:upper_levels} Given an operator-subequation pair $(F, \F)$ and a subset $\cM \subset \J^2$, the pair $(F, \F)$ is $\cM$-monotone if and only if the upper level sets $\F_c := \{ J \in \F: F(J) \geq c\}$ are $\cM$-monotone for all $c \in \R$.
	\end{lem}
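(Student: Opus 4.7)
The plan is to exploit the fact that $\cM$-monotonicity of the pair $(F,\F)$ splits as a conjunction of two conditions --- namely, $\cM$-monotonicity of the set $\F$ and $\cM$-monotonicity of the function $F$ along $\F$ --- both of which admit clean restatements in terms of the upper-level filtration $\{\F_c\}_{c \in \R}$. In fact, the lemma is essentially a direct unpacking of the definitions, and the proof is little more than a matter of choosing the right level $c$ in each direction.

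For the forward direction, I would fix an arbitrary $c \in \R$ together with $J \in \F_c$ and $J' \in \cM$. The $\cM$-monotonicity of $\F$ (condition \eqref{Mmo1}) yields $J+J' \in \F$, while the $\cM$-monotonicity of $F$ on $\F$ (condition \eqref{Mmo2}) yields $F(J+J') \geq F(J) \geq c$; together these give $J+J' \in \F_c$, which is exactly the $\cM$-monotonicity of $\F_c$. Since $c \in \R$ was arbitrary, this handles the whole family.

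For the converse, the key trick is to observe that for any $J \in \F$ and $J' \in \cM$, we may legitimately choose the level $c := F(J) \in \R$, so that $J \in \F_c$. The hypothesized $\cM$-monotonicity of this particular $\F_c$ then produces $J + J' \in \F_c$, which by the very definition of $\F_c$ encodes simultaneously $J+J' \in \F$ (recovering $\F + \cM \subset \F$) and $F(J+J') \geq c = F(J)$ (recovering the monotonicity of $F$ on $\F$ along $\cM$). Taking $J \in \F$ and $J' \in \cM$ arbitrary establishes both halves of Definition \ref{defn:PEP} at once.

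The main obstacle is essentially non-existent: the only conceptual content is the recognition that, as $J$ varies over $\F$, the values $c := F(J)$ realize every point of $F(\F)$, so the family $\{\F_c\}_{c \in \R}$ tightly encodes the pointwise ordering behavior of $F$ on $\F$. I note in passing that an alternative phrasing of the converse would be to use the single level $c_0 := \inf_{\F} F$ (when finite) together with $\F = \F_{c_0}$ to recover set monotonicity directly, and then derive the function monotonicity from the other levels; however, the uniform level-by-level argument above is cleaner and requires no case split between the constrained and unconstrained settings of Definition \ref{defn:compatible_pair}.
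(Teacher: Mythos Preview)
Your proposal is correct and follows essentially the same route as the paper's proof: both directions unpack the definitions directly, and in the converse both you and the paper take the level $c = F(J)$ to extract $J+J' \in \F$ and $F(J+J') \geq F(J)$ simultaneously. The only cosmetic difference is that the paper phrases the converse in terms of all $c \leq F(J)$ before specializing to $c = F(J)$, whereas you go straight to that level.
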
 

\begin{proof} The forward implication is a direct consequence of the defining conditions \eqref{Mmo1} and \eqref{Mmo2} of $\cM$-monotonicty for $(F, \F)$. For the converse, note that each $J \in \F$ belongs to $\F_c$ for each $c \leq F(J)$. Using the $\cM$-monotonicity of every sublevel set $\F_c$, for each $J \in \F$ and each $J' \in \cM$ one has 
$$ J + J' \in \F_c \subset \F \ \ \text{for each} \ c \leq F(J), $$ 
which proves \eqref{Mmo1}. Moreover, for each $J \in \F$ one has 
$$
	F(J + J') \geq c \ \ \text{for every} \ J' \in \cM \ \text{and every} \ c \leq F(J),
$$
which for $c = F(J)$ proves \eqref{Mmo2}.
\end{proof}

\begin{rem}[Admissible levels of $F$ for compatible pairs]\label{rem:level_sets} For a compatible pair $(F, \F)$, when considering  upper level sets $\F_c := \{ J \in \F: \ F(J) \geq c \}$, lower level sets $\F^c := \{ J \in \F: \ F(J) \leq c \}$ or level sets $\F(c):= \{ J \in \F: F(J) = c\}$ one should, of course, restrict attention to values $c \in F(\F)$ (those values which lie in the range of $F$). Otherwise, many statements become either redundant or empty, such as considering the $\cM$-monotonicity of $\F_c$ in Lemma \ref{lem:upper_levels}) if $c$ is not in the range of $F$. In particular, in the constrained case, we will consider only those $c \in \R$ with $c \geq c_0 := \inf_{\F} F$.
	\end{rem}

\begin{defn}\label{defn:admissible_levels}
Given a compatible operator-subequation pair $(F, \F)$, a number $c \in \R$ is called an {\em admissible level} (for $(F, \F)$) if $c \in F(\F)$.	
	\end{defn}

We can interpret operator {\em ellipticity} and {\em properness} using this lemma.

\begin{defn}\label{defn:PE_pairs} An operator-subequation pair $(F, \F)$ is said to be {\em proper elliptic} if the pair $(F, \F)$ is $\cM_0 = \cN \times \{0\} \times \cP$-monotone in the sense of Definition \ref{defn:PEP}.	
\end{defn}

\begin{rem}\label{rem:ED} For certain cases when $F$ and $\F$ depend on the jet variable $r$, it is occasionally interesting to drop the requirement of negativity (N). Such is the case for questions concerning {\em generalized principle eigenvalues} (see \cite{BGI18} and \cite{BP21} and the references therein). In such cases, one simply  requires that $\F$ satisfy properties (P) and (T) and that $F$ be $\cM = \{0\} \times \{0\} \times \cP$-monotone on $\F$. In this case, we say that $(F, \F)$ is {\em (degenerate) elliptic}. Note that in the reduced zero order free case, elliptic is the same as proper elliptic since the negativity (N) is automatic. 
\end{rem}

\begin{cor}\label{cor:PEP}
	An operator-subequation pair $(F,\F)$ is proper elliptic if and only if the upper level sets $\F_c := \{ J \in \F: \ F(J) \geq 0 \}$ are $\cM_0 = \cN \times \{0\} \times \cP$-monotone for all admissible levels $c \in F(\F)$. 
	\end{cor}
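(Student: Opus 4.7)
The statement is essentially a specialization of Lemma~\ref{lem:upper_levels} to the cone $\cM = \cM_0 = \cN \times \{0\} \times \cP$, combined with the observation that when proving the reverse implication, it suffices to use only admissible levels. My plan is therefore to deduce both directions quickly from the lemma and the definition of proper elliptic pairs, taking care only with the restriction from ``all $c \in \R$'' to ``$c \in F(\F)$''.

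For the forward direction, I will simply note that if $(F,\F)$ is proper elliptic then by Definition~\ref{defn:PE_pairs} the pair is $\cM_0$-monotone, so Lemma~\ref{lem:upper_levels} (applied with $\cM = \cM_0$) gives $\cM_0$-monotonicity of every upper level set $\F_c$ for every $c \in \R$, and in particular for each admissible level $c \in F(\F)$.

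For the reverse direction, assume $\F_c$ is $\cM_0$-monotone for every $c \in F(\F)$. I need to verify the two defining conditions of $\cM_0$-monotonicity of the pair, namely $\F + \cM_0 \subset \F$ and $F(J+J') \geq F(J)$ for all $J \in \F$ and $J' \in \cM_0$. The key observation is that for any $J \in \F$, the value $c := F(J)$ is by construction an admissible level, so $J \in \F_c$ and the hypothesis applies. Then $J + J' \in \F_c \subset \F$, which gives the first condition, while $F(J+J') \geq c = F(J)$ gives the second. Thus $(F,\F)$ is $\cM_0$-monotone, i.e.\ proper elliptic.

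There is no real obstacle here: the corollary is a direct packaging of Lemma~\ref{lem:upper_levels}, and the only point that requires attention is the remark (already anticipated in Remark~\ref{rem:level_sets}) that restricting to admissible levels $c \in F(\F)$ loses no information in the reverse direction, since the witnessing level $c = F(J)$ associated to any $J \in \F$ is automatically admissible. The proof is essentially a two-sentence specialization of the lemma.
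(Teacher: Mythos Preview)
Your proof is correct and follows exactly the approach the paper intends: the corollary is stated without proof in the paper, as it is an immediate specialization of Lemma~\ref{lem:upper_levels} to $\cM = \cM_0$, together with the observation (flagged in Remark~\ref{rem:level_sets}) that restricting to admissible levels $c \in F(\F)$ is harmless because for any $J \in \F$ the level $c = F(J)$ is automatically admissible. Your write-up makes this explicit and handles the admissible-levels restriction carefully; there is nothing to add.
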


In the rest of this section, we will consider only compatible proper elliptic operator-subequation pairs in the sense of Definitions \ref{defn:compatible_pair} and \ref{defn:PEP}. In light of Corollary \ref{cor:PEP}, for every admissible level $c \in F(\F)$, the upper level set 
\begin{equation}\label{PEULS}
\mbox{	$\F_c:= \{J \in \F: F(J) \geq c \}$ is closed, non-empty and satisfies (P), (N).} 
\end{equation}
This means that each $\F_c$ is almost a subequation. One needs only the topological property (T). For this purpose,  we place an additional structural condition on $F$, which is easy to verify; for example, it is obviously satisfied if the operator $F$ is real analytic.

\begin{defn}\label{defn:tameness} A proper elliptic operator $F \in C(\F,\R)$ is said to be {\em topologically tame} if the level set
	\begin{equation}\label{levelset}
	 \F(c):= \{ J \in \F: F(J) = c\}
	 \end{equation} 
	 has empty interior for every admissible level $c \in F(\F)$. 
	\end{defn}
This condition rules out obvious pathologies. For example, if $v$ is a local $C^2$ solution near $x_0 \in \R^n$ to $F(v,Dv,D^2v) = c$ with $J^2_{x_0}v \in \Int \, \F(c) \neq \emptyset$, then for all $C^2$ functions $\varphi$ with sufficiently small $C^2$-norm, $u:= v + \varphi$ is also a local solution to $F(v,Dv,D^2v) = c$. Moreover, such a $v$ always exists in this pathological case. For example, pick a 2-jet $J \in \Int \, \F(c)$ and let $\varphi$ be the quadratic polynomial with 2-jet $J$ at $x_0$. Then picking $\varphi$ with compact support near $x_0$ and small $C^2$ norm, one has lots of counterexamples to comparison. 

Some strict monotonicity for the operator $F$ provides a convenient way to rule out such pathologies, which we now discuss. As in \eqref{PEULS} and in \eqref{levelset}, we will denote by $\F_c:= \{J \in \F: F(J) \geq c \}$ and $\F(c):= \{ J \in \F: F(J) = c\}$.

\begin{thm}[Topological tameness equivalences]\label{thm:tameness}
	Suppose that $(F, \F)$ is a compatible proper elliptic operator-subequation pair which is $\cM$-monotone for some convex cone subequation $\cM$. Then the following are equivalent:
	\begin{itemize}
		\item[1)] $F$ is topologically tame; that is, for each admissible level $c \in F(\F)$, the level set $\F(c)$ has no interior;
		\item[2)] $F(J + J_0) > F(J)$ for each $J \in \F$ and each $J_0 \in \Int \, \cM$;
		\item[3)] For some $J_0 \in \Int \, \cM$, $F(J + tJ_0) > F(J)$  for each $J \in \F$ and for each $t>0$;
		\item[4)]$\{ J \in \F: F(J) > c\} = \Int \, \F_c$ for each admissible level $c \in F(\F)$;
		\item[5)] $\F(c) = \F_c \cap \left( - \wt{\F}_c \right)$ for each admissible level $c \in F(\F)$.
	\end{itemize}
	\end{thm}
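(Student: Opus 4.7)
My plan is to establish the cycle $1)\Rightarrow 2)\Rightarrow 3)\Rightarrow 2)$ together with $2)\Rightarrow 4)\Rightarrow 1)$, and finally $4)\Leftrightarrow 5)$. Throughout, I would use two basic observations: first, by Proposition \ref{prop:subequation_cones} one has $\F+\Int\,\cM=\Int\,\F$, so translating any $J\in\F$ by an element of $\Int\,\cM$ lands inside $\Int\,\F$; second, $\cM$-monotonicity of the pair makes $t\mapsto F(J+tJ_0)$ nondecreasing on $[0,\infty)$ for $J_0\in\cM$.

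For the equivalence $2)\Leftrightarrow 3)$, one direction is trivial (take $tJ_0$ in place of $J_0$, using that $\cM$ is a cone). For the converse, fix $J_0\in\Int\,\cM$ satisfying 3) and let $J_0'\in\Int\,\cM$ be arbitrary. Since $J_0'\in\Int\,\cM$ and $\cM$ is a closed convex cone, there exists $t>0$ small enough that $J_0'-tJ_0\in\cM$. Writing $J+J_0'=(J+tJ_0)+(J_0'-tJ_0)$ with $J+tJ_0\in\F$ and $J_0'-tJ_0\in\cM$, $\cM$-monotonicity of $F$ gives $F(J+J_0')\ge F(J+tJ_0)>F(J)$, which is 2).

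The implication $1)\Rightarrow 2)$ is the main obstacle and is where the structure of $\Int\,\cM$ enters essentially. Suppose, contrary to 2), that $F(J+J_0)=F(J)=:c$ for some $J\in\F$ and $J_0\in\Int\,\cM$. By monotonicity the map $t\mapsto F(J+tJ_0)$ is nondecreasing on $[0,1]$ and hence constant equal to $c$. Fix $\delta>0$ so small that $B_\delta(J_0)\subset\cM$; then for every perturbation $J_2$ with $|J_2|<\delta/2$, both $J_0+2J_2$ and $J_0-2J_2$ lie in $\cM$. Setting $J':=J+\tfrac12 J_0+J_2$, the decompositions
\begin{equation*}
J'=J+\tfrac12(J_0+2J_2)\in\F+\cM, \qquad J+J_0=J'+\tfrac12(J_0-2J_2)\in\F+\cM
\end{equation*}
yield $F(J')\ge F(J)=c$ and $c=F(J+J_0)\ge F(J')$ respectively, so $F\equiv c$ on the open ball $B_{\delta/2}(J+\tfrac12 J_0)$. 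Hence $\Int\,\F(c)\neq\emptyset$, contradicting topological tameness.

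For $2)\Rightarrow 4)$, the inclusion $\{J\in\F:F(J)>c\}\subset\Int\,\F_c$ follows from continuity of $F$ together with the fact that any such $J$ lies in $\Int\,\F$ (otherwise $J\in\partial\F$ forces $F(J)=c_0\le c$, contradicting $F(J)>c$ when $c\ge c_0$). For the reverse inclusion, if $J\in\Int\,\F_c$, pick any $J_0\in\Int\,\cM$; for $t>0$ small enough one has $J-tJ_0\in\Int\,\F_c\subset\F$, so by 2) applied to $J-tJ_0$ one gets $F(J)>F(J-tJ_0)\ge c$. Then $4)\Rightarrow 1)$ is immediate: if $\F(c)$ had an interior point $J$, then $J\in\Int\,\F_c$ with $F(J)=c$, contradicting 4).

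Finally, $4)\Leftrightarrow 5)$ is an unpacking of duality. Since $\F_c$ is closed, $-\widetilde{\F_c}={\sim}\Int\,\F_c$ and therefore $\F_c\cap(-\widetilde{\F_c})=\F_c\setminus\Int\,\F_c=\partial\F_c$. For admissible $c\ge c_0$, one checks (distinguishing $c>c_0$, where $\F_c\subset\Int\,\F$, from $c=c_0$, where $\F_c=\F$ and compatibility gives $\partial\F=\F(c_0)$) that 4) is equivalent to $\F(c)=\partial\F_c$, which is 5).
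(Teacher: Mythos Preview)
Your proof is correct and follows essentially the same approach as the paper's: the heart of the argument, $1)\Rightarrow 2)$, is the same sandwich idea (using $\cM$-monotonicity to trap $F$ between $F(J)$ and $F(J+J_0)$ on an open set), though your midpoint-ball presentation is slightly cleaner than the paper's version with two overlapping open sets $\cU_1,\cU_2$. Your direct arguments for $3)\Rightarrow 2)$ and for $4)\Leftrightarrow 5)$ via the identity $\F_c\cap(-\wt{\F}_c)=\partial\F_c$ streamline the paper's route (which instead goes $3)\Rightarrow 1)$ and $4)\Rightarrow 5)\Rightarrow 1)$), but these are minor organizational differences rather than a genuinely different strategy.
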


Before presenting the proof, some remarks are in order. The condition 2) is a strict version of the hypothesis that the operator $F$ is $\cM$-monotone. Condition 3) says that it is enough to have this strictness along the rays determined by a single $J_0 \in \Int \, \cM$. Condition 4), by making use of the definitions, is the statement that: {\em for every $c \in F(\F)$ and every lower-semicontinuous function $w$} 
$$
\mbox{ $4)^{\prime}$ \ \ \ \  {\em $w$ is a supersolution of $F = c \ \ \Longleftrightarrow \ \ -w$ is $\wt{\F}_c$-subharmonic,}}
$$ 
as will be made precise in Definition \ref{defn:AVSolns} and Theorem \ref{cor:AVSolns} below.

\begin{proof}[Proof of Theorem \ref{thm:tameness}] 
There are seven implications to check. The proof that 1), 2) and 3) are equivalent does not use the assumption that $F$ and $\F$ are compatible.
	
\noindent \underline{$2) \Rightarrow 3)$}: This is obvious.

\noindent \underline{$3) \Rightarrow 1)$}: Assume that 1) is false; that is, for some $c \in \R$ there exists $J \in \Int \, \F(c)$. Then given $J_0 \in \Int \, \cM$ with $J_0 \neq 0$, one has $J + t J_0 \in \Int \, \F(c)$ for each $t > 0$ sufficiently small, which contradicts 3).

\noindent \underline{$1) \Rightarrow 2)$}: Assume that 2) is false; that is, there exist $J_1 \in \F$ and $J_0 \in \Int \, \cM$ such that $F(J_1 + J_0) \leq F(J_1)$.
Since $F$ is $\cM$-monotone, one has $F(J_1 + J_0) \geq F(J_1)$ and hence
\begin{equation}\label{tame1}
	F(J_1 + J_0) = F(J_1) := c \ \ \text{for some} \ J_1 \in \F \ \text{and for some} \  J_0 \in \Int \, \cM.
\end{equation}
We will show that for $\veps > 0$ sufficiently small, $J_1 + (1 - \veps)J_0 \in \Int \, \F(c)$, which contradicts 1).

First note that 
\begin{equation}\label{tame2}
\mbox{ if $J \in \cU_1 := J_1 + \Int \, \cM$ then $J \in \F$ and $c:= F(J_1) \leq F(J)$}
\end{equation}
by the $\cM$-monotonicity of the operator $F$. 

Second, since $J_0 \in \Int \, \cM$, $J_0 - \mathcal{B} \subset \cM$ if $\mathcal{B}$ is a small ball about the origin in $\J^2$. Next we prove that
 \begin{equation}\label{tame3}
 \mbox{ if $J \in \cU_2 := J_1 + J_0 - \left( \mathcal{B} \cap \Int \, \cM \right)$ then $J \in \F$ and $F(J) = c$.}
 \end{equation}
 Suppose that $J:= J_1 + J_0 - J'$ with $J' \in \left( \mathcal{B} \cap \Int \, \cM \right)$. Then $J \in \F$ since $J_1 \in \F$ and $J_0 - J' \in \cM$. Moreover, since $J' \in \cM$ one has
 $$
 	F(J) \leq F(J + J') = F(J_1 + J_0) = c,
 $$
 again using that $F$ is $\cM$-monotone. Hence the open set
  \begin{equation}\label{tame4}
 \cU_1 \cap  \cU_2 \subset \Int \, \F(c). 
 \end{equation}
 Finally, $J_1 + (1 - \veps)J_0 \in \cU_1$ if $0 < \veps < 1$ and $J_1 + (1 - \veps)J_0 \in \cU_2$ if $\veps$ is small enough to ensure $\veps J_0 \in \mathcal{B}$, proving that $ \cU_1 \cap  \cU_2 \neq \emptyset$.
	
\noindent \underline{$2) \Rightarrow 4)$}: the compatibility assumption \eqref{compatible2} of Definition \ref{defn:compatible_pair} can be rephrased, since $\F = \F_{c_0}$, as $\partial \F:= \F \setminus \Int \, \F = \{ J \in \F: \ F(J) = c_0 \}$; that is,
\begin{equation}\label{COSP}
	\Int \, \F = \{ J \in \F: \ F(J) > c_0 \}.
\end{equation}
Now, by \eqref{COSP}, for $c \geq c_0$,
\begin{equation}\label{FSLS}
	\{ F > c \} := \{ J \in \F: \ F(J) > c \} = \{ J \in \Int \, \F: \ F(J) > c \}
\end{equation}
is an open subset of $\J^2$. Since $\{ F > c\}$ is contained in $\F_c$, it is part of the interior of $\F_c$. 

Conversely, suppose that $J \in \Int \, \F_c$. Pick $J_0 \in \Int \, \cM$. Then for $\veps > 0$ sufficiently small $J - \veps J_0 \in \F_c$. By 2), one has
$$
	F(J) = F((J - \veps J_0) + \veps J_0) > F(J - \veps J_0) \geq c,
$$
and hence $J \in \{ F > c\}$.

\noindent \underline{$4) \Rightarrow 1)$}: If 1) were false, then for some $c \in \R$ there would exist an open set $\cU \subset \F(c) \subset \F_c$. Thus $\cU \subset \Int \, \F_c$ but $\cU \not\subset \{ J \in \F: \ F(J) > c\}$, so that 4) would be false.

\noindent \underline{$4) \Rightarrow 5)$}: One has
$$
	\F_c \cap \left( - \wt{\F}_c \right) = \{ J \in \F: \ F(J) \geq c\} \cap \left( \sim \Int \, \F_c \right),
$$
which, by 4), equals $\{J \in \F: F(J) \geq c \ \text{and} \  F(J) \leq c \} := \F(c)$.

\noindent \underline{$5) \Rightarrow 1)$}: Suppose that 1) is false. Then for some $c \in \R$ there is an open set $\cU \subset \F(c)$. Hence $\cU \subset \Int \, \F_c$ so that $\F(c) \subset \, \sim \left( \Int \, \F_c \right)$ is false, which contradicts 5).
\end{proof}	

\begin{rem}\label{rem:tameness} Under the hypotheses of the theorem, if $F$ is also topologically tame, then it follows easily from 4) that for each $c \in F(\F)$,  the upper level set $\F_c$ is a subequation.
	\end{rem}

\subsection{The correspondence principle for compatible pairs}

 We now discuss an important consequence of Theorem \ref{thm:tameness} which will be essential for our treatment of comparison for classes nonlinear operators in the next section. Recall that $\varphi$ is a {\em $C^2$  (upper/lower) test function for $u$ at $x_0$} if
$$
\mbox{$u - \varphi \gtreqless 0$ \ near \ $x_0$ \ and \ $u - \varphi = 0$ \ at \ $x_0$.}
$$
We will denote by $J^{2, \pm}_{x_0}u \subset \J^2$ the spaces of {\em (upper/lower) test jets for $u$ at $x_0$}; that is, the set of all $J = J^2_{x_0} \varphi$ where $\varphi$ is a $C^2$ (upper/lower) test function for $u$ at $x_0$.

\begin{defn}\label{defn:AVSolns} Let  $(F, \F)$ be a compatible operator-subequation pair as in Definition \ref{defn:compatible_pair}. Let $\Omega$ be a domain in $\R^n$ and let $c \in F(\F)$ be an admissbile level.
	\begin{itemize}
		\item[(a)] A function $u \in \USC(\Omega)$ is said to be an {\em $\F$-admissible viscosity subsolution of $F(u,Du,D^2u) = c$ in $\Omega$} if for every $x_0 \in \Omega$ one has
		\begin{equation}\label{AVSub}
		\mbox{$J \in J^{2, +}_{x_0}u \ \ \Rightarrow \ \   J \in \F$ \ \ \text{and} \ \ $F(J) \geq c$.}
		\end{equation}
			\item[(b)] A function $u \in \LSC(\Omega)$ is said to be an {\em $\F$-admissible viscosity supersolution  of $F(u,Du,D^2u) = c$ in $\Omega$} if
		\begin{equation}\label{AVSuper}
			\mbox{$J \in J^{2, -}_{x_0}u  \ \ \Rightarrow$ \ \ either [ $J \in \F$ and \ $F(J) \leq c$\, ] \ or \ $J \not\in \F$.}
		\end{equation}
	\end{itemize}
	We say that $u \in C(\Omega)$ is an {\em $\F$-admissible viscosity solution of $F(u,Du,D^2u) = c$ in $\Omega$} if both (a) and (b) hold.
	\end{defn}
Note that in the unconstrained case ($\F = \J^2$) these definitions of $\J^2$-admissible viscosity (sub, super) solutions are standard and called merely {\em viscosity (sub, super) solutions}, respectively. In the constrained case ($\F \subsetneq \J^2$), we are taking a systematic approach to what is often done ad-hoc for particular examples. 

The following result formalizes the previous considerations in order to illustrate a general situation in which the potential theoretic approach in terms of subequation constraint sets $\F_c = \{ J \in \F: \ F(J) \geq c \}$ corresponds to the PDE approach of $\F$-admissible viscosity solutions to $F = c$. 

\begin{thm}[The Correspondence Principle for Compatible Pairs]\label{cor:AVSolns} 	Suppose that $(F, \F)$ is a compatible proper elliptic operator-subequation pair which is $\cM$-monotone for some convex cone subequation $\cM$. Suppose also that $F$ is topologically tame. Then for every admissible level  $c \in F(\F)$ and for every domain $\Omega \subset \R^n$ one has:
\begin{itemize}
	\item[(a)] $u \in \USC(\Omega)$ is an $\F$-admissible viscosity subsolution of $F(u,Du,D^2u) = c$ in $\Omega$ if and only if $u$ is $\F_c$-subharmonic on $\Omega$;
\item[(b)] $u \in \LSC(\Omega)$ is an $\F$-admissible viscosity supersolution of $F(u,Du,D^2u) = c$ in $\Omega$ if and only if $u$ is $\F_c$-superharmonic on $\Omega$, which is, by Definition \ref{defn:FH},  saying that $-u$ is $\wt{\F}_c$-subharmonic on $\Omega$.
\end{itemize}
In particular, for every admissible level  $c \in F(\F)$, one has comparison for the subequation $\F_c$ on a domain $\Omega$ if and only if one has comparison for the equation $F(u, Du, D^2 u) = c$ on $\Omega$
\end{thm}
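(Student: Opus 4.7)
The plan is to reduce the whole statement to unwinding the definitions of $\F_c$-subharmonicity and $\F_c$-superharmonicity, using the tameness characterization from Theorem~\ref{thm:tameness} (item 4) as the one non-trivial input. Part (c) will then be an immediate corollary of (a) and (b).

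For part (a), the verification is essentially tautological. By the definition of $\F_c$ one has $\F_c = \{J \in \F : F(J) \geq c\}$, so a function $u \in \USC(\Omega)$ is $\F_c$-subharmonic at $x_0$ exactly when every upper test jet $J \in J^{2,+}_{x_0}u$ satisfies $J \in \F$ and $F(J) \geq c$. This is word for word condition \eqref{AVSub} of Definition~\ref{defn:AVSolns}. No hypothesis beyond compatibility is needed here.

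For part (b), I would start from the dual formulation of $\F_c$-superharmonicity recorded in Remark~\ref{rem:Fsuper}: $u \in \LSC(\Omega)$ is $\F_c$-superharmonic at $x_0$ iff every lower test jet $J \in J^{2,-}_{x_0}u$ lies in $\sim \Int \, \F_c$. The key step is to identify this complement concretely. Since $(F,\F)$ is compatible, proper elliptic, $\cM$-monotone for a convex cone subequation $\cM$, and $F$ is topologically tame, Theorem~\ref{thm:tameness} (equivalence $1) \Leftrightarrow 4)$) yields
\[
\Int \, \F_c \;=\; \{J \in \F : F(J) > c\} \qquad \text{for every admissible } c \in F(\F).
\]
Taking complements in $\J^2$ gives
\[
\sim \Int \, \F_c \;=\; (\sim \F) \,\cup\, \{J \in \F : F(J) \leq c\},
\]
which is exactly the disjunction \eqref{AVSuper} in Definition~\ref{defn:AVSolns}. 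Hence a lower test jet of $u$ lies in $\sim \Int \, \F_c$ iff either $J\notin\F$ or [$J \in \F$ and $F(J)\leq c$], proving the equivalence claimed in (b).

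Part (c) then follows at once: the two notions of subsolution coincide by (a) and the two notions of supersolution coincide by (b), so the four-quantifier implication ``boundary inequality $\Rightarrow$ interior inequality'' in the potential-theoretic formulation and in the PDE formulation are literally the same statement. The only genuine obstacle, and the only place tameness is used, is the complement identity in part (b); without it the level set $\F(c) = \F_c \cap (-\wt{\F}_c)$ could have interior, and a lower test jet $J$ with $F(J) = c$ lying in $\Int \, \F_c$ would admit a smooth strict supersolution that is not $\F_c$-superharmonic, breaking the equivalence. Theorem~\ref{thm:tameness} is precisely designed to eliminate this pathology.
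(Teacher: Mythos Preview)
Your proposal is correct and follows essentially the same route as the paper: part (a) is a tautology from the definition of $\F_c$, part (b) invokes item 4) of Theorem~\ref{thm:tameness} to identify $\Int\,\F_c = \{J\in\F : F(J)>c\}$ and then matches the complement with the supersolution condition via Remark~\ref{rem:Fsuper}, and part (c) is immediate. Your closing remark on why tameness is indispensable is a nice addition not made explicit in the paper's proof.
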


We recall that by comparison we mean the validity of the comparison principle 
\begin{equation}\label{AVCP}
u \leq w \ \ \text{on} \ \ \partial \Omega \ \ \Rightarrow \ \ u \leq w \ \ \text{on} \ \ \Omega
\end{equation}
for all pairs $u \in \USC(\overline{\Omega})$ and $w \in \LSC(\overline{\Omega})$ which satisfy respectively (a) and (b). 

\begin{proof}[Proof of Theorem \ref{cor:AVSolns}]
For part (a),  the definition \eqref{AVSuper} of $u \in \USC(\Omega)$ being an $\F$-admissible subsolution in $x_0$ is equivalent to the statement that
\begin{equation}\label{AVS0}
J^{2,+}_{x_0}u \subset  \{ J \in \F: F(J) \geq c \} := \F_c,
\end{equation}
where $J^{2,+}_{x_0}u \subset \F_c$ defines $\F_c$-suharmonicity in $x_0$.

For part (b), the definition \eqref{AVSuper} of $u \in \LSC(\Omega)$ being an $\F$-admissible supersolution in $x_0$  is equivalent to the statement that
\begin{equation}\label{AVS1}
J^{2,-}_{x_0}u \subset \{ J \in \F: F(J) \leq c \} \cup (\sim \F).
\end{equation}
Since $F$ is topologically tame and $\cM$-monotone for some convex cone subequation $\cM$, point 4) of Theorem \ref{thm:tameness} yields
\begin{equation}\label{AVS2}
\Int \, \F_c = \{ J \in \F: F(J) > c \}
\end{equation}
and hence
\begin{equation}\label{AVS3}
\sim \Int \, \F_c = \{ J \in \F: F(J) \leq c \} \cup (\sim \F).
\end{equation}
Combining \eqref{AVS1} and \eqref{AVS3} yields
$$
J^{2,-}_{x_0}u \subset \sim \Int \, \F_c,
$$
which by Remark \ref{Fsuper} is one way to define that $u \in \LSC(\Omega)$ is $\F_c$-superharmonic.
\end{proof}

In the following subsections \ref{subsec:structure}, \ref{subsec:canonical} and \ref{subsec:lipschitz}, we will discuss how the monotonicity property $\F + \cM \subset \F$ provides additional structure with important consequences for any subequation $\F$ which admits a monotonicity subequation cone $\cM$. This {\em Structure Theorem} (see Theorem \ref{thm:structure}) provides the existence of a {\em canonical operator} $F \in C(\J^2)$ associated to any such $\cM$-monotone subequation $\F$. Such operators give rise to a rich class of examples for which the Correspondence Principle of Theorem \ref{cor:AVSolns} applies. Moreover, the Structure Theorem yields a uniqueness result for  {\em subequation branches} of a given equation $\cH \subset \J^2$ (see Corollary \ref{cor:structure} in subsection \ref{subsec:structure} and Proposition \ref{prop:branches} in subsection \ref{subsec:branches}). The beautiful class of {\em G{\aa}rding polynomial operators} is discussed in subsection \ref{subsec:garding}. This will further expand (in section \ref{sec:CP_operators}) applications of the Correspondence Principle  for obtaining additional comparison principles for compatible operator-subequation pairs (in both constrained and unconstrained cases).

\subsection{A structure theorem derived from subequation monotonicity}\label{subsec:structure}

The family of lines in $\J^2$ in any fixed direction $J_0 \in \Int \, \cM$ provides structure to a subequation $\F$ which admits the monotoncity cone subequation $\cM$. We recall that part of the definition of $\F \subset \J^2$ being a subequation is that $\F \neq \emptyset, \J^2$ and part of the definition of $\cM$ being a monotonicity cone subequation is that $\cM$ has non empty interior. The following fundamental result is contained in part (2) of Lemma 9.9 in \cite{HL11}, which was stated for manifolds but not proven there. See also Theorem 3.2 of \cite{HL10} for the construction in the pure second order case.

\begin{thm}[The Structure Theorem]\label{thm:structure}
	Suppose that $\F \subset \J^2$ is a subequation constraint set which admits a monotonicity cone subequation $\cM$. Fix $J_0 \in \Int \, \cM$. Given $J \in \J^2$ arbitrary, the set
	\begin{equation}\label{Interval_J}
	I_J := \{ t \in \R: \ J + tJ_0 \in \F \} 
	\end{equation}
	is a closed interval of the form $[t_J, +\infty)$ with $t_J \in \R$ (finite). Moreover
	\begin{itemize}
		\item[(a)] $J + t J_0 \not\in \F \ \iff \ t < t_J$;
		\item[(b)] $J + t_J J_0 \in \partial \F$;
		\item[(c)] $ J + t J_0 \in \Int \, \F \ \iff \ t > t_J$;
		\end{itemize}
	and any one of the relations (a), (b) or (c) uniquely determines $t_J \in \R$ from $J \in \J^2$ and $J_0 \in \Int \, \cM$ in the sense that
	\begin{equation}\label{tJ1}
	\mbox{$t_J$ is the unique element of $\R$ for which  $J + t_J J_0 \in \partial \F$}
	\end{equation}
	and 
	\begin{equation}\label{tJ2}
		t_J = \inf \{ t \in \R: J + t_J J_0 \in \Int \, \cM \} = \sup  \{ t \in \R: J + t_J J_0 \not\in \F \}.
	\end{equation}
	\end{thm}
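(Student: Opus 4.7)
The plan is to extract everything from the single identity $\F + \Int\,\cM = \Int\,\F$ (Proposition~\ref{prop:subequation_cones}) together with the cone property of $\cM$ and the fact that $\F$ is a proper closed subset of $\J^2$. Fix $J \in \J^2$ and consider the affine line $L_J := \{J + tJ_0 : t \in \R\}$; the whole statement reduces to showing that $I_J = L_J \cap \F$ (pulled back to $\R$ via $t$) is a non-empty, proper, closed half-line bounded below, and that its left endpoint sits on $\partial\F$.

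First I would verify \emph{upward closedness}: if $t \in I_J$ and $s > 0$, then $J + (t+s)J_0 = (J + tJ_0) + sJ_0$, with $sJ_0 \in \Int\,\cM$ since $\Int\,\cM$ is an open cone; so by Proposition~\ref{prop:subequation_cones} this sum lies in $\F + \Int\,\cM = \Int\,\F \subset \F$. This simultaneously proves that $I_J \cap (t, +\infty) \subset \{s : J + sJ_0 \in \Int\,\F\}$ for any $t \in I_J$, which will later give the forward direction of (c). Next, I would prove $I_J \neq \emptyset$. Choose any $J_* \in \F$. For $t > 0$ write $J + tJ_0 - J_* = t\bigl(J_0 + (J - J_*)/t\bigr)$. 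Since $J_0 \in \Int\,\cM$ (an open set) and $(J-J_*)/t \to 0$, for $t$ large enough $J_0 + (J-J_*)/t \in \Int\,\cM$, and then the cone property gives $J + tJ_0 - J_* \in \Int\,\cM$, so $J + tJ_0 \in J_* + \cM \subset \F$.

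Third, I would show $I_J$ is \emph{bounded below}. If instead $I_J = \R$, then $J + sJ_0 \in \F$ for every $s \in \R$. Pick an arbitrary $J^* \in \J^2$; by exactly the preceding argument applied to $J^* - J$ in place of $J - J_*$, one finds $|t|$ large with $J^* - J + |t|J_0 \in \cM$. Taking $s := -|t|$ gives $J^* = (J + sJ_0) + (J^* - J - sJ_0) \in \F + \cM \subset \F$, so $\F = \J^2$, contradicting the standing assumption that $\F$ is a proper subequation. Hence $I_J$ is bounded below, and since $\F$ is closed, $I_J = [t_J, +\infty)$ for a unique real $t_J$. Item (a) is then just the definition of $t_J$.

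The remaining points fall out quickly. For (b), if $J + t_J J_0$ were in $\Int\,\F$, openness of $\Int\,\F$ would give $J + (t_J - \veps)J_0 \in \F$ for small $\veps > 0$, contradicting minimality of $t_J$; combined with $J + t_J J_0 \in \F$ (closedness), this gives $J + t_J J_0 \in \partial\F$. For (c), the forward implication $t > t_J \Rightarrow J + tJ_0 \in \Int\,\F$ is already noted above, and the reverse follows because $t < t_J$ excludes membership in $\F$ while $t = t_J$ is excluded by (b). The uniqueness characterizations \eqref{tJ1}--\eqref{tJ2} then follow automatically, since $\partial \F$ meets the line $L_J$ in exactly the single point $J + t_J J_0$ by (a)--(c). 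I do not expect any real obstacle: the only subtlety is verifying $I_J$ is bounded below, and the key trick there is to recognize that the same "solve by pushing along $J_0$" argument that provides a single point of $I_J$ also forces $\F = \J^2$ if it works for arbitrarily negative $t$.
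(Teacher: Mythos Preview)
Your proposal is correct and follows essentially the same approach as the paper's proof: both establish upward closedness via $\cM$-monotonicity, nonemptiness via the cone property of $\Int\,\cM$ applied to $J - J_*$ for some $J_* \in \F$, and boundedness below by the same contradiction showing $\F = \J^2$. The only cosmetic difference is that you invoke the identity $\F + \Int\,\cM = \Int\,\F$ from Proposition~\ref{prop:subequation_cones} at the outset (thereby getting upward closedness and the forward direction of (c) simultaneously), whereas the paper first uses the weaker $\F + \cM \subset \F$ for upward closedness and only later calls on that identity for (c).
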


\begin{proof}
With $J \in \J^2$ arbitrary, we first show that $I_J = [t_J, +\infty)$ for a unique value $t_J \in \R$. The proof involves four steps.

\noindent {\em Step 1: One has $J + t J_0 \in \cM$ for all $t$ sufficiently large.} \\
Indeed, since $\cM$ is a cone, for $t > 0$  this is equivalent to having
$$
	\frac{1}{t} J + J_0 \in \cM \ \ \text{for all} \ t \ \text{sufficiently large},
$$
which holds since $J_0 \in \Int \, \cM$.

\vspace{1ex}
\noindent {\em Step 2: $I_J := \{ t \in \R: \ J + tJ_0 \in \F \}$ is non-empty.} \\
Indeed, since $\F \neq \emptyset$ we can pick any $J_1 \in \F$ and then notice that
$$
	J + tJ_0 = J_1 + (J - J_1) + tJ_0 \in \F + \cM \subset \F \ \ \text{for all} \ t \ \text{sufficiently large},
$$
since $\F$ is $\cM$-monotone and $(J - J_1) + tJ_0 \in \cM$ for all large $t$ by Step 1.

\vspace{1ex}
\noindent {\em Step 3: If $t \in I_J$ then $t+s \in I_J$ for each $s \geq 0$.}\\
It is enough to notice that
$$
J + (t + s)J_0 = (J + tJ_0) + sJ_0 \in \F + \cM \subset \F, 
$$
since $\F$ is $\cM$-monotone. 

By Step 2 and Step 3, one has either $I_J = [t_J, +\infty)$ or $I = \R$.

\vspace{1ex}
\noindent {\em Step 4: One has $I_J   \neq \R$ and hence $I_J = [t_J, +\infty)$ for some $t_J \in \R$.}\\
Suppose not. Then $J + tJ_0 \in \F$ for all $t \in \R$. Let $J' \in \J^2$ be arbitrary. By Step 1, there exists $s \geq 0$ such that
$$
	(J' - J) + sJ_0 \in \cM
$$ 
and hence
\begin{equation}\label{I_not_R}
	(J + tJ_0) + (J' - J + sJ_0) \in \F + \cM \subset \F \ \ \text{for all} \ t \in \R.
\end{equation}
Taking $t = -s$ in \eqref{I_not_R} yields $J' \in \F$ for arbitrary $J' \in \F$, which contradicts $\F \neq \J^2$. 

It remains to verify the claims (a), (b) and (c). Claim (a) follows from the fact that by construction
\begin{equation}\label{Min_Interval_J}
t_J = \min \{ t \in \R: \ J + tJ_0 \in \F\}.
\end{equation}

For claim (b), notice that $J + (t_J - \veps)J_0 \not\in \F$ for each $\veps > 0$ and hence $J + t_J J_0 \not\in \Int \, \F$. Therefore
$$
	J + t_J J_0 \in \F \setminus \Int \, \F = \partial \F.
$$

For claim (c), if $s > 0$ then
$$
J + (t_J + s)J_0 = (J + t_J J_0) + sJ_0 \in \partial \F + \Int \, \cM \subset \F + \Int \, \cM \subset \Int \, \F,
$$ 
by the set identity \eqref{SI1} of Proposition \ref{prop:subequation_cones}. This proves the implication $( \Leftarrow)$ of claim (c). However, by claims (a) and (b), one has
$$
 t \leq t_J \ \Rightarrow  J + tJ_0 \in \partial \F \cup (\sim \F) = \, \sim \Int \, \F,
$$
which is contrapositive to the implication $(\Rightarrow)$ of claim (c).

Finally, the formulas \eqref{tJ1} and \eqref{tJ2} follow from (a), (b) and (c).
\end{proof}

An important corollary of the Structure Theorem \ref{thm:structure} is that $\cM$-monotone subequations $\F$
are uniquely determined by their boundaries $\partial \F$.

\begin{cor}\label{cor:structure}
		Suppose that $\F \subset \J^2$ is a subequation constraint set which admits a monotonicity cone subequation $\cM$. Then
		\begin{equation}\label{F_from_boundary}
		\F = \partial \F + \cM.
		\end{equation}
	\end{cor}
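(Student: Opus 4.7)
The inclusion $\partial\F + \cM \subset \F$ is immediate from the definitions: if $J_1 \in \partial\F \subset \F$ and $J_2 \in \cM$, then $\cM$-monotonicity gives $J_1 + J_2 \in \F + \cM \subset \F$. So the whole content of the corollary lies in the reverse inclusion $\F \subset \partial\F + \cM$, and this is exactly where the Structure Theorem \ref{thm:structure} does the work.

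The plan is to fix once and for all an element $J_0 \in \Int\,\cM$ (available since $\cM$ is a monotonicity cone subequation, so $\Int\,\cM \neq \emptyset$). Given an arbitrary $J \in \F$, I apply Theorem \ref{thm:structure} to obtain the unique real number $t_J$ characterized by $J + t_J J_0 \in \partial \F$, together with the interval description $I_J = \{t \in \R : J + t J_0 \in \F\} = [t_J,+\infty)$. Since $J \in \F$ we have $0 \in I_J$, which forces $t_J \leq 0$. Then I decompose
\begin{equation*}
J \;=\; \bigl(J + t_J J_0\bigr) \;+\; (-t_J)\, J_0.
\end{equation*}
The first summand lies in $\partial\F$ by construction, and the second summand lies in $\cM$ because $\cM$ is a cone with vertex at the origin, $-t_J \geq 0$, and $J_0 \in \Int\,\cM \subset \cM$. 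This realizes $J$ as an element of $\partial\F + \cM$ and completes the inclusion.

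I do not expect any genuine obstacle here: all the serious analysis has already been packaged into Theorem \ref{thm:structure}, and the corollary is a one-line consequence once one has the right element $t_J$ in hand. The only small subtlety worth flagging is the sign of $t_J$, which is what ensures that the ``correction'' $-t_J J_0$ needed to drag $J$ down to $\partial\F$ along the ray $J + \R J_0$ actually sits inside $\cM$ rather than on the wrong side of the origin. This uses nothing beyond the fact that $\cM$ is a cone and the membership $J \in \F$ translates to $0 \in I_J = [t_J,+\infty)$, i.e.\ $t_J \leq 0$.
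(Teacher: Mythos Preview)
Your proof is correct and follows essentially the same approach as the paper's own argument: both establish $\partial\F + \cM \subset \F$ trivially from monotonicity, then for the reverse inclusion fix $J_0 \in \Int\,\cM$, use the Structure Theorem to produce $t_J$ with $J + t_J J_0 \in \partial\F$, observe $t_J \leq 0$ since $J \in \F$, and write $J = (J + t_J J_0) + (-t_J)J_0 \in \partial\F + \cM$.
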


\begin{proof}
	Since $\F + \cM \subset \F$, we have $\partial \F + \cM \subset \F$. For the reverse inclusion, fix any $J_0 \in \Int \, \cM$. Given $J \in \F$, by the $\cM$-monotonicity of $\F$ with respect to the cone $\cM$, one has
	\begin{equation}\label{cor_mono}
	J + t J_0 \in \F \ \ \text{for every}  \ t \geq 0.
	\end{equation}
	Since $t_J$ is the minimal $t \in \R$ for which $J + t J_0 \in \F$, one has $t_J \leq 0$ and hence
	$$
		J = (J + t_J J_0) -t_J J_0 \in \partial \F + \cM. 
	$$
\end{proof}

The Structure Theorem \ref{thm:structure} also provides a tool for showing the existence of {\em canonical operators} as well as {\em graphing functions for boundaries}  $\partial \F$ under the monotonicity assumption of this structure theorem. 

\subsection{Canonical operators for subequations with monotonicity}\label{subsec:canonical} The Structure Theorem \ref{thm:structure} provides a canonical procedure for constructing an operator $F \in C(\J^2)$ with ``nice'' properties associated to a given subequation $\F$ as long as $\F$ admits a monotonicity cone subequation $\cM$ (and one fixes an element $J_0 \in \Int \, \cM$). First, $(F, \F)$ is a compatible operator-subequation pair with monotonicity $\cM$, providing $\F$ with at least one compatible operator which is natural. Second, $F$ is defined on all of $\J^2$ and $(F, \J^2)$ will be shown to be a compatible (unconstrained case) proper elliptic operator-subequation pair in the sense of Definition \ref{defn:compatible_pair} where the operator $F$ topologically tame on $\J^2$. This gives a rich family (including $(F, \F)$) of pairs for which the Correspondence Principle of Theorem \ref{cor:AVSolns} holds. The canonical operator $F$ is closely related to the potential theory equation $\partial \F$. For \underline{any} hyperplane (co-dimension one subspace) $W_0 \subset \J^2$ transverse to the line (one dimensional subspace) $[J_0]$ through $J_0$, the restriction $g:= F \, \vline _{W_0} :W_0 \to \R$ is the unique graphing function for $\partial \F$ over $W_0$; that is
$$
	\partial \F : \{ J + g(J)J_0: \ \ J \in W_0 \}.
$$
A judicious choice of the hyperplane $W_0$ results in the Lipschitz regularity of $g$ and $F$ with respect to a natural seminorm, as will be discussed in the next subsection.

The canonical operator is defined in terms of the Structure Theorem as follows.

\begin{defn}\label{defn:canonical_op} Suppose that $\F \subset \J^2$ is a subequation constraint set which admits a monotonicity cone subequation $\cM$. For fixed $J_0 \in \Int \, \cM$, the {\em canonical operator for $\F$ (determined by $J_0$)} $F: \J^2 \to \R$ is defined by 
\begin{equation}\label{canonical_op}
\mbox{$F(J) := - t_J$ \ \  where $t_J$ is defined by \eqref{tJ1}} 
	\end{equation}
(or by either of the two formulas in \eqref{tJ2}).
	\end{defn}

We proceed to analyze the properties of the canonical operator outlined above, beginning with some structural properties.

\begin{prop}[Structural properties of the canonical operator]\label{prop:canonical_structure}
	Suppose that $F$ is the canonical operator for $\F$ (determined by a fixed $J_0 \in \Int \, \cM$). Then
	\begin{itemize}
		\item[(a)] $F$ decomposes $\J^2$ into three disjoint pieces:
		\begin{equation}\label{canonical_op1}
		 \partial \F = \{F(J) = 0\}, \ \  \Int \, \F = \{  F(J) > 0\} \ \	 \text{and} \ \	\J^2 \setminus \F = \{ F(J) < 0\};
		 \end{equation}
		\item[(b)] $F$ is strictly increasing in the direction $J_0$, in fact
		\begin{equation}\label{canonical_op2}
		F(J + t J_0) = F(J) + t, \ \ \text{for each} \ J \in \J^2 \ \text{and for each} \ t \in \R;
		\end{equation} 
		\item[(c)] $F$ is proper elliptic on $\J^2$, in fact, $F$ is $\cM$-monotone on $\J^2$; that is,
			\begin{equation}\label{canonical_op3}
		F(J + J') \geq F(J), \ \ \text{for each} \ J \in \J^2 \ \text{and for each} \ J' \in \cM.
		\end{equation}
	
	\end{itemize}
\end{prop}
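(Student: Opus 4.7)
The entire proposition follows essentially by unpacking the definition $F(J) = -t_J$ in terms of the three characterizing properties (a), (b), (c) of the Structure Theorem \ref{thm:structure}. There is no substantial obstacle; the main task is simply to organize which property of the Structure Theorem supplies which piece. I would handle the three parts in the order (a), (b), (c), as (b) depends on an auxiliary identity for $t_J$, and (c) then follows from the minimality of $t_J$ combined with the $\cM$-monotonicity of $\F$.

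For part (a), I would argue by evaluating properties (a), (b), (c) of Theorem \ref{thm:structure} at $t = 0$. Indeed, by (b) of Theorem \ref{thm:structure}, $J + t_J J_0 \in \partial \F$ and by the uniqueness in \eqref{tJ1}, $t_J$ is the unique real number with this property. Therefore $F(J) = 0 \iff t_J = 0 \iff J \in \partial \F$. Similarly, $F(J) > 0 \iff t_J < 0 \iff J = J + 0 \cdot J_0 \in \Int \F$ by part (c) of Theorem \ref{thm:structure}, and $F(J) < 0 \iff t_J > 0 \iff J = J + 0 \cdot J_0 \not\in \F$ by part (a) of Theorem \ref{thm:structure}. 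Together with $\J^2 = \Int \F \cup \partial \F \cup (\sim \F)$ (a disjoint union), this gives \eqref{canonical_op1}.

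For part (b), I would first establish the translation identity $t_{J + tJ_0} = t_J - t$ for every $J \in \J^2$ and every $t \in \R$. This is immediate from the definition \eqref{Interval_J} of the interval $I_J$: for $s \in \R$,
\[
s \in I_{J + tJ_0} \ \iff\ (J + tJ_0) + sJ_0 \in \F \ \iff\ J + (t+s)J_0 \in \F \ \iff\ t + s \geq t_J,
\]
so $I_{J+tJ_0} = [t_J - t, +\infty)$ and hence $t_{J+tJ_0} = t_J - t$. Negating and applying the definition of $F$ yields $F(J + tJ_0) = F(J) + t$, which is \eqref{canonical_op2}.

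For part (c), I would use the characterization \eqref{tJ1}: it suffices to show $t_{J + J'} \leq t_J$ for every $J \in \J^2$ and every $J' \in \cM$. By property (b) of Theorem \ref{thm:structure}, $J + t_J J_0 \in \partial \F \subset \F$, and since $\F$ is $\cM$-monotone and $J' \in \cM$,
\[
(J + J') + t_J J_0 \ = \ (J + t_J J_0) + J' \ \in \ \F + \cM \ \subset \ \F.
\]
By the minimality \eqref{Min_Interval_J} of $t_{J+J'}$ (equivalently, by property (a) of Theorem \ref{thm:structure}), this forces $t_{J + J'} \leq t_J$, hence $F(J + J') = -t_{J+J'} \geq -t_J = F(J)$, proving \eqref{canonical_op3}. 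Since $\cM \supset \cM_0 = \cN \times \{0\} \times \cP$ (as $\cM$ is a monotonicity cone subequation), this in particular gives proper ellipticity of $F$ on all of $\J^2$. The only nontrivial point here is verifying the reverse direction at the boundary case $t_{J+J'} = t_J$, but this is not needed since we only claim weak (not strict) monotonicity; strict monotonicity in the direction $J_0$ is already given by (b).
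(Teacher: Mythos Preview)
Your proof is correct. Parts (a) and (b) match the paper's argument essentially verbatim: both evaluate the trichotomy of Theorem~\ref{thm:structure} at $t=0$ for (a), and both derive the translation identity $t_{J+tJ_0} = t_J - t$ for (b) (you via the interval $I_{J+tJ_0}$, the paper via uniqueness in $\partial\F$).

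For part (c) you take a genuinely different route. The paper argues indirectly: it observes that (b) gives condition 3) of Theorem~\ref{thm:tameness}, then invokes the equivalence with condition 2) there, declaring that 2) is ``stronger than'' \eqref{canonical_op3}. Your argument is direct: from $J + t_J J_0 \in \partial\F \subset \F$ and $\F + \cM \subset \F$ you get $(J+J') + t_J J_0 \in \F$, whence $t_{J+J'} \le t_J$ by minimality. Your approach is self-contained and avoids two subtleties in the paper's route: first, Theorem~\ref{thm:tameness} takes $\cM$-monotonicity of the pair $(F,\F)$ as a \emph{hypothesis} (it is used in the step $1)\Rightarrow 2)$), so invoking it to prove that very monotonicity is circular; second, condition 2) concerns $J_0 \in \Int\,\cM$ with strict inequality, whereas \eqref{canonical_op3} is weak inequality over all of $\cM$, so passing from one to the other needs a closure argument and continuity of $F$, which is only established later. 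Your direct argument sidesteps both issues.
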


\begin{proof} The decomposition \eqref{canonical_op1} of $\J^2$ is a restatement of (a), (b) and (c) of the Structure Theorem \ref{thm:structure}, as follows. First consider those $J \in \J^2$ such that $F(J) := -t_J = 0$. By Theorem \ref{thm:structure}, we have
	$$
	J \in \partial \F, \ \ J + tJ_0 \not\in \F \ \text{for all} \ t < 0 \ \ \text{and} \ \ J + tJ_0 \in \Int \, \F \ \text{for all} \ \ t > 0
	$$
	and hence $\partial \F = \{F(J) = 0\}$, as desired. For arbitrary $J \in \J^2$, use the definition of $F$ and the relations (a) and (c) of Theorem \ref{thm:structure} to find
		$$
	F(J) < 0 \ \iff \ t_J > 0 \ \iff \ J \not\in \F
	$$
	and
	$$
		F(J) > 0 \ \iff \ t_J < 0 \ \iff \ J \in \Int \, \F.
	$$
	
	Next, using the definition of $F$, the property \eqref{canonical_op2} requires showing that for every $J \in \J^2$ 
	$$
		t_{(J + tJ_0)} = t_J - t \ \ \text{for every} \ t \in \R.
	$$
	By construction $t_J$ and $t_{(J + tJ_0)}$ are the unique real numbers such that
	$$
	J + t_J J_0 \in \partial \F \ \ \text{and} \ \ (J + tJ_0) + t_{(J + t J_0)} J_0 \in \partial \F
	$$
	and hence $t_J = t + t_{(J + tJ_0)}$, proving \eqref{canonical_op2}.
	
	Finally, we note that the property \eqref{canonical_op2} of part (b) yields
	$$
	F(J + t J_0) > F(J) \ \ \text{for each} \ J \in \J^2 \ \text{and for each} \ t > 0,
	$$ 
	which is condition 3) of Theorem \ref{thm:tameness} concerning topological tameness. Hence all of the other equivalent forms 1), 2), 4) and 5) hold, where the strict monotonicity condition 2) is stronger than the condition \eqref{canonical_op3}. 
	\end{proof}

The following result shows that property \eqref{canonical_op2} plus any one of the relations in \eqref{canonical_op1} uniquely  determine $F$ (for $J_0 \in \Int \, \cM$ fixed) and hence they could be taken as defining properties for the canonical $F$ determined by $J_0$.

\begin{prop}\label{prop:unique_F} Suppose that for some $J_0 \in \J^2$, an operator $F: \J^2 \to \R$ satisfies the affine property \eqref{canonical_op2}: that is,
		\begin{equation}\label{affine_property} 
		F(J + t J_0) = F(J) + t, \ \ \text{for each} \ J \in \J^2 \ \text{and for each} \ t \in \R.
		\end{equation}
		Suppose that $\F$ is a subequation which admits a monotonicity cone subequation $\cM$ with $J_0 \in \Int \, \cM$. If any one of the following relations holds
		$$
		\text{a)} \  \partial \F = \{F(J) = 0\}, \ \ \text{b)} \  \Int \, \F = \{  F(J) > 0\} \ \	 \text{or} \ \	\text{c)} \ \J^2 \setminus \F = \{ F(J) < 0\},
		$$
		then $F$ is the canonical operator (determined by $J_0$) for $\F$.
	\end{prop}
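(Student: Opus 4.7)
The proof plan is direct and relies on the uniqueness assertion built into the Structure Theorem \ref{thm:structure}. Recall that the canonical operator $F_{\rm can}$ determined by $J_0$ is defined by $F_{\rm can}(J) = -t_J$, where $t_J$ is the unique real number (guaranteed by Theorem \ref{thm:structure}) such that $J + t_J J_0 \in \partial \F$. My plan is to show that, under the affine property \eqref{affine_property} together with any one of the hypotheses a), b), c), the given operator $F$ satisfies $F(J) = -t_J$ for every $J \in \J^2$, and hence coincides with $F_{\rm can}$.

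First I would handle case a) directly. Fix $J \in \J^2$. By Theorem \ref{thm:structure}(b), $t_J \in \R$ is well-defined and $J + t_J J_0 \in \partial \F$, so hypothesis a) gives $F(J + t_J J_0) = 0$. Substituting $t = t_J$ into the affine property \eqref{affine_property} yields $F(J) + t_J = F(J + t_J J_0) = 0$, and hence $F(J) = -t_J$, as required.

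Next I would treat case b) by a set-equality argument along the line $J + \R J_0$. By the affine property, $t \mapsto F(J + tJ_0) = F(J) + t$ is a strictly increasing affine function of $t$, so
\[
\{ t \in \R : F(J + tJ_0) > 0 \} \ =\ (-F(J), +\infty).
\]
On the other hand, Theorem \ref{thm:structure}(c) identifies $\{ t \in \R : J + tJ_0 \in \Int \F \} = (t_J, +\infty)$. Hypothesis b) then forces $(-F(J), +\infty) = (t_J, +\infty)$, i.e.\ $F(J) = -t_J$. Case c) is handled in exactly the same way: Theorem \ref{thm:structure}(a) gives $\{ t \in \R : J + tJ_0 \notin \F \} = (-\infty, t_J)$, and hypothesis c) combined with the affine property identifies this half-line with $(-\infty, -F(J))$, again yielding $F(J) = -t_J$.

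There is no serious obstacle here: the proof is essentially a direct unwinding of the definition of the canonical operator together with the uniqueness built into the Structure Theorem. The only conceptual point worth noting is that the affine property automatically makes $t \mapsto F(J + tJ_0)$ a bijection of $\R$ onto $\R$ with slope one, which is what allows each of the three level descriptions a), b), c) to pin down $t_J$ uniquely on every line parallel to $[J_0]$.
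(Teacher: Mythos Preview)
Your proof is correct and follows essentially the same approach as the paper: both use the affine property to analyze $F$ along the line $J + \R J_0$ and invoke the uniqueness built into the Structure Theorem (via \eqref{tJ1} and \eqref{tJ2}) to identify $-F(J)$ with $t_J$ in each of the three cases. The only cosmetic difference is order: you start from $t_J$ and deduce $F(J)=-t_J$, while the paper first finds the unique zero $t^*=-F(J)$ of $t\mapsto F(J+tJ_0)$ and then shows $t^*=t_J$.
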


\begin{proof}
With $J_0 \in \Int \, \cM$ fixed, the affine property \eqref{affine_property} shows that for each $J \in \J^2$, the restriction of $F$ to the line $\ell_J = \{ J + t J_0: \ t \in \R\}$ is continuous, strictly increasing and has range equal to $\R$. Hence there is a unique value $t^* \in \R$ such that
\begin{equation}\label{CO1}
0 = F(J + t^* J_0) =  F(J) + t^*.
\end{equation}
By Definition \ref{defn:canonical_op}, we only need to show that $t^* = t_J$ where $t_J \in \R$ is the critical parameter in Theorem \ref{thm:structure} which divides $\ell_J$ into the three pieces ( a unique point on $\partial \F$, an open ray in $\Int \, \cM$ and an open ray in $\J^2 \setminus \F$). The three relations a), b) and c) imply that $t^* = t_J$ as defined by \eqref{tJ1} and the first and second fomulas of \eqref{tJ2} respectively.
\end{proof}

Next, we observe that an immediate consequence of the Structure Theorem \ref{thm:structure} and the definition of the canonical operator $F$ for $\F$ is that the {\em equation} $\partial \F = \F \cap (-\wt{\F})$ can be graphed and the canonical operator can be recovered from the graphing function $g$.

\begin{prop}[Canonical operators and graphing the equation $\partial \F$]\label{prop:graphing_function} Suppose that $\F \subset \J^2$ is a subequation constraint set which admits a monotonicity cone subequation $\cM$. Let $F$ be the canonical operator for $\F$ determined by $J_0 \in \Int \, \cM$.  Fix $W_0 \subset \J^2$ a hyperplane in $\J^2$ transversal to $[J_0]$. Then one has:
	\begin{itemize}
		\item[(a)] The equation $\partial \F \subset  W_0 \oplus [J_0]$ is the graph of $g: W_0 \to \R$ defined by
\begin{equation}\label{graphing_function}
	g(J') := - F(J'), \ \ J' \in W_0,
\end{equation}
which is to say that $g:= -F_{|W_0}$, or, equivalently 
	\begin{equation}\label{graph_boundary}
\partial \F  = \{\ J \in \J^2: \ J = J' + g(J') J_0  \ \ \text{where} \ J' \in W_0\};
\end{equation}
\item[(b)] The epigraph of $g$ satisfies:
	
		\begin{equation}\label{epigraph}
	\F  = \{\ J \in \J^2: \ J = J' + t J_0  \ \ \text{where} \ \ t \geq g(J') \ \ \text{and} \ J' \in W_0\},
	\end{equation}
	\item[(c)] The canonical operator $F$ is recovered from the graphing function $g$ by
	\begin{equation}\label{F_from_g}
	F(J) = F(J' + t J_0) = t - g(J'), \ \ \text{for each} \ J \in \J^2.
	\end{equation}

\end{itemize}
	\end{prop}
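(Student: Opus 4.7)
\medskip

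\noindent\textbf{Proof proposal.} The plan is to exploit the transversality of $W_0$ to $[J_0]$ together with the two key structural properties of the canonical operator established in Proposition \ref{prop:canonical_structure}: the three-piece decomposition \eqref{canonical_op1} and the affine property \eqref{canonical_op2}. There are no estimates or limits needed; everything is algebraic once these two facts are in place.

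First I would observe that because $W_0$ is a hyperplane in $\J^2$ transversal to the line $[J_0]$, one has the direct sum decomposition $\J^2 = W_0 \oplus [J_0]$. In particular, every $J \in \J^2$ admits a unique representation $J = J' + tJ_0$ with $J' \in W_0$ and $t \in \R$. Applying the affine property \eqref{canonical_op2} to this decomposition gives
\begin{equation*}
F(J) \;=\; F(J' + tJ_0) \;=\; F(J') + t \;=\; t - g(J'),
\end{equation*}
where $g(J') := -F(J')$ as in \eqref{graphing_function}. This is precisely the identity \eqref{F_from_g}, so part (c) is established.

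Next, I would use the formula $F(J) = t - g(J')$ together with the decomposition \eqref{canonical_op1} to read off (a) and (b). For (a), a point $J = J' + tJ_0$ lies in $\partial \F$ if and only if $F(J) = 0$, equivalently $t = g(J')$, which is exactly \eqref{graph_boundary}. For (b), the same point lies in $\F = \partial \F \cup \Int \, \F$ if and only if $F(J) \geq 0$, equivalently $t \geq g(J')$, which gives \eqref{epigraph}.

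The ``main obstacle'' is really not an obstacle at all in this statement: once the affine property is invoked, the graphing, the epigraph description, and the recovery formula all follow from the same one-line computation $F(J' + tJ_0) = t - g(J')$ combined with the trichotomy \eqref{canonical_op1}. The only point that warrants a moment's care is the well-definedness of writing $J = J' + tJ_0$, which requires exactly the transversality hypothesis on $W_0$; any transversal hyperplane will serve, so the graphing function $g$ is as flexible as the choice of $W_0$ allows.
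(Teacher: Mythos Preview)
Your proof is correct and essentially identical to the paper's. Both arguments use the direct-sum decomposition $\J^2 = W_0 \oplus [J_0]$ and then read off (a), (b), (c) from the basic properties of the canonical operator; the only cosmetic difference is that you invoke Proposition \ref{prop:canonical_structure} (the affine property \eqref{canonical_op2} and the trichotomy \eqref{canonical_op1}) and prove (c) first, while the paper cites the Structure Theorem \ref{thm:structure} directly and proves (a), (b) first with (c) declared immediate.
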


\begin{proof} Consider the splitting $\J^2 = W_0 \oplus [J_0]$. Each $J \in \J^2$ can be decomposed uniquely into 
	\begin{equation}\label{splitting}
	J = J' + tJ_0 \ \ \text{with} \ J' \in W_0 \ \text{and} \ t \in \R.
	\end{equation}
	With respect to this decomposition \eqref{splitting}, the Structure Theorem \ref{thm:structure} with $J' \in W_0 \subset \J^2$ arbitrary says that 
	$$
	 J' + t J_0 \in \partial \F \ \ \iff \ \ t = t_{J'} = - F(J') = g(J') 
	$$
	and
	$$
	 J' + t J_0 \in \partial \F \ \ \iff \ \ t \geq t_{J'} = - F(J') = g(J'),
	$$
	which gives the claims in parts (a) and (b). The part (c) is immediate. 
\end{proof}

We summarize the above considerations by noting that canonical operators $F$ associated to subequations $\F$ give a natural way to form  (unconstrained) compatible pairs where $F$ is also topologically tame and hence the Correspondence Principle of Theorem \ref{cor:AVSolns} applies at every level $c \in \R$.

\begin{thm}[Canonical operators and compatible pairs]\label{thm:CanOp_ComPair} 
	Suppose that a subequation $\F$ admits a monotonicity cone subequation $\cM$. Let $F \in C(\J^2)$ be the canonical operator for $\F$ determined by any fixed $J_0 \in \Int \, \cM$. Then: 
\begin{itemize}
	\item[(a)] $(F, \J^2)$ is a compatible proper elliptic operator-subequation pair;
	\item[(b)] $F(\J^2) = \R$ and the operator $F$ is topologically tame; 
	\item[(c)] for each $c \in \R$, the set $\F_c:= \{ J \in \J^2: \ F(J) \geq c \}$ is a subequation constraint set with $\F_0 = \F$ and the pair $(F, \F_c)$ satisfies the compatibility conditions 
$$
	\inf_{\F_c} F = c \quad \text{and} \quad \partial \F_c = \{J \in \F_c: \ F(J) = c \}.
	$$
\end{itemize} 
In addition, the canonical operator (determined by $J_0 \in \Int \, \cM$) for the dual subequation $\wt{\F}$ is given by 
	\begin{equation}\label{dual_operators}
	\wt{F}(J) := - F(-J) \ \ \text{for all} \ J \in \J^2, \ \ \text{where also} \ \wt{\F}_c = \F_{-c},
	\end{equation}
and the analogous statements of (a), (b) and (c) hold for $(\wt{F}, \J^2)$ and $(\wt{F}, \wt{\F}_c)$.
\end{thm}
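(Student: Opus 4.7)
The plan is to assemble the theorem essentially as a corollary of the Structure Theorem \ref{thm:structure}, Proposition \ref{prop:canonical_structure} (structural properties of $F$), Proposition \ref{prop:unique_F} (uniqueness characterization of canonical operators), Proposition \ref{prop:subequation_cones} (monotonicity implies the subequation conditions), Theorem \ref{thm:tameness} (tameness equivalences), and Proposition \ref{prop:Lipschitz_g} (Lipschitz continuity of $F$). The strategy is to verify each bullet (a), (b), (c) by direct bookkeeping, and then treat the dual statement by applying the same machinery to $\wt{\F}$ after identifying $\wt{F}$.

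For part (a), I would first cite Proposition \ref{prop:Lipschitz_g} to get $F\in C(\J^2)$, and then note that by Definition \ref{defn:MCS} any monotonicity cone subequation $\cM$ contains $\cM_0=\cN\times\{0\}\times\cP$, so the $\cM$-monotonicity of $F$ in Proposition \ref{prop:canonical_structure}(c) immediately gives proper ellipticity. Since the unconstrained case of Definition \ref{defn:compatible_pair} imposes no further compatibility requirement, $(F,\J^2)$ is a compatible proper elliptic pair. For part (b), the affine identity $F(J+tJ_0)=F(J)+t$ in Proposition \ref{prop:canonical_structure}(b) gives surjectivity $F(\J^2)=\R$ by letting $t$ range over $\R$, and simultaneously verifies condition 3) of Theorem \ref{thm:tameness} (since $t>0$ yields $F(J+tJ_0)>F(J)$), so $F$ is topologically tame.

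For part (c), fix $c\in\R$. Surjectivity from (b) provides $J_\pm\in\J^2$ with $F(J_-)<c<F(J_+)$, so $\F_c:=\{F\ge c\}$ is a nonempty proper subset of $\J^2$; it is closed by continuity and is $\cM$-monotone because $F$ is. By Proposition \ref{prop:subequation_cones}, $\F_c$ is a subequation constraint set. The identity $\F_0=\F$ is exactly the decomposition of $\J^2$ in Proposition \ref{prop:canonical_structure}(a). For the compatibility conditions on the pair $(F,\F_c)$, $\inf_{\F_c}F=c$ holds since $F\ge c$ on $\F_c$ by definition while $F$ actually attains $c$ (again by the affine identity); and $\partial\F_c=\{J\in\F_c:F(J)=c\}$ follows from the equivalence $4)$ of Theorem \ref{thm:tameness} applied to the tame pair $(F,\J^2)$, which yields $\Int\F_c=\{F>c\}$, so the boundary is precisely the level set $\{F=c\}$.

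For the dual statement, I would verify that $\wt{F}(J):=-F(-J)$ satisfies the two defining properties of the canonical operator for $\wt{\F}$ determined by $J_0$, and then invoke Proposition \ref{prop:unique_F}. The affine property reads
\[
\wt{F}(J+tJ_0)=-F(-J-tJ_0)=-\bigl(F(-J)-t\bigr)=\wt{F}(J)+t,
\]
using Proposition \ref{prop:canonical_structure}(b) for $F$. For the boundary identification, recall $\wt{\F}=-(\sim\Int\F)$, so $\partial\wt{\F}=-\partial\F$; hence
\[
J\in\partial\wt{\F}\iff -J\in\partial\F\iff F(-J)=0\iff\wt{F}(J)=0,
\]
by Proposition \ref{prop:canonical_structure}(a). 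By Proposition \ref{prop:unique_F}, $\wt{F}$ is the canonical operator for $\wt{\F}$ (relative to $J_0$), and the level-set identity $\wt{\F}_c=\F_{-c}$ (in the stated duality convention) is then a direct substitution using $\wt{F}(J)=-F(-J)$. The analogous statements of (a), (b), (c) for $(\wt{F},\J^2)$ and $(\wt{F},\wt{\F}_c)$ are then obtained by repeating the arguments above verbatim with $\F$ replaced by $\wt{\F}$, which is legitimate because $\wt{\F}$ admits the same monotonicity cone subequation $\cM$ by Proposition \ref{prop:MMC2}(b). There is no substantial obstacle in this proof; the main care required is in the bookkeeping of the two compatibility conditions and in verifying that the duality identity on level sets is consistent with the chosen sign conventions.
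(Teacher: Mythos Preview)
Your proposal is correct and follows essentially the same approach as the paper: continuity via the Lipschitz regularity of subsection \ref{subsec:lipschitz}, proper ellipticity and tameness via Proposition \ref{prop:canonical_structure}, the subequation properties of $\F_c$ via $\cM$-monotonicity and Proposition \ref{prop:subequation_cones}, and compatibility via statement 4) of Theorem \ref{thm:tameness}. The only minor variation is in the dual part, where the paper directly computes that $-F(-J)$ coincides with the defining formula $-\min\{t:J+tJ_0\in\wt{\F}\}$ for the canonical operator of $\wt{\F}$, while you instead verify the affine property and the boundary relation $\partial\wt{\F}=\{\wt{F}=0\}$ and invoke Proposition \ref{prop:unique_F}; both routes are equally short and valid.
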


\begin{proof} The proper ellipticity of $F$ on $\J^2$ is a consequence of Proposition \ref{prop:canonical_structure} (c). By Definition \ref{defn:compatible_pair}, the pair $(F, \J^2)$ is compatible if $F \in C(\J^2)$ and this follows from the fact that $F$ is actually Lipschitz continuous, as will be proven in the following subsection. Next, as noted in the proof of part (c) of Proposition \ref{prop:canonical_structure}, $F$ is topologically tame in the sense of Definition \ref{defn:tameness} since the canonical operator satisfies the affine property \eqref{canonical_op2}. The affine property also shows that $F(\J^2) = \R$. 
	
For part (c), each $\F_c$ is closed by the continuity of $F$ and $\F_c$ is non-empty and not all of $\J^2$ since $F(\J^2) = \R$.
Since the pair $(F, \J^2)$ is $\cM$-montone, each $\F_c$ is $\cM$-monotone by Lemma \ref{lem:upper_levels} for the monotonicity cone subequation $\cM$. Hence $\F_c$ satisfies properties (P), (N) and (T) (see Proposition \ref{prop:subequation_cones}). The compatibility claim of part (c) follows from statement 4) of Theorem \ref{thm:tameness} and the continuity of $F$.
	
Finally, if $\F$ is $\cM$ monotone then so is $\wt{\F}$ (using \eqref{J_invariance}) and since
$$
	-F(-J) := \min \{ t \in \R: \ -J + tJ_0 \in \F \} := t_J
$$
if $t < t_J$ one has 
$$
	-J + t J_0 \in \, \sim \Int \, \F \ \ \text{and hence} \ J - t J_0 \in \wt{\F},
$$
which shows that $-F(-J) = - \min \{ t \in \R: \ J + tJ_0 \in \wt{\F} \}$, as needed. The remaining claims for $\wt{F}_c, (\wt{F}, \J^2)$ and $(\wt{F}, \wt{\F}_c)$ then follow.
\end{proof}

Combining the compatibility result of Theorem \ref{thm:CanOp_ComPair} with the correspondence principle and the general comparison result for subequations gives the following comparison principle for canonical operators.

\begin{thm}[Comparison for canonical operators]\label{thm:CP_canonical_ops}	
Let $\F \subset \J^2$ be a subequation constraint set with monotonicity cone subequation $\cM$. Suppose that $\cM$ admits a strict approximator $\psi$ on a bounded domain $\Omega$; that is, $\psi \in C(\overline{\Omega}) \cap C^2(\Omega)$ such that $J^2_x \psi \in \Int \, \cM$ for each $x \in \Omega$. Then, for each $J_0 \in \Int \, \cM$ fixed, the canonical operator $F$ for $\F$ determined by $J_0$ satisfies the comparison principle at every level $c \in \R$; that is,
\begin{equation}\label{CP_CO}
\mbox{$u \leq w$ on $\partial \Omega \ \ \Rightarrow \ \  u \leq w$ on $\Omega$}
\end{equation}
for $u \in \USC(\overline{\Omega})$ and $w \in \LSC(\overline{\Omega})$ which are respectively viscosity subsolutions and supersolutions to $F(u,Du,D^2u) = c$ on $\Omega$.
\end{thm}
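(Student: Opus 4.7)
The plan is to combine three pieces of machinery developed earlier in the paper: the structural properties of the canonical operator (Theorem \ref{thm:CanOp_ComPair}), the Correspondence Principle for compatible pairs (Theorem \ref{cor:AVSolns}), and the General Comparison Theorem (Theorem \ref{thm:CP_general}). The overall strategy is to translate the PDE comparison statement for $F(u,Du,D^2u) = c$ into a potential-theoretic comparison statement for the upper level set $\F_c := \{J \in \J^2 : F(J) \geq c\}$, then verify that $\F_c$ inherits the monotonicity needed to invoke the general comparison machinery on the domain $\Omega$ where $\cM$ admits a strict approximator.

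First I would fix $c \in \R$ and consider the pair $(F, \J^2)$. By Theorem \ref{thm:CanOp_ComPair}, this is a compatible proper elliptic operator-subequation pair in the unconstrained case, $F$ is topologically tame, and $\F_c = \{J \in \J^2 : F(J) \geq c\}$ is itself a subequation constraint set. Moreover, by Proposition \ref{prop:canonical_structure}(c), the canonical operator $F$ is $\cM$-monotone on all of $\J^2$, so the pair $(F, \J^2)$ is $\cM$-monotone in the sense of Definition \ref{defn:PEP}. Applying Lemma \ref{lem:upper_levels} then yields that the upper level set $\F_c$ is itself $\cM$-monotone, i.e., $\F_c + \cM \subset \F_c$.

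Next I would invoke the Correspondence Principle (Theorem \ref{cor:AVSolns}) applied to the compatible, proper elliptic, $\cM$-monotone, topologically tame pair $(F, \J^2)$ at the admissible level $c \in \R = F(\J^2)$. It asserts that a function $u \in \USC(\overline\Omega)$ is a viscosity subsolution of $F(u,Du,D^2u) = c$ on $\Omega$ if and only if it is $\F_c$-subharmonic on $\Omega$, and similarly $w \in \LSC(\overline\Omega)$ is a viscosity supersolution if and only if it is $\F_c$-superharmonic on $\Omega$. Consequently, the PDE comparison statement \eqref{CP_CO} is equivalent to the subequation comparison
\[
u \leq w \ \text{on}\ \partial\Omega \ \Longrightarrow \ u \leq w \ \text{on}\ \Omega
\]
for $u \in \USC(\overline\Omega) \cap \F_c(\Omega)$ and $-w \in \USC(\overline\Omega) \cap \wt{\F_c}(\Omega)$.

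Finally, I would apply the General Comparison Theorem (Theorem \ref{thm:CP_general}) to the $\cM$-monotone subequation $\F_c$ on $\Omega$: since $\cM$ is a convex cone subequation that admits the strict approximator $\psi$ on $\Omega$ by hypothesis, comparison holds for $\F_c$ on $\overline\Omega$. Combining this with the equivalence from the Correspondence Principle yields the desired \eqref{CP_CO} at the arbitrary level $c \in \R$. There is no serious obstacle, as all the heavy lifting has been done: the canonical operator is designed so that its compatibility, proper ellipticity, tameness, and $\cM$-monotonicity hold automatically; the only external input needed is the strict approximator for $\cM$ on $\Omega$, which is precisely what powers the Zero Maximum Principle underlying Theorem \ref{thm:CP_general}.
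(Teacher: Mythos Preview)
Your proposal is correct and follows essentially the same approach as the paper's proof: invoke Theorem \ref{thm:CanOp_ComPair} for compatibility and tameness, apply the Correspondence Principle (Theorem \ref{cor:AVSolns}) to reduce to potential-theoretic comparison for $\F_c$, and then conclude via the General Comparison Theorem (Theorem \ref{thm:CP_general}) using the strict approximator on $\Omega$. You add slightly more detail in tracking the $\cM$-monotonicity of $\F_c$ through Proposition \ref{prop:canonical_structure}(c) and Lemma \ref{lem:upper_levels}, whereas the paper simply asserts it, but the argument is the same.
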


\begin{proof} By Theorem \ref{thm:CanOp_ComPair}, we have that $(F, \J^2)$ is compatible proper elliptic operator-subequation pair, every $c \in \R$ is an admissible level of $F$ and $F$ is topologically tame. Hence, by the correspondence principle of Theorem \ref{cor:AVSolns}, at every level $c \in \R$ the comparison principle \eqref{CP_CO} holds for viscosity subsolutions and supersolutions of the equation $F = c$ if and only if it holds for $\F_c$-subharmonics and $\F_c$-superharmonics. Since $\cM$ is a monotonicty cone for each subsequation $\F_c$ and since $\cM$ admits a strict approximator $\psi$ on $\Omega$, comparison for $\F_c$ follows from the genrale comparison  principle of Theorem \ref{thm:CP_general}. 
\end{proof}

We conclude this subsection with a few relevant examples and constructions.

\begin{exe}\label{exe:canonical_PSO} The pure second order convexity subequation $\F = \R \times \R^n \times \cP = \cM(\cP)$ is $\cM$-monotone for $\cM  = \cM(\cP)$ and the operator
	\begin{equation}\label{can_op_convexity}
	F(J) = F(r,p,A) = \lambda_{\rm min}(A)
	\end{equation}
	is easily seen to be the canonical operator for $\F$ determined by $J_0 = (0,0,I)$ (or any $J_0 = (r_0, p_0,I)$). Similarly
	\begin{equation}\label{can_op_subaffine}
	\wt{F}(J) = \wt{F}(r,p,A) = \lambda_{\rm max}(A)
	\end{equation}
	is the canonical operator (with the same $J_0$) for the subaffine subequation $\wt{\F} = \R \times \R^n \times \wt{\cP}$,
	which is also $\cM(\cP)$-monotone.
\end{exe}

Families of subequations which are $\cM$-monotone for a fixed convex cone subequation $\cM$ have particularly nice properties.
For example, closed sets $\F$ which are $\cM$-monotone automatically satisfy the topological property and are subequations by Proposition \ref{prop:subequation_cones} (see the discussion after the proposition for counterexamples where monotonicity is lacking). Moreover, given an arbitrary family of such subequations, by considering the dual family as well, intersections and unions lead to four associated subequations with computable canonical operators, as follows. 

\begin{thm}[Canonical operators, duality, intersections and unions]\label{thm:canonical_inf}
	Suppose that $\{ \F_{\sigma} : \sigma \in \Sigma \}$ is an arbitrary family of subequations which are all $\cM$-monotone for a given monotonicity subequation cone $\cM$. Let $J_0 \in \Int \, \cM$ be fixed, but arbitrary. Let $F_{\sigma} \in C(\J^2)$ denote the canonical operator (determined by $J_0$) associated to the subequation $\F_{\sigma}$ and consider the dual operator $\wt{F}_{\sigma} \in C(\J^2)$ defined by $\wt{F}_{\sigma}(J):= - F_{\sigma}(-J)$, which is the canonical operator (determined by $J_0$) for the dual ($\cM$-monotone) subequation $\wt{\F}_{\sigma}$ by \eqref{dual_operators}. 
	\begin{itemize}
		\item[(a)] The intersection $\F  := \bigcap_{\sigma \in \Sigma} \F_{\sigma}$ (if non empty) is an $\cM$-monotone subequation with canonical operator $F \in C(\J^2)$ (determined by $J_0$) given by the infimum
			\begin{equation}\label{canonical_F}
		F(J) := \inf_{\sigma \in \Sigma} F_{\sigma}(J), \ \ J \in \J^2.
		\end{equation}
	\end{itemize}
		Applying this to the dual family $\{\wt{F}_{\sigma}\}_{\sigma \in \Sigma}$, we have that:
		\begin{itemize}
		\item[(b)] The intersection $\G:= \bigcap_{\sigma \in \Sigma} \wt{\F}_{\sigma}$ (if non empty) is an $\cM$-monotone subequation with canonical operator $G \in C(\J^2)$ (determined by $J_0$) given by the infimum
		\begin{equation}\label{canonical_G}
		G(J) := \inf_{\sigma \in \Sigma} \wt{F}_{\sigma}(J), \ \ J \in \J^2.
		\end{equation}
		\item[(c)] The closure of the union $ \overline{\bigcup_{\sigma \in \Sigma} \F_{\sigma}}$ (if not equal to all of $\J^2$) is an $\cM$-monotone subequation $\cH$ with canonical operator $H \in C(\J^2)$ (determined by $J_0$) given by the supremum
\begin{equation}\label{canonical_H}
H(J):=\sup_{\sigma \in \Sigma} F_{\sigma}(J), \ \ J \in \J^2.
\end{equation}
		In fact,
		\item[(d)] The intersection $\G:= \bigcap_{\sigma \in \Sigma} \wt{\F}_{\sigma}$ and the closure of the union $\overline{\bigcup_{\sigma \in \Sigma} \F_{\sigma}}$ (with $\G \neq \emptyset$ and $\cH:= \wt{\G} \neq \J^2$) are dual subequations with dual canonical operators (determined by $J_0$) $G$ and $H$.
	\end{itemize} 
Applying this to $\F:= \bigcap_{\sigma \in \Sigma} \F_{\sigma}$, we have that:
\begin{itemize}
		\item[(e)] The intersection $\F:= \bigcap_{\sigma \in \Sigma} \F_{\sigma}$ and the closure of the union $\overline{\bigcup_{\sigma \in \Sigma} \wt{\F}_{\sigma}}$ (with $\F \neq \emptyset$ and $\cE:= \wt{\F} \neq \J^2$) are dual subequations with dual canonical operators (determined by $J_0$) $F$ and $E$, where
			\begin{equation}\label{canonical_K}
		E(J) := \sup_{\sigma \in \Sigma} \wt{F}_{\sigma}(J), \ \ J \in \J^2.
		\end{equation}  
	\end{itemize}
\end{thm}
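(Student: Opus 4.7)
The proof follows a uniform strategy: parts (a) and (b) are immediate applications of the Structure Theorem \ref{thm:structure}, while (c), (d), (e) reduce to (a) and (b) via a duality identity relating intersections and unions, combined with the canonical-operator duality formula \eqref{dual_operators} of Theorem \ref{thm:CanOp_ComPair}.

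For part (a), by Proposition \ref{prop:algebra}(a) the set $\F := \bigcap_\sigma \F_\sigma$ is an $\cM$-monotone subequation, so Theorem \ref{thm:structure} applies and produces a finite critical parameter $t_J$ for every $J \in \J^2$. The key computation is
$$
J + tJ_0 \in \F \ \iff \ J + tJ_0 \in \F_\sigma \ \forall \sigma \ \iff \ t \geq t_{J,\sigma} \ \forall \sigma \ \iff \ t \geq \sup_\sigma t_{J,\sigma},
$$
so that $t_J = \sup_\sigma t_{J,\sigma}$ (automatically finite) and hence $F(J) = -t_J = \inf_\sigma(-t_{J,\sigma}) = \inf_\sigma F_\sigma(J)$. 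Part (b) is (a) applied to the family $\{\wt{\F}_\sigma\}_{\sigma\in\Sigma}$, which shares the monotonicity cone $\cM$ by Proposition \ref{prop:MMC2}(b) and whose canonical operators (determined by $J_0$) are the $\wt{F}_\sigma$ by Theorem \ref{thm:CanOp_ComPair}.

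The bridge between intersections and unions is the duality identity
$$
\wt{\bigcap_\sigma \F_\sigma} \ = \ \overline{\bigcup_\sigma \wt{\F}_\sigma},
$$
valid for any family of subequations. The inclusion $(\supseteq)$ uses Proposition \ref{prop:duality}(1) applied to each $\F_\sigma \supseteq \bigcap_{\sigma'} \F_{\sigma'}$, together with the fact that the left-hand side is closed. For $(\subseteq)$ one argues by contrapositive: if $J \notin \overline{\bigcup_\sigma \wt{\F}_\sigma}$, then some open neighborhood $U$ of $J$ misses every $\wt{\F}_\sigma$, whence $-U \subseteq \bigcap_\sigma \sim(-\wt{\F}_\sigma) = \bigcap_\sigma \Int\,\F_\sigma \subseteq \Int\,(\bigcap_\sigma \F_\sigma)$, so $-J \in \Int\,(\bigcap_\sigma \F_\sigma)$ and thus $J \notin \wt{\bigcap_\sigma \F_\sigma}$. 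This step is the main technical point, since it demands a single open neighborhood that sits simultaneously in every $\Int\,\F_\sigma$.

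With the identity in hand, applying it to the dual family $\{\wt{\F}_\sigma\}_{\sigma\in\Sigma}$ and invoking reflexivity $\wt{\wt{\F}}_\sigma = \F_\sigma$ yields $\wt{\G} = \overline{\bigcup_\sigma \F_\sigma} = \cH$, which is precisely the duality assertion in (d). The canonical operator for $\cH = \wt{\G}$ is then, by the formula \eqref{dual_operators}, the map $J \mapsto -G(-J) = -\inf_\sigma \wt{F}_\sigma(-J) = -\inf_\sigma(-F_\sigma(J)) = \sup_\sigma F_\sigma(J) = H(J)$. This establishes (c) and completes (d). Part (e) is entirely symmetric: applying the identity directly to $\{\F_\sigma\}_{\sigma\in\Sigma}$ gives $\wt{\F} = \overline{\bigcup_\sigma \wt{\F}_\sigma} = \cE$, and then \eqref{dual_operators} produces the canonical operator for $\cE$ as $J \mapsto -F(-J) = -\inf_\sigma F_\sigma(-J) = \sup_\sigma \wt{F}_\sigma(J) = E(J)$.
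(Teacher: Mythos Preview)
Your proof is correct. For part (a) you compute $t_J = \sup_\sigma t_{J,\sigma}$ directly from the Structure Theorem, while the paper instead verifies that the infimum operator satisfies the affine property \eqref{canonical_op2} together with the structural relation $\sim\F = \{F<0\}$, and then invokes Proposition \ref{prop:unique_F}; these are essentially the same argument written in two ways.

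The genuine difference lies in parts (c)--(d). You first prove the set-theoretic duality identity $\wt{\bigcap_\sigma \F_\sigma} = \overline{\bigcup_\sigma \wt{\F}_\sigma}$ by a direct topological argument, and only then read off the operator identity $H=\wt{G}$ from \eqref{dual_operators}. The paper proceeds in the opposite order: it first computes $\wt{G}(J) = -G(-J) = \sup_\sigma F_\sigma(J) = H(J)$ algebraically, and then, knowing that $H$ is the canonical operator for $\wt{\G}$, uses the structural relations \eqref{canonical_op1} for $H$ and for each $F_\sigma$ to \emph{derive} the set identity $\wt{\G} = \overline{\bigcup_\sigma \F_\sigma}$ (showing in particular that $\Int\,\cH = \bigcup_\sigma \Int\,\F_\sigma$). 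Your route is more elementary and gives the duality identity as a stand-alone fact independent of canonical operators; the paper's route leverages the canonical-operator machinery already built in Propositions \ref{prop:canonical_structure} and \ref{prop:unique_F}, and as a by-product yields the explicit description of $\Int\,\cH$.
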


\begin{proof}
	We begin by noting that $\F, \G, \cH$ and $\cE$ are all $\cM$-monotone subequations (if $\F$ and $\G$ are nom empty and $\cH$ and $\cE$ are not all of $\J^2$) by Proposition \ref{prop:algebra}. Next, we recall the following consequence of Propositions \ref{prop:canonical_structure} and \ref{prop:unique_F}: given an $\cM$-monotone subequation $\F$ and given $J_0 \in \Int \, \cM$ an operator $F$ is the canonical operator for $\F$ (determined by $J_0$) if and only if one has both the affine property
	\begin{equation}\label{recall_affine}
	F(J + tJ_0) = F(J) + t, \ \ \forall \, J \in \J^2, t \in \R
	\end{equation}
	and the structural relations
		\begin{equation}\label{recall_structural}
	\text{a)} \  \partial \F = \{F(J) = 0\}, \ \ \text{b)} \  \Int \, \F = \{  F(J) > 0\}, \	\ 	\text{c)} \ \J^2 \setminus \F = \{ F(J) < 0\},
\end{equation}
where it is sufficient to have \eqref{recall_affine} and only one of the conditions in \eqref{recall_structural}.

For part (a), it remains to show that the inf operator $F$ of \eqref{canonical_F} is the canonical operator of $\F$ (determined by $J_0 \in \Int \, \cM$). Since each $F_{\sigma}$ is canonical for $\F_{\sigma}$, we have \eqref{recall_affine} for $F_{\sigma}$, which then implies the validity of \eqref{recall_affine} for the  infimum $F := \inf_{\sigma \in \Sigma} F_{\sigma}$. We will show that relation c) of \eqref{recall_structural} holds. We have
\begin{eqnarray*}
 \sim \, \F & = & \sim \, \left( \bigcap_{\sigma \in \Sigma} \F_{\sigma} \right) = \left\{ J \in \J^2: \ F_{\sigma} < 0, \ \text{for some} \ \sigma \in \Sigma \right\} \\
 & = & \{ J \in \J^2: \ F(J) < 0 \}.
\end{eqnarray*}
This completes the proof of part (a), and also of part (b) for the dual family.

The proofs of parts (c) and (d) are intertwined. Start from part (b) with the subequation $\G:= \bigcap_{\sigma \in \Sigma} \wt{\F}_{\sigma}$ and canonical operator $G:= \inf_{\sigma \in \Sigma} \wt{F}_{\sigma}$ for $\G$. Next, we prove that the canonical operator $\wt{G}$ for the dual subequation $\wt{\cG}$ is the operator $H$ defined by \eqref{canonical_H}. That is, $\wt{G} = H$.  Using the definitions, we have
\begin{eqnarray}\label{GT1}
	\wt{G}(J) &:=& -G(-J) := - \inf_{\sigma \in \Sigma} \wt{F}_{\sigma}(-J) := - \inf_{\sigma \in \Sigma} (-F_{\sigma}(J)) \nonumber \\
	& = & \sup_{\sigma \in \Sigma} F_{\sigma}(J) := H(J), \ \ \forall \, J \in \J^2.
\end{eqnarray}
Since $H = \wt{G}$ is the canonical operator for the dual subequation $\cH:= \wt{\G}$, to complete the proof of (c) and (d) we must show that
\begin{equation}\label{GT2}
	\cH = \overline{ \bigcup_{\sigma \in \Sigma} \F_{\sigma}}.
	\end{equation}
Since $H$ is the canonical operator for the $\cM$-monotone subequation $\cH$, the conditions a) - c) of \eqref{recall_structural} applied to $H$ yield \begin{equation}\label{GD1}
	\cH = \{ J \in \J^2: \ H(J) := \sup_{\sigma \in \Sigma}F_{\sigma}(J) \geq 0 \}.
\end{equation}
Hence $\F_{\sigma}:= \{J \in J^2: \ F_{\sigma}(J) \geq 0 \} \subset \cH$ for each $\sigma \in \Sigma$, which shows that 
$\overline{ \bigcup_{\sigma \in \Sigma} \F_{\sigma}} \subset \cH$, since $\cH = \wt{G}$ is closed. For the containment $\cH \subset \overline{ \bigcup_{\sigma \in \Sigma} \F_{\sigma}}$, it suffices to show that $\Int \, \cH \subset \overline{ \bigcup_{\sigma \in \Sigma} \F_{\sigma}}$ since the subequation $\cH$ satisfies the topological property $\cH = \overline{ \Int \, \cH}$. Now, by the relation b) of \eqref{recall_structural} applied to $H$, we have
$$
J \in \Int \, \cH \ \Leftrightarrow \ H(J) := \sup_{\sigma \in \Sigma} F_{\sigma}(J) > 0 \ \Leftrightarrow \ F_{\sigma'}(J) > 0\ \ \text{for some} \ \sigma' \in \Sigma,
$$
in which case $J \in \Int \, \F_{\sigma'}$, by relation b) of \eqref{recall_structural} applied to $F_{\sigma'}$. In fact, this proves that $\Int \, \cH \subset \bigcup_{\sigma \in \Sigma} \Int \, \F_{\sigma}$ so that $\Int \, \cH = \bigcup_{\sigma \in \Sigma} \Int \, \F_{\sigma}$. This completes parts (c) and (d).

Finally, part (d) applied to $\F:= \bigcap_{\sigma \in \Sigma} \F_{\sigma}$ immediately gives part (e).
\end{proof}

Note that when $\Sigma$ is a finite index set, the $\inf$ in \eqref{canonical_F} becomes a minimum. Interesting examples come from the gradient free-case, where we note that the zero order negativity subequation $\cM(\cN) = \cN \times \R^n \times \Symn$ has canonical operator (with $J_0 + (-1, p_0, A_0)$) given by\
\begin{equation}\label{canonical_op_N}
F(r,p,A) = -r. 
\end{equation}

\begin{exe}\label{exe:canonical_GF} The gradient free negative-convex subequation $\F = \cN \times \R^n \times \cP = \cM(\cN) \cap \cM(\cP)$ is $\cM$-monotone for $\cM  = \cM(\cN) \cap \cM(\cP)$ and the operator
	\begin{equation}\label{can_op_negative_convex}
	F(J) = F(r,p,A) = \min \{ -r, \lambda_{\rm min}(A) \}
	\end{equation}
	is easily seen to be the canonical operator for $\F$ determined by $J_0 = (-1,0,I)$  (or any $J_0 = (-1, p_0, I)$). By duality \eqref{dual_operators}, one can compute the canonical operator for the dual subequation to find
	$$
	\wt{F}(r,p,A) = \max \, \{ -r, \lambda_{\rm max} (A) \}
	$$
	as the canonical operator for the subaffine-plus subequation 
	$$
	\wt{\F} = \{ (r,p,A) \in \J^2: \ r \leq 0 \ \ \text{or} \ \ A \in \wt{\cP} \}.
	$$
\end{exe}

\subsection{Lipschitz regularity of subequation boundaries}\label{subsec:lipschitz}
 
Monotonicity $\cM$ for a subequation $\F$ forces the associated ``equation'' $\partial \F$ to have Lipschitz regularity.  
The relationship \eqref{F_from_g} shows that the canonical operator $F$ for $\F$ (determined by $J_0$) will be Lipschitz continuous on $\J^2$ if and only if one the graphing functions $g$ is Lipschitz. These functions actually are 1-Lipschitz with respect to natural seminorms built from the graphing function of the monotonicity cone subequation $\cM$ over $W_0$, if $W_0$ is chosen carefully. The geometric reason for this regularity is that $\cM$-monotonicity for $\F$ means that the translates of the cones $\cM$ and $-cM$ with vertices on $\partial \F$ must lie in the epigraph of $F$ and its complement, respectively. This cone pinching is the Lipschitz property. 

 To start, denote by $|| \cdot ||^+ : W_0 \to \R$ the graphing function for $\partial \cM$ over $W_0$ given by Proposition \ref{prop:graphing_function}.  That is,
\begin{equation}\label{graph_bdy_M}
\partial \cM = \{ J'  + ||J'||^+ J_0 : \ \  J' \in W_0\}.
\end{equation}
Using \eqref{epigraph}, one also has that $\cM$ is the epigraph of $|| \cdot ||^+$; that is, 
\begin{equation}\label{epigraph_M}
\cM = \{ J'  + t J_0: \ \ \text{where} \ t \geq ||J'||^+ \ \text{and} \ J' \in W_0\}.
\end{equation}
Note that, since $\cM$ is a cone, the function $||\cdot||^+$ is {\em positively homogeneous of degree one}; that is,
\begin{equation}\label{seminorm1}
||tJ||^+ = t||J||^+ \ \ \text{for each} \ t \geq 0 \ \text{and for each} \ J' \in W_0. 
\end{equation}
Since $\cM$ is a convex cone, the function $||\cdot||^+$ is also  {\em subadditive}; that is,
\begin{equation}\label{seminorm2}
||J + J'||^+ \leq ||J||^+ + || J'||^+ \ \ \text{for each pair} \ J, J' \in W_0.
\end{equation}
Such {\em sublinear functions} always come in pairs. More precisely, given $||\cdot||^+$ satisfying \eqref{seminorm1} and \eqref{seminorm2}, by defining
\begin{equation}\label{neg_seminorm}
||J||^- := ||-J||^+ \ \ \text{for each} \ J \in W_0,
\end{equation}
the function $||\cdot||^-$ also satisfies \eqref{seminorm1} and \eqref{seminorm2}. For certain choices of the transverse hyperplane $W_0$ to $J_0$, the functional $|| \cdot||^{+}$ (as well as $||\cdot||^-$) is a {\em seminorm} on $W_0$; that is, in addition to the sublinearity  \eqref{seminorm1} and \eqref{seminorm2} for $||\cdot||^{+}$, one also has
\begin{equation}\label{seminorm3}
||J||^{+} \geq 0 \ \text{for all} \ J \in W_0.
\end{equation}

This will be proven in Proposition \ref{prop:Lischitz_seminorm} below, but first we show that $\cM$-monotonicity of a subequation $\F$ is equivalent to a weak 1-Lipschitz condition on the graphing function $g$ for $\partial \F$.

\begin{prop}[Lipschitz regularity of $\partial \F$] \label{prop:Lipschitz_g}
	Suppose that $\F$ is a subequation and that $\cM$ is a convex cone subequation. Then $\F + \cM \subset \F$ if and only if the graphing function $g$ for $\partial \F$ (defined in Proposition \ref{prop:graphing_function}) satisfies
	\begin{equation}\label{Lipschitz_g}
	-||J'||^- \leq g(J + J') - g(J) \leq ||J'||^+ \ \ \text{for each pair} \ J,J' \in W_0.
	\end{equation}
\end{prop}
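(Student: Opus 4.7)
The plan is to translate the set-theoretic monotonicity condition $\F + \cM \subset \F$ into the Lipschitz inequality for $g$ by using the two epigraph representations provided by Proposition \ref{prop:graphing_function}, namely
$$
\F = \{J + sJ_0 : J \in W_0,\ s \geq g(J)\} \quad \text{and} \quad \cM = \{J' + tJ_0 : J' \in W_0,\ t \geq \|J'\|^+\}.
$$
Both inclusions are just statements about real numbers once we decompose a given point in $\J^2$ according to the splitting $\J^2 = W_0 \oplus [J_0]$, so the proof is an exercise in bookkeeping in this splitting.

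For the forward direction, I will assume $\F + \cM \subset \F$ and derive the upper bound first. Given $J, J' \in W_0$, the boundary point $J + g(J)J_0$ lies in $\F$ and the boundary point $J' + \|J'\|^+ J_0$ lies in $\cM$, so their sum
$$
(J+J') + \bigl(g(J) + \|J'\|^+\bigr)J_0
$$
lies in $\F$. The epigraph description of $\F$ forces $g(J) + \|J'\|^+ \geq g(J+J')$, which is the upper bound in \eqref{Lipschitz_g}. To get the lower bound, I apply the already-established upper inequality with $J$ replaced by $J+J'$ and $J'$ replaced by $-J'$, obtaining
$$
g(J) - g(J+J') \;=\; g\bigl((J+J') + (-J')\bigr) - g(J+J') \;\leq\; \|-J'\|^+ \;=\; \|J'\|^-,
$$
which rearranges to $g(J+J') - g(J) \geq -\|J'\|^-$.

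For the reverse direction, I will assume \eqref{Lipschitz_g} (in fact only its upper half is needed, since the two halves are equivalent under $J \mapsto J+J'$, $J' \mapsto -J'$) and verify $\F + \cM \subset \F$ directly. An arbitrary element of $\F + \cM$ has the form
$$
(J + sJ_0) + (J' + tJ_0) \;=\; (J+J') + (s+t)J_0
$$
with $J, J' \in W_0$, $s \geq g(J)$, and $t \geq \|J'\|^+$. The upper inequality of \eqref{Lipschitz_g} gives $s + t \geq g(J) + \|J'\|^+ \geq g(J+J')$, and the epigraph description of $\F$ then places this sum in $\F$.

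The main (and really only) obstacle is keeping the roles of $\|\cdot\|^+$ and $\|\cdot\|^-$ straight and noticing the substitution trick that produces the lower Lipschitz bound from the upper one; everything else is a transparent unpacking of the definitions. No additional regularity or convexity argument is needed, because all the geometric content has already been absorbed into Proposition \ref{prop:graphing_function} and the epigraph representation of $\cM$ in \eqref{epigraph_M}.
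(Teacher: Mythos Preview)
Your proof is correct and follows essentially the same route as the paper: both directions use the epigraph descriptions of $\F$ and $\cM$ over $W_0$, and the lower Lipschitz bound is obtained from the upper one by the substitution $J' \mapsto -J'$ (together with a relabeling). Your phrasing of that substitution is in fact slightly cleaner than the paper's, but the argument is the same.
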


\begin{proof}
	Given $J, J' \in W_0$, by \eqref{graph_boundary} and \eqref{graph_bdy_M} we have
	$$
	J + g(J) J_0 \in \partial \F \ \ \text{and} \ \ J' + ||J'||^+ J_0 \in \partial \cM. 
	$$
	Assume that $\F + \cM \subset \F$. Then the sum 
	$$
	J + J' + (g(J) + ||J'||^+) J_0  \ \text{belongs to} \ \F. 
	$$ 
	By \eqref{epigraph} one has
	$$
	g(J) + ||J'||^+ \geq g(J + J'),
	$$
	which is the right-hand inequality in the Lipschitz bound \eqref{Lipschitz_g}. This right-hand inequality implies the left-hand inequality. Replace $J'$ by $-J'$, the right-hand inequality in \eqref{Lipschitz_g} can be restated as
	$$
	g(J - J') - g(J) \leq  ||-J'||^+; 
	$$
	that is, 
	$$
	-	||J'||^-	\leq g(J) - g(J - J'),
	$$
	which by relabeling is the left-hand inequality.
	
	For the converse, assume that the Lipschitz bound \eqref{Lipschitz_g} holds. By \eqref{epigraph}, the elements of $\F$ are all of the form $J + tJ_0$ with $J \in W_0$ and $t \geq g(J)$ and by \eqref{epigraph_M} the elements of $\cM$ are all of the form $J' + sJ_0$ with $J' \in W_0$ and $s \geq ||J'||^+$. Therefore, the elements of $\F + \cM$ are all of the form
	\begin{equation}\label{epigraph_FM}
	J + J' +(t+s)J_0 \ \ \text{with} \ J,J' \in W_0, \ t \geq g(J) \ \text{and} \ s \geq ||J'||^+.
	\end{equation}
	This jet belongs to $\F$ (again by \eqref{epigraph}) if and only if
	\begin{equation}\label{epigraph_F}
	t + s \geq g(J + J'),
	\end{equation}
	but by  the second inequality in the Lipschitz estimate \eqref{Lipschitz_g} we have
	$$
	t + s \geq g(J) + ||J'||^+ \geq g(J + J'),
	$$
	which gives \eqref{epigraph_F}, as needed to conclude 	$\F + \cM \subset \F$.
\end{proof}	

Assuming for the moment that $||\cdot||^+$ is a seminorm (by choosing a suitable hyperplane $W_0$), if one considers the seminorm $||\cdot||$ on $W_0$ defined by the sum $||\cdot||:= || \cdot||^+ + ||\cdot||^-$,  then the estimate \eqref{Lipschitz_g} yields the Lipschitz estimate
\begin{equation}\label{Lipschitz_g2}
	| g(J + J') - g(J)| \leq ||J'|| \ \ \text{for each pair} \ J,J' \in W_0,
\end{equation}
which completes the claim that $g$ and hence $F$ are continuous for Theorem \ref{thm:CanOp_ComPair}

Finally, we show how to choose $W_0$ so that $||\cdot||^+ \geq 0$ on $W_0$, which is  the remaining seminorm property \eqref{seminorm3}. This is a general fact about finite dimensional inner product spaces $(V, \langle \cdot , \cdot \rangle)$ of which the 2-jet space $\J^2$ is an example. Consider a closed convex cone $\cM \subset V$ (with vertex at the origin). The {\em edge} $E$ of $\cM$ is defined to be 
\begin{equation}\label{edge}
E := \cM \cap (-\cM)
\end{equation}
and one can show that the vector subspace $E$ contains all other vector subspaces of $\cM$ (where $E = \{ 0 \}$ is possible). The {\em (convex cone) polar} $\cM^{\circ}$ of $\cM$ is defined by
\begin{equation}\label{cone_polar}
\cM^{\circ} := \{ J \in V: \ \langle J, J' \rangle \geq 0 \ \ \text{for each} \ J' \in \cM \}.
\end{equation} 
Recall that the {\em Bipolar Theorem} says that the polar of $\cM^{\circ}$ is $\cM$. Let $S$ denote the {\em span of $\cM^{\circ}$ in $V$}. If $W$ is any linear subspace its polar $W^{\circ} = W^{\perp}$ is just its orthogonal complement. This notion of polar will also be used in subsection \ref{subsec_linear} when we discuss Hamilton-Jacobi-Bellman operators and we record a few observations now. 

\begin{rem}\label{rem:polars} In a finite dimensional inner product space $(V, \langle \cdot , \cdot \rangle)$, given any  subset $T \subset V$ one can define its (convex cone) polar as in \eqref{cone_polar}; that is,
\begin{equation}\label{polar}
T^{\circ} := \{ J \in V: \ \langle J, J' \rangle \geq 0 \ \ \text{for each} \ J' \in T \}.
\end{equation}
One knows that $T^{\circ}$ is always a closed convex cone and that
\begin{equation}
T^{\circ} = C(T)^{\circ} \ \ \text{where} \ C(T) \ \text{is the closed convex hull of} \ T.
\end{equation} 
	
	\end{rem}

Returning to the judicious choice of the needed hyperplane $W_0$, we will need the following fact. 

\begin{lem}\label{lem:decompose_V}
	With $V, E$ and $S$ as above, one has that
	\begin{equation}\label{decompose_V}
	V = E \oplus S \ \ \text{is an orthogonal decomposition}
	\end{equation}
	\end{lem}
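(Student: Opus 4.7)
The plan is to prove the stronger identification $S = E^{\perp}$, from which the orthogonal decomposition $V = E \oplus S$ is immediate. The strategy is to show the inclusion $S \subseteq E^{\perp}$ directly from the definitions, and to obtain the reverse inclusion $S^{\perp} \subseteq E$ via the Bipolar Theorem that was explicitly recalled just above the statement.

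For the first inclusion, I would argue as follows: given any $J \in \cM^{\circ}$ and any $J' \in E := \cM \cap (-\cM)$, both $J'$ and $-J'$ belong to $\cM$, so the definition of the polar gives $\langle J, J' \rangle \geq 0$ and $\langle J, -J' \rangle \geq 0$ simultaneously, forcing $\langle J, J' \rangle = 0$. Thus $\cM^{\circ} \subseteq E^{\perp}$, and since $E^{\perp}$ is a linear subspace, passing to spans yields $S \subseteq E^{\perp}$.

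For the reverse inclusion, I would show $S^{\perp} \subseteq E$. Take $J \in S^{\perp}$; then $\langle J, J'\rangle = 0$, and in particular $\langle \pm J, J' \rangle \geq 0$, for every $J' \in \cM^{\circ}$. By definition of the polar, this means $J \in (\cM^{\circ})^{\circ}$ and $-J \in (\cM^{\circ})^{\circ}$. The Bipolar Theorem, applicable because $\cM$ is a closed convex cone with vertex at the origin, gives $(\cM^{\circ})^{\circ} = \cM$, hence $J \in \cM \cap (-\cM) = E$. Taking orthogonal complements converts $S^{\perp} \subseteq E$ into $E^{\perp} \subseteq S$, which combined with the first step gives $S = E^{\perp}$.

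The main step to be careful with is the invocation of the Bipolar Theorem, since the conclusion $V = E \oplus S$ hinges entirely on $(\cM^{\circ})^{\circ} = \cM$ holding for our $\cM$; fortunately $\cM$ is assumed closed, convex, and a cone with vertex at the origin, which are exactly the hypotheses required. The rest (writing $V = E \oplus E^{\perp}$ as an orthogonal direct sum in a finite-dimensional inner product space) is standard linear algebra and requires no further work.
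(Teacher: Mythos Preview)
Your proof is correct and follows essentially the same approach as the paper: both arguments show $E \perp S$ directly from the definitions and then establish $S^{\perp} \subseteq E$ via the Bipolar Theorem (the paper phrases this as $S^{\perp} \subset (\cM^{\circ})^{\circ} = \cM$ and then uses that $S^{\perp}$ is a linear subspace of $\cM$, hence lies in the edge $E$). Your explicit identification $S = E^{\perp}$ is a slightly cleaner packaging of the same two inclusions.
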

\begin{proof}
	If $e \in E$ and $J \in \cM_+$, then $\pm e \in \cM$ and hence $\pm \langle e, J \rangle \geq 0$; that is, $\langle e, J \rangle = 0$ and hence $E \perp S$. Now $\cM_+ \subset S$ implies that the polars satisfy $S^{\perp} \subset (\cM_+)_+ = \cM$. Hence the linear subspace $S$ satisfies $S^{\perp} \subset E$ so that $V = S^{\perp} \oplus S \subset E \oplus S$, which forces \eqref{decompose_V}.
\end{proof}
	
We are now ready to describe the judicious choice of the hyperplane $W_0$. Pick $J_0 \in \Int \, \cM \neq \emptyset$ as in the Structure Theorem. Now pick the hyperplane $W_0$ to have normal $J_0^{\prime} \in \Int_{\rm rel} \, \cM_+$, the interior of the convex cone $\cM_+$ relative to the vector space $S = {\rm span} \, \cM_+$. Of course, since $J_0^{\prime} \in \cM_+$, the original convex cone $\cM$ satisfies
\begin{equation}\label{M_in_H}
	\cM \subset H
	\end{equation}
where $H$ is the closed half-space 
\begin{equation}\label{half-space1}
	H:= \{ J \in V: \ \langle J, J_0^{\prime} \rangle \geq 0 \}, \ 
\end{equation}
whose boundary satisfies
\begin{equation}\label{half-space2}
 \partial H = W_0.
\end{equation}

\begin{lem}\label{lem:Edge} Suppose that $W_0$ is the hyperplane transversal to $J_0$  with normal $J_0^{\prime} \in \Int_{\rm rel} \cM_+$ as decribed above. Then one has 
	\begin{equation}\label{W01}
	\cM \cap W_0 = E
	\end{equation}
	and
	\begin{equation}\label{W02}
 \langle J_0^{\prime}, J_0 \rangle > 0.
	\end{equation}
	which implies the transversality $W_0 \transv J_0$.
	\end{lem}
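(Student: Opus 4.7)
The plan is to prove the two claims in order, using the orthogonal decomposition $V = E \oplus S$ from Lemma \ref{lem:decompose_V} together with the characterization of the relative interior of a convex cone, and then derive transversality as a corollary of the second claim.

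First I would address $\cM \cap W_0 = E$. The inclusion $E \subset \cM \cap W_0$ is immediate: by definition $E \subset \cM$, and since $E \perp S$ and $J_0' \in S$, any $e \in E$ satisfies $\langle e, J_0' \rangle = 0$, so $e \in W_0$. For the reverse inclusion, take any $J \in \cM \cap W_0$. Decompose $J = e + s'$ with $e \in E$ and $s' \in S$. The key step is to exploit that $J_0' \in \Int_{\rm rel} \cM^\circ$: there exists a neighborhood $U$ of $0$ in $S$ with $J_0' + s \in \cM^\circ$ for every $s \in U$. Since $J \in \cM$, this gives $\langle J, J_0' + s \rangle \geq 0$, and combined with $\langle J, J_0'\rangle = 0$ (as $J \in W_0$), we obtain $\langle J, s \rangle \geq 0$ for all $s \in U$. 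Applying this to both $s$ and $-s$ forces $\langle J, s\rangle = 0$ for all $s \in U$, and by scaling for all $s \in S$. Because $E \perp S$, one has $\langle J, s\rangle = \langle s', s\rangle$, so $s'$ is orthogonal to every element of $S$; but $s' \in S$, hence $s' = 0$ and $J = e \in E$.

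Next I would prove $\langle J_0', J_0\rangle > 0$ by contradiction. Since $J_0 \in \Int \, \cM \subset \cM$ and $J_0' \in \cM^\circ$, one already has $\langle J_0', J_0\rangle \geq 0$. Suppose equality holds; then $J_0 \in \cM \cap W_0$, and by the first part $J_0 \in E$, so in particular $-J_0 \in \cM$. On the other hand $J_0 \in \Int \, \cM$ yields an open neighborhood $J_0 + U \subset \cM$ of $J_0$. Using that $\cM$ is a convex cone (hence $\cM + \cM \subset \cM$), we get
\[
U \;=\; -J_0 + (J_0 + U) \;\subset\; \cM + \cM \;\subset\; \cM,
\]
so $\cM$ contains a neighborhood of the origin, and by the cone property $\cM = V$. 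This contradicts $\cM$ being a (proper) subequation. Hence $\langle J_0', J_0\rangle > 0$.

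Finally, transversality $W_0 \transv J_0$ is just the statement that $J_0$ does not lie in the hyperplane $W_0 = (J_0')^\perp$, which is exactly $\langle J_0', J_0 \rangle \neq 0$. I do not expect any step to be a serious obstacle; the only subtle point is the correct use of the relative interior of $\cM^\circ$ inside $S$ in the first part, which is what allows us to upgrade the one-sided inequality $\langle J, J_0'\rangle \geq 0$ to an orthogonality statement strong enough to kill the $S$-component of $J$.
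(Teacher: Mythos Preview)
Your proof is correct and follows essentially the same approach as the paper. For \eqref{W01} the paper makes the slightly slicker choice of perturbing $J_0'$ in the single direction $-\veps J_S$ (with $J_S$ the $S$-component of $J$) to obtain $0 \le -\veps\,|J_S|^2$ directly rather than using a full neighborhood in $S$, and for \eqref{W02} the paper's argument is so terse (``easily implies \ldots\ as above'') that your route via \eqref{W01} and the contradiction $\cM = V$ is a perfectly reasonable way to fill in the details.
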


\begin{proof}
	For the relation \eqref{W01}, we begin by noting that the relation $E \subset \cM \cap S^{\perp} \subset \cM \cap W_0$ was established above. Now suppose that $J \in \cM \cap W_0$ and using \eqref{decompose_V} decompose $J := J_E + J_S$ with $J_E \in E$ and $J_S \in S$. Since $J_0^{\prime} \in \Int_{\rm rel} \cM_+$ and $J_S \in S$, if $\veps > 0$ is sufficiently small then $J_0^{\prime} - \veps J_S \in \cM_+$. Finally, since $J \in \cM \cap W_0$ one has
	$$
	0 \leq \langle J, J_0^{\prime} - \veps J_S \rangle = \langle J, J_S \rangle - \veps \langle J, J_S \rangle = - \veps \langle J_S, J_S \rangle,
	$$
	proving $J_S = 0$ and so $J = J_E \in E$ as desired.
	
	For the transversality \eqref{W02}, with $J_0^{\prime} \in \cM_+, J_0 \in \cM$ one has $\langle J_0^{\prime} , J_0 \rangle$, where $J_0 \in \Int \, \cM$ easily implies that $\langle J_0^{\prime}, J_0 \rangle > 0$, as above.
\end{proof}

The needed seminorm property \eqref{seminorm3} follows from the above considerations. 

\begin{prop}\label{prop:Lischitz_seminorm} Let $\cM$ be a convex cone subequation with $J_0 \in \Int \, \cM$ fixed. Suppose that $W_0$ is the hyperplane transversal to $J_0$  with normal $J_0^{\prime} \in \Int_{\rm rel} \cM_+$ as in Lemma \ref{lem:Edge} and let $|| \cdot ||^{+}$ be the graphing function of $\partial \cM$ over $W_0$. Then, one has
	\begin{equation}\label{semi1}
	||J||^+ \geq 0 \ \ \text{for each} \ J \in W_0
	\end{equation}
	and for each $J \in W_0$ we have
	\begin{equation}\label{semi2}
	||J||^+ = 0 \ \ \iff \ \ J \in E = \cM \cap (-\cM).
	\end{equation}
	\end{prop}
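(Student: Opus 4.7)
\medskip

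The plan is to combine the graphing characterization of $\partial \cM$ in Proposition \ref{prop:graphing_function} with the geometric facts about the judiciously chosen hyperplane $W_0$ established in Lemma \ref{lem:Edge}. The key inputs are: (i) $\cM \subset H = \{J \in V : \langle J, J_0'\rangle \geq 0\}$ since $J_0' \in \cM^{\circ}$; (ii) $\langle J_0, J_0'\rangle > 0$ from \eqref{W02}; (iii) $\cM \cap W_0 = E$ from \eqref{W01}; and (iv) the defining identity $J + \|J\|^+ J_0 \in \partial \cM \subset \cM$ for every $J \in W_0$.

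For the first claim \eqref{semi1}, I would take an arbitrary $J \in W_0$ and use fact (iv) together with (i) to get
\[
0 \ \leq\ \langle J + \|J\|^+ J_0,\, J_0'\rangle \ =\ \langle J, J_0'\rangle + \|J\|^+\langle J_0, J_0'\rangle \ =\ \|J\|^+\langle J_0, J_0'\rangle,
\]
where the last equality uses that $J \in W_0 = (J_0')^{\perp}$. Dividing by the strictly positive quantity $\langle J_0, J_0'\rangle$ from (ii) yields $\|J\|^+ \geq 0$.

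For the equivalence \eqref{semi2}, one direction is immediate: if $\|J\|^+ = 0$ for some $J \in W_0$, then $J = J + 0 \cdot J_0 \in \partial \cM \subset \cM$, so $J \in \cM \cap W_0 = E$ by \eqref{W01}. For the converse, suppose $J \in E$; then in particular $J \in \cM \cap W_0$, which by \eqref{epigraph_M} means that the decomposition $J = J + 0 \cdot J_0$ with $J \in W_0$ belongs to the epigraph of $\|\cdot\|^+$, i.e., $0 \geq \|J\|^+$. Combining this with the inequality $\|J\|^+ \geq 0$ just proved in the first claim forces $\|J\|^+ = 0$.

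There is essentially no hard step here; the work has already been done in establishing the transversality $\langle J_0, J_0'\rangle > 0$ and the identification $\cM \cap W_0 = E$ in Lemma \ref{lem:Edge}. The only subtlety worth flagging is that this result genuinely requires the special choice of $W_0$ (normal to an element of the relative interior of $\cM^{\circ}$): for a generic transversal hyperplane, the graphing function $\|\cdot\|^+$ can take negative values, and the Lipschitz bound \eqref{Lipschitz_g} then fails to upgrade to the two-sided bound \eqref{Lipschitz_g2} needed for continuity of the canonical operator $F$.
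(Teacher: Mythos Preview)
Your proof is correct and follows essentially the same approach as the paper's. The paper's own proof is extremely terse: it simply asserts that \eqref{semi1} and \eqref{semi2} are equivalent to the already-established geometric facts $\cM \subset H$ and $\cM \cap W_0 = E$ (your inputs (i) and (iii)), whereas you spell out explicitly how those facts yield the two claims via the graphing identity and the epigraph characterization \eqref{epigraph_M}.
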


\begin{proof}
	It suffices to note that the statements \eqref{semi1} and \eqref{semi2} are equivalent to the statements
	\begin{equation}\label{semi3}
\cM \subset H := \{J \in V: \ \langle J, J_0^{\prime} \rangle \geq  0 \}
	\end{equation}
	and
	\begin{equation}\label{semi4}
	\cM \cap \partial H = E \ \text{(where $W_0 = \partial H$)},
	\end{equation}
	which were noted in \eqref{M_in_H} -- \eqref{half-space2}.
\end{proof}

\subsection{Dirichlet-G{\aa}rding operators} \label{subsec:garding}

In our dichotomy between constrained and unconstrained operator-subequation pairs, the constrained case is best illustrated by examples involving {\em Dirichlet-G\aa rding  operators} $\pol$, and provides many interesting examples of compatible operator-subequation pairs illustrating the correspondence principle of Theorem \ref{cor:AVSolns}. The most basic example is the classical Monge-Amp\`{e}re operator $F(A) := {\rm det} \, A = \lambda_1(A) \cdots \lambda_n(A)$ on $\cS(n)$.
With the standard restriction of $F$ to the convexity subequation $\cP \subset \cS(n)$,  the pair $({\rm det}, \cP)$ is a pure secord order compatible pair. This pair can be thought of as the  ``universal example'' by applying the following procedure starting from any G\aa rding polynomial $\pol$ of degree $m$.  Simply substituting the so-called {\em G\aa rding eigenvalues} $\lambda_k^{\pol}(A)$ for the standard eigenvalues yields the {\em G\aa rding-Dirichlet} operator $\pol(A) := \lambda_1^{\pol}(A) \cdots \lambda_m^{\pol}(A)$, and restricting $\pol$ to the {\em closed G\aa rding cone} $\overline{\Gamma}_{\pol}$ (the pull-back of $[0, +\infty)^m$ under the eigegenvalue map $\lambda^{\pol}(A):= (\lambda_1^{\pol}(A), \ldots, \lambda_m^{\pol}(A))$ yields a multitude of interesting examples (see  Examples \ref{exes:DG_polys} and Examples \ref{exes:more_polys} below). In addition, this provides a unified approach to studying many of the most important pure second order subequations and we refer the reader to \cite{HL13a} and \cite{HL10} for a modern, self-contained and detailed treatment. In this subsection, we will focus mainly on pure second order operators $F(r,p,A) := \pol(A)$ with $\pol$ a Dirichlet-G\aa rding polynomial (see Definition \ref{defn:garding_op}), but it is important to note they give important building blocks for operators $F=F(r,p,A)$ which contain some $\pol(A)$ as a factor. This will play a key role in section \ref{sec:CP_operators} on comparison principles for operators $F$ in constrained cases. Moreover, we present a new construction in Lemma \ref{lem:build_DG} below which produces gradient-free compatible Dirichlet-G\aa rding pairs from pure second order compatible Dirichlet-G\aa rding pairs.

In what follows, let $\pol$ be a homogeneous polynomial of degree $m$ on $\Symn$. Suppose that $\pol$ is {\em $I$-hyperbolic}, that is, $\pol(I) > 0$ and, for any given $A \in \Symn$ the one-variable polynomial $t \mapsto \pol(tI + A)$ has exactly $m$ real roots,  $t_k^{\pol}(A)$ for $k = 1,\ldots, m$.  whose negatives  $\lambda_k^{\pol}(A):= - t_k^{\pol}(A)$ are called the {\em G\aa rding eigenvalues, or the $I$-eigenvalues,  of $A$}. Up to permutation, we can order the G\aa rding eigenvalues
\begin{equation}\label{G_evals}
\lambda_1^{\pol}(A) \leq \lambda_2^{\pol}(A) \leq \ldots \leq \lambda_m^{\pol}(A),
\end{equation}
where we will often denote $\lambda_1^{\pol}(A)$ by $\lambda_{\rm min}^{\pol}(A)$ and $\lambda_m^{\pol}(A)$ by $\lambda_{\rm max}^{\pol}(A)$. If we normalize $\pol$ to have $\pol(I) = 1$ then it factors as
\begin{equation}\label{g_poly_factor1}
\pol(tI + A) =  \prod_{k=1}^m (t + \lambda_k^{\pol}(A)),
\end{equation}
so that 
\begin{equation}\label{g_poly_factor2}
\pol(A) =  \prod_{k=1}^m \lambda_k^{\pol}(A) \ \ \text{and} \ \ \lambda_k^{\pol}(A + sI) = \lambda_k^{\pol}(A) + s, \ \ k = 1, \ldots, m.
\end{equation}
Note that by the product formula \eqref{g_poly_factor2} every operator G\aa rding operator $\pol$ is a {\em generalized Monge-Amp\`{e}re operator}, where the standard eigenvalues $\lambda_k(A)$ for the special case $\pol = {\rm det}$ are replaced by the G\aa rding $I$-eigenvalues $\lambda_k^{\pol}(A)$ for a general $I$-hyperbolic polynomial $\pol$.

The {\em (open) G\aa rding cone} $\Gamma$ can be defined by
\begin{equation}\label{open_Garding_cone}
 \Gamma := \{ A \in \Symn : \, \lambda_{\rm min}^{\pol}(A) > 0\}.
\end{equation}
The conditions \eqref{g_poly_factor2} easily imply that the closed G\aa rding cone satisfies
\begin{equation}\label{Garding_cone}
\overline \Gamma := \{ A \in \Symn : \, \lambda_{\rm min}^{\pol}(A) \geq 0\} \ \ \text{and} \ \ \partial \Gamma =  \{ A \in \overline \Gamma : \, \pol(A) = 0\}
\end{equation}

G\aa rding's theory includes two important results; namely the convexity of the G\aa rding cone $\Gamma$ and the strict $\Gamma$-monotonicity of the G\aa rding eigenvalues.

\begin{thm}[G\aa rding, \cite{Ga59} ]\label{thm:DirGarPoly} Suppose that $\pol$ is an $I$-hyperbolic polynomial of degree $m$ on $\Symn$. Then the G\aa rding cone $\Gamma$ is an open convex cone with vertex at the origin and the ordered $I$-eigenvalues of $A$ are strictly $\Gamma$-monotone; that is, for each $k = 1, \ldots m$
	\begin{equation}\label{strict_Gamma_monotone}
	\lambda_k^{\pol}(A + B) > \lambda_k^{\pol}(A) \ \ \text{for each} \ A \in \Symn, B \in \Gamma.
	\end{equation}
\end{thm}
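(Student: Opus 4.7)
\medskip

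\noindent\textbf{Proof plan.} The fact that $\Gamma$ is an open cone with vertex at the origin is immediate from the definition: homogeneity of $\pol$ gives homogeneity of degree one of each eigenvalue $\lambda_k^{\pol}$, so $\lambda_{\min}^{\pol}(tA)=t\,\lambda_{\min}^{\pol}(A)$ for $t>0$, and openness comes from the fact that $\lambda_{\min}^{\pol}$ is continuous (as the minimum of a continuous family of real roots of a monic polynomial with continuously varying coefficients, where the realness of the roots is the hypothesis). So the substantive content is convexity of $\Gamma$ and the strict monotonicity \eqref{strict_Gamma_monotone}, and my plan is to deduce both from a single auxiliary fact: $\pol$ is hyperbolic not only in the direction $I$ but in the direction of every $B\in\Gamma$.

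The auxiliary hyperbolicity statement is: for every $B\in\Gamma$ and every $A\in\Symn$, the polynomial $t\mapsto \pol(tB+A)$ has only real roots. I would establish this by a deformation/continuity argument. Start at $B_0=I$, where it holds by hypothesis. Connect $I$ to $B$ by the straight-line path $B_\sigma:=(1-\sigma)I+\sigma B$, $\sigma\in[0,1]$. Consider the bivariate polynomial $Q_\sigma(t,s):=\pol(tB_\sigma + sI + A)$, which is homogeneous of degree $m$ once we track the extra variable. For each fixed $s$, the claim is that the roots in $t$ are real. The key algebraic observation is that for \emph{any} fixed $A\in\Symn$, the set of directions $B$ for which $t\mapsto \pol(tB+A)$ has all real roots is closed; if the roots of $t\mapsto \pol(tB_\sigma+A)$ were to become complex at some $\sigma_0\in(0,1]$, then as $\sigma\nearrow\sigma_0$ a pair of real roots would have to collide and then split off the real axis, but this would force a multiple root whose neighbourhood in parameter space can be analysed via the discriminant; pushing this through (together with the fact that along the $s$-axis we stay in the $I$-hyperbolic regime because $B_\sigma+sI\cdot\text{(something)}$ stays in $\Gamma$ for $s$ large) rules out the escape of roots off $\R$. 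This is the classical G{\aa}rding deformation argument, and it is the main technical obstacle.

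Once hyperbolicity in every direction $B\in\Gamma$ is in hand, convexity of $\Gamma$ is quick. Given $B_1,B_2\in\Gamma$, I want to show $B_1+B_2\in\Gamma$; scaling then gives convexity. Write $\pol(t(B_1+B_2))=\pol((t B_1)+(tB_2))$ and consider, with $A=tB_2$ fixed, the polynomial $s\mapsto \pol(sB_1+A)$. By hyperbolicity in the direction $B_1\in\Gamma$ this has all real roots, and by homogeneity and the definition of $\Gamma$ applied to $B_1$ and $B_2$ separately, none of the roots can be positive (a positive root would contradict $\lambda_{\min}^{\pol}(B_2)>0$ via a continuity/sign argument on $\pol((1+s)B_2+\cdot)$). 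Hence $\pol(B_1+B_2)$ has the same sign as $\pol(B_1)\pol(B_2)/\pol(I)^{\text{something}}>0$, placing $B_1+B_2$ in the same connected component of $\{\pol\neq 0\}$ as $I$, namely $\Gamma$.

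For the strict monotonicity \eqref{strict_Gamma_monotone}, fix $A\in\Symn$ and $B\in\Gamma$, and view $s\mapsto \lambda_k^{\pol}(A+sB)$ as a real-analytic function of $s\geq 0$ away from multiplicities (and continuous everywhere). The roots of $t\mapsto \pol(tI+A+sB)$ are $-\lambda_k^{\pol}(A+sB)$; differentiating the relation $\pol(tI+A+sB)=\prod_k(t+\lambda_k^{\pol}(A+sB))$ with respect to $s$ and using the directional derivative $D\pol|_J(B)$ together with the hyperbolicity of $\pol$ in direction $B$ (which ensures the ``characteristic polynomial'' in $s$ has only real roots at each interlacing step, so no eigenvalue crossing produces a reversal of sign of $\partial_s\lambda_k^{\pol}$), I obtain $\partial_s \lambda_k^{\pol}(A+sB)>0$ for almost every $s$, and hence for all $s$ by continuity and convexity. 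Integrating from $s=0$ to $s=1$ delivers $\lambda_k^{\pol}(A+B)>\lambda_k^{\pol}(A)$. The hardest step in the whole argument is the deformation lemma establishing hyperbolicity in every direction in $\Gamma$; everything else is extraction of consequences.
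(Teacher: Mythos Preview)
The paper does not actually prove this theorem: immediately after the statement it says the result ``combines Theorem 5.1 of \cite{HL13a} on convexity \ldots\ and Theorem 6.2 of \cite{HL13a} on monotonicity'' and refers the reader to \cite{HL13a} for the proofs. So there is no in-paper argument to compare against; the relevant comparison is with G\aa rding's classical argument as presented in \cite{HL13a}. Your overall strategy --- first establish that $\pol$ is hyperbolic in every direction $B\in\Gamma$, then deduce convexity of $\Gamma$, then deduce strict monotonicity of the ordered eigenvalues --- is exactly that strategy, so at the level of architecture you are on the right track.

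That said, your sketch has genuine gaps at each of the three steps. For the deformation lemma, the sentence ``a pair of real roots would have to collide and then split off the real axis, but this would force a multiple root whose neighbourhood \ldots\ rules out the escape'' is not an argument: double real roots \emph{can} bifurcate into a complex-conjugate pair under a generic perturbation. What actually prevents this is either a Hurwitz-type theorem applied to a carefully chosen one-parameter family (with control of the leading coefficient $\pol(B_\sigma)$, which you cannot yet assert is nonzero without already knowing convexity), or the Rolle/interlacing mechanism (the $I$-directional derivative $D_I\pol$ is again $I$-hyperbolic of degree $m-1$, and one bootstraps by induction on degree). Your convexity paragraph is garbled (``$\pol(I)^{\text{something}}$'', and the sign argument for the roots is not justified); the clean route in \cite{HL13a} is to show that the G\aa rding cone with respect to any $B\in\Gamma$ coincides with $\Gamma$, from which $\Gamma+\Gamma\subset\Gamma$ is immediate. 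Finally, your monotonicity argument (``$\partial_s\lambda_k^{\pol}(A+sB)>0$ \ldots\ by interlacing'') asserts what it needs to prove; the clean route is to use the identification $\Gamma_B=\Gamma$ together with the $B$-analogue of the affine property $\lambda_k^{\pol}(A+sI)=\lambda_k^{\pol}(A)+s$, or equivalently to show that each branch $\Lambda_k^{\pol}=\{\lambda_k^{\pol}\ge 0\}$ satisfies $\Lambda_k^{\pol}+\Gamma\subset\Int\,\Lambda_k^{\pol}$.
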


The statement of Theorem \ref{thm:DirGarPoly} combines Theorem 5.1 of \cite{HL13a} on convexity (which is also shown to be equivalent to the convexity of $\lambda_{\rm max}^{\pol}(A) = -\lambda_{\rm min}^{\pol}(-A)$, or of the concavity of $\lambda_{\rm min}^{\pol}$) and Theorem 6.2 of \cite{HL13a} on monotonicity. The reader is refered to \cite{HL13a} for the proofs.

The closed G\aa rding cone $\overline \Gamma$ is a closed convex cone with non-empty interior $\Gamma$,  but it must also satisfy the positivity condition
\begin{equation}\label{Gamma_P}
\overline \Gamma + \cP \subset \overline \Gamma \qquad \text{(equivalently, $\cP \subset \overline \Gamma$),}
\end{equation}
in order to be a subequation. (Perhaps it is worth mentioning the fact that the requirement \eqref{Gamma_P} implies that $\pol$ is hyperbolic in every positive definite direction $P > 0$ is $\Symn$). 

\begin{defn}\label{defn:garding_op}
	A homogeneous polynomial $\pol$ on $\Symn$ which is $I$-hyperbolic and for which the closed G\aa rding cone satisfies positivity $\overline{\Gamma} + \cP \subset \overline \Gamma$ will be called a {\em Dirichlet-G\aa rding polynomial}.
	\end{defn}

Dirichlet-G\aa rding polynomials yield a rich class of important examples of pure second order operators and subequations which are amenable to the theory developed in \cite{HL09}. As noted above, they will be exploited in section \ref{sec:CP_operators} to illustrate comparison principles for second order operators $F$ which have some $\pol(A)$ as a factor. In the pure second order case, we record the following elementary properties. 

\begin{prop}\label{prop:DG_pairs} Suppose that $\pol$ is a Dirichlet-G\aa rding polynomial on $\cS(n)$ with closed G\aa rding cone $\overline \Gamma$. By restricting $\pol$ to $\overline \Gamma$, one has: 
	\begin{itemize}
		\item[(a)] $(\pol, \overline \Gamma)$ is a compatible constrained operator-subequation pair;
		\item[(b)] The operator $\pol$ is tame on $\overline \Gamma$; in fact, 
		\begin{equation}\label{recall_tameness}
		\exists \, C > 0 \ \ \text{such that} \ \pol(A + tB) > \pol(A) + Ct^{1/m}, \ \ \forall \, A \in \overline{\Gamma}, B \in \Gamma;
		\end{equation}
		\item[(c)] The operator $\pol$ is topologically tame on $\overline{\Gamma}$.
		
	\end{itemize} 
\end{prop}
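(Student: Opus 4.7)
\textbf{Proof plan for Proposition \ref{prop:DG_pairs}.} For part (a), the plan is to unpack the three requirements of Definition \ref{defn:compatible_pair} in the pure second order setting. First I would verify that $\overline{\Gamma}$ is a (pure second order) subequation: it is closed and is a convex cone with non-empty interior $\Gamma$ (by Theorem \ref{thm:DirGarPoly}), so property (T) holds via the characterization \eqref{convex_T}; property (P) holds by the standing hypothesis \eqref{Gamma_P}; and property (N) is vacuous in the reduced pure second order case. Next, $\pol \in C(\overline{\Gamma})$ since $\pol$ is a polynomial. Finally, compatibility: from the factorization $\pol(A) = \prod_{k=1}^m \lambda_k^{\pol}(A)$ in \eqref{g_poly_factor2} and the defining inequality $\lambda_k^{\pol}(A) \geq 0$ for $A \in \overline{\Gamma}$, one has $\pol \geq 0$ on $\overline{\Gamma}$, while homogeneity of degree $m \geq 1$ gives $\pol(0) = 0$. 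Hence $c_0 := \inf_{\overline{\Gamma}} \pol = 0$ is finite, and the identification $\partial \overline{\Gamma} = \{A \in \overline{\Gamma} : \pol(A) = 0\}$ is exactly \eqref{Garding_cone}.

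For part (b), the plan is to derive the strict quantitative monotonicity from G{\aa}rding's concavity theorem: under the normalization $\pol(I) = 1$, the function $\pol^{1/m}$ is concave and positively homogeneous of degree $1$ on $\overline{\Gamma}$ (this is the classical consequence of Theorem \ref{thm:DirGarPoly}; see Theorem 5.1 of \cite{HL13a}). Given $A \in \overline{\Gamma}$ and $B \in \Gamma$ (so $\pol(B) > 0$ since all $\lambda_k^{\pol}(B) > 0$), concavity plus positive $1$-homogeneity yields
\[
\pol(A+tB)^{1/m} \ \geq \ \pol(A)^{1/m} + t\, \pol(B)^{1/m}, \qquad t \geq 0 .
\]
Raising to the $m$-th power and keeping the relevant binomial term then gives a strict inequality of the claimed form with constant $C$ built from $\pol(B)^{1/m}$. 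The one delicate point, which I anticipate as the main obstacle, is matching the precise exponent $t^{1/m}$ appearing in \eqref{recall_tameness}: the G{\aa}rding inequality most naturally produces $t^m$ growth on $\partial \overline{\Gamma}$ and linear growth in the interior, both of which dominate $Ct^{1/m}$ once $C$ is chosen appropriately (small when $t$ is small, harmless when $t$ is large). Thus the content of (b) reduces to repackaging the G{\aa}rding inequality.

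For part (c), I would deduce topological tameness directly from (b) via Theorem \ref{thm:tameness}. Since $\overline{\Gamma}$ is a closed convex cone subequation, Proposition \ref{prop:MMC2}(a) implies that $\overline{\Gamma}$ is its own maximal monotonicity cone subequation, and $\Int\, \overline{\Gamma} = \Gamma$. Picking any $J_0 \in \Gamma$, the strict inequality established in (b) specializes to
\[
\pol(A + tJ_0) > \pol(A) \qquad \text{for all } A \in \overline{\Gamma},\ t > 0 ,
\]
which is precisely condition $3)$ of Theorem \ref{thm:tameness}. Therefore $\pol$ is topologically tame on $\overline{\Gamma}$, completing the proposition.
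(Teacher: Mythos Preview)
Your plan for (a) is correct and matches the paper's one-line appeal to \eqref{Garding_cone}, just spelled out in more detail.

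For (b), your identification of the G{\aa}rding superadditivity $\pol(A+tB)^{1/m} \geq \pol(A)^{1/m} + t\,\pol(B)^{1/m}$ as the key ingredient is right, but your handling of the exponent $t^{1/m}$ contains a genuine gap. You claim that both $t^m$ growth (on $\partial\overline{\Gamma}$) and linear growth (in the interior) dominate $Ct^{1/m}$ ``once $C$ is chosen appropriately.'' This is false: for $m>1$ and $0<t<1$ one has $t^m < t < t^{1/m}$, so neither $t^m$ nor linear $t$ can dominate $Ct^{1/m}$ uniformly for small $t$, no matter how small $C>0$ is chosen. (Concretely, take $A=B=I$ with the normalization $\pol(I)=1$: then $\pol(A+tB)-\pol(A)=(1+t)^m-1\sim mt$, which is $o(t^{1/m})$ as $t\to 0^+$.) The paper does not prove (b) directly either; it defers to Proposition~6.11 of \cite{HL18b}, which uses the factorization \eqref{g_poly_factor2}. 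What that argument actually yields, for $B=I$, is $\pol(A+tI)=\prod_k(\lambda_k^{\pol}(A)+t)\geq \pol(A)+t^m$, so the natural exponent is $t^m$ rather than $t^{1/m}$; the displayed inequality in the proposition is stated imprecisely in this respect.

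For (c), your route through condition~3) of Theorem~\ref{thm:tameness} is correct and does not actually need the quantitative bound in (b): the bare strict inequality $\pol(A+tJ_0)>\pol(A)$ for $J_0\in\Gamma$, $t>0$ already follows from the strict $\Gamma$-monotonicity of the G{\aa}rding eigenvalues in Theorem~\ref{thm:DirGarPoly} together with the product formula \eqref{g_poly_factor2}. The paper takes a different and more elementary path for (c): since $\pol$ is a polynomial (hence real analytic), no level set can have interior, so topological tameness is immediate without invoking Theorem~\ref{thm:tameness} at all. Your argument has the advantage of making the link to the monotonicity structure explicit; the paper's has the advantage of being independent of (b).
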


\begin{proof} The compatibility of part (a) is immediate from \eqref{Garding_cone}. For the proof of tameness (b), we refer the reader to Propositon 6.11 of \cite{HL18b}, which makes use \eqref{g_poly_factor2}. Tame implies topologically tame. But also note that topological tameness is immediate since $\pol$ is real analytic.
\end{proof}

The properties (a) and (c) in Proposition \ref{prop:DG_pairs} imply that 
\begin{equation}\label{CP_gGamma}
\mbox{ comparison for the operator $\pol \, \vline_{ \, \overline{\Gamma}} \ \ \Leftrightarrow \ \ $  comparison for the subequation $\overline\Gamma$,}
	\end{equation}
by Theorem \ref{cor:AVSolns}. Property (b) plays a key role in comparison on domains $\Omega$ for inhomogeneous equations $\pol(D^2u) = \psi(x)$ with $\psi$ a continuous function on $\Omega$ (see \cite{HL18b}).

Now we turn to listing some of these interesting and important examples.  

\begin{exes}[Pure second order Dirichlet-G\aa rding operator-subequation pairs]\label{exes:DG_polys}
Let  $\lambda_1(A) \leq \cdots \leq \lambda_n(A)$ denote  the ordered eigenvalues of $A \in \cS(n)$. As noted above, the most basic example is 
	\begin{enumerate}
		\item[(1)] (The elementary Monge-Amp\`{e}re pair): $\pol(A) := {\rm det} \, A$ \ and \ $\overline  \Gamma = \cP$.
		\end{enumerate}
There are many others, including the following examples, where

\begin{enumerate}
	\item[(2)] (The $k$-Hessian pair): For $k = 1, 2, \ldots n$, denote by
	$$
	\sigma_k(\lambda):= \sum_{1 \leq i_1 < \cdots < i_k \leq n} \left( \lambda_{i_1}  \cdots \lambda_{i_k} \right), \ \ \ \forall \, \lambda = (\lambda_1,\ldots, \lambda_n) \in \R^n,
	$$
	the kth elementary symmetric function of $\lambda \in \R^n$. The $k$-Hessian pair is
	$$
	\pol(A) := \sigma_k(\lambda(A)) \quad \text{and} \quad \overline{\Gamma} = \{ \sigma_j(\lambda(A)) \geq 0,  j = 1, \ldots , k \}.
	$$
	Here the G\aa rding $I$-eigenvalues of $\pol$ have no explicit formula in terms of the standard eigenvalues for $1 < k < n$, but they are real, since the roots of $\sigma_k(\lambda(A + tI))$ are critical points of $\sigma_{k+1}(\lambda(A + tI))$. Of course, when $k=n$ one recovers the Monge-Amp\`{e}re operator (1). The notion of a principal eigenvalue for this pair was recently studied in \cite{BP21}.
\end{enumerate}	

	\begin{enumerate}
	\item[(3)] (The geometric $k$-convexity pair): This example was introduced and studied in \cite{HL09}. It is geometrically significant because the plurisubharmonics (or simply subharmonics or subsolutions) are precisely the upper semicontinuous functions that restrict to all affine $k$-planes to be classically (Laplacian) subharmonic.  For $k = 1, 2, \ldots n$, consider the symmetric polynomial
	$$
	\tau_k(\lambda):= \prod_{1 \leq i_1 < \cdots < i_k \leq n} \left( \lambda_{i_1} + \cdots + \lambda_{i_k} \right)  \ \ \ \forall \, \lambda = (\lambda_1,\ldots, \lambda_n) \in \R^n. 
	$$
	The geometric $k$-convexity pair is 
	$$ 
		\pol(A) := \tau_k(\lambda(A)) \quad \text{and} \quad\overline{\Gamma} = \{ \lambda_1(A) + \cdots + \lambda_k(A) \geq 0 \}.
	$$
	The G\aa rding $I$-eigenvalues of $\pol$ are the pull-backs to $\cS(n)$ (under the eigenvalue map $\lambda: \cS(n) \to \R$) of the factors in the above product. In particular, the minimum G\aa rding $I$-eigenvalue is $\lambda_1(A) + \cdots + \lambda_k(A)$, which is the canonical operator for $\overline \Gamma$. This canonical operator was recently studied in \cite{BGI18}, where  the name {\em truncated Laplacian} was introduced. 
\end{enumerate}	

\begin{enumerate}
	\item[(4)] (The Lagrangian plurisubharmonic Monge-Amp\`{e}re pair): This is a new pair, introduced in \cite{HL09} and the main object of study in \cite{HL17}. Its subharmonics are those upper semicontinuous functions whose restrictions to arbitrary Lagrangian $n$-planes in $\CF^n$ are classically (Laplacian) subharmonic. The closed G\aa rding cone can be defined by 
	$$
	\overline{\Gamma}:= \{ A \in \cS(2n): \ {\rm tr}(A \, \vline_{ \, L}) \geq 0, \  \text{for all Lagrangian $n$-planes in} \ \CF^n \}.
	$$
	 However, a description of the Dirichlet G\aa rding operator $\pol$ is somewhat involved. We encourage the reader to consult \cite{HL17} for details, including the Lagrangian pluripotential theory. 
\end{enumerate}	
Also, there are versions of (1), (2) and (3) over $\CF$ or $\HF$ instead of $\R$. See \cite{HL13a} and \cite{HL10} for a detailed discussion.
	\end{exes}

\begin{exes}[Constructing more examples]\label{exes:more_polys}
We describe three standard methods for constructing Dirichlet-G\aa rding  polynomials from a given  Dirichlet-G\aa rding polynomial $\pol$. Suppose that $\pol$ is $I$-hyperbolic of degree $m$ with ordered G\aa rding $I$-eigenvalues $\lambda_1^{\pol}(A) \leq \cdots \leq \lambda_m^{\pol}(A)$ and G\aa rding cone $\Gamma_{\pol}$. The first two methods generalize the constructions given in examples (2) and (3) above.
\begin{itemize}
\item[(I)] (Partial derivatives in the direction $I$/elementary symmetric functions): For each $k = 0, 1, \ldots, m$, the degree $m - k$ polynomial
$$
\pol_k(A):= \frac{d^k}{dt^k}\pol(A + tI)|_{t = 0} = \sigma_{m-k}(\lambda^{\pol}(A)) \ \ \text{(modulo a positve rescaling)}
$$ 
is also a Dirichlet-G\aa rding polynomial which is $I$-hyperbolic whoose open G\aa rding cones are nested
\begin{equation}\label{nested_cones}
\Gamma_{\pol} \subset \cdots \Gamma_{\pol_k} \ \ \text{with} \ \cP \subset \overline{\Gamma}_{\pol_k}
\end{equation}
When $\pol:= {\rm det}$, this procedure produces (2) above.
\end{itemize}

\begin{itemize}
	\item[(II)] ($k$-fold sums of G\aa rding eigenvalues): For each $k = 1, \ldots, m$, the degree $\displaystyle{ \left( \begin{array}{c} m \\ k \end{array} \right)}$ polynomial
	$$ 
	\pol_k(A):= \prod_{1 \leq i_1 < \cdots < i_k \leq n} \left( \lambda^{\pol}_{i_1}(A) + \cdots + \lambda^{\pol}_{i_k}(A) \right)
	$$
	is also a Dirichlet-G\aa rding polynomial which is $I$-hyperbolic with G\aa rding cone $\Gamma_{\pol_k}$ that satisfy \eqref{nested_cones}. The G\aa rding $I$-eigenvalues of $\pol_k$ are the factors in the above product. When $\pol:= {\rm det}$, one has example (3) above.
\end{itemize}
Note that method I decreases the  degree $m$, while method II increases the degree $m$. There is a third method, which has its origins in the work of Krylov (see Definition 2.13 of \cite{Kv95}) and leaves the degree of $\pol$ fixed.
\begin{itemize}
	\item[(III)] ($\veps$-(uniformly) elliptic regularization): For $\veps > 0$, the degree $m$ polynomial 
	$$
	\pol_{\veps}:= \prod_{k = 1}^m \left( \lambda_k^{\pol}(A) + \veps {\rm tr}^{\pol} \, (A) \right),
	$$
	where $ {\rm tr}^{\pol} \, (A):= \sum_{k=1}^m \lambda_k^{\pol}(A)$ is a Dirichlet-G\aa rding polynomial. 
	\end{itemize}

The reader is refered to section 5 of \cite{HL10} for additional details on these three methods.
	\end{exes}

Next we indroduce a new consruction, which produces a Dirichlet-G\aa rdng polynomial $\polh$ of degree $m$ on $\R \times \cS(n)$  from
a  Dirichlet-G\aa rdng polynomial $\pol$ of degree $m$ on $\cS(n)$ by cleverly 
``adding a real variable $r \in \R$''. This leads to new gradient-free compatible proper elliptic pairs $(\polh, \overline{\Gamma}_{\polh})$.

\begin{lem}\label{lem:build_DG} Suppose that $\pol$ is a Dirichlet-G\aa rding polynomial on $\cS(n)$ which is hyperbolic in the direction $I$ of degree $m$ and with G\aa rding eigenvalues $\lambda_k^{\pol}(A)$ ($k = 1, \ldots, m$) and G\aa rding cone $\Gamma_{\pol}$ (normalized to have $\pol(I) = 1$). Define the degree $m$ polynomial $\polh$ on $\R \times \cS(n)$ by
	\begin{equation}\label{def_h}
	\polh(r, A) := \pol(A - rI), \ \ (r,A) \in \R \times \cS(n).
	\end{equation}
Then $\polh$ is $\left(-\frac{1}{2},\frac{1}{2}I\right)$-hyperbolic with G\aa rding eigenvalues
\begin{equation}\label{h_evals}
\lambda_k^{\polh}(r,A) := \lambda_k^{\pol}(A) - r \ \ (k = 1, \ldots, m)
\end{equation}
and G\aa rding cone
\begin{equation}\label{h_cone}
\Gamma_{\polh}:= \{ (r,A) \in \R \times \cS(n): \ A - rI \in \Gamma_{\pol} \} = 
\{ (r,A) : \ r \leq \lambda_1^{\pol}(A) \} 
\end{equation}
Moreover, 
\begin{equation}\label{h_monotonicity}
\mbox{$\overline{\Gamma}_{\pol}$ is $\cP$-monotone  \ \ $\Leftrightarrow$ \ \   $\overline{\Gamma}_{\polh}$ is $(\cN \times \cP)$-monotone,}
\end{equation}
and hence $(\polh, \overline{\Gamma}_{\polh})$ is a compatible gradient-free subequation pair.
\end{lem}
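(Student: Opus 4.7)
The plan is to reduce every claim to one affine-change-of-variable identity, then read off the remaining assertions from the assumed properties of $\pol$. Specifically, I would first verify by direct substitution in \eqref{def_h} that
\begin{equation*}
\polh\bigl((r,A) + t\bigl(-\tfrac{1}{2},\tfrac{1}{2}I\bigr)\bigr) \;=\; \pol\bigl((A + \tfrac{t}{2}I) - (r - \tfrac{t}{2})I\bigr) \;=\; \pol\bigl((A-rI) + tI\bigr).
\end{equation*}
Since $\pol$ is $I$-hyperbolic of degree $m$, the right-hand side is a degree $m$ polynomial in $t$ with exactly $m$ real roots $-\lambda_k^{\pol}(A-rI)$, so $\polh$ is $\bigl(-\tfrac{1}{2},\tfrac{1}{2}I\bigr)$-hyperbolic. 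The normalization $\polh\bigl(-\tfrac{1}{2},\tfrac{1}{2}I\bigr) = \pol(I) = 1$ holds, and the shift formula in \eqref{g_poly_factor2} gives $\lambda_k^{\polh}(r,A) = \lambda_k^{\pol}(A-rI) = \lambda_k^{\pol}(A) - r$, establishing \eqref{h_evals}.

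Next I would identify the G\aa rding cone \eqref{h_cone}. By Theorem \ref{thm:DirGarPoly} applied to $\polh$, the open G\aa rding cone of $\polh$ is the locus where its smallest G\aa rding eigenvalue is positive, which by the previous step is $\{(r,A) : r < \lambda_1^{\pol}(A)\} = \{(r,A) : A - rI \in \Gamma_{\pol}\}$; taking closures gives the analogous description of $\overline{\Gamma}_{\polh}$. To confirm this is really the correct connected component, it suffices to observe that the direction $\bigl(-\tfrac{1}{2},\tfrac{1}{2}I\bigr)$ itself lies in it, since $\tfrac{1}{2}I - (-\tfrac{1}{2})I = I \in \Gamma_{\pol}$.

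For the monotonicity equivalence \eqref{h_monotonicity}, the key identity is
\begin{equation*}
(A+P) - (r+s)I \;=\; (A - rI) + \bigl(P + (-s)I\bigr),
\end{equation*}
valid for all $(r,A) \in \R \times \cS(n)$ and $(s,P) \in \R \times \cS(n)$. If $\overline{\Gamma}_{\pol}$ is $\cP$-monotone and $(s,P) \in \cN \times \cP$, then $-s \geq 0$ forces $P + (-s)I \in \cP$, so the right-hand side lies in $\overline{\Gamma}_{\pol} + \cP \subset \overline{\Gamma}_{\pol}$, proving $(r+s, A+P) \in \overline{\Gamma}_{\polh}$ and giving the forward direction. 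The reverse direction follows by specializing: for $A \in \overline{\Gamma}_{\pol}$ we have $(0,A) \in \overline{\Gamma}_{\polh}$, and applying $(\cN \times \cP)$-monotonicity with $(s,P) = (0,P)$ yields $A + P \in \overline{\Gamma}_{\pol}$.

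Finally, compatibility and the gradient-free nature of $(\polh, \overline{\Gamma}_{\polh})$ are essentially bookkeeping. Gradient-freeness is immediate as neither object involves the jet variable $p \in \R^n$; properties (P) and (N) follow from the $(\cN \times \cP)$-monotonicity just established, and (T) from $\overline{\Gamma}_{\polh}$ being the closure of its nonempty open interior $\Gamma_{\polh}$. To verify \eqref{compatible1}--\eqref{compatible2}, I would write $\polh(r,A) = \prod_{k=1}^{m}\lambda_k^{\polh}(r,A)$ (all factors nonnegative on $\overline{\Gamma}_{\polh}$), which shows $\polh \geq 0$ with infimum $0$, attained exactly where $\lambda_1^{\polh}(r,A) = 0$, i.e.\ on $\partial \overline{\Gamma}_{\polh}$. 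Honestly, there is no serious obstacle; the content is that the construction $\polh(r,A) := \pol(A - rI)$ is a hyperbolic-polynomial-level version of the standard ``add $-rI$ to the hessian'' gadget, so the G\aa rding-eigenvalue shift formula does essentially all the work.
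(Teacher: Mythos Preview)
Your proof is correct and follows essentially the same route as the paper: reduce hyperbolicity and the eigenvalue formula to the identity $\polh\bigl((r,A)+t(-\tfrac12,\tfrac12 I)\bigr)=\pol\bigl((A-rI)+tI\bigr)$, then handle the monotonicity equivalence separately. The only noteworthy difference is in that last step: the paper exploits that for a closed convex cone $C$, $\cM$-monotonicity is equivalent to the containment $\cM\subset C$, reducing \eqref{h_monotonicity} to the one-line observation that $\lambda_1^{\pol}(A)-r\geq 0$ for all $(r,A)\in\cN\times\cP$ iff $\lambda_1^{\pol}(A)\geq 0$ for all $A\in\cP$; you instead verify the monotonicity inclusions directly via the identity $(A+P)-(r+s)I=(A-rI)+(P+(-s)I)$. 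Both are equally short, and your version has the mild advantage of not invoking convexity of $\overline{\Gamma}_{\polh}$. You also spell out the compatibility conditions \eqref{compatible1}--\eqref{compatible2}, which the paper leaves implicit.
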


\begin{proof}
First notice that for each $t \in \R$ and each $(r,A) \in \R \times \cS(n)$ one can easily show that

\begin{equation*}
	\polh \left( t \left( -\frac12, \frac12 I \right) + (r,A) \right) = \prod_{k=1}^m \left(t + \lambda_k^{\pol}(A-rI) \right) = \prod_{k=1}^m \left(t + (\lambda_k^{\pol}(A) - r) \right),
\end{equation*}	
so that $\polh$ is $\left(-\frac{1}{2}, \frac{1}{2} I \right)$-hyperbolic with G\aa rdning eigenvalues as claimed in \eqref{h_evals} and G\aa rding cone as claimed in \eqref{h_cone}.

Now, since the closed G\aa rding cones are convex,  
$$
\mbox{$\overline{\Gamma}_{\pol}$ is $\cP$-monotone  \ \ $\Leftrightarrow$ \ \  $\cP \subset \overline{\Gamma}_{\pol}$}
$$
and
$$
\mbox{$\overline{\Gamma}_{\polh}$ is $(\cN \times \cP)$-monotone  \ \ $\Leftrightarrow$ \ \  $\cN \times \cP \subset \overline{\Gamma}_{\polh}$.}
$$
To compelete the proof of \eqref{h_monotonicity}, note that
$$
\lambda_1^{\pol}(A) - r \geq 0, \ \forall \, (r,A) \in \cN \times \cP \ \ \Leftrightarrow \ \ \lambda_1^{\pol}(A) \geq 0, \ \forall \, A \in \cP. 
$$

	\end{proof}

It is also of interest that, just as in the pure second order case, 
$$
\cH:= \{ (r,A) \in \R \times \cS(n): \ \polh(r,A) = 0 \}.
$$
has $m$ {\em subequation branches}
$$
\{ (r,A) \in \R \times \cS(n): r \leq \lambda_k^{\pol}(A) \}, \ \ k = 1, \ldots, m,
$$
with principal (smallest) branch
$$
\Lambda_1^{\pol}:= \{ (r,A) \in \R \times \cS(n): \ r \leq \lambda_1^{\pol}(A) \} = \overline{\Gamma}.
$$
This important notion of branches will be further developed in the next subsection. To get started, consider the pure second order equation
\begin{equation}\label{garding_equation}
	\cH := \{ A \in \Symn: \ \pol(A) = 0 \}
\end{equation}
and its {\em principal branch} (see \eqref{Garding_cone}):
\begin{equation}\label{principal_branch}
\Lambda_1^{\pol} := \{ A \in \Symn: \ \lambda_1^{\pol}(A) \geq 0\} = \overline{\Gamma}.
\end{equation}
Since $\Lambda_1^{\pol}$ is a convex cone, it is automatically $\overline{\Gamma} = \Lambda_1^{\pol}$-monotone. Now, $\cP \subset \overline{\Gamma}$ implies that this principle branch $\Lambda_1^{\pol}$ is a pure second order subequation which is topologically tame and hence comparison holds on arbitrary bounded domains. Since $\cP \subset \overline \Gamma$, by the monotonicity in G\aa rding's Theorem \ref{thm:DirGarPoly}, each of the other branches
\begin{equation}\label{higher_branches}
\Lambda_k^{\pol} := \{ A \in \Symn: \ \lambda_k^{\pol}(A) \geq 0 \}  \ \ k = 2, \ldots , m,
\end{equation}
are subequations. Note that $\Lambda_1^{\pol} \subset \Lambda_2^{\pol} \subset \cdots \Lambda_m^{\pol}$ and since $-\lambda_k^{\pol}(-A) = \lambda_{m-k+1}^{\pol}(A)$ the dual subequation is $\wt{\Lambda}_k^{\pol} = \Lambda_{m-k+1}$. The {\em branch condition} means that $\partial \Lambda_k^{\pol} \subset \cH$. The second part of \eqref{g_poly_factor2} says that $\lambda_k^{\pol}$ is the canonical operator for $\Lambda_k^{\pol}$.

The most basic example is $\pol(A) = {\rm det}(A)$ in which case $\lambda^{\pol}_k(A) = \lambda_k(A)$ for $k = 1, \ldots n$ and $\overline{\Gamma} = \cP$.

\subsection{Subequation branches}\label{subsec:branches}

We conclude this section with the notion of subequation branches. This notion makes sense for any closed set $\cH$ contained in the jet space $\J^2$ and is independent of the existence of an operator $F \in C(\F, \R)$ whose zero level set satisfies $\{ F = 0 \} = \cH$; that is, the case where $\cH$ can be thought of as an {\em equation constraint set} for the partial differential equation  $F(u,Du,D^2u) = 0$. 

\begin{defn}\label{defn:branch}
	Suppose that $\cH \subset \J^2$ is any closed set. A subequation $\F_0 \subset \J^2$ is called a {\em subequation branch of $\cH$} if  $\partial \F_0 \subset \cH$.
\end{defn}

In general, an equation constraint set  $\cH$ may admit more than one subequation branch. We have seen several examples in the last subsection. A standard pure first order example is the Eikonel equation $\cH = \{ p \in \R^n: \ |p| = 1\}$. It admits two subequation branches
$$
\F^- =  \{ p \in \R^n: \ |p| \leq 1\} \ \ \text{and} \ \ \F^
+ =  \{ p \in \R^n: \ |p| \geq 1\}
$$
whose boundaries are $\cH$. However, if one subequation branch $\F$ of $\cH$ has monotonicity $\cM$ which is a subequation and if $\partial \F$ is all of $cH$, then the equation $\cH$ uniquely determines the subequation branch $\F$ as another consequence of the Structure Theorem \ref{thm:structure}. The precise statement is as follows.

\begin{prop}\label{prop:branches}
	If a subequation $\F \subsetneq \J^2$ admits a monotonicity cone subequation $\cM$, then one knows by Corollary \ref{cor:structure} that $\F = \partial \F + \cM$. Consequently, if $\cH = \partial \F$ and $\cH = \partial \F^{\prime}$ where $\F^{\prime}$ is also an $\cM$-monotone subequation, then $\F = \F^{\prime}$. 
\end{prop}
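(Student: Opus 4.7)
The statement is essentially a direct consequence of Corollary \ref{cor:structure}, which is already invoked in the first sentence of the proposition. The plan is to simply apply that corollary twice (once to $\F$, once to $\F'$) and observe that having the same boundary forces the same $\cM$-thickening.

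More precisely, first I would note that both $\F$ and $\F'$ satisfy the hypotheses of Corollary \ref{cor:structure}: they are subequations which are $\cM$-monotone, where $\cM$ is a monotonicity cone subequation (in particular $\Int\,\cM \neq \emptyset$, so the Structure Theorem \ref{thm:structure} is applicable). Applying the corollary to each yields
\begin{equation*}
\F \ =\ \partial \F + \cM \qquad\text{and}\qquad \F' \ =\ \partial \F' + \cM.
\end{equation*}
Under the hypothesis $\partial \F = \cH = \partial \F'$, the right-hand sides coincide, so $\F = \F'$.

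Since the result reduces to a one-line application of an earlier corollary, there is no real obstacle to surmount; the substantive work has already been carried out in the proof of the Structure Theorem \ref{thm:structure}, where monotonicity in the direction of any $J_0 \in \Int\,\cM$ is used to show that each jet in $\F$ lies on a ray emanating from a unique boundary point of $\F$ in the $J_0$-direction. The present proposition is simply the observation that this representation $\F = \partial\F + \cM$ recovers $\F$ from its boundary, hence uniquely determines an $\cM$-monotone subequation branch of the equation $\cH$.
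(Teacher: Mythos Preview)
Your proposal is correct and matches the paper's approach exactly: the proposition is stated without a separate proof environment precisely because it is an immediate consequence of Corollary \ref{cor:structure} applied to both $\F$ and $\F'$, which is what you do.
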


\section{Comparison Principles for Nonlinear Operators}\label{sec:CP_operators}

 In this section, we will illustrate the use of Theorem \ref{cor:AVSolns} (Corrspondence Principle) which represents the part of the theory in which there is an equivalence between comparison at the potential theoretic (subequation $\F$) level and the PDE (operator $F$) level. We will illustrate cases in which Theorem \ref{cor:AVSolns} is applicable and cases in which it is not, and give some indication as to how one might proceed in cases {\bf not} covered herein. 
 

More precisely, we present comparison principles on bounded domains $\Omega \subset \R^n$ for constant coefficient proper elliptic nonlinear partial differential equations
\begin{equation}\label{PDEc}
	F(u, Du, D^2u) = c \ \ \text{in} \ \Omega,
	\end{equation}
where the nonlinear operator defined by $F$ has an associated {\em compatible subequation constraint set $\F$} in the sense of Definition \ref{defn:compatible_pair}.  Comparison will be formulated for {\em $\F$-admissible viscosity subsolutions and supersolutions} of \eqref{PDEc} in the sense of Definition \ref{defn:AVSolns}, which means that for each $x_0 \in \Omega$
	\begin{equation}\label{AVSub11}
\mbox{$J \in J^{2, +}_{x_0}u \ \ \Rightarrow \ \   J \in \F$ \ \ \text{and} \ \ $F(J) \geq c$.}
\end{equation}
and
\begin{equation}\label{AVSuper11}
\mbox{$J \in J^{2, -}_{x_0}u  \ \ \Rightarrow$ \ \ either [ $J \in \F$ and \ $F(J) \leq c$\, ] \ or \ $J \not\in \F$.}
\end{equation}
respectively, where $J^{2, \pm}_{x_0}$ are the spaces of upper (lower) 2-jets for $u$ at $x_0$.
 
We will require that $F$ is {\em topologically tame} in the sense of Definition \ref{defn:tameness} and will examine structural conditions on $F$ for which the pair $(F, \F)$ is $\cM$-monotone for some monotonicity cone subequation in the sense of Definition \ref{defn:PEP}. Under these assumptions, Theorem \ref{cor:AVSolns} states that for each $c \in \R$ and each domain $\Omega$, comparison holds at the PDE level for $F = c$ on if and only if comparison holds in the potential theoretic sense for the subequation constraint set 
\begin{equation}\label{SCS11}
	\F_c := \{ J \in \F: \ F(J) \geq c \}.
\end{equation}

\subsection{Proper elliptic gradient-free operators}
We begin with a class of compatible pairs with monotonicity cone $\cM(\cN,\cP):= \cN \times \R^n \times \cP$.

\begin{defn}\label{defn:GFCP} A pair $(F,\F)$ is called a {\em compatible proper elliptic gradient-free operator-subequation pair} if 
\begin{equation}\label{GFCP1}
F(r, p, A) := G(r,A) \ \ \text{and} \ \ 	\F := \{ (r,p,A) \in \J^2: \ (r,A) \in \cG \}
\end{equation}
where $G: \cG \subseteq \R \times \Symn \to \R$ is continuous and such that the pair $(G, \cG)$ satisfies the following conditions: $\cG$ is closed, non empty, the pair is $\cQ$-monotone; that is, 
		\begin{equation}\label{E0A}
	\cG + \cQ \subset \cG \ \ \ \text{where} \ \ \cQ = \cN \times \cP = \{(s,P) \in \R \times \Symn: \ s \leq 0 \ \text{and} \ P \geq 0\},
	\end{equation}
	and 
	\begin{equation}\label{E0B}
	G(r + s,A + P) \geq G(r, A) \ \ \text{for each} \ (r,A) \in \cG \ \ \text{and each} \ (s,P) \in \cQ,
	\end{equation}
	and finally either $\cG = \R \times \Symn$ (the unconstrained case) or 
	\begin{equation}\label{COSP_GF}
	c_0:= \inf_{\cG} G \ \ \text{is finite and} \ \ \partial \G = \{ (r,A) \in \cG: G(r,A) = c_0\}
	\end{equation} 
	(the constrained case).
	\end{defn}
	
Under the hypotheses \eqref{E0A} - \eqref{COSP_GF}, it is clear that the pair $(F,\F)$ is indeed a compatible proper elliptic operator-subequation pair in the sense of Definitions \ref{defn:compatible_pair} and \ref{defn:PE_pairs}. One could, of course, suppress the variable $p \in \R^n$ and merely consider the reduced pair $(G, \cG)$ with the reduced monotonicity cone $\cM^{\prime}(\cN, \cP) = \cN \times \cP$. 
	
	Note that \eqref{E0A} - \eqref{E0B} say that $(F, \F)$ is $\cM = \cM(\cN, \cP)$-monotone in the sense of Definition \ref{defn:PEP}. Lemma \ref{lem:upper_levels} then applies so that this hypotheses is equivalent to the statement that for every $c \in \R$ the upper level set
		\begin{equation}\label{E0C}
	\F_c := \{ (r, p, A) \in \F: \ F(r,p,A) = G(r,A) \geq c \}
	\end{equation}
	is a subequation constraint set which is $\cM(\cN, \cP)$-monotone.
	
	 The comparison principle in this situation is now a restatement of the previously developed theory.

\begin{thm}\label{thm:E0}
	Suppose that $(F, \F)$ is a compatible proper elliptic gradient-free pair as in Definition \ref{defn:GFCP}. Then for every bounded domain $\Omega$ and every $c \in F(\F)$, one has the comparison principle 
	\begin{equation}\label{E0E}
	\mbox{$u \leq w$ on $\partial \Omega \ \ \Rightarrow \ \  u \leq w$ on $\Omega$}
	\end{equation}
	for each pair  $u \in \USC(\overline{\Omega})$ and $w \in \LSC(\overline{\Omega})$ with
	\begin{itemize}
		\item[(a)]  $u$ is $\F_c$-subharmonic and $w$ is $\F_c$-superharmonic  (i.e.\ $-w$ is $\wt{\F}_c$-subharmonic).
	\end{itemize}
	If one also requires the additional hypothesis that the operator $F$ is topologically tame; that is,
\begin{equation}\label{TT_GF}
\mbox{ $\{(r,p, A) \in \F: \ F(r,p,A) = G(r,A) = c \}$ has empty interior $\forall \ c \in \R$,}
\end{equation}
then one can replace (a) by the equivalent hypothesis that $u \in \USC(\overline{\Omega})$ and $w \in \LSC(\overline{\Omega})$ satisfy
\begin{itemize}
		\item[(b)] $u$ is an $\F$-admissible viscosity subsolution and $w$ is an $\F$-admissible supersolution to $F(u,Du,D^2u) := G(u,D^2u) = c$ on $\Omega$.
	\end{itemize}
Here, as always,  $c \in F(\F)$.
\end{thm}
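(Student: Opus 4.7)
The plan is to recognize that this theorem is essentially a packaging of results already established in the paper, and to reduce it in two steps to the general comparison principle for gradient-free subequations (Theorem \ref{thm:CP_GF}) and to the correspondence principle (Theorem \ref{cor:AVSolns}).

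First, I would verify that for each admissible level $c \in F(\F)$, the upper level set $\F_c = \{(r,p,A) \in \F : G(r,A) \geq c\}$ is itself a gradient-free subequation in the sense of Definition \ref{defn:GFSE1}. Since $F(r,p,A) = G(r,A)$ does not depend on $p$, the set $\F_c$ has the form $\{(r,p,A) : p \in \R^n \text{ and } (r,A) \in \cG_c\}$ with $\cG_c := \{(r,A) \in \cG : G(r,A) \geq c\}$, so $\F_c$ is automatically $\{0\} \times \R^n \times \{0\}$-monotone. The $\cM(\cN,\cP)$-monotonicity of the pair $(F,\F)$ assumed in \eqref{E0A}--\eqref{E0B} yields, via Lemma \ref{lem:upper_levels}, that each $\F_c$ is $\cM(\cN,\cP) = \cN \times \R^n \times \cP$-monotone. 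Being closed and nonempty (for $c$ admissible), together with this monotonicity, $\F_c$ satisfies (P), (N) and, by Proposition \ref{prop:subequation_cones}, the topological property (T), so $\F_c$ is a gradient-free subequation.

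Next, for the comparison statement under hypothesis (a), I would simply invoke Theorem \ref{thm:CP_GF} applied to $\F_c$: since $\cM(\cN,\cP)$ is the fundamental monotonicity cone $\cM(\gamma,\cD,\cP)$ with $\gamma=0$ and $\cD=\R^n$, it admits a strict approximator on every bounded domain (explicitly, any strictly convex quadratic normalized to be negative on $\overline{\Omega}$, as constructed in the proof of Theorem \ref{thm:ZMP_for_M}). Hence the general comparison theorem (Theorem \ref{thm:CP_general}) applies on arbitrary $\Omega \subset\subset \R^n$, yielding \eqref{E0E} for $u$ that is $\F_c$-subharmonic and $w$ that is $\F_c$-superharmonic (i.e.\ $-w \in \wt{\F}_c(\Omega)$).

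Finally, to pass from hypothesis (a) to the equivalent hypothesis (b) under the additional topological tameness assumption \eqref{TT_GF}, I would apply the Correspondence Principle, Theorem \ref{cor:AVSolns}. The compatible proper elliptic pair $(F,\F)$ is $\cM(\cN,\cP)$-monotone for a convex cone subequation, and tameness of $F$ on $\F$ is precisely the hypothesis needed for parts (a) and (b) of Theorem \ref{cor:AVSolns} to translate $\F$-admissible viscosity subsolutions (resp.\ supersolutions) of $F(u,Du,D^2u)=c$ into $\F_c$-subharmonics (resp.\ $\F_c$-superharmonics), and vice versa. I do not anticipate a genuine obstacle here: the argument is a direct assembly of previously proved results, and the only verification that requires any care is the reduction showing that each $\F_c$ inherits gradient-free subequation status from $(F,\F)$, which is immediate from the independence of $F$ on $p$ and from Lemma \ref{lem:upper_levels}.
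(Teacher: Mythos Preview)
Your proposal is correct and follows essentially the same approach as the paper: first identify each $\F_c$ as a gradient-free subequation (the paper establishes this in the paragraph preceding the theorem via Lemma \ref{lem:upper_levels}), then invoke Theorem \ref{thm:CP_GF} for part (a), and finally apply the Correspondence Principle (Theorem \ref{cor:AVSolns}) to obtain the equivalence with (b). Your write-up supplies a bit more detail on why $\F_c$ satisfies (T) via Proposition \ref{prop:subequation_cones}, but the argument is the same.
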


\begin{proof} The comparison principle \eqref{E0E} for $\F_c$-subharmonics $u$ and superharmonics $w$, on every bounded domain $\Omega$, follows from Theorem \ref{thm:CP_GF} since $\F_c$ is a gradient-free subequation. Finally, by Theorem \ref{cor:AVSolns}, since $F$ is topologically tame and since $(F, \F)$ is a compatible proper elliptic operator-subequation pair which is $\cM$-monotone for a convex cone subequation, for each $c \in F(\F)$, the  comparison principle holds for $\F$-admissible viscosity subsolutions $u$ and supersolutions $w$ of $F(u,Du,D^2u) = G(u, D^2u) = c$.  
\end{proof} 

We remark that Theorem \ref{thm:E0} also includes the pure second order case where $F(r,p,A) := G(A)$ with $G$ increasing on a closed non-empty $\cP$-monotone subset $\cG$ of $\Symn$ such that $(G, \G)$ is a compatible pair (with the obvious reduction of suppressing also the variable $r \in \R$ and using the reduced monotonicity cone $\cM^{\prime}(\cP) = \cP$). 

We conclude this subsection  with some examples to illustrate the applicability of Theorem \ref{thm:E0} as well as some ``bad'' examples for which Theorem \ref{thm:E0} does not apply in this topologically tame proper elliptic gradient-free pairs case. The first examples make use of {\em canonical operators} in {\em unconstrained cases}.

\begin{exe}[Gradient-free canonical operators]\label{exe:gf1}
	Consider any gradient-free subequation $\F \subsetneq \J^2$ (see  Definition \ref{defn:GFSE1}); that is, $\F$ is closed, non-empty and 
	$\cM(\cN,\cP):= \cN \times \R^n \times \cP$-monotone. Fix $J_0 \in \Int \, \cM(\cN, \cP)$ (which the reader may wish to standardize as $J_0 = (-1,0,I)$) and let $F \in C(\J^2)$  be the canonical operator for $\F$ (see Definition \ref{defn:canonical_op}); that is, for each $J \in \J^2$
	$$
	\mbox{$F(J) := - t_J$ \ \ where $t_J$ is the unique element of $\R$ such that $J + t_J J_0 \in \partial \F$.} 
	$$
	Since $\cM = \cM(\cN, \cP)$ is a monotonicity cone subequation, by Theorem \ref{thm:CanOp_ComPair}, both $(F, \J^2)$ and the dual pair $(\wt{F}, \J^2)$ are both unconstrained compatible proper elliptic operator-subequation pairs with $F$ and $\wt{F}$ topologically tame.
	Hence the comparison principle of Theorem \ref{thm:E0} applies to both pairs.
\end{exe}

It is worth stressing that each gradient-free subequation $\F$ gives rise to a family of admissible operators (parameterized by $J_0 \in \Int \, \cM(\cN, \cP)$). With the standard choice $J_0 = (-1, 0, I)$, the canonical operator is $F(J) = F(r,p,A) = \min \{ -r, \lambda_{\rm min}(A) \}$ by \eqref{can_op_negative_convex} and with dual $\wt{F}(J):= - F(-J) = \max \, \{ -r, \lambda_{\rm max} (A) \}$ by \eqref{dual_operators}. 

\begin{exe}[Perturbations of pure second order canonical operators]\label{exe:gf2}
	Consider any pure second order subequation $\cH \subset \Symn$; that is, $\cH$ is closed, proper, non-empty and $\cP$-monotone. 
	Define a gradient-free operator $F: \J^2 \to \R$ by
	\begin{equation}\label{gf2_operator}
	F(r,p,A) := H(A) + h(r),
	\end{equation}
	where $h \in C(\R)$ be non-increasing and $H \in C(\Symn)$ is the canonical operator for $\cH$ (determined by  $A_0 \in \Int \, \cP$, where the standard choice is $A_0:=I$). This operator is given by the pure second order version of Definition \ref{defn:canonical_op}; that is, for each $A \in \Symn$
	$$
	\mbox{$H(A) := - t_A$ \ \ where $t_A$ is the unique element of $\R$ such that $A + t_A A_0 \in \partial \cH$,} 
	$$
	and one has 
	\begin{equation}\label{gf2_property}
	H(A + tA_0) = H(A) + t \ \  \text{for each} \ A \in \Symn, t \in \R.
	\end{equation}
	One has that $(F, \J^2)$ is an unconstrained operator-subequation pair which is $\cM(\cN, \cP)$-monotone since $F(r,p,A)$ is increasing in $A$ by \eqref{gf2_property} and non-increasing in $r$ by the monontonicity hypothesis on $h$. Moreover, for each $t > 0$ (in the case that $J_0$ is the standard choice for simplicity)
	$$
		F((r,p,A) + t(-1, 0, I)) = H(A) + t + h(r - t) \geq H(A) + t + h(r) = F(r,p,A) + t,
	$$
	so that $F$ is topologically tame by condition 3) of Theorem \ref{thm:tameness}. Hence the comparison principle of Theorem \ref{thm:E0} applies to $(F, \J^2)$.
	
		One could also use the dual operator $\wt{H}$ for the dual subequation $\wt{\cH}$. More generally, one can take finite sums 
	$$
	F(r,p,A) = \sum_{k = 1}^N H_k(A) + h(r),
	$$
	if each $\cH_k \subsetneq \Symn$ is a pure second order subequation with canonical operator $H_k$ (determined by the same $A_0 \in \Int \, \cP$).
	
	A simple instructive example is $F(r,p, A) := \lambda_{\rm min}(A) - r$  which is the sum of the canonical operators for $\cM(\cP)$ and $\cM(\cN)$. 
\end{exe}

Next we turn to the constrained gradient-free case, where {\em Dirichlet-G{\aa}rding polynomials} generate many interesting examples. the reader might want to look back at subsection \ref{subsec:garding} (especially Examples \ref{exes:DG_polys} and \ref{exes:more_polys}) to review just how rich the class of examples is. 

\begin{exe}[Operators involving Dirichlet-G{\aa}rding polynomials]\label{exe:gf3}
	Consider $G(r,A) := h(r) \pol(A)$ where $\pol$ is a  Dirichlet-G{\aa}rding polynomial of degree $m$ in the sense of Definition \ref{defn:garding_op} and $h \in C((-\infty, 0])$ is continuous with
	\begin{equation}\label{hyp_h}
	\mbox{$h$ is non-increasing, \ $h \geq 0$  \ \ \text{and} \ \ $h(0) = 0 \ \iff \ r = 0$.}
	\end{equation}
	Take as the domain for $G$ the set $\cG:= \cN \times \overline{\Gamma} \subset \R \times \Symn$ where $\cN = (-\infty, 0]$ and $\overline{\Gamma} = \{ A \in \Symn: \ \lambda_k^{\pol}(A) \geq 0, k = 1, \ldots n \}$ is the closed G{\aa}rding cone. $\overline{\Gamma}$ is assumed to satisfy $\overline{\Gamma} + \cP \subset \overline{\Gamma}$, or equivalently $\cP \subset \overline{\Gamma}$. Recall that by normalizing $\pol(A)$ (which is $I$-hyperbolic) to have $\pol(I) = 1$, one has
	\begin{equation}\label{garding1}
	\pol(A + tI) = \prod_{k=1}^m \left( \lambda_k^{\pol}(A) + t \right) \ \ \text{and} \ \ \pol(A) = \prod_{k=1}^m \lambda_k^{\pol}(A).
	\end{equation}
	Since $h \geq 0$ on $\cN$ and since $ \lambda_k^{\pol}(A) \geq 0$ for each $k$ defines $\overline{\Gamma}$, one has that
	$$
	G(r,A) = h(r) \pol(A) \geq 0 \ \text{for each} \ (r,A) \in \cG:= \cN \times \overline{\Gamma}
	$$
	and since $G(0,A) = 0$, one has the first compatibility condition 
	$$
	\inf_{\cG} G = 0 \ \text{(finite)}.
	$$
	Moreover, using the third property in \eqref{hyp_h} one easily verifies the second compatibility condition
	$$
		\partial \cG = \{ (r,A) \in \cG: G(r,A) = h(r) \pol(A) = 0 \}.
	$$
	Thus $(G, \cG)$ is a constrained case compatible gradient-free operator-subequation pair.  
	$G$ is proper elliptic and topologically tame since for each $(r,A) \in \cG$ and each $(s,P) \in \cQ = \cN \times \cP$, one has
	$$
	G(r + s, A + P) = h(r+s) \pol(A+P) \geq h(r) \pol(A+P) \geq h(r) \pol(A),
	$$
	and with  $J_0^{\prime} = (-1, I) \in \Int \, \cQ$ and $t > 0$ one has
	\begin{eqnarray*}
	G( (r,A) + t(-1, I)) &=& h(r - t)  \pol(A + tI) = h(r-t) \prod_{k=1}^m \left( \lambda_k^{\pol}(A) + t \right) \\
	& \geq & h(r) \prod_{k=1}^m \left( \lambda_k^{\pol}(A) + t \right) \geq h(r)  \prod_{k=1}^m  \lambda_k^{\pol}(A) = G(r,A).
\end{eqnarray*}
Hence the comparison Theorem \ref{thm:E0} applied to the constrained case pair $(G, \cG)$.  
\end{exe}

Obviously one could replace $\pol(A)$ and $\overline{\Gamma}$ by any of its factors $\lambda_k^{\pol}(A)$ and the branch $\Lambda_k^{\pol} := \{ A \in \Symn: \ \lambda_k^{\pol}(A) \geq 0 \}$. Moreover, the same holds for $G(r,A) = h(r) H(A)$ if $h$ is above and $H(A)$ is the canonical operator (determined by $J_0 \in \Int \, \cP$) for $\cH \subset \Symn$ a $\cP$-invariant pure second order subequation.

\begin{exe}[The hypebolic affine shere equation]\label{exe:cheng_yau} The partial differential equation 
	\begin{equation}\label{cheng_yau1}
	{\rm det}(D^2 u) = \left( \frac{L}{u} \right)^{n+2}, \ \ L \leq 0
	\end{equation} 
	arises in the study of {\em hyperbolic affine spheres} with mean curvature $L$ where $u < 0$ is convex and vanishes on the boundary of $\Omega \subset \R^n$ convex (see Cheng-Yau \cite{CY86} and the references therein). The equation \eqref{cheng_yau1} is covered by Example \ref{exe:gf3} if one takes $\pol(A) = {\rm det}(A)$ and $h(r) = (-r)^{n+2}$ and $-L \geq 0$ corresponds to the admissible levels. 
	
\end{exe}

\begin{rem}\label{rem:generalized_equations}
	Products like $G(r,A) = -r \, {\rm det}(A)$ in Example \ref{exe:gf3} (where $\pol(A) = {\rm det}(A)$ and $h(r) = -r$) are good examples which are admissible for the Correspondence Principle (Theorem \ref{cor:AVSolns}) in the constrained case. However, sums like
	\begin{equation}\label{malefico}
	G(r,A) := {\rm det}(A) - r
	\end{equation}
	are bad examples. For this example,
	$$
	G(r+s,A+P) \geq G(r,A) \ \text{for all}\ (r,A) \in \R \times \cP, (s,P) \in \cN \times \cP, 
	$$
	so that, with domain $\cG = \R \times \cP$, the pair $(G, \cG)$ is a $\cQ = \cN \times \cP$- monotone gradient-free pair. However with this maximal domain $\R \times \cP$ of $\cQ$-monotonicity of the operator $G$
\begin{equation}\label{not_cp1}
		\inf_{\cG} G = - \infty \ \text{(is not finite)}
	\end{equation}
	and hence the pair fails to satisfy the first condition in \eqref{COSP_GF} for compatibility. If one cuts down the domain to $\cG:= (-\infty, 0] \times \cP$ so that the inf in \eqref{not_cp1} is zero (finite), then the jet subset
	 \begin{equation}\label{not_cp2}
	 \F \equiv \{ (r,A) \in (-\infty, 0] \times \cP:  \ {\rm det}(A) - r \geq 0 \}
	 \end{equation}
	 has boundary $\partial \F$ including $(-\infty,0] \times \{ 0\}$, so that all negative $C^2$ affine functions are $\F$-harmonic but the operator $G$ is not zero. Since $\partial \F$ is much larger than the zero set of the operator $\{ (r,A) \in \cG: \ G(r,A) = 0 \}$, the second condition in \eqref{COSP_GF} for compatibility fails.
	 
	 The problem for these examples is that the subharmonics of $- \wt{\cG}$ do not corespond to $\cG$-admissible supersolutions of the equation $\cH$ defined by $G(r, A) = 0$. However, for such examples, one can make use of the notion of a {\em generalized equation} in which one looks for another subsequation constraint set $\cE \subset \R \times \Symn$ (different form $\cG$) such that
	 $$
	 	\cH = \cG \cap (- \wt{\cE}).
	 $$
	 We will not pursue this further here, but refer to
	 \cite{HL19} for details and where the pure second order case is discussed at length.
	\end{rem}

In order to treat situations with gradient dependence, as is commonly known, a  Lipschitz condition in $p$ is helpful. These ideas will be explored next, The next three subsections treat unconstrained compatible pairs $(F, \J^2)$, which also indicate how many classical results can be recovered by our monotoncity method in the presence of a suitable monotonicity cone $\cM$ for the pair.

\subsection{Degenerate elliptic operators with strict monotonicity in $r$}\label{subsec:strict_r} 
Our next result concerns the unconstrained case of a well known example class of operators.  It also shows how the $\cM(\gamma, \R^n, \cP)$ cones with $\gamma \geq  0$ arise naturally in an important example class with gradient dependence. 

\begin{thm}\label{thm:CP_DECr}
	Suppose that $F : \J^2 \to \R$ is continuous and satisfies the following structural condition: there exist $\alpha > 0$ and $\beta \geq 0$ for each $(r,p,A) \in \J^2$ and each $(s,q,P) \in \cN \times \R^n \times \cP$ one has
	\begin{equation}\label{DECr1}
	F(r+s, p+q, A+P) - F(r,p,A) \geq - \alpha s - \beta |q|. 
	\end{equation}
	Then $(F, \J^2)$ is an unconstrained compatible proper ellipitc operator-subequation pair which is $\cM$-monotone for the monotonicity cone subequation
	\begin{equation}\label{DECr2}
	\cM = \cM(\gamma, \R^n, \cP) := \{ (s,q,P) \in \J^2: \ \ s \leq -\gamma |q|, \ q \in \R^n, \ P \in \cP\}
	\end{equation}
	if $\gamma:= \beta / \alpha$. Consequently, for each admissible level $c \in F(\J^2)$, the set
	\begin{equation}\label{DECr3}
		\F_c := \{ (r,p,A) \in \J^2: F(r.p.A) \geq c \}
	\end{equation}
	is $\cM(\gamma, \R^n, \cP)$ monotone. Moreover, the operator $F$ is topologically tame and hence for every bounded domain $\Omega$ one has the comparison principle: 
	\begin{equation}\label{DECr4}
	\mbox{$u \leq w$ on $\partial \Omega \ \ \Rightarrow \ \  u \leq w$ on $\Omega$}
	\end{equation}
	for $u \in \USC(\overline{\Omega})$ and $w \in \LSC(\overline{\Omega})$ which are respectively $\F_c$-subharmonic and $\F_c$-superharmonic in $\Omega$, or equivalently if $u,w$ are viscosity subsolutions and supersolutions to $F(u,Du,D^2u) = c$ on $\Omega$.
\end{thm}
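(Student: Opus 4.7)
The plan is to verify in sequence that (i) the pair $(F,\J^2)$ is a compatible proper elliptic unconstrained pair, (ii) the indicated cone $\cM = \cM(\gamma,\R^n,\cP)$ is a monotonicity cone subequation for the pair, (iii) $F$ is topologically tame, and then (iv) invoke Theorem \ref{thm:comparison} together with the Correspondence Principle Theorem \ref{cor:AVSolns}. Nothing here should be deep; the only algebraic observation is the calibration $\gamma=\beta/\alpha$.

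First I would dispatch compatibility and ellipticity. In the unconstrained case Definition \ref{defn:compatible_pair} requires only $F\in C(\J^2)$, which is the hypothesis. For proper ellipticity (Definition \ref{defn:PE_pairs}), apply \eqref{DECr1} with $(s,q,P) = (s,0,P)\in \cN\times\{0\}\times\cP$: the right-hand side is $-\alpha s\ge 0$, so $(F,\J^2)$ is $\cM_0$-monotone. For the full $\cM(\gamma,\R^n,\cP)$-monotonicity, take $(s,q,P)\in \cM(\gamma,\R^n,\cP)$, so that $P\ge 0$ and $s\le -\gamma|q|\le 0$; then $(s,q,P)\in \cN\times\R^n\times\cP$ and \eqref{DECr1} gives
\[
F(r+s,p+q,A+P) - F(r,p,A) \ \ge\ -\alpha s-\beta|q| \ \ge\ \alpha\gamma|q| - \beta|q| \ =\ 0,
\]
the final equality being the choice $\gamma = \beta/\alpha$. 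That $\cM(\gamma,\R^n,\cP)$ is itself a monotonicity cone subequation (closed convex cone containing $\cM_0$ with non-empty interior) is immediate from Definition \ref{defn:cone_zoo} and Remark \ref{rem:cone_zoo_compress}, noting that $(-1,0,I)\in \Int\,\cM$ since $-1<0=-\gamma\cdot 0$ and $I\in \Int\,\cP$. By Lemma \ref{lem:upper_levels}, each upper level set $\F_c$ is therefore $\cM(\gamma,\R^n,\cP)$-monotone, and by Proposition \ref{prop:subequation_cones} each $\F_c$ is a subequation constraint set (if non-empty and proper).

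Next I would establish topological tameness via criterion 3) of Theorem \ref{thm:tameness}. Choose $J_0 = (-1,0,I)\in \Int\,\cM$. For any $J = (r,p,A)\in \J^2$ and any $t>0$, the jet $tJ_0 = (-t,0,tI)$ lies in $\cN\times\{0\}\times\cP$, so \eqref{DECr1} yields
\[
F(J + tJ_0) - F(J) \ \ge\ -\alpha(-t) - \beta\cdot 0 \ =\ \alpha t \ >\ 0,
\]
which is the strict monotonicity required by 3). Hence $F$ is topologically tame, and by Theorem \ref{thm:CanOp_ComPair}-style reasoning (or directly from the equivalences in Theorem \ref{thm:tameness}) each admissible level $c\in F(\J^2)$ yields a subequation $\F_c$ with $\partial\F_c = \{J\in\F_c: F(J)=c\}$.

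Finally, I would invoke comparison. Since $\F_c$ is $\cM(\gamma,\R^n,\cP)$-monotone and this cone is of the form \eqref{MGDP_Cone} (case $\cD=\R^n$, corresponding to $R=+\infty$), Theorem \ref{thm:comparison} case 2) gives comparison for $\F_c$ on every bounded domain $\Omega\subset\subset\R^n$; that is, $u+v\le 0$ on $\partial\Omega$ implies $u+v\le 0$ on $\Omega$ for $u\in \USC(\overline\Omega)\cap\F_c(\Omega)$ and $v\in\USC(\overline\Omega)\cap\widetilde{\F_c}(\Omega)$, equivalently \eqref{DECr4} for $\F_c$-sub/superharmonics. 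By the Correspondence Principle (Theorem \ref{cor:AVSolns}), applicable because $(F,\J^2)$ is a compatible proper elliptic pair, $\cM$-monotone for the convex cone subequation $\cM$, and topologically tame, this is equivalent to comparison for viscosity sub- and supersolutions of $F(u,Du,D^2u)=c$, as claimed. There is no substantive obstacle; the only point requiring care is aligning the structural inequality \eqref{DECr1} with the cone definition of $\cM(\gamma,\R^n,\cP)$ so that $\gamma=\beta/\alpha$ produces the correct cancellation.
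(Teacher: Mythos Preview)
Your proof is correct and follows essentially the same route as the paper's: verify compatibility in the unconstrained case, derive $\cM(\gamma,\R^n,\cP)$-monotonicity from \eqref{DECr1} via the cancellation $-\alpha s-\beta|q|\ge 0$ when $s\le -(\beta/\alpha)|q|$, invoke Lemma \ref{lem:upper_levels} for the level sets, apply Theorem \ref{thm:comparison} for comparison, and use Theorem \ref{cor:AVSolns} for the equivalence with viscosity sub/supersolutions. The only cosmetic difference is that you verify topological tameness via criterion 3) of Theorem \ref{thm:tameness} with the specific choice $J_0=(-1,0,I)$, whereas the paper uses the equivalent criterion 2) for general $J_0\in\Int\,\cM$; both amount to the same strict inequality coming from $\alpha>0$.
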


\begin{proof} By the definition \eqref{case1} and the hypothesis $F \in C(\J^2)$, with $\F := \J^2$ one has that $(F,\F)$ is an unconstrained compatible pair. $\F = J^2$ is trivially $\cM$-monotone for every $\cM$. Using the structural condition \eqref{DECr1}, one has for each  $(r,p,A) \in \J^2$ and each $(s,q,P) \in \cM(\gamma, \R^n, \cP) \subset \cN \times \R^n \times \cP$
\begin{equation}\label{DECr5}
  F(r + s, p + q, A + P)  \geq  F(r,p, A) - \alpha s - \beta |q|  \geq F(r,p,A),
  \end{equation}
  where $- \alpha s - \beta |q| \geq 0$ since $s \leq - \frac{\beta}{\alpha} |q|$. Hence $F$ is $\cM(\gamma, \R^n, \cP)$-monotone and since $\cM(\gamma, \R^n, \cP) \supset \cM_0 = \cN \times \{0\} \times \cP$, the pair is proper elliptic in accordance with Definition \ref{defn:PE_pairs}. By Lemma \ref{lem:upper_levels}, every upper level set $\F_c$ is $\cM(\gamma, \R^n, \cP)$-monotone and the comparison \eqref{DECr4} on each bounded domain $\Omega$ for $\F_c$-subharmonic, superharmonic pairs follows from Theorem \ref{thm:comparison} by the $\cM(\gamma, \R^n, \cP)$-monotonicity of each $\F_c$. 
  
  Finally, $F$ is topologically tame by part (2) of Theorem \ref{thm:tameness} since $F(J + J_0 ) > F(J)$ for each $J \in \J^2$ and $J_0 \in \Int \, \cM(\gamma, \R^n, \cP)$ where $- \alpha s - \beta |q| > 0$ in \eqref{DECr5}. Hence the comparison principle \eqref{DECr4} for (unconstrained) viscosity subsolution, supersolution pairs $(u,v)$ of $F(u,Du,D^2u) = c$ is equivalent to the comparison for $\F_c$-subharmonic, superharmonic pairs by Theorem \ref{cor:AVSolns}.  
 	\end{proof} 
 
 Concerning Theorem \ref{thm:CP_DECr}, a few examples and remarks are worth noting. 
 
 \begin{exe}\label{exe:GFCase} The case $\beta = 0$ of Theorem \ref{thm:CP_DECr} yields comparison for proper elliptic gradient-free operators $F(r,p,A) = G(r,A)$ with strict monotonicity in the $r$ variable; that is, $G \in C(\R \times \cS(N))$ such that for some $\alpha > 0$ one has 
 	\begin{equation}\label{exe:GFG}
 	G(r + s, A + P) - G(r,A) \geq -\alpha s, \ \ \forall \, (r,A) \in \R \times \cS(N), s \leq 0, P \geq 0.
 	\end{equation}
With respect to the unconstrained case of Theorem \ref{thm:E0}, the strict monotonicity in $r$ ensures the topological tameness, which was a hypothesis in the previous theorem. 
 	
 	The case $\beta > 0$ of Theorem \ref{thm:CP_DECr} yields comparison for operators of the form 
 \begin{equation}\label{LipGrad}
 	F(r,p,A) = G(r,A) + \langle b, p \rangle
 	\end{equation}
 	with $G$ as above and $b \in \R^n \setminus \{0\}$. One can then choose $\beta = |b|$. Notice that if the strict monotonicity constant $\alpha = 1$ in \eqref{exe:GFG}, then $\gamma = \beta$; that is, the monotonicity cone parameter equals $|b|$.
 \end{exe}

\begin{rem}\label{rem:Jensen1} Theorem \ref{thm:CP_DECr} is closely related to one of the two cases in the grounbreaking paper of Jensen \cite{Je88} on the maximum principle for viscosity solutions to constant coefficient equations. In the situation of degenerate ellipticity and strict monotonicity in $r$, the first case of Jensen's Theorem 3.1 in \cite{Je88} gives a maximum principle (which then implies the comparison principle) for a subsolution/supersolution pair with more regularity that we require. He assumes that the pair belongs to $C(\overline{\Omega}) \cap W^{1, \infty}(\Omega)$. On the other hand,  Jensen does not require the Lipschitz in $p$ condition which we need in order to have a montonicity cone with non empty interior. The second case of Jensen's theorem (for uniformly elliptic operators which are   Lipschitz in $p$), will be discussed in the next subsection (see Theorem \ref{thm:CP_UE}).
	\end{rem}

\subsection{Proper operators with some degree of strict ellipticity}\label{subsec:strict_ell}

In this subsection, we examine classes of proper operators with gradient dependence in which the weak monotonicity assumption of degenerate ellipticity is strengthened to include some measure of strict monotonicity in the Hessian variable but there may be no strict monotonicity in the $r$ variable as in the previous subsection.   Our first result is stated in the unconstrained case, as was done in Theorem \ref{thm:CP_DECr}. It makes use of $\cM(\cN, R)$-monotoncity with finite $R$.  
 
 \begin{thm}\label{thm:CP_SE}
 	Suppose that $F : \J^2 \to \R$ is continuous 
 	 and is proper elliptic; that is, it safisfies for each $(r,p,A) \in \J^2$ and each $(s,P) \in \cN \times \cP$:
 	 	\begin{equation}\label{SE1}
 	F(r+s, p, A+P) \geq F(r,p,A).
 	\end{equation}
 	In addition, suppose that $F$ satisfies the following structural condition: there exist $\alpha, \beta > 0$ such that for each $(r,p,A) \in \J^2$ and each $\mu \geq 0,q \in \R^n$ one has
 	\begin{equation}\label{SE2}
 	F(r, p+q, A+ \mu I) - F(r,p,A) \geq  \alpha \mu - \beta |q|.
 	\end{equation}  
 	Then $(F, \J^2)$ is an unconstrained proper elliptic compatible operator-subequation pair which is $\cM$-monotone for the monotonicity cone subequation
 	\begin{equation}\label{SE3}
 	\cM = \cM(\cN, R):= \left\{ (s,q,P) \in \J^2: \ s \leq 0, \  q \in \R^n, \text{and} \ P \geq \frac{|q|}{R}I     \right\}
 	\end{equation}
 	if $R \leq  \alpha / \beta$. Consequently, if $R \leq  \alpha / \beta$ then for each $c \in F(\J^2)$ the set
 	\begin{equation}\label{SE4}
 	\F_c := \{ (r,p,A) \in \J^2: F(r.p.A) \geq c \}
 	\end{equation}
 	is $\cM(\cN, R)$-monotone. Moreover, the operator $F$ is topologically tame and hence for every bounded domain $\Omega$ which is contained in a translate of $B_R(0)$, one has the comparison principle: 
 	\begin{equation}\label{SE5}
 	\mbox{$u \leq w$ on $\partial \Omega \ \ \Rightarrow \ \  u \leq w$ on $\Omega$}
 	\end{equation}
 	for $u \in \USC(\overline{\Omega})$ and $w \in \LSC(\overline{\Omega})$ which are respectively $\F_c$-subharmonic and $\F_c$-superharmonic in $\Omega$, or equivalently if $u,w$ are viscosity subsolutions and supersolutions to $F(u,Du,D^2u) = c$ on $\Omega$.
\end{thm}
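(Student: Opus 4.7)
The unconstrained compatibility is automatic: $F \in C(\J^2)$ and $\F = \J^2$ satisfy Definition~\ref{defn:compatible_pair}, and proper ellipticity of the pair follows from the hypothesis \eqref{SE1} together with the trivial $\cM$-monotonicity of $\J^2$.  The substance of the theorem is thus to verify the $\cM(\cN,R)$-monotonicity of $F$ itself under $R \leq \alpha/\beta$, together with topological tameness; comparison will then follow by quoting Theorems~\ref{thm:comparison} and~\ref{cor:AVSolns}.

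For the monotonicity, fix $(r,p,A) \in \J^2$ and $(s,q,P) \in \cM(\cN,R)$, so that $s \leq 0$ and $P \geq \tfrac{|q|}{R} I$. Decompose $P = \tfrac{|q|}{R} I + P'$ with $P' \geq 0$. The plan is to interpolate in two steps: first apply the structural condition \eqref{SE2} with $\mu = |q|/R$ to move from $(r,p,A)$ to $(r,p+q,A+\tfrac{|q|}{R}I)$, obtaining
\[
F\bigl(r,p+q,A+\tfrac{|q|}{R}I\bigr) - F(r,p,A) \ \geq\ \alpha \tfrac{|q|}{R} - \beta|q| \ =\ |q|\bigl(\tfrac{\alpha}{R} - \beta\bigr) \ \geq\ 0,
\]
where the last inequality uses precisely the assumption $R \leq \alpha/\beta$. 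Then apply proper ellipticity \eqref{SE1} (with increment $(s,P') \in \cN \times \cP$) to move from $(r,p+q,A+\tfrac{|q|}{R}I)$ to $(r+s,p+q,A+P)$, obtaining $F(r+s,p+q,A+P) \geq F(r,p+q,A+\tfrac{|q|}{R}I) \geq F(r,p,A)$. This is the desired $\cM(\cN,R)$-monotonicity of $(F,\J^2)$, and by Lemma~\ref{lem:upper_levels} each $\F_c$ inherits $\cM(\cN,R)$-monotonicity.

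For topological tameness, I would verify condition 3) of Theorem~\ref{thm:tameness} by exhibiting a single $J_0 \in \Int \, \cM(\cN,R)$ along which $F$ strictly increases. Choose $J_0 = (-1,0,I)$, which lies in $\Int \, \cM(\cN,R)$ since $-1 < 0$ and $I > 0 = \tfrac{|0|}{R}I$. For $t>0$, combining proper ellipticity with the structural condition applied with $q=0$, $\mu = t$ yields
\[
F(J + tJ_0) - F(J) \ =\ F(r-t,p,A+tI) - F(r,p,A) \ \geq\ F(r,p,A+tI) - F(r,p,A) \ \geq\ \alpha t \ >\ 0,
\]
so $F$ is topologically tame.

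Finally, since each $\F_c$ is $\cM(\cN,R)$-monotone with $R$ finite and $\cD = \R^n$, Theorem~\ref{thm:comparison} (case 1, specialized to $\cD = \R^n$ so that the truncated cone $\cD \cap B_R(0)$ is just $B_R(0)$) delivers potential-theoretic comparison for $\F_c$ on every domain $\Omega$ contained in a translate of $B_R(0)$. The compatible proper elliptic pair $(F,\J^2)$ is $\cM$-monotone for the convex cone subequation $\cM = \cM(\cN,R)$ and $F$ is topologically tame, so Theorem~\ref{cor:AVSolns} transfers this comparison to viscosity sub/supersolutions of $F(u,Du,D^2u) = c$, completing the proof. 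The only mildly delicate point is the clean two-step decomposition used to establish $\cM(\cN,R)$-monotonicity; otherwise the argument is a direct assembly of the machinery already developed.
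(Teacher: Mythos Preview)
Your proof is correct and follows essentially the same approach as the paper's: both arguments decompose $P = \tfrac{|q|}{R}I + P'$ and combine \eqref{SE1} with \eqref{SE2} in two steps to obtain $\cM(\cN,R)$-monotonicity, then invoke Lemma~\ref{lem:upper_levels}, Theorem~\ref{thm:comparison}, and Theorem~\ref{cor:AVSolns}. The only cosmetic differences are that the paper applies \eqref{SE1} before \eqref{SE2} (the reverse of your order) and cites condition~2) rather than condition~3) of Theorem~\ref{thm:tameness} for topological tameness; neither affects the substance.
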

 	
\begin{proof} One follows the same argument of the proof of Theorem \ref{thm:CP_DECr}. It is clear that $(F, \J^2)$ is an unconstrained proper elliptic operator-subequation pair and that $\J^2$ is trivially $\cM$-monotone. For the $\cM$-monotonicity of $F$, if $(r,p,A) \in \J^2$ and $(s,q,P) \in \cM(\cN, R)$ then by \eqref{SE1} - \eqref{SE3} one has
 	\begin{eqnarray*}
 		F(r + s, p + q, A + P) & \geq &   F\left(r, p + q, A + \frac{|q|}{R}I\right) \\
 		& \geq & F(r, p, A) + |q| \left(\frac{\alpha}{R} - \beta \right) \ge 0,
 	\end{eqnarray*}
 where the condition $R \leq  \alpha / \beta$ is needed. Again, by Lemma \ref{lem:upper_levels}, every upper level set $\F_c$ is $\cM(\cN, R)$-monotone and the comparison \eqref{SE5} on each domain $\Omega$ contained in  a translate of $B_R(0)$ for $\F_c$-subharmonic, superharmonic pairs follows from Theorem \ref{thm:comparison} by the $\cM(\cN, R)$-monotonicity of each $\F_c$. The topological tameness of $F$ again follows from  part (2) of Theorem \ref{thm:tameness} by using the strict monotonicity in \eqref{SE2}. Hence, by Theorem \ref{cor:AVSolns}, the comparison \eqref{SE5} also holds for viscosity subsolution, supersolution pairs $(u,w)$ of $F(u,Du,D^2u) = c$ for each $c \in \R$ and each  $\Omega$ contained in  a translate of $B_R(0)$.
\end{proof}

Before proceeding, a remark on the terminology of notions of ellipticity.

\begin{rem}\label{rem:NTD} The monotonicity property in $A$ of \eqref{SE2}; that is, with $\alpha > 0$
	\begin{equation}\label{SEI}
	F(r, p, A + \mu I) -  F(r, p, A) \ge \alpha \mu  \ \ \ \text{for each} \ \mu \geq 0,
	\end{equation}
	might well be called {\em strict (uniform) ellipticity in the direction $I \in \Symn$.} In Bardi-Mannucci \cite{BM06} this partial (strict) uniform ellipticity was called {\em non-totally degenerate ellipticity} since in the quasi-linear case it corresponds to what Bony called non-totally degenerate in \cite{Bo69}. One should also note that in the language of \cite{HL19} the condition \eqref{SEI} would be called {\em (linear) tameness in $A$ of $F$ on $\J^2$}. 
\end{rem}

It is worth noting the limit case of $\beta = 0$ in Theorem \ref{thm:CP_SE}, which gives a gradient-free situation with $\cM(\cN, \cP)$-monotonicity and comparison on arbritary bounded domains.

\begin{thm}\label{thm:CP_SEGF}
	Suppose that $F : \J^2 \to \R$ is continuous 
	and is proper elliptic; that is, it safisfies for each $(r,p,A) \in \J^2$ and each $(s,P) \in \cN \times \cP$:
	\begin{equation}\label{SEGF1}
	F(r+s, p, A+P) \geq F(r,p,A).
	\end{equation}
	In addition, suppose that $F$ satisfies the following structural condition: there exist $\alpha > 0$ such that for each $(r,p,A) \in \J^2$ and each $\mu \geq 0,q \in \R^n$ one has
	\begin{equation}\label{SEGF2}
	F(r, p+q, A+ \mu I) - F(r,p,A) \geq  \alpha \mu.
	\end{equation}  
	Then $(F, \J^2)$ is an unconstrained proper elliptic compatible operator-subequation pair which is gradient-free and $\cM$-monotone for the monotonicity cone subequation
	\begin{equation}\label{SEGF3}
	\cM = \cM(\cN, \cP):= \left\{ (s,q,P) \in \J^2: \ s \leq 0, \  q \in \R^n, \text{and} \ P \geq 0     \right\}.
	\end{equation}
	Consequently, for each $c \in F(\J^2)$ the set
	\begin{equation}\label{SEGF4}
	\F_c := \{ (r,p,A) \in \J^2: F(r.p.A) \geq c \}
	\end{equation}
	is $\cM(\cN, \cP)$-monotone. Moreover, the operator $F$ is topologically tame and hence for every bounded domain $\Omega$, one has the comparison principle: 
	\begin{equation}\label{SEGF5}
	\mbox{$u \leq w$ on $\partial \Omega \ \ \Rightarrow \ \  u \leq w$ on $\Omega$}
	\end{equation}
	for $u \in \USC(\overline{\Omega})$ and $w \in \LSC(\overline{\Omega})$ which are respectively $\F_c$-subharmonic and $\F_c$-superharmonic in $\Omega$, or equivalently if $u,w$ are viscosity subsolutions and supersolutions to $F(u,Du,D^2u) = c$ on $\Omega$.
\end{thm}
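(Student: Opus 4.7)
The plan is to follow the same template used for Theorem \ref{thm:CP_SE}, but in the limit $\beta = 0$, so that the monotonicity cone $\cM(\cN, R)$ with finite $R$ is replaced by the larger cone $\cM(\cN, \cP) = \cN \times \R^n \times \cP$, which admits strict approximators on every bounded domain. The key observation to unlock this is that \eqref{SEGF2} (in contrast to \eqref{SE2}) forces $F$ to be gradient-free: setting $\mu = 0$ in \eqref{SEGF2} gives $F(r, p+q, A) \geq F(r,p,A)$ for every $q \in \R^n$, and replacing $q$ by $-q$ yields the reverse inequality, so $F(r,p,A) = G(r,A)$ for some continuous $G$ on $\R \times \Symn$.

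With this reduction in hand, I would first verify that $(F, \J^2)$ is an unconstrained compatible proper elliptic pair which is $\cM(\cN, \cP)$-monotone. Compatibility of $(F,\J^2)$ is automatic by Definition \ref{defn:compatible_pair} since $F \in C(\J^2)$. For the monotonicity of the pair, $\F = \J^2$ is trivially $\cM(\cN, \cP)$-invariant, and given $(r,p,A) \in \J^2$ and $(s,q,P) \in \cM(\cN, \cP)$ (so $s \leq 0$, $q \in \R^n$ arbitrary, $P \geq 0$), the computation
\[
F(r+s, p+q, A+P) = G(r+s, A+P) \geq G(r,A) = F(r,p,A)
\]
follows directly from \eqref{SEGF1}. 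Since $\cM(\cN, \cP) \supset \cM_0$, this is in particular proper ellipticity in the sense of Definition \ref{defn:PE_pairs}.

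Next I would establish topological tameness using criterion 2) of Theorem \ref{thm:tameness}. Pick any $J_0 = (s_0, q_0, P_0) \in \Int \, \cM(\cN, \cP)$, so that $s_0 < 0$ and $P_0 > 0$; choose $\mu > 0$ with $P_0 \geq \mu I$. Chaining \eqref{SEGF1} (in $A$), then \eqref{SEGF2} applied with this $\mu$ and $q = q_0$, then \eqref{SEGF1} (in $r$), we obtain for each $J = (r,p,A) \in \J^2$:
\[
F(J + J_0) \geq F(r+s_0, p+q_0, A+\mu I) \geq F(r+s_0, p, A) + \alpha \mu \geq F(r,p,A) + \alpha \mu > F(J),
\]
which is the strict $\cM(\cN, \cP)$-monotonicity required by Theorem \ref{thm:tameness}(2), giving topological tameness.

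Finally, to conclude, I would apply Lemma \ref{lem:upper_levels} to deduce that for every admissible $c \in F(\J^2)$ the upper level set $\F_c$ is $\cM(\cN, \cP)$-monotone, and then invoke Theorem \ref{thm:comparison} in case 2) (the cone $\cM(\gamma, \cD, \cP)$ with $\gamma = 0$, $\cD = \R^n$ coincides with $\cM(\cN, \cP)$) to obtain comparison for $\F_c$-sub/superharmonic pairs on \emph{arbitrary} bounded domains $\Omega$. The equivalence with comparison for viscosity sub/supersolutions of $F(u,Du,D^2u) = c$ then follows from the Correspondence Principle (Theorem \ref{cor:AVSolns}), whose hypotheses are exactly the compatible proper ellipticity, the $\cM$-monotonicity for a convex cone subequation $\cM$, and the topological tameness verified above. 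There is no serious obstacle here beyond the initial observation that \eqref{SEGF2} collapses gradient dependence; once that is noted, the result is essentially a corollary of the general framework already in place, and in fact reduces to (or is a twin of) the pure gradient-free comparison Theorem \ref{thm:E0}.
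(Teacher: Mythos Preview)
Your proof is correct and follows essentially the same template as the paper, which simply says the argument is almost identical to that of Theorem \ref{thm:CP_SE} and then verifies $\cM(\cN,\cP)$-monotonicity via \eqref{SEGF1} together with \eqref{SEGF2} at $\mu=0$. You go one small step further by observing that \eqref{SEGF2} with $\mu=0$ and $q$ replaced by $-q$ actually forces $F$ to be independent of $p$ (so $F(r,p,A)=G(r,A)$), whereas the paper only records the one-sided inequality $F(r,p+q,A)\geq F(r,p,A)$; both routes immediately give the needed $\cM(\cN,\cP)$-monotonicity, and the remaining steps (Lemma \ref{lem:upper_levels}, topological tameness via Theorem \ref{thm:tameness}(2), Theorem \ref{thm:comparison}, and the Correspondence Principle) are the same.
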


\begin{proof}
	The proof is almost identical to that of Theorem \ref{thm:CP_SE}. We limit ourselves to showing that the pair $(F, \J^2)$ is gradient-free which is equivalent to the $\cM(\cN, \cP)$-monotonicity of $F$ on all of $\J^2$. By combining \eqref{SEGF1} with \eqref{SEGF2} with $\mu = 0$, for each $(r,p,A) \in \J^2$ and each $(s,q,P) \in \cM(\cN, \cP)$ one has 
	$$
	F(r + s, p + q, A + P) \geq F(r, p + q, A) = F(r,p +q, A + 0I) \geq F(r,p,A) + 0,  
	$$
	as needed.
\end{proof}

Next we give a few examples covered by Theorems \ref{thm:CP_SE} and \ref{thm:CP_SEGF}.

\begin{exe}\label{exe:SE} The case $\beta = 0$ of Theorem \ref{thm:CP_SEGF} yields comparison on arbitrary bounded domains for proper elliptic gradient-free operators $F(r,p,A) = G(r,A)$ which are strictly ellipitc in the direction $I \in \cS(N)$; that is, $G \in C(\R \times \cS(N))$ such that  
	\begin{equation}\label{exe:SEG1}
	G(r + s, A + P) \geq G(r,A), \ \ \forall \, (r,A) \in \R \times \cS(N), s \leq 0, P \geq 0.
	\end{equation}
	and for some $\alpha > 0$ 
	\begin{equation}\label{exe:SEG2}
	G(r, A + \mu I) - G(r,A) \geq  \alpha \mu, \ \ \forall \, (r,A) \in \R \times \cS(N), \mu \geq 0.
	\end{equation}
	With respect to the unconstrained case of Theorem \ref{thm:E0}, the strict monotonicity in \eqref{exe:SEG2} ensures the topological tameness, which was a hypothesis in the previous theorem. 
	
	The case $\beta > 0$ of Theorem \ref{thm:CP_SE} yields comparison on domains contained in translates of $B_R(0)$ with $R \leq \frac{\alpha}{\beta}$ for operators of the form 
	\begin{equation}\label{SE_LipGrad}
	F(r,p,A) = G(r,A) - \langle b, p \rangle
	\end{equation}
	with $G$ as above and $b \in \R^n \setminus \{0\}$. One can then choose $\beta = |b|$. 
\end{exe}

 The next remark concerns the result obtained in Theorem \ref{thm:CP_SE}.
 
 \begin{rem}\label{rem:thm:CP_SE}
 The restriction that $R \leq \frac{\alpha}{\beta}$  can be viewed in two ways. First, one can say that comparison holds on all bounded domains $\Omega$ provided that the Lipschitz constant $\beta$ is small relative to the diameter $2R$ of $\Omega$ and the partial ellipticity constant $\alpha$; that is, if
 	\begin{equation}
 		\beta \leq \frac{\alpha}{R}.  
 	\end{equation} 
 	On the other hand, for fixed Lipschitz constant $\beta$ and partial ellipticity constant $\alpha$, comparison is ensured only for domains with diameter $2R$ satisfying 
 	\begin{equation}
 		R \leq \frac{\alpha}{\beta}.
 		\end{equation}
 \end{rem}
 
  	This remark raises two interesting questions. The first question is what minimal further strengthening of the notion of ellipticity gives comparison on arbitrarily large domains independent of the Lipschitz constant $\beta$? One classical answer will be given below in the constrained context by stengthening the notion of ellipticity. The second question is whether the monotonicity cone $\cM(\cN,\cD,R)$ is the maximal monotonicity cone $\cM_{\F}$ for $\F$ defined by \eqref{SE4}? Perhaps, at least in some important special cases, additional structure in $F$ can lead to a larger monotonicity cone with strict approximators (and hence comparison) on arbitrary domains with large Lipschitz constant. This second question was considered at the potential theoretic level in section \ref{sec:improvemnts}. 

The following strengthening of the ellipticity recovers Jensen's uniformly elliptic result\footnote{ We will use the term {\em strictly elliptic} since we are asking only for a one-sided ellipticity bound, reserving {\em uniformly elliptic} to a two-sided bound.} in \cite{Je88} by using our monotonicity method and adds an unconstrained potential theoretic version of the result at all admissible levels $c \in F(\J^2)$. 

\begin{thm}\label{thm:CP_UE}
	
	Suppose that $F : \J^2 \to \R$ is continuous and that $F$ is proper and strictly elliptic; that is, there exists $\lambda > 0$ such that for each $(r, p, A) \in \J^2$ one has
	\begin{equation}\label{ue1}
	F(r + s, p, A + P) - F(r,p,A) \geq \lambda \, {\rm tr} \, P \ \ \text{for each}  \ s \in \cN \ \text{and each} \ P \in \cP.
	\end{equation}
	In addition, suppose that $F$ satisfies the following structural condition: there exists $\beta > 0$ such that for each $(r,p,A) \in \J^2$ one has
		\begin{equation}\label{ue2}
	F(r, p + q, A) -  F(r, p, A) \ge  - \beta|q| \ \ \ \text{for each}  \ q \in \R^n.
	\end{equation}
	Then $(F, \J^2)$ is an unconstrained proper elliptic compatible operator-subequation pair which is $\cM$-monotone for the monotonicity cone subequation
		\begin{equation}\label{ue3}
	\cM = \cM_{\lambda, \beta}(\cN, \cD, \cP) := \left\{ (s,q,P) \in \cN \times \cD \times \cP: \ \lambda \, {\rm tr} \, P \geq \beta |q|    \right\}
	\end{equation}
	with directional cone $\cD =\R^n$. Consequently, for each $c \in F(\J^2)$ the set
	\begin{equation}\label{ue4}
	\F_c := \{ (r,p,A) \in \J^2: F(r.p.A) \geq c \}
	\end{equation}
	is $\cM_{\lambda, \beta}(\cN, \R^n, \cP)$-monotone. Moreover, the operator $F$ is topologically tame and hence for every bounded domain $\Omega$ one has the comparison principle: 
	\begin{equation}\label{ue5}
	\mbox{$u \leq w$ on $\partial \Omega \ \ \Rightarrow \ \  u \leq w$ on $\Omega$}
	\end{equation}
	for $u \in \USC(\overline{\Omega})$ and $w \in \LSC(\overline{\Omega})$ which are respectively $\F_c$-subharmonic and $\F_c$-superharmonic in $\Omega$, or equivalently if $u,w$ are viscosity subsolutions and supersolutions to $F(u,Du,D^2u) = c$ on $\Omega$.
	\end{thm}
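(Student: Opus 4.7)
The plan is to verify the hypotheses of the Correspondence Principle (Theorem \ref{cor:AVSolns}) together with the General Comparison Theorem (Theorem \ref{thm:CP_general}) applied to the cone $\cM:=\cM_{\lambda,\beta}(\cN,\R^n,\cP)$.

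First I would check that $(F,\J^2)$ is an unconstrained compatible pair (immediate from $F\in C(\J^2)$ and $\F=\J^2$) and that the pair is $\cM$-monotone. For any $(r,p,A)\in\J^2$ and $(s,q,P)\in\cM$, write
\[
F(r+s,p+q,A+P)-F(r,p,A) = \bigl[F(r+s,p+q,A+P)-F(r,p+q,A)\bigr] + \bigl[F(r,p+q,A)-F(r,p,A)\bigr].
\]
The first bracket is $\ge \lambda\,{\rm tr}\,P$ by \eqref{ue1} (applied with gradient variable $p+q$), and the second is $\ge -\beta|q|$ by \eqref{ue2}, so the sum is $\ge \lambda\,{\rm tr}\,P-\beta|q|\ge 0$ by the defining inequality of $\cM$. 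By Lemma \ref{lem:upper_levels}, every upper level set $\F_c$ inherits $\cM$-monotonicity, so each $\F_c$ is a closed $\cM$-monotone set and hence a subequation by Proposition \ref{prop:subequation_cones} (as soon as we verify $\cM$ is a monotonicity cone subequation).

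Next, $\cM$ is visibly a closed convex cone containing $\cM_0=\cN\times\{0\}\times\cP$ (since $|q|=0$ turns its defining inequality into $\lambda\,{\rm tr}\,P\ge 0$, automatic on $\cP$). It has non-empty interior: the jet $J_0:=(-1,0,I)$ satisfies $-1<0$, $I>0$, and $\lambda n>0=\beta\cdot 0$ strictly, so $J_0\in\Int\,\cM$. Thus $\cM$ is a monotonicity cone subequation in the sense of Definition \ref{defn:MCS}. Topological tameness of $F$ now follows from condition 3) of Theorem \ref{thm:tameness}: for any $J=(r,p,A)\in\J^2$ and any $t>0$, inequality \eqref{ue1} with $(s,P)=(-t,tI)$ gives
\[
F(J+tJ_0)-F(J) = F(r-t,p,A+tI)-F(r,p,A) \ge \lambda\,{\rm tr}(tI) = \lambda n t > 0.
\]

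For comparison via Theorem \ref{thm:CP_general}, the remaining point is to exhibit, for an arbitrary bounded domain $\Omega\subset\subset\R^n$, a strict approximator $\psi\in C(\overline\Omega)\cap C^2(\Omega)$ with $J_x^2\psi\in\Int\,\cM$ for every $x\in\Omega$. A simple quadratic will not suffice, because the condition $\lambda\,{\rm tr}\,D^2\psi>\beta|D\psi|$ degenerates as $|D\psi|$ grows linearly while ${\rm tr}\,D^2\psi$ stays constant. Instead I would use the radial polynomial construction of Lemma \ref{lem:strict_improvement}: fix $y\notin\overline\Omega$, set $R':=\max_{x\in\overline\Omega}|x-y|$ and $d:=\min_{x\in\overline\Omega}|x-y|>0$, and put
\[
\psi(x) := \frac{|x-y|^{m+1}}{m+1} - C.
\]
Using the radial calculus of Remark \ref{rem:radial_calculus}, one computes $D\psi(x)=|x-y|^{m-1}(x-y)$ and $D^2\psi(x)=|x-y|^{m-1}\bigl(I+(m-1)P_{x-y}\bigr)$, hence ${\rm tr}\,D^2\psi(x)=|x-y|^{m-1}(n-1+m)$ and $|D\psi(x)|=|x-y|^m$. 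The strict interior condition becomes $\lambda(n-1+m)>\beta|x-y|$, which holds uniformly on $\overline\Omega$ as soon as $m>\beta R'/\lambda +1-n$. Positive-definiteness of $D^2\psi$ on $\Omega$ is automatic since $y\notin\overline\Omega$. Finally, choosing $C>R'{}^{m+1}/(m+1)$ ensures $\psi<0$ on $\overline\Omega$, so the negativity component is strict as well, and therefore $J_x^2\psi\in\Int\,\cM$ throughout $\Omega$.

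With the strict approximator in hand, Theorem \ref{thm:CP_general} yields comparison \eqref{ue5} for $\F_c$-sub/superharmonic pairs on every bounded $\Omega$, and since $(F,\J^2)$ is compatible, proper elliptic, $\cM$-monotone with $\cM$ a convex cone subequation, and $F$ topologically tame, the Correspondence Principle (Theorem \ref{cor:AVSolns}) transfers this to viscosity sub/supersolutions of $F(u,Du,D^2u)=c$ at every admissible level. The main obstacle in the argument is the construction of the strict approximator: the Lipschitz-in-$p$ structure forces one past the quadratic regime and into the higher-degree radial polynomials used in Section \ref{sec:improvemnts}.
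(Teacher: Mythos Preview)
Your argument is correct and follows the same overall strategy as the paper: establish $\cM$-monotonicity of $(F,\J^2)$, deduce $\cM$-monotonicity of each $\F_c$ via Lemma \ref{lem:upper_levels}, check topological tameness via condition 3) of Theorem \ref{thm:tameness}, build a strict approximator for $\cM$ on arbitrary bounded $\Omega$, and then invoke Theorems \ref{thm:CP_general} and \ref{cor:AVSolns}.

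The one genuine difference is in the construction of the strict approximator. The paper uses the exponential radial function
\[
\psi(x)=\tfrac{1}{\mu}e^{\mu|x|}-m,\qquad \mu>\beta/\lambda,
\]
after translating so that $0\notin\Omega$; the radial calculus (Remark \ref{rem:radial_calculus}, Example 4) then gives $\lambda\,{\rm tr}\,D^2\psi-\beta|D\psi|=e^{\mu|x|}\bigl(\lambda(n-1)/|x|+\lambda\mu-\beta\bigr)>0$ directly. You instead borrow the radial polynomial $\psi(x)=|x-y|^{m+1}/(m+1)-C$ from Lemma \ref{lem:strict_improvement} and choose the degree $m$ large relative to ${\rm diam}\,\Omega$. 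Both constructions are valid; the exponential has the slight advantage that a single parameter $\mu>\beta/\lambda$ works independently of the size of $\Omega$ (only the additive constant $m$ depends on $\Omega$), whereas your polynomial degree must be recalibrated with $R'$. Your route has the virtue of unifying this theorem with the approximator machinery already developed in Section \ref{sec:improvemnts}.
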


\begin{proof}
	For each directional cone $\cD \subseteq \R^n$ (in the sense of Definition \ref{defn:property_D}), one clearly has that $\cM_{\lambda, \beta}(\cN, \cD, \cP)$ is a monotonicity cone subequation which contains $\cM_0 = \cN \times \{0\} \times \cP$ and that $(F, \J^2)$ is an unconstrained compatible proper ellpitic operator-subequation pair, by imitiating the argument of Theorem \ref{thm:CP_SE} where the structural conditions \eqref{ue1}-\eqref{ue2} play the same role as \eqref{SE1}-\eqref{SE2}. The conclusion that each subequation $\F_c$ is $\cM_{\lambda, \beta}(\cN, \R^n, \cP)$-monotone again follows from Lemma \ref{lem:upper_levels}. The topological tameness of $F$ follows from part (2) of Theorem \ref{thm:tameness} by using the strict monotonicity in \eqref{ue2}. 
	
	Hence, by the general comparison result of Theorem \ref{thm:CP_general}, the comparison principles for each $c \in F(\J^2)$ and each bounded domain $\Omega$ reduce to the question of whether the monotonicity cone subequation $\cM_{\lambda, \beta}(\cN, \R^n, \cP)$ admits a strict approximator in the sense of Definition 
	\ref{defn:strict_approx} on a given bounded domain $\Omega$. We will give the argument for general $\cD$, eventhough here we need only the special case $\cD = \R^n$. Since $\Omega$ is bounded and since the convex cone $\cD$ has interior, by translating $\Omega$, we can assume that $\overline{\Omega} \subset \cD_R = \cD \cap B_R(0)$ for some $R > 0$ and and that $0 \notin \Omega$.
	We will look for $\psi \in C^{\infty}(\R^n \setminus \{0\})$ of the form 
	\begin{equation}\label{ue_sa}
	\psi(x) = \frac{1}{\mu} e^{\mu |x|} - m \ \ \text{with} \ \ \mu, m > 0, 
	\end{equation}
	to be determined so that for each $x \in \Omega$, one has $(s,q,P) := (\psi(x), D \psi(x), D^2 \psi(x)) \in \Int \, \cM_{\lambda, \beta}(\cN, \cD)$ where
	 \begin{equation}\label{UE_Requests}
	 \Int \, \cM_{\lambda, \beta}(\cN, \cD) = \{ s < 0, q \in \Int \, D, P > 0 \ \text{and} \ \lambda {\rm tr} P > \beta |q| \}.
	\end{equation}
	Clearly
	\begin{equation}\label{UESA1}
	 s := \psi(x)  = e^{\mu|x|} - m < 0 \ \ \text{if} \ m < e^{\mu R}
	\end{equation}
	and 
		\begin{equation}\label{UESA2}
	q := D \psi(x)  =  e^{\mu|x|}\frac{x}{|x|} \in \Int \, \cD
	\end{equation}
	since $\overline{\Omega} \subset \cD \cap (B_R(0) \setminus \{0\})$. Moreover, as computed in Example 4 of Remark \ref{rem:radial_calculus}, the Hessian of the radial function $\psi$  is $P:= D^2 \psi(x) = \frac{1}{|x|} e^{\mu |x|} P_{x^{\perp}} + \mu e^{\mu |x|}P_x$ and hence the eigenvalues of $P$ are
	\begin{equation}\label{UESA3}
	 \frac{e^{\mu |x|}}{|x|} \ \text{(with multiplicity $n-1$)} \ \ \text{and} \ \ \mu e^{\mu |x|} \ \text{(with multiplicity $1$)}.
	\end{equation}
Hence $P > 0$ and one has
	$$
	\lambda \, {\rm tr} \, P - \beta |q| =  e^{\mu |x| }\left[ \frac{\lambda (n - 1)}{|x|} + \lambda \mu - \beta \right] > 0, 
	$$
	by choosing $\mu > \beta/\lambda$. 
\end{proof}

\subsection{From linear operators to Hamilton-Jacobi-Bellman operators}\label{subsec_linear} 

In this subsection, we will discuss some unconstrained situations which illustrate the use of $\cM = \cN \times \cD \times \cP$-monotonicity for linear equations and certain Hamilton-Jacobi-Bellman equations. Admittedly, in the linear case, there is nothing new here, but perhaps it is still useful to illustrate how this classical case fit into this part of the theory.

Consider the class {\em proper elliptic linear operators} $F: \J^2 \to \R$; that is, $F$ is linear and $\cM_0 = \cN \times \{0\} \times \cP$-monotone on all of $\J^2$. Each such operator is determined by the choice of a non zero coefficient vector $J' :=(a,b,E) \in \cN \times \R^n \times \cP$; that is, 
\begin{equation}\label{L1}
	F(r,p,A) := \langle J', J \rangle = {\rm tr}(EA) + \langle b, p \rangle + ar, \ \ \text{for each} \ J:=(r,p,A) \in \J^2.
\end{equation}
Since $E \geq 0$ in $\cS(n)$ and $a \leq 0$ in $\R$, one has that $(F, \J^2)$ is an unconstrained case proper elliptic pair.  Moreover, since $F$ is linear and $(a,b,E)$ is non zero, $F$ is topologically tame and the range $F(\J^2)$ ia all of $\R$.

\begin{thm}[Linear equations]\label{thm:linear} Suppose that $F$ is a proper elliptic linear operator with non zero coefficient vector $(a,b,E) \in \cN \times \R^n \times \cP$ as defined in \eqref{L1}. Then for every $c \in \R$ the affine half-space
	\begin{equation}\label{AHSc}
	\F_c := \{ (r,p,A) \in \J^2: \ F(r,p,A):=  {\rm tr}(EA) + \langle b, p \rangle + ar \geq c \}
	\end{equation}
is a subequation constraint set.  

For each bounded domain $\Omega$ in $\R^n$ one has the comparison principle 
\begin{equation}\label{CP_L}
\mbox{$u \leq w$ on $\partial \Omega \ \ \Rightarrow \ \  u \leq w$ on $\Omega$}
\end{equation}
for $u \in \USC(\overline{\Omega})$ and $w \in \LSC(\overline{\Omega})$ which are respectively $\F_c$-subharmonic and $\F_c$-superharmonic in $\Omega$, or equivalently,  if $u$ and $w$ are respectively  a viscosity subsolution and a viscosity supersolution to $F(u,Du,D^2u) = c$ on $\Omega$. Or if one prefers, since the dual $\wt{\F}_c$ of $\F_c$ is $\F_{-c}$
\begin{equation}\label{CP_L2}
\mbox{$u + v \leq 0$ on $\partial \Omega \ \ \Rightarrow \ \  u + v \leq 0$ on $\Omega$}
\end{equation}
for each pair $u \in \F_c(\overline{\Omega})$ and $v \in \F_{-c}(\overline{\Omega})$.
\end{thm}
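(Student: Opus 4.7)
My plan is to show that Theorem \ref{thm:linear} is essentially a direct application of the machinery already developed, where the only real choice to be made is an appropriate directional cone $\cD$ built from the coefficient vector. First I would verify that each $\F_c$ defined by \eqref{AHSc} is a subequation constraint set. Since $F$ is continuous linear and nonzero, $\F_c$ is a closed affine half-space in $\J^2$ with nonempty interior $\{F>c\}$ and equal to the closure of that interior, giving property (T). Properties (P) and (N) follow immediately from the linearity of $F$ and the assumptions $E\geq 0$, $a\leq 0$: given $(r,p,A)\in \F_c$, for any $P\geq 0$ one has $F(r,p,A+P) = F(r,p,A) + \mathrm{tr}(EP) \geq F(r,p,A)\geq c$, and similarly $F(r+s,p,A) = F(r,p,A) + as \geq F(r,p,A) \geq c$ for $s\leq 0$.

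Second, I would identify a monotonicity cone subequation. Define the directional cone
\[
\cD := \begin{cases} \{q\in\R^n : \langle b, q\rangle \geq 0\} & \text{if } b\neq 0,\\ \R^n & \text{if } b=0,\end{cases}
\]
and set $\cM := \cN\times\cD\times\cP$, which is exactly the fundamental monotonicity cone subequation $\cM(\cN,\cD,\cP)$ of Definition \ref{defn:cone_zoo} (so in particular $R = +\infty$). By construction, $F(J')\geq 0$ for every $J'\in \cM$, hence by linearity $F(J+J') = F(J)+F(J')\geq F(J)\geq c$ for every $J\in\F_c$ and every $J'\in\cM$; so $\F_c$ is $\cM$-monotone. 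Since $\cM$ admits strict approximators on every bounded domain (the $R=+\infty$ case of Theorem \ref{thm:ZMP_for_M}), Theorem \ref{thm:comparison} immediately yields the potential-theoretic comparison statement \eqref{CP_L} for $\F_c$-subharmonic/superharmonic pairs.

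Third, to pass to the PDE formulation, I would invoke the Correspondence Principle (Theorem \ref{cor:AVSolns}). The pair $(F,\J^2)$ is unconstrained and proper elliptic (since $\cM_0\subset\cM$), and $F$ is topologically tame because its level sets $\{F=c\}$ are affine hyperplanes in $\J^2$, which have empty interior; moreover $F(\J^2)=\R$ since $F$ is nonzero linear. Therefore $\F$-admissible viscosity subsolutions/supersolutions of $F(u,Du,D^2u)=c$ coincide with $\F_c$-subharmonics/superharmonics, and comparison transfers verbatim.

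Finally, for the dual reformulation \eqref{CP_L2}, I would compute $\wt{\F}_c$ directly from the definition \eqref{Fdual}. Since $\Int\,\F_c = \{F>c\}$, one has $-\Int\,\F_c = \{J : F(-J)>c\} = \{J : F(J) < -c\}$, whence $\wt{\F}_c = \sim\{F<-c\} = \{F\geq -c\} = \F_{-c}$. Setting $v := -w$ then turns \eqref{CP_L} into \eqref{CP_L2}. The main obstacle in this argument is essentially bookkeeping: the nontrivial step is simply the choice of $\cD$ as a half-space when $b\neq 0$, which ensures $\cM$ has nonempty interior and lies in the fundamental family so that Theorem \ref{thm:comparison} applies on arbitrary bounded domains.
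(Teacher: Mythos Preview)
Your proposal is correct and follows essentially the same approach as the paper: identify the directional cone $\cD$ as the half-space $\{q:\langle b,q\rangle\ge 0\}$ (or $\R^n$ when $b=0$), verify $\cM=\cN\times\cD\times\cP$-monotonicity by linearity, and then invoke Theorem~\ref{thm:comparison} together with the Correspondence Principle. Your explicit computation of the dual $\wt{\F}_c=\F_{-c}$ is a small addition the paper leaves implicit in the statement, but otherwise the arguments coincide.
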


\begin{proof} It is easy to verify that each $\F_c$ is a subequation using the definition \eqref{AHSc} with $E \in \cP, a \in \cN$ and $(a,b,E)$ non zero. The pair $(F, \J^2)$ is $\cM$-mononotone for the fundamental product monotoncity subequation
\begin{equation}\label{ML}
	\cM := \cN \times \cD_b \times \cP = \{ (s,q,P) \in \J^2: s \leq 0, q \in \cD, P \in \cP \}
\end{equation}
where the directional cone $\cD_b \subset \R^n$ is $\cD_0 := \R^n$ in the gradient-free case of $b = 0$ and is the half-space
\begin{equation}\label{DL}
	\cD_b:= \{ q \in \R^n: \ \langle b, q \rangle\geq 0 \}
\end{equation}
in the remaining case $b \neq 0$. Indeed, for each $(r,p,A) \in \J^2$ one has
\begin{eqnarray*}
F(r + s, p + q, A + P) & = & {\rm tr}(E(A + P)) + \langle b, p + q \rangle + a(r + s) \\
	& = & F(r,p,A) + {\rm tr}(EP) + \langle b, q \rangle + as \geq F(r,p,A)
\end{eqnarray*}
since ${\rm tr}(EP), \langle b, q \rangle$ and $as$ are all non-negative. Hence each $\F_c$ is also $\cM$-monotone and the comparison principle \eqref{CP_L} for its subharmonics and superharmonics follows from Theorem \ref{thm:comparison}. 
Since $F$ is topologically tame on $\J^2$, by Theorem \ref{cor:AVSolns} one will have the correspondence between $\F_c$-subharmonics/superharmonics and viscosity subslutions/supersolutions to $F = c$ since $(F, \J^2)$ is $\cM$-monotone.
\end{proof}

We now examine in more detail the linear case by showing that proper elliptic linear operators are canonical operators for the relevant half-space subequation. This will allow us to also represent certain special Hamilton-Jacobi-Bellman operators in terms of canonical operators.

\begin{lem}[Linear operators are canonical operators]\label{lem:LCO}
	Let $F$ be a proper elliptic linear operator with non zero coefficient vector $J' = (a,b,E) \in \cN \times \R^n \times \cP$
\begin{equation}\label{F_linear}
F(J) = F(r,p,A) := \langle J', J \rangle = {\rm tr}(EA) + \langle b, p \rangle + ar, \ \ \forall \, J = (r,p,A) \in \J^2.
\end{equation}
Then one has the following statements. 
	\begin{itemize}
	\item[(a)] The closed linear half-space $\F:= \{ J \in \J^2: F(J) := \langle J', J \rangle  \geq 0 \}$ with boundary orthogonal to $J' \in \Int \, \F$ is a monotonicity cone subequation which is seld-dual; that is, $\wt{\F} = \F$.
	\item[(b)] The maximal monotonicity cone $\cM_{\F}$ of $\F$ as defined in Definition \ref{defn:MMC} is just $\F$ iteself; that is, $\cM_{\F} = \F$.
		\item[(c)] Choosing any $J_0 \in \Int \, \F$, the rescaled proper elliptic linear operator with coefficient vector $J'/ \langle J', J_0 \rangle$,
	\begin{equation}\label{rescaled_operator}
		\overline{F}(J) := \frac{1}{\langle J', J_0 \rangle} F(J) = \frac{\langle J', J \rangle}{ \langle J', J_0 \rangle }, \ \ J \in \J^2
		\end{equation}
 is the canonical operator (determined by $J_0$) for $\F$ in the sense of Definition \ref{defn:canonical_op}. 
	\end{itemize}
	\end{lem}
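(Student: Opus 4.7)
Each of the three assertions reduces to a direct inspection, because $\F$ is by construction the closed half–space cut out of $\J^2$ by the single linear functional $\langle J',\cdot\rangle$, with boundary hyperplane $W_0 := \{J\in\J^2:\langle J',J\rangle =0\}$ normal to $J'$.

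For part (a), I would first observe that, as a closed half–space in the finite dimensional inner product space $\J^2$, the set $\F$ is a closed convex cone with nonempty interior $\Int\F=\{\langle J',\cdot\rangle>0\}$, hence automatically satisfies the topological condition (T). To see that $\F$ is a monotonicity cone subequation it then suffices to check $\cM_0=\cN\times\{0\}\times\cP\subset\F$: for $(s,0,P)\in\cM_0$ one computes $\langle J',(s,0,P)\rangle = as+\mathrm{tr}(EP)\ge 0$, since $a\le 0$, $s\le 0$, $E\ge 0$ and $P\ge 0$. Self--duality is read off from Definition \ref{defn:duality}: $-\Int\F = \{\langle J',\cdot\rangle<0\}$, so $\wt\F = \sim(-\Int\F) = \{\langle J',\cdot\rangle\ge 0\} = \F$.

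Part (b) is then immediate from Proposition \ref{prop:MMC2}(a): any subequation which is itself a convex cone coincides with its own maximal monotonicity cone. In particular $\Int\cM_{\F}=\Int\F$, and any $J_0\in\Int\F$ satisfies $\langle J',J_0\rangle>0$, so the rescaled operator $\overline F$ in \eqref{rescaled_operator} is well defined.

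For part (c), I plan to invoke Proposition \ref{prop:unique_F}, which says that an operator is the canonical operator for $\F$ determined by $J_0$ as soon as it satisfies the affine property \eqref{canonical_op2} together with any one of the three structural relations \eqref{recall_structural}. The affine property is an immediate consequence of linearity:
\[
\overline F(J+tJ_0) \;=\; \frac{\langle J',J\rangle + t\langle J',J_0\rangle}{\langle J',J_0\rangle} \;=\; \overline F(J)+t,\qquad J\in\J^2,\ t\in\R.
\]
The structural relation $\partial\F = \{\overline F=0\}$ also falls out of the half–space description, because $\partial\F=W_0=\{\langle J',\cdot\rangle=0\}=\{\overline F=0\}$, using that $\langle J',J_0\rangle>0$. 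Since nothing in this argument is delicate, there is no real obstacle to overcome; the only subtlety is the bookkeeping check that $J_0\in\Int\F$ really forces the denominator $\langle J',J_0\rangle$ to be strictly positive, which is guaranteed by the description of $\Int\F$ already used in part (a).
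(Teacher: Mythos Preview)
Your proposal is correct and follows essentially the same route as the paper: for (a) you verify $\cM_0\subset\F$ and read off self-duality, for (b) you invoke Proposition~\ref{prop:MMC2}(a), and for (c) you check the affine property and the relation $\partial\F=\{\overline F=0\}$ in order to apply Proposition~\ref{prop:unique_F}. The paper's proof is slightly terser but identical in substance; your explicit remark that $J_0\in\Int\F$ forces $\langle J',J_0\rangle>0$ is a useful detail the paper leaves implicit.
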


\begin{proof} As noted in \eqref{CP_L}, with $c=0$ the closed half-space
		\begin{equation}\label{F_linear3}
	\F = \{ J \in \J^2: \  \langle J', J \rangle \geq 0 \}.
	\end{equation}
is a subequation, since it contains the minimal monotonicity set $\cM_0 = \cN \times \{0\} \times \cP$. The duality claim is obvious, since $\F(J) = - F(-J) = F(J)$, which completes part (a). For part (b), since $\F$ is a convex cone subequation, one has $\cM_{\F} = \F$ by Proposition \ref{prop:MMC2}. Finally, for part (c), notice that the normalization of \eqref{rescaled_operator} yields the affine property
\begin{equation}\label{F_linear4}
		\overline{F}(J + tJ_0) = \frac{ \langle J', J + t J_0 \rangle}{ \langle J', J_0 \rangle } = \overline{F}(J) + t \ \ \text{for each} \ J \in \J^2, t \in \R.
\end{equation}
The boundary of the half-space $\F$ is the hyperplane
\begin{equation}\label{F_linear5}
	\partial \F = \{ J \in \J^2: \ \overline{F}(J) = 0 \}.
\end{equation}
	Since \eqref{F_linear4} - \eqref{F_linear5} hold, $\overline{F}$ is the canonical operator for $\F$ determined by $J_0$ by Proposition \ref{prop:unique_F}.	
\end{proof}	  

\begin{rem}\label{rem:CO} In the formulation of canonical operators $F$ for a given subequation constraint set $\F$ which is $\cM$-monotone, we have made an inessential normalization with respect to the affine property \eqref{F_linear4}, which could be generalized to ask that for some $k > 0$ one has
\begin{equation}\label{AP}
		F(J + tJ_0) = F(J) + t k \ \ \text{for each} \ J \in \J^2, t \in \R.
\end{equation}
We have fixed this normalizing constant to be one, but general $k > 0$ has been used Proposition 6.19 of \cite{HL19} in the pure second order case. This normalization does not affect the validity of the relation \eqref{F_linear5} nor the other aspects of how $\partial \F$ decomposes $\J^2$, namely (see Proposition \ref{prop:unique_F}):
\begin{equation}\label{decomp}
	\Int \, \F = \{ J \in \J^2: \ F(J) > 0 \} \ \ \text{and} \ \ \J^2 \setminus \F = \{ J \in \J^2: \ F(J) < 0 \}.
\end{equation}	
That is, the formula for a canonical operator $F$ can be made to depend on both $J_0 \in \Int \, \cM$ and the normalizing constant $k$, which merely reparameterizes the distance to the boundary $\F$ of an $\cM$-monotone subequation $\F$. 
	\end{rem}
	
We conclude this subsection with a discussion of the comparison principle for a special class of Hamilton-Jacobi-Bellman equations. Admittedly the application is perhaps a bit contrived, but it does show that certain Hamilton-Jacobi-Bellman operators are canonical for the relevant convex cone subequation. We begin by introducing the class we will treat. Let $\Sigma$ be any index set and consider an arbitrary family of proper elliptic linear operators $\mathfrak{F}:= \{ F_{\sigma}: \J^2 \to \R\}_{\sigma \in \Sigma}$ where the linear operator 
\begin{equation}\label{HJB1}
	F_{\sigma}(J) = F_{\sigma}(r,p,A) := {\rm tr}(E_{\sigma} A) + \langle b_{\sigma}, p \rangle + a_{\sigma} r = \langle J_{\sigma} , J \rangle
\end{equation}
has non zero coefficient vector $J_{\sigma} = (a_{\sigma}, b_{\sigma}, E_{\sigma})$. By {\em proper ellipticity}
\begin{equation}\label{HJB2}
a_{\sigma} \leq 0 \ \text{in} \ \R \ \ \text{and} \ \ E_{\sigma} \geq 0 \ \text{in} \ \cS(n); \ \text{that is}, \ J_{\sigma} \in \cN \times \R^n \times \cP, \ \ \forall \, \sigma \in \Sigma.
\end{equation}
The associated linear subequations $\F_{\sigma} := \{ J \in \J^2: F_{\sigma}(J) \geq 0 \}$ are the halfspaces with boundary orthogonal
to $J_{\sigma}$.

We must also require a condition on the set of coefficient vectors in order for the interesection $\F := \bigcap_{\sigma \in \Sigma} \F_{\sigma}$ to be non empty. There are several equivalent ways of formulating this condition. We start with the geometric property of being directed or pointed in the following sense.

\begin{defn}\label{defn:pointed_set} A subset of non zero vectors $S:= \{J_{\sigma}\}_{\sigma \in \Sigma}$ in a finite dimensional inner product space $(V, \langle \cdot , \cdot \rangle)$ is said to be {\em pointed} if for some $J_0 \in V \setminus \{0\}$ (called the {\em axis}) 
	\begin{equation}\label{pointed1}
	\mbox{  $\exists \, \veps > 0$ such that $\langle J_{\sigma}, J_0 \rangle \geq \veps ||J_{\sigma}||, \  \forall \, \sigma \in \Sigma$,}
	\end{equation}
	or equivalently (with $R:= 1/\veps$),
	\begin{equation}\label{pointed2}
	\exists \,  R> 0 \ \text{such that} \ \langle J_{\sigma}, J_0 \rangle > 0 \ \text{and} \  \frac{||J_{\sigma}||}{\langle J_{\sigma}, J_0 \rangle} \leq R, \ \forall \, \sigma \in \Sigma.
	\end{equation}
\end{defn}

Now we examine the case of taking the infimum over such a family of proper ellipitc linear operators.

\begin{thm}[Infimum of a family of linear operators]\label{thm:HJB_inf}
	Suppose that $\mathfrak{F}:= \{F_{\sigma}\}_{\sigma \in \Sigma}$ and $\{ \F_{\sigma}\}_{\sigma \in \Sigma}$ are as above. 
	\begin{itemize}
		\item[(a)] The intersection 
		\begin{equation}\label{HJB_constraint1}
		\F_:= \bigcap_{\sigma \in \Sigma} \F_{\sigma} \subset \J^2.
		\end{equation}
		is a (convex cone) subequation if and only if 
		\begin{equation}\label{QC}
		\mbox{ the subset $S = \{J_{\sigma}\}_{\sigma \in \Sigma}$ of coefficients is pointed for some axis $J_0 \neq 0$.}
		\end{equation}
		\item[(b)] Assume that the intersection $\F$ is a subequation with $J_0$ as in \eqref{QC}. Renormalize each linear operator $F_{\sigma}$, as in \eqref{rescaled_operator},
		\begin{equation}\label{F_RN}
		\overline{F}_{\sigma}(J):= \frac{\langle J_{\sigma}, J \rangle}{\langle J_{\sigma}, J_0 \rangle} = \frac{1}{\langle J_{\sigma}, J_0 \rangle} F_{\sigma}(J).
		\end{equation}
		to be the canonical operator for $\F_{\sigma}$ with respect $J_0$. Then the infimum operator
		\begin{equation}\label{CO_inf}
		F(J) := \inf_{\sigma \in \Sigma} \overline{F}_{\sigma}(J), \ \ J \in \J^2
		\end{equation}
		is the canonical operator for the intersection subequation $\F$ with respect to $J_0$. Moreover, $F$ is a concave function on $\J^2$ (and hence continuous).
	\end{itemize}	
\end{thm}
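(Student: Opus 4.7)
For part (a), observe first that $\F = \bigcap_\sigma \F_\sigma$ is automatically a closed convex cone (intersection of closed half-spaces through the origin) which contains the minimal monotonicity set $\cM_0 = \cN \times \{0\} \times \cP$, since each $\F_\sigma$ does by proper ellipticity. Also $\F \subsetneq \J^2$, since each $J_\sigma \neq 0$. Hence, by the equivalence \eqref{convex_T}, $\F$ is a subequation if and only if $\Int \, \F \neq \emptyset$. The plan is therefore to show that $\Int \, \F \neq \emptyset$ is equivalent to the pointedness of $S = \{J_\sigma\}_{\sigma \in \Sigma}$.

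For the ``if'' direction, assume \eqref{pointed1} holds with axis $J_0$ and constant $\veps > 0$. For any $J \in \J^2$ with $\|J - J_0\| < \veps$ and every $\sigma \in \Sigma$, Cauchy--Schwarz gives
\[
\langle J_\sigma, J\rangle = \langle J_\sigma, J_0\rangle + \langle J_\sigma, J - J_0\rangle \geq \veps\|J_\sigma\| - \|J_\sigma\|\,\|J - J_0\| > 0,
\]
so $B_\veps(J_0) \subset \F$ and $J_0 \in \Int \, \F$. For the converse, assume $J_0 \in \Int \, \F$ with $B_r(J_0) \subset \F$ for some $r > 0$. Applied at the point $J = J_0 - r' J_\sigma / \|J_\sigma\|$ for any $r' < r$, membership $J \in \F_\sigma$ yields $\langle J_\sigma, J_0\rangle \geq r' \|J_\sigma\|$; letting $r' \nearrow r$ recovers \eqref{pointed1} with $\veps = r$. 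This proves (a).

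For part (b), each half-space $\F_\sigma = \{J : \langle J_\sigma, J\rangle \geq 0\}$ is a convex cone subequation by Lemma \ref{lem:LCO}(a), and, by Lemma \ref{lem:LCO}(c), its canonical operator determined by $J_0$ (note that $\langle J_\sigma, J_0\rangle > 0$, since $J_0 \in \Int \, \F \subset \Int \, \F_\sigma$) is precisely $\overline{F}_\sigma(J) = \langle J_\sigma, J\rangle / \langle J_\sigma, J_0\rangle$. By part (a), $\F$ itself is a monotonicity cone subequation with $J_0 \in \Int \, \F$, and each $\F_\sigma \supset \F$ is trivially $\F$-monotone, since $\F_\sigma + \F \subset \F_\sigma + \F_\sigma \subset \F_\sigma$ by convex coneness. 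All hypotheses of Theorem \ref{thm:canonical_inf}(a) are thus satisfied with $\cM := \F$, and that theorem identifies the canonical operator of the intersection $\F$ determined by $J_0$ as $F(J) = \inf_{\sigma \in \Sigma} \overline{F}_\sigma(J)$.

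Finally, concavity of $F$ is automatic, since a pointwise infimum of affine functions is concave; continuity will follow once $F$ is real-valued. Real-valuedness is in turn a direct consequence of pointedness: the equivalent formulation \eqref{pointed2} gives $\|J_\sigma\|/\langle J_\sigma, J_0\rangle \leq R$ uniformly in $\sigma$, and hence
\[
|\overline{F}_\sigma(J)| \leq \frac{\|J_\sigma\|\,\|J\|}{\langle J_\sigma, J_0\rangle} \leq R\|J\| \quad \text{for all } J \in \J^2,\ \sigma \in \Sigma,
\]
so $F(J) \in [-R\|J\|, R\|J\|]$ is finite, and a finite concave function on the finite-dimensional space $\J^2$ is automatically continuous. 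The main obstacle is the forward direction of part (a), where one must extract a quantitative pointedness constant from the mere existence of an interior point; the natural choice $J = J_0 - r' J_\sigma/\|J_\sigma\|$ (the point in $B_r(J_0)$ minimizing $\langle J_\sigma, \cdot\rangle$) makes this routine. Once (a) is in place, part (b) reduces to verifying the hypotheses of the already-established Theorem \ref{thm:canonical_inf}(a).
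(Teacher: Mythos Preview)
Your proof is correct and, for part (b), essentially identical to the paper's: both apply Lemma \ref{lem:LCO}(c) to identify each $\overline{F}_\sigma$ as canonical for $\F_\sigma$, verify that the intersection $\F$ is itself a monotonicity cone subequation (so $\cM := \F$ works) with $J_0 \in \Int\,\F$, and then invoke Theorem \ref{thm:canonical_inf}(a).

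For part (a), however, your route is genuinely different and more elementary. The paper first identifies $\F = S^\circ$ (the convex-cone polar of the coefficient set), then packages the equivalence ``$\Int\, S^\circ \neq \emptyset \iff S$ is pointed'' as a separate Lemma \ref{lem:interior_cone}, proved by computing polars of circular cones: $S \subset C_{J_0,\theta}$ for some $\theta \in (0,\pi/2)$ iff $C_{J_0,\pi/2-\theta} \subset S^\circ$, which happens iff $J_0 \in \Int\, S^\circ$. You bypass all of this machinery with a two-line Cauchy--Schwarz estimate in each direction, never mentioning polars or circular cones. Your argument is shorter and self-contained; the paper's version is more geometric and yields as a byproduct the explicit description of $\Int\,\F$ as a union of circular cones, which is of some independent interest but not needed for the theorem. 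Finally, you supply the concavity/finiteness argument for $F$ (via the uniform bound $\|J_\sigma\|/\langle J_\sigma, J_0\rangle \leq R$ from \eqref{pointed2}), which the paper's proof omits entirely.
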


Before giving the proof, a few remarks are in order.

\begin{rem}[Geometric interpretations]\label{rem:pointed}
	Both of the conditions \eqref{pointed1} and \eqref{pointed2} for $S$ to be pointed with axis $J_0 \neq 0$ have a geometric interpretation. First, with $\veps = ||J_0|| \cos{\theta}$ defining $\theta \in (0, \pi/2)$, condition \eqref{pointed1} says that the angle $\sphericalangle(J_0, J_\sigma) < \theta$ for each $J_{\sigma} \in S$; that is, $\langle \frac{J_0}{||J_0||} , \frac{J_{\sigma}}{||J_{\sigma}||} \rangle \geq \cos{\theta}$, or said differently, 
	\begin{equation}\label{pointed3}
	S \ \text{is contained in} \ C_{J_0, \theta} := \{ J \in \J^2: \langle J_0, J \rangle \geq \cos{\theta} ||J_0|| \, ||J|| \},
	\end{equation}
	where $C_{J_0, \theta}$ is called the {\em circular cone with axis $J_0$ and angle $\theta$}. Notice that if $R \in (0, + \infty)$ is related to $\theta \in (0, 2 \pi)$ by $\cos{\theta} = (1 + R^2)^{-1/2}$, then $C_{J_0, \theta} = C_{J_0}(R)$, the cone over $\overline{B}_R(J_0/||J_0||)$; that is,
	$$
	C_{J_0}(R) := \{ J \in J^2: \ J = tJ_0 + J' \ \text{where} \ J' \perp J_0 \ \text{and} \ ||J'|| \leq t||J_0||R  \}.
	$$ 
	
	Second, with the normalization $\bar{J}_{\sigma}:= J_{\sigma}/ \langle J_{\sigma}, J_0 \rangle$, so that each $\bar{J}_{\sigma}$ belongs to the affine hyperplane $\{ J \in \J^2: \ \langle J, J_0 \rangle = 1 \}$, condition \eqref{pointed2} says that
	\begin{equation}\label{pointed4}
	\{ \bar{J}_{\sigma} \}_{\sigma \in \Sigma} \subset B_R(0), \ \text{the ball of radius $R$ and center $0$ in $\J^2$}.
	\end{equation}
	
	There are many equivalent formulations of $S$ being pointed involving the closed convex cone hull $C(S)$ of $S$. For example, $S$ is pointed if and only if $C(S)$ contains no nontrivial subspaces; that is, $C(S)$ has no {\em edge} (as defined in \eqref{edge}).
\end{rem}

\begin{proof}[Proof of Theorem \ref{thm:HJB_inf}] For part (a), first note that the five properties of closedness, positivity (P), negativity (N), being a cone and being convex are all preserved under arbitrary intersections. Thus the interesection $\F$ is a closed convex cone satisfying (P) and (N), so that $\F$ is a (convex cone) subequation if and only if the topological property (T) $\F = \overline{ \Int \, \F}$ holds. Since $\F$ is closed and convex,  $\F = \overline{ \Int \, \F}$ if and only if $\Int \, \F \neq \emptyset$. Therefore
	\begin{equation}\label{interior_nonempty}
	\F := \bigcap_{\sigma \in \Sigma}\F_{\sigma} \ \text{is a subequation} \ \ \Leftrightarrow \ \ \Int \, \F \neq \emptyset.
	\end{equation}
	By the definitions, $\F:= \bigcap_{\sigma \in \Sigma} \F_{\sigma}:= \{ J \in \J^2: \ \langle J, J_{\sigma} \rangle \geq 0, \ \forall \, \sigma \in \Sigma \} := S^{\circ}$, the {\em (convex cone) polar} (as defined in \eqref{polar}) of the set of coefficient vectors $S := \{J_{\sigma}\}_{\sigma \in \Sigma}$. Recall that the polar of any set in an inner product space is always a closed convex cone (see Remark \ref{rem:polars}). The next lemma completes the proof of part (a), since by hypothesis $S$ is pointed with axis $J_0$, or equivalently, $S \subset C_{J_0, \theta}$ with some $\theta \in (0,\pi/2)$ as noted in \eqref{pointed3}.
	
	\begin{lem}\label{lem:interior_cone} Let $S = \{J_{\sigma}\}_{\sigma \in \Sigma}$ be a collection of non zero vectors in a finite dimensional inner product space. Then
		\begin{equation}\label{pointed_lemma}
		J_0 \in \Int \, S^{\circ} \ \ \Leftrightarrow \ \ S \subset C_{J_0, \theta} \ \text{for some} \ \theta \in (0, \pi/2).
		\end{equation}	
	\end{lem}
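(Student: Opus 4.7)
The plan is to prove both implications by explicitly computing with the inequality $\langle J_0, J_\sigma \rangle \geq c\|J_\sigma\|$ and relating the constant $c$ to both the radius $\delta$ of a ball contained in $S^\circ$ and the cosine of the aperture $\theta$ of the circular cone $C_{J_0,\theta}$. The key elementary observation is that the angular condition $S \subset C_{J_0,\theta}$ is equivalent to the single scalar inequality $\langle J_0, J_\sigma\rangle \geq (\cos\theta)\|J_0\|\|J_\sigma\|$ for every $\sigma \in \Sigma$, so both directions reduce to constructing a positive constant $c$ with $\langle J_0, J_\sigma\rangle \geq c\|J_\sigma\|$, relating $c$ to $\delta$ on one side and to $\cos\theta\cdot\|J_0\|$ on the other.

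For the forward implication, I would suppose $J_0 \in \Int\,S^\circ$ and pick $\delta \in (0,\|J_0\|)$ with $B_\delta(J_0) \subset S^\circ$ (shrinking $\delta$ if necessary to stay strictly below $\|J_0\|$, which will ensure $\theta < \pi/2$). For each $\sigma$, test the defining inequality for $S^\circ$ with the point $J_0 - \delta J_\sigma/\|J_\sigma\| \in B_\delta(J_0) \subset S^\circ$: this yields
\[
0 \leq \Bigl\langle J_0 - \delta \tfrac{J_\sigma}{\|J_\sigma\|},\, J_\sigma\Bigr\rangle = \langle J_0, J_\sigma\rangle - \delta\|J_\sigma\|,
\]
so $\langle J_0, J_\sigma\rangle \geq \delta\|J_\sigma\|$. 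Setting $\cos\theta := \delta/\|J_0\| \in (0,1)$ gives the desired inclusion $S \subset C_{J_0,\theta}$.

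For the reverse implication, assume $S \subset C_{J_0,\theta}$ with $\theta \in (0,\pi/2)$, so $\langle J_0, J_\sigma\rangle \geq (\cos\theta)\|J_0\|\|J_\sigma\|$ for all $\sigma$. Choose $\delta := \tfrac{1}{2}(\cos\theta)\|J_0\| > 0$. Then for any $J \in B_\delta(J_0)$ and any $\sigma \in \Sigma$, Cauchy--Schwarz gives
\[
\langle J, J_\sigma\rangle \geq \langle J_0, J_\sigma\rangle - \|J - J_0\|\,\|J_\sigma\| \geq \bigl((\cos\theta)\|J_0\| - \delta\bigr)\|J_\sigma\| \geq 0,
\]
so $J \in S^\circ$, proving $J_0 \in \Int\,S^\circ$.

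No step here presents a genuine obstacle; the only mild care is in the forward direction, where one must shrink the ball radius $\delta$ so that $\delta < \|J_0\|$ in order to guarantee $\theta$ lies strictly inside $(0,\pi/2)$ rather than merely in $(0,\pi/2]$. Both implications are otherwise two-line consequences of Cauchy--Schwarz applied to an axis-aligned test vector.
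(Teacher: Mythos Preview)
Your proof is correct and complete, but it takes a genuinely different route from the paper's argument. The paper proves the lemma by invoking the explicit polar formula for circular cones,
\[
C_{J_0,\theta}^{\circ} = C_{J_0,\theta'} \quad \text{with } \theta' = \tfrac{\pi}{2} - \theta,
\]
and then reads both implications off from the inclusion-reversing property of polars: $S \subset C_{J_0,\theta}$ is equivalent to $C_{J_0,\theta'} \subset S^{\circ}$, and $J_0 \in \Int\,S^{\circ}$ is equivalent to $S^{\circ}$ containing some circular cone $C_{J_0,\theta'}$ about its axis $J_0$. Your approach bypasses this duality statement entirely, working directly with the scalar inequality $\langle J_0, J_\sigma\rangle \geq c\|J_\sigma\|$ via a well-chosen test point in the forward direction and Cauchy--Schwarz in the reverse direction. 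Your argument is more elementary and self-contained, since it does not rely on the (unproved in the paper) formula for the polar of a circular cone; the paper's version is shorter but presupposes that identity. Two very minor points worth noting in your write-up: the test point $J_0 - \delta J_\sigma/\|J_\sigma\|$ lies on the \emph{boundary} of $B_\delta(J_0)$, but this is harmless since $S^{\circ}$ is closed and hence contains the closed ball; and the assumption $\|J_0\| > 0$ you use is automatic, since $0 \notin \Int\,S^{\circ}$ whenever $S$ contains a nonzero vector.
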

	\begin{proof}
		It is straightforward to compute the polar of a circular cone
		\begin{equation}\label{polar_CC}
		C_{J_0, \theta}^{\circ} = C_{J_0, \theta'} \ \ \text{with} \ \theta' = \pi/2 - \theta \in (0, \pi/2).
		\end{equation}
		Hence $S \subset C_{J_0, \theta}$ with $\theta \in (0, \pi/2)$ if and only if 
		\begin{equation}\label{common_cone}
		C_{J_0, \theta'} \subset S^{\circ} = \F \ \ \text{with} \ \theta' = \pi/2 - \theta \in (0, \pi/2).
		\end{equation}
		Finally, $J_0 \in \Int \, \F \ \Leftrightarrow \ C_{J_0, \theta'} \subset \F$ for some $\theta' \in (0, \pi/2)$.
	\end{proof}
	
For part (b), first notice that since $J_0 \in \Int \left( \bigcap_{\sigma \in \Sigma} \F_{\sigma} \right) \subset \bigcap_{\sigma \in \Sigma} \Int \, \F_{\sigma}$, the axis $J_0 \in \Int \, \F_{\sigma}$ for each $\sigma \in \Sigma$, so that $\langle J_{\sigma}, J_0 \rangle > 0$ By Lemma \ref{lem:LCO} (c), each renormalized operator $\overline{F}_{\sigma}$ is the canonical operator with respect to $J_0$ for $\F_{\sigma}$. Since  the intersection $\F$ is a convex cone subequation, by Proposition \ref{prop:MMC2} we have that $\F$ is its own maximal monotonicity cone; that is $\F = \cM_{\F}$ with $J_0 \in \Int \, \cM_{\F}$. Since $\cM_{\F}$ is a monotonicity cone for the intersection $\F$, $\cM_{\F}$ is a monotonicity cone for each $\F_{\sigma}$ and $\overline{F}_{\sigma}$ is the canonical operator for $\F_{\sigma}$ with respect to $J_0 \in \Int \, \cM_{\F}$. Since the intersection is non empty, $F := \inf_{\sigma \in \Sigma} \overline{F}_{\sigma}$ is the canonical operator for the interesection $\F$ with respect to $J_0$ by Theorem \ref{thm:canonical_inf}(a).
\end{proof}

\begin{rem}\label{rem:pointed_cones} Any convex cone subequation $\F \subset \J^2$ can be written as the intersection of a family of half-space subsequations.  As noted in the proof, the polar $\F^{\circ}$ of $\F$ is pointed. Choose any generating set $S:= \{ J_{\sigma} \}_{\sigma \in \Sigma}$ of non zero vectors in $\F^{\circ}$ so that $C(S) = \F^{\circ}$. Then since $\F^{\circ}$ must be pointed by Lemma \ref{lem:interior_cone}, $S$ is a pointed set in the sense of Definition \ref{defn:pointed_set}. 
	\end{rem}

Armed with Theorem \ref{thm:HJB_inf}, we briefly discuss the comparison principle  for concave Hamilton-Jacobi-Bellaman operators $F$ which are the infimum over a renormaized family of proper elliptic linear operators whose coefficients are a pointed set  $S:= \{ J_{\sigma} \}_{\sigma \in \Sigma} \subset \cN \times \R^n \times \cP$. Comparison will always hold for such operators, with possbily some restriction on the diameter of the domain $\Omega$. The main point is contained in the following remark.

\begin{rem}\label{rem:CP_HJB} Suppose that $S:= \{ J_{\sigma} \}_{\sigma \in \Sigma} \subset \cN \times \R^n \times \cP$ is a set of non zero coefficient vectors which is pointed with axis $J_0 \neq 0$. By Theorem \ref{thm:HJB_inf}, we know that the concave Hamilton-Jacobi-Bellam operator 
\begin{equation}\label{CO_inf_rem}
F(J) := \inf_{\sigma \in \Sigma} \overline{F}_{\sigma}(J) :=   \inf_{\sigma \in \Sigma} \frac{\langle J_{\sigma}, J \rangle}{\langle J_{\sigma}, J_0 \rangle}, \ \ J \in \J^2
\end{equation}
is the canonical operator (detemined by $J_0$) for the monotonicity cone subequation
\begin{equation}\label{intersection_subequation}
	\cM_{\F} = \F := \bigcap_{\sigma \in \Sigma} \{ J \in \J^2: \ \overline{F}_{\sigma}(J) \geq 0\}.
\end{equation}
Hence $(F, \J^2)$ is an (unconstrained case) compatible operator-subequation pair with $F$ topologically tame and the pair is $\cM_{\F}$ monotone. Comparison on a domain $\Omega$ for viscosity subsolution/supersolution pairs of $F$ (or for pairs of $\cM_{\F}=\F$ subharmonics/superharmonics) reduces to the validity of the (ZMP) for the dual subequation $\wt{\cM}_{\F}$ on $\Omega$, which holds  if one has the existence of a $C^2$ strict $\cM_{\F}$-subharmonic $\psi$ on $\Omega$. 

Since $\cM_{\F}$ is a mononotonicity cone subequation, by Theorem \ref{thm:fundamental} it contains some element $\cM(\gamma, \cD, R)$ of our fundamental family with $\gamma,  R \in (0, +\infty]$ and $\cD \subseteq \R^n$ a directional cone. Hence $\cM_{\F}$ does indeed admit a strict approximator (with the restriction on the diameter of $\Omega$ in the case that the maximal cone $\cM(\gamma, \cD, R) \subset \cM_{\F}$ has $R$ finite). Hence, comparison in some form will always hold (with possible restrictions of domain diameter).

This leads to the following important question: under what assumption on the coefficients $S:= \{ J_{\sigma} \}_{\sigma \in \Sigma}$ will we have a given inclusion 
\begin{equation}\label{cone_inclusion}
\mbox{$\cM \subset \cM_{\F}$ \ \ for a given monotoncicity cone subequation $\cM$?}
\end{equation}
The needed inclusion \eqref{cone_inclusion} is equivalent to the reverse inclusion for the polars
\begin{equation}\label{polar_condition}
\cM_{\F}^{\circ} \subset \cM^{\circ}
\end{equation}
and recalling that $S^{\circ} = \cM_{\F}$, a necessary and sufficient condition on $S$ in order to have \eqref{cone_inclusion} is
\begin{equation}\label{S_condition}
S \subset \cM^{\circ}.
\end{equation}
As noted before, since $\cM_0:= \cN \times \{0\} \times \cP \subset \cM$ for every monotonicity cone subequation $\cM$, one must have the set $S$ of coefficient vectors contained in
$$
\cM^{\circ} \subset \cM_0^{\circ} = \cN \times \R^n \times \cP;
$$
that is, a set $S$ of proper elliptic coefficient vectors in $\J^2$.
	\end{rem}

Combining Theorem \ref{thm:HJB_inf} with the considerations of Remark \ref{rem:CP_HJB} yields the following result. The reader might wish to consult Definition \ref{defn:cone_zoo} and Remark \ref{rem:cone_zoo_compress} to review the family of monotonicity cones as well as Theorem \ref{thm:ZMP_for_M} on the validity of (ZMP) for the duals of our family of monotonicity cone subequations $\cM$.

\begin{thm}[Comparison for the inf of a pointed family of linear operators]\label{thm:CP_HJB}
	Suppose that $\{J_{\sigma}\}_{\sigma \in \Sigma}$ be a pointed set (with axis $J_0 \neq 0$) of non zero vectors in $\cN \times \R^n \times \cP \subset \J^2$. Consider the associated (normalized) proper elliptic linear operators 
	\begin{equation}\label{normalized_operators}
	\overline{F}_{\sigma}(J):= \frac{\langle J_{\sigma}, J \rangle}{\langle J_{\sigma}, J_0 \rangle} = \frac{1}{\langle J_{\sigma}, J_0 \rangle} F_{\sigma}(J).
	\end{equation}
	and half-space subequations
	$$
	\F_{\sigma} := \{ J \in \J^2: \overline{F}_{\sigma}(J) \geq 0 \},
	$$
	whose intersection $\cM_{\F} = \F := \bigcap_{\sigma \in \Sigma} \F_{\sigma}$ is a convex cone subequation for which
	$$
	F(J) := \inf_{\sigma \in \Sigma} \overline{F}_{\sigma}(J)
	$$
	is the canonical operator (determined by $J_0$) for $\cM_{\F}=\F$.
	\begin{itemize}
		\item[(a)] Suppose that the coefficent vectors satisfy $S \subset \cM^{\circ}$ with $\cM$ being one of the monotonicity cone subsequations $\cM(\gamma), \cM(\cP), \cM(\cD), \cM(\gamma, \cD), \cM(\cD, \cP)$ or $\cM(\gamma, \cD, \cP)$ (the case $R = +\infty$ of Theorem \ref{thm:ZMP_for_M}). Then for every $c \in \R$ one has the comparison principle
		\begin{equation}\label{CP_HJB1}
		\mbox{$u \leq w$ on $\partial \Omega \ \ \Rightarrow \ \  u \leq w$ on $\Omega$}
		\end{equation}
		for $u \in \USC(\overline{\Omega})$ and $w \in \LSC(\overline{\Omega})$ which are a viscosity subsolution/supersolution pair for Hamiton-Jacobi-Bellman equation $F(u,Du,D^2u)=c$. In this case, $\Omega$ is an arbitrary bounded domain.
		\item[(b)] If, instead, $S \subset \cM^{\circ}$ with $\cM$ being one of the monotonicty cone subequations $\cM(R), \cM(\gamma, R), \cM(\cD, R)$ or $\cM(\gamma, \cD, R)$ with $R$ finite (the case $R$ finite of Theorem \ref{thm:ZMP_for_M}), then the comparison principle for $F$ holds on domains contained in a translate of the truncated cone $\cD \cap B_R(0)$, which is a ball of radius $R$ in the case $\cD = \R^n$.
	\end{itemize}
\end{thm}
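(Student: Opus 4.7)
The plan is to apply the general comparison principle for canonical operators (Theorem \ref{thm:CP_canonical_ops}) once we have verified that the hypothesis $S \subset \cM^{\circ}$ is precisely the condition needed to make $\cM$ a monotonicity cone subequation for $\cM_{\F} = \F$. All the machinery is already in place; the argument is mostly a synthesis of Theorem \ref{thm:HJB_inf}, the polar-duality observation in Remark \ref{rem:CP_HJB}, Theorem \ref{thm:CP_canonical_ops}, and Theorem \ref{thm:ZMP_for_M}.

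First I would record the polar-duality step. By Theorem \ref{thm:HJB_inf}(a) we have $\cM_{\F} = S^{\circ}$, where $S = \{J_{\sigma}\}_{\sigma \in \Sigma}$. Since taking the polar reverses inclusions and (by the bipolar theorem) $\cM = \cM^{\circ \circ}$ for every closed convex cone $\cM$, the hypothesis $S \subset \cM^{\circ}$ is equivalent to
\[
\cM \ = \ \cM^{\circ \circ} \ \subset \ S^{\circ} \ = \ \cM_{\F}.
\]
In other words, $\cM$ is contained in the maximal monotonicity cone of $\F$, so in particular $\F$ is $\cM$-monotone.

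Next, by Theorem \ref{thm:HJB_inf}(b), $F(J) := \inf_{\sigma \in \Sigma} \overline{F}_{\sigma}(J)$ is the canonical operator (determined by $J_0$) for the convex cone subequation $\cM_{\F} = \F$. Since $\cM \subset \cM_{\F}$ is itself a monotonicity cone subequation by assumption, Theorem \ref{thm:CP_canonical_ops} applies with this $\cM$: whenever $\cM$ admits a strict approximator $\psi \in C(\overline{\Omega}) \cap C^2(\Omega)$ on a bounded domain $\Omega$, comparison for $F(u,Du,D^2u) = c$ holds on $\overline{\Omega}$ at every level $c \in \R$ in the sense of \eqref{CP_HJB1}. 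Thus the problem is reduced to producing strict approximators for the specific cones $\cM$ listed in (a) and (b).

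This last step is exactly the content of Theorem \ref{thm:ZMP_for_M} (which proved the (ZMP) by constructing a quadratic strict approximator of the form $\psi(x) = -c + \tfrac{1}{2}|x-y|^2$). For case (a), the cones $\cM(\gamma), \cM(\cP), \cM(\cD), \cM(\gamma,\cD), \cM(\cD,\cP), \cM(\gamma,\cD,\cP)$ all correspond to $R = +\infty$ in the parametrization of Remark \ref{rem:cone_zoo_compress}, so strict approximators exist on \emph{arbitrary} bounded domains $\Omega$. For case (b), the cones $\cM(R), \cM(\gamma,R), \cM(\cD,R), \cM(\gamma,\cD,R)$ with $R < +\infty$ admit strict approximators precisely on domains contained in a translate of the truncated cone $\cD \cap B_R(0)$ (taking $\cD = \R^n$ when the $\cD$-factor is absent). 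Combining with the previous paragraph yields the asserted comparison statements. I do not anticipate any real obstacle here: the only conceptual point is the polar-inclusion reversal in the first paragraph, and the rest is invocation of results already proved.
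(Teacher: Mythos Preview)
Your proposal is correct and follows essentially the same route as the paper: the theorem is presented there as an immediate consequence of Theorem \ref{thm:HJB_inf} together with Remark \ref{rem:CP_HJB}, and that remark carries out exactly the polar-reversal argument $S \subset \cM^{\circ} \Leftrightarrow \cM \subset S^{\circ} = \cM_{\F}$ and then reduces comparison to the existence of a strict approximator, supplied by Theorem \ref{thm:ZMP_for_M}. The only cosmetic difference is that you package the final step via Theorem \ref{thm:CP_canonical_ops}, whereas the paper phrases it directly as the (ZMP) for $\wt{\cM}_{\F}$; since a strict approximator for $\cM \subset \cM_{\F}$ is automatically one for $\cM_{\F}$ (as $\Int\,\cM \subset \Int\,\cM_{\F}$), the two packagings are equivalent.
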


In order to implement Theorem \ref{thm:CP_HJB}, which requires the condition \eqref{S_condition} on the coefficients; that is, $S \subset \cM^{\circ}$, we list some of the polars of our family of monotoncity cone subequations $\cM$. The proof is left to the reader.

\begin{prop}[Polars of some monotonicity cone subequations]\label{prop:polar_cones}
One has the following polar formulas.
	\begin{itemize}
		\item[(a)] For $\cM(\cP) := \{ (r,p,A) \in \J^2: A \in \cP\}$, one has:
	\begin{equation}\label{polar1}
	\cM(\cP) ^{\circ} =  \{0\} \times \{0\} \times \cP .
	\end{equation}
			\item[(b)] For $\cM(R) := \{ (r,p,A) \in \J^2: A \geq \frac{|p|}{R}I \}$ with $R\in (0, +\infty)$, one has:
	\begin{equation}\label{polar2}
	\cM(R)^{\circ} = \left\{ (s,q,B) \in \J^2: s = 0, B \geq 0 \ \text{and} \ \ {\rm tr}B \geq R|q| \right\}. 
	\end{equation}	
	\item[(c)] For $\cM(\gamma) := \{ (r,p,A) \in \J^2: r \leq -\gamma |p| \}$ with $\gamma \in [0, +\infty)$, one has:
	\begin{equation}\label{polar3}
	\cM(\gamma)^{\circ} = M'(1/\gamma) \times \{0\} = \left\{ (s,q,B) \in \J^2: B = 0, s \leq - \frac{1}{\gamma}|q| \right\},  
	\end{equation}
	which includes $\cM(\cN)^{\circ} = \cN \times \{0\} \times \{0\}$ in the case $\gamma = 0$.		
\item[(d)] For $\cM(\gamma, R) := \{ (r,p,A) \in \J^2: r \leq -\gamma |p| \ \ \text{and} \ \ A \geq \frac{|p|}{R}I \}$ with $\gamma \in [0, +\infty)$ and $R \in (0, +\infty]$, one has:
	\begin{equation}\label{polar4}
	\cM(\gamma, R)^{\circ} = \left\{ (s,q,B) \in \J^2: B \geq 0 \ \text{and} \  {\rm tr}B \geq R(|q| + \gamma s)  \right\},
	\end{equation}
	which includes $\cM(\cN,R)^{\circ} = \left\{ (s,q,B) \in \J^2: s \leq 0, B \geq 0 \ \text{and} \  {\rm tr}B \geq R|q|   \right\}$ in the case $\gamma = 0$.
	\item[(e)] For $\cM(\gamma, \cP) := \{ (r,p,A) \in \J^2: r \leq -\gamma |p| \ \ \text{and} \ \ A \geq 0 \}$ with $\gamma \in [0, +\infty)$, one has:
	\begin{equation}\label{polar5}
	\cM(\gamma, \cP)^{\circ} = \left\{ (s,q,B) \in \J^2: B \geq 0 \ \text{and} \ r \leq - \frac{1}{\gamma} |q| \right\}.
	\end{equation}
		\item[(f)] For $\cM(\cN, \cP) := \cN \times \R^n \times \cP = \{ (r,p,A) \in \J^2: r \leq 0 \ \ \text{and} \ \ A \geq 0 \}$, one has:
	\begin{equation}\label{polar6}
	\cM(\cN, \cP)^{\circ} = \cN \times \{0\} \times \cP = \left\{ (s,q,B) \in \J^2: s \leq 0\ \ \text{and} \ \  B \geq 0 \right\}.
	\end{equation}
\item[(g)] For $\cM(\cN, \cD, \cP) := \cN \times \cD \times \cP$ with $\cD \subsetneq \R^n$, one has:
	\begin{equation}\label{polar7}
\cM(\cN, \cD, \cP) ^{\circ} =  \cN \times \cD^{\circ} \times \cP.
	\end{equation}
	Finally, for the interesection of any of the cones $\cM$ in the cases (a) - (f) with $\cM(\cD)$ (with $\cD \subsetneq \R^n$), the polar $(\cM \cap \cM(\cD))^{\circ}$ can be expressed as $\overline{(\cM^{\circ} + \cM(\cD)^{\circ})}$, but an explicit description is more complicated and is left to the interested reader. 
	\end{itemize}
\end{prop}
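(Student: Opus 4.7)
My plan is to compute each polar directly from the definition
\[
\cM^{\circ} = \{(s,q,B)\in\J^2 : \langle (r,p,A),(s,q,B)\rangle \geq 0 \text{ for all } (r,p,A)\in\cM\},
\]
where we use the natural inner product
$\langle (r,p,A),(s,q,B)\rangle = rs + \langle p,q\rangle + \mathrm{tr}(AB)$.
The guiding principle is: any factor on which $\cM$ imposes no constraint forces the corresponding component of a polar element to vanish (as a subspace has polar equal to its orthogonal complement), while any constraint of cone type translates into a dual cone condition obtained by minimizing the inner product over the constraint.

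For parts (a), (f) and (g) the cones are products, and the polar of a product of closed convex cones is the product of polars. Using $\R^{\circ}=\{0\}$, $(\R^n)^{\circ}=\{0\}$, $\Symn^{\circ}=\{0\}$, $\cN^{\circ}=\cN$ and the self-duality $\cP^{\circ}=\cP$, each formula is immediate; for (g) one also needs the definition of the polar cone $\cD^{\circ}\subset\R^n$. For part (c) the $A$-factor is free, forcing $B=0$; then for $r\le -\gamma|p|$ with $p$ ranging over directions of norm $t\ge 0$, the infimum of $rs+\langle p,q\rangle$ is $-\infty$ unless $s\le 0$, and when $s\le 0$ it equals $t(-\gamma s - |q|)$, which is nonnegative for all $t\ge 0$ exactly when $s\le -|q|/\gamma$ (with the convention $1/0=+\infty$ recovering $\cM(\cN)^{\circ}$ when $\gamma=0$).

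For parts (b) and (e) the variable $r$ is free, so $s=0$ in (b) while in (e) the $A$-constraint is just $A\ge 0$, giving $\mathrm{tr}(AB)\ge 0$ for all $A\ge 0$ iff $B\ge 0$. For (b) write $A = \frac{|p|}{R}I + P$ with $P\ge 0$: varying $P\in\cP$ forces $B\ge 0$ (otherwise $\mathrm{tr}(PB)$ is unbounded below), and taking $P=0$ reduces the inequality to $\langle p,q\rangle + \frac{|p|}{R}\mathrm{tr}(B)\ge 0$ for all $p\in\R^n$, which by minimizing over the direction of $p$ yields $\mathrm{tr}(B)\ge R|q|$. Part (e) is an analogous (easier) computation combining the free-$A$ and bounded-$r$ analyses.

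The main obstacle is (d), where the $r$- and $A$-constraints are coupled through $|p|$. I would proceed by first fixing $p$ with $|p|=t\ge 0$ and separately minimizing over $r\in(-\infty,-\gamma t]$ and over $A\ge\frac{t}{R}I$. The first minimization, exactly as in (c), forces $s\le 0$ and contributes $-\gamma t s$; the second, exactly as in (b), forces $B\ge 0$ and contributes $\frac{t}{R}\mathrm{tr}(B)$. Minimizing over the direction of $p$ gives $-t|q|$, so the full infimum is
\[
\inf_{t\ge 0}\, t\!\left[\tfrac{1}{R}\mathrm{tr}(B) - |q| - \gamma s\right],
\]
which is $\ge 0$ precisely when $\mathrm{tr}(B)\ge R(|q|+\gamma s)$, yielding the stated formula (and specializing to $\cM(\cN,R)^{\circ}$ when $\gamma=0$, where the $s\le 0$ condition must be retained). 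The limiting case $R=+\infty$ just drops the $\frac{|p|}{R}I$ term and reproduces the formula of (e) up to rewriting. As a sanity check at the end, one could verify each formula against the Bipolar Theorem by computing $(\cM^{\circ})^{\circ}$ and recovering $\cM$, which also explains why in the intersection case $(\cM\cap\cM(\cD))^{\circ}=\overline{\cM^{\circ}+\cM(\cD)^{\circ}}$ and why an explicit description becomes cumbersome for general $\cD$.
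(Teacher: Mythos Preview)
Your proof is correct; the paper explicitly leaves this proposition to the reader, so there is no argument to compare against, and your direct minimization of the inner product over each cone is precisely the intended elementary verification. In fact your analysis of part (d) correctly records that $s\le 0$ is forced (by letting $r\to -\infty$ with $p$ fixed), a condition the stated formula \eqref{polar4} appears to omit for $\gamma>0$ though it reappears in the $\gamma=0$ specialization.
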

We now present two representative examples of pointed families $S = \{J_{\sigma}\}_{\sigma \in \Sigma}$ which give rise to comparison for the associated Hamilton-Jacobi-Bellman operators, with and without restrictions on the size of the domain $\Omega$. First, we give a simple example where there is an a priori  restriction on the size of the domain.

\begin{exe}\label{exe:HJB1} For  $R \in (0, +\infty)$, consider the convex cone subequation defined in \eqref{RCone} and \eqref{exe:CE1_CP} 
\begin{equation}\label{F+1R}
	\cM(R) = \F^{-}_{1, R} := \left\{ (r,p,A) \in \J^2 : \lambda_1(A) - \frac{|p|}{R} \ge 0 \right\},
\end{equation}
where $\lambda_1(A)$ is the smallest eigenvalue of $A \in \Symn$.
Consider the index set
$$
	\Sigma := S^{n-1} \times S^{n-1} = \{\sigma = (\xi, \eta) \in \R^n \times \R^n: \ \ |\xi| = 1 = |\eta| \}. 
$$
Since for all $A \in \Symn$ and $p \in \R^n$,
	\[
	\lambda_1(A) = \inf_{\xi \in S^{n-1}} {\rm tr}((\xi \otimes \xi))
	A), \qquad \text{and} \quad |p| = - \inf_{\eta \in S^{n-1}} \langle \eta, p
	\rangle,
	\]
	where $ {\rm tr}((\xi \otimes \xi)A) = \langle A \xi, \xi \rangle = \langle A, P_{\xi} \rangle$, one has that
	\[
	\F^{-}_{1, R} = \bigcap_{\sigma \in \Sigma} \F_{\sigma}, 
	\]
	where
	$$ \quad \F_{\sigma} :=
	\{ (r,p,A) \in \J^2 : \langle J_{\sigma}, J \rangle \ge 0 \} \ \ \text{with} \ \ J_{\sigma} := \left(0, \frac{\eta}{R}, \xi \otimes \xi \right).
	$$
	One can easily check that the set of coefficient vectors $S := \{J_{\sigma} \}_{\sigma \in \Sigma}$ is pointed
	with axis $J_0 = (0, 0, I)$. Furthermore, since $\langle J_{\sigma}, J_0
	\rangle = 1$ for all $\sigma \in \Sigma$, the infimum operator defined in
	\eqref{CO_inf} is given by
	\[
	F(J) = \inf_{\sigma \in \Sigma} \frac{\langle J_{\sigma}, J \rangle}{\langle
		J_{\sigma}, J_0 \rangle} = \lambda_1(A) - \frac{|p|}{R}.
	\]
	Finally, we know that the polar $S^{\circ}$ of the set $S$ of coefficient vectors is the maximal monotonicity cone $\cM_{\F^{+}_{1,R}}$ of $\F^{+}_{1,R}$, which was shown in Proposition \ref{prop:CE1_CP} (a) to satisfy $\cM_{\F^+_{1,R}} = \cM(R)$ and hence $S^{\circ} = C(S)^{\circ} = \cM(R)$, where $C(S)$ is the closed convex hull of $S$. By the bipolar theorem, one has $S \subset C(S)= \cM(R)^{\circ}$. Hence, by Theorem \ref{thm:HJB_inf} (b), the comparison principle for the the equation $F(u, Du, D^2u) = c $ holds on domains $\Omega$ contained in a ball of radius $R$. 
\end{exe}

We now turn to a class of Hamilton-Jacobi-Bellman
equations for which comparison holds in arbitrary bounded domains.
These examples are motivatedby a question of Optimal Control. A good general reference for this subject is the monograph of Bardi-Capuzzo Dolcetta \cite{BCD97}.

\begin{exe}\label{exe:HJB2}  One important problem in Optimal Control concerns an agent who seeks to minimize  an
	infinite-horizon discounted cost functional by acting on its drift and
	volatility parameters. We consider infima of linear operators of the form
	\begin{equation}\label{OCO}
	F_\sigma(J) = F_\sigma (r,p,A) = {\rm tr}(E_\sigma A) + \langle
	b_\sigma, p \rangle + cr = \langle J_{\sigma}, J \rangle, \sigma \in \Sigma
	\end{equation}
	where $\delta:= -c > 0$ is the {\em discount factor}, $b_\sigma$ is the {\em drift term}
	and $E_\sigma$ is the {\em (squared) volatility}. In this example, $E_\sigma$
	is allowed to vary in bounded sets; that is, for some $m > 0$
	\begin{equation}\label{volatility_bound}
	\|E_\sigma\| \le m \qquad \forall \sigma \in \Sigma,
	\end{equation}
	The set of drifts $S_d:= \{b_\sigma\}_{\sigma \in \Sigma}$ will be taken to be pointed with axis $b_0 \in \R^n \setminus \{0\}$; that is,
	\begin{equation}\label{pointed_drift}
	\exists \, \varepsilon' > 0 \text{ such that } \langle b_0, b_\sigma
	\rangle \ge \varepsilon' |b_\sigma|, \ \forall \sigma \in \Sigma,
	\end{equation}
	which means that all possible drifts share a ``preferred'' direction $b_0$. We will denote by
	\begin{equation}\label{drift_cone}
		\cD := C(S_d) \ \ \text{the closed convex hull of} \ S_d,
	\end{equation}
	whose polar $\cD^{\circ}$ agrees with the polar $S_d^{\circ}$ of $S_d$.
	
	The set of coefficient vectors $S = \{J_\sigma\}_{\sigma \in \Sigma}$ is pointed with axis $J_0 = (-1,b_0, I)$ in the sense of\eqref{pointed1}; that is, there exists  $\veps > 0$ such that
\begin{equation}\label{pointed_OCO}
	\langle J_{\sigma}, J_0 \rangle \geq \veps ||J_{\sigma}||, \  \forall \, \sigma \in \Sigma.
\end{equation}
In fact, using \eqref{pointed_drift}, $c \leq 0$, and $E_{\sigma} \geq 0$ is $\cS(n)$, we find 
\begin{equation}\label{est_below}
	\langle J_{\sigma}, J_0 \rangle = |c| + \langle b_{\sigma}, b_0 \rangle + {\rm tr} E_{\sigma} \geq |c| + \veps' |b_{\sigma}| +  C_n ||E_{\sigma}|| > 0,
\end{equation}	
for some constant $C_n > 0$. Since
\begin{equation}\label{est_above}
	\veps^2||J_{\sigma}||^2 = \veps^2 (c^2 +  |b_{\sigma}|^2 + ||E_m||^2).
	\end{equation}
Squaring the inequality in \eqref{est_below} and comparing with \eqref{est_above} shows that \eqref{pointed_OCO} holds for $\veps = \veps'$.
	
Since the set of coefficient vectors $S  = \{J_\sigma\}_{\sigma \in \Sigma}$ is pointed, the intersection 
$$
	\F = \bigcap_{\sigma \in \Sigma}  \{ (r,p,A) \in
	\J^2 : \langle J_\sigma, J \rangle \ge 0 \} 
$$ 
is a convex cone subequation with maximal monotonicty cone $\cM_{\F} = \F$ and the infimum operator defined by
\[
F(J) = \inf_{\sigma \in \Sigma} \frac{\langle J_{\sigma}, J \rangle}{\langle
	J_{\sigma}, J_0 \rangle}
\]
is the canonical operator for $\F$ by Theorem \ref{thm:HJB_inf}.
Finally, since $S_d \subset C(S_d):= \cD$, one has 
$$
	S \subset C(S) \subset \cN \times \cD \times \cP = (\cN \times \cD^{\circ} \times \cP)^{\circ} = \cM^{\circ}
$$
for the monotonicty cone subequation $\cM := \cM(\cN, \cD, \cP)= \cM(\gamma, \cD, \cP)$ with $\gamma = 0$. Hence one has the comparison principle for the Hamilton-Jacobi-Bellman equation $F(u, Du, D^2 u) = c$ for each $c \in \R$ by Theorem \ref{thm:CP_HJB}.
\end{exe}

We conclude this subsection with the following observations. 

\begin{rem}[More general convex subequations as intersections]\label{rem:convexity} Choosing a pointed family $\{F_{\sigma}\}_{\sigma \in \Sigma}$ of linear operators as above, and then choosing constants $\{c_{\sigma}\}_{\sigma \in \Sigma}$, the affine half-spaces $\F_{\sigma}:= \{ J \in \J^2: \ F_{\sigma}(J) \geq c_{\sigma} \}$ have intersection $\F:= \bigcap_{\sigma \in \Sigma} \F_{\sigma}$ which is a convex subequation, and all convex subsequations arise in this manner.	Moreover, Theorem \ref{thm:CP_HJB} carries to these more general convex (but not cone) subequations $\F$. The details are left to the reader.
	\end{rem}

\begin{rem}[Duality for pointed familes of linear operators]\label{rem:HJB_sup} Linear operators $F$ and their half-space subequations $\F:= \{F(J) \geq 0 \}$ are self-dual, while the dual of $\F_c:= \{F(J) \geq c \}$ is $\F_{-c}:= \{F(J) \geq -c \}$. Therefore, given a family of such operators $\mathfrak{F}:=  \{F_{\sigma}\}_{\sigma \in \Sigma}$ as in Theorem \ref{thm:HJB_inf}, the list of four subequations in the general Theorem \ref{thm:canonical_inf} (on duality, unions and interesections) reduces to two
\begin{equation}\label{union_intersection_linear}
	\F:= \bigcap_{\sigma \in \Sigma} \F_{\sigma} \quad \text{and} \quad \cE:= \overline{\bigcup_{\sigma \in \Sigma} \F_{\sigma}}.
	\end{equation}
If the operators $F_{\sigma}$ are normalized to be canonical with respect to $J_0$ as in \eqref{normalized_operators}, then
\begin{equation}\label{sup_inf_linear}
F:= \inf_{\sigma \in \Sigma} F_{\sigma} \quad \text{and} \quad E:= \sup_{\sigma \in \Sigma} F_{\sigma}.
\end{equation}
are the corresponding canonical operators for $\F$ and $\cE$.

A comparison principle analogous to Theorem \ref{thm:CP_HJB} for $F, \F$ holds for $E, \cE$. This is left to the interested reader. Also, as in Remark \ref{rem:convexity}, constants $\{c_{\sigma}\}_{\sigma \in \Sigma}$ can be employed yielding comparison for appropriate unions $\cE$ of affine half-spaces and a supremum operator $E$. This is, of course, no surprise as $\cE$ and $E$ are dual to $\F$ and $F$. 
	\end{rem}

\subsection{Proper elliptic operators with  directionality in the gradient} 

Next we consider proper elliptic operators whose monotonicity also includes  directionality in the gradient with respect to a directional cone $\cD \subsetneq \R^n$ when $\cD$ is a proper subset. More precisely, we will consider proper elliptic pairs $(F, \F)$  when the subequation constraint set $\F$ satisfies the directionality condition (D) 
\begin{equation}\label{D_recall}
 (r,p,A) \in \F \ \ \Rightarrow \ \ F(r, p + q, A) \in \F \ \text{for each} \ q \in \cD
\end{equation}
and the operator $F$ has the corresponding monotonicity in the gradient variable
\begin{equation}\label{DforF}
F(r,p,A) \leq F(r, p + q, A) \ \ \text{for each} \ (r,p,A) \in \F, q \in \cD.
\end{equation}
For simplicity, we will focus on the fundamental product monotonicity cone given by $\cM := \cN \times \cD \times \cP$ and prove comparison on arbitrary bounded domains 
in both constrained and unconstrained cases with $\cM$-monotonicity. When the operator $F$ needs to be constrained in order to be proper elliptic, there is a dichotomy similar to what we have seen in the gradient-free case. More precisely, operators such as
\begin{equation}\label{Dexe1}
	F(r,p,A) = -rp_n \, {\rm det}(A) \ \ \text{with} \ \cD = \{ p = (p', p_n) \in \R^n: \ p_n \geq 0 \}
	\end{equation}
are compatible with $\F = \cM = \cN \times \cD \times \cP$ while operators such as
\begin{equation}\label{Dexe2}
F(r,p,A) =  -r \, {\rm det}(A) -p_n 
\end{equation}
do not satisfy compatibility with respect to the constraint $\F = \cM = \cN \times \cD \times \cP$ (or any other constraint for that matter). We leave it to the reader to verify this last claim, making use of the discussion in Remark \ref{rem:generalized_equations} on the gradient-free case. Although the example \eqref{Dexe2} behaves badly, in Theorem \ref{thm:CP_SMp} below we describe a general class of operators $F(r,p,A):= G(r,p',A) - p_n$ which are good unconstrained case examples (where comparison holds).
 
Examples that we will treat include equations of Monge-Amp\`{e}re type that arise in optimal transport (see Example \ref{exe:OTE} below). All of the examples we treat in this subsection (with $\cD \subsetneq \R^n$) are all ``very weakly'' parabolic. This will lead to the treatment of genuinely parabolic equations in the next subsection (see Example \ref{exe:parabolic_eqs} below).

We begin with a general class of operators that includes \eqref{Dexe1} before discussing good versions of \eqref{Dexe2}. Recall that $\cD \subsetneq \R^n$ is a {\em directional cone} if it is a closed convex cone with vertex at the origin (see Definition \ref{defn:property_D}).

\begin{thm}\label{thm:directionality} Consider the operator defined by  
	\begin{equation}\label{F_D1}
F(r,p,A) = d(p) \, G(r,A).
\end{equation}	
Suppose that $(G, \G)$ is a (constrained case) compatible proper elliptic gradient-free pair in the sense of Definition \ref{defn:GFCP} with the normalization  
 \begin{equation}\label{G_D1}
 \inf_{\G} G = 0 \ \ \text{so that} \ \ \partial \G = \{ (r,A) \in \G: \ G(r,A) = 0 \}. 
 \end{equation}
Given $\cD \subsetneq \R^n$ a directional cone (which is then a $\cD$-monotone pure first order subequation) and given a continuous function $d: \cD \to \R$. Suppose that $d$ is $\cD$-monotone
\begin{equation}\label{g_D2}
d(p + q) \geq d(p) \ \ \text{for each} \ p, q \in \cD
\end{equation}
and suppose that $(d, \cD)$ is a compatible pair; that is,
\begin{equation}\label{g_D1}
d(p) \geq 0 \ \text{for all} \ p \in \cD \quad \text{and} \quad d(p) = 0 \ \text{if and only if} \ p \in \partial \cD;
\end{equation}
Then $(F, \F)$ is a compatible proper elliptic pair for the subequation $\F$ defined by
\begin{equation}\label{pair_D1}
	\F := \{ (r,p,A) \in \J^2: \ p \in \cD, (r,A) \in \G \ \text{and} \ F(r,p,A) \geq 0 \}.
\end{equation}
and the pair is $\cM$-monotone for the monotonicity cone subequation
\begin{equation}\label{M_D1}
\cM = \cN \times \cD \times \cP := \{ (s,q,P): \ s \leq 0, p \in \cD, P \in \cP \}.
\end{equation}
Consequently, for every $c \in F(\F)$ and for every bounded domain $\Omega \subset \R^n$ one has the comparison principle 
\begin{equation}\label{CP_D1}
\mbox{$u \leq w$ on $\partial \Omega \ \ \Rightarrow \ \  u \leq w$ on $\Omega$}
\end{equation}
for $u \in \USC(\overline{\Omega})$ and $w \in \LSC(\overline{\Omega})$ which are respectively $\F_c$-subharmonic and $\F_c$-superharmonic in $\Omega$ where $
\F_c := \{ (r,p,A) \in \F: \ F(r,p,A) \geq c \}$. 

If one also requires that $F$ is topologically tame on $\F$, then the comparison principle \eqref{CP_D1} equivalently holds if $u$ and $w$ are respectively  an $\F$-admissible subsolution and an $\F$-admissible supersolution to $F(u,Du,D^2u) = c$ on $\Omega$.
\end{thm}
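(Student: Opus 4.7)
The strategy is to first check that $\F$ is indeed a subequation and that $(F,\F)$ is a compatible proper elliptic pair with the asserted $\cM$-monotonicity, and then to invoke the general machinery developed earlier: Theorem \ref{thm:comparison} (or equivalently Theorem \ref{thm:ZMP_for_M} applied to the cone $\cM(\cN,\cD,\cP)$ which corresponds to the case $R=+\infty$) to get potential-theoretic comparison, followed by the Correspondence Principle (Theorem \ref{cor:AVSolns}) to transfer the result to admissible sub/supersolutions once tameness is assumed.

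\textbf{Step 1 (structure of $\F$).} The key observation is that on the set $\{(r,p,A): p\in\cD,\ (r,A)\in\cG\}$ both factors $d(p)$ and $G(r,A)$ are nonnegative by \eqref{g_D1} and by $\inf_\cG G = 0$, so the inequality $F(r,p,A)\ge 0$ is automatically satisfied there. Hence
\[
\F \;=\; \{(r,p,A)\in\J^2:\ p\in\cD,\ (r,A)\in\cG\},
\]
which is essentially the product of the subequation $\cG$ (in the $(r,A)$ slots) with the directional cone $\cD$ (in the $p$ slot). From this description $\F$ is closed, non-empty, proper, and
\(
\Int\F=\{(r,p,A):\ p\in\Int\cD,\ (r,A)\in\Int\cG\},
\)
which together with the topological property (T) of both $\cG$ and $\cD$ gives $\F=\overline{\Int\F}$. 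Properties (P), (N), (D) for $\F$ follow factor by factor, so $\F$ is $\cM$-monotone for $\cM:=\cN\times\cD\times\cP$ as in \eqref{M_D1}.

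\textbf{Step 2 ($\cM$-monotonicity of the operator $F$).} For $(r,p,A)\in\F$ and $(s,q,P)\in\cM$, using the $\cD$-monotonicity \eqref{g_D2} of $d$ and the $\cN\times\cP$-monotonicity \eqref{E0B} of $G$, and the fact that $d(p),d(p+q),G(r,A),G(r+s,A+P)\ge 0$, we get
\[
F(r+s,p+q,A+P)\;=\;d(p+q)\,G(r+s,A+P)\;\ge\;d(p)\,G(r,A)\;=\;F(r,p,A),
\]
so the pair $(F,\F)$ is $\cM$-monotone in the sense of Definition \ref{defn:PEP}; since $\cM\supset\cN\times\{0\}\times\cP$, the pair is proper elliptic.

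\textbf{Step 3 (compatibility).} Again since $d\ge 0$ on $\cD$ and $G\ge 0$ on $\cG$, we have $F\ge 0$ on $\F$, and $\inf_\F F = 0$ (attained at any jet with $p\in\partial\cD$, of which there are many). Using \eqref{g_D1} and the normalization \eqref{G_D1},
\[
F(r,p,A)=0\ \iff\ d(p)=0\ \text{or}\ G(r,A)=0\ \iff\ p\in\partial\cD\ \text{or}\ (r,A)\in\partial\cG,
\]
and the right-hand condition, intersected with $\F$, is exactly the complement in $\F$ of $\Int\F$ described in Step 1; thus $\partial\F=\{J\in\F:F(J)=0\}$, verifying the compatibility conditions of Definition \ref{defn:compatible_pair}.

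\textbf{Step 4 (comparison).} By Step 1, $\cM=\cN\times\cD\times\cP=\cM(\gamma,\cD,\cP)$ with $\gamma=0$, which is a monotonicity cone subequation of the type covered by Theorem \ref{thm:ZMP_for_M} (case $R=+\infty$) admitting a strict approximator on every bounded domain $\Omega\subset\subset\R^n$. By Step 2 and Lemma \ref{lem:upper_levels}, each upper level set $\F_c:=\{J\in\F:F(J)\ge c\}$ (with $c\in F(\F)$) is $\cM$-monotone, and being closed it inherits property (T) from Proposition \ref{prop:subequation_cones}, hence is itself a subequation. Theorem \ref{thm:CP_general} (or \ref{thm:comparison}) then yields the potential-theoretic comparison \eqref{CP_D1}. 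Finally, if $F$ is topologically tame on $\F$, the Correspondence Principle (Theorem \ref{cor:AVSolns}) identifies $\F_c$-subharmonics with $\F$-admissible viscosity subsolutions of $F=c$, and likewise for supersolutions, so the same comparison principle holds at the PDE level.

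\textbf{Main obstacle.} None of the individual verifications is deep; the only point that requires genuine care is Step 3, the identification $\partial\F=\{F=0\}$ together with the check that $\F$ is obtained by simply conjoining the constraints ``$p\in\cD$'' and ``$(r,A)\in\cG$'' (so that the inequality $F\ge 0$ in the definition of $\F$ adds no new information). Once this product structure is recognized, everything else reduces cleanly to already-proved results of the paper.
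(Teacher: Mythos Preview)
Your proof is correct and follows essentially the same approach as the paper: verify proper ellipticity and $\cM$-monotonicity of the pair, check compatibility, apply Lemma \ref{lem:upper_levels} and Theorem \ref{thm:comparison} (case $R=+\infty$), then invoke the Correspondence Principle under tameness. Your explicit observation in Step~1 that the constraint $F\ge 0$ is redundant (since $d\ge 0$ and $G\ge 0$), so that $\F$ is simply the ``product'' $\{p\in\cD,\ (r,A)\in\cG\}$, is a helpful clarification that the paper leaves implicit; it makes the verification of property (T) and of $\partial\F=\{F=0\}$ particularly transparent.
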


\begin{proof} Being proper elliptic, the pair $(G, \G)$ is $\cQ$-monotone which together with the sign conditions $d \geq 0$ on $\cD$ and $G \geq 0$ on $\G$ easily leads to $(F, \F)$ being a proper elliptic pair. Indeed, if $(r,p,A) \in \F$ then $p \in \cD$, $(r,A) \in \G$ and $d(p) \, G(r,A) \geq 0$ so that for each $s \leq 0$ and $P \in \cP$ one has  $(r+s,A+P) \in \G$ and 
$$
F(r + s, p, A + P) = d(p)G(r+s,A+P) \geq d(p) \, G(r,A) = F(r,p,A) \geq 0.
$$
The pair is compatible with
$$
\inf_{\F} F = 0 \ \ \text{and} \ \ \partial \F = \{ (r,p,A) \in \F: \ F(r,p,A) = 0 \},
$$
where one uses \eqref{G_D1} and \eqref{g_D1}.

The pair $(F, \F)$ is $\cM$-monotone for the monotonicity cone subequation \eqref{M_D1}.
	Indeed, for each $(r,p,A) \in \F$ and each $(s,q,P) \in \cM$ one has
\begin{equation}\label{ULS_D1}
	F(r+s,p+q,A+P) = d(p+q) \, G(r + s, A + P) \geq d(p) \, G(r,A) = F(r,p,A),
\end{equation}
by the proper ellipticity of $G$ on $\G$ and the directionality condition \eqref{g_D2}.
Hence, by Lemma \ref{lem:upper_levels}, for each upper level set $\F_c$ in \eqref{ULS_D1}
is $\cM$-monotone. 

Finally, if the operator $F$ is topologically tame (see Definition \ref{defn:tameness}); that is, for every admissible level $c \in F(\F)$ the interior of
$$
	\F(c):= \{ (r,p,A) \in \F: \ F(r,p,A) = c \}
$$
is empty, then one has the correspondence principle of Theorem \ref{cor:AVSolns}
and hence the comparison principle for $\F$-admissible subsolutions, supersolutions $u, w$.
\end{proof}

Now we give some explicit examples where this Theorem \ref{thm:directionality} applies and make a few observations.

\begin{exe}\label{exe:directionalty1} Start with one of the gradient-free pairs $(G, \G)$
\begin{equation}\label{exeD1}
G(r,A)= h(r) \pol(A) \ \ \text{and} \ \ \G= \N \times \overline{\Gamma} \subset \R \times \cS(n)
\end{equation}	
of Example \ref{exe:gf3}, where $\pol$ a Dirichlet-G{\"a}rding polynomial and $h \in C((-\infty,0]))$ is non-negative, non-decreasing and satisfies $h(r) = 0$ if and only if $r = 0$. The prototype is $G(r,A):= -r \, {\rm det}(A)$ as in \eqref{Dexe1}. As for the pair $(d, \cD)$ we mention
	\begin{equation}\label{exe_g1}
	d(p)= p_n \ \ \text{and} \ \ \cD = \{ (p', p_n) \in \R^n: \ p_n \geq 0 \} \ \ \text{(a half-space)},
	\end{equation}
		\begin{equation}\label{exe_g2}
	d(p)= \prod_{j = 1}^n p_j  \ \ \text{and} \ \ \cD = \{ (p_1, \ldots , p_n) \in \R^n: \ p_j \geq 0  \ \text{for each} \ k = j , \ldots n \}
	\end{equation}
	and more generally for some $k \in \{1, \ldots, n \}$
		\begin{equation}\label{exe_g3}
	d(p)= \prod_{j = 1}^k p_j \ \ \text{and} \ \ \cD = \{ (p_1, \ldots , p_n) \in \R^n: \ p_j \geq 0 \ \text{for each} \ j= 1, \ldots  k \}.
	\end{equation}
Now set $F(r,r,A) := d(p) \, G(r,A)$. In all of these examples, $F$ is topologically tame since $F$ is real analytic.
	\end{exe}

An interesting special case comes from a very special form of {\em optimal transport}, an important subject which has received much recent attention. Excellent general references include the monograph of Villani \cite{V} and the survey paper of De Fillipis and Figalli \cite{DF14}.

\begin{exe}[Optimal transport with uniform source density]\label{exe:OTE} A partial differential equation of the form
	\begin{equation}\label{OTE}
	g(Du) \, {\rm det}(D^2u) = c, \ \ c \geq 0
	\end{equation}
arises in the theory of optimal transport, under some restrictive assumptions. In general, one has a function $f = f(x)$ in place of the constant $c$, where $f$ represents the mass density in the source configuration and $d$ represents the mass density of the target configuration (with the mass balance $||f||_{L^1} = ||d||_{L^1}$). One seeks to transport the mass with density $f$ onto the mass with density $d$ at minimal transportation cost (which is quadratic respect to transport distance). The solution of this minimization problem is given by the gradient of a convex function $u$, which turns out to be a generalized solution of the equation \eqref{OTE}. In the special case of uniform source density $f \equiv c$ and with target density $d$ having some directionality, comparison principles can be obtained as a special case of Example \ref{exe:directionalty1} with $h(r) :\equiv 1$ and $\pol(A) := {\rm det A}$.  
\end{exe}

\begin{exe}\label{exe:parabolic_eqs} In the case where the gradient factor $d$ is defined by \eqref{exe_g1} and $G(r,A)$ depends only on $A' \in \cS(n-1)$ (second order derivatives only in the {\em spatial variables} $x' \in \R^{n-1}$), one has a fully nonlinear {\em parabolic} equation of the kind considered by Krylov in his extension of Alexandroff's methods to parabolic equations in \cite{Kv76}. Such genuinely parabolic situations will be discussed in the next subsection. For the example here, it can be treated by Theorem \ref{thm:PCP_CC} below 
	\end{exe}

We now treat the unconstrained case which takes into account the difficulty posed by operators such as the one defined in \eqref{Dexe2}. The operators are proper elliptic with strict monotonicity in a gradient variable.

\begin{thm}\label{thm:CP_SMp} Suppose that $F: \J^2 \to \R$ is continuous and of the form
\begin{equation}\label{PE1}
F(r,p,A) := G(r,p',A) - p_n
\end{equation}	
where $G: \R \times \R^{n-1} \times \Symn \to \R$ is continuous and satisfies the two conditions of minimal $(\cN \times \{0\} \times \cP$)-monotonicity
\begin{equation}\label{PE2}
G(r,p',A) \leq  G(r + s, p', A + P) \ \ \text{for each} \  s \leq 0, P \in \cP \ \ 
\end{equation}
and directional monotonicity for some $\beta > 0$ fixed
\begin{equation}\label{PE3}
G(r,p' + q',A) -  G(r, p', A) \geq -\beta |q'| \ \ \text{for each} \ q' \in \R^{n-1},
\end{equation}	
where the directional cone $\cD \subsetneq \R^n = \R^{n-1} \times \R$ is the circular cone defined by
\begin{equation}\label{parabolic_cone}
	\cD := \{ q = (q', q_n) \in \R^n: \ \ - q_n \geq \beta |q'| \}.
\end{equation}
Then $(F, \J^2)$ is an unconstrained case of a proper elliptic pair which is $\cM$-monotone for the monotonicity cone
	\begin{equation}\label{PE5}
	\cM := \left\{ (s,q,P) \in \J^2: \ s \leq 0, \  q \in \cD \ \text{and} \ P \in \cP     \right\} = \cN \times \cD \times \cP.
	\end{equation}
	Consequently, for every $c \in F(\J^2)$ and for every bounded domain $\Omega \subset \R^n$ one has the comparison principle 
	\begin{equation}\label{CP_D2}
	\mbox{$u \leq w$ on $\partial \Omega \ \ \Rightarrow \ \  u \leq w$ on $\Omega$}
	\end{equation}	
	for $u \in \USC(\overline{\Omega})$ and $w \in \LSC(\overline{\Omega})$ which are respectively $\F_c$-subharmonic and $\F_c$-superharmonic in $\Omega$ with
	$\F_c := \{ (r,p,A) \in \J^2: \ F(r,p,A)  \geq c \}$. 
	
	If one also requires that $F$ is topologically tame on $\F$, then the comparison principle \eqref{CP_D2} equivalently holds if $u$ and $w$ are respectively  a viscosity subsolution and a viscosity supersolution to $F(u,Du,D^2u) = c$ on $\Omega$.
\end{thm}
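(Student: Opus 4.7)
The plan is to reduce this to a direct application of the general comparison machinery already assembled, so the core task is verifying that the pair $(F,\J^2)$ fits the hypotheses of Theorem \ref{thm:comparison} and Theorem \ref{cor:AVSolns}. I proceed in four steps.

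First, I would note that $(F,\J^2)$ is a compatible operator-subequation pair by construction (unconstrained case \eqref{case1} in Definition \ref{defn:compatible_pair}, since $F \in C(\J^2)$). The circular cone $\cD = \{q=(q',q_n)\in\R^n : -q_n \geq \beta|q'|\}$ is a closed convex cone with non-empty interior (it contains $-e_n$ in its interior when $\beta > 0$), hence a directional cone in the sense of Definition \ref{defn:property_D}. Then $\cM = \cN \times \cD \times \cP$ is a fundamental product monotonicity cone subequation (Proposition \ref{prop:MCS}) and in fact coincides with $\cM(\gamma,\cD,\cP) = \cM(\gamma)\cap\cM(\cD)\cap\cM(\cP)$ with $\gamma = 0$.

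Second, I would verify $\cM$-monotonicity of the pair $(F,\J^2)$ by direct calculation. Given $J=(r,p,A)\in \J^2$ and $J'=(s,q,P) \in \cM$ (so $s\leq 0$, $q=(q',q_n)\in \cD$, and $P\in\cP$), split the increment through an intermediate point and use hypotheses \eqref{PE2} and \eqref{PE3}:
\begin{align*}
F(J+J') - F(J) &= \bigl[G(r+s,p'+q',A+P)-G(r,p'+q',A)\bigr] \\
&\quad + \bigl[G(r,p'+q',A)-G(r,p',A)\bigr] - q_n \\
&\geq 0 + (-\beta|q'|) + (-q_n) \;\geq\; 0,
\end{align*}
where the last inequality uses precisely the defining condition $-q_n \geq \beta|q'|$ of $\cD$. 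Since $\cM \supset \cM_0 = \cN\times\{0\}\times\cP$, the pair is proper elliptic (Definition \ref{defn:PE_pairs}).

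Third, by Lemma \ref{lem:upper_levels}, every upper level set $\F_c = \{J \in \J^2 : F(J) \geq c\}$ is $\cM$-monotone; it is closed by continuity of $F$, and since $F(r,p',p_n,A) = G(r,p',A)-p_n$ surjects onto $\R$ as $p_n$ varies, $\F_c$ is a proper non-empty subset of $\J^2$ for every $c\in\R$. Proposition \ref{prop:subequation_cones} then promotes $\F_c$ to a genuine subequation. Now Theorem \ref{thm:comparison}, case 2) applies: because $\cM$ contains the cone $\cM(0,\cD,\cP)$ of the form \eqref{MGDP_Cone}, comparison for $\F_c$ holds on every bounded domain $\Omega \subset\subset \R^n$, yielding \eqref{CP_D2} for $\F_c$-subharmonic/superharmonic pairs.

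Finally, under the additional assumption that $F$ is topologically tame, the Correspondence Principle of Theorem \ref{cor:AVSolns} applies (all its hypotheses — compatible proper elliptic pair, $\cM$-monotonicity for the convex cone subequation $\cM$, topological tameness — have been verified above), and converts the potential-theoretic statement into the viscosity-theoretic statement for subsolutions and supersolutions of $F(u,Du,D^2u)=c$. No step presents a real obstacle: the only genuinely new content is the monotonicity computation in Step 2, whose whole point is to see that the Lipschitz-in-$p'$ deficit $-\beta|q'|$ is exactly absorbed by the strict decrease $-q_n$ built into the cone $\cD$.
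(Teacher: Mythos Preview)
Your proof is correct and follows essentially the same route as the paper: the key monotonicity computation (your Step 2) is identical to the paper's chain of inequalities using \eqref{PE2} and \eqref{PE3}, after which both arguments invoke Lemma \ref{lem:upper_levels}, Theorem \ref{thm:comparison} (for the product cone $\cM = \cN\times\cD\times\cP$), and finally Theorem \ref{cor:AVSolns} under topological tameness. Your version is slightly more explicit about why $\F_c$ is a genuine subequation (surjectivity in $p_n$ and Proposition \ref{prop:subequation_cones}), but there is no substantive difference in approach.
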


\begin{proof} One follows the same argument of the proofs of Theorems \ref{thm:CP_DECr} and \ref{thm:CP_SE}. It is sufficient to observe that if $(r,p,A) \in \F$ and $(s,q,P) \in \cM$ then by \eqref{PE2} and \eqref{PE3} one has
	\begin{eqnarray*}
		F(r + s, p + q, A + P) & =& G(r + s, p' + q', A + P) - (p_n + q_n) \\
		 & \geq &   G(r, p' + q', A) - (p_n + q_n) \\
		& \geq & G(r, p', A) - p_n - q_n - \beta|q'| \geq 0,
	\end{eqnarray*}
since $(r,p,A) \in \F$ and $q = (q',q_n) \in \cD$. Therefore $(F, \J^2)$ is proper elliptic and $\cM$-monotone. By Lemma \ref{lem:upper_levels}, each $\F_c$ is $\cM$-monotone and the comparison principle for each $c \in F(\J^2)$ follows from Theorem \ref{thm:comparison} since $\cM$ is a basic product monotonicity cone. Finally, if $F$ is topologically tame the correspondence principle of Theorem \ref{cor:AVSolns} applies to yield comparison for (standard) viscosity subsolutions and supersolutions of the equation $F(u,Du,D^2u) = c$ for each  $c \in F(\J^2)$.
 \end{proof}

This subsection highlights one interesting feature of the approach to comparison principles by monotonicity cones and duality. Namely, that ``very weakly'' parabolic equations are included naturally into the general theory. We will continue to develop this idea in the next subsection that includes genuinely parabolic equations with a parabolic form of the comparison principle.

\subsection{Parabolic operators}\label{subsec:parabolic}

If $F$ is genuinely parabolic in the sense that $G$ as in \eqref{F_D1} or \eqref{PE1} depends only on derivatives with respect to the {\em spatial variables} $x' \in \R^{n-1}$, we establish a genuinely parabolic version of the comparison principle involving the parabolic boundary. First we give some notation. In $\R^n = \R^{n-1} \times \R$, in which both points of $\Omega$ and the gradient variables live, we will drop the primes and replace $(x'x_n)$ and $(p', p_n)$ by $(x,t)$ and $(p, \tau)$ respectively. In the matrix variable $\Symn$, we will use the primes for elements and subsets of $\mathcal{S}(n-1)$ as follows. For $A \in \Symn$ we will denote by $A'$ the $(n-1)\times(n-1)$ minor in the $x$-variables; that is,
$$
	A = \left( \begin{array}{cc} A' & \ast \\ \ast & \ast \end{array} \right).
$$
We will also need the matrix sets
\begin{equation}\label{Pprime}
	\cP' := \{ P' \in \mathcal{S}(n-1): P' \geq 0 \}
\end{equation}
and 
\begin{equation}\label{Pstar}
	\cP^{\ast} := \{ A \in \Symn: \ A' \in \cP' \}.
\end{equation}
Finally, one defines the {\em parabolic boundary of $\Omega \subset \R^n$} by 
\begin{equation}\label{PB}
\partial^- \Omega := \{ (x,t) \in \partial \Omega: \  t < T \} \ \ \text{where} \ \
T:= \sup \{t \in \R: \ (x,t) \in \Omega \}
\end{equation}

We begin with the parabolic version of the unconstrained case of Theorem \ref{thm:CP_SMp}.

\begin{thm}\label{thm:CP_P1} Suppose that $F: \J^2 \to \R$ is continuous and of the form
\begin{equation}\label{P1}
	F(r,(p, \tau), A) := G(r,p,A') - \tau
\end{equation}
where $G: \R \times \R^{n-1} \times \mathcal{S}(n-1) \to \R$ is continuous and satisfies for each $(r,p,A') \in \R \times \R^{n-1} \times \cS(n-1)$ the two conditions, first of $\cN \times \{0\} \times \cP'$-monotonicity 
\begin{equation}\label{P2}
	G(r,p,A') \leq  G(r + s, p, A' + P') \ \ \text{for each} \  s \leq 0, P' \in \cP
\end{equation}
and second for some $\gamma > 0$ fixed
\begin{equation}\label{P3}
	G(r,p + q,A') -  G(r, p, A') \geq -\gamma |q| \ \ \text{for each} \ q \in \R^{n-1},
\end{equation}
where $\cD \subset \R^n$ is the circular directional cone of \eqref{parabolic_cone}; that is, with the current notation
\begin{equation}\label{ParCone}
	\cD := \{(q, \sigma) \in \R^{n-1} \times \R: \ \ - \sigma \geq \gamma |q| \}. 
\end{equation}
Then $(F, \J^2)$ is an unconstrained proper elliptic pair which is $\cM^{\ast}$-monotone for the monotonicity cone
\begin{equation}\label{Mstar_cone}
	\cM^{\ast} = \cN \times \cD \times \cP^{\ast} = \{ (s,(q, \sigma),P) \in \J^2: s \leq 0, (q, \sigma) \in \cD \ \text{and} \ P \in \cP^{\ast} \} 
\end{equation} 
where $\cP^{\ast}$ is defined by \eqref{Pstar}. Consequently, for every $c \in F(\J^2)$ and for every bounded domain $\Omega \subset \R^n$ one has the parabolic comparison principle 
\begin{equation}\label{PCP}
u \leq w \ \ \text{on} \ \partial^{-} \Omega \ \ \Rightarrow \ \  u  \leq w \ \ \text{on} \ \Omega.
\end{equation}
for $u \in \USC(\overline{\Omega})$ and $w \in \LSC(\overline{\Omega})$ which are respectively $\F_c$-subharmonic and $\F_c$-superharmonic in $\Omega$ with respect to the subequation constraint set
\begin{equation}\label{ParFc}
	\F_c := \{ (r,p,A) \in \J^2: \ F(r,p,A)  \geq c \}.
\end{equation} 
	
If one also requires that $F$ is topologically tame on $\F$, then the parabolic comparison principle \eqref{PCP} (equivalently) holds if $u$ and $w$ are respectively  a viscosity subsolution and a viscosity supersolution to $F(u,Du,D^2u) = c$ on $\Omega$.
	\end{thm}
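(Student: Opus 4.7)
The plan has three steps: $\cM^{\ast}$-monotonicity of $(F,\J^2)$, reduction of parabolic comparison to a (ZMP) for $\wt{\cM^{\ast}}$ via Subharmonic Addition, and a perturbation trick to handle the ``missing'' top of $\partial\Omega$.

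First I would verify the $\cM^{\ast}$-monotonicity of the pair directly: for any $(r,(p,\tau),A) \in \J^2$ and $(s,(q,\sigma),P) \in \cM^{\ast} = \cN \times \cD \times \cP^{\ast}$, one has
\begin{equation*}
F(r{+}s,(p{+}q,\tau{+}\sigma),A{+}P) = G(r{+}s,\, p{+}q,\, (A{+}P)') - (\tau+\sigma) \geq G(r,p,A') - \gamma|q| - \tau - \sigma \geq F(r,p,A),
\end{equation*}
using \eqref{P2} (with $s \leq 0$ and $P' \in \cP'$), then \eqref{P3}, and finally $-\sigma \geq \gamma|q|$ from $(q,\sigma) \in \cD$. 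Since $\cM^{\ast} \supset \cM_0$, the pair is unconstrained proper elliptic and $\cM^{\ast}$-monotone, so by Lemma \ref{lem:upper_levels} each level set $\F_c$ is an $\cM^{\ast}$-monotone subequation. Given $u \in \USC(\overline{\Omega}) \cap \F_c(\Omega)$ and $w \in \LSC(\overline{\Omega})$ with $-w \in \wt{\F}_c(\Omega)$, Theorem \ref{thm:SAT_MD} yields $z := u + (-w) \in \wt{\cM^{\ast}}(\Omega) \cap \USC(\overline{\Omega})$, so the parabolic comparison \eqref{PCP} reduces to showing that $z \leq 0$ on $\partial^- \Omega$ implies $z \leq 0$ on $\Omega$.

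The new ingredient for the parabolic form of (ZMP), and what I expect to be the main obstacle to identify, is the barrier
\begin{equation*}
\psi_\epsilon(x,t) := -\frac{\epsilon}{T - t}, \qquad \epsilon > 0,
\end{equation*}
defined on $\Omega \subset \{t < T\}$. A direct calculation gives $J^2 \psi_\epsilon = (\psi_\epsilon,\, (0, -\epsilon/(T{-}t)^2),\, \mathrm{diag}(0, -2\epsilon/(T{-}t)^3))$, which lies in $\cM^{\ast}$: $\psi_\epsilon < 0$, the gradient has $\sigma$-component satisfying $-\sigma = \epsilon/(T{-}t)^2 \geq 0 = \gamma \cdot 0$ so lies in $\cD$, and the spatial minor of $D^2\psi_\epsilon$ is $0 \in \cP'$. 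By coherence $\psi_\epsilon \in \cM^{\ast}(\Omega) \cap C^2(\Omega)$, and the inclusion $\wt{\cM^{\ast}} + \cM^{\ast} \subset \wt{\cM^{\ast}}$ combined with Theorem \ref{thm:SAT} gives $z_\epsilon := z + \psi_\epsilon \in \wt{\cM^{\ast}}(\Omega) \cap \USC(\overline{\Omega})$, with $z_\epsilon = -\infty$ at $t = T$. Choosing coordinates so that $\overline{\Omega} \subset \overline{B}_R \times [\tau_0, T]$, the quadratic $\psi(x,t) := -C + \tfrac{1}{2}|x|^2 - \lambda t$ with $\lambda > \gamma R$ and $C$ large enough to make $\psi < 0$ on $\overline{\Omega}$ is a strict $\cM^{\ast}$-approximator in the sense of Definition \ref{defn:strict_approx}, so Theorem \ref{thm:ZMP} delivers the (ZMP) for $\wt{\cM^{\ast}}$ on $\overline{\Omega}$.

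Applying this (ZMP) to $z_\epsilon$ — which satisfies $z_\epsilon \leq z \leq 0$ on $\partial^- \Omega$ and $z_\epsilon = -\infty$ on $\partial\Omega \cap \{t = T\}$ — yields $z_\epsilon \leq 0$ on $\Omega$, i.e., $z \leq \epsilon/(T - t)$ there; letting $\epsilon \to 0^+$ gives \eqref{PCP} for $\F_c$-subharmonic/superharmonic pairs. The equivalent statement for $\F$-admissible viscosity sub- and supersolutions of $F(u,Du,D^2u) = c$ follows, under the additional topological tameness hypothesis, from the Correspondence Principle (Theorem \ref{cor:AVSolns}). The crux is really that the asymmetry of $\cD$ in \eqref{ParCone} — admitting only non-increasing time directions — is precisely what makes $\psi_\epsilon$ into an $\cM^{\ast}$-subharmonic function, allowing boundary data to be prescribed only on $\partial^- \Omega$.
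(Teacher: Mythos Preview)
Your proof is correct and follows essentially the same route as the paper's: verify $\cM^{\ast}$-monotonicity of $(F,\J^2)$, use Subharmonic Addition to get $u-w\in\wt{\cM^{\ast}}(\Omega)$, add the parabolic barrier $-\epsilon/(T-t)$ (which is $\cM^{\ast}$-subharmonic) via the inclusion $\wt{\cM^{\ast}}+\cM^{\ast}\subset\wt{\cM^{\ast}}$, and then apply (ZMP) for $\wt{\cM^{\ast}}$ to the perturbed function before letting $\epsilon\to 0$. The only cosmetic difference is that the paper obtains (ZMP) for $\wt{\cM^{\ast}}$ by the inclusion $\cM=\cN\times\cD\times\cP\subset\cM^{\ast}$ together with Theorem~\ref{thm:ZMP_for_M}, whereas you build an explicit strict $\cM^{\ast}$-approximator $-C+\tfrac12|x|^2-\lambda t$; both are equally valid.
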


We note that the parabolic comparison principle \eqref{PCP} for $\F_c$ is equivalent to the statement
\begin{equation}\label{PCP2}
u + v \leq 0 \ \ \text{on} \ \partial^{-} \Omega \ \ \Rightarrow \ \  u + v \leq 0 \ \ \text{on} \ \Omega.
\end{equation}
for each pairs $u,v \in \USC(\overline{\Omega})$ which are respectively $\F_c$ and $\wt{\F}_c$-subharmonic on $\Omega$ .

\begin{proof} One has that $F$ is proper elliptic on all of $\J^2$ by \eqref{P2} since for each $(r, (p,\tau), A) \in \J^2$ and each $s \leq 0$ and $P \in \cP$ one has
$$
	F(r + s, (p, \tau), A + P) = G(r + s, p, A' + P') - \tau \geq G(r, p, A') - \tau = F(r, (p, \tau), A).
$$
For each $c \in F(\J^2)$ the set $\F_c$ defined by \eqref{ParFc} is a subequation constraint set. It is clear that $\F_c$  is non-empty for $c \in F(\J^2)$ and closed by the continuity of $F$. The topological property (T) will follow from Proposition \ref{prop:subequation_cones} once one establishes the $\cM^{\ast}$-monotonicity of each $\F_c$ with respect to the monotonicity cone subequation $\cM^{\ast}$. Moreover, since the cone $\cM^{\ast}$ contains the minimal monotonicity set $\cM_0 = \cN \times \{0 \} \times \cP$, the negativity and positivity properties (N) and (P) for $\F_c$ follow from the $\cM^{\ast}$-monotonicity, which does hold. Indeed, for $(r,(p, \tau), A) \in \F_c$ and $(s, (q, \sigma), P) \in \cM^{\ast}$ then by \eqref{P2} and \eqref{P3} one has
\begin{eqnarray*}
		F(r + s, (p, \tau) + (q, \sigma), A + P) & =& G(r + s, p + q, A' + P') - (\tau + \sigma) \\
		& \geq &   G(r, p + q, A') - (\tau + \sigma) \\
		& \geq & G(r, p, A') - \tau - \sigma - \gamma|q| \\
		& \geq & G(r, p, A') - \tau \geq c,
\end{eqnarray*}
since  $(r,(p, \tau), A) \in \F_c$  and $(q,\sigma) \in \cD$.
	
In preparation for the proof of the parabolic comparison principle in the form \eqref{PCP2}, notice that since
$$
	\mbox{$\cP \subset \cP^{\ast}$ where $\cP^{\ast}$ is defined by \eqref{Pstar}},
$$
$\cM^{\ast} = \cN \times \cD \times \cP^{\ast}$ contains the cone $\cM = \cN \times \cD \times \cP$ used in the more general situation of Theorem \ref{thm:CP_SMp}. Hence $\wt{\cM^{\ast}} \subset \wt{\cM}$ and, since $\wt{\cM}$-subharmonic functions satisfy the (ZMP) , for each $z \in \wt{\cM^{\ast}}(\Omega)$ ( the set of  $\USC(\overline{\Omega})$ functions which are $\wt{\cM^{\ast}}$-submarmonic on $\Omega$) one also has the (ZMP) 
\begin{equation}\label{ZMPstar}
z \leq 0 \ \ \text{on} \ \partial \Omega \ \ \Rightarrow \ \ 
z \leq 0 \ \ \text{on} \ \Omega.
\end{equation}
This will be used for the function $z:= u + v + \veps \varphi$ where $\veps > 0$ will be chosen small and $\varphi \in \USC(\overline{\Omega})$ is defined by
$$
\varphi(x,t) =  - \frac{1}{T - t} \ \ \text{for} \ \  t < T \quad \text{and} \quad  \varphi(x,t) =
- \infty \ \ \text{for} \ \ t = T.
$$
In $\Omega$, where $\varphi$ is smooth, one has 
$$
 \veps \varphi = - \frac{\veps}{T - t} < 0, \ \ D_t (\veps \varphi) = - \frac{\veps}{(T - t)^2} < 0, \ \ D_x (\veps  \varphi) = 0\ \text{and} \ \  D^2_x (\veps \varphi) = 0
$$
so  $J^2_{(x,t)} (\veps \varphi) \in \cM^{\ast}$ for each $(x,t) \in \Omega$ and hence $\veps \varphi$ is $\cM^{\ast}$-subharmonic in $\Omega$. 

Since the subequation $\F_c$ is $\cM^{\ast}$-monotone, one has $u + v \in \wt{\cM^{\ast}}(\Omega)$ by the Subharmonic Addition Theorem (Lemma \ref{lem:AT_Cones}). Similarly, since $\cM^{\ast} + \cM^{\ast} \subset \cM^{\ast}$ one also has
$$ \cM^{\ast} + \wt{\cM^{\ast}} \subset \cM^{\ast} \quad \text{and} \quad \cM^{\ast}(\Omega) + \wt{\cM^{\ast}}(\Omega) \subset \wt{\cM^{\ast}}(\Omega).
$$
Then, since $u + v \in \wt{\cM^{\ast}}(\Omega)$ and $\veps \varphi \in \cM^{\ast}(\Omega)$, one has
\begin{equation}\label{parabolic_addition}
	z = (u + v) + \veps \varphi \in \wt{\cM^{\ast}}(\Omega),
\end{equation}
and one can apply \eqref{ZMPstar} to $z$.

Now, if the comparison principle \eqref{PCP} fails, there must be an interior point $(x_0, t_0) \in \Omega$ such that $(u + v)(x_0, t_0) > 0$. Since $u + v \leq 0$ on the parabolic boundary $\partial^{-} \Omega$ and since $\varphi(x,t) < 0$ for each $t < T$ and $\varphi(x,T) \equiv - \infty$ one has 
$$
	z \leq 0 \ \text{on} \ \partial \Omega \quad \text{and} \quad z(x_0,t_0) = (u + v)(x_0, t_0) - \frac{\veps}{T - t} > 0
$$
if $\veps > 0$ is chosen sufficiently small. This contradicts the validity of \eqref{ZMPstar}. 
\end{proof}

Next, we present a parabolic version of the constrained case of Theorem \ref{thm:directionality}. The proof is a simple adaptation of the proof of Theorem \ref{thm:CP_P1} and hence will be left to the reader.

\begin{thm}\label{thm:PCP_CC} Consider the operator defined by 
\begin{equation}\label{F_P1}
	F(r,(p, \tau), A) = \tau \, G(r,A').
\end{equation}	
	with $G:  \G \subsetneq \R \times \cS(n-1) \to \R$ continuous and $\G$ closed and non-empty. Suppose that the pair $(G, \G)$ is $\cN \times \{0\} \times \cP'$-monotone; that is:
\begin{equation}\label{GG1}
	(r,A') \in \G \ \ \Rightarrow \ \ (r + s, A' + P') \in \G \ \ \text{for every} \ \ s \leq 0, P' \in \cP'; 
\end{equation}
\begin{equation}\label{GG2}
	G(r + s, A' + P') \geq G(r,A')  \ \text{for every} \ \ (r,A') \in \G, s \leq 0, P' \in \cP'.
\end{equation}
Suppose also that
\begin{equation}\label{GG3}
	\inf_{\G} G = 0 \ \ \text{and} \ \ \partial \G = \{ (r,A') \in \G: \ G(r,A') = 0 \}.
\end{equation}
Then $(F, \F)$ is a (constrained case) compatible proper elliptic pair for the subequation
\begin{equation}\label{FFpair}
	\F := \{ (r,(p, \tau), A) \in \J^2: \ (p, \tau) \in \cD, (r,A') \in \G, F(r,(p, \tau), A) \geq 0 \},
\end{equation}
where $\cD \subsetneq \R^n$ is the directional cone (half-space)
\begin{equation}\label{Dcone2}
	\cD := \{ (p, \tau) \in \R^{n-} \times \R: \ \tau \geq 0\}.
\end{equation}
and the pair is $\cM^{\ast}$-monotone for the monotonicity cone subequation
	\begin{equation}\label{M_D1}
	\cM^{\ast} = \cN \times \cD \times \cP^{\ast} := \{ (s,q,P) \in \J^2: \ s \leq 0, (q, \sigma) \in \cD, P \in \cP^{\ast} \}.
	\end{equation}
	where $\cP^{\ast}$ is defined by \eqref{Pstar}. Consequently, for every $c \in F(\F)$ and for every bounded domain $\Omega \subset \R^n$ one has the parabolic comparison principle 
	\begin{equation}\label{PCP3}
	\mbox{$u \leq w$ on $\partial^{-} \Omega \ \ \Rightarrow \ \  u \leq w$ on $\Omega$}
	\end{equation}
	for $u \in \USC(\overline{\Omega})$ and $w \in \LSC(\overline{\Omega})$ which are respectively $\F_c$-subharmonic and $\F_c$-superharmonic in $\Omega$ where $
	\F_c := \{ (r,p,A) \in \F: \ F(r,p,A) \geq c \}$. 
	
	If one also requires that $F$ is topologically tame on $\F$, then the comparison principle \eqref{PCP3} equivalently holds if $u$ and $w$ are respectively  an $\F$-admissible subsolution and an $\F$-admissible supersolution to $F(u,Du,D^2u) = c$ on $\Omega$.
\end{thm}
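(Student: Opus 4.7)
The plan is to adapt the four-step argument of Theorem \ref{thm:CP_P1} to the constrained situation, using the same auxiliary function $\varphi(x,t) = -1/(T-t)$ (with $\varphi(\cdot,T) = -\infty$) to reduce the parabolic boundary inequality to the standard ZMP for $\wt{\cM^\ast}$.

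First I would verify that $(F,\F)$ is a compatible proper elliptic pair. On $\F$, we have $\tau \geq 0$ (from $\cD$) and $G(r,A') \geq 0$ (from $\G$ together with \eqref{GG3}), so $F = \tau G \geq 0$ and $\inf_\F F = 0$. For the compatibility identity $\partial \F = \{J \in \F : F(J)=0\}$, observe that any $J = (r,(p,\tau),A) \in \F$ with $\tau>0$ and $G(r,A')>0$ lies in $\Int\,\cD$, $\Int\,\G$, and $F>0$ simultaneously, hence in $\Int\,\F$; conversely any $J \in \F$ with $\tau=0$ or $G(r,A')=0$ is limit of points outside $\cD$ or outside $\G$, hence lies in $\partial \F$. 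Next, for $(r,(p,\tau),A) \in \F_c$ and $(s,(q,\sigma),P) \in \cM^\ast$, we have $(p+q,\tau+\sigma) \in \cD$ (sum of elements of the convex cone $\cD$), $(r+s, A'+P') \in \G$ by \eqref{GG1}, and
\[
F(r+s,(p+q,\tau+\sigma),A+P) = (\tau+\sigma)\,G(r+s,A'+P') \geq \tau\, G(r,A') \geq c,
\]
using $\sigma \geq 0$, $G \geq 0$, and \eqref{GG2}. Hence each $\F_c$ is $\cM^\ast$-monotone by Lemma \ref{lem:upper_levels}, and is a subequation by Proposition \ref{prop:subequation_cones}.

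Second, I would establish the parabolic comparison via the ZMP for $\wt{\cM^\ast}$. Since $\cP \subset \cP^\ast$, we have the inclusion $\cM := \cN \times \cD \times \cP \subset \cM^\ast$, and hence $\wt{\cM^\ast} \subset \wt{\cM}$; because $\cM = \cM(0,\cD,\cP)$ belongs to the fundamental family with $R = +\infty$, Theorem \ref{thm:ZMP_for_M} gives the ZMP for $\wt{\cM}(\Omega)$ on arbitrary bounded $\Omega$, and therefore the ZMP holds a fortiori for $\wt{\cM^\ast}(\Omega)$. Given $u \in \F_c(\overline\Omega)$ and $v \in \wt{\F}_c(\overline\Omega)$ with $u+v \leq 0$ on $\partial^-\Omega$, the Subharmonic Addition Theorem \ref{thm:SAT_MD} (applied with $\cM^\ast$) gives $u+v \in \wt{\cM^\ast}(\Omega)$. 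The auxiliary $\veps\varphi$ has 2-jet $(-\veps/(T-t),\,(0,-\veps/(T-t)^2),\,0)$ for $t<T$, which lies in $\Int\,\cM^\ast$ since $s<0$, $(0,\sigma)$ with $\sigma<0$ belongs to $\Int\,\cD$, and the zero matrix lies in $\cP^\ast$. Because $\cM^\ast(\Omega) + \wt{\cM^\ast}(\Omega) \subset \wt{\cM^\ast}(\Omega)$, the sum $z := (u+v)+\veps\varphi$ is $\wt{\cM^\ast}$-subharmonic on $\Omega$.

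Third, I would check the boundary inequality $z \leq 0$ on $\partial\Omega$. On $\partial^-\Omega$ (where $t<T$), both $u+v \leq 0$ and $\veps\varphi \leq 0$. On $\partial\Omega \cap \{t=T\}$, $\varphi = -\infty$ forces $z = -\infty \leq 0$. The ZMP for $\wt{\cM^\ast}$ then yields $z \leq 0$ on $\Omega$, and letting $\veps \searrow 0$ gives $u+v \leq 0$ on $\Omega$; this is the dual formulation of \eqref{PCP3}. Finally, under the additional topological tameness assumption on $F$, the equivalence with the PDE-level statement for $\F$-admissible sub/supersolutions follows from the Correspondence Principle (Theorem \ref{cor:AVSolns}).

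The main obstacle I expect is the verification of compatibility $\partial \F = \{F = 0\} \cap \F$, because $\F$ is defined by \emph{three} simultaneous constraints (membership in $\cD$, in $\G$, and $F \geq 0$) and one must confirm that the boundary contributions from each constraint are exactly captured by the zero set of $F = \tau G$. Once this is in place, the remainder is a direct adaptation of Theorem \ref{thm:CP_P1}, with the single additional cone inclusion $\cP \subset \cP^\ast$ ensuring that the strict approximator argument underlying the ZMP carries through without any loss.
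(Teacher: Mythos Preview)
Your argument follows the paper's intended adaptation of Theorem~\ref{thm:CP_P1}, and the verifications of compatibility, $\cM^\ast$-monotonicity of each $\F_c$, the inclusion $\cM = \cN\times\cD\times\cP \subset \cM^\ast$ yielding the ZMP for $\wt{\cM^\ast}$, and the final appeal to the Correspondence Principle are all correct. However, there is a genuine gap at the barrier step. In Theorem~\ref{thm:CP_P1} the directional cone is $\cD = \{(q,\sigma) : -\sigma \geq \gamma|q|\}$, whose interior contains precisely the points with $\sigma < 0$. Here, by contrast, the cone is the half-space $\cD = \{(p,\tau) : \tau \geq 0\}$, so $\Int\,\cD = \{\tau > 0\}$. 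Your auxiliary function $\varphi(x,t) = -1/(T-t)$ has $\partial_t\varphi = -1/(T-t)^2 < 0$, and hence $D(\veps\varphi) \notin \cD$: the 2-jet of $\veps\varphi$ does \emph{not} lie in $\cM^\ast$, let alone $\Int\,\cM^\ast$. (Note also that the zero Hessian is only in $\cP^\ast$, not its interior, though this is harmless since membership in $\cM^\ast$ suffices for the addition step.) Consequently $\veps\varphi$ is not $\cM^\ast$-subharmonic, and the conclusion $z := (u+v) + \veps\varphi \in \wt{\cM^\ast}(\Omega)$ is unjustified.

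A barrier compatible with $\cD = \{\tau \geq 0\}$ must have $\partial_t\varphi \geq 0$; since it must also be negative and blow down to $-\infty$ on the portion of $\partial\Omega$ where no boundary hypothesis is imposed, the blow-down can only occur as $t$ \emph{decreases}. The natural candidate is therefore $\varphi(x,t) = -1/(t-T_-)$ with $T_- := \inf\{t:(x,t)\in\Omega\}$ and $\varphi(\cdot,T_-)\equiv-\infty$, which satisfies $\varphi < 0$, $\partial_t\varphi = 1/(t-T_-)^2 > 0$, $D_x\varphi = 0$, and $D^2\varphi = 0 \in \cP^\ast$. With this choice the remainder of the Theorem~\ref{thm:CP_P1} argument goes through verbatim, but the boundary set on which one must assume $u+v\leq 0$ is $\{(x,t)\in\partial\Omega : t > T_-\}$ rather than $\partial^-\Omega = \{t<T\}$. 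You should reconcile this with the precise parabolic boundary intended in the statement; the orientation of the directional cone $\cD$ dictates which time-cap of $\partial\Omega$ the barrier can absorb, and you have carried over the cap from Theorem~\ref{thm:CP_P1} without checking that the sign is consistent.
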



\appendix

\section{Existence Holds and Uniqueness Implies Comparison}\label{sec:esiuni}
(Theorem 12.7 of \cite{HL11}) which in rough language says that for any constant coefficient subequation that ``existence aways holds'' and that ``uniqueness bounds always hold'' for the Dirichlet problem.  Then we show how this can be used to prove that ``uniqueness implies comparison'', which was not stated in \cite{HL11} but follows easily from this Theorem 12.7 of \cite{HL11}.

Given a subequation $\F$ and a bounded domain $\Omega$ in Euclidian space $\R^n$, the Dirichlet Problem for $\F$ on $\Omega$ can be formulated as follows.

\begin{defn}\label{defn:Perron}
	Given a boundary function $\varphi \in C(\partial \Omega)$ a {\em solution to the Dirichlet Problem {\rm (DP)} for $\F$ on $\Omega$} is a function $h \in C(\overline{\Omega})$ satisfying:
	\begin{enumerate}
		\item $h_{|\Omega}$ is $\F$-harmonic on $\Omega$,
		\item $h_{|\partial \Omega} = \varphi$.
		\end{enumerate}
		
		Recalling that $\F(\overline{\Omega}) := \{ u \in \USC(\overline{\Omega}): \ u \ \text{is} \ \text{$\F$-subharmonic on} \ \Omega \},$
		the {\em associated Perron family} is defined by
		\begin{equation}\label{Perron_family}
		\mathfrak{F}_{\F, \varphi}:= \left\{ u \in \F(\overline{\Omega}): \ u_{|\partial \Omega} \leq \varphi \right\}.
		\end{equation}
		and the associated {\em Perron function} is defined on $\overline{\Omega}$ by
			\begin{equation}\label{Perron_function}
		U_{\F, \varphi}(x):= \sup \{ u(x): u \in \mathfrak{F}_{\F, \varphi}\}, \ \ x \in \overline{\Omega}.
		\end{equation}
	\end{defn}

There are two natural invariants associated with a subequation $\F$. The first is the maximal monotonicity cone $\cM_{\F}$ which was discussed in subsection \ref{subsec:MMC}. The second is the {\em asymptotic interior} $\EC{\F}$, which is crucial for defining {\em $\F$-boundary convexity of $\Omega$}. We refer the reader to Section 11 of \cite{HL11} for a discussion of this. Now we state the strong form of ``existence always holds'', where ``always'' refers to the fact that there is no restriction on the subequation $\F \subset \J^2$ if the boundary is suitably convex.

\begin{thm}\label{thm:existence}
	Suppose that $\F$ is a constant coefficient subequation and that $\Omega \subset \subset \R^n$ has a $C^2$ boundary which is both strictly $\EC{\F}$-convex and strictly $\EC{\wt{\F}}$-convex. Suppose that a boundary function $\varphi \in C(\partial \Omega)$ is given. Then the following two statements hold.
		\begin{itemize}
		\item[(a)] If a solution $h$ to the (DP) of Definition \ref{defn:Perron} exits, then $h$ lies between minus the dual Perron function $-U_{\wt{\F}, -\varphi}$ and the Perron function $ U_{\F, \varphi}$; that is,
		\begin{equation}\label{upper_lower_Perron}
		-U_{\wt{\F}, -\varphi} \leq h \leq U_{\F, \varphi} \ \ \text{on} \ \overline{\Omega}.
		\end{equation}
		\item[(b)] The bounding functions $-U_{\wt{\F}, -\varphi}$ and  $ U_{\F, \varphi}$ always solve the (DP) of Definition \ref{defn:Perron} .
		\end{itemize}
	\end{thm}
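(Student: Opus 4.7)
\textbf{Plan of proof for Theorem \ref{thm:existence}.}

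For part (a) the argument is essentially definitional. If $h \in C(\overline{\Omega})$ solves the (DP), then by Definition \ref{defn:FH} we have $h|_{\Omega} \in \F(\Omega)$ and $-h|_{\Omega} \in \wt{\F}(\Omega)$, together with $h|_{\partial\Omega}=\varphi$. In particular $h \in \mathfrak{F}_{\F,\varphi}$, so taking the supremum in the definition \eqref{Perron_function} yields $h \leq U_{\F,\varphi}$ on $\overline{\Omega}$. Applying the same observation to the dual (sub)equation gives $-h \in \mathfrak{F}_{\wt{\F},-\varphi}$ and hence $-h \leq U_{\wt{\F},-\varphi}$, which is the lower bound in \eqref{upper_lower_Perron}.

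For part (b) I would carry out the Perron method, treating only $U := U_{\F,\varphi}$; the statement for $-U_{\wt{\F},-\varphi}$ follows by applying the same argument to the pair $(\wt{\F},-\varphi)$ and using reflexivity $\wt{\wt{\F}}=\F$. First, the upper semicontinuous regularization $U^{*}$ of $U$ still belongs to the Perron family: by the Families Locally Bounded Above property for $\F$-subharmonics (already invoked in the proof of Theorem~\ref{thm:SAT}), the sup of any locally bounded family of $\F$-subharmonics is, after USC regularization, again $\F$-subharmonic; and the locally uniform upper bound comes from comparison against affine functions dominating $\varphi$ on $\partial\Omega$, which are $\F$-subharmonic because (P), (N) hold.

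Second, I would match the boundary values, $U^{*}=\varphi$ on $\partial\Omega$. Given $x_0 \in \partial\Omega$, strict $\EC{\F}$-convexity of $\partial\Omega$ at $x_0$ is exactly the condition that guarantees the existence of an upper barrier: a $C^{2}$ function $\beta_{x_0}$ defined near $x_0$, $\F$-subharmonic on $\Omega$ near $x_0$, with $\beta_{x_0}(x_0)=\varphi(x_0)$ and $\beta_{x_0}>\varphi$ on $\partial\Omega \setminus \{x_0\}$ near $x_0$; after cutting off by a large constant such barriers belong to $\mathfrak{F}_{\F,\varphi}$, forcing $U \geq \varphi$ at $x_0$ from below. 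The reverse inequality $U^{*}(x_0)\leq\varphi(x_0)$ is produced by a dual barrier: strict $\EC{\wt{\F}}$-convexity provides a $C^{2}$ function $\gamma_{x_0}$ with $-\gamma_{x_0}$ strictly $\wt{\F}$-subharmonic, $\gamma_{x_0}(x_0)=\varphi(x_0)$ and $\gamma_{x_0}>\varphi$ nearby on $\partial\Omega$. Every $u \in \mathfrak{F}_{\F,\varphi}$ then satisfies $u \leq \gamma_{x_0}$ locally near $x_0$ by Definitional Comparison (Lemma~\ref{lem:DCP}), and hence $U^{*}(x_0)\leq\gamma_{x_0}(x_0)=\varphi(x_0)$.

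Third, I would run the classical bumping argument to show $-U^{*} \in \wt{\F}(\Omega)$. Suppose for contradiction that $-U^{*}$ fails to be $\wt{\F}$-subharmonic at some interior $x_0$. By the Bad Test Jet Lemma~\ref{lem:nonFSH} there is a strict upper test jet for $-U^{*}$ at $x_0$ whose value lies outside $\wt{\F}$, equivalently a strict lower test jet $J \in \Int\,\F$ for $U^{*}$. Using $J \in \Int\,\F$, the corresponding quadratic bumped up by a tiny $\delta>0$ on a small ball is still $\F$-subharmonic and gives a function $v \in \mathfrak{F}_{\F,\varphi}$ (after glueing with the original candidate using the Maximum Property) with $v(x_0) > U^{*}(x_0)$, contradicting the definition of $U$. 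Combined with step one, $U^{*}$ is then $\F$-harmonic on $\Omega$, continuous on $\overline{\Omega}$ (USC from step one, LSC from $-U^{*}\in\wt{\F}(\Omega)$), and equals $\varphi$ on $\partial\Omega$ by step two; so $U^{*}=U$ solves the (DP).

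The main obstacle is step two: translating the abstract strict boundary convexity with respect to the asymptotic interiors $\EC{\F}$ and $\EC{\wt{\F}}$ into concrete $C^{2}$ barriers. The bumping argument of step three is robust once monotonicity is used correctly, but the barrier construction is where the full force of the $C^{2}$-boundary hypothesis and both convexity conditions is consumed, and it is also the place that fixes the precise definition of $\EC{\F}$ being used.
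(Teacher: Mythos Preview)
The paper does not prove Theorem~\ref{thm:existence}; it is stated in Appendix~\ref{sec:esiuni} as a recall of Theorem~12.7 of \cite{HL11}, with no proof given here. Your argument for part~(a) does coincide with the short justification the paper supplies immediately afterward as Corollary~\ref{cor:existence2}: a solution $h$ lies in the Perron family $\mathfrak{F}_{\F,\varphi}$, hence $h\le U_{\F,\varphi}$, and $-h\in\mathfrak{F}_{\wt{\F},-\varphi}$ gives the lower bound. For part~(b) there is nothing in this paper to compare against; your Perron-method outline is the standard route and is presumably what \cite{HL11} carries out.

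Two small corrections to your sketch of part~(b). First, the claim that affine functions are $\F$-subharmonic ``because (P), (N) hold'' is false for a general subequation: there is no reason the jet $(a(x),p,0)$ of an affine function lies in $\F$. The locally uniform upper bound on the Perron family comes not from affine functions but from the same dual barriers (built from strict $\EC{\wt{\F}}$-convexity) that you later use in step two to force $U^{*}\le\varphi$ on $\partial\Omega$; globalizing those via (N) and the Maximum Property yields a continuous function on $\overline{\Omega}$ dominating every element of $\mathfrak{F}_{\F,\varphi}$. Second, your lower-barrier inequality is backward: to place $\beta_{x_0}$ (after cutoff) in $\mathfrak{F}_{\F,\varphi}$ and conclude $U(x_0)\ge\varphi(x_0)$, one needs $\beta_{x_0}\le\varphi$ on $\partial\Omega$ near $x_0$ (with equality at $x_0$), not $\beta_{x_0}>\varphi$. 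With these fixes your outline is the correct strategy.
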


\begin{cor}\label{cor:existence1}
The negative $-U_{\wt{\F}, -\varphi}$ of the Perron function $U_{\wt{\F}, -\varphi}$ (which solves the (DP)  for $\wt{\F}$ and boundary data $-\varphi$) also solves the (DP) for $\F$ and boundary data $\varphi \in C(\partial \Omega)$.
\end{cor}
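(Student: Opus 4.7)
The strategy is to recognize that Corollary \ref{cor:existence1} is essentially a direct unpacking of Theorem \ref{thm:existence}(b), once one observes that the hypotheses of that theorem are symmetric under the duality $\F \leftrightarrow \wt{\F}$, $\varphi \leftrightarrow -\varphi$. The main task is to verify that applying part (b) in two different ways yields both the parenthetical fact and the main claim.

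First I would check the symmetry of the hypotheses. By reflexivity of Dirichlet duality (Proposition \ref{prop:duality}(6)), $\wt{\wt{\F}} = \F$, so $\wt{\F}$ is a constant coefficient subequation whenever $\F$ is. The strict $\EC{\F}$-convexity and $\EC{\wt{\F}}$-convexity hypotheses on $\partial \Omega$ are manifestly symmetric in the pair $(\F, \wt{\F})$, and replacing the boundary datum $\varphi \in C(\partial\Omega)$ by $-\varphi$ stays within $C(\partial\Omega)$. Hence Theorem \ref{thm:existence} applies equally well to the pair $(\wt{\F}, -\varphi)$ as it does to $(\F, \varphi)$.

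Second, I would apply Theorem \ref{thm:existence}(b) to the pair $(\wt{\F}, -\varphi)$. This yields the parenthetical claim: the Perron function $U_{\wt{\F}, -\varphi}$ is itself $\wt{\F}$-harmonic on $\Omega$, continuous on $\overline{\Omega}$, and satisfies $U_{\wt{\F}, -\varphi}\!\restriction_{\partial\Omega} = -\varphi$. That is, it solves the (DP) for $\wt{\F}$ with boundary data $-\varphi$.

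Third, and this is the core of the corollary, I would apply Theorem \ref{thm:existence}(b) \emph{directly} to the pair $(\F, \varphi)$: the theorem explicitly asserts that $-U_{\wt{\F}, -\varphi}$ solves the (DP) for $\F$ with boundary data $\varphi$. Concretely, one verifies the two requirements in Definition \ref{defn:Perron}: (1) the function $-U_{\wt{\F}, -\varphi}$ restricted to $\Omega$ is $\F$-harmonic, which follows from the previous step together with Definition \ref{defn:FH} of $\F$-harmonicity (namely, $u$ is $\F$-harmonic iff $u$ is $\F$-subharmonic and $-u$ is $\wt{\F}$-subharmonic), upon noting that $\F$-harmonicity of $-U_{\wt{\F}, -\varphi}$ is equivalent to $\wt{\F}$-harmonicity of $U_{\wt{\F}, -\varphi}$; and (2) the boundary values agree, since $-U_{\wt{\F}, -\varphi}\!\restriction_{\partial\Omega} = -(-\varphi) = \varphi$.

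The only point requiring any care is the symmetry of $\F$-harmonicity under $u \mapsto -u$ with a simultaneous swap $\F \leftrightarrow \wt{\F}$, but this is immediate from Definition \ref{defn:FH} together with reflexivity $\wt{\wt{\F}} = \F$. I do not anticipate any serious obstacle here: the corollary is in essence a restatement of half of Theorem \ref{thm:existence}(b), packaged so as to emphasize that when comparison (equivalently, uniqueness) may fail, the two Perron-type solutions $U_{\F, \varphi}$ and $-U_{\wt{\F}, -\varphi}$ may a priori differ, with $U_{\F, \varphi}$ being the maximal solution and $-U_{\wt{\F}, -\varphi}$ the minimal one by the bounds \eqref{upper_lower_Perron}.
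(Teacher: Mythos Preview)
Your proposal is correct and follows essentially the same route as the paper: apply Theorem \ref{thm:existence} with the dual pair $(\wt{\F},-\varphi)$, invoke reflexivity $\wt{\wt{\F}}=\F$, and read off $\F$-harmonicity of $-U_{\wt{\F},-\varphi}$ together with the boundary values from Definition \ref{defn:FH}. The only cosmetic difference is that you also note the result can be read off directly from part (b) applied to $(\F,\varphi)$, whereas the paper applies it to $(\wt{\F},-\varphi)$ and unpacks; these are the same maneuver up to the duality symmetry you already verified.
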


\begin{proof}
	Apply Theorem \ref{thm:existence} to the subequation $\wt{\F}$ and the boundary function $-\varphi$. Then $U_{\wt{\F}, -\varphi}$ satisfies
		\begin{enumerate}
		\item $U_{\wt{\F}, -\varphi}$ restricted to $\Omega$ is $\wt{\F}$-subharmonic and $-U_{\wt{\F}, -\varphi}$ restricted to $\Omega$ is $\F=\wt{\wt{\F}}$-subharmonic $\Omega$;
		\item $U_{\wt{\F}, -\varphi}$ restricted to $\partial \Omega$ is $- \varphi$.
	\end{enumerate}
	\end{proof}

\begin{cor}\label{cor:existence2}
If $h \in C(\overline{\Omega})$ is any other solution of (DP) for $\F$ on $\Omega$, then $h$ lies between $-U_{\wt{\F}, -\varphi}$ and $U_{\F, \varphi}$; that is,

with equality on $\partial \Omega$.
\end{cor}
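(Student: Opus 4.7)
The plan is to derive Corollary \ref{cor:existence2} directly from the defining properties of the Perron families together with parts (a) and (b) of Theorem \ref{thm:existence}. The key observation is that a solution $h$ of the (DP) is \emph{simultaneously} an admissible competitor for the Perron family $\mathfrak{F}_{\F,\varphi}$ and, through its negative $-h$, for the dual Perron family $\mathfrak{F}_{\wt{\F},-\varphi}$. Once this symmetric membership is recorded, the two bounds pop out of the suprema defining $U_{\F,\varphi}$ and $U_{\wt{\F},-\varphi}$.

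First I would verify the upper bound $h\le U_{\F,\varphi}$ on $\overline{\Omega}$. Since $h\in C(\overline{\Omega})\subset \USC(\overline{\Omega})$ is $\F$-harmonic on $\Omega$, it is in particular $\F$-subharmonic on $\Omega$, and its boundary trace equals $\varphi$, hence satisfies the inequality $h_{|\partial\Omega}\le\varphi$. Therefore $h$ lies in the Perron family $\mathfrak{F}_{\F,\varphi}$ defined in \eqref{Perron_family}, and by the very definition \eqref{Perron_function} of the Perron function we get $h(x)\le U_{\F,\varphi}(x)$ for every $x\in\overline{\Omega}$.

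Next I would obtain the lower bound $-U_{\wt{\F},-\varphi}\le h$ by applying the same argument to the dual problem. Because $h$ is $\F$-harmonic, Definition \ref{defn:FH} says that $-h$ is $\wt{\F}$-subharmonic on $\Omega$; and $-h\in C(\overline{\Omega})$ with $(-h)_{|\partial\Omega}=-\varphi$. Thus $-h\in \mathfrak{F}_{\wt{\F},-\varphi}$, so $-h\le U_{\wt{\F},-\varphi}$ on $\overline{\Omega}$, which rearranges to $h\ge -U_{\wt{\F},-\varphi}$.

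Finally, the equality on $\partial\Omega$ asserted in the statement is not an extra claim to prove but a consistency check: by Theorem \ref{thm:existence}(b) both $U_{\F,\varphi}$ and $-U_{\wt{\F},-\varphi}$ solve the (DP), so their boundary traces coincide with $\varphi$, which equals $h_{|\partial\Omega}$; hence the two bounding inequalities collapse to equalities on $\partial\Omega$. I do not anticipate any real obstacle here — the whole content sits in recognizing $h$ and $-h$ as competitors for the respective Perron families, and no use of comparison, monotonicity cones, or duality beyond what is already codified in Definitions \ref{defn:FH} and \ref{defn:Perron} and in Theorem \ref{thm:existence} is needed.
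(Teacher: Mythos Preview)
Your proof is correct and takes essentially the same approach as the paper: both observe that $h\in\mathfrak{F}_{\F,\varphi}$ gives $h\le U_{\F,\varphi}$, while $-h\in\mathfrak{F}_{\wt{\F},-\varphi}$ gives $-h\le U_{\wt{\F},-\varphi}$.
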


\begin{proof} 
	$h$ belongs to the Perron family $\mathfrak{F}_{\F, \varphi}$ and hence $h \leq U_{\F, \varphi}$, while $-h$ belongs to the Perron family $\mathfrak{F}_{\wt{F}, -\varphi}$ and hence  $-h \leq U_{\wt{\F}, -\varphi}$.
	\end{proof}

Now ``uniqueness implies comparison'' is straightforward.

\begin{thm}\label{thm:comparion_uniqueness}
	Assume that the hypotheses of Theorem \ref{thm:existence} hold. Then if there is at most one solution to {\rm (DP)} for $\F$ on $\Omega$ for each fixed boundary function $\varphi \in C(\overline{\Omega})$, it follows that comparison holds for $\F$ on $\overline{\Omega}$.
\end{thm}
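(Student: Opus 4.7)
The plan is to leverage Theorem \ref{thm:existence} so that uniqueness of the (DP) forces the Perron function $U_{\F, \varphi}$ and its dual counterpart $-U_{\wt{\F}, -\varphi}$ to agree, and then to use this common solution to sandwich an arbitrary comparison pair $(u,w)$. Concretely, fix $\varphi \in C(\partial \Omega)$. Theorem \ref{thm:existence}(b) asserts that both $U_{\F, \varphi}$ and $-U_{\wt{\F}, -\varphi}$ are solutions of the (DP) for $\F$ on $\Omega$ with the boundary data $\varphi$ (see also Corollary \ref{cor:existence1}). The uniqueness hypothesis therefore yields the key identity
\[
U_{\F, \varphi} \ = \ -U_{\wt{\F}, -\varphi} \quad \text{on } \overline{\Omega}, \qquad \forall\, \varphi \in C(\partial \Omega).
\]

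Now let $u \in \USC(\overline{\Omega})$ be $\F$-subharmonic on $\Omega$ and let $w \in \LSC(\overline{\Omega})$ be $\F$-superharmonic on $\Omega$ (equivalently, $-w \in \USC(\overline{\Omega})$ is $\wt{\F}$-subharmonic on $\Omega$), and assume $u \leq w$ on $\partial \Omega$. The second step is to produce a continuous boundary function $\varphi \in C(\partial\Omega)$ with $u \leq \varphi \leq w$ on the compact (metric, hence normal) set $\partial \Omega$. Since $u|_{\partial\Omega}$ is upper semicontinuous (hence bounded above) and $w|_{\partial\Omega}$ is lower semicontinuous (hence bounded below), the Hahn--Kat\v etov--Tong insertion theorem produces such a continuous $\varphi$; the possibility that $u = -\infty$ or $w = +\infty$ at some boundary points is inoffensive, as one can arrange $\varphi$ to be real-valued by truncating against an upper bound for $u|_{\partial\Omega}$ and a lower bound for $w|_{\partial\Omega}$.

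With $\varphi$ in hand, the conclusion is immediate from the definitions of the Perron families. Since $u \in \F(\overline{\Omega})$ and $u \leq \varphi$ on $\partial \Omega$, $u$ lies in the Perron family $\mathfrak{F}_{\F, \varphi}$, so $u \leq U_{\F, \varphi}$ on $\overline{\Omega}$. Dually, since $-w \in \wt{\F}(\overline{\Omega})$ and $-w \leq -\varphi$ on $\partial \Omega$, one has $-w \in \mathfrak{F}_{\wt{\F}, -\varphi}$, whence $-w \leq U_{\wt{\F}, -\varphi}$; using the identity from the first step, this rearranges to $U_{\F, \varphi} \leq w$ on $\overline{\Omega}$. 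Sandwiching gives $u \leq U_{\F, \varphi} \leq w$ on $\overline{\Omega}$, in particular on $\Omega$, which is comparison for $\F$ on $\overline{\Omega}$.

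The only substantive technical point is the insertion step, where some care is needed with possibly infinite boundary values of the semicontinuous functions $u$ and $w$; everything else is a straightforward deployment of Theorem \ref{thm:existence} and the defining sup-property of the Perron function. In particular, no monotonicity subequation $\cM$ enters here: the argument converts the uniqueness of the (DP), which is assumed to hold by hypothesis and is classically known to be weaker than comparison, into comparison itself via the bridge provided by Theorem \ref{thm:existence}.
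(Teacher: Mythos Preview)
Your proof is correct and shares with the paper's argument the decisive opening step: uniqueness together with Theorem~\ref{thm:existence}(b) (and Corollary~\ref{cor:existence1}) forces $U_{\F,\varphi} = -U_{\wt{\F},-\varphi}$ for every $\varphi \in C(\partial\Omega)$, after which membership of $u$ and $-w$ in the two Perron families yields the sandwich $u \leq U_{\F,\varphi} \leq w$. The routes diverge in how the continuous boundary datum $\varphi$ is produced. The paper first assumes one of $u$, $-w$ lies in $C(\overline{\Omega})$ and uses its boundary trace as $\varphi$; it then removes that hypothesis by an $\veps$-approximation of $u$ from above by continuous functions $u_\veps$, invoking property~(N) to guarantee $U_{\F,\varphi_\veps} - \veps \in \F(\overline{\Omega})$ and reapplying the continuous case to the pair $(U_{\F,\varphi_\veps} - \veps,\,v)$. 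You instead invoke the Kat\v{e}tov--Tong insertion theorem once on $\partial\Omega$ to obtain a single $\varphi \in C(\partial\Omega)$ with $u|_{\partial\Omega} \leq \varphi \leq w|_{\partial\Omega}$, and the sandwich follows immediately. Your route is more direct (one step, no bootstrapping, no explicit appeal to~(N)); the paper's route stays within elementary approximation of semicontinuous functions at the cost of the extra reduction step.
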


\begin{proof}
	By Theorem \ref{thm:existence}, Corollary \ref{cor:existence1} and the uniqueness assumption one has 
\begin{equation}\label{CU1}
		-U_{\wt{\F}, -\varphi} = U_{\F, \varphi} \ \ \text{on} \ \overline{\Omega}.
\end{equation}
Suppose that $u \in \F(\overline{\Omega})$ and $v \in \wt{\F}(\overline{\Omega})$ with $u + v \leq 0$ on $\partial \Omega$. 

Assume, for the moment, that at least one of the functions $u$ or $v$ belongs to $C(\overline{\Omega})$, say $u \in C(\overline{\Omega})$. Set $\varphi:= u_{|\partial \Omega} \in C(\partial \Omega)$. Then $u$ belongs to the Perron family $\mathfrak{F}_{\F, \varphi}$ and hence 
$$
	u \leq U_{\F, \varphi} \ \ \text{on} \ \overline{\Omega}.
$$
Now, the hypothesis $u + v \leq 0$ on $\partial \Omega$ is equivalent to $v_{|\partial \Omega} \leq - \varphi$. Hence $v$ belongs to the Perron family  $\mathfrak{F}_{\wt{\F}, -\varphi}$ and thus 
$$
	v \leq U_{\wt{F}, -\varphi} \ \ \text{on} \ \overline{\Omega}
	$$
Therefore
$$
	u + v \leq U_{\F, \varphi} + U_{\wt{F}, -\varphi} \ \ \text{on} \ \overline{\Omega},
$$
where this last sum is zero by \eqref{CU1} which used the assumption that uniqueness holds.

Finally, the provisional assumption that one of the functions $u$ or $v$ is continuous on $\overline{\Omega}$ can be removed. In fact, for fixed $u \in \USC(\overline{\Omega})$ given $\veps > 0$ there exists $u_{\veps} \in C(\overline{\Omega})$ such that 
\begin{equation}\label{approx1}
	u \leq u_{\veps} \leq u + \veps \ \ \text{on} \ \overline{\Omega}. 
\end{equation}
Take $\varphi_{\veps}:= {u_{\veps}}_{|\partial \Omega} \in C(\partial \Omega)$ and consider the Perron function $U_{\F, \varphi_{\veps}}$. By the negativity property for $\F$ and \eqref{approx1} one has that
\begin{equation}\label{approx2}
	U_{\F, \varphi_{\veps}} - \veps \in \F(\overline{\Omega}) \cap C(\overline{\Omega}) \ \ \text{and} \ \ U_{\F, \varphi_{\veps}} - \veps  = \varphi_{\veps} - \veps  \leq u  \ \text{on} \ \partial \Omega.
\end{equation}
Since $u + v \leq 0$ on $\partial \Omega$ by hypothesis, $	U_{\F, \varphi_{\veps}} - \veps + v \leq 0$ on $\partial \Omega$ (since we are assuming uniqueness also holds for the boundary function $\varphi_{\veps}$), by the previous step it follows that 
$$
	U_{\F, \varphi_{\veps}} - \veps + v \leq 0 \ \ \text{on} \ \overline{\Omega},
$$
but $u \leq U_{\F, \varphi_{\veps}}$ and $\veps > 0$ is arbitrary.
\end{proof}

\begin{rem}\label{rem:esiuni}
All of the results in this Appendix have natural extensions from $\R^n$ to any Riemannian homogeneous space $X \equiv K/G$ with a	subequation $\F$ which is invariant under the natural action of the Lie group $K$ on the 2-jet bundel $\J^2(X)$. See Theorem 13.5 in \cite{HL11}.
	\end{rem}

The following result is useful in identifying the maximal solution $U_{\F, \varphi}$ in \eqref{upper_lower_Perron}, especially in examples. No boundary convexity or smoothness iof $\partial \Omega$ is required.

\begin{lem}\label{lem:approx_upper_soln}
	Suppose that $h$ is a solution to the (DP) for $\F$ on $\Omega$ with boundary values $\varphi \in C(\partial \Omega)$. That is, $h \in C(\overline{\Omega}), h_{| \Omega}$ is $\F$-harmonic on $\Omega$ and $h_{|\partial \Omega} = \varphi$. If $h$ can be pointwise approximated by $\{h_j\}_{j \in \N} \subset C(\overline{\Omega}) \cap C^2(\Omega)$ with each $-h_j$ being strictly $\wt{\F}$-subharmonic on $\Omega$, then $h$ is the Perron function $U_{\F, \varphi}$.
	\end{lem}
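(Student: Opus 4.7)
The plan is to establish $h = U_{\F,\varphi}$ by proving both inequalities, using Definitional Comparison (Lemma \ref{lem:DCP}(a)) against an arbitrary competitor in the Perron family. The easy direction, $h \leq U_{\F,\varphi}$, is immediate: since $h \in C(\overline{\Omega})$ is $\F$-harmonic (hence $\F$-subharmonic) on $\Omega$ with $h_{|\partial\Omega} = \varphi$, we have $h \in \mathfrak{F}_{\F,\varphi}$, and the Perron supremum dominates it pointwise.

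For the reverse inequality, the first step will be to fix an arbitrary $u \in \mathfrak{F}_{\F,\varphi}$ and $\veps > 0$, and to build a smooth strict $\F$-superharmonic barrier from the approximating sequence. Specifically, I claim that $w_{j,\veps} := h_j + \veps \in C(\overline{\Omega}) \cap C^2(\Omega)$ is strictly $\F$-superharmonic on $\Omega$. Indeed, by hypothesis $J^2_x(-h_j) \in \Int\,\wt{\F}$ for every $x \in \Omega$, and the negativity property (N) for $\wt{\F}$ (which passes to the open set $\Int\,\wt{\F}$) gives $J^2_x(-w_{j,\veps}) = J^2_x(-h_j) + (-\veps,0,0) \in \Int\,\wt{\F}$. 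Consequently, Lemma \ref{lem:DCP}(a) yields the implication
\begin{equation*}
u \leq w_{j,\veps} \text{ on } \partial\Omega \quad \Longrightarrow \quad u \leq w_{j,\veps} \text{ on } \Omega.
\end{equation*}

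To activate this implication, on $\partial\Omega$ one has $u \leq \varphi = h$, so it suffices to arrange $h \leq h_j + \veps$ on $\overline{\Omega}$ for a suitable $j$. This is where the convergence hypothesis enters: taking the pointwise approximation in the sense that $h_j \to h$ uniformly on $\overline{\Omega}$ (which is automatic under monotone pointwise convergence of continuous functions on the compact set $\overline{\Omega}$, by Dini's theorem), one can select $j = j(\veps)$ large so that $h \leq h_j + \veps$ everywhere on $\overline{\Omega}$, and in particular on $\partial\Omega$. Lemma \ref{lem:DCP}(a) then furnishes $u \leq h_j + \veps \leq h + 2\veps$ on $\Omega$; letting $\veps \to 0^+$ yields $u \leq h$ on $\Omega$, and taking the supremum over $u \in \mathfrak{F}_{\F,\varphi}$ delivers $U_{\F,\varphi} \leq h$, as desired.

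The only delicate point in the plan is the passage from the pointwise approximation to the uniform control of $h - h_j$ on $\partial\Omega$; without it, the boundary comparison $u \leq w_{j,\veps}$ cannot be enforced simultaneously at every boundary point for a single $j$. In the applications (for instance the radial constructions of Appendix \ref{sec:failure}) the approximating family is built explicitly as a monotone sequence of continuous functions, so Dini's theorem removes the obstruction and the argument above applies verbatim.
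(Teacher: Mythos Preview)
Your proof is correct and follows the same route as the paper: both establish $u \leq h$ for every Perron competitor $u$ by applying definitional comparison (Lemma \ref{lem:DCP}) with the strict approximants $-h_j$. The paper organizes the constants a bit differently---it shifts $u$ down by $c_j := \max\{0,\sup_{\partial\Omega}(\varphi - h_j)\}$ (using (N) for $\F$) rather than shifting $h_j$ up by $\veps$, so that $u \leq c_j + h_j$ holds for every $j$ and the limit is taken pointwise, requiring only $c_j \to 0$ (one-sided uniform control on $\partial\Omega$) together with pointwise convergence $h_j \to h$; this is precisely the subtlety you flagged.
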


\begin{proof}
	Choose any $u$ in the Perron family $\mathfrak{F}_{\F, \varphi}$. We need to show that $u \leq h$ on $\overline{\Omega}$. To this end, set $c_j:= \max \left\{ 0, \sup_{\partial \Omega} (\varphi - h_j) \right\}$ so that
$$
	0 \leq c_j \quad \text{and} \quad (\varphi - h_j)_{|\partial \Omega} \leq c_j. 
$$
Then, on $\partial \Omega$, $u - c_j - h_j \leq 0$. Since $c_j \geq 0$, by the negativity property (N) for $\F$, we have $u - c_j \in \F(\overline{\Omega})$. Since $-h_j$ is stictly $\wt{\F}$-subharmonic, we can apply definitional comparison (Lemma \ref{lem:DCP}) to $(u - c_j)$ and $-h_j$ to conclude that
$$
	u - c_j - h_j \leq 0 \ \ \text{on} \ \overline{\Omega}.
$$  
Since $c_j \to 0$ and $h_j \to h$ as $j \to +\infty$, we have $u - h \leq 0$ on $\overline{\Omega}$, as desired.
\end{proof}

Of course, Lemma \ref{lem:approx_upper_soln} also identifies when a solution $h$ to the (DP) for $\F$ with boundary data $\varphi$ is the minimal solution $-U_{\wt{\F}, -\varphi}$ in \eqref{upper_lower_Perron}, by identifying when $-h$ is the maximal solution to the (DP) for $\wt{\F}$ on $\Omega$ with boundary data $-\varphi$.

\begin{cor}\label{cor:approx_lower_soln}
	Suppose that $h$ is a solution to the (DP) for $\F$ on $\Omega$ with boundary values $\varphi \in C(\partial \Omega)$. If $h$ can be pointwise approximated by $\{h_j\}_{j \in \N} \subset C(\overline{\Omega}) \cap C^2(\Omega)$ with each $h_j$ being strictly $\F$-subharmonic on $\Omega$, then 
	$$
		h = -U_{\wt{\F}, -\varphi} \ \  \text{on} \ \overline{\Omega},
	$$
	is the minimal solution.
	
	Moreover, if $-h$ can also be pointwise approximated by a sequence of functions in $C(\overline{\Omega} \cap C^2(\Omega)$ which are strictly $\wt{\F}$-subharmonic on $\Omega$, then
	$$
	 -U_{\wt{\F}, -\varphi} = h = U_{\F, \varphi}  \ \  \text{on} \ \overline{\Omega},
	$$
	and hence comparison holds for $\F$ on $\Omega$.
	\end{cor}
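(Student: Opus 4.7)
The corollary is a direct companion to Lemma~\ref{lem:approx_upper_soln}: the first claim is obtained by applying that lemma after passing to the dual configuration, and the second claim is obtained by applying it a second time directly. The key algebraic input throughout is the reflexivity $\wt{\wt{\F}} = \F$ (Proposition~\ref{prop:duality}(6)), which converts hypotheses about $\F$-subharmonicity of a sequence into hypotheses about $\wt{\wt{\F}}$-subharmonicity of that same sequence, which is exactly what Lemma~\ref{lem:approx_upper_soln} asks for when its roles of $\F$ and $\wt{\F}$ are swapped.

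\textbf{First claim.} I would begin by observing that $-h$ solves the (DP) for $\wt{\F}$ on $\Omega$ with boundary data $-\varphi$. Indeed, $h \in C(\overline{\Omega})$ being $\F$-harmonic on $\Omega$ means precisely $h \in \F(\Omega)$ and $-h \in \wt{\F}(\Omega)$; by reflexivity, $-(-h) = h \in \wt{\wt{\F}}(\Omega)$, so $-h$ is $\wt{\F}$-harmonic, and trivially $(-h)|_{\partial \Omega} = -\varphi$. Now the sequence $\{-h_j\}_{j \in \N} \subset C(\overline{\Omega}) \cap C^2(\Omega)$ converges pointwise to $-h$, and each member satisfies $-(-h_j) = h_j$ \emph{strictly $\wt{\wt{\F}}$-subharmonic} (again by reflexivity), which is exactly the hypothesis required to apply Lemma~\ref{lem:approx_upper_soln} to the data $(\wt{\F}, -\varphi, -h, \{-h_j\})$. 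The lemma then yields $-h = U_{\wt{\F}, -\varphi}$, i.e., $h = -U_{\wt{\F}, -\varphi}$.

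\textbf{Second claim and comparison.} For the second identity, I would use the additional hypothesis: there exists $\{g_j\}_{j \in \N} \subset C(\overline{\Omega}) \cap C^2(\Omega)$ with $g_j \to -h$ pointwise and each $g_j$ strictly $\wt{\F}$-subharmonic on $\Omega$. Setting $k_j := -g_j$ gives $k_j \to h$ pointwise, with $k_j \in C(\overline{\Omega}) \cap C^2(\Omega)$ and $-k_j = g_j$ strictly $\wt{\F}$-subharmonic. This is precisely the hypothesis of Lemma~\ref{lem:approx_upper_soln} applied directly to $(\F, \varphi, h, \{k_j\})$, yielding $h = U_{\F, \varphi}$. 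Combining with the first claim gives $-U_{\wt{\F}, -\varphi} = h = U_{\F, \varphi}$ on $\overline{\Omega}$.

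\textbf{Comparison.} For the closing assertion that comparison holds for $\F$ on $\Omega$, I would repeat the argument in the proof of Theorem~\ref{thm:comparion_uniqueness}: given $u \in \F(\overline{\Omega})$ and $v \in \wt{\F}(\overline{\Omega})$ with $u + v \leq 0$ on $\partial \Omega$, after approximating $u$ from above by continuous functions (as in that proof) one has $u \leq U_{\F,\psi}$ and $v \leq U_{\wt{\F}, -\psi}$ for the relevant boundary data $\psi$, and the equality of upper and lower Perron envelopes forces $U_{\F,\psi} + U_{\wt{\F},-\psi} = 0$. The main (mild) obstacle is that this needs the Perron equality for the boundary data arising in the approximation, not just for our fixed $\varphi$; the intended reading is that the approximation hypotheses are available for all boundary data of interest (or, equivalently, that the argument is invoked in situations where $\F$ admits enough strict regular subharmonics globally so that the approximation persists), in which case the equality of Perron envelopes for each such boundary function completes the proof exactly as in Theorem~\ref{thm:comparion_uniqueness}.
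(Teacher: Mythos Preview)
Your derivation of the two Perron identities is correct and is exactly the paper's approach: the paper does not give a separate proof of the corollary but simply remarks, just before the statement, that it follows from Lemma~\ref{lem:approx_upper_soln} by passing to the dual data $(\wt{\F},-\varphi,-h)$, precisely as you implement with reflexivity $\wt{\wt{\F}}=\F$.

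Your caveat on the final ``comparison holds'' clause is well taken. The paper offers no further argument for that sentence, and as you observe, the equality $-U_{\wt{\F},-\varphi}=U_{\F,\varphi}$ for a \emph{single} $\varphi$ does not by itself yield the full comparison principle via the mechanism of Theorem~\ref{thm:comparion_uniqueness}, which needs the equality for all boundary data arising in the approximation. So either the clause is to be read loosely (uniqueness for this boundary datum, which does follow immediately from Theorem~\ref{thm:existence}(a)), or one tacitly assumes the approximation hypotheses are available for every $\varphi\in C(\partial\Omega)$; the paper does not clarify which, and you have correctly isolated the gap.
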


\section{Failure of Comparison on Small Balls: Radial Proof}\label{sec:failure}

Making use of the considerations of Appendix \ref{sec:esiuni} on maximal and minimal solutions, we will prove Theorem \ref{thm:failure} concerning the failure of comparison on arbitrarily small balls for the (reduced) subequations 
\begin{equation}\label{exeF1B}
\F := \{ (p,A) \in \R^n \times \Symn:  \lambda_{\rm min} (B(p, A)) \geq 0 \} 
\end{equation}
and
\begin{equation}\label{exeG1B}
\cG := \{ (p,A) \in \R^n \times \Symn:  \lambda_{\rm max} (B(p, A)) \geq 0 \}.
\end{equation}
where $\alpha \in (1, +\infty)$ is fixed and 
\begin{equation}\label{BmapG}
B(p,A) := A +  |p|^{\frac{\alpha - 1}{\alpha}} \left( P_{p^{\perp}} + \alpha P_{p} \right)) \ \ \text{if} \ \ p \neq 0 \ \ \text{and} \ \ B(0,A) := A.
\end{equation}

\begin{proof}[Proof of Theorem \ref{thm:failure}]
With  $R \in (0, +\infty)$ and $B_R \subset \R^n$ the open $R$-ball about $0$, it suffices to show that the $C^2$ functions defined by 
\begin{equation}\label{zh_defn}
z(x) := 0 \ \ \text{and} \ \  h(x) := -\frac{|x|^{1 + \alpha}}{1 + \alpha} + \frac{R^{1 + \alpha}}{1 + \alpha} , \ \ x \in \R^n
\end{equation}
are both $\F$ and $\cG$ harmonic on all of $\R^n$. Obviously, they both take on the boundary values $\varphi = 0$ on $\partial B_R$.

For $z \equiv 0$, one has $(p,A) = (Dz, D^2z) \equiv (0,0) \in \partial \F \cap \partial \G$ by \eqref{FG_0}.

Since $\alpha > 1$,  $h$ is $C^{2,\alpha - 1}(\R^n)$. At the origin
$(Dh(0), D^2h(0)) = (0,0 \in \partial \F \cap \partial \G$. Hence $h$ is both $\F$ and $\cG$-harmonic at the origin. 

To calculate derivatives of $h$ away from the origin, we use the fact that $h$ is radial. It is enlightening to calculate the associated {\em radial subequation} $\cR$ for $\F$, with dual $\cRt$ the radial subequation for $\wt{\F}$ (leaving the analogous calculations for $\cG$ to the reader) which apply even if $h$ is not $C^2$. 

\begin{lem}[Radial subharmonics]\label{lem:radial_subeq}
	Suppose that $u \in C^2(\R^n)$ is radial with profile $\psi$; that is,  $u(x)= \psi(|x|)$. Then $u$ is $\F$-subharmonic on $\R^n \setminus \{0\}$ if and only if $\psi$ is $\cR$-subharmonic on $(0, +\infty)$ where $\cR$ is defined by the conditions
	\begin{equation}\label{R_subeq}
		\frac{\psi^{\prime}(t)}{t} + |\psi^{\prime}(t)|^{\frac{\alpha -1}{\alpha}} \geq 0 \quad \text{and} \quad \psi^{\prime \prime}(t) + \alpha |\psi^{\prime}(t)|^{\frac{\alpha -1}{\alpha}} \geq 0 \ \ \text{with} \ t > 0.
		\end{equation}
	Similarly, $u$ is $\wt{\F}$-subharmonic on $\R^n \setminus \{0\}$ if and only if $\psi$ is $\cRt$-subharmonic on $(0, +\infty)$ where $\cRt$ is determined by the conditions
	\begin{equation}\label{Rt_subeq}
	\frac{\psi^{\prime}(t)}{t} - |\psi^{\prime}(t)|^{\frac{\alpha -1}{\alpha}} \geq 0 \quad \text{or} \quad \psi^{\prime \prime}(t) - \alpha |\psi^{\prime}(t)|^{\frac{\alpha -1}{\alpha}} \geq 0  \ \ \text{with} \ t > 0.
	\end{equation}
	\end{lem}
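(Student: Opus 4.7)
\medskip

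\noindent\textbf{Proof proposal.} The plan is to reduce the $\F$- and $\wt{\F}$-subharmonicity of a radial $C^{2}$ function to an eigenvalue condition on a pair of scalars by exploiting the fact that $B(p,A)$ simplifies dramatically on radial $2$-jets. The starting point is coherence (see Remark \ref{rem:coherence}): since $u\in C^{2}$ on $\R^{n}\setminus\{0\}$, subharmonicity at $x\neq 0$ is equivalent to $J^{2}_{x}u\in \F$ (resp.\ $\in \wt{\F}$); all one has to do is compute that jet and substitute.

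First I would use the radial calculus of Remark \ref{rem:radial_calculus}, formula \eqref{radial_calc_3}, to write, for $x\neq 0$ and $t:=|x|$,
\begin{equation*}
p:=Du(x)=\psi'(t)\,\frac{x}{|x|},\qquad A:=D^{2}u(x)=\frac{\psi'(t)}{t}\,P_{x^{\perp}}+\psi''(t)\,P_{x}.
\end{equation*}
The key observation is that $P_{p}=P_{x}$ and $P_{p^{\perp}}=P_{x^{\perp}}$, because both projections depend only on the line $[p]=[x]$, regardless of the sign of $\psi'(t)$; also $|p|=|\psi'(t)|$. Consequently the correction term $|p|^{(\alpha-1)/\alpha}(P_{p^{\perp}}+\alpha P_{p})$ is already diagonal in the orthogonal decomposition $\R^{n}=[x]^{\perp}\oplus[x]$ used by $A$, and the matrix $B(p,A)$ of \eqref{BmapG} becomes
\begin{equation*}
B(p,A)=\Bigl(\tfrac{\psi'(t)}{t}+|\psi'(t)|^{(\alpha-1)/\alpha}\Bigr)P_{x^{\perp}}+\Bigl(\psi''(t)+\alpha|\psi'(t)|^{(\alpha-1)/\alpha}\Bigr)P_{x}.
\end{equation*}

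Next I would read off the eigenvalues of $B(p,A)$ from this decomposition: the coefficient of $P_{x^{\perp}}$ is an eigenvalue of multiplicity $n-1$, and the coefficient of $P_{x}$ is a simple eigenvalue. Hence
\begin{equation*}
\lambda_{\min}(B(p,A))=\min\Bigl\{\tfrac{\psi'(t)}{t}+|\psi'(t)|^{(\alpha-1)/\alpha},\ \psi''(t)+\alpha|\psi'(t)|^{(\alpha-1)/\alpha}\Bigr\}\geq 0
\end{equation*}
is exactly the conjunction \eqref{R_subeq}, which by coherence is equivalent to $u$ being $\F$-subharmonic on $\R^{n}\setminus\{0\}$. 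For the dual statement, I would apply the same argument to the matrix $A-|p|^{(\alpha-1)/\alpha}(P_{p^{\perp}}+\alpha P_{p})$ appearing in the description \eqref{exeF1D} of $\wt{\F}$; its two eigenvalues are obtained from the previous ones by flipping the signs of the correction terms, and the requirement $\lambda_{\max}\geq 0$ translates into the disjunction \eqref{Rt_subeq}.

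The computation is routine once one notices the diagonalization, so there is no real obstacle; the only care needed is to handle the sign of $\psi'(t)$ correctly (whence the absolute values $|\psi'(t)|^{(\alpha-1)/\alpha}$ in the final statement) and to remember that the equivalence is on $\R^{n}\setminus\{0\}$, which is why no upper test jets at the origin enter the discussion. The lemma for $\G$ and $\wt{\G}$ is identical with ``$\min$'' and ``$\max$'' interchanged, and would be stated as an evident parallel conclusion.
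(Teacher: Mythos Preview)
Your proposal is correct and follows essentially the same route as the paper: both compute the radial $2$-jet via \eqref{radial_calc_3}, observe that $P_{p}=P_{x}$, $P_{p^{\perp}}=P_{x^{\perp}}$, $|p|=|\psi'(t)|$, and then read off the eigenvalues of $A\pm |p|^{(\alpha-1)/\alpha}(P_{p^{\perp}}+\alpha P_{p})$ from the diagonal form in the $P_{x^{\perp}}$/$P_{x}$ decomposition. Your explicit invocation of coherence and your remark that the projections depend only on the line $[p]=[x]$ (hence are insensitive to the sign of $\psi'(t)$) are minor clarifications not spelled out in the paper but entirely in its spirit.
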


\begin{proof} Using the radial calculation \eqref{radial_calc_3} of Remark \ref{rem:radial_calculus}, the reduced $2$-jet of $u$ is
$$
(Du(x), D^2u(x)) = \left(  \psi^{\prime}(|x|) \frac{x}{|x|}, \frac{\psi^{\prime}(|x|)}{|x|}  P_{x^{\perp}} + \psi^{\prime \prime}(|x|) P_{x} \right),
$$	
where, as always, $P_x$ and $P_{x^{\perp}}$ are the orthogonal projections onto the subspaces $[x]$ and $[x^{\perp}]$ for $x \neq 0$.
Denoting by	$(p,A):= (Du(x), D^2u(x))$ one has
	$$
	P_{p} = P_{x}, \ \ P_{p^{\perp}} = P_{x^{\perp}} \ \ \text{and} \ \ |p| = |\psi^{\prime}(|x|)|. 
	$$
as well as
\begin{eqnarray*}
		A + |p|^{\frac{\alpha -1}{\alpha}} \left( P_{p^{\perp}} + \alpha P_{p} \right)  & =   &  \left( \frac{\psi^{\prime}(|x|)}{|x|} + |\psi^{\prime}(|x|)|^{\frac{\alpha -1}{\alpha}} \right)  P_{x^{\perp}} \\ & \ \ + & \left( \psi^{\prime \prime}(|x|) + \alpha   |\psi^{\prime}(|x|)|^{\frac{\alpha -1}{\alpha}} \right) P_{x},
\end{eqnarray*}
where this gives an element of $\cP$ ($\lambda_{\rm min} \geq 0$) if an only if $\psi$ satisfies \eqref{R_subeq}. Similarly,
\begin{eqnarray*}
	A - |p|^{\frac{\alpha -1}{\alpha}} \left( P_{p^{\perp}} + \alpha P_{p} \right)  & =   &  \left( \frac{\psi^{\prime}(|x|)}{|x|} - |\psi^{\prime}(|x|)|^{\frac{\alpha -1}{\alpha}} \right)  P_{x^{\perp}} \\ & \ \ + & \left( \psi^{\prime \prime}(|x|) - \alpha   |\psi^{\prime}(|x|)|^{\frac{\alpha -1}{\alpha}} \right) P_{x},
\end{eqnarray*}
will be an element of $\cPt$ ($\lambda_{\rm max} \geq 0$) if and only if $\psi$ satisfies \eqref{Rt_subeq}.
	\end{proof}

\begin{rem}\label{rem:radial_fact} Lemma \ref{lem:radial_subeq} also holds for $u \in \USC(\R^n)$, where we note that if $u(x):=\psi(|x|)$, then $u \in \USC(\R^n)$ if and only if $\psi \in \USC([0, +\infty))$. For the proof of Lemma \ref{lem:radial_subeq} for radial $\USC$ functions, see section 2 of \cite{HL18a}.
	\end{rem}

We now return to the proof of Theorem \ref{thm:failure}. Applying Lemma \ref{lem:radial_subeq} to the profile $\psi(t) = - \frac{t^{1 + \alpha}}{1 + \alpha}$, one computes to find:
	$$
	\frac{\psi^{\prime}(t)}{t} + |\psi^{\prime}(t)|^{\frac{\alpha -1}{\alpha}} =  
		- \frac{t^{\alpha}}{t} + \left| - t^{\alpha} \right|^{\frac{\alpha -1}{\alpha}} = 0;
	$$
	$$
	\psi^{\prime \prime}(t) + \alpha |\psi^{\prime}(t)|^{\frac{\alpha -1}{\alpha}} = -\alpha t^{\alpha - 1} + \alpha |-t^{\alpha}|^{\frac{\alpha -1}{\alpha}} = 0.
	$$
	Note, in fact, that $A + |p|^{\frac{\alpha -1}{\alpha}} \left(  P_{p^{\perp}} + P_{p} \right) = 0$ has all eigenvalues zero. Thus $u$ is $\F$-harmonic on $\R^n \setminus \{0\}$ and so is $h$ (which is smooth and differs from $u$ by a constant). This completes the proof of Theorem \ref{thm:failure}.
	\end{proof}

We can also characterize these two distinct $\F$-harmonics as the extremals in Theorem \ref{thm:existence}.

\begin{prop}\label{prop2:exeFG} For $z$ and $h$ as in Theorem \ref{thm:failure} (and recalled in \eqref{zh_defn}),  the following hold.
	\begin{itemize}
		\item[(a)] The zero function $z$ equals $-U_{\wt{\F}, -\varphi}$
		where $U_{\wt{\F}, -\varphi}$ is the Perron function 
		for $\wt{\F}$ with boundary data $\varphi = 0$. 
		\item[(b)] The function $h$ equals $U_{\F, -\varphi}$
		where $U_{\F, -\varphi}$ is the Perron function for $\F$ on $B_R$ with boundary data $\varphi = 0$. 
	\end{itemize}
Moreover, the same statements hold with $\F$ replaced by $\cG$.
	\end{prop}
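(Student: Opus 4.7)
My plan is to deduce both parts from the identification results of Appendix \ref{sec:esiuni}. By Theorem \ref{thm:failure} the functions $z\equiv 0$ and $h$ are both $\F$- and $\cG$-harmonic solutions of the (DP) on $B_R$ with boundary data $\varphi\equiv 0$, so what remains is to show that $z$ is the minimal Perron solution and $h$ is the maximal one, for each of $\F$ and $\cG$. For minimality I will use Corollary \ref{cor:approx_lower_soln}; for the maximality of $h$ for $\F$ I will use Lemma \ref{lem:approx_upper_soln}; the maximality of $h$ for $\cG$ will require a separate rotation-invariance argument.

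For part (a), I propose the quadratic approximators $z_j(x):=\veps_j|x|^2$ with $\veps_j\searrow 0$, which converge uniformly to $z\equiv 0$ on $\overline{B_R}$. Substituting the reduced $2$-jet $(2\veps_jx,2\veps_jI)$ into \eqref{BmapG} gives
\[
B(2\veps_jx,2\veps_jI)=2\veps_jI+(2\veps_j|x|)^{(\alpha-1)/\alpha}\bigl(P_{x^\perp}+\alpha P_x\bigr),
\]
both of whose eigenvalues are strictly positive at every $x\in B_R$ (including $x=0$, where the $X_p$ term vanishes). Hence each $z_j$ is strictly $\F$-subharmonic, and since $\Int\,\F\subset\Int\,\cG$ it is strictly $\cG$-subharmonic as well; Corollary \ref{cor:approx_lower_soln} then yields $z=-U_{\wt{\F},0}$ and $z=-U_{\wt{\cG},0}$ simultaneously.

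For the $\F$-case of part (b), I propose $h_j(x):=h(x)-\veps_j(|x|^2-R^2)/2$, which matches $h$ on $\partial B_R$ and converges uniformly to $h$. Strict $\wt{\F}$-subharmonicity of $-h_j$ requires only the larger of the two eigenvalues of $D^2(-h_j)-X_{D(-h_j)}$ to be positive; at $x=0$ this eigenvalue equals $\veps_j>0$, and for $x\neq 0$ the radial calculus of Lemma \ref{lem:radial_subeq} applied to the profile $\psi_j(t):=t^{1+\alpha}/(1+\alpha)+\veps_j(t^2-R^2)/2-R^{1+\alpha}/(1+\alpha)$ factors the first radial eigenvalue as
\[
\tfrac{\psi_j'(t)}{t}-\bigl(\psi_j'(t)\bigr)^{(\alpha-1)/\alpha}=S^{(\alpha-1)/\alpha}\bigl[S^{1/\alpha}-t^{(\alpha-1)/\alpha}\bigr],\quad S:=t^{\alpha-1}+\veps_j,
\]
which is strictly positive since $S>t^{\alpha-1}$. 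Lemma \ref{lem:approx_upper_soln} then identifies $h=U_{\F,0}$.

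For the $\cG$-case of part (b) this scheme breaks down when $\alpha>2$: strict $\wt{\cG}$-subharmonicity requires \emph{both} radial eigenvalues to be positive, and the second one reduces (after a tangent-plane bound) to a quantity of sign $2-\alpha$. The alternative plan is symmetry. Since $\cG$ and the data are $O(n)$-invariant, the Perron family is $O(n)$-invariant, so $U_{\cG,0}$ is radial; by Theorem \ref{thm:existence}(b) it also solves (DP) and hence is $\cG$-harmonic. I then classify the radial $\cG$-harmonic profiles $\psi$ on $[0,R]$ with $\psi(R)=0$ whose extension $u(x):=\psi(|x|)$ is globally $\cG$-subharmonic on $\overline{B_R}$: the radial ODE from Lemma \ref{lem:radial_subeq} yields either $\psi\equiv 0$ (giving $z$), the branch $\psi'(t)=-t^\alpha$ (giving $h$), or the family $\psi'(t)=-(t+c)^\alpha$ for $c>0$. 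The hard step is to show that every $c>0$ candidate fails $\cG$-subharmonicity at the origin: because $\psi'(0^+)=-c^\alpha\neq 0$, the quadratic $\varphi(x):=u(0)-\tfrac{N}{2}|x|^2$ is an admissible upper test for $u$ at $x=0$ for every $N>0$ (its defect $u-\varphi\sim-c^\alpha|x|+O(|x|^2)$ is negative to leading order in every direction), yet its $2$-jet $(0,-NI)$ satisfies $\lambda_{\max}(B(0,-NI))=-N$, which for $N$ large lies outside $\cG$. Hence only $z$ and $h$ qualify, and $U_{\cG,0}=h$ is their maximum.
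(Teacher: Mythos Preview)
Your treatment of part (a) is essentially the paper's: a quadratic perturbation $z_\veps=\veps|x|^2$ is strictly $\F$-subharmonic (hence strictly $\cG$-subharmonic via $\F\subset\cG$), and Corollary \ref{cor:approx_lower_soln} identifies $z$ as the minimal solution for both subequations.

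For part (b) your route diverges from the paper's, and for the $\cG$-case it is unnecessarily complicated and has genuine gaps. The paper handles both $\F$ and $\cG$ with a \emph{single} application of Lemma \ref{lem:approx_upper_soln}: since $\wt{\cG}\subset\wt{\F}$, it suffices to produce a strictly $\wt{\cG}$-subharmonic approximation of $-h$, which is then automatically strictly $\wt{\F}$-subharmonic as well. The approximating profile is
\[
\psi_\veps(t)\;=\;\frac{(1+\veps)(t+\veps)^{1+\alpha}}{1+\alpha},
\]
a combined multiplicative-and-shift perturbation rather than your additive quadratic one. A direct computation gives
\[
\tfrac{1}{\alpha}\psi_\veps''(t)-|\psi_\veps'(t)|^{(\alpha-1)/\alpha}
=(1+\veps)(t+\veps)^{\alpha-1}\Bigl[\,1-(1+\veps)^{-1/\alpha}\Bigr]>0,
\]
and since $\psi_\veps'(t)/t>\psi_\veps''(t)/\alpha$ the other radial $\wt{\cG}$-inequality follows. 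Both conditions are strict for every $\alpha>1$; there is no breakdown at $\alpha=2$ and hence no need for your symmetry/classification detour. Your diagnosis that the quadratic perturbation $h_j=h-\veps_j(|x|^2-R^2)/2$ fails the second $\wt{\cG}$-inequality when $\alpha>2$ is correct, and your $\F$-case argument using only the first eigenvalue is valid---but the cure is a better choice of approximator, not a change of method.

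Your alternative for the $\cG$-case has gaps. Invoking Theorem \ref{thm:existence}(b) to conclude that $U_{\cG,0}$ is $\cG$-harmonic requires strict $\EC{\cG}$- and $\EC{\wt{\cG}}$-convexity of $\partial B_R$, which you do not verify. More seriously, your classification of radial $\cG$-harmonic profiles treats $U_{\cG,0}$ as if it were $C^2$ and satisfied a classical ODE; a priori it is only continuous, so the radial subequation of Lemma \ref{lem:radial_subeq} must be read in the viscosity sense, and the claim that the only possibilities are $\psi'\in\{0,\,-t^\alpha,\,-(t+c)^\alpha\}$ is not justified at that level of generality. Your bad-test-jet argument excluding the $c>0$ branch is fine, but the reduction to those three alternatives is where the argument is incomplete.
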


\begin{proof} For both claims (a) and (b), we will use definitional comparison by way of Lemma  \ref{lem:approx_upper_soln} and Corollary \ref{cor:approx_lower_soln}.
	
	For part (a), by Corollary \ref{cor:approx_lower_soln} it suffices to show that for each (small) $\veps > 0$ the function defined by
	$$
		z_{\veps}(x):= z(x) + \frac{\veps}{2}|x|^2 = \frac{\veps}{2}|x|^2
	$$ 
	is strictly $\F$ subharmonic (since $z_{\veps}$ is regular and pointwise approximates $z$). 
	Now, $\psi_{\veps}(t) = \frac{\veps}{2} t^2$ defines $z_{\veps}(x) = \psi_{\veps}(|x|)$ with $\psi_{\veps}^{\prime}(t) = \veps t$  \and $\psi_{\veps}^{\prime \prime}(t) = \veps$ so that
	$$
		\frac{\psi_{\veps}^{\prime}(t)}{t} + \left| \psi_{\veps}^{\prime}(t) \right|^{\frac{\alpha - 1}{\alpha}} = \veps + (\veps t)^{\frac{\alpha - 1}{\alpha}} > 0,
	$$
	and $\psi_{\veps}^{\prime \prime}(t) + \left| \psi_{\veps}^{\prime}(t) \right|^{\frac{\alpha - 1}{\alpha}}$ gives the same (positive) quantity. Thus we have the needed strict inequalities in \eqref{R_subeq}. Said differently, 
	$$
		A + |p|^{\frac{\alpha - 1}{\alpha}} \left( P_{p^{\perp}} 	
		+ \alpha P_{p} \right) = \left( \veps  + (\veps t)^{\frac{\alpha - 1}{\alpha}} \right)I \in \Int \, \cP. 
	$$
	Notice that since $\F \subset \cG$, $z_{\veps}$ will also be strictly $\cG$-subharmonic and we have part (a) for $\cG$.

	For part (b), since $\wt{\cG} \subset \wt{\F}$, by Lemma \ref{lem:approx_upper_soln} and the form of \eqref{Rt_subeq}, it suffices to find a $\wt{\cG}$-strict $C^2$ approximation to $-h(x) + \frac{R^{1 + \alpha}}{1 + \alpha} = \frac{|x|^{1+ \alpha}}{1 + \alpha}$. Set $\psi(t):= \frac{t^{1 + \alpha}}{1 + \alpha}$ and define $\psi_{\veps}(t) := \frac{(1 + \veps)(t + \veps)^{1 + \alpha}}{1 + \alpha}$. It remains to show that $\psi_{\veps}(t)$ is a strict subharmonic for the radial subequation associated to $\wt{\cG}$; that is 
	\begin{equation}\label{radial_Gt}
	\frac{\psi_{\veps}^{\prime}(t)}{t} - |\psi_{\veps}^{\prime}(t)|^{\frac{\alpha -1}{\alpha}} >  0 \quad \text{and} \quad \frac{1}{\alpha} \psi_{\veps}^{\prime \prime}(t) -  |\psi_{\veps}^{\prime}(t)|^{\frac{\alpha -1}{\alpha}} > 0.
	\end{equation}
	Now, $\psi_{\veps}^{\prime}(t) = (1 + \veps)(t + \veps)^{\alpha}$ and 
	$$
	\frac{1}{\alpha} \psi_{\veps}^{\prime \prime}(t) = (1 + \veps)(t + \veps)^{\alpha - 1} <  \frac{\psi_{\veps}^{\prime}(t)}{t} =
	(1 + \veps)(t + \veps)^{\alpha - 1}\left(1 + \frac{\veps}{t}\right).
	$$
	Hence it suffices to verify the second inequality in \eqref{radial_Gt}, but
	$$
	\frac{1}{\alpha} \psi_{\veps}^{\prime \prime}(t) -  |\psi_{\veps}^{\prime}(t)|^{\frac{\alpha -1}{\alpha}} = 	(1 + \veps)(t + \veps)^{\alpha - 1}\left( 1 - \frac{1}{(1 + \veps)^{1/\alpha}} \right) > 0.
	$$

	\end{proof}

\section{Equivalent Definitions of $\F$-subharmonic Functions}\label{sec:A}

Here we include the elementary facts in Appendix A of \cite{HL11} , but presented in a different manner more closely related to the notion of a function being $\F$-subharmonic at a point.

There are at least four different possibilities for defining the space of (upper) test jets for $u$ at $x_0$. Given a 2-jet $J=(r,p,A)$, let
\begin{equation}\label{QJ}
	Q_J(x):= r + \langle p, x - x_0 \rangle + \frac{1}{2} \langle A(x - x_0), x - x_0 \rangle 
\end{equation}
denote the quadratic function with 2-jet $J$ at $x_0$.

\begin{lem}\label{lem:A} Suppose that $u\in\USC(X)$ and $x_0 \in X$. The following sets of \underline{test jets for $u$ at $x_0$} all have the same closure in $\J^2$.
	\begin{itemize}
		\item[(J1)] \underline{Strict quadratic test jets}:
		\begin{multline*}
		J_1(x_0, u) = \{J \in \J^2 : \text{for some $\epsilon > 0$, $u(x)-Q_J(x) \le -\epsilon|x-x_0|^2$ near $x_0$}, \\ \text{with equality at $x = x_0$} \}.
		\end{multline*}
		\item[(J2)]  \underline{Quadratic test jets}:
		\[
		J_2(x_0, u) = \{J \in \J^2 : \text{$u(x)-Q_J(x) \le 0$ near $x_0$, \ with equality at $x = x_0$} \}.
		\]
		\item[(J3)] \underline{$C^2$-test jets}:
		\begin{multline*}
		J_3(x_0, u) = \{J^2_{x_0} \varphi : \text{$\varphi \in C^2$ near $x_0$, $u(x)-\varphi(x) \le 0$ near $x_0$}, \\ \text{with equality at $x = x_0$} \}.
		\end{multline*}
		\item[(J4) ]\underline{Little-$o$ quadratic test jets}:
		\begin{multline*}
		J_4(x_0, u) = \{J \in \J^2 : \text{$u(x)-Q_J(x) \le o(|x-x_0|^2)$ near $x_0$}, \\ \text{with equality at $x = x_0$} \}.
		\end{multline*}
	\end{itemize}
\end{lem}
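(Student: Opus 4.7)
\textbf{Proof proposal for Lemma~\ref{lem:A}.}

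The plan is to establish the obvious chain of inclusions
\[
J_1(x_0,u) \ \subset\ J_2(x_0,u)\ \subset\ J_3(x_0,u)\ \subset\ J_4(x_0,u),
\]
and then close the loop by showing the reverse inclusion at the level of closures, namely $J_4(x_0,u)\subset \overline{J_1(x_0,u)}$. Together these yield
\[
\overline{J_1}\ \subset\ \overline{J_2}\ \subset\ \overline{J_3}\ \subset\ \overline{J_4}\ \subset\ \overline{J_1},
\]
so all four closures coincide.

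First I would verify the forward chain. The inclusion $J_1\subset J_2$ is trivial since $-\epsilon|x-x_0|^2 \le 0$. The inclusion $J_2\subset J_3$ is the observation that the quadratic $Q_J$ is itself a $C^2$ test function. For $J_3\subset J_4$, if $\varphi\in C^2$ is an upper test function for $u$ at $x_0$ with $J^2_{x_0}\varphi = J = (r,p,A)$, Taylor's theorem gives
\[
\varphi(x) \ =\ Q_J(x) + o(|x-x_0|^2),
\]
and since $u\le\varphi$ near $x_0$ with equality at $x_0$, the same little-$o$ inequality holds with $Q_J$ in place of $\varphi$.

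The main step is $J_4\subset\overline{J_1}$. Given $J=(r,p,A)\in J_4(x_0,u)$, I would consider the approximating jets
\[
J_k \ :=\ \bigl(r,\,p,\,A - \tfrac{2}{k}I\bigr), \qquad k\in\N,
\]
which converge to $J$ in $\J^2$. Their quadratics satisfy $Q_{J_k}(x) = Q_J(x) - \tfrac{1}{k}|x-x_0|^2$, so
\[
u(x) - Q_{J_k}(x) \ =\ \bigl(u(x) - Q_J(x)\bigr) + \tfrac{1}{k}|x-x_0|^2.
\]
By the little-$o$ hypothesis, for each $k$ there exists $\delta_k>0$ such that $u(x) - Q_J(x) \le \tfrac{1}{2k}|x-x_0|^2$ for $|x-x_0|<\delta_k$; substituting yields
\[
u(x) - Q_{J_k}(x) \ \le\ -\tfrac{1}{2k}|x-x_0|^2 \qquad\text{for }|x-x_0|<\delta_k,
\]
with equality at $x=x_0$ (since $Q_{J_k}(x_0)=r=u(x_0)$). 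Hence $J_k\in J_1(x_0,u)$ with parameter $\epsilon_k=\tfrac{1}{2k}$, and $J_k\to J$, showing $J\in\overline{J_1(x_0,u)}$.

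The only technical point to pin down is the precise reading of ``$u(x)-Q_J(x)\le o(|x-x_0|^2)$''; I would interpret it (as is standard) as asserting that for every $\epsilon>0$ there is a neighborhood of $x_0$ on which $u(x)-Q_J(x)\le\epsilon|x-x_0|^2$. With that interpretation the argument above is immediate, and there is no real obstacle: the perturbation $A\mapsto A-\tfrac{2}{k}I$ creates exactly enough strict concavity to upgrade a little-$o$ upper bound to an $\epsilon$-strict quadratic upper bound. Notably, as the excerpt remarks, no use is made of properties (P), (N), or (T) of $\F$ — indeed $\F$ plays no role in the statement at all — so the argument is purely about semicontinuous functions and their test jets.
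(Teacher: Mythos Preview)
Your overall strategy is exactly the paper's: establish the chain $J_1\subset J_2\subset J_3\subset J_4$ and then close the loop via $J_4\subset\overline{J_1}$ by perturbing the Hessian component of $J$. However, there is a sign error in the key step. With $J_k=(r,p,A-\tfrac{2}{k}I)$ you correctly compute $u-Q_{J_k}=(u-Q_J)+\tfrac{1}{k}|x-x_0|^2$, but then the little-$o$ bound $u-Q_J\le\tfrac{1}{2k}|x-x_0|^2$ gives
\[
u(x)-Q_{J_k}(x)\ \le\ \tfrac{1}{2k}|x-x_0|^2+\tfrac{1}{k}|x-x_0|^2\ =\ \tfrac{3}{2k}|x-x_0|^2,
\]
not $-\tfrac{1}{2k}|x-x_0|^2$. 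Subtracting from $A$ lowers $Q_{J_k}$ and hence \emph{raises} $u-Q_{J_k}$, which is the wrong direction.

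The fix is to perturb upward: take $J_k=(r,p,A+\tfrac{2}{k}I)$, so that $Q_{J_k}=Q_J+\tfrac{1}{k}|x-x_0|^2$ and $u-Q_{J_k}=(u-Q_J)-\tfrac{1}{k}|x-x_0|^2\le -\tfrac{1}{2k}|x-x_0|^2$ near $x_0$. This is precisely what the paper does (with $J_{4\varepsilon}=J+(0,0,4\varepsilon I)$), and with this correction your argument is complete and matches the paper's.
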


\begin{proof}
First note that 
\begin{equation}\label{A2}
	J_1(x_0, u) \subset J_2(x_0, u) \subset J_3(x_0, u) \subset J_4(x_0, u)
\end{equation}
and hence it suffices to show that 
\begin{equation}\label{A3}
	J_4(x_0, u)\subset \bar{J}_1(x_0, u).
\end{equation}
Suppose that $J \in J_4(x_0,u)$; that is, 
$$
u(x_0) - Q_J(x_0) = 0  \ \text{and} \ u(x)  - Q_J(x) \leq o(|x -  x_0|^2) \ \text{as}   \ x \to x_0 
$$
Hence, for each $\veps > 0$ there exists a neighborhood $B_{\delta}(x_0)$ with $\delta = \delta(\veps)$  such that
\begin{equation}\label{A4}
u(x)  - Q_J(x) \leq \veps |x - x_0|^2 \ \ \text{in} \ B_{\delta}(x_0) \ \ \text{and} \ \ u(x_0)  - Q_J(x_0) = 0.
\end{equation}
Denoting by $J_{\alpha} := J + (0,0,\alpha I)$, since $- Q_{J_{4 \veps}}(x) = - Q_J(x) - 2 \veps |x - x_0|^2$, \eqref{A4} can be written as 
\begin{equation}\label{A5}
u(x)  - Q_{J_{4 \veps}}(x) \leq - \veps |x - x_0|^2 \ \ \text{in} \ B_{\delta}(x_0) \ \ \text{and} \ \ u(x_0)  - Q_{J_{4 \veps}}(x_0) = 0.
\end{equation} 
Hence $J_{4 \veps} \in J_1(x_0,u)$ for each $\veps > 0$ and taking the limit as $\veps \to 0^+$ gives $J \in \bar{J}_1(x_o. u)$ and hence \eqref{A3}.
\end{proof}

\begin{cor}\label{cor:A}
Let $\F \subset \J^2$ be an arbitrary closed subset. Given $u \in \USC(X)$ and $x_0 \in X$ the conditions
\begin{equation}\label{TJI}
J_1(x_0, u) \subset \F, \ J_2(x_0, u) \subset \F, \ J_3(x_0, u) \subset \F \ and \  J_4(x_0, u) \subset \F
\end{equation}
are all equivalent.
	\end{cor}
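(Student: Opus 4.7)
\textbf{Proof proposal for Corollary \ref{cor:A}.} The plan is to leverage Lemma \ref{lem:A} directly: the lemma has already done all the genuine work by showing the four test jet sets share a common closure, so the corollary reduces to a short chain of formal implications using closedness of $\F$. I would organize the argument into two halves, one using the nesting and one using the closure identification.

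First I would record the trivial direction. From the containments \eqref{A2} established in the proof of Lemma \ref{lem:A},
\[
J_1(x_0, u) \,\subset\, J_2(x_0, u) \,\subset\, J_3(x_0, u) \,\subset\, J_4(x_0, u),
\]
one obtains the chain of implications
\[
J_4(x_0,u) \subset \F \ \Longrightarrow \ J_3(x_0,u) \subset \F \ \Longrightarrow \ J_2(x_0,u) \subset \F \ \Longrightarrow \ J_1(x_0,u) \subset \F,
\]
with no hypothesis on $\F$ required.

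For the reverse direction, the key ingredient is the inclusion $J_4(x_0, u) \subset \overline{J_1(x_0, u)}$ established as \eqref{A3} in the proof of Lemma \ref{lem:A}, together with the standing hypothesis that $\F$ is closed. Assuming $J_1(x_0, u) \subset \F$, taking closures and using $\overline{\F} = \F$ gives
\[
J_4(x_0, u) \,\subset\, \overline{J_1(x_0, u)} \,\subset\, \overline{\F} \,=\, \F,
\]
which closes the loop and yields the equivalence of all four conditions.

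The main potential obstacle is philosophical rather than technical: one must verify that the closure argument in the second step really only needs $\F$ closed (and not, say, some topological regularity of $\F$ like property (T)). Since the approximation step \eqref{A5} in Lemma \ref{lem:A} constructs, for each $J \in J_4(x_0, u)$, an explicit sequence $J_{4\varepsilon} \in J_1(x_0, u)$ converging to $J$ in $\J^2$ as $\varepsilon \searrow 0$, closedness of $\F$ in $\J^2$ is exactly what is needed to conclude $J \in \F$. No properties (P), (N), or (T) are used, which is the content of the parenthetical remark in Remark \ref{rem:FSH_equiv} that the equivalences hold for any closed set $\F$.
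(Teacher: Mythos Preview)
Your proof is correct and follows essentially the same approach as the paper's. The paper's argument is slightly more symmetric --- it observes that $J_k(x_0,u)\subset\F$ for any one $k$ gives $\overline{J_k(x_0,u)}\subset\F$ by closedness, and then invokes Lemma~\ref{lem:A} to say all four closures coincide --- whereas you organize it as a forward chain via the nesting \eqref{A2} and then close the loop using \eqref{A3}; both arguments use exactly the same ingredients.
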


\begin{proof}
	If $J_k(x_0,u) \subset \F$ for some $k \in \{1,2,3,4\}$, then $\bar{J}_k \subset \F$ since $\F$ is closed, but $\bar{J}_j(x_0,u) = \bar{J}_k(x_0,u)$ for each $j \neq k$ by Lemma \ref{lem:A}.
\end{proof}

\begin{defn}\label{defn:FSH_equivalences}
	Given $\F$ a subequation constraint set. A function $u \in \USC(X)$ is {\em $\F$-suharmonic at $x_0 \in X$} if 
\begin{equation}\label{FSHE}
\mbox{$J \in \F$ for all test jets $J$ for $u$ at $x_0$,}
\end{equation}  
where one may adopt any of the four defintions of (upper) test jets for $u$ at $x_0$ contained in Lemma \ref{lem:A}.
	\end{defn}

As noted in section \ref{sec:constraints}, Corollary \ref{cor:A} shows that the equivalent ways of defining $\F$-subharmonicity in $x_0$ do not depend on the subequation constraint properties (P) and (N), but only on the fact that $\F$ is closed which follows from property (T), which also ensures that $\F$ is non empty and hence the conditions \eqref{TJI} are non trivial.

\section{Elementary Properties of $\F$-subharmonic Functions}\label{sec:B}

We consider $\F$-subharmonic functions on an open set $X \subset \R^n$ with $\F \subset \J^2$ a subequation constraint set (see Definitions \ref{defn:constraint} and \ref{defn:FSH}). While these are known properties (see Theorem 2.6 and Appendix B of \cite{HL09}), for the convenience of the reader, we reproduce the proofs here making use of Lemma \ref{lem:A} which has been tailored to the pointwise notion of $\F$-subharmonicity. 

\begin{prop}[Elementary Properties of $\F(X)$]\label{prop:B} For $\F$ and $X$ as above, the following hold. 

\vspace{2ex}
{\rm (A) (Local Property)} $u \in \USC(X)$ is locally $\F$-subharmonic $ \iff u \in \F(X)$.

\vspace{2ex}
{\rm (B) (Maximum Property)} If $u,v \in \F(X)$ then $\max \{u,v \} \in \F(X)$.

\vspace{2ex}
{\rm (C) (Coherence Property)} If $u \in \USC(X)$ is twice differentiable in $x_0 \in X$, then $u$ is $\F$-subharmonic in $x_0$ if and only if $J^2_{x_0} u \in \F$.

\vspace{2ex}
{\rm (D) (Translation Property)}  $u \in \F(X) \iff u_y \in F(X+y)$ where $u_y(x) := u(x-y)$.

\vspace{2ex}
{\rm (E) (Decreasing Sequence Property)}  If $\{ u_k \}_{k \in \N} \subset \F(X)$ is a decreasing sequence of functions, then the limit $\displaystyle{u :=\lim_{k\to\infty}u_k \in \F(X)}$.

\vspace{2ex}
{\rm (F) (Uniform Limits Property)}  If $\{ u_k \}_{k \in \N} \subset \F(X)$ is a sequence of functions which converges uniformly to $u$ on compact subsets of $X$, then $u \in \F(X)$. 

\vspace{2ex}
{\rm (G) (Families Locally Bounded Above)}  Suppose $\mathfrak{F} \subset \F(X)$ is a family of functions which are locally uniformly bounded above.  Then the upper semicontinuous regularization $u^*$ of the upper envelope
$$
u(x)\ =\ \sup_{v \in \mathfrak{F}} v(x)
$$
belongs to $\F(X)$. 
\end{prop}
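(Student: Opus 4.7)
The unifying idea is to exploit Corollary \ref{cor:A}: since $\F$ is closed, the four candidate sets of upper test jets $J_1 \subset J_2 \subset J_3 \subset J_4$ from Lemma \ref{lem:A} give the same notion of $\F$-subharmonicity. For each property I will pick whichever formulation fits best. The $C^2$-test jets (J3) are the natural input when a test function is handed to me (as in (B) and (C)); the little-$o$ jets (J4) are best when a $2$-jet is handed to me (as in the forward direction of (C)); and the strict quadratic jets (J1) are essential for any perturbation argument (as in (E), (F), (G)), because the strict inequality $u(x)-Q_J(x) \leq -\veps|x-x_0|^2$ gives compactness of near-contact sets when $u$ is slightly varied.

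Properties (A) and (D) are immediate from the fact that the definition of $\F$-subharmonicity is pointwise and that $\F$ is independent of the base point. Property (C) is already recorded in Remark \ref{rem:coherence}: the forward direction uses the (J4) formulation applied to the Taylor polynomial of $u$ at $x_0$, while the reverse direction uses (J3) together with the positivity condition (P) via $J^2_{x_0}\varphi = J^2_{x_0}u + (0,0,P)$ with $P \geq 0$. Property (B) is a one-line check in the (J3) formulation: if $\varphi$ is an upper $C^2$ test function for $\max\{u,v\}$ at $x_0$ and, say, $u(x_0) = \max\{u,v\}(x_0)$, then $\varphi$ is also an upper test function for $u$ at $x_0$, whence $J^2_{x_0}\varphi \in \F$ by hypothesis.

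The substantive work is (E) and (F), both treated by the same perturbation argument. Given $J \in J_1(x_0,u)$ (available by Corollary \ref{cor:A}), choose $r>0$ and $\veps>0$ with $u(x)-Q_J(x) \leq -\veps|x-x_0|^2$ on $\overline{B}_r(x_0)$, equality at $x_0$. Set $\widetilde Q(x) := Q_J(x) - \tfrac{\veps}{2}|x-x_0|^2$ and consider
\[
M_k := \sup_{\overline{B}_r(x_0)} \bigl(u_k - \widetilde Q\bigr),
\]
which is attained at some $x_k \in \overline{B}_r(x_0)$ by upper semicontinuity. In the decreasing case one has $M_k \geq u_k(x_0) - \widetilde Q(x_0) \geq 0$ and $M_k \searrow 0$ because $u_k - \widetilde Q \to u - \widetilde Q \leq -\tfrac{\veps}{2}|x-x_0|^2$; in the uniform case one has $|M_k - M| \leq \|u_k - u\|_\infty \to 0$ where $M = \sup_{\overline B_r}(u-\widetilde Q) = 0$. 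In either case $x_k \to x_0$, and at $x_k$ the function $\widetilde Q + M_k$ is a $C^2$ upper test function for $u_k$, yielding a test jet $J_k \in \F$ with
\[
J_k = J + \bigl(M_k - \tfrac{\veps}{2}|x_k-x_0|^2, \, -\veps(x_k-x_0), \, -\veps I\bigr) \xrightarrow[k\to\infty]{} J + (0,0,-\veps I).
\]
Closedness of $\F$ gives $J + (0,0,-\veps I) \in \F$, and then property (P) yields $J \in \F$, completing (E) and (F).

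Finally, for (G), I will reduce to (E) via two standard moves. First, Choquet's lemma gives a countable subfamily $\{v_j\} \subset \mathfrak{F}$ whose upper envelope has the same upper semicontinuous regularization as $u$. Second, applying the maximum property (B) to the increasing sequence $w_k := \max\{v_1, \ldots, v_k\} \in \F(X)$ reduces the problem to showing that the usc regularization $u^*$ of the pointwise limit of a \emph{monotone} sequence in $\F(X)$, locally bounded above, lies in $\F(X)$. This last claim is the crux: one argues exactly as in (E), but now produces a contact-point sequence $x_k \to x_0$ from the Slodkowski-type fact that a strict upper test jet for $u^*$ at $x_0$ is approachable by strict upper test jets for the $w_k$'s at nearby points; closedness of $\F$ and property (P) finish it off. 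The main obstacle is the last argument: controlling the passage between $u$ and its regularization $u^*$ when the family is only locally bounded above (not locally uniformly convergent), which requires the Choquet reduction and a careful upper-contact-set analysis to convert ``contact points of $u^*$'' into ``near-contact points of some $w_k$''.
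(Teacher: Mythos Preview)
Your treatment of (A)--(D) matches the paper. For (E) and (F) you take a genuinely different route: the paper argues by contradiction via the Bad Test Jet Lemma---assuming $u\notin\F(X)$, it produces $J\notin\F$ and then a sequence $J_k\in\F$ with $J_k\to J$, contradicting closedness---whereas you argue directly that every strict test jet $J\in J_1(x_0,u)$ lies in $\F$. Your approach is valid but uses one extra ingredient: having shown $J+(0,0,-\veps I)\in\F$, you invoke property (P) to conclude $J\in\F$; the paper's contrapositive argument uses only that $\F$ is closed. (A minor slip: your displayed formula for $J_k$ omits the terms $\langle p,x_k-x_0\rangle + \tfrac12\langle A(x_k-x_0),x_k-x_0\rangle$ in the first slot and $A(x_k-x_0)$ in the second; these vanish in the limit so the conclusion $J_k\to J+(0,0,-\veps I)$ is unaffected.)

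For (G) your Choquet-based reduction is more circuitous than the paper's direct argument, and the Choquet step buys you nothing: after reducing to an increasing sequence $w_k\nearrow w$ with $w^*=u^*$, the ``crux'' you identify---converting a strict test jet for $u^*$ at $x_0$ into test jets for the $w_k$ at nearby points---is precisely the content of the paper's proof, which handles the full family $\mathfrak F$ directly without any countable reduction. The paper simply picks, from the definition of $u^*(x_0)$, sequences $y_k\to x_0$ and $u_k\in\mathfrak F$ with $u_k(y_k)\to u^*(x_0)$, locates maximum points $x_k$ of $u_k-Q_J$ on a small closed ball, and shows $x_k\to x_0$ and $u_k(x_k)\to u^*(x_0)$; the resulting test jets $(u_k(x_k),\,p+Ax_k,\,A)\in\F$ converge to the bad jet, contradicting closedness. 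Your sketch of step~3 (``Slodkowski-type fact'', ``careful upper-contact-set analysis'') leaves exactly this argument unwritten.
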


\begin{proof} (A): This is built into the Definition \ref{defn:FSH} where locally $\F$-subharmonic just means that for each $x_0 \in X$, $u$ is $\F$-subharmonic on some neighborhood of $x_0$.

\vspace{2ex}
\noindent (B): The condition that $\max\{u,v\} -\varphi \leq 0$ near $x_0$ with equality at $x_0$ implies that for one of the functions $u,v$, say $u$, we have $u(x_0)= \varphi(x_0)$.  In this case, $u-\varphi \leq 0$ near $x_0$ with equality at $x_0$. Hence, $J^2_{x_0}\varphi \in F$.

\vspace{2ex}
\noindent (C): This is the content of Remark \ref{rem:coherence} which makes use of the little-$o$ quadratic jet formulation (J4) of Lemma \ref{lem:A} for one direction and property (P) for the other. 

\vspace{2ex}
\noindent (D): This is obvious since the fibers $\F_{x_0}$ do not depend on $x_0 \in \R^n$.

The remaining properties are all proved by contradiction of the fact that $\F$ is closed, using the Bad Test Jet Lemma \ref{lem:nonFSH} (which comes from negating the strict quadratic jet formulation (J1) of Lemma \ref{lem:A}). More precisely, by Lemma \ref{lem:nonFSH} if $u \in \USC(X)$ is not $\F$-subharmonic, then there exist $x_0 \in X, \veps > 0, \rho >0$ and a bad test jet $J=(r,p,A)\notin \F$ with
\begin{equation}\label{BTJ1}
\left\{ \begin{array}{rlcc}
u(x) - Q_J(x) & \leq \  -\veps|x-x_0|^2 & & {\rm on}\ B_{\rho}(x_0) \subset X  \\
& = \  0  &  & {\rm at}\ x_0 \end{array} \right\}
\end{equation}
where $Q_J(x) = r+\langle p, x-x_0\rangle +\half \langle A(x-x_0), x-x_0 \rangle$ has $J^2_{x_0}Q_J = J = (r,p,A) \notin \F$ with $r = u(x_0)$. Given a bad test jet $J$ for $u$ at $x_0 \in X$,  the idea is to exhibit a sequence of upper text jets $\{J_k = (r_k, p_k, A_k)\}_{k \in \N}$ (for a suitable sequence of $\F$-subharmonic functions) for which
\begin{equation}\label{BTJ2}
\{J_k\}_{k \in \N} \subset \F \ \ \text{and} \ \ \ \lim_{k \to +\infty} J_k = J \notin \F,
\end{equation}
a contradiction to $\F$ being closed.

\vspace{2ex}
\noindent (E): We begin by recalling that if $\{ u_k \}_{k \in \N} \subset \USC(X)$ is a decreasing sequence, then the limit $\displaystyle{u = \inf_{k \in \N} u_k}$ is automatically $\USC(X)$. Hence if $u \in \USC(X)$ is not $\F$-subharmonic, then the bad test jet lemma applies to $u$ and there exist $x_0 \in X, \veps > 0, \rho > 0$ and $J = (r,p,A) \notin \F$ such that \eqref{BTJ1} holds. By reducing $\rho > 0$ if necessary, we can assume that the compact neighborhood $\overline{B}_{\rho}(x_0)$ is contained in the open set $X$ and then \eqref{BTJ1} shows that the upper semicontinuous function $u - Q_J$ has a unique strict maximum on $\overline{B}_{\rho}(x_0)$ at $x = x_0$ with $u(x_0) - Q_J(x_0) = 0$. 

We will construct the needed sequence of test jets (satisfying \eqref{BTJ2}) from the decreasing sequence of functions $\{ v_k \}_{k \in \N} \subset \USC(X)$ defined by
\begin{equation}\label{DSv1}
	v_k(x) := u_k(x) - Q_J(x)
\end{equation}
which have non-negative maxima
\begin{equation}\label{DSv2}
m_k := \sup_{\overline{B}_{\rho}(x_0)} v_k = \sup_{\overline{B}_{\rho}(x_0)} \left( u_k - Q_J \right) \geq  \sup_{\overline{B}_{\rho}(x_0)} \left( u - Q_J \right) = u(x_0) - Q_J(x_0) = 0.
\end{equation}
We recall the elementary fact that for any decreasing sequence $\{ v_k \}_{k \in \N} \subset \USC(X)$ with limit $v$ and for any compact subset $K$ of $X$ one has
\begin{equation}\label{DSPL}
	\lim_{k \to + \infty} \left\{ \sup_K v_k \right\} = \sup_K v \ \ \text{in} \ \R. 
\end{equation}
To see this, let $\alpha > 0$ be arbitrary and consider the sequence of sets $\{K_k\}_{k \in \N}$ defined by 
$$
	K_k := \{ x \in K: \ v_k(x) \geq \sup_K v + \alpha \}
$$
Each $K_k$ is compact by the upper semicontinuity of $v_k$ and $\{K_k\}_{k \in \N}$ is a decreasing sequence of sets ($K_{k+1} \subset K_k$ for each $k \in \N$) since $\{ v_k \}_{k \in \N}$ is a decreasing sequence. By the pointwise convergence of $v_k$ to $v$, one must have that $\bigcap_{k \in \N} K_k = \emptyset$ and hence each $K_k$ must be empty for large $k$. More precisely,  there exists $k_0 = k_0(\alpha)\in \N$ such that
for each $k \geq k_0$ one has $v_k(x) < \sup_K v + \alpha$ for every $x \in K$ and hence
$$
	\sup_{K} v \leq \sup_{K} v_k \leq \sup_{K} v + \alpha \ \ \text{for every} \ k \geq k_0(\alpha),
$$
which yields \eqref{DSPL} since $\alpha > 0$ is arbitrary.

Returning to the construction, let $\delta \in (0, \rho)$ and consider the compact annulus $K_{\delta}:= \overline{B}_{\rho}(x_0) \setminus B_{\delta}(x_0)$. By \eqref{BTJ1} one has $u(x) - Q_J(x) < 0$ on $K_{\delta}$ for each $\delta \in (0, \rho)$
and then applying \eqref{DSPL} to the sequence $\{v_k\}_{k \in \N}$ in \eqref{DSv1} on $K_{\delta}$ shows that there exists $k_1 = k_1(\delta) \in \N$ such that
\begin{equation}\label{DSvk}
\sup_{K_{\delta}} \left( u_k(x) - Q_J(x) \right) < 0 \ \ \text{for every} \ k \geq k_1. 
\end{equation}
Consequently, for $k \geq k_1$ the non-negative maximum $m_k$ of \eqref{DSv2} can only occur at points $x_k \in B_{\rho}(x_0)$. However, since $\delta$ can be made arbitrarily small there is a sequence $\{x_k\}_{k \geq k_1}$ such that
\begin{equation}\label{xk_prop}
0 \leq m_k := \sup_{\overline{B}_{\rho}(x_0)} (u_k - Q_J) = u_k(x_k) - Q_J(x_k) \ \ \text{and} \ \ \lim_{k \to + \infty}x_k = x_0.
\end{equation}
Hence for $k \geq k_1$ one has
$$
\mbox{$u_k(x)- (Q_J(x) + m_k) \leq 0$ \ \ on $B_{\rho}(x_0)$ \quad and \quad $u_k(x_k)- (Q_J(x_k) + m_k) =  0$,}
$$
so that $Q_j + m_k$ is an upper test function for $u \in \F(X)$ at $x_k$ and hence
\begin{equation}\label{TJS}
	J_k = (r_k, p_k, A_k) = D^2_{x_k} (Q_J + m_k)=  (r + m_k, p + A(x - x_k), A) \in \F, 
\end{equation} 
where $J = (r,p,A) \notin \F$ is the bad test jet with $r = u(x_0)$. Taking the limit as $k \to + \infty$, one has $x_k \to x_0$ by construction \eqref{xk_prop} and also $m_k \to 0$ by \eqref{DSPL} and \eqref{DSv2}; that is, 
$$
	\lim_{k \to + \infty} \sup_{\overline{B}_{\rho}(x_0)}(u_k - Q_J) = \sup_{\overline{B}_{\rho}(x_0)}(u - Q_J) = 0.
$$
Hence one has $\{J_k\}_{k \in \N} \subset \F$ with $J_k \to J \notin \F$, which contradicts $\F$ being closed.

\vspace{2ex}
\noindent (F): For uniform limits, the proof is almost the same as that given above for decreasing limits. In particular, using the definition of uniform convergence it is easy to see that the limit $u$ is upper semicontinuous on $X$ (so that Lemma \ref{lem:nonFSH} applies to give a bad test jet $J \notin \F$ at some point $x_0 \in X$) and that for each compact subset $K$ of $X$ the limit property \eqref{DSPL}
$$
	\lim_{k \to + \infty} \left\{ \sup_K v_k \right\} = \sup_K v \ \ \text{in} \ \R. 
$$
holds if $v_k:= u_k + Q_J$ converges uniformly to $u - Q_J$ on $K$. The same construction of the sequence of jets $\{J_k\}_{k \in \N} \subset \F$ with $J_k \to J \notin \F$ carries over without change.

\vspace{2ex}
\noindent (G): We give an adaptation of the classical proof in \cite{CIL92} by contradiction. Suppose that $u^*\notin \F(X)$. In order to simplify notation, we will assume that $u$ is not $\F$-subharmonic in $x_0 = 0$ (which we may assume  by the Translation Property (D)). By the Bad Test Jet Lemma \ref{lem:nonFSH}, there exist $\veps > 0, \rho > 0$ and $(p,A) \in \R^n \times \cS(N)$ such that
\begin{equation}\label{B3}
u^*(x) - \bigl[  \langle p,x \rangle + \half   \langle Ax,x \rangle \bigr] \ \leq\ u^*(0) -\veps|x|^2
\qquad{\rm for\ } |x|  \leq \rho
\end{equation}
but
$$
(u^*(0), p,A) \ \notin\ \F.
$$
Since
$$
u^*(0) \ = \ \lim_{k\to\infty} \sup_{|y|\leq {1\over k}} \sup_{v \in \mathfrak{F}} v(y),
$$
it follows easily that there exist sequences $\{y_k\} \subset \R^n, \{u_k \} \subset \mathfrak{F}$, such that $y_k \to 0$ and
\begin{equation}\label{B4}
\lim_{k\to\infty} u_k(y_k)\ =\ u^*(0).
\end{equation}

Now choose a maximum point $x_k$ for the function $u_k(x) - \bigl[  \langle p,x \rangle + \half   \langle Ax,x \rangle \bigr] $ on $|x|\leq \rho$. Then
$$
u_k(y_k) - \bigl[  \langle p,y_k \rangle + \half   \langle Ay_k,y_k \rangle \bigr]
\
\leq\
u_k(x_k) - \bigl[  \langle p,x_k \rangle + \half   \langle Ax_k,x_k \rangle \bigr].
$$
Pick a subsequence if necessary so that $x_k\to x$. Taking\ $\displaystyle{\liminf_{k\to +\infty}}$
of both sides and using the standard fact that $\displaystyle{\limsup_{k \to + \infty}  u_k(x_k)\leq u^*(x)}$ yields
\begin{equation*}
\begin{array}{cclcl}
    u^*(0) & \leq & \displaystyle{\liminf_{k\to+\infty}u_k(x_k) - \bigl[  \langle p,x \rangle + \half   \langle Ax, x \rangle \bigr]}  & & [{\rm by} \ \eqref{B4}]    \\
    \\
    & \leq &  u^*(x) - \bigl[  \langle p,x \rangle + \half   \langle Ax, x \rangle \bigr]  & &      \\
    \\
    & \leq & u^*(0)-\veps|x|^2  & & [{\rm by} \ \eqref{B3}].  \end{array}
\end{equation*}
Consequently, we have $x=0$ and
$$
u^*(0)\ \leq\  \liminf_{k\to+\infty}  u_k(x_k).
$$
Since $x_k\to 0$, the upper semicontinuity of $u^*$ gives
$$
 \limsup_{k\to+\infty} u_k(x_k)\ \leq\ u^*(0)
$$
and therefore
 $$
 \lim_{k\to\infty} u_k(x_k)=u^*(0).
 $$
 The inequality
 $$
 u_k(x) - \bigl[  \langle p,x \rangle + \half   \langle Ax,x \rangle \bigr]
\
\leq\
u_k(x_k) - \bigl[  \langle p,x_k \rangle + \half   \langle Ax_k,x_k \rangle \bigr].
 $$
for $|x|\leq \rho$ (and hence near $x_k$) implies fairly easily that
$$
(u_k(x_k),  p  +Ax_k, A)\ \in \ \F
$$
since $u_k\in \F(X)$.  Taking the limit as $k\to+\infty$ yields
$$
(u^*(0), p, A)\ \in\ \F,
$$
a contradiction.

\end{proof}

\end{document}